\let\ams@starttoc\@starttoc
\let\@starttoc\ams@starttoc
\patchcmd{\@starttoc}{\makeatletter}{\makeatletter\parskip\z@}{}{}
\DeclareSymbolFontAlphabet{\mathbb}{AMSb}
\DeclareSymbolFontAlphabet{\mathbbl}{bbold}
\numberwithin{equation}{section}
\newtheorem{theorem}{Theorem}[section]
\newtheorem{proposition}[theorem]{Proposition}
\newtheorem{lemma}[theorem]{Lemma}
\newtheorem{corollary}[theorem]{Corollary}
\theoremstyle{definition}
\theoremstyle{remark}
\newtheorem{remark}[theorem]{Remark}
\newcommand{\lb}{\left(}
\newcommand{\rb}{\right)}
\newcommand{\lbb}{\left[}
\newcommand{\rbb}{\right]}
\newcommand{\labs}{\left|}
\newcommand{\rabs}{\right|}
\newcommand{\lbrb}[1]{\lb#1\rb}
\newcommand{\lbbrbb}[1]{\lbb#1\rbb}
\newcommand{\labsrabs}[1]{\labs#1\rabs}
\newcommand{\lbbrb}[1]{\lbb#1\rb}
\newcommand{\lbrbb}[1]{\lb#1\rbb}
\newcommand{\lbcurly}{\left\{}
\newcommand{\rbcurly}{\right\}}
\newcommand{\lbcurlyrbcurly}[1]{\lbcurly#1\rbcurly}
\newcommand{\lrfloor}[1]{\lfloor#1\rfloor}
\newcommand{\simi}{\stackrel{\infty}{\sim}}
\newcommand{\simo}{\stackrel{0}{\sim}}
\newcommand{\intervalII}{\lbrb{-\infty,\infty}}
\newcommand{\intervalOI}{\lbrb{0,\infty}}
\newcommand{\abs}[1]{\labs#1\rabs}
\newcommand{\curly}[1]{\lbcurly#1\rbcurly}
\newcommand{\complex}[2]{#1+i#2}
\newcommand{\bospace}[1]{\mathrm{O}\!\lbrb{#1}}
\renewcommand{\so}[1]{\mathrm{o}\lbrb{#1}}
\newcommand{\sospace}[1]{\mathrm{o}\!\lbrb{#1}}
\newcommand{\pls}{+}
\newcommand{\mis}{-}
\newcommand{\tightoverset}[2]{%
	\mathop{#2}\limits^{\vbox to 0.28ex{\kern -0.2ex\hbox{$#1$}\vss}}}
\newcommand{\pms}{\pm}
\newcommand{\Pbb}[1]{\Pb\lb #1\rb}
\newcommand{\Pbbs}[1]{\Pb\!\lb #1\rb}
\newcommand{\Ebb}[1]{\Eb\lbb #1\rbb}
\newcommand{\LL}{L\'{e}vy }
\newcommand{\LLP}{L\'{e}vy process }
\newcommand{\LLPs}{L\'{e}vy processes }
\newcommand{\LLK}{\LL\!\!-Khintchine }
\newcommand{\Pnn}{\Pi_{\mis}}
\newcommand{\PP}{\overline{\Pi}}
\newcommand{\PPn}{\overline{\Pi}_{\mis}}
\newcommand{\PPPn}{\overline{\overline{\Pi}}_{\mis}}
\newcommand{\mubr}{\bar{\mu}}
\newcommand{\mubar}[1]{\bar{\mu}\!\lbrb{#1}}
\newcommand{\mubrp}{\bar{\mu}_{\pls}}
\newcommand{\mubarp}[1]{\bar{\mu}_{\pls}\lbrb{#1}}
\newcommand{\mubarpspace}[1]{\bar{\mu}_{\pls}\!\lbrb{#1}}
\newcommand{\mubrn}{\bar{\mu}_{\mis}}
\newcommand{\mubarn}[1]{\bar{\mu}_{\mis}\lbrb{#1}}
\newcommand{\mubarnspace}[1]{\bar{\mu}_{\mis}\!\lbrb{#1}}
\newcommand{\mup}[1]{\mu_{\pls}\lbrb{#1}}
\newcommand{\munspace}[1]{\mu_{\mis}\!\lbrb{#1}}
\newcommand{\uun}[1]{\upsilon_{\mis}\lbrb{#1}}
\newcommand{\uunspace}[1]{\upsilon_{\mis}\!\lbrb{#1}}
\newcommand{\ttinf}[1]{_{#1\to \infty}}
\newcommand{\limi}[1]{\lim\limits_{#1\to \infty}}
\newcommand{\limsupi}[1]{\varlimsup\limits_{#1\to \infty}}
\newcommand{\liminfi}[1]{\varliminf\limits_{#1\to \infty}}
\newcommand{\limo}[1]{\lim\limits_{#1\to 0}}
\newcommand{\limsupo}[1]{\varlimsup\limits_{#1\to 0}}
\newcommand{\Cb}{\mathbb{C}}
\newcommand{\C}{\mathbb{C}}
\newcommand{\Eb}{\mathbb{E}}
\newcommand{\Kb}{\mathbb{K}}
\newcommand{\Nb}{\mathbb{N}}
\newcommand{\N}{\mathbb{N}}
\newcommand{\Rb}{\mathbb{R}}
\newcommand{\R}{\mathbb{R}}
\newcommand{\Rp}{\mathbb{R}^+}
\newcommand{\Rn}{\mathbb{R}^-}
\newcommand{\Pb}{\mathbb{P}}
\newcommand{\Zb}{\mathbb{Z}}
\renewcommand{\P}{\mathbb{P}}
\newcommand{\Ac}{\mathcal{A}}
\newcommand{\Bc}{\mathcal{B}}
\newcommand{\Hc}{\mathcal{H}}
\newcommand{\Mcc}{\mathcal{M}}
\newcommand{\M}{\mathcal{M}}
\newcommand{\Nc}{\mathcal{N}}
\newcommand{\Oc}{\mathcal{O}}
\newcommand{\Pc}{\mathcal{P}}
\newcommand{\Sc}{\mathcal{S}}
\newcommand{\Wc}{\mathcal{W}}
\newcommand{\Zc}{\mathcal{Z}}
\newcommand{\npow}{\mathcal{P}}
\newcommand{\nexp}{\mathcal{E}}
\newcommand{\Be}{\mathcal{B}}
\newcommand{\Bi}{\Be_{\npow}(\infty)}
\newcommand{\Beb}{\Be_{\npow}(\beta)}
\newcommand{\Bep}[1]{\Be_{\npow}(#1)}
\newcommand{\Bth}{\Be_{\nexp}(\theta)}
\newcommand{\Bthd}{\Be_{\nexp}(\frac{\pi}{2})}
\newcommand{\Bthaspace}{\Be_{\nexp}\!\lbrb{\frac{\pi}{2}\alpha}}
\newcommand{\Ne}{\mathcal{N}}
\newcommand{\Npi}{\overNc_{\npow}(\infty)}
\newcommand{\Npb}{\overNc_{\npow}(\beta)}
\newcommand{\NpNP}{\overNc_{\npow}(\NPs)}
\newcommand{\NepN}{\Ne_{\npow}(\NPs)}
\newcommand{\Nn}{\Ne_{\npow}(\Ntt_{\Psi})}
\newcommand{\Ni}{\Ne_{\npow}(\infty)}
\newcommand{\Nth}{\overNc_{\nexp}(\Theta)}
\newcommand{\Ntpspace}[1]{\overNc_{\nexp}\!\left(#1\right)}
\newcommand{\Nthdspace}{\overNc_{\nexp}\!\left(\frac{\pi}{2}\right)}
\newcommand{\Nthaspace}{\overNc_{\nexp}\!\left(\frac{ \pi}{2}\alpha\right)}
\newcommand{\supp}{{\rm{Supp}}}
\newcommand{\Fo}[1]{\mathcal{F}_{#1}}
\newcommand{\Ctt}{\mathtt{C}}
\newcommand{\Mtt}{\mathtt{M}}
\newcommand{\Ntt}{\mathtt{N}}
\newcommand{\ak}{\widehat{a}}
\newcommand{\bk}{\widehat{b}}
\newcommand{\rk}{\mathfrak{r}}
\newcommand{\aP}{\mathfrak{a}_{\Psi}}
\newcommand{\ap}{\mathfrak{a}_{\pls}}
\newcommand{\Ir}{I}
\newcommand{\ind}[1]{\mathbb{I}_{\{#1\}}}
\newcommand{\IntOI}{\int_{0}^{\infty}}
\newcommand{\IntII}{\int_{-\infty}^{\infty}}
\newcommand{\IntIO}{\int_{-\infty}^{0}}
\newcommand{\Lspace}[2]{\mathrm{L}^{#1}\lbrb{#2}}
\newcommand{\Lspaces}[2]{\mathrm{L}^{#1}\!\lbrb{#2}}
\newcommand{\LtwoRp}{L^2\lbrb{\Rb^+}}
\newcommand{\cco}{\mathtt{C}_0}
\newcommand{\ccoiRp}{\mathtt{C}^{\infty}_0\lbrb{\Rb^+}}
\newcommand{\overNc}{\overline{\Nc}}
\newcommand{\Aph}{A_\phi}
\newcommand{\aph}{\mathfrak{a}_\phi}
\newcommand{\aphp}{\mathfrak{a}_{{\pls}}}
\newcommand{\aphn}{\mathfrak{a}_{{\mis}}}
\newcommand{\Aphn}{A_{\phi_{\mis}}}
\newcommand{\Aphp}{A_{\phi_{\pls}}}
\newcommand{\Aphm}{A_{\phi_{\pm}}}
\newcommand{\dph}{\overline{\mathfrak{a}}_\phi}
\newcommand{\dphn}{\overline{\mathfrak{a}}_{\mis}}
\newcommand{\dphp}{\overline{\mathfrak{a}}_{\pls}}
\newcommand{\phn}{\phi_{\mis}}
\newcommand{\phnspace}{\phi_{\mis}\!}
\newcommand{\php}{\phi_{\pls}}
\newcommand{\phpspace}{\phi_{\pls}\!}
\newcommand{\phpm}{\phi_{\pms}}
\newcommand{\dr}{{\mathtt{d}}}
\newcommand{\dep}{{\dr}_{\pls}}
\newcommand{\dem}{{\dr}_{\mis}}
\newcommand{\ptt}{{\mathtt{p}}}
\newcommand{\Eph}{E_\phi}
\newcommand{\Gph}{G_\phi}
\newcommand{\Hph}{H_\phi}
\newcommand{\Rph}{R_\phi}
\newcommand{\gamph}{\gamma_\phi}
\newcommand{\Tph}{T_\phi}
\newcommand{\tph}{\mathfrak{u}_{\phi}}
\newcommand{\tphp}{\mathfrak{u}_{{\pls}}}
\newcommand{\tphn}{\mathfrak{u}_{\mis}}
\newcommand{\Wp}{W_\phi}
\newcommand{\Wpn}{W_{\phi_{{\mis}}}}
\newcommand{\Wpnspace}{W_{\phi_{{\mis}}}\!}
\newcommand{\Wpp}{W_{\phi_{\pls}}}
\newcommand{\Wppspace}{W_{\phi_{\pls}}\!}
\newcommand{\Zcph}{\Zc_0(\phi)}
\newcommand{\PcBinf}{ \Bi }
\newcommand{\PcNinf}{\Ni}
\newcommand{\dPs}{d_\Psi}
\newcommand{\pPs}{f_\Psi}
\newcommand{\PIp}{F_\Psi}
\newcommand{\PIPs}[1]{\overline{F}_{\Psi}(#1)}
\newcommand{\PIoPs}[1]{F_{\Psi}(#1)}
\newcommand{\PIoPsnspace}[1]{F^{(n)}_{\Psi}\!(#1)}
\newcommand{\thPsml}{\mathfrak{u}_{-}^{\hspace{-0.05cm}\leta}}
\newcommand{\AbOI}{\Ac_{\intervalOI}}
\newcommand{\CbOI}{\Cb_{\intervalOI}}
\newcommand{\IPsi}{I_\Psi}
\newcommand{\IPsiq}[1]{I_{\Psi_{#1}}}
\newcommand{\IPsq}[1]{I_{\Psi^{#1}}}
\newcommand{\MPsi}{\Mcc_{I_\Psi}}
\newcommand{\MPsispace}{\Mcc_{I_\Psi}\!}
\newcommand{\MPIsi}{\Mcc_{F_\Psi}}
\newcommand{\MPoIsi}{\Mcc_{\overline{F}_\Psi}}
\newcommand{\MPIssi}{H_{\Psi}}
\newcommand{\MPs}{M_{\Psi}}
\newcommand{\MPspace}{M_{\Psi}\!}
\newcommand{\MPsq}[1]{M_{\Psi^{\dagger #1}}}
\newcommand{\MPsqq}[1]{M_{\Psi^{#1}}}
\newcommand{\MPsiq}{\Mcc_{I_{\Psi^{\dagger q }}}}
\newcommand{\MPIsia}[1]{\Mcc_{V_{#1}^q}}
\newcommand{\pPsi}{f_\Psi}
\newcommand{\gPsi}{g_\Psi}
\newcommand{\hPsi}{h_\Psi}
\newcommand{\hPsih}{h_{\Psi,1}}
\newcommand{\hPsihh}{h_{\Psi,2}}
\newcommand{\VPsi}{V_\Psi}
\newcommand{\NPs}{\mathtt{N}_{\Psi}}
\newcommand{\Nps}{\mathtt{N}_{\phi}}
\newcommand{\leta}{(\eta)}
\newcommand{\Psime}{\Psi^{\leta}}
\newcommand{\seta}{(\eta)}
\newcommand{\Logm}[1]{{\underline{\mathfrak{L}}}_{#1}}
\newcommand{\Logp}[1]{{\mathfrak{L}}_{#1}}
\newcommand{\phntc}{\phi_{\mis}^{ (c)}}
\newcommand{\Untc}{U^{(c)}}
\newcommand{\phnstc}{\tilde{\phi}_{\mis}^{(c)}}
\newcommand{\ab}{a+ib}
\newcommand{\BP}{\Bc_{\dr}}
\renewcommand{\Ac}{\mathtt{A}}
\renewcommand{\Im}{\mathtt{Im}}
\renewcommand{\Re}{\mathtt{Re}}
\renewcommand{\unrhd}{\text{ent}}
\author{P. Patie}
\address{School of Operations Research and Information Engineering, Cornell University, Ithaca, NY 14853.}
\email{	pp396@cornell.edu}
\author{M. Savov} \thanks{This work was partially supported by NSF Grant DMS-
	1406599 and ARC IAPAS, a fund of the Communaut´ee francaise de Belgique. The second author also acknowledges the support of the project MOCT, which has received funding from the European Union’s
	Horizon 2020 research and innovation programme under the Marie Sklodowska-Curie grant
	agreement No 657025.}
\address{Institute of Mathematics and informatics,  Bulgarian academy of sciences, Akad.  Georgi Bonchev street
	Block 8, Sofia 1113.}
\email{mladensavov@math.bas.bg}
\title[Bernstein-gamma functions and exponential functionals]{Bernstein-gamma functions and exponential functionals of L\'{e}vy Processes}
\begin{document}
	\begin{abstract}
		In this work we analyse the solution to the recurrence equation
		\[\MPs(z+1)=\frac{-z}{\Psi(-z)}\MPs(z), \quad \MPs(1)=1,\]
		defined on a subset of the imaginary line and where $-\Psi$ runs through the set of all continuous negative definite functions. Using the analytic Wiener-Hopf method we furnish the solution to this equation as a product of functions that extend the classical gamma function.  These latter functions,  being in bijection with the class of Bernstein functions, are called Bernstein-gamma functions.  Using their Weierstrass product representation we establish universal Stirling type asymptotic which is explicit in terms of the constituting Bernstein function.  This allows the thorough understanding of the decay of $\abs{\MPs(z)}$ along imaginary lines and an access to quantities important for many theoretical and applied studies in probability and analysis.
		
		This functional equation appears as a central object in several recent studies ranging from analysis and spectral theory to probability theory. In this paper, as an application of the results above, we investigate from a global perspective the exponential functionals of \LLPs whose Mellin transform satisfies the equation above. Although these variables have been intensively studied, our new approach based on a combination of probabilistic and analytical techniques enables us to derive comprehensive properties and strengthen several results on the law of these random variables for some classes of L\'evy processes that could be found in the literature. These encompass smoothness for its density, regularity and  analytical properties,  large and small asymptotic behaviour, including asymptotic expansions,  bounds, and Mellin-Barnes representations of its successive derivatives.  We also furnish a thorough study of the weak convergence of exponential functionals on a finite time horizon when the latter expands to infinity.  As a result of new Wiener-Hopf and  infinite product factorizations of the law of the exponential functional  we deliver important intertwining relation between members of the class of positive self-similar semigroups.
		Some of the results presented in this paper have been announced in the note \cite{Patie-Savov-13}.
		%The derivation of all our results and applications relies crucially on a mixture of analytical, complex-analytical and probabilistic techniques.
		% We believe that the
		%It seems plausible that the role that the gamma function has in the theory of self-adjoint diffusions and many related quantities will be taken up by the Bernstein-gamma functions at least in the theory of self-similar non-self-adjoint Markov processes.}
	\end{abstract}
	\keywords{Asymptotic analysis, Functional equations, Exponential functional, L\'evy processes, Wiener-Hopf factorizations, Infinite divisibility,  Stable L\'evy processes, Special functions, Intertwining, Bernstein function\\ \small\it 2010 Mathematical Subject Classification: 30D05, 60G51, 60J55, 60E07, 44A60, 33E99}

		\maketitle
		\newpage
	\tableofcontents

%%%%%%%%%%%%%%%%%%%%%%%%%%%%%%%%%%%%%%%%%%%%%%%%%%%%%%%%%%%%%%%%%%%
%%                                                               %%
%% No macro definitions below this line please!                  %%
%%                                                               %%
%%%%%%%%%%%%%%%%%%%%%%%%%%%%%%%%%%%%%%%%%%%%%%%%%%%%%%%%%%%%%%%%%%%

%%%%%%%%%%%%%%%%%%%%%%%%%%%%%%%%%%%%%%%%%%%%%%%%%%%%%%%%%%%%%%%%%%%
%%                                                               %%
%% No need for \maketitle.                                       %%
%%                                                               %%
%%%%%%%%%%%%%%%%%%%%%%%%%%%%%%%%%%%%%%%%%%%%%%%%%%%%%%%%%%%%%%%%%%%

%%%%%%%%%%%%%%%%%%%%%%%%%%%%%%%%%%%%%%%%%%%%%%%%%%%%%%%%%%%%%%%%%%%
%%                                                               %%
%% Please replace what follows by the body of your article       %%
%% (up to the bibliography):                                     %%
%%                                                               %%
%%%%%%%%%%%%%%%%%%%%%%%%%%%%%%%%%%%%%%%%%%%%%%%%%%%%%%%%%%%%%%%%%%%
\section{Introduction}\label{sec:intro}
The main aim of this work is to develop an in-depth analysis of the solution  in the space of analytic functions and in the space of Mellin transforms of random variables to the functional equation defined,  for any function $\Psi $ such that $e^{\Psi} \in \mathcal{P}$, where $\mathcal{P}$ is the set of positive definite functions or equivalently $-\Psi$ is  a continuous negative definite function, by
\begin{equation}\label{eq:fe}
\MPs(z+1)=\frac{-z}{\Psi(-z)}\MPs(z), \quad \MPs(1)=1,
\end{equation}
and valid (at least) on the domain $i\R\setminus\lbrb{\Zc_0\!\lbrb{\Psi}\cup\curly{0}}$, where we set \[\Zc_0\!\lbrb{\Psi}=\curly{z\in i\R:\,\Psi(-z)=0}.\label{sym:P}\]
The foremost motivation underlying this study is the methodology underpinning an approach developed by the authors for understanding the spectral decomposition of at least some non-self-adjoint Markov semigroups. This program has been carried out for a class of generalized Laguerre semigroups and a class of positive self-similar semigroups, see the recent papers \cite{Patie-Savov-16}, \cite{Patie-Savov-GL}, \cite{Patie-Savov-Zhao}  and \cite{Patie-Zhao}. The latter study has revealed that solutions to equations of the type \eqref{eq:fe}  play a central role in obtaining and quantitatively characterizing the spectral representation of the aforementioned semigroups. Since \eqref{eq:fe} is  defined on a subset of $ i\R$, a natural approach to derive and understand its solution stems from the classical Wiener-Hopf method. It is well-known that for any $\Psi\in\overNc$, where  $\overNc$\label{overNc} stands for the space of the negative of continuous negative definite functions, see \eqref{eq:defoNc}  below for definition, we have the analytic Wiener-Hopf factorization
\begin{eqnarray}\label{eq:WH1}
\Psi(z)=-\php(-z)\phn(z),\: z\in i\R,
\end{eqnarray}
where $\php,\phn\in\Bc$\label{Bc}, that is $\phi_{\pms}$ are Bernstein functions, see \eqref{eq:phi0}  for definition. {Note that  $\phpm$ are uniquely defined up to a positive multiplicative constant. The latter as will be seen is of no significance.
	Exploiting \eqref{eq:WH1} the characterization of the solution of \eqref{eq:fe}   in the space of analytic functions and in  the space of Mellin transforms of random variables is,  up to an uniqueness argument,  reduced to the solution  in the space of Mellin transforms of random variables of equations of the type
	\begin{eqnarray}\label{eq:feb}
	\qquad\qquad\,\,\,\qquad W_{\phi}(z+1)&=&\phi(z)W_{\phi}(z),\quad\Wp(1)=1, \quad\phi\in\Bc, \label{eq:Wp}
	\end{eqnarray}
	on the region  $z\in\CbOI=\curly{z\in\Cb:\,\Re(z)>0}$.
	In turn the solution to \eqref{eq:feb} can be  represented on $\CbOI$\label{CbI} as an infinite Weierstrass product involving $\phi\in\Bc$, see \cite[Chapter 6]{Patie-Savov-16}. In Theorem \ref{thm:Wp} we obtain the main complex-analytical properties of $\Wp$ via a couple of parameters pertaining to all $\phi\in\Bc$. Also, new asymptotic representations of $\Wp$ are contained in Theorem \ref{thm:Stirling}. From them, the asymptotic of $\Wp$ along imaginary lines of the form $a+i\R, a>0,$ can be related to the geometry of the set $\phi\lbrb{\CbOI}$.  In many instances  this asymptotic can be computed or well-estimated as it is amply illustrated in Proposition \ref{thm:imagineryStirling}.
	
	All results are reminiscent of the Stirling asymptotic for the gamma function which solves \eqref{eq:feb} with $\phi(z)=z$. For this purpose we call the functions $\Wp$ Bernstein-gamma functions. Due to their ubiquitous presence in many theoretical studies they are an important class of special functions. If $\Rp=\intervalOI$  the restriction of \eqref{eq:feb}  on $\Rp$ has been considered in a larger generality by Webster in \cite{Webster-97} and for the class of Bernstein functions by Hirsch and Yor \cite{Hirsch-Yor-13}. More information on the literature can be found in Section \ref{sec:mainResults}.
	
	The results on the solution $\Wp$  of \eqref{eq:feb}  lead to asymptotic representation and characterization  of particular, generic solution  $\MPs$ of \eqref{eq:fe}.  The complex-analytical properties of $\MPs$ are then described fully in terms of four global parameters describing the analyticity and the roots of $\phpm$. The latter are related to the properties of  $\Psi$ as stated by Remark \ref{rem:d}. In Theorem \ref{thm:asympMPsi} we also conduct asymptotic analysis of $\abs{\MPs(z)}$. We emphasize that \eqref{eq:WH1} does not reduce the study of \eqref{eq:fe} to the decoupled investigation of \eqref{eq:feb} for $\phpm$. In fact the interplay between $\php$ and $\phn$ induced by \eqref{eq:WH1} is the key to get exhaustive results on the properties of $\MPs$ as illustrated by \eqref{eq:NPs} of Theorem \ref{thm:asympMPsi}. The latter gives explicit information regarding the rate of polynomial decay of $\abs{\MPspace(z)}$ along complex lines of the type $a+i\R$.
	
	As a major application of our results we present a general and unified study of the exponential functionals of \LL processes.  To facilitate the discussion of our main motivation, aims and achievements in light of the existing body of literature we recall that a possibly killed \LLP $\xi=(\xi_t)_{t\geq0}$ is an a.s.~right-continuous, real-valued stochastic process which possesses stationary and independent increments that is killed at an independent of the conservative version of $\xi$ exponential random variable (time) $\textbf{e}_q$ of parameter $q\geq 0$  and $\xi_t=\infty$ for any $t\geq {\textbf{e}_q}$.  Note that $\textbf{e}_0=\infty$.  The law of a possibly killed \LLP $\xi$ is  characterized via its characteristic exponent $\Psi\in\overNc$ with killing rate $-\Psi(0)=q$, i.e.  \[\Eb\left[e^{z\xi_t}\right]=e^{\Psi(z)t}, z \in i\R,\]
	 which defines a bijection between the class of possibly killed \LLPs and $\overNc.$ Denote the exponential functional of the L\'evy process $\xi$ by
	\begin{equation*}
	\IPsi(t) =\int_0^{t}e^{-\xi_s}ds, \quad t\geq 0,
	\end{equation*}
	and its associated perpetuity by
	\begin{equation}\label{eq:Ipsi}
	\IPsi =\int_0^{\infty}e^{-\xi_s}ds=\int_{0}^{\textbf{e}_q}e^{-\xi_s}ds.
	\end{equation}
	The study of $\IPsi$ has been initiated by Urbanik in \cite{Urbanik-95} and proceeded by M. Yor with various co-authors, see e.g.~ \cite{Bertoin-Yor-05,Hirsch-Yor-13,Yor-01}. There is also a number of subsequent and intermediate contributions to the study of these random variables, a small sample of which comprises of \cite{Alili-Jadidi-14, Arista-Rivero-16,Pardo2012,Patie2012,Patie-Savov-11,Patie-Savov-13,Patie-Savov-16}. The interest in these variables seems to be due to the fact that they appear and play a crucial role in various theoretical and applied contexts such as the spectral theory of some non-reversible Markov semigroups (\cite{Patie-Savov-16,Patie-Savov-GL}),  the study of random planar maps (\cite{Bertoin-Curien-15}), limit theorems of Markov chains (\cite{Bertoin-Igor-16}),  positive self-similar Markov processes  (\cite{Bertoin-Caballero-02,Bertoin-Yor-02-b,Caballero-Chaumont-06-b,Patie2009d}), financial and insurance mathematics (\cite{Hackmann-Kuznetsov-14,Patie-As}), branching processes with immigration (\cite{Patie2009}), fragmentation processes (\cite{Bertoin-Yor-05}), random affine equations, perpetuities, etc.. Starting from \cite{Maulik-Zwart-06} it has become gradually evident that studying the Mellin transform of the exponential functional is the right tool in many contexts. For particular subclasses, that include and allow the study of the supremum of the stable process, this transform has been evaluated and sometimes via inversion the law of exponential functional has been obtained, see \cite{Bernyk2008,Kuznetsov2011,Kuznetsov-Pardo,Patie2012}.
	In this paper, as a consequence of the study of $\MPs$ and $\Wp$, and the fact that whenever $\IPsi<\infty$ the Mellin transform of $\IPsi$, denoted by $\MPsispace$\label{sym:MT}, satisfies $\MPsispace(z)=\phn(0)\MPs(z)$ at least on $\Re(z)\in\lbrb{0,1}$, we obtain, refine and complement various results on the law of $\IPsi$, which we now briefly summarize.
	
	\begin{enumerate}[a)]
		\item Deriving  information on the decay of $\abs{\MPs(z)}$ along complex lines allows us to show that the law of $\IPsi$ is infinitely differentiable unless $\xi$ is a compound  Poisson process with strictly positive drift. In the latter case \eqref{eq:NPs} of Theorem \ref{thm:asympMPsi} evaluates the minimum number of smooth derivatives the law of $\IPsi$ possesses.
		
		\item Under no restriction we provide a Mellin-Barnes representation for the law of $\IPsi$ and thereby bounds for the law of $\IPsi$ and its derivatives. In Theorem \ref{cor:smoothness}\eqref{it:smallExp} and Corollary \ref{cor:smallExp} we show that polynomial small asymptotic expansion is possible if and only if the \LLP is killed, in which case we obtain explicit evaluation of the terms of this expansion.
		
		\item In Theorem \ref{thm:largeAsymp} general results on the tail of the law are offered. These include the computation of both the Pareto index for any exponential functional and under Cram\'er's condition, depending on the decay of  $\abs{\MPsispace(z)}$ and under minute additional requirements, the  asymptotic of the tail and its derivatives. The latter for example immediately recovers the asymptotic behaviour of the density of the supremum of a stable \LL process as investigated in \cite{Bernyk2008,Doney-Savov-10,Kuznetsov2011,Patie2012}.
		
		\item In Theorem \ref{thm:smallTime} comprehensive results are also presented for the behaviour of the law at zero.
		
		\item Next, when $\limi{t}\IPsi(t)=\int_{0}^{\infty}e^{-\xi_s}ds=\infty$ and under the Spitzer's condition for $\xi$ we establish the  weak convergence of the probability measures $\Pbb{\IPsi(t)\in dx}$ after proper rescaling in time and space. This result is particularly relevant  in the world of random processes in random environments, where such information strengthens significantly
		the results of \cite{Li-XU-16,Palau-Pardo-Smadi-16}. To achieve this we have developed necessary and sufficient conditions for the finiteness of the positive and negative moments of $\IPsi(t)$.

		\item We proceed by showing that the Wiener-Hopf type factorization of the law of $\IPsi$ which was proved, under some moderate technical conditions, in \cite{Pardo2012,Patie-Savov-11} holds in fact in complete generality, see Theorem \ref{thm:factorization} which also contains additional interesting factorizations.
		
		\item Finally, by means of a classical relation between the entrance law of  positive self-similar Markov processes and the law of the exponential functional of L\'evy processes, we compute explicitly the Mellin transform of the former, see Theorem \ref{thm:PSSMP}\eqref{it:entrance}. Moreover, exploiting this relation and the Wiener-Hopf decomposition of the law of $\IPsi$ mentioned earlier, we derive intertwining relations between the positive self-similar semigroups, see Theorem \ref{thm:PSSMP}\eqref{it:intertwining}.
	\end{enumerate}
	
	The Mellin transform $\MPsi$ turns out to be the key tool for understanding the exponential functional of a \LL process. The reason for the latter is the representation of $\MPsi$ as a product combination of the Bernstein-gamma functions $\Wpn$ and $\Wpp$. Thus  the complex-analytical and asymptotic properties of $\MPsi$ are accessible. However, as it can be most notably seen in the proofs of Theorem \ref{thm:asympMPsi}, Theorem \ref{thm:largeAsymp} and Theorem \ref{thm:smallTime}, the most precise results depend on mixing analytical tools with probabilistic techniques and the properties of \LL processes.
	
	The paper is structured as follows. Section \ref{sec:MR} is dedicated to the main results and their statements. Section \ref{sec:BernFunc} outlines the main properties of the Bernstein functions and proves some new results on them. Section \ref{sec:mainResults} introduces and  studies in detail the Bernstein-gamma functions. Section \ref{sec:ProofsBE} furnishes the proof regarding the statements of Sections \ref{sec:BernFunc} and \ref{sec:mainResults}. Section \ref{sec:FE} considers the proofs of the results related to the functional equation \eqref{eq:fe}. Section \ref{sec:EFLP} furnishes the proofs for the results regarding the exponential functionals of \LL processes. Section \ref{sec:addons} deals with the factorizations of the law of the exponential functional and the intertwinings between  positive self-similar semigroups. The Appendix provides some additional information on \LLPs and results on them that cannot be easily detected in the literature, e.g.~the version of  \textit{\'{e}quation amicale invers\'{e}e} for killed \LL processes. Also we have included  Section \ref{sec:symbols} which contains tables with the main symbols introduced and used throughout the paper.
	\section{Main Results}\label{sec:MR}
	\subsection{Wiener-Hopf factorization, Bernstein-Weierstrass representation and the asymptotic analysis of the solution of \eqref{eq:fe} }
	We start by introducing some notation. We use $\N$ for the set of non-negative integers and the standard notation $\Ctt^k(\Kb)$\label{Ck} for the $k$ times differentiable functions on  some complex or real domain $\Kb$. The space $\cco^k\lbrb{\Rp}$\label{Cok} stands for the $k$ times differentiable functions which together with their $k$ derivatives vanish at infinity, whereas $\Ctt_b^k\lbrb{\Rp}$\label{Cob} requires only boundedness. When $k=0$ we drop the superscript. For any $z\in\Cb$ set $z=|z|e^{i \arg z}$ with the branch of the argument function defined via the convention $\arg:\Cb\mapsto\lbrbb{-\pi,\pi}$. For any $-\infty\leq a<b\leq \infty$, we denote by $\Cb_{\lbrb{a,b}}=\lbcurlyrbcurly{z\in\Cb:\,a<\Re(z)<b}$ and for any $a\in\intervalII$ we set $\Cb_a=\lbcurlyrbcurly{z\in\Cb:\,\Re(z)=a}$\label{Ca}. We use $\Ac_{\lbrb{a,b}}$ for the set of holomorphic functions on $\Cb_{\lbrb{a,b}}$, whereas if $-\infty<a$ then $\Ac_{\lbbrb{a,b}}$ stands for the holomorphic functions on $\Cb_{\lbrb{a,b}}$ that can  be extended continuously to $\Cb_a$. Similarly, we have the spaces $\Ac_{\lbbrbb{a,b}}$ and $\Ac_{\lbrbb{a,b}}$. Finally, we use  $\Mtt_{\lbrb{a,b}}$  for the set of meromorphic functions on $\Cb_{\lbrb{a,b}}$.
	
	We proceed by recalling the definition of the set of  positive definite functions
	\[ \mathcal{P} =\{M : i\R \mapsto \C;\:  \forall n \in \N, \sum_{j,k=1}^n M(s_j-s_k)z_j\bar{z}_k \geq 0 \textrm{ for } s_1,\ldots, s_n \in i\R, z_1,\ldots,z_n \in \C \}, \]
	that is $M \in \mathcal{P}$ is the characteristic function of a real-valued random variable, or, equivalently, the Mellin transform of a positive random variable.
	
	It is well-known, see \cite{Jacob-01},  that  a function $\Psi : i\R \rightarrow \mathbb{C}$  is  such that $e^{\Psi} \in \mathcal{P}$, i.e.~$-\Psi$ is a continuous negative definite function, if and only if  it admits the following L\'evy-Khintchine representation
	\begin{eqnarray} \label{eq:lk0}
	\Psi(z) =  \frac{\sigma^2}{2} z^2 + \gamma z +\IntII \left(e^{zr} -1
	-zr \ind{|r|<1}\right)\Pi(dr)+\Psi(0),
	\end{eqnarray}
	where $\Psi(0)\leq0$, $\sigma^2 \geq 0$, $\gamma\in \R$, and, the sigma-finite measure
	$\Pi$ satisfies the integrability condition
	\[\IntII(1
	\wedge r^2 )\:\Pi(dr) <\infty.\]
	We then write
	\begin{equation}\label{eq:defoNc}
	\overNc = \{\Psi : i\R \rightarrow \mathbb{C}; \: \Psi \textrm{ is of the form } \eqref{eq:lk0}\}.
	\end{equation}
	
	As a subset of continuous negative definite functions we have the set of Bernstein functions $\Bc$ which consists of all functions $\phi\not\equiv0$ represented as follows
	\begin{equation}\label{eq:phi0}
	%\begin{split}
	\phi(z)=\phi(0)+\dr z+\IntOI \lbrb{1-e^{-zy}}\mu(dy)=\phi(0)+\dr z+z\IntOI e^{-zy}\mubar{y}dy,
	%\end{split}
	\end{equation}
	at least for $z\in\Cb_{\lbbrb{0,\infty}}$, where $\phi(0)\geq 0,\,\dr\geq 0$, $\mu$ is a sigma-finite measure satisfying
	\[\IntOI (1\wedge y) \mu(dy)<\infty\quad\text{ and }\quad\mubar{y}=\int_{y}^{\infty}\mu(dr),\,y\geq0.\]
	We then set
	\begin{equation}\label{eq:defbe}
	\Be = \left\{ \phi : \Cb_{\lbbrb{0,\infty}} \rightarrow \mathbb{C}; \: \phi \textrm{ is of the form } \eqref{eq:phi0}\right\}.
	\end{equation}

    With any function $\phi\in\Be$ since $\phi\in\Ac_{\lbbrb{0,\infty}}$, see \eqref{eq:phi0}, we associate the quantities
    \begin{align}
    \aph&=\inf\,\,\,\{u<0:\phi\in  \Ac_{(u,\infty)}\}\in [-\infty,0],\label{eq:aphi}\\
    \tph&=\sup\lbcurlyrbcurly{u\in\lbbrbb{\aph,0}:\phi(u)=0}\in\lbbrbb{-\infty,0},\label{eq:thetaphi}\\
    \dph&=\max\curly{\aph,\tph}\in\lbbrbb{-\infty,0},\label{eq:dphi}
    \end{align}
    which are well defined thanks to the form of $\phi$, see \eqref{eq:phi0}, and the convention $\sup{\emptyset}=-\infty$ and $\inf{\emptyset}=0$. Note that $\phi$ is a non-zero constant if and only if (iff) $\dph=-\infty$. Indeed, otherwise, if $\aph=-\infty$ then necessarily from \eqref{eq:phi0}, $\limi{a}\phi(-a)=-\infty$. Hence, $\tph>-\infty$ and $\dph=\tph\in\lbrbb{-\infty,0}$. For these three quantities associated to $\phpspace,\phn$ in \eqref{eq:WH1} for the sake of clarity we drop the subscript $\phi$ and use $\tphp,\tphn,\aphp,\aphn,\dphp,\dphn$.
    For $\phi\in\Bc$, we write the generalized Weierstrass product
    	\begin{equation*}
    W_\phi(z)=\frac{e^{-\gamma_{\phi}z}}{\phi(z)}\prod_{k=1}^{\infty}\frac{\phi(k)}{\phi(k+z)}e^{\frac{\phi'\!(k)}{\phi(k)}z},\,z\in\CbOI,
    \end{equation*}
    where
    \begin{equation}\label{sym:gphi}
    \gamma_\phi=\lim_{n \to \infty} \lb \sum_{k=1}^n\frac{\phi'\!(k)}{\phi(k)} - \log \phi(n) \rb\in\lbbrbb{-\ln\phi(1),\frac{\phi'\!(1)}{\phi(1)}-\ln\phi(1)}.
    \end{equation}
    For more information on the existence and properties of $\Wp$ and $\gamph$ we refer to Section \ref{sec:mainResults}.
    Observe that if $\phi(z)=z$, then $W_\phi$ corresponds to the Weierstrass product representation of the gamma function $\Gamma$, valid on $\C \setminus \{{0,-1,-2,\ldots}\}$, and $\gamma_\phi$ is the Euler-Mascheroni constant, see e.g.~\cite{Lebedev-72}, justifying both the terminology and notation. Note also that when $z=n \in \N$ then \[W_\phi(n+1)=\prod_{k=1}^n\phi(k).\]
    We are ready to state the first of our central results which, for any $\Psi\in\overNc$, provides an explicit representation of $\MPs$ in terms of generalized Bernstein-gamma functions.
    
    \begin{theorem}\label{thm:FormMellin} \label{lemma:FormMellin}
    	Let $\Psi\in \overNc$ and recall that $\Psi(z)=-\php(-z)\phn(z),\: z\in i\R$. Then, the mapping $\MPs$ defined by
    	\begin{equation}\label{eqM:MIPsi}
    	\MPs(z)=\frac{\Gamma(z)}{W_{\phi_{\pls}}\!(z)}W_{\phi_{\mis}}\!(1-z) %\in \Ac_{(\aP,\,1-\dphn)}\cap\Mtt_{\lbrb{\aphp,\,1-\aphn}},	
    	\end{equation}	
    	satisfies the recurrence relation  \eqref{eq:fe} at least on $i\R\setminus \lbrb{\Zc_0\!\lbrb{\Psi}\cup\curly{0}}$, where we recall that $\Zc_0(\Psi)=\curly{z\in i\R:\,\Psi(-z)=0}$. Setting  $\aP = \ap\mathbb{I}_{\{\dphp=0\}}\leq 0$, we have that
    	\begin{equation}\label{eq:domainAnalMPs}
    	\MPs \in  \Ac_{(\aP,\,1-\dphn)}.
    	\end{equation}
    	If $\aP=0$, then $\MPs$ extends continuously to $i\R\setminus\{0\}$ provided additionally $\phi_{\pls}'\!(0^+)=\infty$ or $\dphp<0$, and otherwise $\MPs\in\Ac_{\lbbrb{0,1-\dphn}}$.
    	In any case
    	\begin{equation}\label{eq:domainMerMPs}
    	\MPs \in   \Mtt_{\lbrb{\aphp,\,1-\aphn}}.
    	\end{equation}
    	Let $\aphp\leq\dphp<0$. If $\tphp=-\infty$ or $-\tphp\notin\N$ then on $\Cb_{\lbrb{\aphp,1-\dphn}}$, $\MPs$ has simple poles at all points $-n$ such that $-n>\aphp,\,n\in\N$. Otherwise, if $-\tphp\in\N$, on $\Cb_{\lbrb{\aphp,1-\dphn}}$ the function $\MPs$ has simple poles at all points $-n$ such that $n \in \N \setminus \{|\tphp|,|\tphp|+1,\ldots\}$. In both cases the residue at each simple pole $-n$ is of value $\phpspace(0)\frac{\prod_{k=1}^{n} \Psi(k)}{n!}$ where we apply the convention $\prod_{k=1}^{0}=1$. Finally \eqref{eqM:MIPsi} is invariant of the choice of the Wiener-Hopf factors up to a multiplicative constant, that is $\php\mapsto c\php,\phn\mapsto c^{-1}\phn,c>0$.
    \end{theorem}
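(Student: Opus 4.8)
The plan is to build the result on top of the properties of the Bernstein--gamma functions $\Wp$ (Theorem \ref{thm:Wp}) and the analytic Wiener--Hopf factorization \eqref{eq:WH1}, proceeding in five stages. First I would verify the functional equation \eqref{eq:fe}: starting from the definition \eqref{eqM:MIPsi}, apply the three recurrences $\Gamma(z+1)=z\Gamma(z)$, $W_{\php}(z+1)=\php(z)W_{\php}(z)$, and $W_{\phn}(2-z)=W_{\phn}\bigl((1-z)+1\bigr)=\phn(1-z)W_{\phn}(1-z)$, so that
\begin{equation*}
\MPs(z+1)=\frac{\Gamma(z+1)}{W_{\php}(z+1)}W_{\phn}(1-(z+1))\cdot\frac{W_{\phn}(2-z)}{W_{\phn}(2-z)}=\frac{z}{\php(z)\phn(1-z)}\,\MPs(z),
\end{equation*}
and then recognize $\php(z)\phn(1-z)=-\Psi(-z)$ by putting $z\mapsto -z$ in \eqref{eq:WH1}, which yields exactly \eqref{eq:fe}; the normalization $\MPs(1)=\Gamma(1)W_{\php}(1)^{-1}W_{\phn}(0)$ needs the conventions $W_\phi(1)=1$ and $W_{\phn}(0)=1$ coming from the empty product, so $\MPs(1)=1$. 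The identity holds wherever all three factors are finite and nonzero, i.e.\ off $\Zc_0(\Psi)\cup\{0\}$.

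Second, for the analyticity statement \eqref{eq:domainAnalMPs} I would locate the zeros and poles of each factor on vertical strips. By Theorem \ref{thm:Wp}, $W_\phi$ is zero-free and holomorphic on $\CbOI$, hence $W_{\php}(z)$ is holomorphic and nonzero for $\Re(z)>0$, and more precisely on $\Re(z)>\aphp$ away from its poles; $1/W_{\php}$ inherits the poles of $W_{\php}$, but — and this is the subtle bookkeeping point — on the right half-plane it has no poles, and the left extension of $1/W_{\php}$ is entire up to the zeros of $\php$, which accounts for the $\aP=\ap\mathbb{I}_{\{\dphp=0\}}$ truncation. Similarly $W_{\phn}(1-z)$ is holomorphic for $\Re(1-z)>0$, i.e.\ $\Re(z)<1$, and extends to $\Re(z)<1-\aphn$ except for poles coming from the poles of $W_{\phn}$ located, after the reflection $z\mapsto 1-z$, in $\Re(z)>1-\dphn$; the factor $\Gamma(z)$ contributes poles only at $z\in\{0,-1,-2,\dots\}$, which lie to the left of $\aP$ when $\aP=0$. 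Intersecting the three regions of holomorphy gives $\Ac_{(\aP,1-\dphn)}$, and the boundary behaviour at $\Re(z)=0$ is read off from whether $\php'(0^+)=\infty$ (forcing $W_{\php}(z)^{-1}$ to vanish, hence kill the potential pole of $\Gamma$ at $0$) or $\dphp<0$ (so $\aP=0$ is not attained by a genuine singularity), versus the complementary case where a boundary continuation $\Ac_{[0,1-\dphn)}$ is the best possible.

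Third, the meromorphy claim \eqref{eq:domainMerMPs} follows by the same factor analysis but now allowing poles: $\Gamma\in\Mtt_{\intervalII}$, $1/W_{\php}\in\Mtt_{(\aphp,\infty)}$ and $W_{\phn}(1-\cdot)\in\Mtt_{(-\infty,1-\aphn)}$ by Theorem \ref{thm:Wp}, and the product of meromorphic functions on the common strip $\Cb_{(\aphp,1-\aphn)}$ is meromorphic there. Fourth, to identify the poles and residues on $\Cb_{(\aphp,1-\dphn)}$ I would argue that on this strip $W_{\phn}(1-z)$ is holomorphic and nonzero, so the poles of $\MPs$ are exactly those of $\Gamma(z)/W_{\php}(z)$; the poles of $\Gamma$ sit at $z=-n$, $n\in\N$, with residue $(-1)^n/n!$, while $1/W_{\php}$ may cancel some of them: by Theorem \ref{thm:Wp} the zeros of $W_{\php}(z)$ in the left half-plane occur precisely at $-n$ with $n\geq|\tphp|$ when $-\tphp\in\N$ (because $\php$ vanishes at $\tphp$ and the product telescopes), which is why in that case the surviving poles of $\MPs$ are those $-n$ with $n\in\N\setminus\{|\tphp|,|\tphp|+1,\dots\}$, and otherwise all $-n>\aphp$ survive. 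For the residue, I would use the functional equation \eqref{eq:fe} iterated downward: $\MPs(z)=\Psi(-z)\MPs(z+1)/(-z)=\cdots$, so near $z=-n$ one gets $\MPs(z)\sim \bigl(\prod_{k=1}^{n}\Psi(k)\bigr) n!^{-1}\,\mathrm{Res}_{z=0}\bigl(\Gamma(z)/W_{\php}(z)\cdot(\text{rest})\bigr)$, and the residue at $z=0$ of $\Gamma(z)/W_{\php}(z)\cdot W_{\phn}(1)$, using $W_{\php}(0^+)^{-1}=\php(0)$ (from $W_{\php}(z+1)=\php(z)W_{\php}(z)$ at $z\to 0$ with $W_{\php}(1)=1$) and $W_{\phn}(1)=1$, equals $\php(0)$; combining gives residue $\php(0)\prod_{k=1}^{n}\Psi(k)/n!$. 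Finally, the scaling invariance $\php\mapsto c\php$, $\phn\mapsto c^{-1}\phn$ is immediate from the scaling relation $W_{c\phi}(z)=c^{\,z-1}W_\phi(z)$ (read off the Weierstrass product, where the constant $c$ cancels in every ratio $\phi(k)/\phi(k+z)$ and contributes $c^{-1}$ overall from the leading $1/\phi(z)$ and $c$ from... ), so the factor $c^{\,z-1}$ from $1/W_{c\php}$ and $c^{-(1-z)-1}$ wait — more carefully $c^{1-z}$ from $1/W_{c\php}(z)$ and $c^{(1-z)-1}=c^{-z}$ from $W_{c^{-1}\phn}(1-z)$ multiply to $c^{1-2z}$... this needs the precise normalization of $\gamma_{c\phi}$, which I would extract from \eqref{sym:gphi}; the clean statement is that all $c$-dependence cancels, and I would verify this by checking that both sides of \eqref{eqM:MIPsi} satisfy \eqref{eq:fe} with the same initial value and the same analyticity domain, then invoke a uniqueness argument.

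The main obstacle I anticipate is the fourth step — the precise matching of zeros of $W_{\php}$ against poles of $\Gamma$ — since it hinges on a delicate description (provided by Theorem \ref{thm:Wp}) of where and with what multiplicity $W_\phi$ vanishes on its meromorphic extension, and on correctly handling the dichotomy $-\tphp\in\N$ versus $-\tphp\notin\N$ or $\tphp=-\infty$; the residue computation itself is then a clean consequence of the downward recursion \eqref{eq:fe}, but one must be careful that the recursion is applied only across points where $\Psi(-k)\neq 0$, i.e.\ within the strip of meromorphy, which is exactly why the statement is phrased on $\Cb_{(\aphp,1-\dphn)}$ rather than a larger region.
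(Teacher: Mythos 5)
Your plan follows the paper's own route (verify the recurrence directly, read analyticity, meromorphy and poles off Theorem~\ref{thm:Wp}, then plug the scaling relation $W_{c\phi}(z)=c^{z-1}W_{\phi}(z)$ into \eqref{eqM:MIPsi}), but two of the computations that are supposed to drive the argument contain errors that would make the derivation fail as written. In the verification of \eqref{eq:fe} you apply the $W_{\phn}$-recurrence at the point $1-z$, producing $\phn(1-z)$, and then assert $\php(z)\phn(1-z)=-\Psi(-z)$. That identity is false: \eqref{eq:WH1} with $w=-z$ gives $\Psi(-z)=-\php(z)\phn(-z)$, not $-\php(z)\phn(1-z)$. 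What is needed is the recurrence for $W_{\phn}$ at the point $-z$, i.e.\ $W_{\phn}(1-z)=\phn(-z)W_{\phn}(-z)$, so that $W_{\phn}(1-(z+1))=W_{\phn}(-z)=W_{\phn}(1-z)/\phn(-z)$; combined with $\Gamma(z+1)=z\Gamma(z)$ and $\Wpp(z+1)=\php(z)\Wpp(z)$ this gives $\MPs(z+1)=\frac{z}{\php(z)\phn(-z)}\MPs(z)=\frac{-z}{\Psi(-z)}\MPs(z)$. A related slip: $W_{\phn}(0)$ is not $1$; the recurrence gives $W_{\phn}(0)=1/\phn(0)$ when $\phn(0)>0$ and a pole when $\phn(0)=0$, so $\MPs(1)=1/\phn(0)$ in general. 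This does not affect the theorem, which asserts only the recurrence on $i\R$, but the claim should be dropped.

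Two further corrections. In the scaling step your exponent for $W_{c^{-1}\phn}(1-z)$ is wrong: applying $W_{c\phi}(z)=c^{z-1}W_{\phi}(z)$ with base $c^{-1}$ gives $(c^{-1})^{(1-z)-1}=c^{z}$, not $c^{-z}$, so the two factors yield $c^{1-z}\cdot c^{z}=c$ and \eqref{eqM:MIPsi} is rescaled by the single constant $c$. That is exactly the ``up to a multiplicative constant'' the statement asserts; it is \emph{not} the case that all $c$-dependence cancels, and the paper settles the point by direct substitution of Theorem~\ref{thm:Wp}\eqref{it:G} --- the uniqueness detour you sketch is unnecessary and would require an additional growth argument to rule out a nontrivial $1$-periodic quotient. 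In the pole bookkeeping, you should speak of the \emph{poles} of $W_{\php}$ (equivalently, the zeros of $1/W_{\php}$), not its zeros: Theorem~\ref{thm:Wp} says $W_{\php}$ is zero-free on $\Cb_{\lbrb{\aphp,\infty}}$, and its poles at $\tphp,\tphp-1,\ldots$, propagated down through the recurrence from the root of $\php$, furnish the zeros of $1/W_{\php}$ that cancel poles of $\Gamma$. With these fixed, your residue computation by iterating \eqref{eq:fe} downward is equivalent to the paper's direct expansion of the three recurrences and gives $\php(0)\prod_{k=1}^{n}\Psi(k)/n!$.
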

{\emph{This theorem is proved in Section \ref{sec:proof_thm1}.}}
\begin{remark}\label{rem:d}
	We note, from \eqref{eq:WH1} and \eqref{eq:thetaphi}-\eqref{eq:dphi}, that the quantities $\aphp,\aphn,\tphp,\tphn,\dphp,\dphn$ can be computed from the analytical properties of $\Psi$. For example, if $\Psi\notin \Ac_{\lbrb{u,0}}$ for any $u<0$ then $\aphn=0$. Similarly, if $\limsupi{t}\xi_t=-\liminfi{t}\xi_t=\infty$ a.s.~then clearly $\dphn=\dphp=0$ as $\php(0)=\phn(0)=0$. The latter indicates that the ladder height processes of $\xi$, which are defined in Appendix \ref{subsec:LP} are not killed, see e.g.~\cite[Chapter VI]{Bertoin-96}.
\end{remark}
In view of the fact that, for any $\phi \in \Be$, $\Wp$ has a Stirling type asymptotic representation, see Theorem \ref{thm:Stirling} below, and \eqref{eqM:MIPsi} holds, we proceed with a definition of two classes that will encapsulate different modes of decay of $\abs{\MPs(z)}$ along complex lines.
For any  $\beta\in [0,\infty]$, we write
\begin{equation}\label{eq:polyclass}
\Npb\! =\!\left\{ \Psi \in \overNc:\limi{|b|}\frac{\abs{\MPspace\lbrb{a+ib}}}{|b|^{-\beta+\varepsilon}}\!=\!\limi{|b|}\frac{|b|^{-\beta -\varepsilon}}{\abs{\MPspace\lbrb{a+ib}}}=0,\forall a\in \lbrb{0,1-\dphn},\forall \varepsilon>0\right\},
\end{equation}
where if $\beta=\infty$ we  understand
\begin{equation}\label{eq:Ninf}
\Npi = \left\{ \Psi \in \overNc:\:\limi{|b|}|b|^{\beta}\abs{\MPspace\lbrb{a+ib}}=0,\,\,\forall a\in \lbrb{0,1-\dphn},\forall \beta \geq0 \right\}.
\end{equation}
Moreover, for any $\Theta>0$ we set
\begin{equation}\label{eq:polyclass1}
\Nth = \left\{ \Psi \in \overNc:\:\limsupi{|b|} \frac{\ln \abs{\MPspace\lbrb{a+ib}}}{|b|}\leq -\Theta,\,\,\forall a\in \lbrb{0,1-\dphn} \right\}.
\end{equation}
Finally, we shall also need the set of regularly varying functions at $0$. For this purpose we introduce some more notation. We use in the standard manner $ f\stackrel{a}{\sim} g$  (resp.~$f\stackrel{a}{=}\bospace{g}$) for any $a\in [-\infty,\infty]$, to denote that  $\lim\limits_{x\to a}\frac{f(x)}{g(x)}=1$ (resp.~$\varlimsup\limits_{x \to a}\labsrabs{\frac{f(x)}{g(x)}}=C<\infty$). The notation $\sospace{.}$ specifies that in the previous relations the constants are zero. We shall drop the superscripts if it is explicitly stated or clear that $x\to a$. Let us introduce the regularly varying functions. We say that, for some $\alpha\in\lbbrb{0,1}$, \[ f\in RV_\alpha\iff f(y)\simo y^\alpha \ell(y),\] where $\ell\in RV_0$ is a slowly varying function, that is, $\ell(cy)\simo \ell(y)$ for any $c>0$. Furthermore, $\ell\in RV_0$ is said to be quasi-monotone, if $\ell$ is of bounded variation in a neighbourhood of zero and  for any $ \gamma>0$
\begin{equation}\label{eq:quasi-monotone}
\int_{0}^xy^\gamma |d \ell(y)|\stackrel{0}{=}\bospace{x^{\gamma}\ell(x)}.
\end{equation}
%%%%%%%%%%%%%%%%%%%%%%%%%%%%%%%%%%%%%%%%%%%%%%%%%%%%%%%%%%%%%%%%%%%
%%    
Define, after  recalling that $\bar{\mu}(y)=\int_{y}^{\infty}\mu(dr)$, see \eqref{eq:phi0},
\begin{equation}\label{eq:classesPhi1}
\Bc_{\alpha}=\lbcurlyrbcurly{\phi\in\Bc:\,\dr=0,  \bar{\mu}\in RV_\alpha \textrm{ and }y\mapsto \frac{\bar{\mu}(y)}{y^\alpha} \text{ is quasi-monotone}}.		
\end{equation}  
Next, we define the class of Bernstein functions with a positive drift that is
\begin{equation}\label{eq:BP}
\BP=\curly{\phi\in\Bc:\,\dr>0}.
\end{equation}
Finally, we denote by $\mu_{\pls},\mu_{\mis}$  the measures associated to $\phpspace,\phn\in\Bc$ stemming from \eqref{eq:WH1} and \[\Pi_{\pls}(dy)=\Pi(dy)\ind{y>0},\,\,\Pi_{\mis}(dy)=\Pi(-dy)\ind{y>0}\] for the measure in \eqref{eq:lk0}. Finally, $f(x^\pm)$ will stand throughout for the right, respectively left, limit at $x$. We are now ready  to provide an exhaustive claim concerning the decay of $\abs{\MPspace}$ along complex lines.
\begin{theorem}\label{thm:asympMPsi}
	Let $\Psi\in\overNc$.
	\begin{enumerate}
		\item  \label{eq:subexp} \label{it:decayA}    $\Psi \in \NpNP$ with
		\begin{align}
		\NPs\!=\!\begin{cases} \frac{\upsilon_{\mis}\!\lbrb{0^+}}{\phn(0)+\mubarnspace{0}}+\frac{\php(0)+\mubarpspace{0}}{\dep}<\infty & \textrm{if } \dep>0, \dem\!=0 \textrm{ and } \PP(0)\!=\!\int_{-\infty}^{\infty}\Pi(dy)<\infty, \\
		\infty &\textrm{otherwise,} \label{eq:NPs}
		\end{cases}
		\end{align}
		where  we have used implicitly  the fact that if $\dep>0$ and $\PP(0)<\infty$ hold, then $\munspace{dy}=\upsilon_{\mis}(y)dy\ind{y>0}$ with $\upsilon_{\mis}$ right-continuous on $\lbbrb{0,\infty}$ and $\upsilon_{\mis}\!\lbrb{0^+}\in\lbbrb{0,\infty}$. Moreover,  we have that \[\NPs=0\iff \PP(0)=0, \phn(z)=\phn(0)>0 \text{ and } \php(z)=\dep z.\]
		\item\label{it:classes} Moreover, if $\phn\in\BP$, that is $\dem >0$, or $\php\equiv  c\phn$, $c>0,$ then $\Psi \in \Nthdspace$. If $\phn\in\Bc_{\alpha}$ or $\php\in\Bc_{1-\alpha}$, with $\alpha \in (0,1)$, see \eqref{eq:classesPhi1} for the definition of these sets involving regularly varying functions, then $\Psi\in\Nthaspace$. Finally, if \[\Theta_{\pms}=\frac{\pi}{2}+\underline{\Theta}_{\phi_{\mis}}-\overline{\Theta}_{\phi_{\pls}}>0,\] where  \[\underline{\Theta}_{\phi}=\varliminf\limits_{b\to\infty}\frac{\int_{0}^{|b|}\arg \phi\!\lbrb{1+iu}du}{|b|} \quad\text{ and }\quad \overline{\Theta}_{\phi}=\varlimsup\limits_{b\to\infty}\frac{\int_{0}^{|b|}\arg \phi\!\lbrb{1+iu}du}{|b|},\] then $\Psi\in\Ntpspace{\Theta_{\pms}}$.
	\end{enumerate}		
\end{theorem}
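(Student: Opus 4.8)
The plan is to reduce both claims to the explicit form $\MPs(z)=\Gamma(z)W_{\phn}(1-z)/W_{\php}(z)$ of Theorem~\ref{thm:FormMellin}, combined with the Stirling‑type asymptotics of the Bernstein--gamma functions. For $a\in\lbrb{0,1-\dphn}$ and $|b|\to\infty$ one has
\[
\ln\abs{\MPs(a+ib)}=\ln\abs{\Gamma(a+ib)}-\ln\abs{W_{\php}(a+ib)}+\ln\abs{W_{\phn}(1-a-ib)},
\]
and the three terms are read off from classical Stirling, $\ln\abs{\Gamma(a+ib)}=-\tfrac\pi2|b|+(a-\tfrac12)\ln|b|+\bo{1}$, together with Theorem~\ref{thm:Stirling} and Proposition~\ref{thm:imagineryStirling}, which give $\ln\abs{W_\phi(a+ib)}=-\int_0^{|b|}\arg\phi(1+iu)\,du+\bo{\ln|b|}$ (replacing the line $a+i\R$ by $1+i\R$ perturbs only lower order terms, since $\phi(a+iu)/\phi(1+iu)\to1$ when $\phi$ is unbounded and the two arguments share the same Ces\`aro limit otherwise). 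I also use the elementary facts about $\phi\in\Be$, established alongside Section~\ref{sec:BernFunc}, that $0\le\underline{\Theta}_\phi\le\overline{\Theta}_\phi\le\tfrac\pi2$, with $\overline{\Theta}_\phi=\tfrac\pi2$ precisely when $\phi\in\BP$ and $\underline{\Theta}_\phi=\overline{\Theta}_\phi=\tfrac{\alpha\pi}2$ when $\phi\in\Bc_{\alpha}$, together with the dichotomy that $\abs{W_\phi(a+ib)}$ is $\so{|b|^{-N}}$ for every $N$ when $\phi$ is unbounded, whereas $\abs{W_\phi(a+ib)}\asymp|b|^{-\upsilon(0^+)/\phi(+\infty)}$ when $\phi$ is bounded with a L\'evy density $\upsilon$ continuous at $0$.

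For \eqref{it:classes}, dividing the displayed identity by $|b|$ gives $\ln\abs{\MPs(a+ib)}/|b|=-\tfrac\pi2+|b|^{-1}\int_0^{|b|}\arg\php(1+iu)\,du-|b|^{-1}\int_0^{|b|}\arg\phn(1+iu)\,du+\so{1}$, whence, using $\limsup(f+g)\le\limsup f+\limsup g$,
\[
\limsupi{|b|}\frac{\ln\abs{\MPs(a+ib)}}{|b|}\le-\frac\pi2+\overline{\Theta}_{\php}-\underline{\Theta}_{\phn}=-\Theta_{\pms},
\]
i.e.\ $\Psi\in\Ntp{\Theta_{\pms}}$ whenever $\Theta_{\pms}>0$. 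The named sub‑cases follow by specialisation: $\phn\in\BP$ gives $\underline{\Theta}_{\phn}=\tfrac\pi2$ and $\overline{\Theta}_{\php}\le\tfrac\pi2$, hence $\Theta_{\pms}\ge\tfrac\pi2$ and $\Ntp{\Theta_{\pms}}\subseteq\Nthd$; $\phn\in\Bc_{\alpha}$ or $\php\in\Bc_{1-\alpha}$ gives $\Theta_{\pms}\ge\tfrac{\pi\alpha}2$, hence $\Psi\in\Ntha$. The case $\php\equiv c\phn$ is handled separately: the factorisation collapses to $\MPs(z)=c^{1-z}\Gamma(z)W_{\phn}(1-z)/W_{\phn}(z)$, so $\MPs(z)\MPs(1-z)=c\pi/\sin\pi z$, and since $\ln\abs{W_{\phn}(1-a-ib)}-\ln\abs{W_{\phn}(a+ib)}=\so{|b|}$ (identical vertical‑line exponential rates, only the polynomial prefactors differing) one gets $\ln\abs{\MPs(a+ib)}=-\tfrac\pi2|b|+\so{|b|}$ directly, so $\Psi\in\Nthd$.

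For \eqref{it:decayA} one distinguishes the generic case $\NPs=\infty$ from the exceptional one. In the generic case it suffices to show that $\MPs$ decays faster than every power of $|b|$ on each line $a+i\R$, $a\in\lbrb{0,1-\dphn}$; this is a short case analysis on the same identity: if $\dem>0$ then $\Psi\in\Nthd$ by \eqref{it:classes}; if $\dep=0$ then $\overline{\Theta}_{\php}<\tfrac\pi2$, so $\Gamma(z)/W_{\php}(z)$ retains genuine exponential decay while $W_{\phn}(1-a-ib)=e^{\so{|b|}}$; and if $\dep>0,\dem=0$ but $\PP(0)=\infty$ then at least one of $\mu_{\pls},\mu_{\mis}$ is infinite, so the corresponding $\php$ or $\phn$ is unbounded and by the dichotomy the factor $\Gamma/W_{\php}$ or $W_{\phn}(1-\cdot)$ is superpolynomially small. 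The exceptional case $\dep>0,\ \dem=0,\ \PP(0)<\infty$ is where probabilistic input is indispensable. Here $\mu_{\pls},\mu_{\mis}$ are both finite, so $\php(z)=\dep\bigl(z+\tfrac{\php(0)+\mubarp{0}}{\dep}\bigr)-\int_0^\infty e^{-zy}\mu_{\pls}(dy)$ with $\php\in\BP$; comparing the recursion for $W_{\php}$ with that of $\dep^{z-1}\Gamma\bigl(z+\tfrac{\php(0)+\mubarp{0}}{\dep}\bigr)$ and checking that the correcting infinite product tends to $1$ along $a+i\R$ shows that the $e^{-\pi|b|/2}$ factors of $\Gamma(a+ib)$ and $W_{\php}(a+ib)$ cancel, leaving $\abs{\Gamma(a+ib)/W_{\php}(a+ib)}\asymp|b|^{-(\php(0)+\mubarp{0})/\dep}$; meanwhile the \textit{\'{e}quation amicale invers\'{e}e} of Appendix~\ref{subsec:LP} forces $\mu_{\mis}(dy)=\upsilon_{\mis}(y)\,dy$ with $\upsilon_{\mis}\in\mathrm C([0,\infty))$ and $\upsilon_{\mis}(0^+)<\infty$, and since $\phn$ is then bounded the dichotomy yields $\abs{W_{\phn}(1-a-ib)}\asymp|b|^{-\upsilon_{\mis}(0^+)/(\phn(0)+\mubarn{0})}$. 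Multiplying these produces $\Psi\in\NpNP$ with exactly the $\NPs$ of \eqref{eq:NPs}, and the equivalence $\NPs=0\iff\PP(0)=0$ and $\php(z)=\dep z$ is immediate from the two fractions.

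The main obstacle is precisely this exceptional case of \eqref{it:decayA}: unlike everywhere else, the $\bo{\ln|b|}$ slack of the crude Stirling estimate cannot be tolerated, since the point is an \emph{exact} polynomial rate, so one needs the sharp form of Theorem~\ref{thm:Stirling} for Bernstein functions carrying a drift and a finite jump measure, together with the genuinely probabilistic fact that upward creeping makes the descending ladder height measure absolutely continuous with finite density at the origin. A secondary subtlety permeating \eqref{it:classes} is that the bound relies on the \emph{cancellation} between $\Gamma(z)$ and $W_{\php}(z)$ and on the sign constraints $0\le\underline{\Theta}_\phi$, $\overline{\Theta}_\phi\le\tfrac\pi2$; bounding $W_{\php}$ and $W_{\phn}$ separately by crude estimates does not place $\Psi$ inside the advertised classes.
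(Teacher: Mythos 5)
Your overall route — writing $\ln\abs{\MPs(a+ib)}$ as a sum of three Stirling-type terms and letting the $\Gamma$-factor cancel against $W_{\php}$ — is essentially the paper's, and your $\Theta_{\pm}$-argument for item (2) (including the reflection-formula shortcut for $\php\equiv c\phn$) and your treatment of the exceptional case of (1) (amicable \'equation invers\'ee giving $\upsilon_{\mis}\in\mathrm C([0,\infty))$ with $\upsilon_{\mis}(0^+)<\infty$, then comparing $W_{\php}$ to a drift-shifted $\Gamma$) all match the spirit of Propositions~\ref{prop:condClass1} and~\ref{prop:condClass2}. The part of the proof you label as ``a short case analysis'', however, contains two false steps, and these are precisely what Propositions~\ref{prop:condClass1}--\ref{prop:condClass2} are doing the work to repair.

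First, $\dep=0$ does \emph{not} imply $\overline{\Theta}_{\php}<\tfrac\pi2$. Take a Bernstein $\php$ with $\dep=0$ but $\php(z)\sim z/\log z$ at infinity (e.g.\ $\mu_{\pls}(dr)\asymp r^{-2}(\log(1/r))^{-2}dr$ near zero). Then $\arg\php(1+iu)\to\tfrac\pi2$, hence $\overline{\Theta}_{\php}=\tfrac\pi2$, and $\Gamma/W_{\php}$ has \emph{no} exponential decay; the superpolynomial decay asserted for every $\phi\in\BP^c$ in Proposition~\ref{prop:condClass1} comes from the logarithmic margin $\Aph(a+ib)\le\tfrac\pi2|b|-\tfrac{v\phi(a)}2\ln|b|+O(1)$ valid for every $v>0$ (Proposition~\ref{propAsymp1}\eqref{it:argPhi}), which yields $\abs{\Gamma/W_\phi}\lesssim|b|^{a-v\phi(a)/2}$ with $v$ free, not an $e^{-\theta|b|}$ bound. (Incidentally your claim ``\,$\overline{\Theta}_\phi=\tfrac\pi2$ precisely when $\phi\in\BP$\,'' is ``if'', not ``iff''.) Second, $\PP(0)=\infty$ with $\dep>0$, $\dem=0$, $\mubarp{0}<\infty$ does \emph{not} force $\mubarn{0}=\infty$. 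The correct alternative, proved as part of Proposition~\ref{prop:condClass2}\eqref{it:class2_B}, is $\mubarp{0}=\infty$ or $\PPn(0)=\infty$; one can have $\PPn(0)=\infty$ with $\mubarn{0}=\int_0^\infty u_{\pls}(v)\PPn(v)\,dv<\infty$, for instance a bounded-variation $\xi$ with $\dep>0$, finite $\Pi_{\pls}$, and $\Pi_{\mis}(dy)\sim y^{-3/2}\,dy$ near $0$. Then $\phn$ is \emph{bounded} while $\upsilon_{\mis}(0^+)=\infty$, a regime your unbounded/bounded-with-continuous-density dichotomy omits; showing $W_{\phn}\in\PcBinf$ there is the content of Lemmas~\ref{lem:aux}--\ref{lem:Urau} and the lower bound \eqref{eq:lower}. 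Relatedly, your dichotomy is stated for $W_{\php}$ but you then apply it to $\Gamma/W_{\php}$: when $\dep>0$ and $\mubarp{0}=\infty$, \emph{both} $\Gamma(a+ib)$ and $W_{\php}(a+ib)$ decay at exponential rate $\tfrac\pi2|b|$, and the superpolynomial smallness of their \emph{ratio} needs the refined estimate \eqref{eq:condClass11}, not merely superpolynomial smallness of $W_{\php}$.
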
           
               {\emph{This theorem is proved in Section \ref{subsec:decay}.}}
               
  \subsection{Exponential functionals of L\'evy processes}
  We now introduce the following subclasses of $\overNc$
  \begin{equation}\label{eq:Nc}
  \qquad\quad\,\Ne=\lbcurlyrbcurly{\Psi \in \overNc: \: \Psi(z)=-\php(-z)\phn(z), z \in i\R, \textrm{ with } \phn(0)>0} 
  \end{equation}
  and
  \begin{equation}\label{eq:Ncdag}
  \!\!\!\!\!\!\!\!\!\Nc_\dagger=\curly{\Psi\in\overNc:\:\Psi(0)=-\php(0)\phn(0)<0}\subseteq \Nc.
  \end{equation}    
  We note that, with $-\Psi(0)=q\geq 0$,
  \begin{equation}\label{eq:Ipsi1}
  \IPsi =\int_{0}^{\textbf{e}_q}e^{-\xi_s}ds<\infty\text{ a.s. }\iff \,\,\, \Psi \in \Ne \iff \,\,\,\phn(0)>0,
  \end{equation}
  which is evident when $q>0$ and is due to the strong law of large numbers  when $q=0$. The latter includes the case \[\Ebb{\xi_1\ind{\xi_1>0}}=\Ebb{-\xi_1\ind{\xi_1<0}}=\infty\] but yet a.s.~$\limi{t}\xi_t=\infty$ since in this case a.s. $\limi{t}\xi_t-ct=\infty$ for any $c>0$, see \cite[Chapter IV]{Doney-07-book}. The latter is known as the Erickson's test, for more information and discussion, see also \cite[Section 6.7]{Doney-07-book}.
  Let us write for any $x\geq0$,
  \[ \PIoPs{x}=\P(\IPsi \leq x).\]
  From \cite{Bertoin-Lindner-07}, we know that the law of $\IPsi$ is absolute continuous with density denoted by $\pPs$, i.e.~$\PIp'(x)=\pPs(x)$ a.e..
  Introduce the Mellin transform of the positive random variable $\IPsi$ defined formally, for some $z\in \C$, as follows
  \begin{eqnarray}\label{eq:MTIPsi}
  \MPsispace(z) = \Ebb{\IPsi^{z-1}}=\int_0^{\infty}x^{z-1}\pPs(x)dx.
  \end{eqnarray}
  We also use the ceiling function $\lceil.\rceil:\lbbrb{0,\infty}\mapsto \N$, that is $\lceil x\rceil=\min\curly{n\in\N:\,n\geq x}$.
  \subsubsection{Regularity, analyticity and representations  of the density and its successive derivatives}
  We start our results on the exponential functional of L\'evy processes by providing a result that can be regarded as a  corollary to Theorem \ref{thm:asympMPsi}.
  \begin{theorem}\label{cor:smoothness}
  	Let $\Psi\in\Nc$.
  	\begin{enumerate}
  		\item\label{it:MTfact} We have
  		\begin{equation}\label{eq:MIPsi}
  		\MPsispace(z)= \phn(0)\MPs(z) =\phn(0)\frac{\Gamma(z)}{\Wppspace(z)}\Wpnspace(1-z)\in\Ac_{\lbrb{\ap\mathbb{I}_{\{\dphp=0\}},\,1-\dphn}}\cap\,\Mtt_{\lbrb{\aphp,\,1-\aphn}}
  		\end{equation}
  		and $\MPsi$ satisfies the recurrence relation \eqref{eq:fe} at least on $i\R\setminus\lbrb{\Zc_0\!\lbrb{\Psi}\cup\curly{0}}$.
  		\item\label{it:supp} If $\phn$ is a constant, that is $\phn(z)=\phn(\infty)\in\lbrb{0,\infty}$,  then  $\supp\,\IPsi=\lbbrbb{0,\frac{1}{\phn(\infty)\dep }}$, unless $\php(z)=\dep z,\,\dep\in\lbrb{0,\infty}$, in which case $\supp\,\IPsi=\curly{\frac{1}{\phn(\infty)\dep }}$. If $\phn$ is not identical to a constant and $\php(z)=\dep z$ then $\supp\,\IPsi=\lbbrbb{\frac{1}{\phn(\infty)\dep},\infty}$, where we use the convention that $\frac{1}{\infty}=0$.  In all other cases $\supp\,\IPsi=\lbbrbb{0,\infty}$.
  		\item \label{it:smooth} $\PIp \in\mathrm{C}^{\lceil\NPs\rceil-1}_0\lbrb{\Rp}$ and if $\NPs>1$ (resp.~$\NPs>\frac12$)
  		for any $n=0,\ldots, \lceil\NPs\rceil-2$
  		and any $a\in\lbrb{\ap\mathbb{I}_{\{\dphp=0\}},1-\dphn}$,
  		\begin{equation}\label{eq:MITd}
  		\pPs^{(n)}\!(x)=(-1)^n\frac{\phi_{\mis}(0)}{2\pi i}\int^{a+i\infty}_{a-i\infty}x^{-z-n}\Gamma\!\lbrb{z+n}\frac{\Wpnspace(1-z)}{\Wppspace(z)}dz,
  		\end{equation}
  		where the integral is absolutely convergent for any $x>0$ (resp.~is defined in the $L^2$-sense, as in the book of Titchmarsh \cite{Titchmarsh-58}).
  		\item\label{it:smallExp}	Let $\Psi\in\Nc_{\dagger}$, i.e.~$\Psi(0)<0$, and let  \[\Ntt_{+}=\abs{\tphp}\mathbb{I}_{\{\abs{\tphp} \in \N\}}+\left(\lceil |\aphp|+1\rceil\right)\mathbb{I}_{\{\abs{\tphp}\notin \N\}}.\] Then, we have, for any  $0\leq n<\NPs$, any positive integer $M$ such that $ \Ntt_+>M$, any $a\in\lbrb{\lbrb{-M-1}\vee\lbrb{\aphp-1},-M}$ and $x>0$,
  		\begin{equation}\label{eq:smallExpansion}
  		\begin{split}
  		\PIoPsnspace{x}&=-\Psi(0)\sum_{k=1\vee n}^{M}\frac{\prod_{j=1}^{k-1}\Psi\!\lbrb{j}}{\lbrb{k-n}!}x^{k-n}\\
  		&+\lbrb{-1}^{n+1}\frac{\phn(0)}{2\pi i}\int_{a-i\infty}^{a+i\infty}x^{-z-n}\Gamma\!\lbrb{z+n}\frac{\Wpnspace(-z)}{\Wppspace(1+z)}dz,
  		\end{split}
  		\end{equation}
  		where by convention $\prod_{1}^{0}=1$ and the sum vanishes if $1\vee n>M$.
  		\item\label{it:holomorphic}   If $\Psi\in\Nth\,\cap\,\Nc, \: \Theta \in\lbrbb{0,\pi}$, then $\pPs$ is holomorphic in the complex sector\\ $\curly{z\in\Cb:\,\abs{\arg z}<\Theta}$.
  		
  	\end{enumerate}
  \end{theorem}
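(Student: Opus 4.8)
\textbf{Proof plan for Theorem \ref{cor:smoothness}\eqref{it:holomorphic}.}

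The strategy is to realize $\pPs$ as a Mellin--Barnes integral and then deform the contour into the complex plane, using the decay of $\abs{\MPs}$ encoded in the hypothesis $\Psi\in\Nth$ to control the resulting oscillatory integral. Concretely, fix $a\in\lbrb{\ap\mathbb{I}_{\{\dphp=0\}},1-\dphn}$; since $\Psi\in\Nth\subseteq\Ne$ and $\Theta>0$ forces $\NPs=\infty$ (indeed by part \eqref{eq:subexp} the condition $\Psi\in\Nth$ is incompatible with the finite-$\NPs$ regime, which requires $\dem=0$ but only gives algebraic decay, whereas $\Nth$ demands exponential decay), part \eqref{it:smooth} applies with $n=0$ and gives the absolutely convergent representation
\begin{equation*}
\pPs(x)=\frac{\phn(0)}{2\pi i}\int_{a-i\infty}^{a+i\infty}x^{-z}\MPs(z)\,dz,\qquad x>0.
\end{equation*}
The plan is to extend this to complex $x$ in the sector $S_\Theta=\curly{z\in\Cb:\abs{\arg z}<\Theta}$. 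Writing $x=re^{i\varphi}$ with $\abs{\varphi}<\Theta$, the integrand becomes $r^{-z}e^{-i\varphi z}\MPs(z)$, and along the line $\Re(z)=a$ we have $\abs{x^{-z}}=r^{-a}e^{\varphi\,\Im(z)}=r^{-a}e^{\varphi b}$ with $z=a+ib$. The definition \eqref{eq:polyclass1} gives, for every $\varepsilon>0$, that $\abs{\MPs(a+ib)}\leq C_\varepsilon e^{-(\Theta-\varepsilon)\abs{b}}$ for all large $\abs{b}$. Hence $\abs{x^{-z}\MPs(z)}\leq C_\varepsilon r^{-a} e^{\varphi b-(\Theta-\varepsilon)\abs{b}}$, and since $\abs{\varphi}<\Theta$ one may choose $\varepsilon$ small enough that $\varphi b-(\Theta-\varepsilon)\abs{b}\leq -\delta\abs{b}$ for some $\delta>0$; thus the integral converges absolutely and locally uniformly in $(r,\varphi)$ on any compact subset of $S_\Theta$.

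Next I would check holomorphy. Since $z\mapsto x^{-z}=e^{-z\log x}$ is entire in $x$ for each fixed $z$ on the contour (choosing the principal branch of $\log$, which is holomorphic on $S_\Theta\subseteq\Cb\setminus\lbrb{-\infty,0\rbb}$), and since the above estimate provides a dominating function integrable in $b$ that is locally uniform in $x\in S_\Theta$, Morera's theorem together with Fubini (or differentiation under the integral sign) shows that $x\mapsto \frac{\phn(0)}{2\pi i}\int_{a-i\infty}^{a+i\infty}x^{-z}\MPs(z)\,dz$ is holomorphic on $S_\Theta$. This holomorphic function agrees with $\pPs$ on $\Rp=S_\Theta\cap\Rb$ by part \eqref{it:smooth}, so it is the asserted holomorphic extension.

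The only genuine subtlety—the step I expect to be the main obstacle—is making sure part \eqref{it:smooth} is actually applicable, i.e.\ that the Mellin--Barnes representation with $n=0$ holds, and more importantly that the sector of holomorphy is \emph{exactly} $S_\Theta$ rather than something smaller. The first point is handled by observing $\Nth\subseteq\NpNP$ with $\NPs=\infty$ (equivalently, exponential decay beats any polynomial), so the hypothesis $\NPs>\frac12$ of \eqref{it:smooth} is satisfied and in fact $\pPs\in\mathrm{C}^\infty_0\lbrb{\Rp}$. The second point is the reason the argument needs the full strength of \eqref{eq:polyclass1}: the exponential rate $\Theta$ in the decay of $\abs{\MPs(a+ib)}$ must match the opening half-angle of the sector, and this is precisely the content of the estimate $\varphi b-(\Theta-\varepsilon)\abs b\le -\delta\abs b$ for $\abs\varphi<\Theta$. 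One should also note that the value of $a$ is immaterial: by Cauchy's theorem and the decay of $\abs{\MPs}$ on horizontal segments (which tends to $0$ as $\Im(z)\to\pm\infty$, uniformly for $\Re(z)$ in compacts of $\lbrb{\ap\mathbb{I}_{\{\dphp=0\}},1-\dphn}$, again by \eqref{eq:polyclass1}), the integral is independent of the admissible choice of $a$, so the extension is well defined and unambiguous.
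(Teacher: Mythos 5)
Your proof is correct and takes essentially the same approach as the paper, which compresses the whole argument into a single sentence ("the analyticity... follows by a utilization of the dominated convergence theorem in the simple Mellin inversion," justified by the exponential decay of $\abs{\MPsi}$): you have simply filled in the standard details — writing $\abs{x^{-z}}=r^{-a}e^{\varphi b}$ for $x=re^{i\varphi}$, dominating this by the $e^{-(\Theta-\varepsilon)\abs{b}}$ decay whenever $\abs{\varphi}<\Theta$, and invoking Morera. One small slip to correct: the inclusion $\Nth\subseteq\Ne$ does not hold in general, since the exponential decay of $\abs{\MPs}$ says nothing about $\phn(0)>0$; this is harmless here because the hypothesis already supplies $\Psi\in\Nc$ explicitly.
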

  {\emph{This theorem is proved in Section \ref{sec:EFLP}.}}
  \begin{remark}\label{rem:asympMPsi}
  	Item \eqref{it:smooth}  confirms the conjecture that $\pPs\in\ccoiRp$ when either $\sigma^2>0$ and/or $\int_{-\infty}^{\infty}\Pi(dy)=\infty$ hold in \eqref{eq:lk0} or equivalently $\xi$ has infinite activity of the jumps. Indeed, from Theorem \ref{thm:asympMPsi}\eqref{it:decayA}, under each of these conditions $\Psi\in\Ni$, where
  	\begin{equation}\label{eq:Ni}
  	\Ni=\Npi\cap\Nc.
  	\end{equation}
  	The surprising fact is that $\Psi\in\Ni$ and hence $\pPs\in\ccoiRp$ even when $\xi$ is a pure compound Poisson process with a non-positive drift, whereas if the drift is positive then $\Psi\in\NepN,\,\NPs<\infty$. Finally, if $\phn\equiv 1$, $\php(z)=q+z,\,q>0,$ then $\xi_t=t$ is killed at rate $q$ and $\PIp(x)=1-\lbrb{1-x}^q,\,x\in\lbrb{0,1}$, which since $\NPs=q$, see \eqref{eq:NPs}, shows that the claim $\PIp \in\mathrm{C}^{\lceil\NPs\rceil-1}_0\lbrb{\Rp}$ is sharp unless $q\in\N$. In the latter case evidently $\pPs\in\Ac_{\lbrb{-\infty,\infty}}$.
  \end{remark}	
  \begin{remark}\label{rem:asympMPsi2}
  	Let $\php(z)=z\in\BP$ and $\phn(0)>0$ so that $\Psi(z)=z\phn(z)\in\Nc$. Then
  	$\IPsi=\IntOI e^{-\xi_t}dt$ is a self-decomposable random variable, see \cite[Chapter 5]{Patie-Savov-16}. The rate of decay of the Fourier transform of $\IPsi$ is $\lambda$ in the notation of \cite{Sato-Yam-78}. One can check that $\lambda=\NPs$.
  \end{remark}
  \begin{remark}\label{rem:genRemark}
  	If $\dphp=0$, \eqref{eq:MIPsi}  evaluates the moments $\Ebb{\IPsi^{-a}}$ for $a\in\lbrb{0,1-\aphp}$. This extends \cite[Proposition 2]{Bertoin-Yor-02}, which gives the entire negative moments of $\IPsi$ when $\Ebb{\xi_1}=\Psi'\!\lbrb{0^+}\in\lbrb{0,\infty}$ and $\aphp=-\infty$.
  \end{remark}
  \begin{remark}\label{rem:Maulik}
  	The proof of \eqref{eq:MIPsi} shows that, provided $\phn(0)>0$,  $\MPsi$ is the unique solution  to \eqref{eq:fe} in the space of Mellin transforms of random variables. When $\dphn<0$ this is  shown for $\Psi\in\Nc\setminus\Nc_\dagger$ in \cite{Maulik-Zwart-06} and for $\Psi\in\Nc$ in \cite{Arista-Rivero-16}. In the latter cases the recurrent equation \eqref{eq:fe} holds at least on the strip  $\Cb_{\lbrb{0,-\dphn}}$.
  \end{remark}
Item \eqref{it:smallExp} of Theorem \ref{cor:smoothness} can be refined as follows.
\begin{corollary}\label{cor:smallExp}
	Let $\Psi\in\Nc_\dagger$, $\abs{\aphp}=\infty$ and $-\tphp\notin \N$. Then $-\Psi(0)>0$ and
	\[ \PIp(x)  \stackrel{0}{\approx}  -\Psi(0)\sum_{k=1}^{\infty}\frac{\prod_{j=1}^{k-1}\Psi(j)}{k!}x^{k}\]
	is the asymptotic expansion of $\PIp$ at zero in the sense  that, for any $N \in \N$,
	\[\PIp(x)+\Psi(0)\sum_{k=1}^{N}\frac{\prod_{j=1}^{k-1}\Psi(j)}{k!}x^{k}\stackrel{0}{=}\sospace{x^{N}}.\]
	The asymptotic expansion cannot be a convergent series for any $x>0$ unless $\php(z)=\php(\infty)\in\lbrb{0,\infty}$ or $\php(z)=\php(0)+\dep,\dep>0,$ and $\phn(\infty)<\infty$. Then in the first case it converges for $x<\frac{1}{\php(\infty)\dem}$ and diverges for $x>\frac{1}{\php(\infty)\dem}$, and in the second it converges for $x<\frac{1}{\phn(\infty)\dep }$ and diverges for $x>\frac{1}{\phn(\infty)\dep }$.
\end{corollary}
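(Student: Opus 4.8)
The plan is to bootstrap from Theorem \ref{cor:smoothness}\eqref{it:smallExp}, which already supplies, for every positive integer $M<\Ntt_+$ and every $a\in\lbrb{\lbrb{-M-1}\vee\lbrb{\aphp-1},-M}$, the exact identity
\begin{equation*}
\PIp(x)=-\Psi(0)\sum_{k=1}^{M}\frac{\prod_{j=1}^{k-1}\Psi\lbrb{j}}{k!}x^{k}+\frac{\phn(0)}{2\pi i}\int_{a-i\infty}^{a+i\infty}x^{-z}\MPs(z+1)dz,
\end{equation*}
obtained by taking $n=0$ there. Under the standing hypotheses $\Psi\in\Nc_\dagger$, $\abs{\aphp}=\infty$ and $-\tphp\notin\N$, Theorem \ref{thm:FormMellin} tells us that $\aP=0$ is excluded (since $\dphp=\aphp=-\infty$), that $\MPs\in\Mtt_{\intervalII}$ is in fact meromorphic on the whole plane with simple poles exactly at the negative integers $-n$, $n\in\N$, and that the residue of $\MPs$ at $-n$ equals $\php(0)\frac{\prod_{k=1}^{n}\Psi(k)}{n!}$; equivalently the residue of $z\mapsto\MPs(z+1)$ at $-(n+1)$ is this same quantity. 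Hence $M$ above may be taken arbitrarily large. The first step is therefore to record that the contour integral remainder term $R_M(x):=\frac{\phn(0)}{2\pi i}\int_{-M-1/2-i\infty}^{-M-1/2+i\infty}x^{-z}\MPs(z+1)dz$ satisfies $R_M(x)\stackrel{0}{=}\so{x^{M}}$: indeed on the line $\Re(z)=-M-\tfrac12$ one has $\abs{x^{-z}}=x^{M+1/2}$, and $\int_{\R}\abs{\MPs(-M-\tfrac12+ib+1)}db<\infty$ by the decay estimates of Theorem \ref{thm:asympMPsi} — here one uses that $\Psi\in\Nc_\dagger$ forces $\phn(0)>0$, hence (by Theorem \ref{thm:asympMPsi}\eqref{it:decayA} and the remark that $\NPs=\infty$ unless $\dep>0,\dem=0,\PP(0)<\infty$) enough polynomial decay of $\abs{\MPs}$ along every vertical line to make the integral converge absolutely and be uniformly bounded in $M$ up to a harmless shift. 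This establishes that $-\Psi(0)\sum_{k\geq1}\frac{\prod_{j=1}^{k-1}\Psi(j)}{k!}x^{k}$ is the asymptotic expansion of $\PIp$ at $0$ in the claimed Poincaré sense.

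The second step addresses convergence versus divergence of the formal series as a genuine power series. Its radius of convergence is governed by $\limsup_{k}\big(\frac{1}{k!}\prod_{j=1}^{k-1}\abs{\Psi(j)}\big)^{1/k}$. Using $\Psi(j)=-\php(-j)\phn(j)$ and the sub/super-multiplicative growth of Bernstein functions along the positive axis — namely $\phi(k)\asymp \phi(0)+\dr k+k\int_0^\infty e^{-ky}\mubar{y}dy$ from \eqref{eq:phi0} — one sees that $\phn(j)$ grows at most linearly (exactly linearly iff $\dem>0$, else sublinearly), while $\php(-j)$ is where the combinatorics live: $\prod_{j=1}^{k-1}\php(-j)$ is comparable, via $\php(-j)=\php(0)+\dep(-j)+\cdots$, to behaviour that makes $\frac{1}{k!}\prod_{j=1}^{k-1}\abs{\Psi(j)}$ decay geometrically precisely when $\php$ is affine, i.e. $\php(z)=\php(\infty)$ constant or $\php(z)=\php(0)+\dep z$ with $\dep>0$; in all other cases $\php$ grows faster than any linear rate is compensated by $k!$ in the denominator only partially, producing a zero radius of convergence. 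I would make this rigorous by splitting into the two affine cases and the complementary case, computing $\lim_k \php(-k)/k$ type ratios explicitly, and in the two affine subcases identifying the constant: $\prod_{j=1}^{k-1}\Psi(j)\sim c\,(\php(\infty)\dem)^{k}\,k!$ in the first, giving radius $\frac{1}{\php(\infty)\dem}$, and $\prod_{j=1}^{k-1}\Psi(j)\sim c\,(\phn(\infty)\dep)^{k}\,k!$ in the second, giving radius $\frac{1}{\phn(\infty)\dep}$ — the factor $k!$ cancelling the $1/k!$ in the series.

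The final step is to confirm that in these two special cases the convergent power series actually represents $\PIp(x)$ for $x$ below the stated radius, and that $\PIp$ is genuinely not analytic beyond it. For the former, in each affine subcase $\MPs(z+1)$ is an explicit ratio of gamma functions (a Beta-type kernel), so pushing the contour in Theorem \ref{cor:smoothness}\eqref{it:smallExp} all the way to $\Re(z)\to-\infty$ and collecting every residue yields the full series, the pushed contour integral vanishing because the gamma-ratio decays super-exponentially in $\Re(z)$; alternatively one invokes \eqref{it:supp} of Theorem \ref{cor:smoothness}, which identifies $\supp\,\IPsi$ with $\lbbrbb{0,\frac{1}{\phn(\infty)\dep}}$ (resp. an analogous interval), so the distribution function cannot be real-analytic across the endpoint of its support, forcing divergence there. \textbf{The main obstacle} I anticipate is the second step: extracting sharp two-sided asymptotics of $\prod_{j=1}^{k-1}\abs{\Psi(j)}$ from only the integral representation \eqref{eq:phi0} of the Wiener–Hopf factors, in particular pinning down the exact exponential constants $\php(\infty)\dem$ and $\phn(\infty)\dep$ in the affine cases and proving a clean zero-radius dichotomy otherwise; this requires careful use of the asymptotics of Bernstein functions at $+\infty$ (and of $\php$ along the negative half-line up to $\aphp=-\infty$) rather than any soft argument.
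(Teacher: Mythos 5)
Your overall approach is essentially the same as the paper's: obtain the Poincar\'e asymptotic expansion by shifting the contour in the representation from Theorem~\ref{cor:smoothness}\eqref{it:smallExp} with $M$ arbitrarily large (justified by $\aphp=-\infty$ and $-\tphp\notin\N$ making all the relevant residues available), then analyze the radius of convergence of the formal power series via the asymptotics of $\prod_{j<k}\abs{\Psi(j)}$. The paper's proof is precisely this, and it carries out your ``second step'' quite directly from the explicit formula $\php(-j)=\php(0)-\dep j-\int_0^\infty(e^{jy}-1)\mu_{\pls}(dy)$: if $\mu_{\pls}\not\equiv 0$ the integral grows super-polynomially (hence the radius is $0$), and if $\mu_{\pls}\equiv 0$ one lands exactly in your two affine sub-cases. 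So the ``main obstacle'' you anticipated is in fact handled by a one-line asymptotic, and your affine dichotomy is equivalent to the paper's condition $\mu_{\pls}\equiv 0$.

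A few inaccuracies worth correcting. (i) Your parenthetical ``$\aP=0$ is excluded (since $\dphp=\aphp=-\infty$)'' is backwards: $\Psi\in\Nc_\dagger$ forces $\php(0)>0$, hence $\tphp<0$, hence $\dphp=\tphp<0$ and thus $\aP=\aphp\mathbb{I}_{\{\dphp=0\}}=0$; the meromorphic continuation over $\Cb_{(-\infty,1-\aphn)}$ with poles at the nonpositive integers still holds, so this does not damage the argument, but the reasoning was off. (ii) In the second affine case you never state the hypothesis $\phn(\infty)<\infty$, yet you use it implicitly when claiming the product is comparable to $(\phn(\infty)\dep)^{k}k!$; without it the radius is zero even when $\php$ is affine, and the corollary's statement explicitly includes this condition. (iii) Your ``final step'' about the series actually representing $\PIp$ below the radius is not part of the claim — the corollary only concerns convergence and divergence of the formal series — and the appeal to Theorem~\ref{cor:smoothness}\eqref{it:supp} does not quite fit: that item gives a bounded support only when $\phn$ is constant, whereas here the hypothesis is merely $\phn(\infty)<\infty$. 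Dropping step 3 entirely brings you to the paper's proof.
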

\begin{remark}\label{rem:smallExp}
	When $\php(z)=\php(\infty)\in\lbrb{0,\infty}$ it is known from \cite[Corollary 1.3]{Patie-Savov-11} and implicitly from \cite{Pardo-Rivero-Scheik-13} that the asymptotic expansion is convergent if and only if $x<\frac{1}{\php(\infty)\dem}$. Therefore, the series is convergent on $\Rp$ if and only if $\dem=0$.
\end{remark}
\subsubsection{Large asymptotic behaviour of the distribution of $\IPsi$  and its successive derivatives.}\label{subsec:asymp}
We introduce the well-known \textit{non-lattice subclass} of $\Bc$ and $\Nc$. Recall that
\begin{equation}\label{eq:zerosPsi}
\Zc_0\!\lbrb{\Psi}=\curly{z\in i\R:\,\Psi(-z)=0}=\curly{z\in i\R:\,\Psi(z)=0},
\end{equation}
where the latter identity follows from $\overline{\Psi(z)}=\Psi(\overline{z})$, see \eqref{eq:lk0}.
For any $\Psi\in\Nc,\,\phi\in\Bc$ set
\[\Psi^\sharp(z)=\Psi(z)-\Psi(0)\, \text{ and }\, \phi^\sharp(z)=\phi(z)-\phi(0).\]
Then the \textit{non-lattice subclass} is defined as follows
\begin{equation}\label{eq:nonlatticeclass}
\begin{split}
\Psi\in \Nc_{\Zc}&\iff \Zc_{0}\!\lbrb{\Psi^\sharp}=\lbcurlyrbcurly{0}.
\end{split}
\end{equation}
We note that we use the terminology \textit{non-lattice class} since the underlying \LLPs does not live on a sublattice of $\Rb$, e.g. if $\Psi\in\Nc_\Zc$ then the support of the distribution of the underlying \LLP $\xi$ is either $\Rb$ or $\Rp$. If $\tphn\in\lbrb{-\infty,0}$, see \eqref{eq:thetaphi}, we introduce the \textit{weak non-lattice} class as follows
\begin{equation}\label{eq:weaknonlattice}
\begin{split}
\Psi\in\Nc_\Wc&\iff \tphn\in\lbrb{-\infty,0}\text{ and }\exists \: k\,\in\N \textrm{ such that~} \,\liminfi{|b|}|b|^k\abs{\Psi\!\lbrb{\tphn+ib}}>0\\
&\iff \tphn\in\lbrb{-\infty,0}\text{ and }\exists \: k\,\in\N \textrm{ such that~}\,\liminfi{|b|}|b|^k\abs{\phnspace\lbrb{\tphn+ib}}>0.
\end{split}	
\end{equation}
We note that the \textit{weak non-lattice} class seems not to have been introduced in the literature yet.
Clearly, $\Nc\setminus\Nc_\Zc\subseteq \Nc\setminus\Nc_\Wc$ since $\Psi^\sharp\in\Nc\setminus\Nc_\Zc$ vanishes on $\curly{2\pi k i/h:\,k\in\Zb}$, where $h>0$ is the span of the lattice that supports the distribution of the conservative \LL process $\xi^{\sharp}$. Indeed the latter ensures that for any $l\in\Zb$
\begin{equation*}
\begin{split}
e^{\Psi^\sharp\lbrb{\tphn+\frac{2\pi l i}{h}}}=\Ebb{e^{\lbrb{\tphn+\frac{2\pi l i}{h}}\xi_1^{\sharp}}}=\Ebb{e^{\tphn\xi_1^{\sharp}}}=e^{\Psi^\sharp(\tphn)}=e^{-\Psi(0)},
\end{split}
\end{equation*}
where the last identity follows since from \eqref{eq:WH1} and \eqref{eq:thetaphi} we get that \[\Psi(\tphn)=-\phn(\tphn)\php(-\tphn)=0.\]
Therefore, $\Psi^\sharp\!\lbrb{\tphn+\frac{2\pi l i}{h}}=-\Psi(0)$ and hence from the definition of $\Psi^\sharp$ we arrive at the identity $\Psi\!\lbrb{\tphn+\frac{2\pi l i}{h}}=0$. Since the latter is valid for any $l\in\N$, \eqref{eq:weaknonlattice} cannot be satisfied for any $k\in\N$ and hence $\Nc\setminus\Nc_\Zc\subseteq \Nc\setminus\Nc_\Wc$.

 We phrase our first main result which  encompasses virtually all exponential functionals. We write throughout, for $x\geq 0$,
\[\overline{F}_{\Psi}(x)=1-\PIoPs{x}=\Pbb{\IPsi>x}\]
for the tail of $\IPsi$.
\begin{theorem}\label{thm:largeAsymp}
	Let $\Psi\in \Nc$.
	\begin{enumerate}
		\item \label{it:dPsi}
		In all cases
		\begin{align}\label{eq:logLargedPs}
		\limi{x}\frac{\ln\overline{F}_{\Psi}(x)}{\ln x}=\dphn.
		\end{align}	
		\item \label{it:Cramer} \label{it:Cramer1}\label{it:Cramer2}
		If in addition $\Psi\in\Nc_\Zc$, $\dphn=\tphn\in \lbrb{-\infty,0}$ and $\abs{\Psi'\!\lbrb{\tphn^+}}<\infty$  then
		\begin{equation}\label{eq:tailCramer}
		\overline{F}_{\Psi}(x) \simi \frac{\phn(0)\Gamma\!\lbrb{-\tphn}\Wpnspace\lbrb{1+\tphn}}{\phn'\!\lbrb{\tphn^+}\Wppspace\lbrb{1-\tphn}} x^{\tphn}.
		\end{equation}
		Moreover, if $\Psi\in \PcNinf\cap\Nc_\Wc$ (resp.~$\Psi\in \Nn,\,\NPs<\infty$) and $\abs{\Psi''\!\lbrb{\tphn^+}}<\infty$ then for every $n\in\Nb$ (resp.~$n\leq \lceil\NPs\rceil-2$)
		\begin{equation}\label{eq:densityCramer}
		\pPs^{(n)}\!(x) \simi \lbrb{-1}^{n}\frac{\phn(0)\Gamma\!\lbrb{n+1-\tphn}\Wpnspace\lbrb{1+\tphn}}{\phn'\!\lbrb{\tphn^+}\Wppspace\lbrb{1-\tphn}}x^{-n-1+\tphn}.
		\end{equation}
	\end{enumerate}
\end{theorem}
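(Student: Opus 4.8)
The plan is to treat the two items by different tools: item \eqref{it:dPsi} by a combination of moment bounds and a change of measure, and item \eqref{it:Cramer} by Mellin inversion together with a residue computation.

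For \eqref{it:dPsi}, the bound $\limsupi{x}\ln\PIPs{x}/\ln x\le\dphn$ is routine: by \eqref{eq:MIPsi} one has $\Ebb{\IPsi^{r}}=\MPsi(r+1)<\infty$ for every $r<-\dphn$, so Markov's inequality gives $\PIPs{x}\le x^{-r}\Ebb{\IPsi^{r}}$, and letting $r\uparrow-\dphn$ finishes it. For the matching lower bound I would use the renewal-type identity $\IPsi\stackrel{d}{=}\int_0^te^{-\xi_s}ds+e^{-\xi_t}\widehat{\IPsi}$, valid for each fixed $t>0$ with $\widehat{\IPsi}$ an independent copy of $\IPsi$ (in the killed case one sets $e^{-\xi_t}:=0$ past the killing time). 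It yields, for any $a$ with $\Pbb{\IPsi>a}>0$, the bound $\PIPs{x}\ge\Pbb{\IPsi>a}\,\Pbb{\xi_t\le\ln a-\ln x}$, so it suffices to bound $\Pbb{\xi_t\le-y}$ below with exponential rate $\dphn$. This I would do by an Esscher transform at a level $u>\aphn$, taken arbitrarily close to $\aphn$ when $\dphn=\aphn$, and equal to $\tphn$ when $\dphn=\tphn$ (legitimate because $\Psi(\tphn)=0$ makes the tilted process conservative and drifting to $-\infty$); optimising over $t$, which turns out to be of order $\ln x$, gives $\PIPs{x}\gtrsim x^{\dphn}/\mathrm{poly}(\ln x)$ and hence $\liminfi{x}\ln\PIPs{x}/\ln x\ge\dphn$. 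The one subtlety is that the crude inequality $e^{-u\xi_t}\ge e^{uy}$ would cost a spurious factor, so $\xi_t$ must be localised to within $o(y)$ of $-y$ under the tilted law, which the strong law of large numbers provides.

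Item \eqref{it:Cramer} follows from Mellin inversion and a rightward contour shift. Since $\int_0^\infty x^{s-1}\PIPs{x}dx=\MPsi(s+1)/s$ for $0<\Re(s)<-\tphn$, we have $\PIPs{x}=\frac{1}{2\pi i}\int_{c-i\infty}^{c+i\infty}x^{-s}\MPsi(s+1)s^{-1}ds$ with $0<c<-\tphn$, and I would move the line of integration to $\Re(s)=c'$ with $-\tphn<c'<1-\tphn$. Writing $\MPsi(s+1)=\phn(0)\Gamma(s+1)\Wpn(-s)/\Wpp(s+1)$ via \eqref{eq:MIPsi}, the functional equation $\Wpn(w)=\Wpn(w+1)/\phn(w)$ together with $\phn(\tphn)=0$ and $\phi'_{\mis}(\tphn^+)\in(0,\infty)$ --- which is exactly the hypothesis $\abs{\Psi'(\tphn^+)}<\infty$, since $\Psi'(\tphn^+)=-\php(-\tphn)\phi'_{\mis}(\tphn^+)$ --- shows that $\Wpn$ has a simple pole at $\tphn$ with residue $\Wpn(1+\tphn)/\phi'_{\mis}(\tphn^+)$. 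Hence in the strip $c\le\Re(s)\le c'$ the integrand has exactly one simple pole, at $s=-\tphn$, and computing its residue (using $\Gamma(1-\tphn)=-\tphn\,\Gamma(-\tphn)$ to absorb the $s^{-1}$ factor) delivers the constant in \eqref{eq:tailCramer}, while the shifted integral is $\bo{x^{-c'}}=\so{x^{\tphn}}$. The derivative asymptotics \eqref{eq:densityCramer} follow the same recipe, starting instead from the Mellin--Barnes formula \eqref{eq:MITd} for $\pPs^{(n)}$, shifting its contour past $1-\tphn$ and collecting the simple pole of $\MPs$ there; the extra gamma ratio evaluates at that pole to $\Gamma(n+1-\tphn)/\Gamma(1-\tphn)$, the denominator cancelling the $\Gamma(z)$ in $\MPs$ and leaving the $\Gamma(n+1-\tphn)$ of \eqref{eq:densityCramer}.

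What remains --- and this is where the hypotheses do their work and where the real effort lies --- is to justify the two shifts. One must first check that no pole is crossed apart from the real one: the remaining poles of $\Wpn(-\,\cdot\,)$ and the poles of $\Gamma$ lie strictly to the left of $s=-\tphn$, $\Wpp$ is zero-free on $\CbOI$, and, crucially, there is no pole off the real axis on the line $\Re(s)=-\tphn$; this is exactly why $\Psi\in\Nc_\Zc$ is assumed, since an Esscher transform at $\tphn$ is conservative and preserves the non-lattice property, so $\Psi(\tphn+ib)=0$, i.e.~$\phn(\tphn+ib)=0$, forces $b=0$. One must then show that the horizontal sides of the rectangle vanish and the integral on $\Re(s)=c'$ converges; for this I would invoke Theorem \ref{thm:asympMPsi}, which provides decay of $\abs{\MPs}$ along vertical lines strictly inside $(0,1-\dphn)$ at rate $\abs{b}^{-\NPs}$ (with $\NPs=\infty$ allowed, i.e.~$\Psi\in\PcNinf$), and transport this estimate just past $1-\dphn$ by \eqref{eq:fe}, which degrades the decay by at most one power. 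The extra factor $\Gamma(z+n)/\Gamma(z)\asymp z^{n}$ appearing for the $n$-th derivative consumes $n$ further orders, which is why one needs $\Psi\in\PcNinf$ for all $n$, or $\Psi\in\Nn$ with $n\le\lceil\NPs\rceil-2$ when $\NPs<\infty$; it is here too that the weak non-lattice condition $\Psi\in\Nc_\Wc$, namely $\liminfi{\abs{b}}\abs{b}^{k}\abs{\phn(\tphn+ib)}>0$, enters, being used to control $\Wpn$ on the shifted line through $\Wpn(1-z)=\Wpn(2-z)/\phn(1-z)$, while $\abs{\Psi''(\tphn^+)}<\infty$ controls the resulting error. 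The main obstacle is precisely this analytic bookkeeping near and just past the critical line $\Re=1-\dphn$, where the estimates of Theorem \ref{thm:asympMPsi} run out and must be pushed forward through \eqref{eq:fe} --- iterated when $\tphn<-1$ --- without losing integrability; for the tail \eqref{eq:tailCramer} the surplus factor $s^{-1}$ makes $\Psi\in\Nc_\Zc$ alone sufficient, and $\Nc_\Wc$ is needed only for the pointwise estimates \eqref{eq:densityCramer}.
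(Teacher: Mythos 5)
The proposal is sound in broad outline, but it has a genuine gap in the treatment of \eqref{eq:tailCramer}, and the route you take for item \eqref{it:dPsi} is materially different from the paper's. I'll address both.

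For \eqref{eq:tailCramer}, you propose a Mellin inversion with a rightward contour shift to $\Re(s)=c'\in(-\tphn,1-\tphn)$, and you assert that ``the surplus factor $s^{-1}$ makes $\Psi\in\Nc_\Zc$ alone sufficient.'' This is where the argument breaks. To justify the shift you need the integrand $x^{-s}\MPsi(s+1)/s$ to be integrable along $\Cb_{c'}$ and the horizontal sides to vanish, and neither is controlled by $\Nc_\Zc$. Writing, via \eqref{eq:fe}, $\MPs(1-\tphn+ib)=\frac{\tphn-ib}{\Psi(\tphn-ib)}\,\MPs(-\tphn+ib)$, the decay of $\abs{\MPs}$ on $\Cb_{1-\tphn}$ (and a fortiori on any line to its right) is governed by how slowly $\abs{\Psi(\tphn+ib)}=\abs{\php(-\tphn+ib)}\,\abs{\phn(\tphn+ib)}$ can tend to zero. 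Since $\phn(\tphn)=0$, without the weak non-lattice condition $\Nc_\Wc$ there is nothing to prevent $\abs{\phn(\tphn+ib)}$ from vanishing faster than any power of $\abs{b}$ along a sequence $\abs{b}\to\infty$, in which case $\abs{\MPs(1-\tphn+ib)}$ explodes and the shifted integral diverges; the single extra factor $\abs{s}^{-1}$ cannot compensate for an unbounded degradation. The paper's Proposition \ref{prop:extendDecay1} makes exactly this point: preserving decay across $\Cb_{1-\tphn}$ requires either $\Nc_\Wc$ (when $\NPs=\infty$) or $\NPs<\infty$ (in which case $\phn(\tphn+ib)\to\phn(\infty)>0$), and $\Nc_\Zc$ alone gives neither. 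The paper sidesteps this entirely for \eqref{eq:tailCramer}: because $\PIPs{\cdot}$ is monotone, a Wiener--Ikehara/Karamata Tauberian argument (they cite \cite[Lemma~4]{Rivero-05}) yields $\PIPs{x}\sim Cx^{\tphn}$ using only the simple pole and continuous extension of $s\mapsto\MPsi(s+1)/s$ to the critical line, with no decay hypotheses along vertical lines; the residue computation then identifies $C$. Your residue identification is correct, but the convergence of the shifted integral is the missing justification, and it cannot be supplied under the stated hypotheses. Note also that the assertion ``transport this estimate just past $1-\dphn$ by \eqref{eq:fe}, which degrades the decay by at most one power'' is not quite right either: the degradation is through $1/\abs{\Psi(\tphn+ib)}$, which is $\bo{\abs{b}^{k}}$ only granted $\Nc_\Wc$ with that $k$, and is $\bo{1}$ when $\NPs<\infty$ --- there is no a priori ``one power.''

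For item \eqref{it:dPsi}, your upper bound via Markov's inequality and your lower bound via the renewal identity $\IPsi\geq e^{-\xi_t}\widehat{\IPsi}$ combined with an Esscher transform constitute a genuinely different and more self-contained argument than the paper's. The paper instead proves the lower bound by thinning the large negative jumps of the L\'evy measure (replacing $\Pi(dr)$ by $e^{-\eta r^2\ind{r<-1}}\Pi(dr)$ to get $\Psime$), which forces $\Psime$ into the Cram\'er regime of item \eqref{it:Cramer}, uses the pathwise domination $\IPsi\geq I_{\Psime}$, applies \eqref{eq:tailCramer} to $\Psime$, and then lets $\eta\downarrow0$, showing $\thPsml\to\dphn$. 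The paper's route is deliberately parasitic on item \eqref{it:Cramer} and avoids any large-deviation bookkeeping; your route avoids that circularity but has to be careful in the case $\dphn=\aphn$ with $\tphn=-\infty$, where the tilt level $u$ cannot be taken at a root of $\Psi$ and the drift $\Psi'(u)$ may degenerate as $u\downarrow\aphn$ --- a point you flag but do not resolve. Both can work; the paper's is operationally simpler given that it already has \eqref{eq:tailCramer} in hand, while yours avoids deriving \eqref{eq:tailCramer} at all in order to prove \eqref{eq:logLargedPs}. Your treatment of \eqref{eq:densityCramer}, by contrast, is essentially the same as the paper's (contour pushed to the critical line, simple pole isolated by a small semi-circle, Riemann--Lebesgue on the line, $\Nc_\Wc$ plus $\abs{\Psi''(\tphn^+)}<\infty$ to control the estimates), and there is no objection there.
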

{\emph{This theorem is proved in Section \ref{subsec:Cramer}.}}
\begin{remark}\label{rem:Cramer}
	The requirement $\Psi\in \Nc_{\Zc}$, $\dphn=\tphn<0$ and $\abs{\Psi'\!\lbrb{\tphn^+}}<\infty$ is known as the Cram\'er's condition for the underlying \LL process $\xi$. In this case \[\limi{x}x^{-\tphn}\PIPs{x}=C\in\lbrb{0,\infty}\] or \eqref{eq:tailCramer} holds is known from \cite[Lemma 4]{Rivero-05}. Here, $C$ is explicit. We emphasize that the refinement of \eqref{eq:tailCramer} to derivatives of the tail, see \eqref{eq:densityCramer}, is due to the decay of $\abs{\MPsi\!(z)}$ along  $\Cb_{1-\tphn}$. The latter is an extension of \eqref{eq:NPs} in Theorem \ref{thm:asympMPsi} which is valid only when  $\Psi\in \Nc_\Wc$ and $\abs{\Psi''\lbrb{\tphn^+}}<\infty$. The latter is always satisfied if $\tphn>\aphn$. The requirements  $\Psi\in \Nc_\Wc$ and $\abs{\Psi''\!\lbrb{\tphn^+}}<\infty$ are not needed for the non-increasing function $\PIPs{x}$ since the Wiener-Ikehara theorem is applicable in this instance.
\end{remark}
\begin{remark}\label{rem:generalAsymp}
	Relation \eqref{eq:logLargedPs} strengthens \cite[Lemma 2]{Arista-Rivero-16} by quantifying the rate of the power decay of $\PIPs{x}$ as $x\to\infty$. Since $\IPsi$ is also a perpetuity with thin tails, see \cite{Goldie-Grubel-96}, we provide precise estimates for the tail behaviour of this class of perpetuities.
\end{remark}
\begin{remark}\label{rem:Alexey}
	Since the supremum of a stable \LL process is related to an exponential functional for which the asymptotic \eqref{eq:densityCramer} is valid, then our result recovers the main statements about the asymptotic of the density of the supremum of a stable \LL process and its derivatives that appear in \cite{Doney-Savov-10,Kuznetsov2011}.
\end{remark}
{Under specific conditions,  see \cite{Patie2012,Patie-Savov-11} for the class of (possibly killed) spectrally negative L\'evy processes and \cite{Kuznetsov2011, Kuznetsov13} for some special  instances, the density $\pPs$ can be expanded into a converging series of negative powers of $x$. This is achieved by a subtle pushing to infinity of the contour of the Mellin inversion when the meromorphic extension $\MPsi\in\Mtt_{\intervalOI}$ is available. Our Theorem \ref{thm:asympMPsi}, which ensures a priori knowledge for the decay of $\abs{\MPsi\!(z)}$, allows for various asymptotic expansions or evaluation of the speed of convergence of $\pPs$ at infinity as long as $\Psi\in \Nc_\Wc$ and $\MPsi$ extends to the right of $\Cb_{1-\tphn}$. However, for sake of generality, we leave this line of research aside.
	
	\subsubsection{Small asymptotic behaviour of the distribution of $\IPsi$ and its successive derivatives.}
	We proceed with  the small asymptotic behaviour.
	\begin{theorem}\label{thm:smallTime}
		Let $\Psi\in\Nc$.  Then
		\begin{equation}\label{eq:smallTime}
		\limo{x}\frac{\PIoPs{x}}x= -\Psi(0)
		\end{equation}
		with $\pPs(0^+)=-\Psi(0)$.
		Hence, if  $\pPs$ is continuous at zero or $\Psi\in\NepN$ with $\NPs>1$, we have that
		\begin{equation}\label{eq:smallTime1}
		\limo{x}\pPs(x)=\pPs(0)=-\Psi(0).
		\end{equation}
	\end{theorem}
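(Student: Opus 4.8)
The plan is to extract the small-$x$ behaviour of $\PIoPs{x}=\P(\IPsi\le x)$ directly from the Mellin-Barnes representation of $\pPs$ established in Theorem \ref{cor:smoothness}\eqref{it:smooth}, together with the residue of $\MPsi$ at its first pole to the left of the critical strip. The key fact is \eqref{eq:MIPsi}: $\MPsi(z)=\phn(0)\MPs(z)=\phn(0)\Gamma(z)W_{\phi_{\mis}}(1-z)/W_{\phi_{\pls}}(z)$, together with the recurrence \eqref{eq:fe}, which gives $\MPsi(z+1)=\frac{-z}{\Psi(-z)}\MPsi(z)$. Evaluating at $z\to 0$ and using $\MPsi(1)=\phn(0)\MPs(1)=\phn(0)$ and $\Gamma(z)\sim 1/z$ near $0$, one sees that $\MPsi(z)$ has a simple pole at $z=0$ with residue $\phn(0)\cdot\mathrm{Res}_{z=0}\Gamma(z)\cdot W_{\phi_{\mis}}(1)/W_{\phi_{\pls}}(0^+)$. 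Since $W_{\phi_{\mis}}(1)=1$ and $W_{\phi_{\pls}}(0^+)=\lim_{z\to0}W_{\phi_{\pls}}(z)$, and because near $0$ we have $\phi_{\pls}(0)\phi_{\mis}(0)=-\Psi(0)$ while $W_{\phi_{\pls}}(z)\sim 1/\phi_{\pls}(z)\to 1/\phi_{\pls}(0)$ when $\dphp=0$ (and $\phi_{\pls}(0)>0$ precisely on $\Nc_\dagger$), the residue of $\MPsi$ at $z=0$ equals $\phn(0)\phi_{\pls}(0)=-\Psi(0)$ when $\Psi(0)<0$, and equals $0$ (no pole, since $W_{\phi_{\pls}}$ has a pole cancelling $\Gamma$) when $\Psi(0)=0$. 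In the conservative case $\Psi(0)=0$ the asserted limit is $0$, consistent with this.

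Concretely, I would argue as follows. First, for $\Psi\in\Nc_\dagger$ (i.e. $\Psi(0)<0$) apply the Mellin inversion for $\PIoPs{x}$ itself — equivalently integrate \eqref{eq:MITd} with $n=0$, or use the standard identity that the Mellin transform of $x\mapsto\PIoPs{x}$ is $-\MPsi(z+1)/z$ on a suitable strip. Shifting the contour from $\Re(z)=a\in(0,1-\dphn)$ across the simple pole at $z=0$ of $-\MPsi(z+1)/z$, whose residue is $-\MPsi(1)=-\phn(0)$ — wait, more carefully: the Mellin transform of $\PIoPs{\cdot}$ is $\int_0^\infty x^{z-1}\PIoPs{x}dx$ which by Fubini equals $-\tfrac1z\MPsi(z+1)$ for $\Re(z)\in(-1,0)$ minus contributions, hence has a pole at $z=-1$ with residue $\MPsi(0^+)$'s analogue; the cleanest route is to use \eqref{eq:smallExpansion} of Theorem \ref{cor:smoothness}\eqref{it:smallExp} with $n=0$ and $M=1$, valid since $\Ntt_+>1$ can be arranged (or handled by a separate truncation when $\Ntt_+\le 1$), which gives directly $\PIoPs{x}=-\Psi(0)x+\text{(remainder integral on }\Re(z)=a<-1\text{ or }a<0\text{)}$, and the remainder is $\so{x}$ as $x\to0$ because the integrand is $x^{-z}$ times an absolutely integrable function of $\Im(z)$ on that line with $\Re(-z)>0$. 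This yields \eqref{eq:smallTime}. For the density statement, since $\PIp$ is absolutely continuous with density $\pPs$, the relation $\limo{x}\PIoPs{x}/x=-\Psi(0)$ together with monotonicity considerations (or the fundamental theorem of calculus applied to $\PIoPs{x}=\int_0^x\pPs(u)du$) forces $\pPs(0^+)=-\Psi(0)$ in the sense of the essential/Cesàro limit; if moreover $\pPs$ is continuous at $0$, or $\Psi\in\NepN$ with $\NPs>1$ so that $\pPs\in\mathrm{C}^{\lceil\NPs\rceil-1}_0(\Rp)\subseteq\mathrm{C}(\lbbrb{0,\infty})$ by \eqref{it:smooth}, the genuine limit $\pPs(0)=\pPs(0^+)$ holds.

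For the conservative case $\Psi(0)=0$ one must show $\limo{x}\PIoPs{x}/x=0$. Here $\MPsi$ has no pole at $z=0$ (the zero of $1/W_{\phi_{\pls}}$ at $z=0$, coming from $\phi_{\pls}(0)=0$, cancels the pole of $\Gamma$), so shifting the contour past $z=0$ leaves no residue and the integral bound gives $\PIoPs{x}=\so{x}$; alternatively, one exhibits $\PIoPs{x}\le\P(\int_0^{\mathbf{e}_1}e^{-\xi_s}ds\le x)$ by comparison with a killed process, or uses the elementary bound $\PIoPs{x}=\P(\int_0^\infty e^{-\xi_s}ds\le x)\le \P(\int_0^{1}e^{-\xi_s}ds\le x)\le\P(\inf_{s\le1}e^{-\xi_s}\le x)=\P(\sup_{s\le 1}\xi_s\ge\ln(1/x))\to 0$, and then refines the rate. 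The main obstacle I anticipate is the bookkeeping at the endpoint: making the contour shift rigorous requires the a priori decay of $\abs{\MPsi(a+ib)}$ as $\abs b\to\infty$ \emph{uniformly} for $a$ ranging over a small interval straddling $0$ (to justify Cauchy's theorem on the shifted rectangle), and handling the borderline sub-cases where $\aphp$ or $\tphp$ is small so that the representation \eqref{eq:smallExpansion} is only available with $M$ constrained; these are exactly the decay estimates supplied by Theorem \ref{thm:asympMPsi} and the analyticity statement \eqref{eq:domainAnalMPs}, so the proof is a matter of assembling them carefully rather than a new idea.
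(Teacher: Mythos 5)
Your identification of the target residue is correct: $\MPIsi(z)=-\MPsi(z+1)/z$ has a simple pole at $z=-1$ with residue $\php(0)\phn(0)=-\Psi(0)$, and extracting this via Mellin inversion is indeed the engine of the paper's argument. But the step you label ``bookkeeping at the endpoint'' is in fact the entire difficulty, and the devices you propose for it do not work.

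The fundamental obstruction is that for a generic $\Psi\in\Nc$ one has $\aphp=0$ (this is the rule, not the exception — it fails only when $\Psi$ has an upward exponential moment), and then $\MPsi\in\Ac_{(0,1-\dphn)}\cap\Mtt_{(\aphp,1-\aphn)}$ gives $\MPIsi$ a domain of meromorphy ending exactly at $\Re(z)=-1$. There is no ``small interval straddling'' the pole, and the contour cannot be shifted past $z=-1$ at all: there is nothing beyond the wall. For the same reason, invoking \eqref{eq:smallExpansion} with $M=1$ is not available: the hypothesis of Theorem \ref{cor:smoothness}\eqref{it:smallExp} requires a positive integer $M<\Ntt_+$, and when $\aphp=0$ with $\abs{\tphp}\notin\N$ one has $\Ntt_+=\lceil\abs{\aphp}+1\rceil=1$, so there is no admissible $M\ge 1$. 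You cannot ``arrange'' $\Ntt_+>1$; it is determined by $\php$. The paper resolves this by working on the closure of the strip with the semi-circle contour $B^+(-1,c)$ (never crossing $\Re(z)=-1$) and by the decomposition $\MPIsi=H_{\Psi,1}+H_{\Psi,2}+H_{\Psi,3}$ in \eqref{eq:M^*decomp}: the first piece, $\php(0)\phn(0)/(z+1)$, yields $-\Psi(0)x$ directly; the third is regular at $-1$; and the second piece, $\php^\sharp(z+1)\MPIssi(-1)/(z+1)$, is the real obstacle. When $\php'(0^+)=\infty$ this term is unbounded near $z=-1$ and is \emph{not} controlled by naive estimates on $\abs{\MPsi}$ — the paper has to split the L\'evy measure $\mubrp$ at level $1$ and carry out an explicit computation reducing to $\int\sin(y)/y\,dy$ to show the contribution is $\so{x}$. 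None of this is a corollary of the decay estimates in Theorem \ref{thm:asympMPsi}; it is a separate oscillatory-integral argument.

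Your fallback for the case $\Psi(0)=0$ is also insufficient. The crude bound $\PIoPs{x}\le\Pbb{\sup_{s\le 1}\xi_s\ge\ln(1/x)}$ shows $\PIoPs{x}\to 0$ but gives no rate; when $\aphp=0$ the tail of $\sup_{s\le 1}\xi_s$ decays sub-exponentially, so $\Pbb{\sup_{s\le 1}\xi_s\ge\ln(1/x)}$ decays slower than any power of $x$, and in particular is not $\so{x}$. The paper does not need a separate argument here: in the decomposition \eqref{eq:M^*decomp}, $H_{\Psi,1}\equiv 0$ when $\php(0)=0$, and the remaining pieces are shown to be $\so{x}$ by the same estimates as above, uniformly over $\Psi\in\Nc$. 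In short, your proposal is correct on the identity being established and on where the constant $-\Psi(0)$ comes from, but it omits the one genuinely non-trivial step and the workarounds it offers in its place do not close the gap.
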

	{\emph{This theorem is proved in Section \ref{subsec:smallTime}.}}
	\begin{remark}\label{rem:smallTime}
		When $\Psi(0)<0$, the limit \eqref{eq:smallTime}  has been proved in \cite[Theorem 7(i)]{Arista-Rivero-16} using the non-trivial \cite[Theorem 2.5]{Pardo-Rivero-Scheik-13}.
		We note that \eqref{eq:smallTime1} is a new result for general exponential functionals $\IPsi$. For exponential functionals based on an increasing \LLP \eqref{eq:smallTime1} has already appeared in \cite[Theorem 2.5]{Pardo-Rivero-Scheik-13} without any assumptions. This can be deducted from \eqref{eq:smallTime} if $\pPs$ is continuous at zero. When $\NPs\leq 1$ then from \eqref{eq:NPs} in Theorem \ref{thm:asympMPsi} it is valid that $\mubarpspace{0}\leq\dep<\infty$.  Then  from its functional equation in \cite[Theorem 2.4]{Pardo-Rivero-Scheik-13} $\pPs$ is continuous at zero  and  $\pPs(0)=-\Psi(0)$ from \eqref{eq:smallTime}.
	\end{remark}
\begin{remark}\label{rem:SN}	
	Denote by $\Nc_-=\curly{\Psi\in\Nc:\,\php(z)=\php(0)+\dep z}$ the class of \LLK exponents of  the so-called spectrally negative \LL processes, that is \LLPs that do not jump upwards. Assume that $\php(0)=0$. Then $\Psi\lbrb{z}=z\phn(z)$ and if $\phnspace\lbrb{\infty}=\infty$ the small-time asymptotic of $\pPs$ and its derivatives, according to \cite{Patie-Savov-16} reads off, with $\varphi_{\mis}$ such that $\phn(\varphi_{\mis}(u))=u$, as follows
	\begin{equation}\label{eq:Tauberian}
	\pPs^{(n)}\!\!\lbrb{x}\simo \frac{C_{\phn}\phn(0)}{\sqrt{2\pi}}\frac{\varphi^n_{\mis}\!\lbrb{\frac{1}{x}}}{x^{n+1}}\sqrt{\varphi'_{\mis}\!\lbrb{\frac{1}{x}}}e^{-\int_{\phn(0)}^{\frac{1}{x}} \varphi_{\mis}(r)\frac{dr}{r}}.
	\end{equation}
	If $\phnspace\lbrb{\infty}<\infty$ then according to Theorem \ref{cor:smoothness}\eqref{it:supp}  we have that $\supp \,\pPs\in\lbbrb{\frac{1}{\phn\lbrb{\infty}},\infty}$. Comparing the small asymptotic behaviour of $f_{\Psi}$ in \eqref{eq:Tauberian}  with the one in \eqref{eq:smallTime1}  reveals that  a simple killing of the underlying \LLP leads to a  drastic change in the small time asymptotic.
\end{remark}
\subsubsection{Finiteness of negative and positive moments, and asymptotic behaviour of the exponential functionals on a finite time horizon.}
For any $\Psi\in\overNc$ and $t\geq0$ let
\[ \IPsi(t)=\int_{0}^{t}e^{-\xi_s}ds.\]
We have the following claim which furnishes necessary and sufficient conditions for the finiteness of the negative and positive moments of $\IPsi(t)$.
\begin{theorem}\label{thm:momentsIt} 	Let $\Psi\in\overNc\setminus\Nc_\dagger$ that is $\Psi(0)=0$. 
	\begin{enumerate}
		\item\label{it:negative} Then, for any $t>0$, we have the following relations for the negative moments of $\IPsi(t)$
		\begin{eqnarray}
		\Ebb{\IPsi^{-a}(t)}<\infty &\impliedby& a\in\lbrb{0,1-\aphp},\label{eq:momentsIt}\\
		\Ebb{\IPsi^{-1+\aphp}\!(t)}<\infty&\iff&\abs{\Psi\!\lbrb{-\aphp}}<\infty\iff\abs{\php(\aphp)}<\infty,\label{eq:momentsIt1}\\
		\Ebb{\IPsi^{-1}(t)}<\infty &\iff& \abs{\Psi'(0^+)}<\infty,\label{eq:momentsIt2}\\
		\Ebb{\IPsi^{-a}(t)}=\infty &\impliedby& a>1-\aphp.\label{eq:momentsIt3}
		\end{eqnarray}
		Finally, we have that for any  $a\in\lbrb{0,1-\aphp}$
		\begin{equation}\label{eq:limitIt}
		\limo{t}t^{a}\Ebb{\IPsi^{-a}(t)}=1.
		\end{equation}
		\item \label{it:positive} Then, for any $t>0$, we have the following relations for the positive moments of $\IPsi(t)$
		\begin{eqnarray}
		\Ebb{\IPsi^{a}(t)}<\infty &\impliedby& a\in\lbrb{0,-\aphn},\label{eq:pmomentsIt}\\
		\Ebb{\IPsi^{-\aphn}\!(t)}<\infty&\iff&\abs{\Psi\!\lbrb{\aphn}}<\infty\iff\abs{\phn(\aphn)}<\infty,\label{eq:pmomentsIt1}\\
		\Ebb{\IPsi^{a}(t)}=\infty &\impliedby& a>-\aphn.\label{eq:pmomentsIt3}
		\end{eqnarray}
	\end{enumerate}

\end{theorem}
{\emph{ This theorem is proved in Section \ref{subsec:momentsIt}}}
\begin{remark}\label{rem:momentsIt}
	Results on the finiteness of $\Ebb{\IPsi^{-a}(t)}$ appear in \cite{Palau-Pardo-Smadi-16} where the authors limit their attention on the range $a\in\lbrb{0,-\aphp}$ which is substantially easier to prove via the relation \eqref{eq:finiteMoment} in the proof below.
\end{remark}
Next, we consider the case of conservative \LLPs such that $\liminfi{t}\xi_t=-\infty$ a.s.~ or equivalently $\phn(0)=0$.
In the setting of the next claim we use superscript $\dagger q$ for
\[\Psi^{\dagger q}(z)=\Psi(z)-q=-\php^{\dagger q}(-z)\phn^{\dagger q}(z),\,q\geq0,\]
and all related quantities. From Lemma \ref{lem:WH} and \eqref{eq:Fristed} we have that $\phn^{\dagger q}(z)=\kappa_{\mis}\lbrb{q,z}$, where $\kappa_{\mis}$ is associated to the descending ladder height process.
Recall that $RV_\alpha$ stands for the class of regularly varying functions of index $\alpha\in\R$ at zero. Then the following result elucidates the behaviour of the measures  $\Pbb{\IPsi(t)\in dx}$ as $t\to\infty$ in quite a general framework.
 \begin{theorem}\label{thm:Nc}
	Let $\Psi\in\overNc$ and $\phn(0)=0$.
	\begin{enumerate}
		\item\label{it:NC1} For any $x>0$, $1-\aphp>n\in \N$ and $a\in\lbrb{n,\max\curly{n+1, 1-\aphp}}, -a\notin\N,$ we get
		\begin{equation}\label{eq:It}
		\begin{split}
		\frac{e-1}{e}\limsupi{t}\frac{\Pbb{\IPsi(t)\leq x}}{\kappa_{\mis}\lbrb{\frac{1}{t},0}}
		&\leq \php^{\dagger \frac{1}{t}}(0)\sum_{k=1}^{n \wedge \Ntt_+}\frac{\abs{\prod_{j=1}^{k-1}\Psi\lbrb{j}}}{k!}x^{k}\\
		&\,\,+\frac{x^{a}}{2\pi }\int_{-\infty}^{\infty}\frac{\abs{\MPs(-a+1+ib)}}{\sqrt{a^2+b^2}}db,
		\end{split}	
		\end{equation}
		where we recall that $\kappa_{\mis}(q,0)=\phn^{\dagger q}(0)$, \[\Ntt_{+}=\abs{\tphp}\mathbb{I}_{\{\abs{\tphp} \in \N\}}+\left(\lceil |\aphp|+1\rceil\right)\mathbb{I}_{\{\abs{\tphp}\notin \N\}}\] and we use the convention $\sum_{1}^{0}=0$.
		\item\label{it:NC2} Let now $-\liminfi{t}\xi_t=\limsupi{t}\xi_t=\infty$ a.s.~or equivalently $\php(0)=\phn(0)=0$. Assume that \[\limi{t}\Pbb{\xi_t<0}=\rho\in\lbbrb{0,1},\] that is the Spitzer's condition holds. Then $\kappa_{\mis}(q,0)=\phn^{\dagger q}(0)\in RV_\rho$. Also,
		for any $a\in\lbrb{0,1-\aphp}$ and any $f\in\Ctt_b(\Rp)$
		\begin{equation}\label{eq:weak}
		\limi{t}\frac{\Ebb{\IPsi^{-a}(t)f\!\lbrb{\IPsi(t)}}}{\kappa_{\mis}\lbrb{\frac{1}{t},0}}=\IntOI f(x)\vartheta_a(dx),
		\end{equation}
		where $\vartheta_a$ is a finite positive measure on $\intervalOI$ such that,  for any $c\in\lbrb{a-1+\aphp,0}$ and any $x>0$,
		\begin{equation}\label{eq:V_a}
		\vartheta_a((0,x))=-\frac{x^{-c}}{2\pi \Gamma\lbrb{1-\rho}}\IntII x^{-ib}\frac{\MPs\!\lbrb{c+1-a+ib}}{c+ib}db.
		\end{equation}
		Finally
		\begin{equation}\label{eq:RVIPsi}
		\vartheta_a\lbrb{\Rp}= %\limi{t}\frac{\Ebb{\IPsi^{-a}(t)}}{\kappa_{\mis}(\frac{1}{t})}=
		\frac{1}{\Gamma\!\lbrb{1-\rho}}\frac{\Gamma(1-a)}{\Wppspace(1-a)}\Wpnspace\lbrb{a}.
		\end{equation}
		%Then $\IPsic^a\in \Lspace{1}{\R^+},\, a\in\lbrb{-1,0}, \iff \limi{t}\xi_t=-\infty$  and then
		%\begin{equation}\label{eq:integral}
		% ||\IPsic^a||_{\Lspace{1}{\R^+}}=\frac{1}{\phi_{\pls}(0)}\frac{\Gamma(1+a)}{\Wpp\lbrb{1+a}}\Wpn\lbrb{-a}.
		%\end{equation}
	\end{enumerate}
\end{theorem}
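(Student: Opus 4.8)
The plan is to treat item \eqref{it:NC1} and item \eqref{it:NC2} in parallel, both resting on the same two pillars: the identity $\MPsi(z)=\phn(0)\MPs(z)$ extended via the killed process $\Psi^{\dagger q}$, and the fact that $\IPsi(t)$ relates to $I_{\Psi^{\dagger q}}$ by killing at an independent exponential clock. Concretely, for $q>0$ one has $\Ebb{I_{\Psi^{\dagger q}}^{z-1}}=\phn^{\dagger q}(0)\MPs^{\dagger q}(z)$ by Theorem \ref{cor:smoothness}\eqref{it:MTfact}, and since $I_{\Psi^{\dagger q}}\stackrel{d}{=}\IPsi({\bf e}_q)$, conditioning on ${\bf e}_q$ gives $q\int_0^\infty e^{-qt}\Ebb{\IPsi^{z-1}(t)}dt=\phn^{\dagger q}(0)\MPs^{\dagger q}(z)$. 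This is the bridge between the time-$t$ functional and the Bernstein-gamma machinery. First I would establish this Laplace-transform-in-$t$ identity carefully, including the justification that $\phn^{\dagger q}(0)=\kappa_{\mis}(q,0)$ via Lemma \ref{lem:WH} and \eqref{eq:Fristed}, and record that $\aphp^{\dagger q}=\aphp$ since adding a constant $q$ to $\Psi$ does not change the domain of analyticity of $\php$.

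For item \eqref{it:NC1}, the strategy is a Mellin–Barnes inversion of $\MPs^{\dagger q}(z)$ shifted to the left of the imaginary axis, exactly as in Theorem \ref{cor:smoothness}\eqref{it:smallExp}: one writes $\P(I_{\Psi^{\dagger q}}\le x)=F_{\Psi^{\dagger q}}(x)$ as a sum of residues at the poles $-1,\dots,-(n\wedge\Ntt_+)$ (each residue being $\php^{\dagger q}(0)\frac{\prod_{j=1}^{k-1}\Psi^{\dagger q}(j)}{k!}x^k$ up to the $-\Psi^{\dagger q}(0)=q$ prefactor cancelling against the $1/q$ from the recurrence) plus a remainder integral on a line $\Re z=-a$. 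Bounding the remainder integral by $\frac{x^a}{2\pi}\int|\MPs^{\dagger q}(-a+1+ib)|(a^2+b^2)^{-1/2}db$ and then letting $q=1/t\to 0$ — using continuity/monotone control of $\MPs^{\dagger q}$ in $q$, which follows from the Weierstrass product for $\Wp$ — produces the stated bound, with the factor $\frac{e-1}{e}$ coming from the elementary inequality $\Pbb{{\bf e}_q\ge 1/q}=e^{-1}$ so that $\P(\IPsi(t)\le x)\le \frac{1}{1-e^{-1}}\P(\IPsi({\bf e}_{1/t})\le x)$ via monotonicity of $t\mapsto\IPsi(t)$. The prefactors $\prod_{j=1}^{k-1}\Psi(j)$ (not $\Psi^{\dagger q}(j)$) appear because one takes $q\to 0$ inside the finite sum. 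I would need to check that $\Psi^{\dagger q}(j)\to\Psi(j)$ is harmless and that the residue terms survive the limit, and to absorb $\php^{\dagger 1/t}(0)$ which stays outside the limit.

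For item \eqref{it:NC2}, under Spitzer's condition $\limi{t}\Pbb{\xi_t<0}=\rho$, the classical fact (Bertoin, \cite{Bertoin-96}, Ch.~VI) is that the descending ladder height exponent satisfies $\kappa_{\mis}(q,0)\in RV_\rho$ as $q\to 0$; I would cite this. Then for $f\in\Ctt_b(\Rp)$ and $a\in(0,1-\aphp)$, I would write $\Ebb{\IPsi^{-a}(t)f(\IPsi(t))}$ and pass through the Laplace transform in $t$: the key point is that $q\mapsto \phn^{\dagger q}(0)$ regularly varying of index $\rho$ together with a Tauberian theorem of exponential type (de Haan / Bingham–Goldie–Teugels) converts the $q\to 0$ asymptotics of $q\int_0^\infty e^{-qt}\Ebb{\IPsi^{-a}(t)f(\IPsi(t))}dt \sim \phn^{\dagger q}(0)\,\Ebb{I_{\Psi^{\dagger q}}^{-a}f(I_{\Psi^{\dagger q}})}/\phn^{\dagger q}(0)$ — wait, more precisely one shows $\Ebb{I_{\Psi^{\dagger q}}^{-a}f(I_{\Psi^{\dagger q}})}$ has a finite limit as $q\to 0$ and divides by $\kappa_{\mis}(q,0)=\phn^{\dagger q}(0)$, the $\Gamma(1-\rho)$ arising from the Tauberian constant $1/\Gamma(1+\rho)$ combined with the relation between $\IPsi(t)$ and $\IPsi({\bf e}_q)$. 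The limiting measure $\vartheta_a$ is identified by computing its ``Mellin transform'': from $\Ebb{I_{\Psi^{\dagger q}}^{-a}I_{\Psi^{\dagger q}}^{\,c}}/\phn^{\dagger q}(0)\to \MPs(c+1-a)$ (the $\phn^{\dagger q}(0)$ cancels the prefactor in Theorem \ref{cor:smoothness}\eqref{it:MTfact}), one gets $\int_0^\infty x^{c}\vartheta_a(dx)=\frac{1}{\Gamma(1-\rho)}\MPs(c+1-a)$ for $c\in(a-1+\aphp,0)$, and Mellin inversion over the line $\Re(\cdot)=c$ yields \eqref{eq:V_a}; evaluating at $c\to 0^-$ (equivalently $z=1-a$ in $\MPs$) gives $\vartheta_a(\Rp)=\frac{1}{\Gamma(1-\rho)}\MPs(1-a)=\frac{1}{\Gamma(1-\rho)}\frac{\Gamma(1-a)}{\Wpp(1-a)}\Wpn(a)$, which is \eqref{eq:RVIPsi}. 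The weak convergence \eqref{eq:weak} then follows because $\vartheta_a$ is finite and the convergence of all moments $\int x^c$ on a vertical strip, plus tightness, upgrades to convergence against all $f\in\Ctt_b$.

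The main obstacle I anticipate is the Tauberian step in item \eqref{it:NC2}: one must justify interchanging the $q\to 0$ limit with the Mellin inversion integral defining $\vartheta_a$, which requires uniform-in-$q$ decay estimates on $|\MPs^{\dagger q}(c+1-a+ib)|$ along the vertical line as $|b|\to\infty$ — this is where Theorem \ref{thm:asympMPsi} and the Stirling-type asymptotics of Theorem \ref{thm:Stirling} are indispensable, since they guarantee the integrand is dominated uniformly in small $q$ by an integrable function of $b$ (the Bernstein-gamma factors $\Wpp,\Wpn$ vary continuously with $q$ through $\php^{\dagger q},\phn^{\dagger q}$ and their asymptotic decay rates degrade at worst continuously). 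A secondary technical point is verifying that $\IPsi({\bf e}_q)$ genuinely has the law of $I_{\Psi^{\dagger q}}$ — this is where the \emph{équation amicale inversée} for killed \LLPs from the Appendix is used — and handling the case $-a\in\N$ excluded in the hypotheses, where the poles of $\Gamma$ and of $\MPs$ could collide.
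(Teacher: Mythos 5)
Your overall strategy is aligned with the paper's: the exponential-killing bridge $I_{\Psi^{\dagger q}}\stackrel{d}{=}\IPsi({\bf e}_q)$, the Laplace transform identity $q\int_0^\infty e^{-qt}\Ebb{\IPsi^{z-1}(t)}dt=\phn^{\dagger q}(0)\MPsq{q}(z)$, the monotonicity of $t\mapsto\IPsi(t)$ to produce the $\frac{e-1}{e}=1-e^{-1}$ factor (the paper bounds $q\int_0^\infty e^{-qt}\Pbb{\IPsi(t)\leq x}dt\geq q\int_0^{1/q}e^{-qt}\Pbb{\IPsi(1/q)\leq x}dt=(1-e^{-1})\Pbb{\IPsi(1/q)\leq x}$, which is what your complementary probability remark amounts to), the Mellin--Barnes inversion of $\MPsq{q}$ with residue extraction for item \eqref{it:NC1}, the regular variation of $\kappa_{\mis}(\cdot,0)$ and the Karamata Tauberian theorem for item \eqref{it:NC2}, and the identification of $\vartheta_a$ via its Mellin transform. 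The uniform-in-$q$ control you flag as "the main obstacle" is exactly what the paper isolates into Lemma \ref{cor:uniformDecay}; recognizing this is the crux is good, but it is a nontrivial piece you would still have to supply.

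There is, however, a genuine gap in your treatment of item \eqref{it:NC2} for the range $a\geq 1$ (i.e.~$a\in[1,1-\aphp)$ when $\aphp<0$). Your argument implicitly assumes that the Mellin transform $c\mapsto\Ebb{I_{\Psi^{\dagger q}}^{\,c-a}}=\kappa_{\mis}(q,0)\MPsq{q}(c+1-a)$ is analytic on the vertical strip $\Re(c)\in(a-1+\aphp,0)$ and that you may perform the Mellin inversion there directly. When $a\in(0,1)$ this is fine, since $c+1-a$ then ranges over a sub-strip of $(0,1)$ where $\MPsq{q}$ is holomorphic. But when $a\geq 1$, one must push $c+1-a$ into $(\aphp,0]$ where $\MPsq{q}\in\Mtt_{(\aphp,1-\aphn)}$ has simple poles at the nonpositive integers greater than $\aphp$; the naive inversion integral and the Tauberian argument no longer apply as stated. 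The paper deals with this by the meromorphic decomposition $\MPsqq{\dagger q}(z)=\php^{\dagger q}(0)\sum_{n\leq n_0}\frac{\prod_{k\leq n}\Psi^{\dagger q}(k)}{n!(z+n)}+\MPsqq{\dagger q}^{(n_0)}(z)$ with a holomorphic remainder, followed by a careful bookkeeping of residue terms through the auxiliary functions $W_x(q,\cdot)$, $H_j(x,\cdot)$, $\tilde H_{k}(x,q,\cdot)$ and a cancellation argument that proves the limit of the holomorphic part is governed by $-\hat H_0(x,\zeta)$. This step occupies the bulk of the paper's proof and is not sketched in your proposal; without it, your approach only establishes the $a\in(0,1)$ case.

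Two smaller remarks. First, you attribute the identity $I_{\Psi^{\dagger q}}\stackrel{d}{=}\IPsi({\bf e}_q)$ to the killed-process version of the \emph{\'equation amicale invers\'ee}; this attribution is off --- the distributional identity is an immediate consequence of the definition of the exponential functional and the independence of $\xi$ and ${\bf e}_q$, whereas Vigon's identity \eqref{eq:Vigon} is used elsewhere (e.g.~to decompose $\mu_{\mis}$). Second, the constant $\Gamma(1-\rho)$ deserves more care than your parenthetical "Tauberian constant $1/\Gamma(1+\rho)$": the paper obtains it by noting $\kappa_{\mis}(q,0)/q=1/\php^{\dagger q}(0)$ and that $\php^{\dagger\cdot}(0)\in RV_{1-\rho}$ with $1-\rho>0$, and then applying the Karamata Tauberian theorem together with the monotone density theorem to the non-increasing map $t\mapsto\Ebb{\IPsi^{-a}(t)\ind{\IPsi(t)\leq x}}$; it is the index $1-\rho$, not $\rho$, that feeds into the gamma constant.
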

{\emph{This theorem is proved in Section \ref{subsec:Li}.}}
 \begin{remark}\label{rem:Nc}
	Note that if $\Ebb{\xi_1}=0,\, \Ebb{\xi^2_1}<\infty$ then $\kappa_{\mis}(q,0)\simo Cq^{\frac12}$  with $C$ extracted from the Fristedt's formula, \cite[Chapter VI]{Bertoin-96}. Therefore, if $f(x)=x^{-a}f_1(x)$ with $a\in\lbrb{0,1-\aphp}$ and $f_1\in\Ctt_b\lbrb{\Rp}$ then \[\limi{t} C\sqrt{t}\Ebb{f(I_t)}=\IntOI f_1(x)\vartheta_a(dx).\] This result is at the core of the studies \cite{Li-XU-16} and \cite{Palau-Pardo-Smadi-16} on the large temporal asymptotic behaviour of extinction and explosion probabilities of continuous state branching processes in \LL random environment. However, in \cite{Li-XU-16,Palau-Pardo-Smadi-16}  some  very stringent restrictions are imposed on $f$.
	Also $\xi$ is required to have some two-sided exponential moments. These requirements are due to reduction to more general results on specific random sums/random walks, which can be found in \cite{Af-Gei-Ker-Vat-05,Kozlov-76}. Thus, Theorem \ref{thm:Nc} can significantly extend the aforementioned results of \cite{Li-XU-16,Palau-Pardo-Smadi-16} and furnish new ones when $\Ebb{\xi^2_1}=\infty$ and $\limi{t}\Pbb{\xi_t<0}=\rho\in\lbbrb{0,1}$.
\end{remark}
\subsection{Intertwining relations of self-similar semigroups and factorization of laws}

By Mellin identification, we obtain as a straightforward consequence of the representation \eqref{eq:MIPsi},  the following probabilistic factorizations of the distribution of the exponential functional.
\begin{theorem} \label{thm:factorization}
	For any $\Psi \in \Ne,$ the following multiplicative Wiener-Hopf factorizations of the  exponential functional hold
	\begin{eqnarray}\label{eq:GammaType2}\label{eq:InfW}
	\IPsi&\stackrel{d}{=}& \Ir_{\php}\times X_{\phn} \stackrel{d}{=} \bigotimes_{k=0}^{\infty}C_\Psi(k) \lbrb{\mathfrak{B}_k X_{\Psi}\times\mathfrak{B}_{-k} Y_{\Psi}},
	\end{eqnarray}
	where $\times$ stands for the product of independent random variables and $X_{\phn}$ is with distribution
	\[\Pbb{X_{\phn}\in dx}=\phn(0)x\Pbb{Y^{-1}_{\phn}\in dx},\:x>0,\]
	where $Y_{\phn}$ is the  random variable whose Mellin transform is $\Wpn$, see the exact definition below \eqref{eq:defWb}.
	The laws of the positive variables $X_{\Psi}, Y_\Psi$ are given by
	\begin{equation}\label{eq:rvs}
	\begin{split}
	\Pbb{X_{\Psi} \in dx}&=\frac{1}{\php(1)}\left(\bar{\mu}_{\pls}(-\ln x)dx+\php(0)dx+\dep \delta_{1}(dx)\right),\,x\in\lbrb{0,1},\\
	\Pbb{Y_{\Psi} \in dx}&=\phn(0)\Upsilon_{\mis}(dx),\,x>1,
	\end{split}
	\end{equation}
	where $\Upsilon_{\mis}(dx)=U_{\mis}\lbrb{d\ln(x)},\,x>1,$ is the image of the potential measure $U_{\mis}$, associated to $\phn$, see Proposition \ref{propAsymp1} \eqref{it:bernstein_cmi}, by the mapping $y\mapsto \ln y$,	
	\[C_\Psi(0)=e^{\gamma_{\php}+\gamma_{\phn}-\gamma},\: C_\Psi(k)=e^{\frac1{k}-\frac{\php'\!(k)}{\phpspace(k)}-\frac{\phn'\!(k)}{\phnspace(k)}}, \: k=1,2,\ldots,\]
	where we recall that $\gamma_{\php}$ and $\gamma_{\phn}$ were defined in \eqref{sym:gphi} and $\gamma$ is the Euler-Mascheroni constant,
	and for integer $k, \mathfrak{B}_k X$ is the variable defined by
	\[ \Ebb{f(\mathfrak{B}_k X)} = \frac{\Ebb{X^kf(X)}}{\Eb[X^k]}.\]
\end{theorem}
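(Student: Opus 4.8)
The strategy is to establish each factorization in \eqref{eq:GammaType2} by Mellin transform identification, exploiting the fact that a positive random variable is characterized by its Mellin transform on the strip $\Re(z)\in\lbrb{0,1-\dphn}$ (or any vertical strip inside the domain of holomorphy where the transform is nontrivial), and that the Mellin transform of a product of independent positive variables is the product of the individual Mellin transforms. The central object is the identity $\MPsi(z)=\phn(0)\MPs(z)=\phn(0)\frac{\Gamma(z)}{\Wpp(z)}\Wpn(1-z)$ from \eqref{eq:MIPsi}, valid on $\Cb_{\lbrb{0,1-\dphn}}$ by Theorem \ref{cor:smoothness}\eqref{it:MTfact}.

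\textbf{First factorization.} I would first identify the Mellin transforms of $\Ir_{\php}$ and $X_{\phn}$. For $\Ir_{\php}$, since $\php(0)=0$ forces nothing here but $\php\in\Bc$, the exponential functional $\Ir_{\php}$ is supported on $\lbrb{0,1}$ with density built from $\bar\mu_{\pls}$, $\php(0)$ and $\dep$ as in \eqref{eq:rvs} — I would check by direct integration (against $x^{z-1}$) and an integration by parts using the representation \eqref{eq:phi0} of $\php$ that $\Ebb{\Ir_{\php}^{z-1}}=\frac{\Gamma(z)}{\Wpp(z)}$, i.e.\ that this is exactly the Mellin transform solving \eqref{eq:feb} with $\phi=\php$; alternatively this is already recorded in the construction of $\Wpp$ in Section~\ref{sec:mainResults}. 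For $X_{\phn}$, the size-biasing-type relation $\Pbb{X_{\phn}\in dx}=\phn(0)x^{-1}\Pbb{Y_{\phn}\in dx}$ gives $\Ebb{X_{\phn}^{z-1}}=\phn(0)\Ebb{Y_{\phn}^{z-2}}=\phn(0)\Wpn(z-1)$ — wait, I must be careful: if $Y_{\phn}$ has Mellin transform $z\mapsto\Wpn(z)$ then $\Ebb{X_{\phn}^{z-1}}=\phn(0)\IntOI x^{z-2}\Pbb{Y_{\phn}\in dx}=\phn(0)\Wpn(z-1)$, and using the functional equation \eqref{eq:feb} for $\Wpn$ one rewrites $\Wpn(z-1)$ in terms of $\Wpn$ evaluated appropriately; the point is to match it with $\Wpn(1-z)$ after the reflection inherent in \eqref{eq:MIPsi}. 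The cleanest route is to observe that $Y_{\phn}$ is defined (below \eqref{eq:defWb}) precisely so that $\Ebb{Y_{\phn}^{z}}=\Wpn(1+z)$ on the appropriate strip, whence $\Ebb{X_{\phn}^{z-1}}=\phn(0)\Wpn(1-z)$ after the sign flip; multiplying by $\Ebb{\Ir_{\php}^{z-1}}=\Gamma(z)/\Wpp(z)$ reproduces $\MPsi(z)$, and independence plus Mellin uniqueness on the common strip of analyticity closes the argument.

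\textbf{Second (infinite product) factorization.} Here I would start from the Weierstrass products defining $\Wpp$, $\Wpn$ and the classical $\Gamma$, and expand $\MPsi(z)=\phn(0)\frac{\Gamma(z)}{\Wpp(z)}\Wpn(1-z)$ term by term. Writing out $\frac{1}{\Wpp(z)}=\php(z)e^{\gamma_{\php}z}\prod_{k\ge1}\frac{\php(k+z)}{\php(k)}e^{-\frac{\php'(k)}{\php(k)}z}$ and similarly for $\Wpn(1-z)$ and for $\Gamma(z)=\frac{e^{-\gamma z}}{z}\prod_{k\ge1}\frac{k}{k+z}e^{z/k}$, the exponential prefactors combine into $C_\Psi(0)=e^{\gamma_{\php}+\gamma_{\phn}-\gamma}$ and, at level $k$, into $C_\Psi(k)=e^{\frac1k-\frac{\php'(k)}{\php(k)}-\frac{\phn'(k)}{\phn(k)}}$, exactly as stated. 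The ratio factors $\frac{\php(k+z)\phn(k+1-z)}{\php(k)\phn(k)}\cdot\frac{k}{k+z}$ must then be recognized as the Mellin transform of $C_\Psi(k)^{-1}\lbrb{\mathfrak{B}_kX_\Psi\times\mathfrak{B}_{-k}Y_\Psi}$; using $\php(k+z)=\Ebb{X_\Psi^{k+z-1}}\php(1)/\Ebb{X_\Psi^{k-1}}\cdot(\dots)$ — more precisely, since $X_\Psi$ has Mellin transform proportional to $\php(z)/\php(1)$ (one checks from \eqref{eq:rvs} by the same integration by parts as above that $\Ebb{X_\Psi^{z-1}}=\php(z)/\php(1)$), the size-biased variable $\mathfrak{B}_kX_\Psi$ has Mellin transform $\Ebb{X_\Psi^{z+k-1}}/\Ebb{X_\Psi^{k-1}}=\frac{\php(z+k)}{\php(k)}$, and similarly $\mathfrak{B}_{-k}Y_\Psi$ — where $Y_\Psi$ has Mellin transform $\phn$-related — yields $\frac{\phn(k+1-z)}{\phn(k)}$ or its reflected analogue, with the $\frac{k}{k+z}$ factor absorbed into one of these via the Bernstein functional equations, or carried by an auxiliary Gamma/Beta factor. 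Convergence of the infinite product of independent variables $\bigotimes_k$ must be justified: I would show $\sum_k\lbrb{1-C_\Psi(k)}$ and the associated variances converge, using $\frac{\php'(k)}{\php(k)}=\frac1k+\bo{1/k^2}$-type estimates that follow from the Bernstein representation \eqref{eq:phi0} (this is the content ensuring $\gamma_\phi$ in \eqref{sym:gphi} is finite), so that the partial products form a Cauchy sequence in distribution and the limiting Mellin transform is the convergent infinite product, which by construction equals $\MPsi(z)/\phn(0)$ up to the normalization, hence equals the law of $\IPsi$ by Mellin uniqueness.

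\textbf{Main obstacle.} The routine part is the Mellin bookkeeping; the genuinely delicate point is the second factorization's \emph{convergence and identification of the size-biased factors}: one must verify that the formal term-by-term expansion of the three Weierstrass products is legitimate (uniform convergence on compacts of the relevant strip, which is where the quasi-monotonicity / regular-variation hypotheses are \emph{not} needed but the basic $\bo{1/k^2}$ control on $\php'(k)/\php(k)-1/k$ is), that each level-$k$ ratio is genuinely a probability Mellin transform (positivity/complete-monotonicity of the corresponding density, i.e.\ that $\mathfrak{B}_{\pm k}$ applied to $X_\Psi,Y_\Psi$ yields honest random variables — immediate from the $\mathfrak{B}_k$ definition once $X_\Psi,Y_\Psi$ have finite moments of all orders, which one reads off from $\php,\phn$ being finite on $\lbbrb{0,\infty}$), and that the infinite product of independent variables converges almost surely / in distribution rather than merely formally. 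I would handle convergence by the classical three-series-type criterion for infinite products of independent positive variables after passing to logarithms, controlling $\Ebb{\ln(C_\Psi(k)\mathfrak{B}_kX_\Psi\mathfrak{B}_{-k}Y_\Psi)}$ and its variance via $\frac{\phi'(k)}{\phi(k)}-\frac1k=\bo{k^{-2}}$.
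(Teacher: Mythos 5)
Your overall route — Mellin identification for the first factorization, term-by-term expansion of the three Weierstrass products for the second, with the level-$k$ ratio recognized as the Mellin transform of $\mathfrak{B}_k X_\Psi\times\mathfrak{B}_{-k}Y_\Psi$ — is exactly the paper's approach. A few bookkeeping points where you went astray, one of which is substantive:

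\textbf{The $k/(k+z)$ factor is not auxiliary.} You posit $\Ebb{X_\Psi^{z-1}}=\php(z)/\php(1)$, but integrating \eqref{eq:rvs} against $x^z$ and using \eqref{eq:phi0} gives
\[\Ebb{X_\Psi^{z}}=\frac{\php(1+z)}{\php(1)(1+z)},\]
so the extra rational factor is already carried by $X_\Psi$ itself. Consequently
\[\Ebb{(\mathfrak{B}_kX_\Psi)^z}=\frac{\Ebb{X_\Psi^{k+z}}}{\Ebb{X_\Psi^{k}}}=\frac{(k+1)\,\php(k+1+z)}{\php(k+1)\,(k+1+z)},\]
which supplies the $\frac{k}{k+z}$-type factor with no auxiliary Gamma or Beta variable needed. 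Similarly, the correct reading of the $Y_\Psi$ contribution is $\Ebb{Y_\Psi^z}=\phn(0)/\phn(-z)$ (it is the image of the potential measure of $\phn$ under $y\mapsto e^y$), hence $\Ebb{(\mathfrak{B}_{-k}Y_\Psi)^z}=\phn(k)/\phn(k-z)$, not your "$\phn(k+1-z)/\phn(k)$"; this is the reflection needed for the identity to close. After expanding $\MPsi(z+1)$ via \eqref{eq:BernWeier} for $\Wpp,\Wpn$ and the classical Weierstrass product for $\Gamma$, these level-$k$ blocks and the constants $C_\Psi(k)$ account for every factor, so the matching is exact.

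\textbf{On $X_{\phn}$.} You correctly noticed that the displayed relation $\Pbb{X_{\phn}\in dx}=\phn(0)x^{-1}\Pbb{Y_{\phn}\in dx}$ would give $\phn(0)\Wpn(z-1)$ rather than $\phn(0)\Wpn(1-z)$. The version actually used in the paper's argument is $\Ebb{g(X_{\phn})}=\phn(0)\Ebb{Y_{\phn}^{-1}g(Y_{\phn}^{-1})}$ (see \eqref{eq:auxRv}), which involves inverting $Y_{\phn}$ and produces the reflected $\Wpn(1-z)$. Your "sign flip" intuition was correct; the resolution is that $X_{\phn}$ lives on the reciprocal of the support of $Y_{\phn}$.

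\textbf{On convergence of the infinite product.} Here your proposal is heavier than necessary. Since the Bernstein--Weierstrass products \eqref{eq:BernWeier} converge absolutely on $\CbOI$ by construction, the partial products of Mellin transforms converge locally uniformly to $\MPsi(z+1)$ on $\Cb_{\lbrb{-1,0}}$. Restricting to $i\R$ (Fourier transforms of $\log$ of the variables) and applying L\'evy's continuity theorem immediately yields convergence in distribution of the partial products to $\IPsi$; no three-series argument is required. The paper simply exhibits the absolutely convergent product identity
\[\MPsi(z+1)=\prod_{k=0}^{\infty}\frac{\phn(k)}{\phn(k-z)}\,\frac{(k+1)\php(k+1+z)}{\php(k+1)(k+1+z)}\,C_\Psi^z(k)\]
and identifies each factor, which is the same argument with the convergence analysis outsourced to the existence of $\Wp$.

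Modulo these corrections the proof is sound and follows the paper's line of reasoning.
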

\begin{remark}\label{rem:factorization}
	The first factorization in \eqref{eq:GammaType2} is proved in \cite{Pardo2012, Patie-Savov-11} under the assumption that $\Pi(dx)\ind{x>0}=\pi_{\pls}(x)dx$ with $\pi_{\pls}$ being non-decreasing on $\Rp$. Then $X_{\phn}=I_\psi$ with $\psi(z)=z\phn(z)\in\Nc$. The factorization \eqref{eq:GammaType2} has been announced in generality in \cite{Patie-Savov-13}. The second factorization of \eqref{eq:InfW} is new. When $\Psi(z)=-\php(-z)\in\Bc$ that is $\xi$ is a subordinator then \eqref{eq:InfW} is contained in \cite[Theorem 3]{Alili-Jadidi-14}. For the class of meromorphic \LL processes,  $\IPsi$ has been factorized in an infinite product of independent Beta random variables, see \cite{Hackmann-Kuznetsov-14}.
\end{remark}
Next, we discuss some immediate results for the positive self-similar Markov processes whose  semigroups we call for brevity the positive self-similar semigroups and are denoted by $K^\Psi=\lbrb{K_t^\Psi}_{t\geq0}$. The dependence on $\Psi\in\overNc$ is due to the Lamperti transformation which establishes a bijection between the positive self-similar semigroups and $\overNc$, see \cite{Lamperti-62}. We recall that for some $\alpha>0$, any $f\in\cco\!\lbrb{\lbbrb{0,\infty}}$ and any $x,c>0$, $K^\Psi_t\!f(cx)=K^\Psi_{c^{-\alpha}t}d_cf(x)$, where $d_cf(x)=f(cx)$ is the dilation operator. Without loss of generality we set $\alpha=1$ and write \[\Nc_{\unrhd}=\curly{\Psi\in\Nc:\,\Psi(0)=0,\,\php'(0^+)<\infty,\,\Zc_0(\Psi)=\curly{0}}.\label{Nm}\]
From \eqref{eq:WH1}, \eqref{eq:Nc} and Theorem \ref{thm:Wp}\eqref{it:D}, it is clear that the class $\Nc_{\unrhd}$ stands for the characteristic exponents of conservative \LLPs that do not live on a lattice, which either drift to infinity and possess a finite positive mean, or oscillate, but the ascending ladder height process has a finite mean, that is $\php'\!\lbrb{0^+}<\infty$. It is well-known from \cite{Bertoin-Savov} that $\Psi\in\Nc_{\unrhd} $ if and only if $K^\Psi$ possesses an entrance law from zero. More specifically, for each $\Psi \in \Nc_{\unrhd}$, there exists a family of probability measures $\nu^\Psi=\lbrb{\nu^\Psi_t}_{t>0}$ such that $\limo{t}\nu^\Psi_t=\delta_0$ and for any  $f\in\cco\!\lbrb{\lbbrb{0,\infty}}$ and any $t,s>0$,
\[\nu^\Psi_{t+s}f=\IntOI f(x)\nu^\Psi_{t+s}(dx)=\IntOI K_s^\Psi\!f(x) \nu^\Psi_t(dx)=\nu^\Psi_t K^\Psi_s\!f.\]
We denote by $\VPsi$ the random variable whose law is $\nu_1^\Psi$.
\begin{theorem}\label{thm:PSSMP}  Let $\Psi\in\Nc_{\unrhd}$.
	\begin{enumerate}
		%\item The entrance law of the semigroup $P^{\Psi}$  is the distribution of a positive variable, denoted by $V_{\Psi}$, whose law is absolutely continuous.% with a density denoted by  $\nu^{\Psi}$.
		\item\label{it:entrance} Then the Mellin transform $\mathcal{M}_{\VPsi}$ of $\VPsi$ is the unique solution in  the space of Mellin transforms of non-negative random variables
		to the following functional equation
		\begin{equation}\label{eq:feVPsi}
		\mathcal{M}_{\VPsi}\!(z+1)=\frac{\Psi(z)}{z}\mathcal{M}_{\VPsi}\!(z), \: z\in i\R,
		\end{equation}
		with initial condition $\Mcc_{\VPsi}(1)=1$, and admits the representation
		\begin{equation}\label{eqn:GammaType}
		\mathcal{M}_{\VPsi}(z)=\frac{1}{\php'(0^+)}\frac{\Gamma(1-z)}{W_{\phi_{\pls}}\!(1-z)}W_{\phi_{\mis}}\!(z),\quad  z\in \C_{\lbrb{\dphn,1}}.
		\end{equation}
		\item \label{it:intertwining} Let in addition $\Pi(dx)\mathbb{I}_{\{x>0\}}=\pi_{\pls}(x)dx,$ with $\pi_{\pls}$ non-increasing on $\Rp$, see \eqref{eq:lk0}. Then $\Lambda_{\php}f(x)=\Ebb{f\left(xV_{\php}\right)}$ is a continuous linear operator from  $\cco\!\lbrb{\lbbrb{0,\infty}}$, endowed with the uniform topology,  into itself  and we have the following intertwining identity on $\cco\!\lbrb{\lbbrb{0,\infty}}$
		\begin{equation}\label{MainInter}
		K^{\psi}_t\!\Lambda_{\php}f=\Lambda_{\php}\!K^{\Psi}_t\!f, \quad t\geq 0,
		\end{equation}
		where  $\psi(z)=z\phn(z)\in \Nc_-$, see Remark \ref{rem:SN}.
	\end{enumerate}
\end{theorem}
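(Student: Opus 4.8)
We first establish assertion~\eqref{it:entrance}. Since $\Psi\in\Nc_{\unrhd}\subseteq\Nc$ one has $\phn(0)>0$ and, from $\Psi(0)=0$, also $\php(0)=0$, so that $\Psi'(0^+)=\php'(0^+)\phn(0)\in(0,\infty)$. The key ingredient is the classical relation between the entrance law of a $1$-self-similar Markov process and the negative moments of the associated exponential functional (Bertoin--Yor \cite{Bertoin-Yor-02}, see also \cite{Bertoin-Savov}), which in the present normalisation reads $\mathcal{M}_{\VPsi}(z)=\Ebb{\VPsi^{z-1}}=\frac{1}{\Psi'(0^+)}\Ebb{\IPsi^{-z}}=\frac{1}{\Psi'(0^+)}\MPsi(1-z)$ wherever the right-hand side is finite. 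Inserting $\MPsi=\phn(0)\MPs$ and $\Psi'(0^+)=\php'(0^+)\phn(0)$ gives the representation \eqref{eqn:GammaType}; since $\dphp=0$ forces $\aP=\aphp\le 0$, reading \eqref{eq:domainAnalMPs} under $z\mapsto 1-z$ shows that $\mathcal{M}_{\VPsi}$ is holomorphic on $\C_{\lbrb{\dphn,\,1-\aphp}}\supseteq\C_{\lbrb{\dphn,1}}$. The recurrence \eqref{eq:feVPsi} is then formal: replacing the running variable by $-z$ in \eqref{eq:fe} yields $\MPs(1-z)=\frac{z}{\Psi(z)}\MPs(-z)$, so $\mathcal{M}_{\VPsi}(z+1)=\frac{1}{\php'(0^+)}\MPs(-z)=\frac{\Psi(z)}{z}\mathcal{M}_{\VPsi}(z)$. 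Uniqueness within $\mathcal{P}$ reduces to that for \eqref{eq:fe} (Remark \ref{rem:Maulik}, valid since $\phn(0)>0$): if $g\in\mathcal{P}$ is the Mellin transform of a positive variable $Y$ and solves \eqref{eq:feVPsi}, then letting $z\to 0$ in \eqref{eq:feVPsi} and using $\Psi(0)=0$ gives $g(0)=\Ebb{Y^{-1}}=1/\Psi'(0^+)$, the rescaled reflection $\tilde g(z):=\Psi'(0^+)g(1-z)$ is again the Mellin transform of a positive variable (a size-biasing of $Y^{-1}$) and solves \eqref{eq:fe} with $\tilde g(1)=1$, whence $\tilde g=\MPs$ and $g=\mathcal{M}_{\VPsi}$.

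We turn to \eqref{it:intertwining}. That $\Lambda_{\php}$ maps $\cco\lbrb{\lbbrb{0,\infty}}$ continuously into itself is routine: $V_{\php}$ is a.s.\ valued in $(0,\infty)$, $\Lambda_{\php}$ is a Markov kernel (hence a contraction for the uniform norm), and for $f\in\cco\lbrb{\lbbrb{0,\infty}}$ — hence uniformly continuous — the function $x\mapsto\Ebb{f(xV_{\php})}$ is continuous on $\lbbrb{0,\infty}$ by dominated convergence, equals $f(0)$ at $x=0$, and vanishes at $+\infty$ because $xV_{\php}\to\infty$ a.s. For the intertwining, under the standing hypothesis on $\pi_{\pls}$ we have by Remark \ref{rem:factorization} that $\psi(z)=z\phn(z)\in\Nc_-$ and $X_{\phn}=I_\psi$, so that the first factorisation in \eqref{eq:GammaType2} becomes $\IPsi\stackrel{d}{=}I_{\php}\times I_\psi$ with independent factors; moreover $\psi\in\Nc_{\unrhd}$, its ascending ladder exponent being the identity and $\Zc_0(\psi)=\curly{0}$ since $\Re\phn(iu)\ge\phn(0)>0$. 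From \eqref{eqn:GammaType} (applied to $\psi$ and, for the subordinator exponent, to $-\php(-\cdot)$) and the definitions of $V_\psi$ and $V_{\php}$ (see below \eqref{eq:defWb}), $\mathcal{M}_{V_\psi}(z)=\Wpn(z)$ and $\mathcal{M}_{V_{\php}}(z)=\frac{1}{\php'(0^+)}\Gamma(1-z)/\Wpp(1-z)$, so by \eqref{eqM:MIPsi} their product equals $\frac{1}{\php'(0^+)}\MPs(1-z)=\mathcal{M}_{\VPsi}(z)$; this is the entrance-law factorisation
\[\VPsi\stackrel{d}{=}V_{\php}\times V_\psi\qquad(\text{independent}),\]
the exact shadow of the multiplicative Wiener--Hopf factorisation $\IPsi\stackrel{d}{=}I_{\php}\times I_\psi$ of Theorem \ref{thm:factorization}.

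To pass from this to the operator identity, observe that by self-similarity with $\alpha=1$ the entrance laws scale as $\nu^\Psi_t=\mathrm{law}(t\VPsi)$ and $\nu^\psi_t=\mathrm{law}(tV_\psi)$, $t>0$, so the entrance-law factorisation reads $\nu^\Psi_t=\nu^\psi_t\Lambda_{\php}$ for all $t>0$. Combining this with the entrance relation $\nu^\bullet_{t+s}=\nu^\bullet_t K^\bullet_s$ for both semigroups gives, for all $s,t>0$ and $f\in\cco\lbrb{\lbbrb{0,\infty}}$,
\[\nu^\psi_t\lbrb{K^\psi_s\Lambda_{\php}f}=\nu^\psi_{t+s}\Lambda_{\php}f=\nu^\Psi_{t+s}f=\nu^\Psi_t K^\Psi_s f=\nu^\psi_t\lbrb{\Lambda_{\php}K^\Psi_s f}.\]
Both $K^\psi_s\Lambda_{\php}f$ and $\Lambda_{\php}K^\Psi_s f$ belong to $\cco\lbrb{\lbbrb{0,\infty}}$ (by the mapping property and the fact that $K^\psi$ and $K^\Psi$ act on $\cco\lbrb{\lbbrb{0,\infty}}$), so their difference $h$ satisfies $\Ebb{h(tV_\psi)}=0$ for every $t>0$. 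Since $\mathcal{M}_{V_\psi}=\Wpn$ is holomorphic and zero-free on $\C_{\lbrb{0,\infty}}$ (part of Theorem \ref{thm:Wp}), the characteristic function $u\mapsto\Ebb{V_\psi^{iu}}=\Wpn(1+iu)$ does not vanish on $\R$, and a standard Wiener/Mellin injectivity argument applied to the bounded continuous function $y\mapsto h(e^y)$ and the finite measure $\mathrm{law}(\ln V_\psi)$ forces $h\equiv 0$, which is the intertwining \eqref{MainInter}.

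The delicate point is this last step — inferring equality of two functions in $\cco\lbrb{\lbbrb{0,\infty}}$ from equality of their integrals against the whole scaling family $\{\nu^\psi_t:t>0\}$ — which rests on the zero-freeness of $\Wpn$ on $\C_{\lbrb{0,\infty}}$ together with an injectivity/Tauberian argument; everything else is either a direct reading of \eqref{eqM:MIPsi}--\eqref{eq:domainAnalMPs} or a routine continuity check. One may also note that the intertwining is forced already at the algebraic level of power functions: the generator of the $\alpha=1$ self-similar semigroup sends $x^z$ to $\Psi(z)x^{z-1}$ while $\Lambda_{\php}$ multiplies $x^z$ by $\mathcal{M}_{V_{\php}}(z+1)$, and the resulting identity is precisely the recurrence \eqref{eq:feVPsi} written for $-\php(-\cdot)$ — but lifting it from power functions to $\cco\lbrb{\lbbrb{0,\infty}}$ would require the same care.
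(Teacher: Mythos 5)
Your proof of item \eqref{it:entrance} matches the paper's argument in the main case: both proceed from the Bertoin--Yor identity $\Ebb{f(\VPsi)}=\frac{1}{\Ebb{\IPsi^{-1}}}\Ebb{\frac{1}{\IPsi}f(\frac{1}{\IPsi})}$, compute $\Ebb{\IPsi^{-1}}=\phn(0)\php'(0^+)$ and substitute into \eqref{eq:MIPsi}. You go further than the paper's own proof by spelling out the uniqueness argument (via the reflection $\tilde g(z)=\Psi'(0^+)g(1-z)$ and Remark \ref{rem:Maulik}), which the paper asserts but does not derive.

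However, there is a gap: the paper's proof explicitly splits into two cases, $\Psi'(0^+)>0$ (handled as you do) and $\Psi'(0^+)=0$ (the oscillating case, treated by the approximation $\Psi_{\rk}(z)=\Psi(z)+\rk z$ combined with Fristedt's formula and Lemma \ref{lem:continuityW}). Your opening computation correctly shows that $\Psi\in\Nc\subseteq\Nc_{\unrhd}$ forces $\phn(0)>0$, $\php(0)=0$, and hence $\Psi'(0^+)=\php'(0^+)\phn(0)>0$, so under the literal definition $\Nc_{\unrhd}=\{\Psi\in\Nc:\ldots\}$ the oscillating case is vacuous and your proof is complete. But the paper's accompanying remark (``which either drift to infinity and possess a finite positive mean, \emph{or oscillate}'') and the paper's own proof both treat oscillating processes, for which $\phn(0)=0$ and $\IPsi=\infty$ a.s., so that $\MPsi$ does not exist and the Bertoin--Yor identity cannot be applied directly -- only $\MPs$ survives, and one must argue by a drift-perturbation limit. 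The intended class is almost certainly $\{\Psi\in\overNc:\ldots\}$, and under that reading your proof is missing the approximation step for the oscillating case (and also the uniqueness in that case, since Remark \ref{rem:Maulik} requires $\phn(0)>0$).

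For item \eqref{it:intertwining} your route genuinely differs from the paper. The paper establishes $\IPsi\stackrel{d}{=}I_{\php}\times I_\psi$ via \cite{Pardo2012}, transfers it to $\VPsi\stackrel{d}{=}V_{\php}\times V_\psi$ using \eqref{eqn:GammaType}, and then simply invokes \cite[Theorem~7.1]{Patie-Savov-16}, whose proof rests on this entrance-law factorisation and the zero-freeness of $\M_{\VPsi}$ on $\C_{(0,1)}$. You instead make the conclusion self-contained: you derive the scaling identity $\nu^\Psi_t=\mathrm{law}(t\VPsi)$, observe $\nu^\Psi_t=\nu^\psi_t\Lambda_{\php}$, use the entrance relations of both semigroups to get $\nu^\psi_t(K^\psi_s\Lambda_{\php}f)=\nu^\psi_t(\Lambda_{\php}K^\Psi_sf)$ for all $t>0$, and then close the argument with a Wiener--Tauberian/Mellin injectivity step relying on the non-vanishing of $\Wpn(1+i\cdot)$. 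This is a legitimate alternative and has the virtue of not deferring to an external theorem; the only point deserving more care is the last step, which should be phrased as: $H(y)=h(e^y)$ is bounded continuous, $H*\check\mu_{\ln V_\psi}\equiv 0$, and $\widehat{\mu_{\ln V_\psi}}=\Wpn(1+i\cdot)$ is zero-free, so the translates of $\mu_{\ln V_\psi}$ are $L^1$-dense by Wiener's theorem and $H\equiv 0$ follows by duality against $L^1$. Apart from this, and from the gap discussed above, the proposal is correct.
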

This theorem is proved in Section \ref{sec:prof_int}.
\begin{remark}
	The literature on intertwining of Markov semigroups is  very rich and reveals that it is useful in a variety of contexts, see e.g.~ Diaconis and Fill \cite{Diaconis1990} in relation
	with strong uniform times, by Carmona, Petit and Yor \cite{Carmona-Petit-Yor-98} in relation to the so-called selfsimilar
	saw tooth-processes, by Borodin and Corwin \cite{Borodin-Corwin} in
	the context of Macdonald processes, by Pal and Shkolnikov \cite{Pal-Shkolnikov} for linking diffusion operators,
	and, by Patie and Simon \cite{Patie2012b} to relate classical fractional operators. In this direction, it seems that the family of intertwining relations \eqref{MainInter}  are the first instances involving Markov processes with two-sided jumps.
\end{remark}
\begin{remark}
	Recently, this type of commutation relations proved to be a natural concept in some new  developments of spectral theory. We refer to the work of Miclo \cite{Miclo-16} where it is shown that the notions  of isospectrality and  intertwining  of some self-adjoint Markov operators are equivalent leading to an alternative view of the work of B\'erard \cite{Berard-89} on isospectral compact Riemanian manifolds, see also Arendt et al.~\cite{Arendt} for similar developments that enable them to give counterexamples to the famous Kac's problem. Intertwining  is also the central idea in the recent works of the authors \cite{Patie-Savov-GL,Patie-Savov-16} on the spectral analysis of  classes of non-self-adjoint, non-local Markov semigroups and also in the extension of Krein's theory offered by Patie et al.~\cite{Patie-Savov-Zhao}. We stress that the intertwining relation \eqref{MainInter} and more generally the analytical properties of the solution of the recurrence equation \eqref{eq:fe}, studied in this paper, are critical in the spectral theory of the full class of positive self-similar Markov semigroups developed in \cite{Patie-Savov-Zhao}.
\end{remark}

\section{Known and new results on Bernstein functions and some  related functions}\label{sec:BernFunc}
Recall from \eqref{eq:phi0} that $\phi\in\Bc$ if and only if $\phi\not\equiv 0$ and, for $z\in\Cb_{\lbbrb{0,\infty}}$,
\begin{equation}\label{eq:phi}
\begin{split}
\phi(z)&=\phi(0)+\dr z+\IntOI \lbrb{1-e^{-zy}}\mu(dy)\\
&=\phi(0)+\dr z+z\IntOI e^{-zy}\bar{\mu}(y)dy,
\end{split}
\end{equation}
where $\phi(0)\geq 0,\,\dr\geq 0$ and $\mu$ is a sigma-finite measure satisfying $\IntOI (1\wedge y) \mu(dy)<\infty$. Also here and hereafter we denote the tail of a measure $\lambda $ by $\bar{\lambda}(x)=\int_{|y|\geq x}\lambda(dy)$, provided it exists. Due to its importance, the class $\Bc$ has been studied extensively in several monographs and papers, see e.g.~\cite{Patie-Savov-16,Schilling2010}. Recall the definitions of $\aph,\tph,\dph$ respectively in \eqref{eq:aphi},\eqref{eq:thetaphi} and \eqref{eq:dphi}.

We start by providing some basic functional properties for $\phi\in\Bc$ most of which can be found in \cite{Schilling2010} and \cite[Section 4]{Patie-Savov-16}.
\begin{proposition}\label{propAsymp1}
	Let $\phi\in\Bc$.
	\begin{enumerate}
		\item  For any $z\in\Cb_{\intervalOI}$,
		\begin{equation}\label{eq:phi'}
		\phi'(z)=\dr +\IntOI ye^{-zy}\mu(dy)=\dr+\IntOI e^{-zy}\bar{\mu}(y)dy-z\IntOI e^{-zy}y\bar{\mu}(y)dy.
		\end{equation}
		\item \label{it:bernstein_cm} For any $u\in \R^+$,
		\begin{equation}\label{specialEstimates11}
		0\leq u \phi'(u)\leq \phi(u)\quad \text{ and }\quad |\phi''(u)|\leq 2\frac{\phi(u)}{u^2}.
		\end{equation}
		\item\label{it:shiftBern} If $\tph\in\lbrb{-\infty,0}$ then for any $u\geq \tph$  we have that $\phi(u+\cdot)\in\Bc$.
		\item \label{it:asyphid}  $\phi(u)\stackrel{\infty}{=} \dr u +\sospace{u}$ and $\phi'(u)\stackrel{\infty}{=}\dr+\sospace{1}$. Fix  $a>\aph$, then $\labsrabs{\phi\lbrb{a+ib}}=\labsrabs{a+ib}\lbrb{ \dr+\sospace{1}}$ as $|b|\to\infty$. The latter extends to $a=\aph$ provided $\abs{\phi(\aph)}<\infty$. Moreover, $\phi(z)=\dr z\lbrb{1+\sospace{1}}$ uniformly on $\Cb_{\lbbrb{\tph,\infty}}$.
		\item\label{it:finPhi} If $\phi\lbrb{\infty}<\infty$ and $\mu$ is absolutely continuous then for any fixed $a>\aph$, we have that $\limi{|b|}\phi\lbrb{\ab}=\phi\lbrb{\infty}$.
		\item \label{it:bernstein_cmi} The mapping $u\mapsto \frac{1}{\phi(u)},\, u\in \R^+,$ is completely monotone, i.e.~there exists a positive measure $U$, whose support is contained in $\lbbrb{0,\infty}$, called the potential measure, such that the Laplace transform of $U$ is given via the identity
		\[ \frac{1}{\phi(u)} = \int_0^{\infty} e^{-uy}U(dy).\]
		Moreover, if $\phi\in\BP$, that is $\dr>0,$ then $U$ has a density $u\in \Ctt\!\lbrb{\lbbrb{0,\infty}}$ with $u(0)=\frac{1}{\dr}$ and $u>0$ on $\Rp$.
		\item \label{it:zero-free} $\phi$ is zero-free on $\CbOI$.
		\item In any case, \label{it:flatphi}
		\begin{equation}\label{lemmaAsymp1-1}
		\lim_{u\to\infty}\frac{\phi(u\pm a)}{\phi(u)}=1\,\,\text{ uniformly for $a$-compact intervals  on $\R^+$.}
		\end{equation}
		\item \label{it:RePhi} For any $\phi\in\Bc$ and $a>\dph$
		\begin{equation}\label{eq:rephi}
		\Re\!\lbrb{\phi(a+ib)}=\phi(0)+\dr a+\IntOI\lbrb{1-e^{-ay}\cos\lbrb{by}}\mu(dy)\geq\phi(a)>0.
		\end{equation}
		\item For any $\phi\in\Bc$ and $z\in\CbOI$
		\begin{equation}\label{eq:ineqArg}
		\abs{\arg\phi(z)}\leq \abs{\arg(z)}.
		\end{equation}
		\item\label{it:argPhi} Let $\phi\in\BP^c$\label{BPc}, that is $\dr=0$. Fix any $a>0$. Then, for any $v>0$, there is $u(v)>0$ such that, for any $u>u(v)$,        \begin{equation}\label{eq:argPhi}
		\abs{\arg\lbrb{\phi(a+iu)}}\leq \frac{\pi}{2}-\arctan\!\lbrb{\frac{v\phi(a)}{u}}.
		\end{equation}
	\end{enumerate}
\end{proposition}
Next, we define important functions based on $\phi$ which play a key role in the understanding of the asymptotic behaviour of the Bernstein-gamma functions. Let $\phi\in\Bc$. Define formally, for any $z=a+ib\in\Cb_{\intervalOI}$,
\begin{align}\label{eq:Aphi}
\Aph\!\lbrb{z}&=\int_{0}^{b}\arg \phi\lbrb{a+iu}du.
\end{align}
The following functions are defined for $a>0$
\begin{equation}\label{eq:Gphi}
\begin{split}
\Gph\lbrb{a}&=\int_{1}^{1+a}\ln\phi(u)du,\\
\Hph(a)&=\int_{1}^{1+a}\frac{u\phi'(u)}{\phi(u)}du \quad\textrm{ and }\quad \Hph^*(a)=a\lbrb{\frac{\phi(a+1)-\phi(a)}{\phi(a)}}.
\end{split}
\end{equation}
Next, recall the floor and ceiling functions defined as follows $\lfloor u\rfloor=\max\lbcurlyrbcurly{n\in\Nb:\,n\leq u}$ and $\lceil u\rceil=\min\lbcurlyrbcurly{n\in\Nb:\, u\geq n}$.   For $z=a+ib\in\Cb_{\intervalOI}$, writing \[\mathrm{P}(u)=\lbrb{u-\lrfloor{u}}\lbrb{1-\lbrb{u-\lrfloor{u}}},\] we set
\begin{equation}\label{eq:Ephi}
\,\,\Eph\!\lbrb{z}=\frac12\int_{0}^{\infty}\mathrm{P}(u)\lbrb{\ln\frac{\abs{\phi(u+z)}}{\phi(u+a)}}''du,
\end{equation}
\begin{equation}\label{eq:Rphi}
\Rph(a)=\frac12\int_{1}^{\infty}\mathrm{P}(u)\lbrb{\ln\frac{\phi(u+a)}{\phi(u)}}''du,
\end{equation}
where the derivatives under the integrals are with respect to $u$ and
\begin{align}\label{eq:realInfty}	T_\phi=-\frac12\int_{1}^{\infty}\mathrm{P}(u)\lbrb{\ln\phi(u)}''du.
\end{align}
We start by proving some general properties of these functions.
\begin{theorem}\label{thm:genFuncs}
	Let $\phi\in\Bc$.
	\begin{enumerate}
		\item \label{it:Aphi} For $z=\ab\in\CbOI$
		\begin{eqnarray}\label{eq:Tphi} \label{eq:A=Theta}
		\Aph\!\lbrb{\ab}=\int_{a}^{\infty}\ln\lbrb{\frac{\abs{\phi\lbrb{u+ib}}}{\phi(u)}}du \in \left[0,\frac\pi2|b|\right].
		\end{eqnarray}
		As a result, for any $b\in\R$, $a\mapsto \Aph\!\lbrb{\ab}$ is non-increasing on $\R^+$ and if
		\[\phi^{\dag q }(z)=q+\phi(0)+\dr z+\IntOI \lbrb{1-e^{-zy}}\mu(dy),\,q\geq 0,\]
		then $q\mapsto A_{\phi^{\dag q }}(z)$ is non-increasing for any $z\in\CbOI$.
		\item\label{it:RE}  For any $a>0$, we have that
		\begin{equation}\label{eq:uniBoundRE1}
		\sup_{\phi\in\Bc}\sup_{z\in\Cb_{\lbrb{a,\infty}}}\abs{\Eph\!\lbrb{z}}<\infty \quad\textrm{ and }\quad \sup_{\phi\in\Bc}\sup_{c>a} \abs{\Rph\!\lbrb{c}}<\infty.
		\end{equation}
		\item \label{it:H}	We have the asymptotic relations
		\begin{equation}\label{eq:AsympHphi}
		\begin{split}
		&0\leq \liminfi{a}\frac{\Hph(a)}a\leq \limsupi{a}\frac{\Hph(a)}a\leq 1\\
		&0\leq \liminfi{a}\Hph^*(a)\leq \limsupi{a}\Hph^*(a)\leq 1,
		\end{split}
		\end{equation}
		and, for large $a$,
		\begin{equation}\label{eq:Gph}
		\Gph(a)= a\ln\phi(a)-\Hph(a)+\Hph^*(a)+\ln\phi(a)-\ln\phi(1)+\bospace{\frac1a}.
		\end{equation}
		\item\label{it:Tphi} For any $b\in\R$ we have  	$T_\phi=\limi{a}\lbrb{\Eph\!\lbrb{a+ib}+\Rph(a)}$.
	\end{enumerate}
\end{theorem}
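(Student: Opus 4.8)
The plan is to prove the four items separately, relying throughout on the universal bounds $u\phi'(u)\le\phi(u)$ and $\abs{\phi''(u)}\le 2\phi(u)/u^2$ of Proposition~\ref{propAsymp1}\eqref{it:bernstein_cm}, together with the elementary consequences of the representations \eqref{eq:phi}--\eqref{eq:phi'} that, for every $w\in\CbOI$, $\abs{\phi'(w)}\le\phi'(\Re(w))$, $\abs{\phi''(w)}\le\abs{\phi''(\Re(w))}$ and, by \eqref{eq:rephi}, $\abs{\phi(w)}\ge\Re(\phi(w))\ge\phi(\Re(w))>0$. One computation recurs: for $z=\ab\in\CbOI$ the map $u\mapsto f(u)=\ln\phi(u+z)$ is holomorphic, hence $(\ln\abs{\phi(u+z)})''=\Re(f''(u))$ with $f''(u)=\phi''(u+z)/\phi(u+z)-(\phi'(u+z)/\phi(u+z))^2$, and the bounds just listed give $\abs{f''(u)}\le 3(u+a)^{-2}$; the same bound holds for $(\ln\phi(u+a))''$ and $(\ln\phi(u))''$.

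For item~\eqref{it:Aphi} I fix $b>0$ (the case $b<0$ is symmetric since $\phi(\bar z)=\overline{\phi(z)}$) and differentiate $a\mapsto\Aph(\ab)=\int_0^b\arg\phi(a+iu)\,du$ under the integral sign, which is justified because $\abs{\phi'(a+iu)/\phi(a+iu)}\le\phi'(a)/\phi(a)\le 1/a$ is locally bounded in $(a,u)$. Since $\ln\phi$ is holomorphic on $\CbOI$ with the branch normalised so that $\arg\phi\equiv 0$ on $\Rp$, the Cauchy--Riemann equations give $\partial_a\arg\phi(a+iu)=-\partial_u\ln\abs{\phi(a+iu)}$, whence $\tfrac{d}{da}\Aph(\ab)=\ln\phi(a)-\ln\abs{\phi(a+ib)}$. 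Integrating from $a$ to $\infty$ and using $\limi{R}\Aph(R+ib)=0$ yields \eqref{eq:Tphi}; for the vanishing at infinity one observes that $\arg\phi(R+iu)\to 0$ as $R\to\infty$ for each $u$ --- because $\Im(\phi(R+iu))\to\dr u$ while $\Re(\phi(R+iu))\to\infty$ if $\dr>0$ or $\mubar{0}=\infty$, and $\Re(\phi(R+iu))\to\phi(\infty)<\infty$ with $\Im(\phi(R+iu))\to 0$ if $\dr=0$ and $\mubar{0}<\infty$ --- and then uses dominated convergence on $[0,b]$. Nonnegativity of $\Aph(\ab)$ and the bound $\tfrac{\pi}{2}\abs{b}$ are then immediate: the integrand in \eqref{eq:Tphi} is $\ge 0$ because $\abs{\phi(u+ib)}\ge\Re(\phi(u+ib))\ge\phi(u)$, and $\abs{\arg\phi}<\pi/2$ on $\CbOI$ by \eqref{eq:ineqArg}. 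Monotonicity in $a$ comes from $\tfrac{d}{da}\Aph(\ab)=-\ln(\abs{\phi(a+ib)}/\phi(a))\le 0$; monotonicity in $q$ comes from \eqref{eq:Tphi} applied to $\phi^{\dagger q}=q+\phi\in\Bc$, since with $w=\phi(u+ib)$, $r=\phi(u)$ one has $\Re(w)\ge r>0$ and hence $\partial_q\ln(\abs{q+w}/(q+r))=(q+\Re(w))\abs{q+w}^{-2}-(q+r)^{-1}\le 0$, the last inequality being $(q+\Re(w))(q+r)\le(q+\Re(w))^2+\Im(w)^2=\abs{q+w}^2$.

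Items~\eqref{it:RE} and \eqref{it:Tphi} follow quickly from the estimate of the first paragraph. As $0\le\mathrm{P}(u)\le 1$ and $\abs{(\ln(\abs{\phi(u+z)}/\phi(u+a)))''}\le 6(u+a)^{-2}$, we get $\abs{\Eph(z)}\le 3\int_0^\infty(u+\Re(z))^{-2}\,du\le 3/a$, uniformly over $\phi\in\Bc$ and over $z$ with $\Re(z)>a$; similarly $\abs{\Rph(c)}\le 3\int_1^\infty u^{-2}\,du=3$ for all $c>a$, using $\abs{(\ln(\phi(u+c)/\phi(u)))''}\le 6u^{-2}$ on $u\ge 1$. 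For item~\eqref{it:Tphi}, the first bound gives $\abs{\Eph(\ab)}\le 3/a\to 0$ as $a\to\infty$; writing $\Rph(a)=-\tfrac12\int_1^\infty\mathrm{P}(u)(\ln\phi(u))''\,du+\tfrac12\int_1^\infty\mathrm{P}(u)(\ln\phi(u+a))''\,du$ shows the second term is $\bo{\int_1^\infty(u+a)^{-2}\,du}=\bo{1/a}$, so $\Eph(\ab)+\Rph(a)\to\Tph$ for every $b$.

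For item~\eqref{it:H}, the inequalities $0\le\Hph(a)\le a$ behind \eqref{eq:AsympHphi} are immediate from $0\le u\phi'(u)/\phi(u)\le 1$, and $0\le\Hph^*(a)\le 1$ from concavity of $\phi$ (so $\phi'(a+1)\le\phi(a+1)-\phi(a)\le\phi'(a)$) together with $a\phi'(a)\le\phi(a)$. For \eqref{eq:Gph} I integrate by parts to get $\Gph(a)=(1+a)\ln\phi(1+a)-\ln\phi(1)-\Hph(a)$, then expand $(1+a)\ln\phi(1+a)-(1+a)\ln\phi(a)=(1+a)\ln(1+x)$ with $x=(\phi(1+a)-\phi(a))/\phi(a)\le\phi'(a)/\phi(a)\le 1/a$; since $\ln(1+x)=x+\bo{x^2}$ this equals $(1+a)x+\bo{1/a}=\Hph^*(a)+\bo{1/a}$, and rearranging gives \eqref{eq:Gph}. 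The only genuinely delicate step is the boundary analysis $\limi{R}\Aph(R+ib)=0$ in item~\eqref{it:Aphi}: it requires the case split above for $\Re(\phi(R+iu))$ and a dominating bound such as $\abs{e^{-Ry}\sin(uy)}\le e^{-y}(uy\wedge 1)$ for $R\ge 1$ to control the imaginary part; every remaining step is a direct application of the Bernstein estimates recorded at the outset.
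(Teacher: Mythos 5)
Your proof is correct but departs from the paper's route in two places. For item~\eqref{it:Aphi}, the paper applies Cauchy's theorem to $\log\phi$ around the rectangle with vertices $a+ib$, $u+ib$, $u$, $a$ and lets $u\to\infty$; you instead differentiate $a\mapsto\Aph(\ab)$ under the integral sign, invoke the Cauchy--Riemann relations, and integrate back in $a$. These are the infinitesimal and global forms of the same complex-analytic mechanism, and both hinge on $\lim_{R\to\infty}\Aph(R+ib)=0$; the paper's route via \eqref{eq:ineqArg}, namely $\abs{\arg\phi(R+iu)}\le\arctan(\abs{u}/R)\to 0$ uniformly for $u\in[0,b]$, is cleaner than your case split on $\dr$ and $\mubar{0}$, though both work. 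For items~\eqref{it:RE} and~\eqref{it:Tphi}, the paper quotes $\sup_{z\in\Cb_{\lbrb{a,\infty}}}\abs{\Eph(z)}\le 19/(8a)$ from \cite[Proposition~6.10(2)]{Patie-Savov-16} and then manipulates the auxiliary estimates \eqref{eq:phi'phi}--\eqref{eq:phi''phi} carrying $\sqrt{10}$ constants, whereas you rederive the bound directly from $\abs{\phi'(w)}\le\phi'(\Re(w))$, $\abs{\phi''(w)}\le\abs{\phi''(\Re(w))}$ and $\abs{\phi(w)}\ge\phi(\Re(w))$: this is self-contained, yields the sharper $\abs{\Eph(z)}\le 3/a$, and gives the clean decomposition $\Rph(a)=\Tph+\tfrac12\int_1^\infty\mathrm{P}(u)\lbrb{\ln\phi(u+a)}''\,du$ for item~\eqref{it:Tphi}. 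Your monotonicity-in-$q$ derivative computation also makes explicit what the paper merely asserts after rewriting the logarithm. Item~\eqref{it:H} matches the paper's proof essentially verbatim.
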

We note that the last result of Theorem \ref{thm:genFuncs} \eqref{it:Aphi} concerning the monotonicity of $\Aph$ upon killing can be significantly extended when $\phi$ is a Wiener-Hopf factor in the sense of \eqref{eq:WH1}, see Lemma \ref{lem:Aphi}. The next result gives explicit estimates on $\Aph$ in some special but important cases.\newpage
\begin{theorem}\label{thm:Aph}
	Let $\phi\in\Bc$.
	\begin{enumerate}
		\item \label{it:awpid} If  $\phi\in\BP$, that is $\dr>0$, then, for any $a>0$ fixed and as $|b|\to\infty$,
		\begin{align}\label{eq:AphiAsymp}	 			
		\Aph\!\lbrb{a+ib} &= \frac{\pi}{2}|b|+\sospace{|b|}.
		\end{align}
		Even more precisely, as $|b|\to\infty,$
		\begin{align}\label{eq:AphiAsymp4}	 			
		\Aph\!\lbrb{a+ib} &\leq \frac{\pi}{2}|b|-\lbrb{a+\frac{\phi(0)}{\dr}+\frac{\bar{\mu}\!\lbrb{\frac{1}{|b|}}\lbrb{1+\sospace{1}}}{\dr}}\ln |b|.
		\end{align}		
		\item \label{it:awpia} Next, let $\phi\in\Bc_{\alpha}$, see \eqref{eq:classesPhi1}, with $\alpha\in\lbrb{0,1}$. Then, for any fixed $a>0$,
		\begin{align}\label{eq:AphiAsymp1}
		\Aph\!\lbrb{a+ib}\stackrel{\infty}{=}\frac{\pi}{2}\alpha|b|\lbrb{1+\sospace{1}}.
		\end{align}
		\item \label{eq:subexpp}  Let $\phi\in\BP^c$, that is $\dr=0,$ with $\mu(dy)=\upsilon(y)dy$. If $\upsilon(0^+)<\infty$ exists and $||\upsilon||_\infty=\sup_{y\geq 0}\upsilon(y)<\infty$ then, for any $a>0$,
		\[\limi{b}\frac{\Aph\!\lbrb{a+ib}}{\ln(b)}=\frac{\upsilon(0^+)}{\phi(\infty)}.\]
		Otherwise, if  $\upsilon(0^+)=\infty$, $\upsilon(y)=\upsilon_1(y)+\upsilon_2(y),\,\upsilon_1,\upsilon_2\in\Lspace{1}{\Rp}$, $\upsilon_1\geq0$ is non-increasing in $\Rp$, \[\IntOI \upsilon_2(y)dy\geq0 \text{ and } \abs{\upsilon_2(y)}\leq \lbrb{\int_{y}^{\infty}\upsilon_1(r)dr}\vee C\] for some $C>0$, then 	\[\limi{b}\frac{\Aph\!\lbrb{a+ib}}{\ln(b)}=\infty.\]
	\end{enumerate}
	\textit{This result is proved  mainly in Section \ref{sec:ProofsBE} with item \eqref{eq:subexpp} established at the end of Section \ref{sec:FE}, see page \pageref{page:subexpp}.}
\end{theorem}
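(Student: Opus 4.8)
The plan is to derive all three items from the two integral representations of $\Aph$ established in Theorem~\ref{thm:genFuncs}\eqref{it:Aphi}, namely, for $z=a+ib\in\CbOI$,
\[
\Aph\lbrb{a+ib}=\int_{0}^{b}\arg\phi\lbrb{a+iu}\,du=\int_{a}^{\infty}\ln\lbrb{\frac{\abs{\phi\lbrb{u+ib}}}{\phi(u)}}\,du\in\lbbrbb{0,\frac{\pi}{2}\abs{b}},
\]
together with the behaviour of $\phi$ along the vertical line $\Cb_a$ supplied by Proposition~\ref{propAsymp1}. In every regime the task reduces to identifying the rate at which $\arg\phi\lbrb{a+iu}$ reaches its limit as $\abs{u}\to\infty$ (equivalently the rate of decay in $u$ of $\abs{\phi\lbrb{u+ib}}/\phi(u)$), and then to integrate; the two elementary computations used repeatedly are the Ces\`aro (Abelian) averaging $\frac{1}{\abs{b}}\int_{0}^{b}\arg\phi\lbrb{a+iu}\,du\to\lim_{\abs{u}\to\infty}\arg\phi\lbrb{a+iu}$ and the identity $\IntOI\ln\lbrb{1+t^{-2}}dt=\pi$ with its local expansion $\int_{0}^{\varepsilon}\ln\lbrb{1+t^{-2}}dt=2\varepsilon\ln(1/\varepsilon)+\bo{\varepsilon}$.

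For item~\eqref{it:awpid} ($\dr>0$): Proposition~\ref{propAsymp1}\eqref{it:asyphid} gives $\phi(z)=\dr z\lbrb{1+\so{1}}$, whence $\arg\phi\lbrb{a+iu}=\arg\lbrb{a+iu}+\so{1}\to\frac{\pi}{2}$, and Ces\`aro averaging of the first representation immediately yields \eqref{eq:AphiAsymp}. For the sharper estimate \eqref{eq:AphiAsymp4} I would work from the second representation. The crude bound $\abs{1-e^{-zy}}\le\frac{\abs{z}}{\Re z}\lbrb{1-e^{-(\Re z)y}}$ already gives $\abs{\phi\lbrb{u+ib}}\le\frac{\sqrt{u^{2}+b^{2}}}{u}\phi(u)$, which after integration and the local expansion above produces $\Aph\lbrb{a+ib}\le\frac{\pi}{2}\abs{b}-a\ln\abs{b}+\bo1$. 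To recover the finer coefficient one refines this by splitting the L\'evy integral $\IntOI\lbrb{1-e^{-(u+ib)y}}\mu(dy)$ at the scale $y\asymp\abs{b}^{-1}$: on $(0,\abs{b}^{-1})$ the bound $\abs{1-e^{-zy}}\le\abs{z}y$ is essentially sharp, while the jumps of size larger than $\abs{b}^{-1}$ contribute $\bo{\mubar{\abs{b}^{-1}}}$; combining these with the atom $\phi(0)$ of the drift part and integrating against the kernel $\tfrac12\ln(1+b^{2}/u^{2})$, then expanding near the lower limit $a$, leads to an estimate of the form \eqref{eq:AphiAsymp4}. Obtaining this sharp, $u$-uniform control of $\abs{\phi\lbrb{u+ib}}/\phi(u)$ — correctly isolating the contributions of $\phi(0)$ and of the $\abs{b}^{-1}$-scale mass of $\mu$ — is the technical crux of the theorem.

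For item~\eqref{it:awpia} ($\phi\in\Bc_{\alpha}$, $\alpha\in(0,1)$): here $\dr=0$, so $\phi(u)=\phi(0)+u\IntOI e^{-uy}\mubar{y}dy$ and the hypothesis on $\bar{\mu}$ in \eqref{eq:classesPhi1} is, by a Tauberian theorem, equivalent to $\phi$ being regularly varying of index $\alpha$ at infinity. The point is that this regular variation extends into the sector: for $z=re^{i\theta}$ with $\theta\in(0,\frac{\pi}{2}]$ one gets $\phi\lbrb{re^{i\theta}}\simi\Gamma(1-\alpha)\,\mubar{1/r}\,e^{i\alpha\theta}$ as $r\to\infty$, the interchange of limits being justified by Potter-type bounds, for which the quasi-monotonicity of $y\mapsto\mubar{y}/y^{\alpha}$ (bounded variation near $0$ together with \eqref{eq:quasi-monotone}) furnishes the required uniform domination. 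Consequently $\arg\phi\lbrb{a+iu}=\alpha\arg\lbrb{a+iu}+\so{1}\to\frac{\pi}{2}\alpha$, and Ces\`aro averaging of the first representation gives \eqref{eq:AphiAsymp1}.

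For item~\eqref{eq:subexpp} ($\dr=0$, $\mu(dy)=\upsilon(y)dy$), whose proof is deferred to Section~\ref{sec:FE}: in both sub-cases the L\'evy condition together with the boundedness assumptions forces $\mu$ to be finite, so $\phi\lbrb{\infty}<\infty$ and, by Proposition~\ref{propAsymp1}\eqref{it:finPhi}, $\Re\phi\lbrb{a+iu}\to\phi\lbrb{\infty}$ while $\Im\phi\lbrb{a+iu}\to0$ as $\abs{u}\to\infty$; thus $\arg\phi\lbrb{a+iu}\to0$ and the growth of $\Aph\lbrb{a+ib}$ is only logarithmic. The $\ln\abs{b}$-rate is governed by the range $1\le u\lesssim\abs{b}$, on which one shows $\arg\phi\lbrb{a+iu}\asymp\frac{\upsilon(0^{+})}{u\,\phi(\infty)}$, the oscillatory integral $\IntOI e^{-uy}\sin(by)\upsilon(y)\,dy$ being concentrated near $y=0$; integrating over that range gives $\frac{\upsilon(0^{+})}{\phi(\infty)}\ln\abs{b}\lbrb{1+\so1}$ when $\upsilon(0^{+})<\infty$. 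In the second sub-case the decomposition $\upsilon=\upsilon_{1}+\upsilon_{2}$ with $\upsilon_{1}$ non-increasing and $\upsilon_{1}(0^{+})=\infty$ lets one bound $\Aph$ from below by the corresponding quantity for the Bernstein function carried by $\upsilon_{1}$ alone; there $\Im\phi\lbrb{a+iu}$ is of order strictly larger than $u^{-1}$, so $\int_{1}^{\abs{b}}\arg\phi\lbrb{a+iu}\,du$ outgrows every multiple of $\ln\abs{b}$. The main obstacle here is that no smoothness or bounded-variation hypothesis is placed on $\upsilon$ in the first sub-case, so the asymptotics of the oscillatory integral must be obtained by rescaling and dominated convergence rather than by integration by parts, and the remainder $\upsilon_{2}$ in the second sub-case has to be absorbed via the domination $\abs{\upsilon_{2}(y)}\le\lbrb{\int_{y}^{\infty}\upsilon_{1}(r)dr}\vee C$.
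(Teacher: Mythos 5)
Your treatment of item~\eqref{it:awpia} is essentially the paper's: both arguments extend the regular variation of $\bar\mu$ into the sector (the paper via the quasi-monotone Abelian theorem, you via Potter-type domination) to conclude $\arg\phi\lbrb{a+iu}\to\frac{\pi\alpha}{2}$, then Ces\`aro-average. The coarse estimate \eqref{eq:AphiAsymp} in item~\eqref{it:awpid} is likewise the same idea, stated more cleanly than in the paper. The genuinely new parts of the theorem are the sharp inequality \eqref{eq:AphiAsymp4} and item~\eqref{eq:subexpp}, and there your route diverges from the paper and has gaps.

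For \eqref{eq:AphiAsymp4}, the paper does not work from the $\ln$-representation at all. It factors $\phi(z)=\dr z\lbrb{1+\tfrac{\phi(0)}{\dr z}+\Fo{\bar\mu_{a,\dr}}(-ib)}$, decomposes $\arg\phi$ accordingly, and then the whole point is Proposition~\ref{prop:harmConvo}: a Wiener--L\'evy argument identifying $\arg\lbrb{1+\Fo{\bar\mu_{a,\dr}}(-ib)}$ with $-\IntOI\sin(by)\Logm{\bar\mu_{a,\dr}}(dy)$ for a measure $\Logm{\bar\mu_{a,\dr}}$ on $\Rp$. The $\bar\mu(1/|b|)/\dr$ coefficient comes from the lower bound \eqref{eq:arctan2} on $\IntOI\frac{1-\cos(by)}{y}\Logm{\bar\mu_{a,\dr}}(dy)$, which in turn needs the sign information $\Im\Fo{\bar\mu_{a,\dr}}(ib)>0$ from \eqref{eq:toarctan2}. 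Your sketch ("split the L\'evy integral at scale $|b|^{-1}$, integrate against $\tfrac12\ln(1+b^2/u^2)$, expand near $a$") is plausible as a heuristic, but if you try to push it through you will find that the upper bound on $\ln\abs{\phi(u+ib)}/\phi(u)$ it produces does not cleanly integrate to $-\bar\mu(1/|b|)\ln|b|/\dr$; the one-sided control needed (a lower bound on $\phi(u)/\abs{\phi(u+ib)}$ that is uniform enough in $u$) is exactly what the $\Logm{}$ machinery encodes, and there is no obvious shortcut. You correctly flag this as "the technical crux", but the sketch does not actually deliver it.

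For item~\eqref{eq:subexpp}, first sub-case, two concrete problems. First, "rescaling and dominated convergence" cannot give $\Im\phi\lbrb{a+iu}\simi\upsilon(0^+)/u$: after the substitution $y=t/u$ one gets $\frac1u\int_0^\infty e^{-at/u}\sin t\,\upsilon(t/u)\,dt$, whose integrand is dominated only by $\|\upsilon\|_\infty\abs{\sin t}\notin\Lspace{1}{\Rp}$, so the theorem does not apply; the limit exists only in an Abel/averaged sense. Second, and more importantly, the pointwise statement $\arg\phi\lbrb{a+iu}\asymp\upsilon(0^+)/(u\phi(\infty))$ is false as written: take $\upsilon=\ind{y\in(0,1)}$, then $\Im\phi\lbrb{a+iu}=\frac{1-e^{-a}\cos u}{u}+O(u^{-2})$, which oscillates with amplitude comparable to its mean. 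What is true is that the oscillation integrates to $\bo{1}$ over $u\in(1,b)$, and this is precisely why the paper first integrates in $u$ (converting $\sin(uy)$ into the absolutely-integrable kernel $\frac{1-\cos(by)}{y}$) before doing any asymptotic analysis; the $\arctan$ and $\log_0$ corrections are then controlled by the convolution-power estimates \eqref{eq:convoEst}--\eqref{eq:secTerm}. Your outline skips this and would have to be rebuilt around the integrated quantity to be correct. For the second sub-case, the proposed lower bound "by the Bernstein function carried by $\upsilon_1$ alone" is not a monotone comparison because $\upsilon_2$ changes sign; the paper handles this through the decomposition $\phi=\phi^{\triangleleft c}+\phnstc$ and the auxiliary lemmas controlling the residual, and some such device is unavoidable.
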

\begin{remark}\label{rem:precision}
	Proposition \ref{thm:imagineryStirling} below  gives more detailed information for $\Aph$ when $\phi\in\BP$ which we have omitted here for the sake of clarity.
\end{remark}
\begin{remark}\label{rem:3a}
	The requirements for item \eqref{eq:subexpp} are not stringent. They impose that in a neighbourhood of $0$, the density can be decomposed as a non-increasing function and an oscillating function $\upsilon_2$ of order  $\bospace{\int_{y}^{\infty}\upsilon_1(r)dr}$. This is the case when $\upsilon$ itself is non-increasing and therefore $\upsilon_2\equiv0$.
\end{remark}
\section{The class of Bernstein-Gamma functions} \label{sec:mainResults}
Perhaps one of the most used special functions is the gamma function $\Gamma$ introduced by Euler in \cite{Euler-1728}. Amongst  various properties it satisfies the recurrence equation
\begin{equation}\label{eq:GammaRec}
\Gamma\!\lbrb{z+1}=z\Gamma(z),\quad\Gamma(1)=1,
\end{equation}
valid on $\Cb\setminus\Nb^-$.
For example, relation \eqref{eq:GammaRec} allows for the derivation of both the Weierstrass product representation of $\Gamma(z)$ and the precise Stirling asymptotic expression for the behaviour of the gamma function as $|z|\to\infty$, see e.g.~\cite{Lebedev-72} for more information. Here, we use it to introduce the class of Bernstein-gamma functions denoted by $\Wc_\Bc$, which appear in most of the results above.
\begin{eqnarray}\label{eq:defWb} \Wc_\Bc\!&=&\!\left\{W \in \mathcal{P}\!:W\!\lbrb{1}=1, W\!\lbrb{z+1}=\phi(z)W\!\lbrb{z}\!,\,\Re(z)>0,  \textrm{ for some } \phi\!\in\!\Be\right\}\!.\label{def:WB}
\end{eqnarray}
It means that $\Wp \in \Wc_\Bc$,  for some  $\phi \in \Be$  if there exists a positive random variable $Y_{\phi}$ such that $\Wp\!\lbrb{z+1}=\Ebb{Y_{\phi}^z},\,\Re(z)\geq 0$.

Note that, when in \eqref{eq:GammaRec}, $\phi(z)=z \in \Be$, then $\Wp$ boils down to the gamma function with $Y_{\phi}$  the standard exponential random variable. This yields to the well known integral  representation of the gamma function, i.e.~ $\Gamma(z)=\int_{0}^{\infty}x^{z-1}e^{-x}dx$ valid on $\Re(z)> 0$. %More generally, it is proved in that for any $\phi \in \Be$, the law of $Y_{\phi}$ is absolutely continuous with a density denoted by $\nu_{\phi}$ and thus  on $\Re(z)\geq 0$, $\Wp(z)=\int_{0}^{\infty}x^{z-1}\nu_{\phi}(x)dx$.}
The functions $\Wp \in\Wc_\Bc$ have already appeared explicitly, see \cite{Alili-Jadidi-14,Hirsch-Yor-13,Patie-Savov-13,Patie-Savov-16}, or implicitly, see \cite{Bertoin-Yor-05,Maulik-Zwart-06}, in the literature. However, with the exception of \cite[Chapter 6]{Patie-Savov-16} we are not aware of other studies that focus on the understanding of $\Wp$ as a holomorphic function on the complex half-plane $\CbOI$. The latter is of importance at least for the following reasons. First, the class $\Wc_\Bc$ arises in the spectral study of some Markov semigroups and the quantification of its analytic properties in terms of $\phi\in\Bc$ opens the door to obtaining explicit information about most of the spectral objects and quantities of interest, see \cite{Patie-Savov-16}. Then, the class $\Wc_\Bc$ appears in the description of $\MPs$ and hence of $\MPsi$, see \eqref{eqM:MIPsi} and \eqref{eq:MIPsi}. Thus, the understanding of the analytic properties of $\Wp$ yields detailed information about the law of the exponential functionals.  Also, the class $\Wc_\Bc$ contains some well-known special functions, e.g.~the Barnes-gamma function and the q-gamma function related to the q-calculus, see \cite{Bertoin-Biane-Yor-04,Chhaibi-2016}, \cite[Remark 6.4]{Patie-Savov-16}, and the derivation of the analytic properties of $\Wc_\Bc$ renders many special computations to a direct application of the results concerning the functions $\Wp$. Equations of the type \eqref{eq:Wp} have been considered on $\Rp$ in greater generality. For example when $\phi$ is merely a $\log$-concave function on $\Rp$, Webster \cite{Webster-97} has provided comprehensive results on the solution to \eqref{eq:Wp}, which we use  readily throughout this work since $\phi\in\Bc$ is a $\log$-concave function on $\Rp$ itself.

In this Section, we start by stating the main results of our work concerning the class $\Wc_\Bc$ and postpone their proofs to the subsections \ref{sec:proof_wb}--\ref{sec:proof_wbe}. In particular, we derive and state representations, asymptotic and analytical properties of $\Wp$. To do so we introduce some notation.
For any $\phi\in\Bc$ and any $a>\aph$ we set
\begin{equation}\label{eq:zerosphi}
\Zc_{a}(\phi)=\lbcurlyrbcurly{z\in\Cb_{a}:\,\phi(z)=0}=\lbcurlyrbcurly{z\in\Cb_{a}:\,\phi(\overline{z})=0}.
\end{equation}
Also, we recall \eqref{eq:aphi}, \eqref{eq:thetaphi} and \eqref{eq:dphi}, that is
\begin{align}
\aph&=\inf\{u<0:\phi\in  \Ac_{(u,\infty)}\}\in [-\infty,0]\label{eq:aphi1},\\
\tph&=\sup\lbcurlyrbcurly{u\in\lbbrbb{\aph,0}:\,\phi(u)=0}\in\lbbrbb{-\infty,0}\label{eq:thetaphi1},\\
\dph&=\max\curly{\aph,\tph}\in\lbbrbb{-\infty,0}\label{eq:dphi1}.
\end{align}
The next theorem contains some easy but useful results which stem from the existing literature. Before we state them we recall from \cite[Chapter 6]{Patie-Savov-16} that
the class $\Bc$ is in bijection with $\Wc_\Bc$ via the absolutely convergent product on (at least) $\CbOI$
\begin{equation}\label{eq:BernWeier}
\Wp\!\lbrb{z}=\frac{e^{-\gamma_\phi z}}{\phi(z)}\prod_{k=1}^{\infty}\frac{\phi(k)}{\phi\!\lbrb{k+z}}e^{\frac{\phi'(k)}{\phi(k)}z},%\in\Ac_{\lbrb{d_\phi,\infty}}\cap \Mtt_{\lbrb{\aph,\infty}},
\end{equation}
where
\begin{equation}\label{eq:EulerConst}
\gamma_\phi=\lim\ttinf{n}\lbrb{\sum_{k=1}^{n}\frac{\phi'(k)}{\phi(k)}-\ln\phi(n)}\in\lbbrbb{-\ln\phi(1),\frac{\phi'(1)}{\phi(1)}-\ln\phi(1)}.
\end{equation}

\begin{theorem}\label{thm:Wp}
	Let $\phi \in \Be$.
	\begin{enumerate}
		\item  \label{it:A} \label{it:B} \label{it:C}
		$\Wp \in\Ac_{\lbrb{\dph,\infty}}\cap \Mtt_{\lbrb{\aph,\infty}}$ and $\Wp$ is zero-free on $\Cb_{\lbrb{\aph,\infty}}$. If $\phi(0)>0$ then  $\Wp\in\Ac_{\lbbrb{0,\infty}}$ and $\Wp$ is zero-free on $\Cb_{\lbbrb{0,\infty}}.$
		If $\phi(0)=0$ (resp.~$\phi(0)=0$ and $\phi'(0^+)<\infty$)  then $\Wp$ (resp.~$z \mapsto z\Wp(z)$) extends continuously to $i\R\setminus\Zc_0(\phi)$ (resp.~$\lbrb{i\R\setminus\Zc_0(\phi)}\cup\curly{0}$)  and if $\mathfrak{z} \in\Zc_0(\phi)$ then \[\lim\limits_{\stackrel{\Re(z)\geq 0}{z \to \mathfrak{z}}} \phi(z)\Wp\lbrb{z}=\Wp\lbrb{\mathfrak{z}+1}.\] Finally, if $\phi(0)=0,\phi''\!(0^+)<\infty$ then
		$\Wp\lbrb{z}-\frac{1}{\phi'\!\lbrb{0^+}z}$ extends  continuously to the region $\lbrb{i\R\setminus\Zc_0(\phi)}\cup\curly{0}$.
		
		\item\label{it:D} There exists $ \mathfrak{z} \in \Zcph $ with $\mathfrak{z}\neq 0$  if and only if $\phi(0)=\dr=0$ and $\mu=\sum_{n=0}^{\infty}c_n\delta_{ h k_n}$ with $\sum_{n=1}^{\infty}c_n<\infty$,  $h>0$, $k_n\in\N$ and $c_n\geq0$ for all $n\in \N$. In this case, the mappings \[z\mapsto e^{-z\ln\phi(\infty)}\Wp(z) \text{ and } z\mapsto\abs{\Wp(z)}\] are periodic with period $\frac{2\pi i}{h}$ on $\CbOI$.
		\item \label{it:E}  Assume that $\tph\in\lbrb{-\infty,0}$, $\Zc_{\tph}(\phi)=\lbcurlyrbcurly{\tph}$ and $\abs{\phi''(\tph^+)}<\infty$, the latter being always true if $\tph>\aph$. Then \[z \mapsto \Wp(z)-\frac{\Wp(1+\tph)}{\phi'(\tph^+)\lbrb{z-\tph}}\in\Ac_{\lbbrb{\tph,\infty}}.\] In this setting $\phi'(\tph^+)=\dr+\IntOI ye^{-\tph y}\mu(dy)\in\lbrb{0,\infty}$.
		\item\label{it:F} Assume that $\aph<\tph\leq 0$ and put \[N_{\aph}=\max\curly{n\in\N:\,\tph-n>\aph}\in\N\cup\curly{\infty}.\] Then there exists an open set $\Oc\subset\Cb$ such that $[\tph-N_{\aph},1]\subset\Oc,$ if $N_{\aph}<\infty$, and $\lbrbb{-\infty,1}\subset\Oc$, if $N_{\aph}=\infty$, and $\Wp$ is meromorphic on $\Oc$ with simple poles at $\curly{\tph-k}_{0\leq k< N_{\aph}+1}$ and residues ${\curly{\mathfrak{R}_k=\frac{\Wp\lbrb{1+\tph}}{\phi'(\tph)\prod_{j=1}^{k}\phi(\tph-j)}}}_{0\leq k< N_{\aph}+1}$ with the convention $\prod_{1}^0=1$.
		\item\label{it:G} Let $\phi\in\Bc$ and $c>0$. Then $W_{c\phi}(z)=c^{z-1}\Wp(z)$.		
	\end{enumerate}
	
\end{theorem}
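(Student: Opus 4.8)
The plan is to deduce every item from the absolutely and locally uniformly convergent Weierstrass product \eqref{eq:BernWeier}--\eqref{eq:EulerConst}, which together with its convergence I take over from \cite[Chapter 6]{Patie-Savov-16} (and, on $\Rp$, from Webster \cite{Webster-97}), and from the functional equation \eqref{eq:Wp}, using only the elementary facts about $\phi$ collected in Proposition \ref{propAsymp1}. For the first item I would argue that on $\Cb_{\lbrb{\dph,\infty}}$ every factor of \eqref{eq:BernWeier} is holomorphic and non-vanishing: $\phi\in\Ac_{\lbrb{\aph,\infty}}$ has no poles there, while \eqref{eq:rephi} gives $\Re\phi(a+ib)\ge\phi(a)>0$ for every $a>\dph$, so $1/\phi(z)$ and $\phi(k)/\phi(k+z)$ are holomorphic and zero-free on $\Cb_{\lbrb{\dph,\infty}}$; since a locally uniform limit of non-vanishing holomorphic functions is identically zero or zero-free and $\Wp\not\equiv0$, this yields $\Wp\in\Ac_{\lbrb{\dph,\infty}}$ and its zero-freeness on $\Cb_{\lbrb{\aph,\infty}}$ (the part outside $\Cb_{\lbrb{\dph,\infty}}$ being covered by the meromorphy discussed next). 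When $\phi(0)>0$, the identity $\Re\phi(ib)=\phi(0)+\IntOI(1-\cos by)\mu(dy)\ge\phi(0)>0$ read off \eqref{eq:phi0} lets the same argument run up to the boundary $\Cb_0$, giving $\Wp\in\Ac_{\lbbrb{0,\infty}}$, zero-free on $\Cb_{\lbbrb{0,\infty}}$. Meromorphy on $\Cb_{\lbrb{\aph,\infty}}$ follows by iterating $\Wp(z)=\Wp(z+1)/\phi(z)$: the numerator is holomorphic for $\Re z>\dph-1$ and $\phi\in\Ac_{\lbrb{\aph,\infty}}$, so $\Wp$ extends meromorphically with poles confined to the zeros of the shifted copies of $\phi$. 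Finally, when $\phi(0)=0$ (so $\dph=0$), the same identity $\Wp(z)=\Wp(z+1)/\phi(z)$, with $\Wp(z+1)$ holomorphic for $\Re z>-1$ and $\phi$ continuous and zero-free on $i\R\setminus\Zcph$, gives the continuous extension of $\Wp$ there and, for $\mathfrak z\in\Zcph$, the limit $\lim_{\Re z\ge0,\,z\to\mathfrak z}\phi(z)\Wp(z)=\lim\Wp(z+1)=\Wp(\mathfrak z+1)$; at $z=0$ one uses $\phi(z)=\phi'(0^+)z(1+\so{1})$ as $z\to0$ (dominated convergence in \eqref{eq:phi0}, valid since $\phi'(0^+)<\infty$), so $z\Wp(z)=\tfrac{z}{\phi(z)}\Wp(z+1)\to1/\phi'(0^+)$, and a second-order expansion of $\phi$ at $0$ and of $\Wp$ at $1$ handles $\Wp(z)-1/(\phi'(0^+)z)$ when in addition $\phi''(0^+)<\infty$.

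For \eqref{it:D} I would first determine $\Zcph\setminus\{0\}$ from \eqref{eq:phi0}: since $\Re\phi(ib)=\phi(0)+\IntOI(1-\cos by)\mu(dy)$ and $\Im\phi(ib)=\dr b+\IntOI\sin(by)\mu(dy)$, for $b\ne0$ the equation $\phi(ib)=0$ forces $\phi(0)=0$, then $\cos(by)=1$ for $\mu$-a.e.\ $y$, then $\dr b=0$, i.e.\ $\dr=0$ and $\mu$ is carried by $\tfrac{2\pi}{|b|}\N$, equivalently $\mu=\sum_n c_n\delta_{hk_n}$ with $h>0$, $k_n\in\N$, $\sum_n c_n<\infty$; conversely $\tfrac{2\pi i}{h}\in\Zcph$ for every such $\mu$. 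In that case \eqref{eq:phi0} gives $\phi(z+\tfrac{2\pi i}{h})=\phi(z)$ on $\CbOI$; inserting this into \eqref{eq:BernWeier}, all factors $1/\phi(z)$ and $\phi(k)/\phi(k+z)$ are unchanged, so $\Wp(z+\tfrac{2\pi i}{h})/\Wp(z)=e^{-\frac{2\pi i}{h}\gamph}\prod_{k\ge1}e^{\frac{2\pi i}{h}\phi'(k)/\phi(k)}$, which by \eqref{eq:EulerConst} and $\phi(n)\to\phi(\infty)=\mu((0,\infty))\in(0,\infty)$ telescopes to $e^{\frac{2\pi i}{h}\ln\phi(\infty)}$; since $\ln\phi(\infty)\in\R$, this gives $\Wp(z+\tfrac{2\pi i}{h})=e^{\frac{2\pi i}{h}\ln\phi(\infty)}\Wp(z)$ and hence the stated periodicity of $|\Wp|$ and of $e^{-z\ln\phi(\infty)}\Wp(z)$.

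Items \eqref{it:E} and \eqref{it:F} are again driven by $\Wp(z)=\Wp(z+1)/\phi(z)$, localised near $\tph$ and its integer translates. Under the hypotheses of \eqref{it:E} one has $\dph=\tph$, $\phi(\tph+\cdot)\in\Bc$ by Proposition \ref{propAsymp1}\eqref{it:shiftBern}, $\phi'(\tph^+)=\dr+\IntOI ye^{-\tph y}\mu(dy)\in(0,\infty)$ (finite because $\phi''(\tph^+)<\infty$, positive because $\phi(\tph+\cdot)$ is non-constant), and $\phi(z)=\phi'(\tph^+)(z-\tph)(1+\so{1})$ as $z\to\tph$; since $\Wp(z+1)$ is holomorphic and non-vanishing near $\tph+1\in\Cb_{\lbrb{\tph,\infty}}$, the map $\Wp(z)-\tfrac{\Wp(1+\tph)}{\phi'(\tph^+)(z-\tph)}$ is bounded near $\tph$, hence has a removable singularity there, and on the rest of $\Cb_\tph$ it is continuous because $\Zc_\tph(\phi)=\{\tph\}$ makes $\phi$ continuous and zero-free, so the difference lies in $\Ac_{\lbbrb{\tph,\infty}}$. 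For \eqref{it:F}, when $\aph<\tph$ the point $\tph$ is an interior real point of $\lbrb{\aph,\infty}$, so $\phi$ is holomorphic there with $\phi'(\tph)>0$, and it is the only real zero of $\phi$ on $\lbrb{\aph,\infty}$ ($\phi$ being non-decreasing and real-analytic there); on a small disc about $\tph-k$ with $0\le k\le N_{\aph}$ one writes $\Wp(z)=\Wp(z+k+1)\big/\prod_{j=0}^{k}\phi(z+j)$ — legitimate since $\Wp$ is holomorphic and non-vanishing near $\tph+1$ and $\phi(\tph-j)<0$ for $1\le j\le k$ — whence only the last denominator factor vanishes, simply, producing a simple pole at $\tph-k$ with residue $\Wp(1+\tph)\big/\big(\phi'(\tph)\prod_{j=1}^{k}\phi(\tph-j)\big)$. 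Gluing these discs with $\Cb_{\lbrb{\tph,\infty}}$, where $\Wp$ is holomorphic, and thinning them so as to remain in $\Cb_{\lbrb{\aph,\infty}}$ and to avoid the remaining (discrete) zeros of $\phi$, produces the required open set $\Oc$.

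Finally, \eqref{it:G} is immediate from \eqref{eq:BernWeier} since $(c\phi)(k)=c\phi(k)$, $(c\phi)'(k)/(c\phi)(k)=\phi'(k)/\phi(k)$ and $\gamma_{c\phi}=\gamph-\ln c$ by \eqref{eq:EulerConst}, so all occurrences of $c$ cancel except for the overall factor $e^{z\ln c}/c=c^{z-1}$; alternatively, $c^{z-1}\Wp(z)$ satisfies \eqref{eq:Wp} for $c\phi$ with value $1$ at $z=1$ and is the Mellin transform of $cY_\phi$, hence equals $W_{c\phi}$. The only genuinely delicate points in this scheme are the convergence of \eqref{eq:BernWeier} up to the boundary line $\Cb_0$ that underlies the first item, for which I would rely on \cite[Chapter 6]{Patie-Savov-16} and Webster \cite{Webster-97}, and the case analysis near $z=0$ and near $z=\tph$; everything else is routine manipulation of the infinite product and of the functional equation.
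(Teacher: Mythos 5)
Your proposal is correct and follows essentially the same route as the paper: both rest on the recurrence $\Wp(z)=\Wp(z+1)/\phi(z)$ for the analytic/meromorphic extension and boundary behaviour, and on the Weierstrass product plus $\gamph$ for items \eqref{it:D} and \eqref{it:G}. The only differences are cosmetic: for item \eqref{it:A} you argue holomorphy and zero-freeness on $\Cb_{\lbrb{\dph,\infty}}$ directly factor-by-factor in \eqref{eq:BernWeier} (which requires you to know the product converges there, not only on $\CbOI$ — so in effect you still fall back on the same recurrence argument the paper uses, or the citation), whereas the paper simply cites \cite[Theorem 6.1, Corollary 7.8]{Patie-Savov-16}; and for item \eqref{it:D} you identify $\Zcph\setminus\{0\}$ analytically from $\Re\phi(ib)$ and $\Im\phi(ib)$ and then compute the ratio $\Wp(z+\tfrac{2\pi i}{h})/\Wp(z)$ directly, whereas the paper reads the lattice structure off the probabilistic identity $\Eb[e^{-z_0\xi_1}]=1$ and rewrites $\Wp(z)=\frac{e^{z\ln\phi(\infty)}}{\phi(z)}\prod_{k\ge1}\frac{\phi(k)}{\phi(k+z)}$. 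Both yield the same periodicity; neither variation changes the substance of the argument.
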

	\textit{This result is proved  mainly in Section \ref{sec:proof_wb}.}
	
Theorem \ref{thm:Wp} and especially representation \eqref{eq:BernWeier} allow for the understanding of the asymptotic behaviour of $\abs{\Wp\lbrb{z}}$, as $|z|\to \infty$. The main functions that control the asymptotic are $\Aph,\Gph$ introduced in \eqref{eq:Aphi} and \eqref{eq:Gphi}, but before stating the main results,
we introduce subclasses of $\Bc$ equivalent to $\Npb$ and $\Nth$, see \eqref{eq:polyclass} and \eqref{eq:polyclass1}. For any $\beta\in\lbbrb{0,\infty}$
\begin{equation}\label{eq:polyclassB}
\Beb=\,\,\curly{\phi\in\Bc:\,\limi{|b|}\frac{\abs{\Wp\!\lbrb{a+ib}}}{|b|^{-\beta+\varepsilon}}=\limi{|b|}\frac{|b|^{-\beta-\varepsilon}}{\abs{\Wp\!\lbrb{a+ib}}}=0,\,\,\forall a>\dph,\forall\varepsilon>0}
\end{equation}
and
\begin{equation*}
\Bep{\infty}=\,\,\curly{\phi\in\Bc:\,\limi{|b|}|b|^\beta\abs{\Wp\!\lbrb{a+ib}}=0,\,\,\forall a>\dph,\forall\beta\geq 0}.
\end{equation*}
 Also, for any $\theta\in\lbrbb{0,\frac{\pi}{2}}$,
\begin{equation}\label{eq:polyclassB1}
\Bth=\curly{ \phi \in \Bc:\:\limsupi{|b|} \frac{\ln \abs{\Wp\!\lbrb{a+ib}}}{|b|}\leq -\theta,\,\,\forall a>\dph}.
\end{equation}
Recalling that the functions $\Aph,\Gph,\Eph,\Rph,\Hph,\Hph^*,\Tph$ were  introduced in \eqref{eq:Aphi},\eqref{eq:Gphi}, \eqref{eq:Ephi},\eqref{eq:Rphi} and \eqref{eq:realInfty}, we now state our second main result which can be thought of as the Stirling asymptotic for the Bernstein-gamma functions.
\begin{theorem}\label{thm:Stirling} \label{thm:imagineryStirling1}
	\begin{enumerate}
		\item \label{it:gbWp}
		For any  $\phi\in\Bc$ and any $z=\ab\in\CbOI$, we have that
		\begin{align}\label{eq:Stirling}
		\abs{\Wp\!\lbrb{z}}&	=\frac{\sqrt{\phi(1)}}{\sqrt{\phi(a)\phi(1+a)|\phi(z)|}}e^{\Gph(a)-\Aph(z)}e^{-\Eph(z)-\Rph(a)}.
		\end{align}
		For any $a>0$
		\begin{equation}\label{eq:err}
		\sup_{\phi\in\Bc}\sup_{z\in\Cb_{\lbrb{a,\infty}}}\abs{e^{-\Eph(z)-\Rph(\Re(z))}}\asymp 1.
		\end{equation}
		\item \label{it:realStirling}  \label{thm:realStirling} 	For any fixed $b\in \R$ and  large $a$, we have  that
		\begin{align}\label{eq:realStirling}	  \abs{\Wp\!\lbrb{a+ib}}=\frac{e^{-T_\phi}}{\sqrt{\abs{\phi(\ab)}\phi(1)}}e^{a\ln\phi(a)-\Hph(a)+\Hph^*(a)-\Aph(\ab)}\lbrb{1+\bospace{\frac{1}{a}}},
		\end{align}
		where $\lim\limits_{a \to \infty}\Aph(a+ib)=0$.
		
		\item\label{it:cases} Let $\phi\in\Bc$
		\begin{enumerate}
			\item\label{it:case1} if $\phi\in\BP$ then $\phi\in\Bthd$;
			\item\label{it:case2} if $\phi\in\Bc_\alpha,$ for $\alpha\in\lbrb{0,1}$, then $\phi\in  \Bthaspace $;
			\item\label{it:case3} under the conditions of Theorem \ref{thm:Aph}\eqref{eq:subexpp} with $\mu(dy)=\upsilon(y)dy$ then $\phi\in \Bep{\Nps} $ with $\Nps=\frac{\upsilon\lbrb{0^+}}{\phi\lbrb{\infty}}$ provided $\upsilon(0^+)<\infty$, whereas $\phi\in \Bi $ otherwise.
		\end{enumerate}
	\end{enumerate}
\end{theorem}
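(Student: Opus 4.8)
The plan is to prove \eqref{it:gbWp} first and then to deduce \eqref{it:realStirling} and \eqref{it:cases} by specializing the exact formula \eqref{eq:Stirling}. I would start from the Weierstrass product \eqref{eq:BernWeier}: taking moduli, then logarithms, and using the defining limit \eqref{eq:EulerConst} of $\gamph$ to absorb $-\gamph a$ against $a\sum_{k=1}^n\phi'(k)/\phi(k)=a\gamph+a\ln\phi(n)+\so1$, one reduces to
\[\ln\abs{\Wp(z)}=-\ln\abs{\phi(z)}+\lim_{n\to\infty}\lbrb{\sum_{k=1}^n\lbrb{\ln\phi(k)-\ln\abs{\phi(k+z)}}+a\ln\phi(n)},\qquad z=a+ib\in\CbOI,\]
the series being absolutely convergent as a consequence of the absolute convergence of the product \eqref{eq:BernWeier}. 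The heart of the matter is to evaluate this limit through the second--order Euler--Maclaurin identity $\sum_{k=1}^n f(k)=\int_1^n f+\tfrac12(f(1)+f(n))+\tfrac12\int_1^n\mathrm{P}(u)f''(u)\,du$ (with $\mathrm{P}$ as in \eqref{eq:Ephi}) applied to $g(u)=\ln\phi(u)-\ln\abs{\phi(u+z)}$, and then to match the outcome with the functions $\Gph,\Aph,\Eph,\Rph$ of \eqref{eq:Gphi}, \eqref{eq:Aphi}, \eqref{eq:Ephi}, \eqref{eq:Rphi}.

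Handling the three pieces: $g(n)\to0$ since $\abs{\phi(n+z)}/\phi(n)\to1$ by \eqref{lemmaAsymp1-1} together with the integrability of the nonnegative (by \eqref{eq:rephi}) integrand of $\Aph$ in \eqref{eq:A=Theta}. For $\int_1^n g+a\ln\phi(n)$ I would substitute $v=u+a$, split $\ln\abs{\phi(v+ib)}=\ln\phi(v)+(\ln\abs{\phi(v+ib)}-\ln\phi(v))$, invoke the identity $\Aph(c+ib)=\int_c^\infty\ln(\abs{\phi(u+ib)}/\phi(u))\,du$ of Theorem \ref{thm:genFuncs}\eqref{it:Aphi}, and use $\int_n^{n+a}\ln\phi\,dv=a\ln\phi(n)+\so1$ (from $\phi'/\phi\le u^{-1}$ in \eqref{specialEstimates11}); this gives the limit $\Gph(a)-\Aph((1+a)+ib)$. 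For the error term, \eqref{specialEstimates11} gives $\abs{(\ln\phi)''(u)}\le3u^{-2}$, and since $\abs{\phi'(w)}\le\phi'(\Re w)$, $\abs{\phi''(w)}\le\abs{\phi''(\Re w)}$ and $\abs{\phi(w)}\ge\phi(\Re w)$ by \eqref{eq:rephi}, one gets likewise $\abs{(\ln\abs{\phi(\cdot+z)})''(u)}\le3u^{-2}$, so $\abs{g''}\le6u^{-2}$ and $\tfrac12\int_1^n\mathrm{P}g''\to\tfrac12\int_1^\infty\mathrm{P}g''$. Writing $g''=-\big(\ln\tfrac{\phi(u+a)}{\phi(u)}\big)''-\big(\ln\tfrac{\abs{\phi(u+z)}}{\phi(u+a)}\big)''$ identifies the latter with $-\Rph(a)-\Eph(z)+\tfrac12\int_0^1\mathrm{P}(u)\big(\ln\tfrac{\abs{\phi(u+z)}}{\phi(u+a)}\big)''\,du$; integrating this last integral by parts twice (using $\mathrm{P}(0)=\mathrm{P}(1)=0$, $\mathrm{P}'(0^+)=1$, $\mathrm{P}'(1^-)=-1$, $\mathrm{P}''\equiv-2$ on $(0,1)$) and applying \eqref{eq:A=Theta} once more through $\int_0^1(\ln\abs{\phi(u+z)}-\ln\phi(u+a))\,du=\Aph(a+ib)-\Aph((1+a)+ib)$ converts $\Aph((1+a)+ib)$ back into $\Aph(z)$ and, after combining with the boundary term $\tfrac12 g(1)=\tfrac12\ln\phi(1)-\tfrac12\ln\abs{\phi(1+z)}$ and with the leading $-\ln\abs{\phi(z)}$, produces exactly the prefactor $\sqrt{\phi(1)}/\sqrt{\phi(a)\phi(1+a)\abs{\phi(z)}}$. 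Collecting all terms gives \eqref{eq:Stirling}, and \eqref{eq:err} is then immediate from \eqref{eq:uniBoundRE1} of Theorem \ref{thm:genFuncs}.

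For \eqref{it:realStirling} I would fix $b$ and send $a\to\infty$ in \eqref{eq:Stirling}: $\Aph(a+ib)\to0$ as the tail of a convergent nonnegative integral (by \eqref{eq:A=Theta} and \eqref{eq:rephi}); $\Gph(a)=a\ln\phi(a)-\Hph(a)+\Hph^*(a)+\ln\phi(a)-\ln\phi(1)+\bo{1/a}$ by \eqref{eq:Gph}; $\phi(1+a)/\phi(a)=1+\Hph^*(a)/a=1+\bo{1/a}$ since $\Hph^*$ is bounded by \eqref{eq:AsympHphi}; and $\Eph(a+ib)+\Rph(a)\to T_\phi$ by Theorem \ref{thm:genFuncs}\eqref{it:Tphi} with remainder $\bo{1/a}$ (coming from $\int_1^\infty(u+a)^{-2}\,du=\bo{1/a}$ applied to the $\ln\phi(u+a)$-part of $\Eph+\Rph-T_\phi$). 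Inserting these into $\sqrt{\phi(1)}/\sqrt{\phi(a)\phi(1+a)\abs{\phi(z)}}$ and simplifying yields \eqref{eq:realStirling}. For \eqref{it:cases}, \eqref{eq:Stirling} gives, for each fixed $a>0$, $\ln\abs{\Wp(a+ib)}=-\Aph(a+ib)+\bo{\ln\abs b}$ because $\Eph(a+ib)+\Rph(a)=\bo1$ by \eqref{eq:uniBoundRE1} and $\phi(a)\le\abs{\phi(a+ib)}\le\bo{\abs b}$ (lower bound by \eqref{eq:rephi}); when moreover $\phi(\infty)<\infty$ and $\mu$ is absolutely continuous, Proposition \ref{propAsymp1}\eqref{it:finPhi} sharpens the $\ln\abs{\phi(a+ib)}$ term to $\bo1$. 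Then \eqref{it:case1}--\eqref{it:case2} follow by inserting $\Aph(a+ib)=\tfrac\pi2\abs b+\so{\abs b}$, resp.\ $\tfrac\pi2\alpha\abs b(1+\so1)$, from Theorem \ref{thm:Aph}\eqref{it:awpid},\eqref{it:awpia}, and \eqref{it:case3} by inserting $\Aph(a+ib)=\tfrac{\upsilon(0^+)}{\phi(\infty)}\ln\abs b(1+\so1)$, resp.\ $\Aph(a+ib)/\ln\abs b\to\infty$, from Theorem \ref{thm:Aph}\eqref{eq:subexpp} (whose hypotheses force $\phi(\infty)<\infty$ and $\mu$ absolutely continuous, so $\Nps=\upsilon(0^+)/\phi(\infty)$ is the exact logarithmic rate); the passage from $a>0$ to all $a>\dph$ is handled by $\Wp(z)=\Wp(z+1)/\phi(z)$, which introduces only finitely many polynomially bounded factors and thus changes neither the linear nor the logarithmic rate.

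The step I expect to be the main obstacle is the Euler--Maclaurin bookkeeping in \eqref{it:gbWp}: one has to carry along several boundary logarithms and two distinct applications of the identity of Theorem \ref{thm:genFuncs}\eqref{it:Aphi} — one creating $\Aph$ out of $\int_1^n g$, the other reducing $\Aph((1+a)+ib)$ back to $\Aph(z)$ — and then verify that everything that is not $\Gph(a)-\Aph(z)-\Eph(z)-\Rph(a)$ collapses exactly into the prefactor $\sqrt{\phi(1)}/\sqrt{\phi(a)\phi(1+a)\abs{\phi(z)}}$. All the convergence and remainder estimates rest uniformly on the quadratic decay $(\ln\phi)''=\bo{u^{-2}}$ supplied by \eqref{specialEstimates11}.
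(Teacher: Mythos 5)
Your calculation is correct and rests on the same core machinery as the paper --- the Euler--Maclaurin formula applied to the log of the Weierstrass partial products, with the error integrals matched against $\Rph,\Eph$ and the boundary/midpoint terms matched against $\Gph,\Aph$ and the prefactor --- but it is organized differently. The paper factorizes $|\Wp(z)|=\Wp(a)\cdot|\Wp(z)|/\Wp(a)$, quotes the formula for the second ratio (involving $\Eph$ and $\int_a^\infty\ln(|\phi(u+ib)|/\phi(u))\,du$) from \cite[Proposition~6.10]{Patie-Savov-16}, and applies Euler--Maclaurin only to the \emph{real}-argument function $u\mapsto\ln(\phi(a+u)/\phi(u))$ to extract $\Gph,\Rph$. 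You instead apply Euler--Maclaurin once, directly to $g(u)=\ln\phi(u)-\ln|\phi(u+z)|$ with $z$ complex and fixed, and then split $g''$ and the inner integral $\int_0^1\mathrm{P}\bigl(\ln\tfrac{|\phi(u+z)|}{\phi(u+a)}\bigr)''$ afterwards. This yields a self-contained derivation of \eqref{eq:Stirling} (modulo the uniform bound \eqref{eq:uniBoundRE1}, which itself cites \cite{Patie-Savov-16}), whereas the paper's two-step version is shorter at the cost of one more external reference; I checked your two applications of Theorem~\ref{thm:genFuncs}\eqref{it:Aphi} and the boundary-term bookkeeping (using $\mathrm{P}(0)=\mathrm{P}(1)=0$, $\mathrm{P}'(0^+)=1$, $\mathrm{P}'(1^-)=-1$, $\mathrm{P}''\equiv-2$), and everything collapses to the stated prefactor.

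Two small imprecisions, neither fatal: (i) your justification that $g(n)\to0$ --- ``integrability of the nonnegative integrand of $\Aph$'' --- is not by itself enough (integrability of a nonnegative function does not force pointwise decay at infinity); the clean argument is that $\phi(n+z)-\phi(n)=\dr z+\int_0^\infty e^{-ny}(1-e^{-zy})\mu(dy)=\dr z+\so{1}$ while $\phi(n)\to\phi(\infty)\in(0,\infty]$, so $|\phi(n+z)|/\phi(n)\to1$ directly. (ii) In item \eqref{it:cases}, when you extend from $a>0$ to $a>\dph$ by the recurrence $\Wp(z)=\Wp(z+1)/\phi(z)$, the claim ``polynomially bounded factors change neither the linear nor the logarithmic rate'' is stated too loosely for case \eqref{it:case3}: a factor $\asymp|b|$ \emph{would} shift the polynomial exponent. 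What saves you there is that under the hypotheses of Theorem~\ref{thm:Aph}\eqref{eq:subexpp} one has $\dr=0$, $\mu$ absolutely continuous and $\phi(\infty)<\infty$, so by Proposition~\ref{propAsymp1}\eqref{it:finPhi} the finitely many factors $\phi(a+j+ib)$ are in fact bounded above and below uniformly in $b$, which does preserve the exact logarithmic rate $\Nps$. The paper handles the extension via Lemma~\ref{lem:Lindelof} (Phragm\'en--Lindel\"of) for the $\Bi$ and $\Bth$ classes; your direct recurrence argument is a fine alternative but should be stated with the case distinction made explicit.
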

\textit{The proof of this result starts with Section \ref{subsec:proof}.}
\begin{remark}\label{rem:Aphi}
	Note the dependence of \eqref{eq:Stirling} on the geometry of  $\phi\!\lbrb{\CbOI}\subseteq \CbOI$. The more $\phi$ shrinks $\CbOI$ the smaller the contribution of $\Aph$. In fact $\frac{1}{b}\Aph\!\lbrb{a+ib}$, as $b\to\infty$, measures the fluctuations of the average angle along the contour $\phi\!\lbrb{\Cb_a}$, which due to averaging are necessarily of lesser order than those of $\arg\phi(\ab)$ along $\Cb_a$. Overall,  $\Aph$  codes the asymptotic associated to changes of $\Im(z)$,  $\Gph$ reflects the asymptotic purely due to change of $\Re(z)$ and the functions $\Rph, \Eph$ control uniformly the error of approximation on $\Cb_{\lbrb{a,\infty}},\forall a>0,$ and on the space of Bernstein functions.
\end{remark}
\begin{remark}
	When $z=a$, modulo to the specifications of the constants, the central result \eqref{eq:Stirling} has appeared for $\log$-concave functions in \cite[Theorem 6.3]{Webster-97} and for Bernstein functions in \cite[Theorem 5.1]{Patie-Savov-16}. Here, we provide an explicit representation of the terms of the asymptotics of $\abs{\Wp(a+ib)}$, as simultaneously $a\to\infty$ and $|b|\to\infty$.
\end{remark}

\begin{remark}\label{rem:shiftedWp}
	Since $\Wp\in\Ac_{\lbrb{\dph,\infty}}\cap \Mtt_{\lbrb{\aph,\infty}}$ one can extend, via the recurrent equation \eqref{eq:Wp}, the estimate \eqref{eq:Stirling}, away from the poles residing in $\Cb_{\lbrbb{\aph,\,\dph}}$, to $z\in\Cb_{\lbrb{\aph,\,\infty}}$. Indeed, let $a>\aph$, $z=\ab$ not a pole and take $n\in\N$ such that $n+a>0$, then one has
	\begin{equation}\label{eq:Stirling1}
	\Wp\!\lbrb{a+ib}=\Wp\!\lbrb{a+n+ib}\prod_{j=0}^{n-1}\frac{1}{\phi\!\lbrb{a+j+ib}}
	\end{equation}
	with the convention that $\prod_{0}^{-1}=1$. Note that under these assumption $\prod_{j=0}^{n-1}\phi\lbrb{a+j+ib}\neq 0$ because otherwise if $\phi\lbrb{a+k+ib}=0,1\leq k\leq n-1,$ then via $\Wp(z+1)=\phi(z)\Wp(z)$ the points		
	$a+j+ib, 0\leq j\leq k$ would all be poles and hence a contradiction.
\end{remark}
The next result shows that often it suffices to check the decay of $\abs{\Wp}$ along a single $\Cb_c, c>\dph,$ to ensure that it decays with the same speed along any complex line $\Cb_a, a>\dph$.
\begin{lemma}\label{lem:Lindelof}
	Let $\phi \in \Be$ and  assume that there exists $\ak>\dph$ such that for all $n\in \Nb$, $\limsupi{|b|}|b|^n\abs{\Wp\!\lbrb{\ak+ib}}=0$ (resp.~there exists $\theta \in\lbrbb{0,\frac{\pi}{2}}$ such that $\limsupi{|b|}\frac{\ln \abs{\Wp\!\lbrb{\ak+ib}}}{\abs{b}}\leq -\theta$) then $\phi \in \Bi$ (resp.~$\phi \in \Bth$).
\end{lemma}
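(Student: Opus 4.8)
The plan is to reduce the statement, via the Stirling‑type asymptotic \eqref{eq:Stirling}, to a comparison of the single function $\Aph$ along two vertical lines, and then to exploit that $\Aph$ is non‑increasing in its real part (Theorem \ref{thm:genFuncs}\eqref{it:Aphi}) and that its values on different vertical lines differ by at most $\bo{\ln(1+|b|)}$.

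First I would dispose of the sub‑case $\ak\le 0$. Using the functional equation $\Wp(z+1)=\phi(z)\Wp(z)$ together with $\phi(c)\le\abs{\phi(c+ib)}$ for $c>\dph$ (from \eqref{eq:rephi}) and $\abs{\phi(c+ib)}=\bo{1+|b|}$ as $|b|\to\infty$ (Proposition \ref{propAsymp1}\eqref{it:asyphid}, or directly from \eqref{eq:phi}), one checks that the hypothesis on $\Cb_{\ak}$ implies the same hypothesis on $\Cb_{\ak+n}$ for every $n\in\N$, and conversely that the conclusion on a line $\Cb_{a+n}$ with $a+n>0$ forces the conclusion on $\Cb_{a}$ whenever $a>\dph$ (both directions follow by multiplying, respectively dividing, by the factor $\phi(a+j+i\cdot)$ and iterating). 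Hence it is enough to assume $\ak>0$ and to establish the conclusion on every line $\Cb_a$ with $a>0$.

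Next, for a fixed $a>0$ and $z=\ab\in\CbOI$, formula \eqref{eq:Stirling} together with the uniform bound \eqref{eq:err} yields
\[\ln\abs{\Wp\lbrb{a+ib}}=-\Aph\lbrb{a+ib}+r_a(b),\qquad \abs{r_a(b)}\le C_a\lbrb{1+\ln(1+|b|)},\]
since the only non‑constant factors in \eqref{eq:Stirling} besides $e^{-\Aph(z)}$ are the bounded term $e^{-\Eph(z)-\Rph(a)}$ and $\abs{\phi(a+ib)}^{-1/2}$, which is bounded below by $\phi(a)^{-1/2}$ (again \eqref{eq:rephi}) and above by a multiple of $(1+|b|)^{-1/2}$. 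On the other hand, by Theorem \ref{thm:genFuncs}\eqref{it:Aphi} we have $\Aph(a+ib)=\int_a^\infty\ln\frac{\abs{\phi(u+ib)}}{\phi(u)}\,du$ with non‑negative integrand, so $a\mapsto\Aph(a+ib)$ is non‑increasing; moreover for $0<a_1<a_2$ the difference $\Aph(a_1+ib)-\Aph(a_2+ib)=\int_{a_1}^{a_2}\ln\frac{\abs{\phi(u+ib)}}{\phi(u)}\,du$ lies in $[0,\,C_{a_1,a_2}(1+\ln(1+|b|))]$, using the elementary uniform estimate $\abs{\phi(u+ib)}\le C(1+|b|)$ for $u$ in a compact subinterval of $\intervalOI$ (read off from \eqref{eq:phi}) and $\phi(u)\ge\phi(a_1)>0$. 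Consequently, for every $a>0$,
\[\abs{\Aph\lbrb{a+ib}-\Aph\lbrb{\ak+ib}}\le C_{a,\ak}\lbrb{1+\ln(1+|b|)}=\so{|b|}.\]
Finally I would read off both conclusions. In the polynomial case the hypothesis is equivalent to $\ln\abs{\Wp(\ak+ib)}/\ln(1+|b|)\to-\infty$, hence by the first display to $\Aph(\ak+ib)/\ln(1+|b|)\to\infty$, hence by the second display to $\Aph(a+ib)/\ln(1+|b|)\to\infty$ for every $a>0$, hence again by the first display to $\ln\abs{\Wp(a+ib)}/\ln(1+|b|)\to-\infty$, i.e.~$\phi\in\Bi$ (after the first‑paragraph reduction). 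In the exponential case the hypothesis reads $\limsup_{|b|\to\infty}\ln\abs{\Wp(\ak+ib)}/|b|\le-\theta$, i.e.~$\liminf_{|b|\to\infty}\Aph(\ak+ib)/|b|\ge\theta$; since the second display has right‑hand side $\so{|b|}$, this gives $\liminf_{|b|\to\infty}\Aph(a+ib)/|b|\ge\theta$ for all $a>0$, whence $\limsup_{|b|\to\infty}\ln\abs{\Wp(a+ib)}/|b|\le-\theta$ for all $a>0$, i.e.~$\phi\in\Bth$.

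I expect the only genuine obstacle to be the uniform bookkeeping: checking that the functional‑equation shift in the first paragraph transfers both hypothesis and conclusion in every degenerate configuration ($\dr=0$, $\phi(0)=0$, $\tph>-\infty$, $\ak\le 0$), and that the error term $r_a(b)$ and the line‑to‑line comparison of $\Aph$ are genuinely $\bo{\ln(1+|b|)}$ uniformly in $b$. The analytic substance — that $\abs{\Wp(a+ib)}$ decays like $e^{-\Aph(a+ib)}$ up to a logarithmic correction — is already packaged in Theorem \ref{thm:Stirling}, so no further hard analysis should be needed.
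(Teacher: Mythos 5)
Your proof is correct, but it takes a genuinely different route from the paper's. The paper proves Lemma \ref{lem:Lindelof} by the Phragm\'en-Lindel\"of principle on the half-strip $\Cb^+_{[\ak,\ak+1]}$: one checks that $z^n\Wp(z)$ (resp.~$\Wp(z)e^{-i(\theta-\varepsilon)z}$) is bounded on the boundary of the strip and has admissible growth in the interior (because $\Wp$, being a Mellin transform on $\Cb_{(0,\infty)}$, satisfies $|\Wp(a+ib)|\le\Wp(a)$ there), applies the principle to obtain interior boundedness, and then propagates the decay to all $\Cb_a$ with $a>\dph$ by iterating the functional equation \eqref{eq:Wp} in both directions --- the same transfer device as in your first paragraph. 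You instead start from the Stirling-type asymptotic of Theorem \ref{thm:Stirling}\eqref{it:gbWp}, so that $\ln|\Wp(a+ib)|=-\Aph(a+ib)$ up to an explicit $\bo{\ln|b|}$ error, and then exploit the monotonicity of $\Aph$ in $\Re(z)$ (Theorem \ref{thm:genFuncs}\eqref{it:Aphi}) together with the elementary $\bo{\ln|b|}$ control on its line-to-line variation. Since Theorem \ref{thm:Stirling}\eqref{it:gbWp} is established before this lemma and never invokes it, there is no circularity. The paper's route is softer and independent of the Stirling expansion; yours is more quantitative, identifying the decay as $e^{-\Aph}$ up to a logarithmic correction --- finer information than the lemma requires, but essentially free once Theorem \ref{thm:Stirling} is in hand, and it has the small advantage of not needing the Mellin-transform boundedness of $\Wp$ on closed substrips, which in the paper's argument is the ingredient that makes Phragm\'en-Lindel\"of applicable.
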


The next theorem contains alternative representation of $\Wp$, which modulo to an easy extension to $\CbOI$ is due to \cite{Hirsch-Yor-13}, as well as a number of mappings that can be useful in a variety of contexts.
\begin{theorem}\label{thm:HY}
	Let $\phi,\underline{\phi} \in \Be$.
	\begin{enumerate}
		\item \label{it:hy} $z\mapsto \log W_{\phi}(z+1) \in \Ne$ with
		\begin{equation}\label{eq:WpMid}
		\log \Wp(z+1)=\lbrb{\ln \phi(1)}z + \int_0^{\infty} \lbrb{e^{-zy}-1-z(e^{-y}-1)}\frac{\kappa(dy)}{y(e^y-1)},
		\end{equation}
		where $\kappa(dy)=\int_{0}^{y}U\lbrb{dy-r}\left(r\mu(dr)+\delta_{\dr}(dr)\right)$, $U$ is the  potential measure associated to $\phi$, see Proposition \ref{propAsymp1}\eqref{it:bernstein_cmi}, and $\dr$ is the drift of $\phi$.
		\item \label{it:ts} $z\mapsto \log \left(W_{\phi}(z+1) W_{\underline{\phi}}(1-z)\right) \in \overNc$.
		\item \label{it:sm}If  $u\mapsto \left(\phi/\underline{\phi}\right)(u)$ is a non-zero completely monotone function then there exists  a positive variable $I$ which is moment determinate and such that, for all $n\geq0$, $\Ebb{I^n}=W_{\underline{\phi}}(n+1)/W_\phi(n+1)$. Next, assume that  $\phi \in \Bep{\Ntt_{\phi}}$ and $\underline{\phi} \in \Bep{\Ntt_{\underline{\phi}}}$.  If  $\Ntt=\Ntt_{\underline{\phi}}-\Ntt_{\phi}>\frac12$, then  the law of $I$ is absolutely continuous with density $f_I \in \LtwoRp$ and if $\Ntt>1$ then $f_I\in \mathrm{C}^{\lceil\Ntt\rceil-2}_0\lbrb{\Rp}$.
		\item \label{it:mid} $u\mapsto \left(\ln\left(\underline{\phi}/\phi\right)\right)'(u)=\left(\underline{\phi}'/\underline{\phi}-\phi'/\phi\right)(u)$ is completely monotone if and only if $z\mapsto \log \frac{W_{\phi}}{W_{\underline{\phi}}}(z+1) \in \Ne$. Note that since  $\left(\ln\left(\underline{\phi}/\phi\right)\right)'$ completely monotone implies the same property for $\phi/\underline{\phi}$, then, with the notation of the previous item, $I$ is well-defined and $\log I$ is infinitely divisible on $\R$. An equivalent condition is that with the measure $\kappa$ as defined in  item \eqref{it:hy} we have that $\kappa-\underline{\kappa}\geq 0$.
		
		%   is absolutely continuous with respect to the measure, with the obvious notation, $\underline{\kappa}$ with a density $\vartheta_{\kappa}\geq 1$ $\underline{\kappa}$ a.s..
	\end{enumerate}
\end{theorem}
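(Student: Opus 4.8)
The plan is to treat the four items in order, leaning on Theorem~\ref{thm:Wp} and the Weierstrass product \eqref{eq:BernWeier} throughout. For item (i), I would start from \eqref{eq:BernWeier}, which together with $\Wp(z+1)=\phi(z)\Wp(z)$ gives $\log\Wp(z+1)=-\gamma_\phi z+\sum_{k\geq1}\bigl(\log\phi(k)-\log\phi(k+z)+\tfrac{\phi'(k)}{\phi(k)}z\bigr)$ on $\CbOI$. The structural input is that $u\mapsto(\log\phi)'(u)=\phi'(u)/\phi(u)$ is completely monotone, being the product of the completely monotone functions $\phi'$ (by \eqref{eq:phi'}) and $1/\phi$ (Proposition~\ref{propAsymp1}\eqref{it:bernstein_cmi}); writing $(\log\phi)'(u)=\int_0^\infty e^{-uy}\kappa(dy)$ and matching Laplace transforms identifies $\kappa(dy)=\int_{[0,y]}U(dy-r)\bigl(r\mu(dr)+\dr\delta_0(dr)\bigr)$, after which $\log\phi(k+z)-\log\phi(k)=\int_0^\infty\tfrac{e^{-ky}-e^{-(k+z)y}}{y}\kappa(dy)$ and the geometric sum $\sum_{k\geq1}e^{-ky}=1/(e^y-1)$ produce \eqref{eq:WpMid} for real $z>0$; on $\R^+$ this identity is Hirsch--Yor \cite{Hirsch-Yor-13}. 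Both sides are holomorphic on $\CbOI$ — the right-hand side once one checks $\int_0^\infty(1\wedge y^2)\tfrac{\kappa(dy)}{y(e^y-1)}<\infty$ from the local finiteness of $\kappa$ near $0$ (using $\int_0^1 r\mu(dr)<\infty$ and the local finiteness of $U$) and its $\kappa(dy)/(ye^y)$ decay at infinity — so they agree there, and the integral is manifestly a spectrally negative L\'evy--Khintchine exponent with value $0$ at $z=0$, yielding the asserted membership. Item (ii) is then immediate: $\Psi\in\overNc\Rightarrow z\mapsto\Psi(-z)\in\overNc$ (since $e^{\Psi(-z)}=\overline{e^{\Psi(\bar z)}}$ is again positive definite on $i\R$), so $z\mapsto\log W_{\underline{\phi}}(1-z)\in\overNc$ by (i), and $\overNc$ is stable under addition because products of positive definite functions are positive definite; adding gives $z\mapsto\log\bigl(\Wp(z+1)W_{\underline{\phi}}(1-z)\bigr)\in\overNc$.

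For item (iii) the idea is to factorize the Mellin transform $z\mapsto W_{\underline{\phi}}(z+1)/\Wp(z+1)$ into an infinite product of Mellin transforms of bounded positive variables. Complete monotonicity of $\phi/\underline{\phi}$ gives $(\phi/\underline{\phi})(u)=\int_{[0,\infty)}e^{-uy}\lambda(dy)$; for $k\geq1$ the finite measure $e^{-ky}\lambda(dy)$ has total mass $(\phi/\underline{\phi})(k)$, so $(\phi/\underline{\phi})(k+z)=(\phi/\underline{\phi})(k)\,\Ebb{J_k^{\,z}}$ with $J_k\in(0,1]$. Substituting this into the products \eqref{eq:BernWeier} for $\Wp(z+1)=e^{-\gamma_\phi z}\prod_{k\geq1}\tfrac{\phi(k)}{\phi(k+z)}e^{\phi'(k)z/\phi(k)}$ and for $W_{\underline{\phi}}(z+1)$, the factors $(\phi/\underline{\phi})(k)$ cancel and one obtains $W_{\underline{\phi}}(z+1)/\Wp(z+1)=e^{(\gamma_\phi-\gamma_{\underline{\phi}})z}\prod_{k\geq1}\Ebb{\tilde J_k^{\,z}}$ with $\tilde J_k=J_k e^{c_k}$, $c_k=\bigl(\log(\underline{\phi}/\phi)\bigr)'(k)\geq0$; second-order Taylor estimates on the smooth Bernstein functions $\phi,\underline{\phi}$ give $\Ebb{\tilde J_k}=1+\bo{k^{-2}}$, so $\prod_k\tilde J_k$ converges a.s.\ and $I:=e^{\gamma_\phi-\gamma_{\underline{\phi}}}\prod_{k\geq1}\tilde J_k$ satisfies $\Ebb{I^n}=W_{\underline{\phi}}(n+1)/\Wp(n+1)=\prod_{k=1}^n(\underline{\phi}/\phi)(k)$. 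Since $\underline{\phi}(k)\leq Ck$ and $\phi(k)\geq\phi(1)>0$, these moments obey $\Ebb{I^n}\leq C'^{\,n}n!$, so Carleman's criterion gives moment-determinacy. For the regularity part, $\phi\in\Bep{\Ntt_\phi}$ and $\underline{\phi}\in\Bep{\Ntt_{\underline{\phi}}}$ give, via \eqref{eq:polyclassB} and Theorem~\ref{thm:Wp}, that $\bigl|W_{\underline{\phi}}(a+ib)/\Wp(a+ib)\bigr|\asymp|b|^{-\Ntt}$ on every line $\Cb_a$, $a>0$; a Mellin--Plancherel argument then yields $f_I\in\LtwoRp$ once $\Ntt>\tfrac12$, and absolute convergence of the inverse Mellin integral and of those obtained after multiplying by $z,\dots,z^{\lceil\Ntt\rceil-2}$, together with Riemann--Lebesgue and a contour shift to control the endpoints, gives $f_I\in\mathrm{C}^{\lceil\Ntt\rceil-2}_0\lbrb{\Rp}$.

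Item (iv) is the sign analysis of the L\'evy measure from (i). By (i), $\log\bigl(W_{\underline{\phi}}(z+1)/\Wp(z+1)\bigr)=\bigl(\log\tfrac{\underline{\phi}(1)}{\phi(1)}\bigr)z+\int_0^\infty\bigl(e^{-zy}-1-z(e^{-y}-1)\bigr)\tfrac{(\underline{\kappa}-\kappa)(dy)}{y(e^y-1)}$, where $\underline{\kappa},\kappa$ are the measures of item (i) attached to $\underline{\phi},\phi$, so the Laplace transform of $\underline{\kappa}-\kappa$ is $\underline{\phi}'/\underline{\phi}-\phi'/\phi=\bigl(\log(\underline{\phi}/\phi)\bigr)'$. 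Since $y(e^y-1)>0$, the right-hand side is a L\'evy--Khintchine exponent — hence in $\overNc$, and in the announced subclass $\Ne$ because the associated descending Wiener--Hopf factor then has positive killing rate — if and only if $\underline{\kappa}-\kappa$ is a non-negative measure; by Bernstein's theorem and injectivity of the Laplace transform this is equivalent to $\bigl(\log(\underline{\phi}/\phi)\bigr)'$ being completely monotone (swapping $\phi$ and $\underline{\phi}$ swaps which of $\log(W_\phi/W_{\underline{\phi}})(\cdot+1)$ and its negative lands in $\overNc$). The condition $\underline{\kappa}-\kappa\geq0$ is exactly the statement that $\kappa$ is absolutely continuous with respect to $\underline{\kappa}$ with density $\le1$; and since $\Ebb{I^{\,z}}=W_{\underline{\phi}}(z+1)/\Wp(z+1)$ by (iii), the characteristic exponent of $\log I$ is this very log-ratio, so $\log I$ is infinitely divisible under this (stronger) hypothesis, which indeed forces $\phi/\underline{\phi}=e^{-\log(\underline{\phi}/\phi)}$ to be completely monotone (because $e^{-\psi}$ is completely monotone for Bernstein $\psi$), so the variable $I$ of (iii) is available. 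I expect the main obstacle to be part (iii): making the infinite-product factorization genuinely rigorous — convergence of $\prod_k\tilde J_k$ and control of the corrections $e^{c_k}$ and of $\gamma_\phi-\gamma_{\underline{\phi}}$ — and reading off the precise smoothness of $f_I$ from the vertical decay; the sign bookkeeping in (iv) and the verification that membership is in $\Ne$ rather than merely $\overNc$ also require care.
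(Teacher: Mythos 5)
Your items (i) and (iii) take a genuinely different route from the paper, while (ii) and (iv) mirror it. For (i), the paper simply invokes Berg \cite[Theorem 2.2]{Berg-07}, whereas you reconstruct the representation from scratch by observing that $\phi'/\phi$ is the product of the completely monotone functions $\phi'$ and $1/\phi$, identifying $\kappa$ via Laplace transforms, and summing the Weierstrass product; this is essentially a proof of Berg's formula and is a legitimate, self-contained alternative (one does have to reconcile the $-\gamma_\phi z+\int zy\,\kappa(dy)/(y(e^y-1))$ coming out of the product with the $(\ln\phi(1))z-\int z(e^{-y}-1)\kappa(dy)/(y(e^y-1))$ appearing in \eqref{eq:WpMid}, a $z$-independent identity for $\gamma_\phi$ that you lean on Hirsch--Yor to supply). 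For (iii), the paper cites Berg's Hausdorff-to-Stieltjes transformation theorem \cite[Theorem 1.3]{Berg} to get that $(f_n)$ is a moment sequence, and then a crude ratio bound $f_{n+1}/f_n=O(n)$ plus a finite exponential moment to get determinacy; you instead try to \emph{construct} $I$ as $e^{\gamma_\phi-\gamma_{\underline{\phi}}}\prod_k\tilde J_k$ with $\tilde J_k=J_ke^{c_k}$, which is a more explicit and constructive route and additionally exhibits $\log I$ as an infinite sum of independent variables, dovetailing with (iv). For the regularity statement and for (iv) you use exactly the same ideas as the paper.

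The gap in your (iii) is the a.s.\ convergence of $\prod_k\tilde J_k$. The bare fact that $\Ebb{\tilde J_k}=1+\bo{k^{-2}}$ does not give a.s.\ convergence of the product: a sequence of factors can each have mean close to $1$ while the product vanishes a.s.\ or fails to converge. What actually does the job here is the centring $\Ebb{\log\tilde J_k}=\Ebb{-Y_k+c_k}=0$ (since $\Ebb{Y_k}=-(\log(\phi/\underline{\phi}))'(k)=c_k$) together with the variance bound $\mathrm{Var}(\log\tilde J_k)=(\log(\phi/\underline{\phi}))''(k)=\bo{k^{-2}}$, which follows from the estimates in Proposition \ref{propAsymp1}\eqref{it:bernstein_cm}; then Kolmogorov's one- or three-series theorem gives a.s.\ convergence of $\sum_k\log\tilde J_k$, and the a priori bound $\tilde J_k\le e^{c_k}$ with $\sum c_k<\infty$ lets you pass expectation through the product. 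You flagged this as the main obstacle, and indeed it requires these extra inputs; as stated the argument is incomplete.

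One further remark that applies to both your write-up and the paper's proof: the memberships asserted are really in $\overNc$, and the additional claim of membership in $\Ne$ (equivalently $\phn(0)>0$) is not established by either derivation. Exhibiting a spectrally one-sided L\'evy--Khintchine exponent shows $\overNc$ only; whether the underlying process drifts to $+\infty$ depends on the sign of $-\gamma_\phi=\Psi'(0)$. For $\phi(z)=z$ one has $\Psi(z)=\log\Gamma(z+1)$ and $\Psi'(0)=-\gamma<0$, so the associated process drifts to $-\infty$ and $\phn(0)=0$; hence item (i) cannot hold with $\Ne$ and must be read as $\overNc$. Your assertion in (iv) that ``the associated descending Wiener--Hopf factor then has positive killing rate'' is likewise unjustified — it is not automatic and in general false. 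This is more a flaw in the statement you were asked to prove than in your argument, but since you attempted to justify the $\Ne$ claim you should be aware the justification is hollow.
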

\textit{The proof of this result is in Section \ref{subsec:prof}.}
\begin{remark}\label{rem:H}
	Note that in the trivial case $\phi(z)=z$ then $\kappa(dy)=dy$  and the representation  \eqref{eq:WpMid} yields to the classical Malmst\'en formula for the gamma function, see \cite{Erdelyi-55}. With the recurrence equation \eqref{eq:Wp}, the Weierstrass product \eqref{eq:BernWeier} and the Mellin transform of a positive random variable, see Definition \ref{def:WB}, this integral  provides a fourth representation  $W_{\phi}$ share with the classical gamma function, justifying our choice to name them the Bernstein-gamma functions.
\end{remark}

\begin{remark}\label{rem:HY}
	We mention that when $\phi(0)=0$ the representation \eqref{eq:WpMid} for $z\in\lbrb{0,\infty}$ appears in 	\cite[Theorem 3.1]{Hirsch-Yor-13} and for any $\phi\in\Bc$ in \cite[Theorem 2.2]{Berg-07}.  We emphasize that to get detailed information regarding bounds and asymptotic behaviour of $|W_{\phi}(z)|$, see Theorem \ref{thm:Stirling}, we found the  Weierstrass product representation more informative to work with. However, as it is illustrated by the authors of \cite{Hirsch-Yor-13}, the integral representation  is useful for other purposes such as, for instance, for proving the multiplicative infinite divisibility property of some random variables.
\end{remark}

The final claim shows  that the mapping $\phi\in\Bc\mapsto W_{\phi}\in \Wc_\Bc$ is continuous with respect to the pointwise topology in $\Bc$. This handy result is widely used throughout.
\begin{lemma}\label{lem:continuityW}
	Let $(\phi_n)_{n\geq0},\,\underline\phi\in\Bc$  and $\limi{n}\phi_n(a)=\underline{\phi}(a)$ for all $a>0$. Then pointwise $\limi{n}W_{\phi_n}(z)=W_{\underline\phi}(z),\,z\in\CbOI$.
\end{lemma}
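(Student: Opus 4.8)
\emph{Proof strategy.} The plan is to run everything through the Weierstrass product \eqref{eq:BernWeier}, reducing the assertion to (i) an upgrade of the pointwise convergence $\phi_n\to\underline\phi$ from $\R^+$ to $\CbOI$, and (ii) a tail estimate for the product that is \emph{uniform in $\phi\in\Bc$}; the hypothesis will then enter only through finitely many factors.

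\textbf{Step 1: from $\R^+$ to $\CbOI$.} First I would show that $\{\phi_n\}$ is locally bounded on $\CbOI$. Since $\phi_n$ is nondecreasing on $\R^+$ and $\phi_n(a)\to\underline\phi(a)<\infty$, each $\sup_n\phi_n(a)$ is finite; from $\phi_n(1)=\phi_n(0)+\dr_n+\int_0^\infty(1-e^{-y})\mu_n(dy)$ together with $\dr_n\le\phi_n'(1)\le\phi_n(1)$ (Proposition \ref{propAsymp1}\eqref{it:bernstein_cm}) and $1-e^{-y}\ge(1-e^{-1})(1\wedge y)$, both $\sup_n\dr_n$ and $\sup_n\int_0^\infty(1\wedge y)\mu_n(dy)$ are finite; combined with $|1-e^{-zy}|\le 2\wedge(|z|y)$ this bounds $|\phi_n(z)|$ uniformly on compacts of $\CbOI$. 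By Montel's theorem $\{\phi_n\}$ is normal, and since every locally uniform subsequential limit agrees with $\underline\phi$ on $\R^+$, it equals $\underline\phi$ on $\CbOI$ by the identity theorem; hence $\phi_n\to\underline\phi$ locally uniformly on $\CbOI$, and by Weierstrass's theorem also $\phi_n'\to\underline\phi'$ locally uniformly. In particular $\phi_n(z)\to\underline\phi(z)$, $\phi_n(k)\to\underline\phi(k)$, $\phi_n(k+z)\to\underline\phi(k+z)$ and $\phi_n'(k)\to\underline\phi'(k)$ for all $k\in\N$, $z\in\CbOI$, all limits finite and nonzero — the non-vanishing on $\CbOI$ coming from $\Re\phi(a+ib)\ge\phi(a)>0$, Proposition \ref{propAsymp1}\eqref{it:RePhi}, and from $\underline\phi\not\equiv0\Rightarrow\underline\phi(u)>0$ on $\R^+$.

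\textbf{Step 2: a uniform tail bound.} For $\phi\in\Bc$ and $N\in\N$ set $\gamma_\phi^{(N)}=\sum_{k=1}^N\frac{\phi'(k)}{\phi(k)}-\ln\phi(N)$, so $\gamma_\phi^{(N)}\to\gamma_\phi$ by \eqref{eq:EulerConst}, and
\[
P_N^\phi(z)=\frac{e^{-\gamma_\phi^{(N)}z}}{\phi(z)}\prod_{k=1}^N\frac{\phi(k)}{\phi(k+z)}e^{\frac{\phi'(k)}{\phi(k)}z}=\frac{\phi(N)^z}{\phi(z)}\prod_{k=1}^N\frac{\phi(k)}{\phi(k+z)},
\]
the second form being the crucial simplification: the exponential convergence factors telescope and the derivatives disappear. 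By \eqref{eq:BernWeier}, $P_N^\phi(z)\to\Wp(z)$ as $N\to\infty$. Writing $\log$ for the principal branch (legitimate since $\phi$ maps $\CbOI$ into the open right half-plane by Proposition \ref{propAsymp1}\eqref{it:RePhi}) and $a_k^\phi(z)=\log\phi(k)-\log\phi(k+z)+\frac{\phi'(k)}{\phi(k)}z$, one has $\Wp(z)=P_N^\phi(z)e^{r_N^\phi(z)}$ with $r_N^\phi(z)=-(\gamma_\phi-\gamma_\phi^{(N)})z+\sum_{k>N}a_k^\phi(z)$. A second-order Taylor expansion of $w\mapsto\log\phi(k+w)$ at $0$ gives $|a_k^\phi(z)|\le|z|^2\sup_{w\in[0,z]}|(\log\phi(k+\cdot))''(w)|$, and on $[0,z]$ one has $\Re(k+w)\ge k$; using $|\phi''(u)|\le 2\phi(u)/u^2$, $u\phi'(u)\le\phi(u)$ (Proposition \ref{propAsymp1}\eqref{it:bernstein_cm}), concavity of $\phi$ on $\R^+$, and $|\phi(k+w)|\ge\phi(k)$ (Proposition \ref{propAsymp1}\eqref{it:RePhi}) I would bound this second derivative by $C_z/k^2$ with $C_z$ independent of $\phi$. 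Similarly, comparing $\sum_{k=N+1}^M\frac{\phi'(k)}{\phi(k)}$ with $\int_N^M\frac{\phi'(u)}{\phi(u)}du=\ln\phi(M)-\ln\phi(N)$ and using $|(\log\phi)''(u)|\le 3/u^2$ on $\R^+$ yields $|\gamma_\phi-\gamma_\phi^{(N)}|\le3\sum_{j\ge N}j^{-2}$. Hence $|r_N^\phi(z)|\le\varepsilon_N(z)$ with $\varepsilon_N(z)\to 0$ as $N\to\infty$ and $\varepsilon_N(z)$ independent of $\phi$.

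\textbf{Step 3: assembling, and the main obstacle.} Fix $z\in\CbOI$. I would introduce the holomorphic branch $L_\phi(z)=-\gamma_\phi z-\log\phi(z)+\sum_{k\ge1}a_k^\phi(z)$ of $\log\Wp(z)$ on $\CbOI$ (the series converges absolutely by Step 2, and $e^{L_\phi}=\Wp$ since $\Wp$ is zero-free there by Theorem \ref{thm:Wp}\eqref{it:A}), and the analogous $L_N^\phi$ for $P_N^\phi$, so that $L_\phi-L_N^\phi=r_N^\phi$. Then
\[
L_{\phi_n}(z)-L_{\underline\phi}(z)=\bigl(L_{\phi_n}-L_N^{\phi_n}\bigr)(z)+\bigl(L_N^{\phi_n}-L_N^{\underline\phi}\bigr)(z)+\bigl(L_N^{\underline\phi}-L_{\underline\phi}\bigr)(z),
\]
the first and third summands being $\le\varepsilon_N(z)$ by Step 2, while for fixed $N$ the middle one is a finite algebraic expression in $\phi_n(z),\phi_n(k),\phi_n(k+z),\phi_n'(k),\phi_n(N)$ ($k\le N$) which by Step 1 converges to the corresponding expression for $\underline\phi$, all arguments of $\log$, $\ln$ and of the quotients being nonzero, hence it tends to $0$ as $n\to\infty$. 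Therefore $\limsup_{n\to\infty}|L_{\phi_n}(z)-L_{\underline\phi}(z)|\le2\varepsilon_N(z)$ for every $N$, so it is $0$; exponentiating gives $W_{\phi_n}(z)\to W_{\underline\phi}(z)$, as claimed. The hard part is Step 2 — making the control on the product tail \emph{uniform over all of $\Bc$} — which is exactly where the structural estimates for Bernstein functions (concavity together with the bounds in Proposition \ref{propAsymp1}) are indispensable; Step 1 is routine complex analysis but cannot be skipped, because the Weierstrass product evaluates $\phi$ at the complex points $z$ and $k+z$.
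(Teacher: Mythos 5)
Your proof is correct, and it takes a genuinely different route from the paper. The paper's argument is probabilistic: recalling from Definition \eqref{eq:defWb} that $\Wp(z+1)=\Ebb{Y_\phi^z}$, it observes that $\Ebb{Y_{\phi_n}^k}=\prod_{j=1}^k\phi_n(j)\to\prod_{j=1}^k\underline\phi(j)=\Ebb{Y_{\underline\phi}^k}$ for every integer $k$; then, since $\dr^*=\sup_n\dr_n<\infty$ forces the moment generating functions to be analytic in a common strip past $0$, the law of $Y_{\underline\phi}$ is moment-determinate and Feller's method of moments gives $Y_{\phi_n}\to Y_{\underline\phi}$ in distribution; finally a Skorokhod representation plus dominated convergence turns this into pointwise convergence of the Mellin transforms. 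Your argument is purely complex-analytic: you upgrade $\phi_n\to\underline\phi$ from $\R^+$ to locally uniform convergence on $\CbOI$ by Montel and the identity theorem (this is exactly the step the paper leaves implicit when it silently passes from $\Re(z)\geq 1$, where the Mellin identity $\Wp(z)=\Ebb{Y_\phi^{z-1}}$ holds, to all of $\CbOI$, where one must divide by $\phi_n(z)$), and you control the tail of the Weierstrass product uniformly over $\Bc$ using the second-derivative bounds of Proposition \ref{propAsymp1}\eqref{it:bernstein_cm} — the same estimates the paper already exploits in \eqref{eq:phi'phi}, \eqref{eq:phi''phi} and \eqref{eq:R2Bound} — so that a three-epsilon argument closes the proof. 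The paper's route is shorter given the Feller/Kallenberg machinery, whereas yours is self-contained, avoids the moment problem and Skorokhod representation entirely, works on all of $\CbOI$ in one pass, and actually demonstrates a stronger local uniform convergence of $W_{\phi_n}$ on compacts (since the tail bound $\varepsilon_N(z)$ depends only on $|z|$ and the finite part converges locally uniformly). Both proofs are valid; yours is more elementary in its prerequisites and dovetails more tightly with the paper's own Stirling-type analysis of the Bernstein--Weierstrass product.
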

We proceed now with the proofs of the statements of the last two sections.

\section{Proofs for Sections \ref{sec:BernFunc} and \ref{sec:mainResults}}\label{sec:ProofsBE}
\subsection{Proof of Proposition \ref{propAsymp1}}
We furnish some comments for the claims that cannot be directly found in \cite{Schilling2010} and \cite[Section 4]{Patie-Savov-16}. First we look at the case $\phi\in\BP$ of item \eqref{it:bernstein_cmi}. If $\phi(0)=0$ then the fact that $U$ has a density $u \in \Ctt\!\lbrb{\lbbrb{0,\infty}}$ with $u(0)=\frac{1}{\dr}$ and $u>0$ on $\Rp$ is known from \cite[Chapter III]{Bertoin-96}. If $\phi(0)=q>0$, then from \cite[p.10]{Doering-Savov-11} we get that
\[\dr u(x)=\Pbb{\xi^\sharp_{T^\sharp_{\lbbrb{x,\infty}}}=x,\textbf{e}_q>T^\sharp_{\lbbrb{x,\infty}}},\]
where $\xi^\sharp$ is the conservative \LLP(subordinator) associated to $\phi^\sharp(\cdot)=\phi(\cdot)-\phi(0)$ and $T^\sharp_{\lbbrb{x,\infty}}=\inf_{t>0}\curly{\xi^\sharp_t\geq x}$. Hence, we get immediately that $u>0$ on $\Rp$ since otherwise
\[\Pbb{\xi^\sharp_{T^\sharp_{\lbbrb{x,\infty}}}=x,\textbf{e}_q>T^\sharp_{\lbbrb{x,\infty}}}=q\IntOI e^{-qt}\Pbb{\xi^\sharp_{T^\sharp_{\lbbrb{x,\infty}}}=x,{t>T^\sharp_{\lbbrb{x,\infty}}}}dt=0\]
for some $x>0$, would imply, for any $t\geq 0$,
\[\Pbb{\xi^\sharp_{T^\sharp_{\lbbrb{x,\infty}}}=x,{t>T^\sharp_{\lbbrb{x,\infty}}}}=0\]
which contradicts $\dr u^\sharp(x)=\Pbb{\xi^\sharp_{T^\sharp_{\lbbrb{x,\infty}}}=x}>0$ with $u^\sharp$ the density of $U^\sharp$. Also $u\lbrb{0}=u^\sharp\lbrb{0}=\frac{1}{\dr}$ follows from
\[\limo{x}\dr u(x)=\limo{x}\Pbb{\xi^\sharp_{T^\sharp_{\lbbrb{x,\infty}}}=x,\textbf{e}_q>T^\sharp_{\lbbrb{x,\infty}}}=\limo{x}\Pbb{\xi^\sharp_{T^\sharp_{\lbbrb{x,\infty}}}=x}=\dr u^\sharp\lbrb{0}=1.\]
The continuity of $u$ follows from its series representation in \cite[Proposition 1]{Doering-Savov-11}. Next, we consider item \eqref{it:argPhi}. 	Clearly, $\Re\lbrb{\phi(a+ib)}\geq \phi(a)>0$, see \eqref{eq:rephi}. Next, for any $a>0$,
\begin{equation}\label{eq:limIm}
\limi{b}\frac{\abs{\Im\lbrb{\phi(a+ib)}}}b=0,
\end{equation}
follows from  Proposition \ref{propAsymp1}\eqref{it:asyphid}. These facts allow us to deduct that, for any $v>0$ and any $u>u\lbrb{v}>0$,
\begin{equation*}
\begin{split}
\abs{\arg\lbrb{\phi(a+iu)}}&= \abs{\arctan\!\lbrb{\frac{\Im\lbrb{\phi(a+iu)}}{\Re\lbrb{\phi(a+iu)}}}}\\	
&\leq \arctan\!\lbrb{\frac{\abs{\Im\lbrb{\phi(a+iu)}}}{\phi(a)}}
=\frac{\pi}{2}-\arctan\!\lbrb{\frac{\phi(a)u}{u\abs{\Im\lbrb{\phi(a+iu)}}}}\\
&\leq \frac{\pi}{2}-\arctan\!\lbrb{\frac{v\phi(a)}{u}},
\end{split}	
\end{equation*}
where the last inequality follows from \eqref{eq:limIm}. This proves \eqref{eq:argPhi} and thus item \eqref{it:argPhi} is settled. The rest is in the existing literature.
\subsection{Proof of Theorem \ref{thm:genFuncs}}
We start with item \eqref{it:Aphi}. From \eqref{eq:rephi} of Proposition \ref{propAsymp1}, which is valid at least on $\CbOI$, we conclude that  \begin{equation}\label{eq:phiC+}
\phi:\,\CbOI\to \CbOI.
\end{equation}
Therefore, $\log\phi\in\AbOI$. We proceed to establish the first identity of \eqref{eq:A=Theta}.  By means of the integral expression in \eqref{eq:Tphi} and an application, in the third equality below, of the Cauchy integral theorem to $\log\phi$ on the closed rectangular contour with vertices  $a+ib, u+ib, u$ and   $a$, for any $z=a+ib\in\CbOI,\,b>0,\,u>a$, we deduct that
\begin{eqnarray*}\label{eq:argarg}
	%\begin{split}
	\nonumber\int_{a}^{\infty}\ln\!\lbrb{ \frac{\labsrabs{\phi\lbrb{y+ib}}}{\phi(y)}}dy&=&\lim_{u\to\infty} \int_{a}^{u}\ln\!\lbrb{\frac{\labsrabs{\phi\lbrb{y+ib}}}{\phi(y)}}dy\\
	&=& \lim_{u\to\infty}\Re\!\lbrb{\int_{a}^{u}\log\frac{\phi\lbrb{y+ib}}{\phi(y)}dy}\\
	\nonumber	&=&\lim_{u\to\infty}\lbrb{\Re\!\lbrb{\int_{u\to u+ib}\log\phi(z)dz}-\Re\!\lbrb{\int_{a\to a+ib}\log\phi(z)dz}}\\
	&=&\int_{0}^{b}\arg\phi(a+iy)dy-\lim_{u\to\infty}\int_{0}^b \arg\phi(u+iy)dy\\
	&=&\int_{0}^{b}\arg\phi(a+iy)dy-\lim_{u\to\infty}\Aph\!\lbrb{u+ib}.
	%\end{split}	
\end{eqnarray*}
We investigate the last limit. From \eqref{eq:ineqArg} of Proposition \ref{propAsymp1} we have that
\[\limi{a}\abs{\arg\phi\lbrb{a+iy}}\leq \limi{a}\arctan\!\lbrb{\frac{|y|}{a}}=0,\,\text{ uniformly on $y$-compact sets}.\]
Henceforth,
\begin{equation}\label{eq:Aphto0}
\limi{u}\Aph\lbrb{u+ib}=\lim_{u\to\infty}\int_{0}^b \arg\phi(u+iy)dy=0
\end{equation}
and we conclude the first identity of \eqref{eq:A=Theta} for $b>0$. For $b=0$ there is nothing to prove whereas if $b<0$ the same follows noting that $\arg\phi\lbrb{a+ib}=-\arg\phi(a-ib)$ implies that $\Aph\lbrb{\ab}=\Aph\lbrb{a-ib}$. Next, for $b>0$,  \cite[Proposition 6.10(1) and (6.27)]{Patie-Savov-16} give immediately that
\[\frac{1}{b}\int_{a}^{\infty}\ln\!\lbrb{ \frac{\labsrabs{\phi\lbrb{y+ib}}}{\phi(y)}}dy\in\lbbrbb{0,\frac{\pi}{2}}\]
and \eqref{eq:A=Theta} is thus proved in its entirety. To conclude item \eqref{it:Aphi} it remains to prove the two monotonicity properties. The monotonicity, for fixed $b\in\R$, of the mapping $a\mapsto\Aph\!\lbrb{a+ib}$ on $\R^+$ follows from \eqref{eq:Tphi} right away since for all $u>0,$ $\ln\!\lbrb{\frac{\abs{\phi\lbrb{u+ib}}}{\phi(u)}}\geq 0$, which follows from \eqref{eq:rephi} and has been also derived in \cite[Proposition 6.10, (6.32)]{Patie-Savov-16}. Next, recalling the notation $\phi(z)=\phi(0)+\phi^\sharp(z)$, we get that the mapping}
\[q \mapsto \ln\!\lbrb{\frac{\abs{q+\phi\lbrb{u+ib}}}{q+\phi(u)}}=\ln\abs{1+\frac{\phi^\sharp(u+ib)-\phi^\sharp(u)}{q+\phi(0)+\phi^\sharp(u)}}\]
is non-increasing on $\R^+$
and \eqref{eq:Tphi} closes the proof of item \eqref{it:Aphi}. Next, we consider item \eqref{it:RE}. First, the proof and claim of \cite[Proposition 6.10(2)]{Patie-Savov-16} show that, for any $a>0$ and any $\phi\in\Bc$,
$\sup_{z\in\Cb_{\lbrb{a,\infty}}}\abs{\Eph\lbrb{z}}\leq \frac{19}{8a}$,  from where  we get the first global bound in \eqref{eq:uniBoundRE1}. It follows from the immediate $\sup_{u>0}\abs{\mathrm{P}(u)}\leq\frac{1}{4}$, where $\mathrm{P}(u)=\lbrb{u-\lrfloor{u}}\lbrb{1-\lbrb{u-\lrfloor{u}}}$, that
\begin{equation}\label{eq:RhiBound}
\begin{split}
\sup_{\phi\in\Bc}\sup_{c>a}\abs{\Rph(c)}&\leq \frac{1}{8}\sup_{\phi\in\Bc}\sup_{c>a}\int_{1}^{\infty}\lbrb{\lbrb{\frac{\phi'(u)}{\phi(u)}}^2+\lbrb{\frac{\phi'(u+c)}{\phi(u+c)}}^2+\abs{\frac{\phi''(u)}{\phi(u)}}+\abs{\frac{\phi''(u+c)}{\phi(u+a)}}}du\\
&\leq \frac{3}{4}\int_{1}^{\infty}\frac{du}{u^2}=\frac{3}{4},
\end{split}
\end{equation}
where for the second inequality we have used \eqref{specialEstimates11} of Proposition \ref{propAsymp1}. This shows the second bound in \eqref{eq:uniBoundRE1} and concludes the proof of item \eqref{it:RE}. Next, consider item \eqref{it:H}. For any $\phi\in\Bc$, we deduce by integration by parts in the expression of $\Gph$ in \eqref{eq:Gphi},   that, for any $a>0$,
\begin{align} \label{eq:gpa}
\Gph(a)&=\lbrb{a+1}\ln\phi(a+1)-\ln\phi(1)-\Hph(a).
\end{align}	
Next,  note that, for large $a$,
\[a\ln\frac{\phi(a+1)}{\phi(a)}=a\ln\!\lbrb{1+\frac{\phi(a+1)-\phi(a)}{\phi(a)}}=\Hph^*(a)+a\: \bospace{\lbrb{\frac{\phi(a+1)-\phi(a)}{\phi(a)}}^2},\]
where the last asymptotic relation follows from $\lim\limits_{a\to \infty}\frac{\phi(a+1)}{\phi(a)}=1$, see \eqref{lemmaAsymp1-1}. This shows also the relation \eqref{eq:AsympHphi} for $\Hph^*$. However, from \eqref{specialEstimates11} we get that $a\frac{\phi'(a)}{\phi(a)}\leq1,\,a>0,$ and, since $\phi'$ is  non-increasing,  we conclude further that
\[a\ln\frac{\phi(1+a)}{\phi(a)}=\Hph^*(a)+\bospace{\frac1a}.\]
These considerations also establish \eqref{eq:AsympHphi} for $\Hph$ because
\[\frac{\Hph(a)}a=\frac{\int_{1}^{1+a}\frac{u\phi'(u)}{\phi(u)}du}a\leq \frac{1}{a}\int_{1}^{1+a}du=1.\]
Thus, putting the pieces together, we deduce from \eqref{eq:gpa} that
\begin{align}\label{eq:GphiAsymp}
\Gph(a)&= a\ln\phi(a)-\Hph(a)+\Hph^*(a)+\ln\phi(a)-\ln\phi(1)+\bospace{\frac1a}
\end{align}
which is \eqref{eq:Gph}. We continue with item \eqref{it:Tphi}. Note that, for $z=a+ib\in\CbOI$, some elementary algebra yields that
\begin{align*}
\Eph(z)+\Rph(a)&=\frac12\int_0^1\mathrm{P}(u)\lbrb{\frac{\ln\abs{\phi(u+z)}}{\ln\phi(u+a)}}''du +\frac12\int_1^\infty\mathrm{P}(u)\lbrb{\frac{\ln\abs{\phi(u+z)}}{\ln\phi(u)}}''du\\
:&=\overline{E}_{\phi}(z)+\overline{R}_{\phi}(z).
\end{align*}
%Consider $\overline{E}_{\phi}(z)$
Then, noting that $\lbrb{\ln\abs{\phi(u+z)}}''\leq \abs{\lbrb{\log\phi(u+z)}''}$, we easily obtain the estimate
\[\abs{\overline{E}_{\phi}(z)}\leq\int_{0}^{1}\lbrb{\abs{\frac{\phi'(u+z)}{\phi(u+z)}}^2+\lbrb{\frac{\phi'(u+a)}{\phi(u+a)}}^2+\abs{\frac{{\phi''(u+z)}}{\phi(u+z)}}+\abs{\frac{\phi''(u+a)}{\phi(u+a)}}}du.\]
Clearly, from the proof of \cite[Lemma 4.5(2)]{Patie-Savov-16} combined with \eqref{specialEstimates11}, we get that
\begin{equation}\label{eq:phi'phi}
\abs{\frac{\phi'(a+ib)}{\phi(a+ib)}}\leq \sqrt{10}\frac{\phi'(a)}{\phi(a)}\leq \frac{\sqrt{10}}a
\end{equation}
and
\begin{equation}\label{eq:phi''phi}
\,\quad\abs{\frac{\phi''(a+ib)}{\phi(a+ib)}}\leq \sqrt{10}\frac{\phi''(a)}{\phi(a)}\leq \frac{2\sqrt{10}}{a^2},
\end{equation}
from where we deduce that $\lim_{\,a\to\infty}\abs{\overline{E}_{\phi}(z)}=0$. Next, we look into $\overline{R}_{\phi}(z)$. Note that
\begin{equation*}
\begin{split}
\overline{R}_{\phi}(z)&=\frac12\int_1^\infty\mathrm{P}(u)\lbrb{\ln\abs{\phi(u+z)}}''du 	-\frac12\int_1^\infty\mathrm{P}(u)\lbrb{\ln\phi(u)}''du\\
&=\overline{R}_1(z)+\overline{R}_2.
\end{split}
\end{equation*}
Clearly, $\overline{R}_2=T_\phi$, see \eqref{eq:realInfty}, and we proceed to show that $\lim_{a\to\infty}\abs{\overline{R}_1(z)}=0$. To do so we simply repeat the work done for $\overline{E}_{\phi}(z)$ above to conclude that
\begin{equation*}
\abs{\overline{R}_1(z)}\leq \frac{1}{8}\int_{1}^{\infty}\lbrb{\abs{\frac{\phi'(u+z)}{\phi(u+z)}}^2+\abs{\frac{{\phi''(u+z)}}{\phi(u+z)}}}du.
\end{equation*}
Since each integrand converges to zero as $a\to\infty$, see \eqref{eq:phi'phi} and \eqref{eq:phi''phi}, we invoke again \eqref{eq:phi'phi} and \eqref{eq:phi''phi} to employ the dominated convergence theorem.	Therefore, for any fixed $b\in\R$,
\[\limi{a}\Eph(\ab)+\Rph(a)=\Tph\]
and item \eqref{it:Tphi} is established. This concludes the proof of the theorem.

\subsection{Proof of Theorem \ref{thm:Aph}\eqref{it:awpid}}
We start the proof by introducing some notation and quantities.  For a measure  (or a function) $\mu$ on $\R$, \[\Fo{\mu}(-ib)=\int_{-\infty}^{\infty}e^{-ib y}\mu(dy),\,b\in\Rb,\] 
stands for its Fourier transform. Also, we use $\lambda*\gamma$ (resp.~$\lambda^{*n}, n\in \N$) to denote the convolution (resp.~the $n^{th}$ convolution) of measures and/or functions. Next, for any measure $\lambda$ on $\R$ such that $||\lambda||_{TV}:=\IntII \abs{\lambda(dy)}<1$ we define
\begin{equation}\label{eq:convo}
\Logp{\lambda}(dy)=\sum_{n=1}^{\infty}\frac{\lambda^{*n}(dy)}{n}  \textrm{\,\,and\,\,\,}  \Logm{\lambda} (dy)=\sum_{n=1}^{\infty}\lbrb{-1}^{n-1}\frac{\lambda^{*n}(dy)}{n}.
\end{equation}
Clearly, $||\Logp{\lambda}||_{TV}<\infty$ and $||\Logm{\lambda}||_{TV}<\infty$. Finally, we use for a measure (resp.~function) $\lambda_a(dy)=e^{-ay}\lambda(dy)$ (resp.~$\lambda_a(y)=e^{-ay}\lambda(y)$) with $a\in\R$. Let from now on $\lambda$ be a measure on $\Rp$. If $||\lambda_a||_{TV}<1$ for some $a>0$, then by virtue of the fact that $\lambda^{*n}_a(dy)=e^{-ay}\lambda^{*n}(dy),\,y\in\intervalOI$, \eqref{eq:convo} holds vaguely on $\lbrb{0,\infty}$ for $\lambda_0=\lambda$. Let next $\lambda(dy)=\lambda(y)dy,\,y\in\intervalOI$, and $\lambda\in\Lspace{1}{\Rp}$, i.e.~the $C^*$ algebra of the integrable  functions on $\Rp$, that is $\Lspace{1}{\Rp}$ considered as a subalgebra of $\Lspace{1}{\R}$ which is endowed with the convolution operation as a multiplication. Note that formally
\begin{equation}\label{eq:formalF}
-\log_0\lbrb{1- \Fo{\lambda}}=\Fo{\Logp{\lambda}}=\sum_{n=1}^{\infty}\frac{\Fo{\lambda}^n}n \quad \textrm{ and } \quad\log_0\lbrb{1+ \Fo{\lambda}}=\Fo{\Logm{\lambda}}=\sum_{n=1}^{\infty}\lbrb{- 1}^{n-1}\frac{\Fo{\lambda}^n}n.
\end{equation}
We then have the following claim which is a simple restatement of several results from the existing literature. A good references on the topic are for example \cite{Nikolski-98, Paley-Wiener-34}.
\begin{proposition}\label{prop:harmConvo}
	Let $\lambda$ be a real-valued measure on $\intervalOI$. If $||\lambda||_{TV}<1$ then both $||\Logp{\lambda}||_{TV}<\infty$ and $||\Logm{\lambda}||_{TV}<\infty$ and \eqref{eq:logFT}, \eqref{eq:logFT1} hold true. Furthermore, let $\lambda\in\Lspaces{1}{\Rp}$. If \,$\log_0(1+\Fo{\lambda}(-z))\in\Ac_{\lbbrb{0,\infty}}$ (resp.~$\log_0(1-\Fo{\lambda}(-z))\in \Ac_{\lbbrb{0,\infty}}$ then $\Logm{\lambda}\in\Lspaces{1}{\Rp}$ (resp.~$\Logp{\lambda}\in\Lspaces{1}{\Rp}$) and
	\begin{align}
	\log_0(1+\Fo{\lambda}(-ib))&=\Fo{\Logm{\lambda}}(-ib)\text{ if $\Logm{\lambda}\in\Lspaces{1}{\Rp}$}\label{eq:logFT}\\
	-\log_0(1-\Fo{\lambda}(-ib))&=\Fo{\Logp{\lambda}}(-ib)\text{ if $\Logp{\lambda}\in\Lspaces{1}{\Rp}$}.\label{eq:logFT1}
	\end{align}
\end{proposition}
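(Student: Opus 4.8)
The plan is to dispatch the total-variation assertion by a one-line submultiplicativity estimate, and to obtain the $\Lspace{1}{\Rp}$ statements through the holomorphic functional calculus in the Wiener-type convolution algebra $\mathcal{A}:=\Cb\delta_0\oplus\Lspace{1}{\Rp}$.

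First I would note that, since total variation is submultiplicative under convolution, $\|\lambda^{*n}\|_{TV}\le\|\lambda\|_{TV}^n$; hence whenever $\|\lambda\|_{TV}<1$ both $\sum_{n\ge1}\|\lambda^{*n}\|_{TV}/n\le-\ln(1-\|\lambda\|_{TV})<\infty$, so the series \eqref{eq:convo} converge in total variation and define finite signed measures on $\Rp$. If in addition $\lambda\in\Lspace{1}{\Rp}$, the same bound places $\Logp\lambda,\Logm\lambda$ in $\Lspace{1}{\Rp}$, and since then $\|\Fo{\lambda}\|_\infty<1$ the identities \eqref{eq:logFT}--\eqref{eq:logFT1} drop out of \eqref{eq:formalF} by continuity of the Fourier transform. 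So the substance is the case $\lambda\in\Lspace{1}{\Rp}$ with $\|\lambda\|_{TV}$ possibly at least $1$, in which \eqref{eq:convo} need not converge and $\Logm\lambda$ (resp.~$\Logp\lambda$) is to be read as the — still to be produced — element of $\Lspace{1}{\Rp}$ whose Fourier transform is $\log_0(1+\Fo{\lambda})$ (resp.~$-\log_0(1-\Fo{\lambda})$), in agreement with \eqref{eq:convo} when the latter makes sense.

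For that case I would recall that $\mathcal{A}$ is a commutative unital Banach algebra under convolution whose maximal ideal space is the one-point compactification $\widehat\Omega$ of the closed half-plane $\{\Re z\ge0\}$: its characters are $\chi_z(c\delta_0+f)=c+\int_0^\infty e^{-zy}f(y)\,dy$ for $\Re z\ge0$, together with the augmentation $\chi_\infty(c\delta_0+f)=c$; thus the Gelfand transform of $\lambda$ is its Laplace transform $\hat\lambda(z)=\int_0^\infty e^{-zy}\lambda(dy)$, which restricts to $\Fo{\lambda}(-ib)$ on $i\R$ and tends to $0$ at $\infty$ (here Riemann--Lebesgue, and so the hypothesis $\lambda\in\Lspace{1}{\Rp}$, is used), and whose range $\hat\lambda(\widehat\Omega)$ is the spectrum $\sigma_{\mathcal{A}}(\lambda)$. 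The key step will be
\[\sigma_{\mathcal{A}}(\lambda)\cap(-\infty,-1]=\emptyset\qquad\text{under}\qquad\Fo{\lambda}(i\R)\cap(-\infty,-1]=\emptyset,\]
which I plan to prove by the argument principle: for each $t\in(-\infty,-1]$ the loop $b\mapsto\Fo{\lambda}(-ib)$ (closed through the value $0$ at $|b|=\infty$) lies in the simply connected domain $\Cb\setminus(-\infty,-1]$, which does not contain $t$, so it has winding number $0$ about $t$; exhausting $\{\Re z>0\}$ by large half-discs and letting the radius tend to infinity, $\hat\lambda-t$ is therefore zero-free on the open half-plane, and since also $\hat\lambda(ib)\ne t$ by hypothesis and $\hat\lambda(\infty)=0\ne t$, we get $t\notin\hat\lambda(\widehat\Omega)$.

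Granting this, $g(w)=\log_0(1+w)$ is holomorphic on the open set $\Cb\setminus(-\infty,-1]$, a neighbourhood of the compact set $\sigma_{\mathcal{A}}(\lambda)$, so the holomorphic functional calculus in $\mathcal{A}$ furnishes $g(\lambda)\in\mathcal{A}$ with Gelfand transform $g\circ\hat\lambda$; evaluating at $\chi_\infty$ gives $g(\hat\lambda(\infty))=g(0)=\log_0 1=0$, so the $\delta_0$-component of $g(\lambda)$ vanishes, i.e.~$g(\lambda)=h\in\Lspace{1}{\Rp}$, while evaluating at $\chi_{ib}$ gives $\Fo{h}(-ib)=\log_0(1+\Fo{\lambda}(-ib))$. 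When $\|\lambda\|_{TV}<1$ the spectral radius of $\lambda$ in $\mathcal{A}$ is below $1$, so $g(\lambda)$ coincides with the norm-convergent power series $\sum_{n\ge1}(-1)^{n-1}\lambda^{*n}/n=\Logm\lambda$; this identifies $h$ with $\Logm\lambda$ in general and proves \eqref{eq:logFT}. The statement for $\Logp\lambda$ follows verbatim with $g(w)=-\log_0(1-w)$, holomorphic on $\Cb\setminus[1,\infty)$, and the hypothesis $\Fo{\lambda}(i\R)\cap[1,\infty)=\emptyset$ (again $g(0)=0$). The one genuinely delicate point is the key step: the hypothesis constrains $\Fo{\lambda}$ only on $i\R$, whereas the functional calculus needs the whole spectrum — the range of the Laplace transform over the entire closed half-plane — to miss the ray, and the winding-number argument is exactly what bridges that gap; the remainder is the routine convolution estimate and a standard appeal to Gelfand theory.
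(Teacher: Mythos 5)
Your proof is correct, and it takes a genuinely different route from the paper's. Both proofs handle the case $\|\lambda\|_{TV}<1$ the same way, via submultiplicativity, and both determine the scalar component of the logarithm by evaluating the Gelfand transform at the augmentation character (observing $g(0)=0$). The divergence is in the main case. The paper works in the whole-line Wiener algebra $\Cb\delta_0\oplus\Lspace{1}{\R}$, where the Gelfand spectrum is the one-point compactification of $\R$: the spectrum of $\lambda$ is just $\Fo{\lambda}(\R)\cup\{0\}$, so the hypothesis on the imaginary axis directly controls the whole spectrum, and the Wiener--L\'evy theorem can be invoked as a black box. The price is that one then has to argue separately that $\Logm{\lambda}$, a priori only in $\Lspace{1}{\R}$, is actually supported on $\Rp$; the paper does this by exponential tilting — replace $\lambda$ by $\lambda_a(dy)=e^{-ay}\lambda(dy)$ with $a$ large enough that $\|\lambda_a\|_{TV}<1$, observe that the convergent series then lands in $\Lspace{1}{\Rp}$, and untilt. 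You instead work in the half-line algebra $\Cb\delta_0\oplus\Lspace{1}{\Rp}$, so the support question never arises, but the spectrum of $\lambda$ is now the range of $\hat\lambda$ over the entire closed right half-plane, not just $i\R$, and your argument-principle/winding-number step is exactly what is needed to show the branch cut is still avoided. The two "extra" ingredients (exponential tilting vs.\ the argument principle) are of comparable weight, and the background facts they lean on (the Wiener--L\'evy theorem vs.\ the identification of the Gelfand spectrum of $\Lspace{1}{\Rp}$ with the closed half-plane) are about equally standard; your version is arguably more self-contained and makes the $\Rp$-support conclusion structural rather than something to be verified afterwards.
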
	
\begin{proof}
	Let $||\lambda||_{TV}<1$. Since, for all $n\in\N$, $||\lambda^{*n}||_{TV}\leq ||\lambda||^n_{TV}$ and $\sup\limits_{b\in\Rb}\abs{\mathcal{F}^{n}_{\lambda}(ib)}\leq ||\lambda||^n_{TV}$, then  \eqref{eq:convo}, \eqref{eq:formalF} and the Taylor expansion about zero of $\log_0(1\pm z)$ yield all results in the case $||\lambda||_{TV}<1$. Next, for general $\lambda\in \Lspace{1}{\Rp}$ assuming that \,$\log_0(1+\Fo{\lambda}(-z))\in\Ac_{\lbbrb{0,\infty}}$ (resp.~$\log_0(1-\Fo{\lambda}(-z))\in\Ac_{\lbbrb{0,\infty}}$) we can apply \cite[Theorem 2]{Shea-75}, which refers to \cite{Paley-Wiener-34},  to the  semisimple Banach algebra $\Lspace{1}{\R}$ and thus conclude the claims contained in relations \eqref{eq:logFT} and \eqref{eq:logFT1}.
\end{proof}	
Let us assume that $\dr>0$. Then, for each $z=a+ib, a>0,$ we have from the second expression in \eqref{eq:phi} that
\begin{equation}\label{eq:phiP}
\phi(z)=\dr z\lbrb{1+\frac{\phi(0)}{\dr z}+\mathcal{F}_{\bar{\mu}_{a,\dr}}(-ib)},
\end{equation}
where we have set $\bar{\mu}_{a,\dr}(y)=\frac{1}{\dr}e^{-ay}\bar{\mu}(y),\,y\in\intervalOI$.  With the preceding notation we can state the  next result which with the help of Lemma \ref{lem:Lindelof}
concludes the proof of Theorem \ref{thm:Aph}\eqref{it:awpid}.

\begin{proposition}\label{thm:imagineryStirling}
	Assume that $\phi\in\BP$ and $z=\ab\in\Cb_a$, with $a>0$ fixed. We have
	\begin{equation}\label{eq:AphiAsymp2}
	\begin{split}	 			\Aph\lbrb{z}&=|b|\arctan\lbrb{\frac{|b|}a}-\frac{\lbrb{a+\frac{\phi(0)}{\dr}}}2\ln\lbrb{1+\frac{b^2}{a^2}}-\int_{0}^{\infty}\frac{1-\cos(by)}{y}\Logm{\bar{\mu}_{a,\dr}}(dy)+\bar{A}_\phi(a,b)\\
	&\simi\frac{\pi}{2}|b|+\sospace{|b|},
	\end{split}
	%\lbrb{a+\frac{m}{\dr}}\ln |b|-\frac{1}{\dr}\int_{1}^{|b|}\frac{1}{u}\IntOI \lbrb{1-e^{-ay}\cos(uy)}\mu(dy)du+ O(1).
	\end{equation}
	where $\Logm{\bar{\mu}_{a,\dr}}(dy)$ is related to $ \bar{\mu}_{a,\dr}(y)dy=\dr^{-1}e^{-ay}\bar{\mu}(y)dy$ via \eqref{eq:logFT} and for any $a>0$ fixed \[\abs{\bar{A}_\phi(a,b)}=\sospace{\ln|b|}.\] For all $a>0$ big enough, $||\bar{\mu}_{a,\dr}||_{1}<1$ and \eqref{eq:convo} relates $\Logm{\bar{\mu}_{a,\dr}}(dy)$ to $\bar{\mu}_{a,\dr}\lbrb{y}dy,\,y\in\intervalOI$. Also, for those $a>0$ such that $||\bar{\mu}_{a,\dr}||_{1}<1$, we have that
	\begin{eqnarray}\label{eq:arctan}
	\int_{0}^{\infty}\frac{1-\cos(by)}{y}\Logm{\bar{\mu}_{a,\dr}}(dy)&=&\int_{0}^{|b|}\arctan\!\lbrb{\frac{\Im\!\lbrb{\Fo{\bar{\mu}_{a,\dr}}\!\lbrb{iu}}}{1+\Re\!\lbrb{\Fo{\bar{\mu}_{a,\dr}}\!\lbrb{iu}}}}du
	\end{eqnarray}
	with, as $b\to\infty$,
	\begin{equation}\label{eq:arctan1}
	\arctan\!\lbrb{\frac{\Im\!\lbrb{\Fo{\bar{\mu}_{a,\dr}}\!\lbrb{ib}}}{1+\Re\!\lbrb{\Fo{\bar{\mu}_{a,\dr}}\!\lbrb{ib}}}}=\Im\!\lbrb{\Fo{\bar{\mu}_{a,\dr}}\lbrb{ib}}\lbrb{1+\bospace{\lbrb{\Im\!\lbrb{\Fo{\bar{\mu}_{a,\dr}}\!\lbrb{ib}}}^2}}
	\end{equation}
	and
	\begin{eqnarray}\label{eq:arctan2}
	\liminfi{b}\frac{\dr}{\ln (b)\bar{\mu}\lbrb{\frac{1}{b}}}\int_{0}^{\infty}\frac{1-\cos(by)}{y}\Logm{\bar{\mu}_{a,\dr}}(dy)\geq 1.
	\end{eqnarray}
\end{proposition}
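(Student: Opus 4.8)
\textit{Proof plan.} It suffices to treat $b>0$, since $u\mapsto\arg\phi(a+iu)$ is odd and hence both $\Aph(a+ib)$ and the right‑hand side of \eqref{eq:AphiAsymp2} are even in $b$. The starting point is the factorisation, valid for every $u\geq0$,
\[\phi(a+iu)=\dr(a+iu)\cdot\lbrb{1+\Fo{\bar{\mu}_{a,\dr}}(-iu)}\cdot\lbrb{1+\frac{\phi(0)}{\phi^\sharp(a+iu)}},\]
where $\phi^\sharp=\phi-\phi(0)\in\BP$ has the same drift $\dr$ and jump measure $\mu$ (hence the same $\bar{\mu}_{a,\dr}$) and $\phi^\sharp(a+iu)=\dr(a+iu)\lbrb{1+\Fo{\bar{\mu}_{a,\dr}}(-iu)}$ by \eqref{eq:phiP}. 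Since $\Re\phi^\sharp(a+iu)\geq\phi^\sharp(a)>0$ by \eqref{eq:rephi}, the factor $1+\phi(0)/\phi^\sharp(a+iu)$ has positive real part, so each of the three factors has principal argument in $(-\pi,\pi)$ and these add up to $\arg\phi(a+iu)\in(-\tfrac\pi2,\tfrac\pi2)$ without winding. The plan is to integrate
\[\arg\phi(a+iu)=\arctan\lbrb{u/a}+\arg\lbrb{1+\Fo{\bar{\mu}_{a,\dr}}(-iu)}+\arg\lbrb{1+\frac{\phi(0)}{\phi^\sharp(a+iu)}}\]
over $u\in[0,b]$ and treat the three pieces separately. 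The first is elementary, $\int_0^b\arctan(u/a)\,du=b\arctan(b/a)-\tfrac a2\ln(1+b^2/a^2)$. For the second, since $1+\Fo{\bar{\mu}_{a,\dr}}(\mp iu)=\phi^\sharp(a\pm iu)/(\dr(a\pm iu))$ never meets $(-\infty,0]$ one has $\Fo{\bar{\mu}_{a,\dr}}(i\R)\cap(-\infty,-1]=\emptyset$ for \emph{every} $a>0$; Proposition \ref{prop:harmConvo} then gives $\Logm{\bar{\mu}_{a,\dr}}\in\Lspace1{\Rp}$ and $\log_0(1+\Fo{\bar{\mu}_{a,\dr}}(-iu))=\Fo{\Logm{\bar{\mu}_{a,\dr}}}(-iu)$ (this is the meaning of \eqref{eq:logFT} for $\Logm{\bar{\mu}_{a,\dr}}$), and taking imaginary parts followed by Fubini (licit as $\Logm{\bar{\mu}_{a,\dr}}\in\Lspace1{\Rp}$) yields $\int_0^b\arg(1+\Fo{\bar{\mu}_{a,\dr}}(-iu))\,du=-\int_0^\infty\tfrac{1-\cos(by)}y\Logm{\bar{\mu}_{a,\dr}}(dy)$.

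The third piece vanishes when $\phi(0)=0$; when $\phi(0)>0$ I would use $\arg(1+w)=\Im w+\bo{|w|^2}$ with $w=\phi(0)/\phi^\sharp(a+iu)$, the relevant inputs being $\Im\phi^\sharp(a+iu)=\dr u+\int_0^\infty e^{-ay}\sin(uy)\mu(dy)=\dr u+\so u$ and $\Re\phi^\sharp(a+iu)=\phi^\sharp(a)+\int_0^\infty e^{-ay}(1-\cos(uy))\mu(dy)=\so u$, where the only non‑routine estimates are $\int_0^{1/u}y\,\mu(dy)\to0$ and $u^{-1}\bar\mu(1/u)\to0$, both consequences of $\int_0^\infty(1\wedge y)\mu(dy)<\infty$. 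These give $|w|=\bo{1/u}$ and $\Im w=-\tfrac{\phi(0)}{\dr u}(1+\so1)$, so $\int_0^b\arg(1+\phi(0)/\phi^\sharp(a+iu))\,du=-\tfrac{\phi(0)}{\dr}\ln b+\so{\ln b}=-\tfrac{\phi(0)/\dr}2\ln(1+b^2/a^2)+\so{\ln b}$. Summing the three pieces gives \eqref{eq:AphiAsymp2} with $\bar A_\phi(a,b)=\so{\ln b}$; for the rider $\Aph(z)=\tfrac\pi2|b|+\so{|b|}$ I would use $b\arctan(b/a)=\tfrac\pi2 b+\bo1$, that the logarithmic terms are $\bo{\ln b}=\so b$, and that $-\int_0^b\arg(1+\Fo{\bar{\mu}_{a,\dr}}(-iu))\,du$ has integrand tending to $0$ (Riemann–Lebesgue, $\bar{\mu}_{a,\dr}\in\Lspace1{\Rp}$), hence equals $\so b$ by Cesàro. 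For $a$ large, $\|\bar{\mu}_{a,\dr}\|_1=\dr^{-1}\int_0^\infty e^{-ay}\bar\mu(y)\,dy\to0$ by dominated convergence, so $\|\bar{\mu}_{a,\dr}\|_1<1$ and Proposition \ref{prop:harmConvo} identifies $\Logm{\bar{\mu}_{a,\dr}}$ with the series \eqref{eq:convo}; for such $a$, $\Re(1+\Fo{\bar{\mu}_{a,\dr}}(iu))>0$ and $\arg(1+\Fo{\bar{\mu}_{a,\dr}}(-iu))=-\arctan\!\big(\Im\Fo{\bar{\mu}_{a,\dr}}(iu)/(1+\Re\Fo{\bar{\mu}_{a,\dr}}(iu))\big)$ (using $\Fo{\bar{\mu}_{a,\dr}}(-iu)=\overline{\Fo{\bar{\mu}_{a,\dr}}(iu)}$), which with the previous formula gives \eqref{eq:arctan}; and \eqref{eq:arctan1} is a Taylor expansion of $\arctan$ combined with $\Fo{\bar{\mu}_{a,\dr}}(ib)\to0$.

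For \eqref{eq:arctan2}: $\bar{\mu}_{a,\dr}$ is non‑increasing and integrable, so $\Im\Fo{\bar{\mu}_{a,\dr}}(iu)=\int_0^\infty\sin(uy)\bar{\mu}_{a,\dr}(y)\,dy\geq0$, and by \eqref{eq:arctan1} (with $\Re\Fo{\bar{\mu}_{a,\dr}}(iu)\to0$) for each $\varepsilon>0$ there is $U_\varepsilon$ with integrand of \eqref{eq:arctan} $\geq(1-\varepsilon)\Im\Fo{\bar{\mu}_{a,\dr}}(iu)$ for $u\geq U_\varepsilon$ and $\geq0$ on $[0,U_\varepsilon]$; using $\int_0^b\Im\Fo{\bar{\mu}_{a,\dr}}(iu)\,du=\tfrac1\dr\int_0^\infty\tfrac{1-\cos(by)}ye^{-ay}\bar\mu(y)\,dy$ (Fubini),
\[\int_0^\infty\tfrac{1-\cos(by)}y\Logm{\bar{\mu}_{a,\dr}}(dy)\geq(1-\varepsilon)\Big(\tfrac1\dr\int_0^\infty\tfrac{1-\cos(by)}ye^{-ay}\bar\mu(y)\,dy-\bo1\Big).\]
Then, writing $\bar\mu(y)=\int_y^\infty\mu(dr)$ and interchanging integrals, $\int_0^\infty\tfrac{1-\cos(by)}ye^{-ay}\bar\mu(y)\,dy=\int_{(0,\infty)}F_b(r)\,\mu(dr)$ with $F_b(r)=\int_0^r\tfrac{1-\cos(by)}ye^{-ay}\,dy$ non‑decreasing in $r$, and for fixed $\delta>0$, $F_b(\delta)=\tfrac12\ln(1+b^2/a^2)-\int_\delta^\infty\tfrac{1-\cos(by)}ye^{-ay}\,dy=\ln b+\bo1$ as $b\to\infty$ (the cosine part converging to $\int_\delta^\infty y^{-1}e^{-ay}\,dy$ by Riemann–Lebesgue), so $\int_{(0,\infty)}F_b(r)\,\mu(dr)\geq F_b(\delta)\bar\mu(\delta)=(\ln b+\bo1)\bar\mu(\delta)$. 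Hence $\liminf_{b\to\infty}\tfrac\dr{\ln b\,\bar\mu(1/b)}\int_0^\infty\tfrac{1-\cos(by)}y\Logm{\bar{\mu}_{a,\dr}}(dy)\geq(1-\varepsilon)\liminf_{b\to\infty}\tfrac{\bar\mu(\delta)}{\bar\mu(1/b)}$, and letting $\delta\to0$ (so $\bar\mu(\delta)\uparrow\bar\mu(0^+)$ while $\bar\mu(1/b)\to\bar\mu(0^+)$) and then $\varepsilon\to0$ gives $\geq1$. The step I expect to be the main obstacle is precisely this last limit: recovering the sharp constant $1$ relies on the asymptotic equality $\arctan(\cdot)\sim\Im\Fo{\bar{\mu}_{a,\dr}}(\cdot)$ of \eqref{eq:arctan1} \emph{and} on pinning $F_b(\delta)$ down to $\ln b+\bo1$ (not just $\gtrsim\ln b$), and the quotient $\bar\mu(\delta)/\bar\mu(1/b)$ must be controlled by letting $\delta=\delta_b\downarrow0$ at a carefully tuned rate, which is where the serious bookkeeping — and the bulk of the work — sits.
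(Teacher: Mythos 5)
Your treatment of \eqref{eq:AphiAsymp2}, \eqref{eq:arctan} and \eqref{eq:arctan1} follows essentially the same route as the paper's. The one substantive difference is your multiplicative peeling of the killing term via $\phi^\sharp=\phi-\phi(0)$: writing $\phi(a+iu)=\phi^\sharp(a+iu)\lbrb{1+\phi(0)/\phi^\sharp(a+iu)}$ with $\phi^\sharp(a+iu)=\dr(a+iu)\lbrb{1+\Fo{\bar{\mu}_{a,\dr}}(-iu)}$ makes the range condition $\Fo{\bar{\mu}_{a,\dr}}\lbrb{i\R}\cap\lbrbb{-\infty,-1}=\emptyset$ immediate for every $a>0$ from $\Re\lbrb{\phi^\sharp}>0$, whereas the paper works from the additive decomposition $\phi(z)=\dr z\lbrb{1+\frac{\phi(0)}{\dr z}+\Fo{\bar{\mu}_{a,\dr}}(-ib)}$ of \eqref{eq:phiP}, Taylor-expands the argument of the bracket for large $b$, and obtains the same range condition from $\Im\lbrb{\Fo{\bar{\mu}_{a,\dr}}(-ib)}<0$ together with continuity and the Riemann--Lebesgue lemma. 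Both are valid; yours is slightly tidier.

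For \eqref{eq:arctan2} the gap you flag at the end is not merely bookkeeping, and your iterated limit does not close it. With $\delta>0$ fixed, letting $b\to\infty$ in your estimate gives $\liminf_{b\to\infty}\bar{\mu}(\delta)/\bar{\mu}(1/b)=\bar{\mu}(\delta)/\bar{\mu}(0^+)$, which is identically $0$ whenever $\bar{\mu}(0^+)=\infty$, i.e., precisely in the infinite-activity case where the bound is used downstream in Proposition~\ref{prop:condClass1}; sending $\delta\to0$ afterwards cannot repair a liminf that is already zero. Replacing $\delta$ by a $b$-dependent $\delta_b\downarrow0$ forces a genuine competition: $F_b(\delta)=\ln(b\delta)+\bo{1}$ for $b\delta$ large, so keeping $F_b(\delta_b)=\lbrb{1+\so{1}}\ln b$ needs $b\delta_b\to\infty$ not too slowly, while $\bar{\mu}(\delta_b)/\bar{\mu}(1/b)\to1$ needs $\delta_b$ to shadow $1/b$ closely, and for $\bar{\mu}$ regularly varying of a nonzero index at $0$ these two requirements are incompatible. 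The paper's own argument never Fubinis over $\mu$: it converts $\int_0^\infty\frac{1-\cos(by)}{y}\Logm{\bar{\mu}_{a,\dr}}(dy)$ into $\lbrb{1+\so{1}}\int_0^\infty\frac{1-\cos(by)}{y}\bar{\mu}_{a,\dr}(y)dy$ via \eqref{eq:arctan} and \eqref{eq:arctan1}, and then bounds the latter from below by restricting the range of integration and invoking the monotonicity of $y\mapsto\bar{\mu}_{a,\dr}(y)$ pointwise, a comparison made directly at the level of $\bar{\mu}$ rather than of $\mu$, which is what produces the $\bar{\mu}(1/b)$ in the normalisation. If you want to keep your $F_b$ route you must replace the crude step $\int_{\lbrb{0,\infty}}F_b(r)\mu(dr)\geq F_b(\delta)\bar{\mu}(\delta)$ by something that retains the contribution of $\mu$ near $0$ rather than discarding it.
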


\begin{remark}\label{rem:imagineryStirling}
	We note that \eqref{eq:arctan} coupled with \eqref{eq:arctan1} give a more tractable way to compute $\Aph$ in \eqref{eq:AphiAsymp2}. When $||\bar{\mu}_{a,\dr}||_{1}<1$ thanks to \eqref{eq:convo} we have that
	\[\int_{0}^{\infty}\frac{1-\cos(by)}{y}\Logm{\bar{\mu}_{a,\dr}}(dy)=\int_{0}^{\infty}\frac{1-\cos(by)}{y}\sum_{n=1}^{\infty}\lbrb{-1}^{n-1}\frac{\bar{\mu}_{a,\dr}^{*n}(dy)}{n}\]
	and thus via \eqref{eq:AphiAsymp2} we have precise information for the asymptotic expansion of $\Aph$ when the convolutions of $\bar{\mu}_{a,\dr}(y)=\dr^{-1}e^{-ay}\mubar{y}$ or equivalently of $\mubr$ are accessible. For example, if $\bar{\mu}_{a,\dr}(y)\simo \dr^{-1} y^{-\alpha},\,\alpha\in\lbrb{0,1}$, then with $B(a,b),\,a,b>0$, standing for the classical Beta function, with the relations \[C_n=\dr^{-n}\prod_{j=1}^{n-1}B\lbrb{j-j\alpha,1-\alpha}, \quad\bar{\mu}^{*n}_{a,\dr}(y)\simo C_ny^{n-1-n\alpha}\] and, with the obvious notation for asymptotic behaviour of densities of measures, we arrive at the following asymptotc expansion
	\[\Logm{\bar{\mu}_{a,\dr}}(dy)\simo\lbrb{ \sum_{\frac{1}{1-\alpha}\geq n\geq 1}(-1)^{n-1}C_ny^{n-1-n\alpha}+\bospace{y^{n^*-1-n^*\alpha}}}dy,\]
	where $n^*$ is the minimum integer strictly larger then $\frac{1}{1-\alpha}.$
	A substitution in \eqref{eq:AphiAsymp} and elementary calculations yield that as $b\to \infty$
	\[\int_{0}^{\infty}\frac{1-\cos(by)}{y}\Logm{\bar{\mu}_{a,\dr}}(dy)\simi \sum_{\frac{1}{1-\alpha}> n\geq 1}(-1)^{n-1}\tilde{C}_nb^{1-n+n\alpha}+\ind{\frac{1}{1-\alpha}\in\Nb}\bospace{\ln b}+\bospace{1}\]
	with $\tilde{C}_n=C_n\IntOI \frac{1-\cos(v)}{v^{2-n(1-\alpha)}}dv,\,n\in\lbbrb{1,\frac{1}{1-\alpha}}$, and the asymptotic expansion of $\Aph$ follows.
\end{remark}
\begin{proof}[Proof of Proposition \ref{thm:imagineryStirling}]
	Since $\Aph\!\lbrb{a+ib}=\Aph\!\lbrb{a-ib}$ without loss of generality we assume throughout that $b>0$. From \eqref{eq:ineqArg} of Proposition \ref{propAsymp1} we have that $\arg\phi(z)-\arg z \in\lbrb{-\pi,\pi}$ and then from \eqref{eq:phiP} we have that
	\begin{equation}\label{eq:BP1}
	\arg \phi\lbrb{z}=\arg z+\arg\lbrb{1+\frac{\phi(0)}{\dr z}+\mathcal{F}_{\bar{\mu}_{a,\dr}}(-ib)}.
	\end{equation}
	However,  an application of the Riemann-Lebesgue lemma to the function $\bar{\mu}_{a,\dr}\in\Lspaces{1}{\Rp}$ yields, as $b\to\infty$,
	that	
	\begin{equation}\label{eq:toarctan1}
	\abs{\Fo{\bar{\mu}_{a,\dr}}(-ib)}=\sospace{1}.
	\end{equation}
	Therefore, for all $b$ big enough, the Taylor expansion of the $\arg$ function leads to
	\begin{equation}\label{eq:BP2}
	\begin{split}
	\arg\lbrb{1+\frac{\phi(0)}{\dr\lbrb{ \ab}}+\mathcal{F}_{\bar{\mu}_{a,\dr}}(-ib)}&=\arg\lbrb{1+\mathcal{F}_{\bar{\mu}_{a,\dr}}(-ib)}+\Im\!\lbrb{\frac{\phi(0)}{\dr\lbrb{ \ab}}}\lbrb{1+\sospace{1}}\\
	&=\arg\lbrb{1+\mathcal{F}_{\bar{\mu}_{a,\dr}}(-ib)}-\frac{\phi(0)b}{\dr \lbrb{a^2+b^2}}+\sospace{\frac{1}{b}}.
	\end{split}
	\end{equation}
	Also for any $a'\geq 0$ and $b>0$
	\begin{equation}\label{eq:toarctan2}
	\begin{split}
	\Im\lbrb{\Fo{\bar{\mu}_{a,\dr}}\lbrb{-a'-ib}}&=-\IntOI \sin\lbrb{by}e^{-a'y}\bar{\mu}_{a,\dr}(y)dy\\
	&=-\frac{1}{ b}\IntOI \lbrb{1-\cos(by)}\mu_{a+a',\dr}(dy)<0,
	\end{split}
	\end{equation}
	since for $a+a'>0$, 
	\[y\mapsto \bar{\mu}_{a+a',\dr}(y)=e^{-a'y}\bar{\mu}_{a,\dr}(y)=\dr^{-1}e^{-\lbrb{a+a'}y}\bar{\mu}(y)\]
	is strictly decreasing on $\Rp$ and hence $\mu_{a+a',\dr}(dy)=d\lbrb{e^{-a'y}\bar{\mu}_{a,\dr}(y)},\,y\in\intervalOI$, is not supported on a lattice. Also $\Im\!\lbrb{\Fo{\bar{\mu}_{a,\dr}}\lbrb{-a'-ib}}>0$ for $b<0$ and $\Fo{\bar{\mu}_{a,\dr}}(-a')\geq 0$.
	Therefore, from \eqref{eq:toarctan1}, \eqref{eq:toarctan2} and the fact that $b\mapsto \mathcal{F}_{\bar{\mu}_{a,\dr}}(-ib)$ is continuous we deduct that  $\Fo{\bar{\mu}_{a,\dr}}\lbrb{i\R}\,\cap\lbrbb{-\infty,- 1}=\emptyset$ and $\log_0\!\lbrb{1+\Fo{\bar{\mu}_{a,\dr}}\lbrb{-z}}\in \Ac_{\lbbrb{0,\infty}}$. 
	Henceforth, Proposition \ref{prop:harmConvo} gives that  $\Logm{\bar{\mu}_{a,\dr}}\in\Lspaces{1}{\Rp}$. From \eqref{eq:logFT}, for all $b>0$,
	\begin{equation}\label{eq:toarctan}
	\begin{split}
	\arg\lbrb{1+\mathcal{F}_{\bar{\mu}_{a,\dr}}(-ib)}&=\Im\!\lbrb{\log_0\!\lbrb{1+\Fo{\bar{\mu}_{a,\dr}}(-ib)}}\\
	&=\Im\!\lbrb{\mathcal{F}_{\Logm{\bar{\mu}_{a,\dr}}}(-ib)}=-\IntOI \sin(by)\Logm{\bar{\mu}_{a,\dr}}(dy).
	\end{split}	  	
	\end{equation}
	Then, from \eqref{eq:Aphi} due to \eqref{eq:BP1},\eqref{eq:BP2} and \eqref{eq:toarctan} we deduce that,  for any $a,b>0$,
	\begin{equation*}
	\begin{split}
	\Aph(a+ib)&=\int_{0}^{b}\arg\lbrb{a+iu}du+ \int_{0}^{b}\arg\lbrb{1+\frac{\phi(0)}{\dr\lbrb{ a+iu}}+\mathcal{F}_{\bar{\mu}_{a,\dr}}(-iu)}du\\
	&=\int_{0}^{b}\arctan\lbrb{\frac{u}{a}}du-\frac{\phi(0)}{2\dr}\ln\lbrb{1+\frac{b^2}{a^2}}-\IntOI \frac{1-\cos(by)}{y}\Logm{\bar{\mu}_{a,\dr}}(dy)+\bar{A}_\phi(a,b),
	\end{split}	
	\end{equation*}
	and $\bar{A}_\phi(a,b)=\sospace{\ln b}$,  as $b\to\infty$.
	%\end{equation*}
	Then the first relation in \eqref{eq:AphiAsymp2} follows by a simple integration by parts of $\int_{0}^{b}\arctan\lbrb{\frac{u}{a}}du$. The asymptotic relation in \eqref{eq:AphiAsymp2} comes from the fact that $\Logm{\bar{\mu}_{a,\dr}}\in\Lspaces{1}{\Rp}$ and the auxiliary claim that for any $h\in\Lspaces{1}{\Rp}$
	\begin{equation}\label{eq:o(b)}
	\abs{\IntOI \frac{1-\cos\lbrb{by}}{y}h(y)dy}=\sospace{|b|},
	\end{equation}
	which follows from the Riemann-Lebesgue lemma invoked in the middle term of
	\[	\abs{\IntOI \frac{1-\cos\lbrb{by}}{y}h(y)dy}=\abs{\int_{0}^{b}\IntOI\sin\lbrb{uy}h(y)dydu}\leq \sospace{1}\abs{\int_{0}^{b}du}.\]
	Finally, since $\limi{a}\IntOI e^{-ay}\bar{\mu}(y)dy=0$ then  $\limi{a}||\bar{\mu}_{a,\dr}||_{1}=0$ and thus eventually, for some $a$ large enough, $||\bar{\mu}_{a,\dr}||_{1}<1$ and $\sup_{b\in\R}\abs{\Fo{\bar{\mu}_{a,\dr}}(ib)}<1$.  Choose such $a>0$. Then, from \eqref{eq:toarctan1} with $z\in\Cb_a,\,b>0,$	
	\begin{equation}\label{eq:argArc}
	\begin{split}
	\arg\lbrb{1+\Fo{\bar{\mu}_{a,\dr}}\!\lbrb{-ib}}&=\arctan\!\lbrb{\frac{\Im\!\lbrb{\Fo{\bar{\mu}_{a,\dr}}\!\lbrb{-ib}}}{1+\Re\!\lbrb{\Fo{\bar{\mu}_{a,\dr}}\!\lbrb{-ib}}}}\\
	&=-\arctan\!\lbrb{\frac{\Im\!\lbrb{\Fo{\bar{\mu}_{a,\dr}}\!\lbrb{ib}}}{1+\Re\!\lbrb{\Fo{\bar{\mu}_{a,\dr}}\!\lbrb{ib}}}},
	\end{split}
	\end{equation}
	and using the latter in \eqref{eq:toarctan} then \eqref{eq:arctan} follows upon simple integration of \eqref{eq:toarctan}.
	The asymptotic relation \eqref{eq:arctan1} follows from \eqref{eq:toarctan1}, i.e.~$\abs{\Fo{\bar{\mu}_{a,\dr}}(-ib)}=\sospace{1}$, combined with the Taylor expansion of $\arctan x$. Next, consider the bound \eqref{eq:arctan2}. Since from \eqref{eq:toarctan2} with $a'=0$, we have that $\Im\!\lbrb{\Fo{\bar{\mu}_{a,\dr}}\!\lbrb{ib}}> 0$ and, as $b\to\infty$, $\abs{\Im\!\lbrb{\Fo{\bar{\mu}_{a,\dr}}\!\lbrb{ib}}}=\sospace{1}$ we get from \eqref{eq:arctan}, \eqref{eq:arctan1},  \eqref{eq:argArc} and \eqref{eq:toarctan} that
	\begin{equation*}
	\begin{split}
	\int_{0}^{\infty}\frac{1-\cos(by)}{y}\Logm{\bar{\mu}_{a,\dr}}(dy)&\,\,=\int_{0}^{b}\arctan\!\lbrb{\frac{\Im\!\lbrb{\Fo{\bar{\mu}_{a,\dr}}\!\lbrb{iu}}}{1+\Re\!\lbrb{\Fo{\bar{\mu}_{a,\dr}}\!\lbrb{iu}}}}du\\
	&\,\,\simi \lbrb{1+\sospace{1}}\int_{0}^{b}\Im\!\lbrb{\Fo{\bar{\mu}_{a,\dr}}\!\lbrb{iu}}du\\
	&=\lbrb{1+\sospace{1}}\int_{0}^{\infty}\frac{1-\cos(by)}{y}\bar{\mu}_{a,\dr}(y)dy\\
	&\,\,\geq\lbrb{1+\sospace{1}} \int_{0}^{1}\frac{1-\cos(by)}{y}\bar{\mu}_{a,\dr}(y)dy\\
	&\,\,\geq\lbrb{1+\sospace{1}} \bar{\mu}_{a,\dr}\!\lbrb{\frac{1}{b}}\int_{1}^{b}\frac{1-\cos(y)}{y}dy\simi \bar{\mu}_{a,\dr}\!\lbrb{\frac{1}{b}}\ln(b),
	\end{split}
	\end{equation*}
	where in the third line we have used \eqref{eq:toarctan2}.
	This proves \eqref{eq:arctan2} since \[\bar{\mu}_{a,\dr}\!\lbrb{\frac{1}{b}}=e^{-\frac{a}b}\frac{1}{\dr}\bar{\mu}\!\lbrb{\frac{1}{b}}\simi\frac{1}{\dr}\bar{\mu}\!\lbrb{\frac{1}{b}}.\]
\end{proof}

\subsection{Proof of Theorem \ref{thm:Aph}\eqref{it:awpia}}
Let $\phi\in\Bc_{\alpha}$ with $\alpha \in (0,1)$  and let $z=a+ib\in\Cb_a,\,a>0$. Then, $\bar{\mu}(y)=y^{-\alpha}\ell(y)$ and $\ell$ is quasi-monotone, see \eqref{eq:quasi-monotone} and \eqref{eq:classesPhi1}. Recall the second relation of \eqref{eq:phi} which, since in this setting $\dr=0$, takes the form
\begin{equation}\label{eq:phi1}
\begin{split}
\phi(z)&=\phi(0)+z\IntOI e^{-iby}e^{-ay}\bar{\mu}(y)dy\\
&=\phi(0)+z\IntOI e^{-iby}y^{-\alpha}e^{-ay}\ell(y)dy.
\end{split}
\end{equation}
Since the mapping $y\mapsto\ell(y)e^{-ay}$ is clearly quasi-monotone we conclude from \cite[Theorem 1.39]{Soulier-09} that, for fixed $a>0$ and $b\to\infty$,
\begin{equation}
\begin{split}
\IntOI e^{-iby}y^{-\alpha}e^{-ay}\ell(y)dy&\simi \Gamma\lbrb{1-\alpha}\left(be^{\frac{i\pi}{2}}\right)^{\alpha-1}e^{-\frac{a}{b}}\ell\!\lbrb{\frac{1}{b}}\\
&\simi \Gamma\lbrb{1-\alpha}\left(be^{\frac{i\pi}{2}}\right)^{\alpha-1}\ell\!\lbrb{\frac{1}{b}}.
\end{split}
\end{equation}
Therefore, from \eqref{eq:phi1} and the last relation we obtain, as $b\to\infty$, that
\begin{equation*}
\begin{split}
\arg \phi(z)&=\arg z+\arg\lbrb{\IntOI e^{-iby}y^{-\alpha}e^{-ay}\ell(y)dy+\frac{\phi(0)}{z}}\\
&\simi \arg z+\frac{\pi\lbrb{\alpha-1}}{2}\simi \frac{\pi}{2}\alpha,
\end{split}
\end{equation*}
which proves \eqref{eq:AphiAsymp1} by using the definition of $\Aph$ in \eqref{eq:Aphi}. This together with Lemma \ref{lem:Lindelof} establishes the claim.

\subsection{Proof of Theorem \ref{thm:Wp}}\label{sec:proof_wb}
We start with item \eqref{it:A}. The fact that $\Wp\in\Ac_{\lbrb{\dph,\infty}}$ is a consequence of \cite[Theorem 6.1]{Patie-Savov-16} and is essentially due to the recurrence equation \eqref{eq:Wp} and the fact that $\phi$ is zero-free on $\Cb_{\lbrb{\dph,\infty}}$. Also, $\Wp\in\Mtt_{\lbrb{\aph,\infty}}$ comes from the observation that $0\not\equiv \phi\in\Ac_{\lbrb{\aph,\infty}}$, that is $\phi$ can only have zeros of finite order, and \eqref{eq:Wp} which allows a recurrent meromorphic extension to $\Cb_{\lbrb{\aph,\infty}}$.  $\Wp$ is zero-free on $\Cb_{\lbrb{\dph,\infty}}$  follows from \cite[Theorem 6.1 and Corollary 7.8]{Patie-Savov-16}. The fact that $\Wp$ is zero-free on $\Cb_{\lbrb{\aph,\infty}}$ is thanks to $\phi\in\Ac_{\lbrb{\aph,\infty}}$ and
\begin{equation}\label{eq:Wp11}
\Wp(z)=\frac{\Wp(z+1)}{\phi(z)}
\end{equation}
which comes from \eqref{eq:Wp}. If $\phi(0)>0$ then $\Zcph=\emptyset$ and hence the facts that $\Wp$ is zero-free on $\Cb_{\lbbrb{0,\infty}}$  and $\Wp\in\Ac_{\lbbrb{0,\infty}}$ are immediate from $\Wp$ being zero-free on $\intervalOI$ and \eqref{eq:Wp11}. However, when $\phi(0)=0$ relation \eqref{eq:Wp11} ensures that $\Wp$ extends continuously to $i\R\setminus\Zc_{0}\!\lbrb{\phi}$ and clearly if $\mathfrak{z}\in\Zc_0\!\lbrb{\phi}$ then
\[\lim\limits_{\Re(z)\geq 0, z \to \mathfrak{z}} \phi(z)\Wp\lbrb{z}=\Wp\lbrb{\mathfrak{z}+1}.\] Finally, let us assume that $\phi'\!\lbrb{0^+}=\dr+\IntOI y\mu(dy)<\infty$ and $\lbcurlyrbcurly{0}\in\Zc_0\!\lbrb{\phi}$, that is $\phi(0)=0$.
From the assumption $\phi'\!\lbrb{0^+}<\infty$ and the dominated convergence theorem, we get that $\phi'$ extends to $i\R$, see \eqref{eq:phi'}. Therefore, from \eqref{eq:Wp} and the assumption $\phi(0)=0$ we get, for any $z\in\Cb_{\intervalOI}$, that
\begin{equation*}
\begin{split}
\Wp\!\lbrb{z+1}&=\phi\!\lbrb{z}\Wp\!\lbrb{z}=\lbrb{\phi\!\lbrb{z}-\phi(0)}\Wp\!\lbrb{z}\\
&=\lbrb{\phi'(0^+)z+\sospace{|z|}}\Wp\!\lbrb{z}.
\end{split}
\end{equation*}
Thus $ z\mapsto z\Wp(z) $ extends continuously to $i\R\setminus\lbrb{\Zc_0\!\lbrb{\phi}\setminus\curly{0}}$ provided $\phi'(0^+)=\dr+\IntOI y\mu(dy)>0$, which is apparently true.
If in addition $\phi''\!(0^+)<\infty$ then
\begin{equation*}
\begin{split}
\Wp\!\lbrb{z+1}&=\phi\!\lbrb{z}\Wp\!\lbrb{z}=\lbrb{\phi\!\lbrb{z}-\phi(0)}\Wp\!\lbrb{z}\\
&=\lbrb{\phi'\!(0^+)z+\phi''\!(0^+)z^2+\sospace{|z|^2}}\Wp\!\lbrb{z}.
\end{split}
\end{equation*}
Clearly, then the mapping  $z \mapsto \Wp\!\lbrb{z}-\frac{1}{\phi'\lbrb{0^+}z}$ extends continuously to $i\R\setminus\lbrb{\Zc_0\lbrb{\phi}\setminus\curly{0}}$. Let us deal with item \eqref{it:D}. We note that
\[e^{-\phi\lbrb{z}}=\Ebb{e^{-z\xi_1}},\,\,z\in\Cb_{\lbbrb{0,\infty}},\]
where $\xi=\lbrb{\xi_t}_{t\geq 0}$ is a non-decreasing \LLP (subordinator) as $-\phi(-z)=\Psi(z)\in\overNc$, see \eqref{eq:lk0}. Thus, if $\phi(z_0)=0$ then $\Ebb{e^{-z_0\xi_1}}=1$.  If in addition, $z_0\in \Cb_{\lbbrb{0,\infty}}\setminus\curly{0}$ then $\phi(0)=0$ and $z_0\in i\R$. Next, $\phi(z_0)=0$ also triggers that $\xi $ lives on a lattice of size, say $h>0$, which immediately gives that $\dr=0$ and $\mu=\sum_{n=1}^{\infty}c_n\delta_{x_n}$ with $\sum_{n=1}^{\infty}c_n<\infty$ and $\forall n\in\N$ we have that $x_n=hk_n,\,k_n\in\Nb$, $c_n\geq0$. Finally, $h$ can be chosen to be the largest such that $\xi$ lives on $\lbrb{h n}_{n\in\N}$. Thus,
\[\phi\lbrb{z}=\sum_{n=1}^{\infty}c_n\lbrb{1-e^{-z h k_n}}\]
and we conclude that $\phi$ is periodic with period $\frac{2\pi i}{h}$ on $\CbOI$. Next, note that
\begin{equation}\label{eq:phiInf}	\phi(\infty)=\lim\ttinf{u}\phi(u)=\lim\ttinf{u}\sum_{n=1}^{\infty}c_n\lbrb{1-e^{-uhk_n}}=\sum_{n=1}^{\infty}c_n<\infty.
\end{equation}
Then \eqref{eq:EulerConst} implies that
\[\lim\ttinf{n}\sum_{k=1}^n\frac{\phi'(k)}{\phi(k)}=\sum_{k=1}^{\infty}\frac{\phi'(k)}{\phi(k)}=\gamph+\ln\phi(\infty).\]
Thus, from \eqref{eq:BernWeier} we get that
\[\Wp(z)=\frac{e^{-\gamma_\phi z}}{\phi(z)}\prod_{k=1}^{\infty}\frac{\phi(k)}{\phi\!\lbrb{k+z}}e^{\frac{\phi'\!(k)}{\phi(k)}z}=\frac{e^{z\ln \phi\lbrb{\infty}}}{\phi(z)}\prod_{k=1}^{\infty}\frac{\phi(k)}{\phi\!\lbrb{k+z}}.\]
Hence, the claim for the $\frac{2\pi i}{ h}$ periodicity of the mappings
\[z \mapsto e^{-z\ln \phi\lbrb{\infty}}\Wp(z)\text{ and } z\mapsto \abs{\Wp(z)}\]
  follows immediately from the periodicity of $\phi$. Thus, item \eqref{it:D} is proved. Item \eqref{it:E} follows in the same manner as item \eqref{it:C} noting that when $\tph<0$ then $\dph=\tph$, see \eqref{eq:thetaphi1} and \eqref{eq:dphi1}. The last item \eqref{it:F} is an immediate result from \eqref{eq:Wp} and the fact that $\phi'>0$ on $\lbrb{\aph,\infty}$, see \eqref{eq:phi'}, that is $\tph$ is the unique zero of order one of $\phi$ on $\lbrb{\aph,\infty}$. Item \eqref{it:G} follows by simply changing $\phi\mapsto c\phi$ in \eqref{eq:BernWeier} and \eqref{eq:EulerConst}, and elementary algebra.  This ends the proof of Theorem \ref{thm:Wp}.

\subsection{Proof of Theorem \ref{thm:Stirling}\eqref{it:gbWp}}\label{subsec:proof}
Next, we know from the proof of \cite[Proposition 6.10]{Patie-Savov-16} and see in particular the expressions obtained for the quantities in \cite[(6.33) and (6.34)]{Patie-Savov-16}, that, for any $z=a+ib\in\CbOI,\,b>0$,
\begin{equation}\label{eq:absWp}
\begin{split}
\abs{\Wp(z)}&=\Wp\!\lbrb{a}\frac{\phi(a)}{\abs{\phi(z)}}\sqrt{\abs{\frac{\phi(z)}{\phi(a)}}}e^{-\int_{0}^{\infty}\ln\abs{\frac{\phi(u+a+ib)}{\phi(a+u)}}du}e^{-\Eph(z)}\\
&=\Wp\!\lbrb{a}\sqrt{\abs{\frac{\phi(a)}{\phi(z)}}}e^{-\int_{a}^{\infty}\ln\abs{\frac{\phi(u+ib)}{\phi(u)}}du}e^{-\Eph(z)}.
\end{split}
\end{equation}
We note that the term $-\Eph(z)$ is the limit  in $n$ of the error terms $E^B_\phi(n,a)-E^B_\phi(n,a+ib)$ in the notation of the proof of \cite[Proposition 6.10]{Patie-Savov-16} and is formulated in \eqref{eq:Ephi}. Thus, the last three terms of the second expression in \eqref{eq:absWp} are in fact the quantity $\frac{\phi(a)}{\abs{\phi(z)}}Z_\phi(z)$ in the notation of \cite[Proposition 6.10, (6.33)]{Patie-Savov-16}.
Thanks to Theorem \ref{thm:genFuncs}\eqref{it:Aphi}, for $z=a+ib\in\CbOI, b>0$, we deduce substituting \eqref{eq:A=Theta} in \eqref{eq:absWp} that
\begin{equation}\label{eq:absWp1}
\abs{\Wp(z)}=\Wp\!\lbrb{a}\sqrt{\frac{\phi(a)}{\abs{\phi(z)}}}e^{-\Aph\lbrb{z}}e^{-\Eph(z)}.
\end{equation}
Since $\abs{\overline{\Wp\lbrb{z}}}=\abs{\Wp\lbrb{\bar{z}}}$ we conclude \eqref{eq:absWp1}, for any $z=\ab\in\CbOI, b\neq 0$. Since from \eqref{eq:Aphi} and \eqref{eq:Ephi}, $\Aph\!\lbrb{\Re(z)}=\Eph\!\lbrb{\Re(z)}=0$, we deduct that \eqref{eq:absWp1} holds for $z=a\in\Rb^+$ too. Next, let us investigate $\Wp(a)$ in \eqref{eq:absWp1}. Recall that,  from \eqref{eq:BernWeier} and \eqref{eq:EulerConst}, we get, for $a>0$, that
\begin{equation}\label{eq:realWp}
\begin{split}
\Wp\!\lbrb{a}&=\frac{e^{-\gamma_\phi a}}{\phi(a)}\prod_{k=1}^{\infty}\frac{\phi(k)}{\phi\!\lbrb{k+a}}e^{\frac{\phi'(k)}{\phi(k)}a}\\
&=\frac{1}{\phi(a)}\limi{n}\prod_{k=1}^{n}\frac{\phi(k)}{\phi\!\lbrb{k+a}}e^{a\ln\phi(n)}e^{a\lbrb{\sum_{k=1}^{n}\frac{\phi'(k)}{\phi(k)}-\ln\phi(n)-\gamph}}\\
&=\frac{1}{\phi(a)}\limi{n}\prod_{k=1}^{n}\frac{\phi(k)}{\phi\!\lbrb{k+a}}e^{a\ln\phi(n)}=\frac{1}{\phi(a)}\limi{n}e^{-S_{n}(a)+a\ln\phi(n)},
\end{split}
\end{equation}
where $S_n(a)=\sum_{k=1}^{n}\ln \frac{\phi\lbrb{a+k}}{\phi(k)}$. Then, we get, from \cite[Section 8.2, (2.01), (2.03)]{Olver-74} applied to the function $f(u):=\ln\frac{\phi\lbrb{a+u}}{\phi(u)},\,u>0,$ with $m=1$ in their notation, that
\begin{equation}\label{eq:Sn}
\begin{split}
S_n(a)&=\int_{1}^{n}\ln\frac{\phi\lbrb{a+u}}{\phi(u)}du+\frac{1}{2}\ln\frac{\phi(1+a)}{\phi(1)}+\frac{1}{2}\ln\frac{\phi(n+a)}{\phi(n)}+R_2(n,a)\\
&=\frac{1}{2}\ln\frac{\phi(1+a)}{\phi(1)}-\int_{1}^{a+1}\ln\phi(u)du+\int_{0}^{a}\ln\phi(n+u)du+\frac{1}{2}\ln\frac{\phi(n+a)}{\phi(n)}+R_2(n,a),
\end{split}		
\end{equation}
where, recalling that $\mathrm{P}(u)=\lbrb{u-\lrfloor{u}}\lbrb{1-\lbrb{u-\lrfloor{u}}}$, for any $a>0$,
\begin{equation}\label{eq:Rn}
R_2(n,a)=\frac{1}{2}\int_{1}^{n}\mathrm{P}(u)\lbrb{\ln
	\frac{\phi\lbrb{a+u}}{\phi(u)}}''du.
\end{equation}
Using \eqref{lemmaAsymp1-1} in Proposition \ref{propAsymp1}  and \eqref{eq:Sn},  we get that
\begin{align*}			
\limi{n}\lbrb{S_n(a)-a\ln\phi(n)}&=\frac{1}{2}\ln \frac{\phi(1+a)}{\phi(1)}-\int_{1}^{a+1}\ln\phi(u)du\\
&\,\,\,\,\,\,+\limi{n}\lbrb{\int_{0}^{a}\ln \frac{\phi(n+u)}{\phi(n)}du
	+R_2(n,a)}\\	
\nonumber\qquad&=\frac{1}{2}\ln \frac{\phi(1+a)}{\phi(1)}-\Gph(a)+\limi{n}R_2(n,a),
\end{align*}
where $\Gph$ is defined in \eqref{eq:Gph}.
Let us show that $R_\phi(a)=\limi{n}R_2(n,a)$ exists with $\Rph$ defined in \eqref{eq:Rphi}. It follows from \eqref{eq:Rn}, the inequality $\sup_{u>0}\abs{\mathrm{P}(u)}\leq\frac{1}{4}$ and the dominated convergence theorem since
\begin{equation}\label{eq:R2Bound}
\sup_{n\geq 1}\sup_{\phi\in\Bc}\abs{R_2(n,a)}\leq \frac{1}{4}\sup_{\phi\in\Bc}\int_{1}^{\infty}\lbrb{\lbrb{\frac{\phi'(u)}{\phi(u)}}^2+\abs{\frac{\phi''(u)}{\phi(u)}}}du<2,
\end{equation}
where the finiteness follows from \eqref{specialEstimates11}. Therefore, from \eqref{eq:realWp}, \eqref{eq:Rn} and the existence of $R_\phi(a)$, we get that
\[\Wp(a)=\frac{1}{\phi(a)}\sqrt{\frac{\phi(1)}{\phi(1+a)}}e^{\Gph(a)-R_\phi(a)}.\]
Substituting this in \eqref{eq:absWp1} we prove \eqref{eq:Stirling}. Finally, the asymptotic relation \eqref{eq:err} follows from the application of \eqref{eq:uniBoundRE1}.
\subsection{Proof of Theorem \ref{thm:Stirling}\eqref{it:realStirling}}
The proof of this claim follows from representation of $\abs{\Wp}$ in \eqref{eq:Stirling}, the form of $\Gph$ as expressed in \eqref{eq:Gph} and Theorem \ref{thm:genFuncs}\eqref{it:Tphi}. Finally, the fact that $\limi{a}\Aph\!\lbrb{a+ib}=0$ has already been proved in \eqref{eq:Aphto0}. This concludes the proof of this item.

\subsection{Proof of Theorem \ref{thm:Stirling}\eqref{it:cases}} The proof is straightforward. Since $\phi\in\BP$ item \eqref{it:case1} follows immediately from \eqref{eq:AphiAsymp}, the definition of $\Bc_{\nexp}$, see \eqref{eq:polyclassB1}, and Lemma \ref{lem:Lindelof}, which allows to extend the exponential decay to all complex lines $\Cb_a,\,a>\dph$. In the same fashion since $\phi\in\Bc_\alpha$ item \eqref{it:case2} is deducted from \eqref{eq:AphiAsymp1} and Lemma \ref{lem:Lindelof}. Finally, item \eqref{it:case3} is merely a rephrasing of Theorem \ref{thm:Aph}\eqref{eq:subexpp} for $a>0$ which is then augmented by Lemma \ref{lem:Lindelof} to $a>\dph$.
\subsection{Proof of Lemma \ref{lem:Lindelof}}
To prove the claims we rely on the  Phragm\'{e}n-Lindel\"{o}f's  principle combined with the functional equation \eqref{eq:Wp}, which also appears in Definition \ref{def:WB}. First, assume that  there exists $\ak>\dph$ such that \[\limsupi{|b|}|b|^n\abs{\Wp\!\lbrb{\ak+ib}}=0,\,\forall n\in\Nb,\]
and since $\abs{\Wp\!\lbrb{\ak+ib}}=\abs{\Wp\!\lbrb{\ak-ib}}$ consider $b>0$ only. The recurrent equation $\eqref{eq:Wp}$ and $\abs{\phi\lbrb{\ak+ib}}=\dr b+\sospace{|b|},$ as $b\to\infty$ and $\ak>\dph$ fixed, see Proposition \ref{propAsymp1}\eqref{it:asyphid}, yield for all $n\in\Nb$ that
\[\limsupi{|b|}|b|^n\abs{\Wp\!\lbrb{1+\ak+ib}}=\limsupi{|b|}|b|^n\abs{\phi\!\lbrb{\ak+ib}}\abs{\Wp\!\lbrb{\ak+ib}}=0.\] Then, we apply the Phragm\'{e}n-Lindel\"{o}f's principle to the strip $\Cb^+_{\lbbrbb{\ak,\,\ak+1}}=\Cb_{\lbbrbb{\ak,\,\ak+1}}\cap\lbcurlyrbcurly{b\geq 0}$ and to the functions $f_n(z)=z^n\Wp\!\lbrb{z},\,n\in\Nb,$ which are holomorphic on $\Cb^+_{\lbbrbb{\ak,\,\ak+1}}$. Indeed, from our assumptions and the observation above, we have that, for every $n\in\Nb$ and some finite positive constants $C_n$,
\[\sup_{z\in\partial \Cb^+_{\lbbrbb{\ak,\,\ak+1}}}\abs{f_n(z)}\leq C_n\]
and clearly, since $\Wp \in
\Wc_\Bc$, see \eqref{eq:defWb}, is a Mellin transform of a random variable,
\[\sup_{z\in \Cb^+_{\lbbrbb{\ak,\,\ak+1}}}\abs{\frac{f_n(z)}{z^n}}=\sup_{z\in \Cb^+_{\lbbrbb{\ak,\,\ak+1}}}\abs{\Wp\!\lbrb{z}}=\sup_{v\in\lbbrbb{\ak,\,\ak+1}}\Wp(v)<\infty.\]
Thus, we conclude from the Phragm\'{e}n-Lindel\"{o}f's  principle that
\[\sup_{z\in \Cb^+_{\lbbrbb{\ak,\,\ak+1}}}\abs{f_n(z)}=\sup_{z\in \Cb^+_{\lbbrbb{\ak,\,\ak+1}}}\abs{z^n\Wp(z)}\leq C_n,\]
see \cite{Titchmarsh1939} or \cite[Theorem 1.0.1]{Garret-07}, which is a discussion of the paper by Phragm\'{e}n and Lindel\"{o}f, that is  \cite{Phragmen-Lindelof-1908}.
Finally, \eqref{eq:Wp} and $\abs{\phi\!\lbrb{a+ib}}=\dr b+\sospace{b},$ as $b\to\infty$, allow us to deduce that
\[\limsupi{|b|}|b|^n\abs{\Wp\!\lbrb{a+ib}}=0,\,\forall n\in\Nb,\,\forall a\geq\ak.\]
To conclude the claim for $a\in\lbrb{\dph,\ak}$ we use \eqref{eq:Wp} in the opposite direction and \eqref{eq:rephi}, that is $\Re\!\lbrb{\phi\lbrb{\ab}}\geq\phi(a)>0$, for any $a>\dph$, to get that
\begin{equation}\label{eq:opposite} \frac{1}{\phi\lbrb{a}}\abs{\Wp\!\lbrb{1+a+ib}}\geq\frac{1}{\abs{\phi\!\lbrb{\ab}}}\abs{\Wp\!\lbrb{1+a+ib}}=\abs{\Wp\!\lbrb{\ab}}.
\end{equation}
Thus, $\phi\in\PcBinf$. Next, assume that there exists $ \ak>\dph,\,\theta\in\lbrbb{0,\frac{\pi}{2}}$ such that \[\limsupi{|b|}\frac{\ln\abs{\Wp\!\lbrb{\ak+ib}}}{\abs{b}}\leq -\theta.\] Then, arguing as above, we conclude that this relation holds for $\ak+1$ too. Then, on $\Cb^+_{\lbbrbb{\ak,\,\ak+1}}$, for the functions \[f_\varepsilon(z)=\Wp(z)e^{-i\lbrb{\theta-\varepsilon} z},\,\varepsilon\in\lbrb{0,\theta},\] we have again from  the Definition \eqref{eq:defWb} with some $C>0,\,D=D(\ak)>0$, that
\[\sup_{z\in\partial \Cb^+_{\lbbrbb{\ak,\,\ak+1}}}\abs{f_\varepsilon(z)}\leq C\quad \textrm{ and } \quad \sup_{b\geq 0}\sup_{v\in\lbbrbb{\ak,\,\ak+1}}|f_\varepsilon(v+ib)|\leq De^{\lbrb{\theta-\varepsilon} b}\leq De^{\frac{\pi}{2}b}.\]
This suffices for the application of the Phragm\'{e}n-Lindel\"{o}f's principle with $\tilde{f}_\varepsilon(z)=f_\varepsilon\lbrb{iz}$. Therefore, we conclude that $\abs{f_\varepsilon(z)}\leq C$ on $\Cb^+_{\lbbrbb{\ak,\,\ak+1}}$ and thus, for all $v\in\lbbrbb{\ak,\ak+1}$,
\[\limsupi{b}\frac{\ln\abs{\Wp\!\lbrb{v+ib}}}{b}\leq -\theta+\varepsilon.\]
Sending $\varepsilon\to 0$ we conclude that, for all $v\in\lbbrbb{\ak,\ak+1}$,
\begin{equation}\label{eq:hata}
\limsupi{b}\frac{\ln\abs{\Wp\!\lbrb{v+ib}}}{b}\leq -\theta.
\end{equation}
Therefore, from the identities $\abs{\Wp\!\lbrb{a+ib}}=\abs{\Wp\!\lbrb{a-ib}}$ and \eqref{eq:Wp}, the widely used relation $\abs{\phi\!\lbrb{a+ib}}=\dr b+\sospace{b},$ as $b\to\infty$ and $a>\dph$ fixed, see Proposition \ref{propAsymp1}\eqref{it:asyphid}, and \eqref{eq:opposite} we deduct that  \eqref{eq:hata} holds for $a>\dph$. Thus, we deduce that
$\phi\in\Bth$ and conclude the entire proof of the lemma.

\subsection{Proof of Theorem \ref{thm:HY}}\label{subsec:prof}
The first item is proved, for any $\phi\in\Bc$, by Berg in \cite[Theorem 2.2]{Berg-07}.  The second item follows readily from the first one after recalling that if $\Psi \in \Ne$ then $\Psi(-\cdot)\in \overNc$.  For the item \eqref{it:sm}, since $\phi/\underline{\phi}$ is completely monotone, and  we recall that for any $n\in \N$, $W_{\phi}(n+1)=\prod_{k=1}^n\phi(k)$, we first get from \cite[Theorem 1.3]{Berg}  that the sequence $\left(f_n=W_{\underline{\phi}}(n+1)/W_\phi(n+1)\right)_{n\geq0}$ is the moment sequence of a positive variable $I$.  Next, from the recurrence equation \eqref{eq:Wp} combined with the estimates stated in Proposition \ref{propAsymp1}\eqref{it:asyphid}, we deduce that $f_{n+1}/f_{n}=\underline{\phi}(n)/\phi(n)\stackrel{\infty}{=}O(n)$.  Thus, there exists $A>0$ such that for any $a<A$,
\[ \mathbb{E}\left[e^{aI}\right] =\sum_{n=0}^{\infty}\frac{f_n}{n!}a^n <\infty,\]
implying that $I$ is moment determinate. Next, with the notation of the statement, if $\Ntt=\Ntt_{\underline{\phi}}-\Ntt_{\phi}>\frac12$   then, as \[\left|\mathcal{M}_{I}\!\left(ib-\frac12\right)\right|=\left|\frac{W_{\underline{\phi}}\!\lbrb{ib+\frac12}}{W_\phi\!\lbrb{ib+\frac12}}\right|\stackrel{\infty}{=}\bospace{|b|^{-\Ntt}}\]
and, from Theorem \ref{thm:Wp}, for any $\phi\in \Be$, $\Wp \in\Ac_{\lbrb{0,\infty}}$ and is zero-free on $\Cb_{\lbrb{0,\infty}}$, we obtain that  $b\mapsto \mathcal{M}_{I}\!\left(ib-\frac12\right) \in \Lspaces{2}{\R}$ and hence by the Parseval identity for Mellin transform we conclude that  $f_{I}\in \Lspaces{2}{\Rp}$.  Finally if $\Ntt>1$ then the result follows from a similar estimate for the Mellin transform which allows to use a  Mellin inversion technique to prove the  claim in this case. For the last item, we first  observe, from \eqref{eq:phi'} and Proposition \ref{propAsymp1}\eqref{it:bernstein_cmi}, that for any $\phi \in \Be$ and $u>0$,
\begin{equation}\label{eq:defk}
\frac{\phi'(u)}{\phi(u)}=\int_0^{\infty}e^{-uy}\kappa(dy)
\end{equation}
where we recall that $\kappa(dy)=\int_{0}^{y}U\lbrb{dy-r}\left(r\mu(dr)+\delta_{\dr}(dr)\right)$. Thus,
\begin{equation}
\underline{\Psi}(z)=\ln \frac{\Wp(z+1)}{W_{\underline{\phi}}(z+1)}=\ln \frac{\phi(1)}{\underline{\phi}(1)}z + \int_0^{\infty} \lbrb{e^{-zy}-1-z(e^{-y}-1)}\frac{\kappa(dy)-\underline{\kappa}(dy)}{y(e^y-1)}
\end{equation}
where, as in \eqref{eq:defk}, we have set $\underline{\phi}'(u)/\underline{\phi}(u)=\int_0^{\infty}e^{-uy}\underline{\kappa}(dy)$. Next, since plainly $\underline{\phi}'(1)/\underline{\phi}(1)-\phi'(1)/\phi(1)<\infty$, we have that the measure $e^{-y}K(dy)=e^{-y}\left(\kappa(dy)-\underline{\kappa}(dy)\right)$ is finite on $\Rp$.  Thus, by the L\'evy-Khintchine formula, see \cite{Bertoin-96},  $\underline{\Psi} \in \Ne$ if and only if $K$ is a non-negative measure, which by Bernstein theorem, see e.g.~\cite{Feller-71}, is equivalent to the mapping   $u\mapsto \ln \left(\underline{\phi}/\phi\right)'(u)=\left(\underline{\phi}'/\underline{\phi}-\phi'/\phi\right)(u)$ to be completely monotone. Note that this latter condition implies that $\phi/\underline{\phi}$ is completely monotone and hence with $I$ as in the previous item, we deduce easily that
$\Eb\left[e^{z \log I}\right]=e^{\underline{\Psi}(z)}$ which shows that $\log I$ is infinitely divisible on $\R$.  The last equivalent condition being immediate from the definition of $K$, the proof of the theorem is completed.

\subsection{Proof of Lemma \ref{lem:continuityW}}\label{sec:proof_wbe}
Let $Y_{\phi_n},\,Y_{\underline\phi},\,n\in\N,$ be the random variables associated to $W_{\phi_n},W_{\underline{\phi}},\,n\in\N,$ see the Definition \eqref{eq:defWb}. Clearly, since for any $\phi\in\Bc$,
\[\Ebb{e^{tY_\phi}}=\sum_{k=0}^{\infty}t^k\frac{\Ebb{Y^k_\phi}}{k!}=\sum_{k=0}^{\infty}t^k\frac{\Wp(k+1)}{k!}=\sum_{k=0}^{\infty}t^k\frac{\prod_{j=1}^{k}\phi(j)}{k!}\]
and Proposition \ref{propAsymp1}\eqref{it:asyphid} holds, we conclude that $\Ebb{e^{tY_\phi}}$ is well defined for $t<\frac{1}{\dr}\in\lbrbb{0,\infty}$. However, $\limi{n}\phi_n(a)=\underline\phi(a)$ implies that $\dr^*=\sup_{n\geq 0}\dr_n<\infty$, where $\dr_n$ are the linear terms of $\phi_n$  in \eqref{eq:phi}. Therefore, $\Ebb{e^{zY_{\underline\phi}}},\,\Ebb{e^{zY_{\phi_n}}},\,n\in\N,$ are analytic in $\Cb_{\lbrb{-\infty, \min\curly{\frac{1}{\underline{\dr}},\,\frac{1}{\dr^*}}}}\varsupsetneq\Cb_{\lbrbb{-\infty,0}}$. Moreover, for any $k\in\N$,
\begin{equation*}
\begin{split}
\limi{n}\Ebb{Y^k_{\phi_n}}&=\limi{n}W_{\phi_n}\!(k+1)\\
&=\limi{n}\prod_{j=1}^{k}\phi_n(j)=W_{\underline{\phi}}(k+1)=\Ebb{Y^k_{\underline{\phi}}}.
\end{split}
\end{equation*}
The last two observations trigger that $\limi{n}Y_{\phi_n}\stackrel{d}{=}Y_{\underline{\phi}}$, see \cite[p.269, Example (b)]{Feller-71}. Therefore, since on a suitable probability space one can choose random variables $Y_n\stackrel{d}{=} Y_{\phi_n}, n\geq 0,$ and $Y\stackrel{d}{=}Y_{\underline{\phi}}$ such that $\limi{n} Y_n=Y$ a.s., see \cite[Theorem 3.30]{Kallenberg2001}, one gets by the dominated convergence theorem and the fact that the family of measures $y^a\Pbb{Y_n\in dy}, a>0,$ are uniformly integrable
that   $\limi{n}W_{\phi_n}\!(z)=W_{\underline{\phi}}(z),\,z\in\CbOI$, which concludes the proof. To check the uniform integrability for $a\geq 1$ note that for $N>2a$
\[\limi{K}\sup_{n\geq 1}\Ebb{Y_n^a\ind{Y_n\geq K}}\leq \limi{K}\sup_{n\geq 1}\sqrt{\Ebb{Y_n^{2N}}}\sqrt{\frac{\Ebb{Y_n}}{K}}=0\]
since $\Ebb{Y_n}=W_{\phi_n}\!(2)$ and $\Ebb{Y_n^{2N}}=W_{\phi_n}\!(2N+1)$ converge as $n\to \infty$. For $a<1$ the convergence follows from the recurrence equation $W_{\phi_n}\!(z+1)=\phi_n(z)W_{\phi_n}\!(z)$.

\section{The functional equation \eqref{eq:fe}}\label{sec:FE}

\subsection{Proof of  Theorem \ref{lemma:FormMellin}} \label{sec:proof_thm1}
Recall that by definition $\MPs(z)=\frac{\Gamma(z)}{\Wpp\!(z)}\Wpn\!\lbrb{1-z}$, see \eqref{eqM:MIPsi}. From \eqref{eq:WH1} and \eqref{eq:feb} it is clear that formally for $z\in i\R$
\begin{equation}\label{eq:fe2}
\begin{split}
\MPs\!\lbrb{z+1}&=\frac{\Gamma\!\lbrb{z+1}}{\Wpp\!\lbrb{z+1}}\Wpn\!(-z)\\
&=\frac{z\Gamma\!\lbrb{z}}{\php(z)\Wpp\!\lbrb{z}}\frac{\Wpn\!\lbrb{1-z}}{\phn(-z)}=\frac{-z}{\Psi\!\lbrb{-z}}\MPs(z).
\end{split}
\end{equation}
However, from Theorem \ref{thm:Wp}\eqref{it:A} it is clear that $\Wpp$ (resp.~$\Wpn$) extend continuously to $i\R\setminus \Zc_0\lbrb{\php}$ (resp.~$i\R\setminus\Zc_0\lbrb{\phn}$). Since from \eqref{eq:WH1} we have that  $\Zc_0\!\lbrb{\Psi}=\Zc_0\!\lbrb{\php}\cup\Zc_0\!\lbrb{\phn}$, see \eqref{eq:zerosPsi} and \eqref{eq:zerosphi} for the definition of the sets of zeros, we conclude that $\MPs$ satisfies \eqref{eq:fe2} on $i\R\setminus\Zc_0\!\lbrb{\Psi}$.
The claim that $\MPs\in\Ac_{(0,\,1-\dphn)}\cap\,\Mtt_{\lbrb{\aphp,\,1-\aphn}}$ then follows from the facts that $\Wp\in\Ac_{\lbrb{\dph,\infty}}\cap\, \Mtt_{\lbrb{\aph,\infty}}$ and $\Wp$ is zero-free on $\Cb_{\lbrb{\aph,\infty}}$ for any $\phi\in\Bc$, see Theorem \ref{thm:Wp}\eqref{it:A}, lead to 
\begin{equation}\label{eq:merom}
\begin{split}
&\frac{1}{\Wpp(\cdot)}\in\Ac_{\lbrb{\aphp,\infty}},\quad \Gamma\in \Ac_{\lbrb{0,\infty}}\cap \Mtt_{\lbrb{-\infty,\infty}}  \text{ and }\\ &\Wpn\!\!\lbrb{1-\cdot}\in \Ac_{\lbrb{-\infty,1-\dphn}}\cap \Mtt_{\lbrb{-\infty,1-\aphn}}.
\end{split}
\end{equation}
Thus, \eqref{eq:domainMerMPs}, that is $\MPs\in\Mtt_{\lbrb{\aphp,\,1-\aphn}}$, in general, and \eqref{eq:domainAnalMPs}, that is $\MPs\in\Ac_{(\aP,\,1-\dphn)}$, when $\aP=\aphp\ind{\dphp=0}=0$ follow. Note that when $0=\dphp>\aphp$, i.e.~$\aP=\aphp<0$, then necessarily 
\[\php'(a^+)=\dr+\IntOI ye^{ay}\mup{dy}\in\lbrb{0,\infty}, \text{ for any $a>\aphp$},\] 
see \eqref{eq:phi'}, and  $\dphp=0$, see \eqref{eq:dphi1}, is the only zero of $\php$ on $\lbrb{\aphp,\infty}$. Therefore, Theorem \ref{thm:Wp}\eqref{it:F} applies and yields that at $z=0$, $\Wpp$ has a simple pole. Via the recurrence relation \eqref{eq:Wp} combined with the fact that $\php<0$ on $\lbrb{\aphp,0}$ this simple pole is propagated to a simple pole at all negative integers $n$ such that $n>\aphp$. These simple poles however are simple zeros for $\frac{1}{\Wpp}\in\Ac_{\lbrb{\aphp,\infty}}$ which cancel the poles of $\Gamma$. Thus, $\MPs\in\Ac_{\lbrb{\aP,\,1-\dphn}}$ and \eqref{eq:domainAnalMPs} is established. For $z\in\CbOI$ we have that
\begin{equation}\label{eq:recur} \MPs(z)=\frac{\Gamma(z)}{\Wpp\!(z)}\Wpn\!(1-z)=\frac{\php(z)}{z}\frac{\Gamma\!\lbrb{z+1}}{\Wpp\!\lbrb{z+1}}\Wpn\!(1-z).
\end{equation}
From Theorem \ref{thm:Wp}\eqref{it:A} if $\php(0)>0$ then $\Wpp\in\Ac_{\lbbrb{0,\infty}}$ and $\Wpp$ is zero-free on $\Cb_{\lbbrb{0,\infty}}$, and the pole of $\Gamma$ at zero is uncontested, see \eqref{eq:recur}. Therefore, $\MPs$ extends continuously to $i\R\setminus\curly{0}$ in this case. The same follows from \eqref{eq:recur}  when $\php'(0^+)=\infty$. Let next $\php(0)=0=\aphp$ and $\php'(0^+)<\infty$. Then \eqref{eq:recur}
shows that the claim $\MPs\in\Ac_{\lbbrb{0,\,1-\aphn}}$ clearly follows. The fact that $\MPs \in   \Mtt_{\lbrb{\aphp,\,1-\aphn}}$, that is \eqref{eq:domainMerMPs} is apparent from \eqref{eq:merom}. We proceed with the final assertions.
Let $\aphp\leq\dphp<0$. If $-\tphp\notin\N$ then $\tphp$ is the only zero of $\php$ on $\lbrb{\aphp,\infty}$ and if $\tphp=-\infty$ since $\dphp<0$ then $\php$ has no zeros on $\lbrb{\aphp,\infty}$ at all. Henceforth, from \eqref{eq:Wp} we see that $\Wpp$ does not possess poles at the negative integers. Thus, the poles of the function $\Gamma$ are uncontested. If $-\tphp\in \N$ and $\tphp=\aphp$ there is nothing to prove, whereas if $-\tphp\in \N$ and $\tphp>\aphp$ then Theorem \ref{thm:Wp}\eqref{it:E} shows that $\Wpp$ has a simple pole at $\tphp$. Thus $\frac{1}{\Wpp}$ has a simple zero at $\tphp$. Then, \eqref{eq:Wp} propagates the zeros to all $\aphp<-n \leq\tphp$ cancelling the poles of $\Gamma$ at those locations. The values of the residues are easily computed via the recurrent equation \eqref{eq:feb}  for $\Wpp,\Wpn$, the Wiener-Hopf factorization \eqref{eq:WH1}, the form of $\MPs$, see \eqref{eqM:MIPsi}, and the residues of the gamma function which are of value $\frac{\lbrb{-1}^n}{n!}$ at $-n$. Indeed let $-n$ be a pole and choose $0<\epsilon<1$. Then
\begin{equation*}
\begin{split}
\MPs\!\lbrb{-n+\epsilon}&=\frac{\Gamma\!\lbrb{-n+\epsilon}}{\Wpp\!\lbrb{-n+\epsilon}}\Wpn\!\lbrb{1+n-\epsilon}\\
&=\lbrb{\prod_{k=1}^n\frac{\php\!\lbrb{-k+\epsilon}}{-k+\epsilon}\prod_{k=1}^n\phn(k-\epsilon)}\frac{\php(\epsilon)\Gamma\!\lbrb{1+\epsilon}}{\epsilon\Wpp\!\lbrb{1+\epsilon}}\Wpn\!\lbrb{1-\epsilon}\\
&=\frac{1}{\epsilon}\lbrb{\frac{\prod_{k=1}^n\Psi(k-\epsilon)}{\prod_{k=1}^n\lbrb{k-\epsilon}}}\frac{\php(\epsilon)\Gamma\!\lbrb{1+\epsilon}}{\Wpp\!\lbrb{1+\epsilon}}\Wpn\!\lbrb{1-\epsilon}.
\end{split}
\end{equation*}
Then 
\[ \limo{\epsilon}\epsilon\MPs\!\lbrb{-n+\epsilon}= \php(0)\frac{\prod_{k=1}^{n} \Psi(k)}{n!}\]
and the term on the right-hand side is the residue at $-n$. Finally, if we alter the Wiener-Hopf factors as $\php\mapsto c\php,\phn\mapsto c^{-1}\phn,c>0$ then Theorem \ref{thm:Wp}\eqref{it:G} gives that
\[W_{c\php}(z)=c^{z-1}\Wpp(z) \text{ and } W_{c^{-1}\phn}\lbrb{1-z}=c^{z}\Wpn(1-z).\]
Plugging these identities in relation \eqref{eqM:MIPsi} furnishes the invariance. This concludes the proof of this theorem.

\subsection{Proof of Theorem \ref{thm:asympMPsi}}\label{subsec:decay}
%\subsection{Proofs for Section \ref{subsec:decay}}
Before we commence the proof we introduce some more notation. We use $f\asymp g$ to denote the existence of two positive constants $0<C_1<C_2<\infty$ such that \[C_1\leq \varliminf_{x\to a}\abs{\frac{f(x)}{g(x)}}\leq \varlimsup_{x\to a}\abs{\frac{f(x)}{g(x)}}\leq C_2,\] where $a$ is usually $0$ or $\infty$. The relation $f\lesssim g$, that will be employed from now on, requires only that $\varlimsup_{x\to a}\abs{\frac{f(x)}{g(x)}}\leq C_2<\infty$. Note that the relation  $f\lesssim g$ is equivalent to the relation $f=\bospace{g}$.

We recall from \eqref{eq:BP} that
$\BP=\curly{\phi\in\Bc:\,\dr>0}$ and $\BP^c$ is its complement in $\Bc$.
Appealing to various auxiliary results below we consider Theorem \ref{thm:asympMPsi}\eqref{it:decayA} first. Throughout the proof we use \eqref{eqM:MIPsi}, that is
\begin{equation}\label{eq:MPsi1}
\MPs(z)=  \frac{\Gamma(z)}{\Wpp\!(z)}\Wpn\!(1-z)\in \Ac_{(0,1-\dphn)}.
\end{equation}
We note from  Definition \ref{def:WB} and \eqref{eq:GammaType2} of Theorem  \ref{thm:factorization} that $\Wpn\!(z)$ and $\frac{\Gamma(z)}{\Wpp\!(z)}$ are Mellin transforms of positive random variables. Therefore the bounds
\begin{align}
\abs{\MPs(z)}&\leq \frac{\Gamma(a)}{\Wpp\!(a)}\,\,\abs{\Wpn\!\lbrb{1-z}}\label{eq:Ipp}\\
\abs{\MPs(z)}&\leq \frac{\abs{\Gamma(z)}}{\abs{\Wpp\!(z)}}\abs{\Wpn\!\lbrb{1-a}}\label{eq:Ypn}
\end{align}
hold for $z\in\Cb_a,\,a\in\lbrb{0,1-\dphn}$.
From \eqref{eq:Ipp} and \eqref{eq:Ypn} we have that
$\Psi\in\Npi$, see \eqref{eq:Ninf}, if and only if  $\phn\in\PcBinf$ and/or $\labsrabs{\frac{\Gamma(z)}{\Wpp(z)}}$ decays faster than any polynomial along $a+i\R,\,\forall a\in\lbrb{0,1-\dphn}$.  The former certainly holds if $\phn\in\BP$, that is $\dem>0$, since from Theorem \ref{thm:Stirling} \eqref{it:case1} or   Proposition \ref{prop:condClass2}\eqref{it:class2_A}, we have the even stronger $\phn\in\Bthd\subseteq\PcBinf$, and the latter if $\php\in\BP^c$, see  \eqref{eq:condClass1} in Proposition \ref{prop:condClass1}. Also, if $\php\in\BP$ then, for any $a>0$, \eqref{eq:condClass1} in Proposition \ref{prop:condClass1} holds for all $u>0$ iff $\mubarpspace{0}=\infty$ and hence from \eqref{eq:Ypn} we deduce that $\Psi\in\Npi$ provided $\mubarpspace{0}=\infty$. Next, from \eqref{eq:Ipp},\eqref{eq:Ypn}, Proposition \ref{prop:condClass1} and Proposition \ref{prop:condClass2}\eqref{it:class2_B} if  $\php\in\BP$, $\mubarpspace{0}<\infty$ and $\phn\in \BP^c$ then \[\phn\in\PcBinf\iff\Psi\in\Npi\iff \PPn(0)=\infty.\] Therefore to confirm the second line of \eqref{eq:NPs}
we ought to check only that if $\Psi\in\overNc$, $\php\in\BP$ and $\phn\in\BP^c$ then
\begin{equation}\label{eq:proveA}
\PP(0)=\infty\iff \PPn(0)=\infty \text{ or }\bar{\mu}_{\pls}(0)=\infty.
\end{equation}
However, if $\PPn(0)<\infty$ and $\mubarpspace{0}<\infty$ then since $\php\in\BP$ the \LLP is a positive linear drift plus compound Poisson process which proves the forward direction of \eqref{eq:proveA}. The backward part is immediate as it precludes right away the possibility of compound Poisson process with drift. In fact the expression for $\NPs$ in the first line of \eqref{eq:NPs} is derived as the sum of the rate of polynomial decay of $\abs{\frac{\Gamma(z)}{\Wpp\!(z)}}$ in Proposition \ref{prop:condClass1} and of $\abs{\Wpn\!(z)}$ in Proposition \ref{prop:condClass2}\eqref{it:class2_C} coupled with \eqref{eq:MPsi1} and subsequent simplification. Indeed immediately, we have that 
\begin{equation}\label{eq:NPS}
\NPs=\frac{\IntOI u_{\pls}(y)\Pi_{\mis}(dy)}{\phn(0)+\mubrn(0)}+\frac{1}{\dep}\lbrb{\php\!(0)+\mubarpspace{0}},
\end{equation}
where since $\dep>0$, $u_+$ is the potential density associated to the potential measure of $\php$, see Proposition \ref{propAsymp1} \eqref{it:bernstein_cmi}. Since $\dep>0$  and $\PPn\!(0)<\infty$ from Proposition \ref{prop:VigonDens} we have that 
\begin{equation}\label{eq:upsilon}
\upsilon_-(0^+)=\IntOI u_+(y)\Pi_-(dy)
\end{equation}
and the first line of \eqref{eq:NPs} for $\NPs$ follows from a substitution of \eqref{eq:upsilon} in \eqref{eq:NPS}.
However, as it is known again from Proposition \ref{propAsymp1} \eqref{it:bernstein_cmi} that $u_+>0$ on $\Rp$, whenever $\dep>0$, then $\upsilon_-(0^+)=0$ if and only if $\Pi_-$ is the zero measure. Then from \eqref{eq:NPs} we see that
\[\NPs=0\iff \php(0)=\mubarpspace{0}=\upsilon_-\!\lbrb{0^+}=0.\]
The relation  $\php(0)=\mubarpspace{0}=0$ is valid if and only if $\php(z)=\dep z$. This proves the claim and ends the proof of item \eqref{it:decayA}.
The assertions
\[\phn\in\BP\implies \Psi\in\Nthdspace\quad\text{ and }\quad \phn\in\Bc_{\alpha},\php\in\Bc_{1-\alpha}\implies\Psi\in\Nthaspace\]
of item \eqref{it:classes}  follow from \eqref{eq:Ipp} and \eqref{eq:Ypn} with the help of items \eqref{it:awpid} and \eqref{it:awpia} of Theorem \ref{thm:Stirling} and the standard asymptotic for the gamma function
\begin{equation}\label{eq:GammaStirling}
\abs{\Gamma\!\lbrb{a+ib}}=\sqrt{2\pi}|b|^{a-\frac{1}{2}}e^{-\frac{\pi}{2}|b|}\lbrb{1+\sospace{1}},
\end{equation}
which holds as $b\to\infty$ and $a$ fixed, see \cite[(8.328.1)]{Gradshteyn-Ryzhik-00}. Let $\php\equiv C\phn, C>0,$ hold. Then clearly $\arg \php\equiv\arg\phn$ and hence $\Aphp\equiv \Aphn$. Choose $a=\frac12$. Then from \eqref{eq:Stirling}  and \eqref{eq:MPsi1} we see that modulo two constants, as $b\to\infty$,
\[\abs{\MPspace\lbrb{\frac12+ib}}\asymp \frac{\sqrt{\abs{\php\!\lbrb{\complex{\frac12}{b}}}}}{\sqrt{\abs{\phn\!\lbrb{\frac12-ib}}}}\abs{\Gamma\!\lbrb{\complex{\frac12}{b}}}.\]
From \eqref{eq:rephi}, that is $\Re\!\lbrb{\phn\lbrb{\frac12-ib}}>\phn\lbrb{\frac12}>0$ and \eqref{eq:GammaStirling}  we see that $\Psi\in\Nthdspace$. Finally, the last claim of  $\Psi\in\Ntpspace{\Theta_{\pms}}$ follows readily from \eqref{eq:MPsi1} and \eqref{eq:Stirling}. This ends the proof.\qed

The next sequence of results are used in the proof above. Recall that the classes $\Bi$ and $\Bth$ are defined in  \eqref{eq:polyclassB} and \eqref{eq:polyclassB1} respectively.

The proof of Theorem \ref{thm:asympMPsi} via \eqref{eq:Ipp} and \eqref{eq:Ypn} hinges upon the assertions of Proposition \ref{prop:condClass1} and Proposition \ref{prop:condClass2}. 	Before stating and proving them, we have  the following simple result, for which we need to recall that  for any $a\geq 0$ and any function $f:\Rp\mapsto\R$ we use the notation $f_a(y)=e^{-ay}f(y),\,y>0$.
\begin{proposition}\label{prop:convoEst}
	For any function $f$ and $n \in \N$, we have that $f^{*n}_a(y)=e^{-ay}f^{*n}(y),\,y\in\intervalOI$. If $f_{a'}\in \Lspaces{\infty}{\Rp}$ for some $a'\geq 0$ then for any $a\geq a'$, all $n\geq 1$ and $y>0$,
	\begin{equation}\label{eq:convoEst}
	\abs{f^{*n}_a(y)}\leq ||f_{a'}||_{\infty}^n\frac{y^{n-1}e^{-\lbrb{a-a'}y}}{\lbrb{n-1}!}.
	\end{equation}
\end{proposition}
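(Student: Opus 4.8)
The plan is to prove the two assertions in turn, both by induction on $n$, the first being a formal identity that I will then use to reduce the second to the classical estimate for iterated convolutions of a bounded function on the half-line.

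First I would establish the identity $f^{*n}_a(y)=e^{-ay}f^{*n}(y)$ on $\intervalOI$ by induction on $n\geq1$. The case $n=1$ is the definition $f_a(y)=e^{-ay}f(y)$. For the inductive step, recalling that convolution of functions supported on $\Rp$ is $\lbrb{g*h}(y)=\int_{0}^{y}g(y-s)h(s)ds$ for $y>0$, I would compute
\begin{align*}
f^{*(n+1)}_a(y)&=\int_{0}^{y}f_a(y-s)\,f^{*n}_a(s)\,ds=\int_{0}^{y}e^{-a(y-s)}f(y-s)\,e^{-as}f^{*n}(s)\,ds\\
&=e^{-ay}\int_{0}^{y}f(y-s)\,f^{*n}(s)\,ds=e^{-ay}f^{*(n+1)}(y),
\end{align*}
using the inductive hypothesis in the second equality and factoring out $e^{-ay}$ in the third. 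Under the standing hypothesis $f_{a'}\in\Lspace{\infty}{\Rp}$ one has $\abs{f(y)}\leq ||f_{a'}||_\infty e^{a'y}$, so $f$ is locally bounded, hence locally integrable, which makes all these convolutions finite and the manipulations legitimate; absent any integrability assumption the identity is purely formal.

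For the bound \eqref{eq:convoEst} I would first reduce to the case $a'=0$. Since $f_a=(f_{a'})_{a-a'}$ and therefore $f^{*n}_a=(f_{a'})^{*n}_{a-a'}$, setting $g=f_{a'}\in\Lspace{\infty}{\Rp}$ and $c=a-a'\geq0$ it suffices to show $\abs{g^{*n}_c(y)}\leq ||g||_\infty^{n}\,y^{n-1}e^{-cy}/\lbrb{n-1}!$. By the identity just proved, $g^{*n}_c(y)=e^{-cy}g^{*n}(y)$, so it remains to prove $\abs{g^{*n}(y)}\leq ||g||_\infty^{n}\,y^{n-1}/\lbrb{n-1}!$; this is an easy induction: it is trivial for $n=1$, and if it holds for $n$ then
\begin{align*}
\abs{g^{*(n+1)}(y)}\leq\int_{0}^{y}\abs{g(y-s)}\,\abs{g^{*n}(s)}\,ds\leq ||g||_\infty\cdot||g||_\infty^{n}\int_{0}^{y}\frac{s^{n-1}}{\lbrb{n-1}!}\,ds=||g||_\infty^{n+1}\frac{y^{n}}{n!}.
\end{align*}
Multiplying back by $e^{-cy}$ and unwinding $g=f_{a'}$, $c=a-a'$ yields \eqref{eq:convoEst}.

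There is essentially no genuine obstacle here; the only point deserving a word of care is ensuring that the convolutions are well defined so that the associativity of $*$ (equivalently, the interchange of integrations) used implicitly above is justified, and this is immediate from the local boundedness of $f$ furnished by $f_{a'}\in\Lspace{\infty}{\Rp}$.
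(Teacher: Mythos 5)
Your proof is correct and follows essentially the same route as the paper: the exponential-factoring identity is proved (the paper treats it as a triviality), and the bound is then established by induction on $n$, with the paper exhibiting the $n=2$ base case directly rather than first reducing to $a'=0$. The reduction $f_a=(f_{a'})_{a-a'}$ is a neat bookkeeping simplification, but the underlying argument is identical.
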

\begin{proof}
	First $f^{*n}_a(y)=e^{-ay}f^{*n}(y),\,y\in\intervalOI,$ is a triviality.
	Then, \eqref{eq:convoEst} is proved by an elementary inductive hypothesis based on the immediate observation that, for any $y>0$,
	\[\abs{f^{*2}_a(y)}=e^{-ay}\abs{\int_{0}^{y}f(y-v)f(v)dv}\leq ||f_{a'}||^2_{\infty} ye^{-\lbrb{a-a'}y}.\]
\end{proof}
Let us examine $\abs{\frac{\Gamma(z)}{\Wpp\!(z)}}$, that is \eqref{eq:Ypn}, first.
\begin{proposition}\label{prop:condClass1}
	Let $\phi\in\BP^c$ then for any $ u\geq 0$ and $a>0$ fixed
	\begin{equation}\label{eq:condClass1}
	\limi{|b|}|b|^u\abs{\frac{\Gamma\!\lbrb{a+ib}}{\Wp\!\lbrb{a+ib}}}=0.
	\end{equation}
	If $\phi\in\BP$ then \eqref{eq:condClass1} holds for any $u< \frac{1}{\dr}\lbrb{\phi(0)+\mubr\!\lbrb{0}}\in\lbrbb{0,\infty}$. In fact, if $\mubr\!\lbrb{0}<\infty$ the limit in \eqref{eq:condClass1} is infinity for all $u>\frac{1}{\dr}\lbrb{\phi(0)+\mubr\!\lbrb{0}}$. Finally, regardless of the value of $\mubr(0)$,  we have, for any $a>0$ such that $\dr^{-1}\IntOI e^{-ay}\mubar{y}dy<1$, that as $b\to\infty$,
	\begin{equation}\label{eq:condClass11}
	\abs{\frac{\Gamma\!\lbrb{a+ib}}{\Wp\!\lbrb{a+ib}}}\lesssim b^{\so{1}}e^{-\frac{\mubr\lbrb{\frac{1}{b}}+\phi(0)}{\dr}\ln |b| }.
	\end{equation}
\end{proposition}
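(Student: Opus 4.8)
The plan is to read off everything from the exact Stirling-type representation \eqref{eq:Stirling} of Theorem \ref{thm:Stirling}\eqref{it:gbWp} combined with the classical Stirling formula \eqref{eq:GammaStirling}. Fix $a>0$. Dividing \eqref{eq:GammaStirling} by \eqref{eq:Stirling}, using that $\Gph(a)$ is a constant, that $e^{\Eph\lbrb{a+ib}+\Rph(a)}$ is bounded above and below by positive constants as $b$ varies (by \eqref{eq:uniBoundRE1}, applied at a real part slightly smaller than $a$), and that $\abs{\phi(a+ib)}=\abs{a+ib}\lbrb{\dr+\so1}$ by Proposition \ref{propAsymp1}\eqref{it:asyphid}, one obtains, as $\abs{b}\to\infty$,
\[
\abs{\frac{\Gamma(a+ib)}{\Wp(a+ib)}}\asymp\abs{b}^{a-\frac12}\sqrt{\abs{\phi(a+ib)}}\;e^{-\lbrb{\frac\pi2\abs{b}-\Aph(a+ib)}},
\]
which is $\asymp\abs{b}^{a}e^{-\lbrb{\frac\pi2\abs{b}-\Aph(a+ib)}}$ when $\dr>0$ and $\so{\abs{b}^{a}}e^{-\lbrb{\frac\pi2\abs{b}-\Aph(a+ib)}}$ when $\dr=0$; here $\frac\pi2\abs{b}-\Aph(a+ib)\geq0$ by \eqref{eq:A=Theta}, and $\Aph(a+ib)=\Aph(a-ib)$ lets us take $b>0$. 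So everything reduces to estimating $\frac\pi2\abs{b}-\Aph(a+ib)$.

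Suppose first $\phi\in\BP^c$, i.e. $\dr=0$. By Proposition \ref{propAsymp1}\eqref{it:argPhi}, for every $v>0$ there is $u(v)>0$ with $\frac\pi2-\arg\phi(a+iu)\geq\arctan\lbrb{v\phi(a)/u}$ for $u>u(v)$; integrating in $u$ and using $\arctan x\geq x/2$ near $0$ gives $\frac\pi2\abs{b}-\Aph(a+ib)\geq\frac{v\phi(a)}{2}\ln\abs{b}-\bo1$, hence $\lbrb{\frac\pi2\abs{b}-\Aph(a+ib)}/\ln\abs{b}\to\infty$. Together with the display above and $\sqrt{\abs{\phi(a+ib)}}=\so{\abs{b}^{1/2}}$ this yields $\abs{b}^{u}\bigl|\Gamma(a+ib)/\Wp(a+ib)\bigr|\lesssim\abs{b}^{u+a}e^{-C\ln\abs{b}}\to0$ for $C$ large, which is \eqref{eq:condClass1} for all $u\geq0$ and all $a>0$.

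Now let $\phi\in\BP$, i.e. $\dr>0$, and take first $a>0$ with $\dr^{-1}\IntOI e^{-ay}\mubar{y}\,dy<1$. Insert the expansion \eqref{eq:AphiAsymp2} of Proposition \ref{thm:imagineryStirling}: writing $I(b)=\IntOI\frac{1-\cos(by)}{y}\Logm{\bar\mu_{a,\dr}}(dy)\geq0$, the elementary identities $\abs{b}\arctan\lbrb{\abs{b}/a}=\frac\pi2\abs{b}-a+\so1$ and $\ln\lbrb{1+b^2/a^2}=2\ln\abs{b}+\bo1$ give $\frac\pi2\abs{b}-\Aph(a+ib)=\lbrb{a+\phi(0)/\dr}\ln\abs{b}+I(b)+\so{\ln\abs{b}}$, so that $\bigl|\Gamma(a+ib)/\Wp(a+ib)\bigr|\asymp\abs{b}^{-\phi(0)/\dr}e^{-I(b)+\so{\ln\abs{b}}}$. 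The lower bound \eqref{eq:arctan2}, $\liminfi{b}\frac{\dr I(b)}{\ln(b)\mubar{1/b}}\geq1$, gives \eqref{eq:condClass11} at once, and also \eqref{eq:condClass1} for $u<\frac1{\dr}\lbrb{\phi(0)+\mubar{0}}$: since $\mubar{1/b}\to\mubar{0}$ one has $I(b)\geq(1-\varepsilon)\frac{\mubar{1/b}}{\dr}\ln\abs{b}$ eventually, whence $\abs{b}^u\bigl|\Gamma(a+ib)/\Wp(a+ib)\bigr|\lesssim\abs{b}^{u-\phi(0)/\dr-(1-\varepsilon)\mubar{1/b}/\dr+\so1}\to0$ (choose $\varepsilon$ small when $\mubar{0}<\infty$; when $\mubar{0}=\infty$ then $I(b)/\ln\abs{b}\to\infty$ and all $u$ work). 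For the converse ($\mubar{0}<\infty$, $u>\frac1{\dr}\lbrb{\phi(0)+\mubar{0}}$) one needs the matching upper bound $\limsupi{b}I(b)/\ln\abs{b}\leq\mubar{0}/\dr$: using the representation \eqref{eq:arctan}, $\arctan x\leq x$, and $1+\Re\lbrb{\mathcal{F}_{\bar\mu_{a,\dr}}(iu)}\geq1-\eta$ for $u$ large (Riemann--Lebesgue), one bounds $I(b)\leq\bo1+\frac1{1-\eta}\IntOI\bar\mu_{a,\dr}(y)\frac{1-\cos(\abs{b}y)}{y}\,dy$, the last integral being $\frac{\mubar{0}}{\dr}\ln\abs{b}+\bo1$ (split at a small $\delta>0$, use $\bar\mu_{a,\dr}(y)\to\mubar{0}/\dr$ as $y\to0$, $\int_0^{\abs{b}\delta}\frac{1-\cos t}{t}\,dt=\ln\abs{b}+\bo1$, and Riemann--Lebesgue on $(\delta,\infty)$); letting $\eta\to0$ gives the bound, and then the weighted quantity is $\gtrsim\abs{b}^{u-\lbrb{\phi(0)+\mubar{0}}/\dr-\eta}\to\infty$.

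To remove the restriction on $a$ in the case $\dr>0$, note from \eqref{eq:A=Theta} that for $0<a<a'$, $\Aph(a+ib)-\Aph(a'+ib)=\int_a^{a'}\ln\frac{\abs{\phi(u+ib)}}{\phi(u)}\,du=(a'-a)\ln\abs{b}+\bo1$, because $\abs{\phi(u+ib)}=\abs{u+ib}\lbrb{\dr+\so1}$ \emph{uniformly} for $u$ in the compact interval $[a,a']$ (uniformity from the Lipschitz dependence of $u\mapsto\bar\mu_{u,\dr}$ in $\Lspace{1}{\Rp}$ and Riemann--Lebesgue); feeding this into the first display shows $\bigl|\Gamma(a+ib)/\Wp(a+ib)\bigr|\asymp\bigl|\Gamma(a'+ib)/\Wp(a'+ib)\bigr|$ for all $a,a'>0$, so \eqref{eq:condClass1} (for the stated ranges of $u$) and the $a$-free estimate \eqref{eq:condClass11} hold for every $a>0$. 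The genuinely delicate step is the sharp upper bound on $I(b)$ required for the converse assertion: one must recover the exact constant $\mubar{0}/\dr$ and, in particular, keep the Wiener--L\'evy factor $\lbrb{1-\|\bar\mu_{a,\dr}\|_1}^{-1}>1$ from contaminating it — which is possible precisely because Riemann--Lebesgue confines that factor to a bounded range of $u$, where it may be replaced by $(1-\eta)^{-1}$ with $\eta>0$ arbitrary.
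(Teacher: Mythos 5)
Your proof is correct and follows the same architecture as the paper's — reduce via \eqref{eq:Stirling} and \eqref{eq:GammaStirling} to estimating $\frac{\pi}{2}\abs{b}-\Aph(a+ib)$, handle $\BP^c$ via Proposition~\ref{propAsymp1}\eqref{it:argPhi}, and handle $\BP$ via Proposition~\ref{thm:imagineryStirling}\eqref{eq:AphiAsymp2} together with the lower bound \eqref{eq:arctan2}. It diverges from the paper in two places, both of which buy something. First, for the matching upper bound $\limsupi{b}\dr\, I(b)/\ln\abs{b}\leq\mubr(0)$ (needed for the converse assertion when $\mubr(0)<\infty$), you work entirely at the level of the $\arctan$ representation \eqref{eq:arctan}: bound $\arctan x\leq x$, confine the Wiener--L\'evy denominator $1+\Re\,\Fo{\bar\mu_{a,\dr}}$ to $[1-\eta,1+\eta]$ for $u$ large by Riemann--Lebesgue, and then evaluate $\IntOI\bar\mu_{a,\dr}(y)\frac{1-\cos(\abs{b}y)}{y}\,dy$ by a $\delta$-split. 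The paper instead decomposes $\Logm{\bar\mu_{a,\dr}}=\dr^{-1}e^{-ay}\mubr(y)\,dy+h(y)\,dy$ and kills $h$ by the convolution estimate of Proposition~\ref{prop:convoEst}, which forces the auxiliary restriction $a>\mubr(0)/\dr$. Your route avoids that restriction and the convolution bookkeeping, at the small cost of a Riemann--Lebesgue argument. Second, to remove the large-$a$ hypothesis needed by Proposition~\ref{thm:imagineryStirling}, you shift $a$ continuously using the integral formula \eqref{eq:A=Theta} for $\Aph(a+ib)-\Aph(a'+ib)$; the paper instead iterates the one-step recurrence $\Gamma(1+a+ib)/\Wp(1+a+ib)=\frac{\abs{a+ib}}{\abs{\phi(a+ib)}}\Gamma(a+ib)/\Wp(a+ib)$. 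Both work; your version identifies cleanly how the polynomial decay rate is $a$-independent. One small remark: the uniformity in $u\in[a,a']$ of $\abs{\phi(u+ib)}=\abs{u+ib}(\dr+\so{1})$ that you invoke via an ad hoc Lipschitz/Riemann--Lebesgue argument is stated directly as the last assertion of Proposition~\ref{propAsymp1}\eqref{it:asyphid} ($\phi(z)=\dr z(1+\so{1})$ uniformly on $\Cb_{\lbbrb{\tph,\infty}}$); citing that is cleaner.
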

\begin{proof}
	Let $\phi\in\Bc$. Fix $a>0$ and without loss of generality assume that $b>0$. Applying \eqref{eq:GammaStirling} to $\abs{\Gamma\lbrb{a+ib}}$ and \eqref{eq:Stirling} to $\abs{\Wp\lbrb{a+ib}}$ we get, as $b\to\infty$, that
	\begin{equation}\label{eq:condClass1_1}
	\abs{\frac{\Gamma\!\lbrb{a+ib}}{\Wp\!\lbrb{a+ib}}}\asymp b^{a-\frac12}\sqrt{\abs{\phi(a+ib)}}e^{\Aph(a+ib)-\frac{\pi}{2}b}.
	\end{equation}
	It therefore remains to estimate $\Aph(a+ib)$ in the different scenarios stated. Let us start with $\dr=0$ or equivalently $\phi\in\BP^c$.
	Let $v>0$. From \eqref{eq:ineqArg} we have that $\labsrabs{\arg \phi\!\lbrb{a+iu}}\leq\pi/2$ and from \eqref{eq:argPhi} of Proposition \ref{propAsymp1}\eqref{it:argPhi} we conclude that there is $u(v)>0$ such that,  for  $u\geq u(v)$, 
	\[\labsrabs{\arg \phi\!\lbrb{a+iu}}\leq\frac{\pi}{2}-\arctan\!\lbrb{\frac{v\phi(a)}u}.\]
	Therefore, from the definition of $\Aph$, see \eqref{eq:Aphi}, we get that for any $b>u\lbrb{v}$,
	\[\Aph\!\lbrb{a+ib}\leq \int_{0}^{b}\labsrabs{\arg \phi\!\lbrb{a+iu}}du \leq \frac{\pi}{2}b-\int_{u\lbrb{v}}^{b}\arctan\!\lbrb{\frac{v\phi(a)}{u}}du.\]
	However, since $\arctan x\simo x,$ we see that there exists $ \,u'$ big enough such that  for any $ b>u'>u(v)$ we have the inequality
	\[\Aph\!\lbrb{a+ib}\leq\frac{\pi}{2}b-\frac{v\phi(a)}{2}\lbrb{\ln b-\ln u'}.\]
	Plugging this in \eqref{eq:condClass1_1} and using the fact that, for a fixed $a>0$, $\abs{\phi\!\lbrb{a+ib}}\stackrel{\infty}{=}\sospace{\abs{a+ib}},$ when $\dr=0$, see  Proposition \ref{propAsymp1}\eqref{it:asyphid}, we easily get that, as $b\to\infty$,
	\[\abs{\frac{\Gamma\!\lbrb{a+ib}}{\Wp\!\lbrb{a+ib}}}\lesssim b^{a}e^{-\frac{v\phi(a)}{2}\ln b}=b^{a-\frac{v\phi(a)}{2}}.\]
	Since $v$ is arbitrary we conclude \eqref{eq:condClass1} when $\phi\in\BP^c$. Assume next that $\phi\in\BP$ and without loss of generality that $b>0$. Then from \eqref{eq:AphiAsymp2} of Proposition \ref{thm:imagineryStirling} we get for the exponent of \eqref{eq:condClass1_1} that, as $b\to\infty$ and any fixed $a>0$,
	\begin{equation*}
	\begin{split}
	\Aph(a+ib)-\frac{\pi}{2}b&=-b\arctan\!\lbrb{\frac{a}{b}}-\lbrb{a+\frac{\phi(0)}{\dr}}\!\ln b\\
	&-\frac{\lbrb{a+\frac{\phi(0)}{\dr}}}2\lbrb{\ln\!\lbrb{1+\frac{b^2}{a^2}}-\ln b^2}-\int_{0}^{\infty}\frac{1-\cos(by)}{y}\Logm{\bar{\mu}_{a,\dr}}(dy)+\bar{A}_\phi(a,b)\\
	&= -\lbrb{a+\frac{\phi(0)}{\dr}+\sospace{1}}\ln b-\int_{0}^{\infty}\frac{1-\cos(by)}{y}\Logm{\bar{\mu}_{a,\dr}}(dy),
	\end{split}
	\end{equation*}
	where we have used implicitly that $\arctan \frac{a}{b}+\arctan \frac{b}{a}=\frac{\pi}{2}$ and $\bar{A}_\phi(a,b)=\sospace{\ln b}$, see Proposition \ref{thm:imagineryStirling}. Therefore, as $b\to\infty$, \eqref{eq:condClass1_1} is simplified to
	\begin{equation*}
	\abs{\frac{\Gamma\!\lbrb{a+ib}}{\Wp\!\lbrb{a+ib}}}\asymp b^{a-\frac12}\sqrt{\abs{\phi(a+ib)}}e^{-\lbrb{a+\frac{\phi(0)}{\dr}+\so{1}}\!\ln b-\int_{0}^{\infty}\frac{1-\cos(by)}{y}\Logm{\mubr_{a,\dr}}(dy)}.
	\end{equation*}
	We recall that $\Logm{\bar{\mu}_{a,\dr}}(dy)$ is the measure associated to the absolutely continuous measure $\mubr_{a,\dr}(y)dy=\dr^{-1}e^{-ay}\mubr(y)dy$ as defined via its Fourier transform in \eqref{eq:logFT}.
	When $\phi\in\BP$ we have that $\abs{\phi(a+ib)}\sim\dr b,$ as $b\to\infty$ and $a>0$ fixed,  see  Proposition \ref{propAsymp1}\eqref{it:asyphid}, and thus \eqref{eq:condClass1_1} is simplified further to
	\begin{equation}\label{eq:condClass1_2}
	\abs{\frac{\Gamma\!\lbrb{a+ib}}{\Wp\!\lbrb{a+ib}}}\asymp e^{-\lbrb{\frac{\phi(0)}{\dr}+\so{1}}\ln b-\int_{0}^{\infty}\frac{1-\cos(by)}{y}\Logm{\mubr_{a,\dr}}(dy)}.
	\end{equation}
	Next, choose $a>a_0>0$ so as to have 
	\[||\mubr_{a,\dr}||_{1}=\dr^{-1}\IntOI e^{-ay}\mubr(y)dy<1.\]
	Then \eqref{eq:arctan2} of Proposition \ref{thm:imagineryStirling} applies and yields
	\begin{equation*}
	\abs{\frac{\Gamma\!\lbrb{a+ib}}{\Wp\!\lbrb{a+ib}}}\lesssim b^{\so{1}} e^{-\frac{\phi(0)}{\dr}\ln b-\frac{\bar{\mu}\lbrb{\frac{1}{b}}}{\dr}\ln b}.
	\end{equation*}
	This is precisely \eqref{eq:condClass11} regardless of the value of $\mubr(0)$. Moreover, \eqref{eq:condClass11} also settles \eqref{eq:condClass1}  for those $a>a_0$ and any $u\geq 0$, whenever $\mubr(0)=\infty$. However, since
	\[\labsrabs{\frac{\Gamma\!\lbrb{1+a+ib}}{\Wp\!\lbrb{1+a+ib}}}=\frac{\labsrabs{a+ib}}{\labsrabs{\phi\!\lbrb{a+ib}}}\labsrabs{\frac{\Gamma\!\lbrb{a+ib}}{\Wp\!\lbrb{a+ib}}}\]
	and $\limi{b}\frac{\labsrabs{a+ib}}{\labsrabs{\phi\lbrb{a+ib}}}=\dr^{-1}$, see Proposition \ref{propAsymp1}\eqref{it:asyphid}, we trivially conclude \eqref{eq:condClass1}, when $\mubr(0)=\infty$, for any $ a>0,\,u\geq 0$. Next, let $\mubr(0)<\infty$  and choose again $a>a_0$ so that $||\mubr_{a,\dr}||_{1}<1$. Then \eqref{eq:convo} holds and  thanks to Proposition \ref{prop:convoEst} we have that $\mubr^{*n}_{a,\dr}(y)=e^{-ay}\frac{\mubr^{*n}(y)}{\dr^n}$. Therefore, on $\intervalOI$,
	\begin{equation*}
	\begin{split}
	\Logm{\mubr_{a,\dr}}(dy)&=\lbrb{\sum_{n=1}^{\infty}\lbrb{- 1}^{n-1}\frac{\mubr^{*n}_{a,\dr}(y)}{n}}dy 		\\
	&=e^{-ay}\lbrb{\sum_{n=1}^{\infty}\lbrb{- 1}^{n-1}\frac{\mubr^{*n}(y)}{\dr^n n}}dy=\frac{1}{\dr}e^{-ay}\mubr(y)dy+h(y)dy.
	\end{split}	
	\end{equation*}	
	We will use this decomposition to simplify the last term in \eqref{eq:condClass1_2}. Now since $\mubr(0)<\infty$ and therefore $\mubr\in\Lspaces{\infty}{\Rp}$ we have from \eqref{eq:convoEst} in Proposition \ref{prop:convoEst} applied with $a'=0$ that
	\[\abs{h(y)}\leq e^{-ay}\sum_{n=2}^{\infty}\frac{\mubr^{*n}(y)}{\dr^n n}\leq ye^{-ay}\sum_{n=2}^{\infty}\frac{\mubr^n(0)y^{n-2}}{\dr^n n!}.\]
	Thus
	\begin{eqnarray*}
		\abs{\int_{0}^{\infty}\frac{1-\cos(by)}{y}h(y)dy}\leq 2\sum_{n=2}^{\infty}\frac{\mubr^n\!(0)}{n(n-1)\dr^na^{n-1}}<\infty\iff a\geq \frac{\mubr(0)}{\dr}.
	\end{eqnarray*}
	Therefore, if we choose $a>a_0\vee \frac{\mubr(0)}{\dr}$ we see that the term above does not contribute to the asymptotic in \eqref{eq:condClass1_2}. We are then left with the relation
	\begin{equation}\label{eq:condClass1_3}
	\abs{\frac{\Gamma\!\lbrb{a+ib}}{\Wp\!\lbrb{a+ib}}}\asymp e^{-\lbrb{\frac{\phi(0)}{\dr}+\so{1}}\ln b-\frac{1}{\dr}\int_{0}^{\infty}\frac{1-\cos(by)}{y}e^{-ay}\mubr(y)dy}.
	\end{equation}
	To investigate the integral term in the exponent of \eqref{eq:condClass1_3} we split up the range of integration in three regions: $\lbrbb{0,\frac{1}{b}},\lbrb{\frac{1}{b},1},\lbrb{1,\infty}.$
	First, since $y\mapsto e^{-ay}\frac{\mubr(y)}{y}$ is integrable on $\lbrb{1,\infty}$, the Riemann-Lebesgue lemma yields that
	\begin{equation}\label{eq:condClass1_3_1}
	\limi{b}\int_{1}^{\infty}\frac{1-\cos(by)}{y}e^{-ay}\mubr(y)dy=\int_{1}^{\infty} e^{-ay}\mubr(y)\frac{dy}{y}.
	\end{equation}
	Next, recall that $\mubr(0)<\infty$. Henceforth, from the dominated convergence theorem
	\begin{equation}\label{eq:condClass1_3_2}  	\begin{split}  	
	\limi{b}\int_{0}^{\frac{1}{b}}\frac{1-\cos(by)}{y}e^{-ay}\mubr(y)dy&=\limi{b}\int_{0}^{1}\frac{1-\cos y}{y}e^{-a\frac{y}b}\mubr\!\lbrb{\frac{y}{b}}dy\\
	&=\mubar{0}\int_{0}^{1}\frac{1-\cos y }{y}dy.
	\end{split}
	\end{equation}
	Recall that $y\mapsto \mubr_a(y)=e^{-ay}\mubr(y)$ is decreasing on $\Rp$, thus defining a measure $\mubr_a(dy)$ on $\intervalOI$. Therefore, the remaining portion of the integral in the exponent of relation \eqref{eq:condClass1_3} can be written as
	\begin{equation}\label{eq:condClass1_3_3}
	\begin{split}
	\int_{\frac{1}{b}}^{1}\frac{1-\cos(by)}{y}e^{-ay}\mubr(y)dy&=\int_{\frac{1}{b}}^{1}\frac{1-\cos(by)}{y}\lbrb{\mubr_a(y)-\mubr_a\!\lbrb{\frac{1}{b}}+\mubr_a\!\lbrb{\frac{1}{b}}}dy\\
	&=\mubr_a\!\lbrb{\frac{1}{b}}\lbrb{\ln b-\int_{1}^{b}\frac{\cos y }{y}dy}\\
	&-\int_{\frac{1}{b}}^{1}\lbrb{\mubr_a\!\lbrb{\frac1b}-\mubr_a(y)}\frac{dy}{y}+\int_{\frac{1}{b}}^{1}\frac{\cos(by)}{y}\int_{\frac{1}{b}}^{y}\mubr_a(dv)dy.
	\end{split}
	\end{equation}
	Clearly, then
	\begin{align*}  			\limsupi{b}\abs{\int_{\frac{1}{b}}^{1}\frac{\cos(by)}{y}\int_{\frac{1}{b}}^{y}\mubr_a(dv)dy}&\leq\limsupi{b}\int_{\frac{1}{b}}^{1}\abs{\int_{bv}^{b}\frac{\cos y}{y}dy}\mubr_a(dv)=0,
	\end{align*}
	since, for all $v\in\lbrb
	{0,1}$,
	\[\limsupi{b}\abs{\int_{bv}^{b}\frac{\cos y }{y}dy}=\limsupi{b}\abs{\int_{v}^{1}\frac{\cos by }{y}dy}=0\]
	and the validity of the dominated convergence theorem which is due to 
	\[\sup_{x\geq 1}\abs{\int_{x}^{\infty}\frac{\cos y }{y}dy}<\infty\]
	 and \[\int_{0}^{1}|\mubr_a(dv)|=\mubar{0}-e^{-a}\mubar{1}<\infty.\] Henceforth,  from \eqref{eq:condClass1_3_1}, \eqref{eq:condClass1_3_2} and \eqref{eq:condClass1_3_3}, we obtain, as $b\to\infty$, that
	\begin{equation*}
	\begin{split}
	\abs{\int_{0}^{\infty}\frac{1-\cos(by)}{y}e^{-ay}\mubr(y)dy-\mubr(0)\ln b}&\leq \lbrb{\mubr(0)-\mubr_a\!\lbrb{\frac{1}{b}}}\ln b \\
	&\quad\,\,+\int_{\frac{1}{b}}^{1}\lbrb{\mubr(0)-\mubr_a(y)}\frac{dy}{y}+\bospace{1}.
	\end{split}
	\end{equation*}
	However, since $\limo{y}\mubr(y)=\mubr(0)$, it can be seen easily that the first and the second term on the right-hand side are of order $\sospace{\ln b }$ and therefore, as $b\to\infty$,
	\begin{equation*}
	\int_{0}^{\infty}\frac{1-\cos(by)}{y}e^{-ay}\mubr(y)dy=\mubr(0)\ln b +\sospace{\ln b }.
	\end{equation*}
	This fed in \eqref{eq:condClass1_3} yields  that
	\begin{equation}\label{eq:condClass1_4}
	\abs{\frac{\Gamma\!\lbrb{a+ib}}{\Wp\!\lbrb{a+ib}}}\asymp e^{-\frac{\phi(0)}{\dr}\ln b-\frac{\mubr(0)}{\dr}\ln b+\so{\ln b}},
	\end{equation}
	which proves \eqref{eq:condClass1} for $u<\frac{1}{\dr}\lbrb{\phi(0)+\mubar{0}}$ and shows that the limit in \eqref{eq:condClass1} is infinity for $u>\frac{1}{\dr}\lbrb{\phi(0)+\mubar{0}}$. This concludes the proof of Proposition \ref{prop:condClass1}.
\end{proof}
Proposition \ref{prop:condClass1} essentially deals exhaustively with the proof of Theorem \ref{thm:asympMPsi} via the term $\abs{\frac{\Gamma(z)}{\Wpp\!(z)}}$ in \eqref{eq:Ypn}. Since $\abs{\frac{\Gamma(z)}{\Wpp\!(z)}}$  decays faster than any polynomial except when $\php\in\BP$ and $\mubarpspace{0}<\infty$, it remains to discuss this scenario. Before stating and proving Proposition \ref{prop:condClass2} we introduce some more notation needed throughout below.   We recall that with any $\phi\in\Bc$ we have an associated non-decreasing (possibly killed at independent exponential rate of parameter $\phi(0)> 0$) \LLP $\xi$. Then the potential measure of $\xi$ and therefore of $\phi$ is defined as
\begin{equation}\label{eq:potentialMeasure}
U(dy)=\IntOI e^{-\phi(0)t}\Pbbs{\xi_t\in dy}dt,\,y>0.
\end{equation}
The latter combined with Fubini's theorem gives, for $z\in\CbOI$,
\begin{equation}\label{eq:LTU}
\IntOI e^{-zy}U(dy)=\frac{1}{\phi(z)},
\end{equation}
which is the expression in  Proposition \ref{propAsymp1}\eqref{it:bernstein_cmi}.
The renewal or potential function $U(y)=\int_{0}^{y}U(dx),\,y>0$, is subadditive on $\intervalOI$. Recall that if $\Psi\in\overNc$ then $\Psi(z)=-\php(-z)\phn(z)$ and we have two potential measures $U_\pms$ related to $\phi_\pms$ respectively.
If in addition $\phi\in\BP$ then  it is well known from \cite[Chapter III]{Bertoin-96} or from Proposition \ref{propAsymp1}\eqref{it:bernstein_cmi} that the potential density $u(y)=\frac{U(dy)}{dy}$ exists. Moreover, it is continuous, strictly positive and bounded on $\lbbrb{0,\infty}$, that is $||u||_\infty<\infty$. Furthermore, \cite[Proposition 1]{Doering-Savov-11} establishes that in this case
\begin{equation}\label{eq:u}
u(y)=\sum_{j=0}^{\infty}\frac{\lbrb{-1}^j}{\dr^{j+1}}\lbrb{\mathbf{1}*(\phi(0)+\mubr)^{*j}}\!(y)=\frac{1}{\dr}+\tilde{u}(y),\,y\geq0,
\end{equation}
where $\mathbf{1}(y)=\ind{y>0}$ stands for the Heavyside function and recall that $f*g(x)=\int_{0}^{x}f(x-v)g(v)dv$ represents the convolution of two functions supported on $\Rp$. We keep the last notation for convolutions of measures too. Then we have the result.
\begin{proposition}\label{prop:condClass2}
	Let $\php\in\BP$ and $\mubrp\!\lbrb{0}<\infty$. Then, we have the following scenarios.
	\begin{enumerate}
		\item \label{it:class2_A} If $\phn\in\BP$ then $\phn\in \Bthd$.
		\item \label{it:class2_B} If $\phn\in\BP^c$ then $\phn\in\PcBinf\iff\,\PPn(0)=\infty$.
		\item \label{it:class2_C} If $\phn\in\BP^c$ and $\PPn(0)<\infty$ then for any $u<\frac{\IntOI u_{\pls}(y)\Pi_{\mis}(dy)}{\phn(0)+\mubrn(0)}$ and any $a>0$ we have that \[\limi{|b|}|b|^u\abs{\Wpn\!\lbrb{a+ib}}=0.\] If $u>\frac{\IntOI u_{\pls}(y)\Pi_{\mis}(dy)}{\phn(0)+\mubrn(0)}$ then for any $a>0$ the following limit is valid \[\limi{|b|}|b|^u\abs{\Wpn\!\lbrb{a+ib}}=\infty.\] 
		%In any case $\upsilon_{\mis}(0^+)=\IntOI u_{\pls}(y)\Pi_{\mis}(dy)$.
	\end{enumerate}
\end{proposition}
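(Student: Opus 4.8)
First, item \eqref{it:class2_A} needs no argument: since $\phn\in\BP$, Theorem \ref{thm:Stirling}\eqref{it:case1} (equivalently Theorem \ref{thm:Aph}\eqref{it:awpid} combined with Lemma \ref{lem:Lindelof}) immediately gives $\phn\in\Bthd$, and the standing hypotheses on $\php$ play no role here.

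For items \eqref{it:class2_B} and \eqref{it:class2_C} the plan is to read the decay of $\abs{\Wpn}$ along vertical lines directly off the Stirling representation \eqref{eq:Stirling}: on a fixed line $\Cb_a$ with $a>0$ every factor there is, up to the bounded error governed by \eqref{eq:err}, constant in $|b|$ except $e^{-\Aphn(a+ib)}$ and $\abs{\phn(a+ib)}^{-1/2}$, and since $\dem=0$ we have $\abs{\phn(a+ib)}=\so{|b|}$ by Proposition \ref{propAsymp1}\eqref{it:asyphid}; so everything will come down to the growth of $\Aphn(a+ib)$, which I intend to control through Theorem \ref{thm:Aph}\eqref{eq:subexpp}. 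To be in a position to apply that theorem I first need structural information on $\mu_{\mis}$. Because $\php\in\BP$, its potential measure has a continuous, strictly positive, bounded density $u_{\pls}=\dep^{-1}+\tilde u_{\pls}$, see \eqref{eq:u}; plugging this into the (possibly killed) \'equation amicale invers\'ee recalled in the Appendix, see \eqref{eq:Vigon}, will show that $\mu_{\mis}(dy)=\upsilon_{\mis}(y)\,dy$ with $\upsilon_{\mis}$ continuous on $\lbbrb{0,\infty}$ and $\upsilon_{\mis}(y)=\int_{[0,\infty)}u_{\pls}(z)\,\Pi_{\mis}(y+dz)$, whence $\upsilon_{\mis}(0^+)=\IntOI u_{\pls}(y)\Pi_{\mis}(dy)\in\lbbrb{0,\infty}$ — this is the last assertion of \eqref{it:class2_C}. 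Splitting $u_{\pls}=\dep^{-1}+\tilde u_{\pls}$ then yields $\upsilon_{\mis}=\dep^{-1}\PPn+\upsilon_2$, in which $\dep^{-1}\PPn$ is non-increasing and, by the subadditivity of the renewal function of $\php$ together with $\mubrp\lbrb{0}<\infty$, the remainder $\upsilon_2$ is dominated by $\int_{\cdot}^{\infty}\dep^{-1}\PPn(r)\,dr$ up to an additive constant; and when $\PPn(0)<\infty$ one gets moreover $\|\upsilon_{\mis}\|_\infty<\infty$, hence $\upsilon_{\mis}\in\Lspace{1}{\Rp}$ and $\phn(\infty)=\phn(0)+\mubrn(0)<\infty$. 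These are precisely the hypotheses of Theorem \ref{thm:Aph}\eqref{eq:subexpp}.

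With this in hand I would split on $\PPn(0)$. If $\PPn(0)=\infty$, then since $u_{\pls}$ is bounded below near the origin and $\Pi_{\mis}$ has infinite total mass one gets $\upsilon_{\mis}(0^+)=\infty$, so the second alternative of Theorem \ref{thm:Aph}\eqref{eq:subexpp} gives $\Aphn(a+ib)/\ln|b|\to\infty$ for each fixed $a>0$; combined with $\abs{\phn(a+ib)}=\so{|b|}$ and fed into \eqref{eq:Stirling} this yields $\limi{|b|}|b|^{\beta}\abs{\Wpn(a+ib)}=0$ for all $\beta\geq0$ and all $a>0$, and Lemma \ref{lem:Lindelof} upgrades this to all $a>\dphn$, i.e.\ $\phn\in\PcBinf$. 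If $\PPn(0)<\infty$, then $\upsilon_{\mis}(0^+)<\infty$ and $\|\upsilon_{\mis}\|_\infty<\infty$, so the first alternative of Theorem \ref{thm:Aph}\eqref{eq:subexpp} gives $\Aphn(a+ib)/\ln|b|\to \upsilon_{\mis}(0^+)/\phn(\infty)=\upsilon_{\mis}(0^+)/(\phn(0)+\mubrn(0))$, call this limit $r$; since here $\phn(\infty)<\infty$ and $\mu_{\mis}$ is absolutely continuous, Proposition \ref{propAsymp1}\eqref{it:finPhi} gives $\abs{\phn(a+ib)}\to\phn(\infty)\in\lbrb{0,\infty}$, so \eqref{eq:Stirling} produces $\abs{\Wpn(a+ib)}=|b|^{-r+\so{1}}$ on each $\Cb_a$. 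Hence $\limi{|b|}|b|^u\abs{\Wpn(a+ib)}=0$ for $u<r$ and $=\infty$ for $u>r$, extended to $a>\dphn$ by Lemma \ref{lem:Lindelof}; this is \eqref{it:class2_C}, and since $r<\infty$ it also gives the remaining direction of \eqref{it:class2_B}, namely $\PPn(0)<\infty\Rightarrow\phn\notin\PcBinf$, closing the equivalence.

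The hard part is the structural step: extracting the decomposition $\upsilon_{\mis}=\dep^{-1}\PPn+\upsilon_2$ with $\upsilon_2$ controlled in exactly the form required by Theorem \ref{thm:Aph}\eqref{eq:subexpp} — this is where the subadditivity of the renewal function of $\php$ and the hypothesis $\mubrp\lbrb{0}<\infty$ are genuinely used — and pinning down the integrability and boundedness of $\upsilon_{\mis}$ as dictated by $\PPn(0)$. Once these facts about $\upsilon_{\mis}$ are secured, the remaining passage through \eqref{eq:Stirling}, \eqref{eq:err} and Lemma \ref{lem:Lindelof} is routine.
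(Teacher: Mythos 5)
Your plan for item \eqref{it:class2_A} and the general strategy of reading the decay of $\abs{\Wpn}$ off \eqref{eq:Stirling} via $\Aphn$ are correct, as is the structural reduction through Vigon's equation \eqref{eq:Vigon} and the split $u_{\pls}=\dep^{-1}+\tilde u_{\pls}$. However, there is a genuine gap in the way you route items \eqref{it:class2_B} and \eqref{it:class2_C} through Theorem \ref{thm:Aph}\eqref{eq:subexpp}.

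The second alternative of Theorem \ref{thm:Aph}\eqref{eq:subexpp} explicitly requires $\upsilon_1,\upsilon_2\in\Lspace{1}{\Rp}$, hence $\upsilon=\upsilon_1+\upsilon_2\in\Lspace{1}{\Rp}$, i.e.\ $\mubr(0)<\infty$. Under the hypotheses of the proposition one can perfectly well have $\mubarn{0}=\infty$: by \eqref{eq:muinfPinf1} this is equivalent to $\int_0^1\PPn(y)dy=\infty$, which occurs, for instance, if $\Pi_{\mis}$ has a density comparable to $y^{-\alpha-1}$ near the origin with $\alpha\in[1,2)$. In that case $\upsilon_{\mis}\notin\Lspace{1}{\Rp}$ and Theorem \ref{thm:Aph}\eqref{eq:subexpp} simply does not apply, no matter how you choose the decomposition $\upsilon_1+\upsilon_2$; truncating $\upsilon_1=\dep^{-1}\PPn\ind{(0,c)}$ does not help, because $\int_0^c\PPn$ is then infinite too. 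This $\mubarn{0}=\infty$ sub-case is precisely where the paper deploys its heaviest machinery (Lemma \ref{lem:aux}, Lemma \ref{lem:aux1}, Lemma \ref{lem:argphi}, Lemma \ref{lem:Urau}): the decomposition is carried out at the level of the Bernstein function, $\phn=\phntc+\phnstc$, not at the level of an $L^1$ density, and one shows that $\Aphn$ is asymptotically controlled by $A_{\phntc}$ plus a term bounded uniformly in $b$; the decay of $\abs{W_{\phntc}}$ is then read from \cite[Theorem 5.1]{Patie-Savov-16}, since $\phntc$ has a compactly supported non-increasing L\'evy density. Moreover, the paper splits the case $\PPn(0)=\infty$ further: when $\int_0^1\PPn(y)dy<\infty$ but $\PPn(0)=\infty$ (so $\mubarn{0}<\infty$), it resorts to a separate direct lower bound on $\Aphn$ via \eqref{eq:A=Theta} and the blow-up $\limo{y}\uun{y}=\infty$, rather than the truncation machinery. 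Your proposal collapses both sub-cases into one appeal to Theorem \ref{thm:Aph}\eqref{eq:subexpp}, which covers neither.

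Two further issues worth flagging. First, even in the regime $\mubarn{0}<\infty$ the hypothesis $\IntOI\upsilon_2(y)dy\geq0$ of Theorem \ref{thm:Aph}\eqref{eq:subexpp} is not automatic for the naive split $\upsilon_{\mis}=\dep^{-1}\PPn+\upsilon_2$; the paper has to introduce the correction $\ptt$ in Lemma \ref{lem:aux1} and choose the truncation level $c$ small enough (Lemma \ref{prop:tau}\eqref{it:tau}) exactly to secure a positive residual mass. Second, as a matter of the paper's own logical structure the route you propose is circular: the proof of Theorem \ref{thm:Aph}\eqref{eq:subexpp} given at the end of Section \ref{sec:FE} is a sketch that defers to, and is extracted from, the proof of Proposition \ref{prop:condClass2} (see the discussion in Remark \ref{rem:condClass2}); one cannot quote it as a prerequisite here without first supplying an independent proof of it.
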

\begin{remark}\label{rem:condClass2}
	For general $\phi\in\Bc$ the relation $\phi\in\PcBinf$ can be established as long as the following three conditions are satisfied. First, the \LL measure of $\phi$ is absolutely continuous with right-continuous density, that is $\mu(dy)=\upsilon(y)dy,\,y>0,$ and $\upsilon(0^+)=\infty$. Secondly,  $\upsilon(y)=\upsilon_1(y)+\upsilon_2(y)$ such that $\upsilon_1,\upsilon_2\in\Lspaces{1}{\Rp}$ and $\upsilon_1\geq 0$ is non-increasing on $\Rp$. Finally, $\IntOI\upsilon_2(y)dy\geq 0$ and $\abs{\upsilon_2(x)}\leq \lbrb{\int_{x}^{\infty}\upsilon_1(y)dy}\vee C$ for some $C>0$ on $\Rp$. Imposing $\php\in\BP$ and $\mubrp\!\lbrb{0}<\infty$ ensures precisely those conditions for $\phn$. In fact Lemma \ref{lem:aux}, Lemma \ref{prop:tau}\eqref{it:tau} and Lemma \ref{lem:aux1} modulo to \eqref{eq:residual}, \eqref{eq:red} serve only the purpose to check the validity of those conditions. For item \eqref{it:class2_C} for general $\phi\in\Bc$ it suffices to assume that $\mu(dy)=\upsilon(y)dy,\,y>0$, $\upsilon\!\lbrb{0^+}=\limo{y}\upsilon\!\lbrb{y}<\infty$ and $\upsilon\in\Lspaces{\infty}{\Rp}$.
\end{remark}
We split the proof into several steps and start with a sequence of lemmas.
The first auxiliary result uses the expansion \eqref{eq:u} of the potential density $u_{\pls}$ and an extension to killed \LLPs of Vigon's \textit{\'{e}quation amicale invers\'{e}e}  that is stated in Proposition \ref{prop:Vigon}, to decompose and relate the \LL measure of $\phn$ to the \LL measure associated to the \LLP underlying $\Psi\in\overNc$. The essential aim of this decomposition is to obtain a Bernstein function whose \LL measure possesses a non-increasing density. This will lead us into a setting with well-understood results. %\st{This decomposition is used to relate $\phn$ and hence $\Wpn$ to $\phntc\in\Bc$ and $W_{\phntc}$ as in the proof of Proposition \ref{prop:condClass2}\eqref{it:class2_B}.}
\begin{lemma}\label{lem:aux} \label{prop:tau}
	Let $\Psi\in\overNc$ such that $\php\in\BP$ and $\mubarpspace{0}<\infty$ and recall, from  \eqref{eq:u}, the decomposition of the potential density $u_{\pls}(y)=\frac{1}{\dep}+\tilde{u}_{\pls}(y),\,y\geq 0$.
	Then, there exists $ c_0=c_0(\Psi)>0$ such that for any $c\in\lbrb{0,c_0}$ the following holds.
	\begin{enumerate}[(1)]
		\item We have the  identity of measures  on $\lbrb{0,c}$
		\begin{align}\label{eq:mu_-_1}
		%\begin{split}
		\munspace{dy}&=\frac{\PPn^{(c)}\!(y)+\PPn(c)-\PPn(y+c)}{\dep}dy\\
		&+
		\int_{0}^{\infty}(\tilde{u}_{\pls}(v)\ind{v<c} +  u_{\pls}(v)\ind{v>c})\Pnn(y+dv)dy \nonumber \\
		&=\frac{\PPn^{(c)}\!(y)}{\dep}dy+\tau^{(c)}_1\!(y)dy+\tau^{(c)}_2\!(y)dy,
		%\end{split}
		\end{align}
		where $\PPn^{(c)}\!(y)=\lbrb{\PPn(y)-\PPn(c)}\ind{y\leq c}$,
		\begin{equation}\label{eq:tau1}
		\tau^{(c)}_1\!(y)=\ind{y\leq\frac{c}{2}}\int_{0}^{\frac{c}{2}}\tilde{u}_{\pls}(v)\Pnn(y+dv)
		\end{equation}
		and
		\begin{equation}\label{eq:tau2}
		\begin{split}
		\tau^{(c)}_2\!(y)&=\lbrb{\int_{0}^{\frac{c}{2}}\tilde{u}_{\pls}(v)\Pnn(y+dv)\ind{y\in\lbrb{\frac{c}{2},c}}+\int_{\frac{c}{2}}^{c}\tilde{u}_{\pls}(v)\Pnn(y+dv)\ind{y<c}}\\
		&+\frac{\lbrb{\PPn(c)-\PPn(y+c)}}{\dep}\ind{y<c}+\lbrb{\int_{c}^{\infty}u_{\pls}(v)\Pnn(y+dv)}\ind{y<c}.
		\end{split}
		\end{equation}
		\item  $\sup\limits_{y\in\lbrb{0,c}
		}\abs{\tau^{(c)}_2\!(y)}<\infty$, and, for some $C_1>0$,
		\begin{equation}\label{eq:boundTau}
		\abs{\tau^{(c)}_1\!(y)}\leq C_1\lbrb{\int_{0}^{\frac{c}{2}}\PPn^{(c)}\!\lbrb{\rho+y}-\PPn^{(c)}\!\lbrb{\frac{c}{2}+y}d\rho}\ind{y\leq\frac{c}{2}}.
		\end{equation}
		\item \label{it:tauint}$\tau^{(c)}_1$ and $\tau^{(c)}_2$ are absolutely integrable on $\lbrb{0,c}$.
		\item  \label{it:tau} Finally, for any $d\in\lbbrb{0,\mubarnspace{0}}$ there exists $c_1=c_1(d,\Psi)\in\lbrb{0,c_0}$ (with $c_0$ as above) such that $\IntOI \tau^{(c)}\!(y)dy\in\lbrb{d,\mubarnspace{0}}$ for all $c\in\lbrb{0,c_1}$, where
		\begin{equation}\label{def:tau}
		\tau^{(c)}\!(y):= \tau^{(c)}_1\!(y)+ \tau^{(c)}_2\!(y)+\uunspace{y}\ind{y\geq c}
		\end{equation}
		and $\uunspace{y}$ is the density of $\munspace{dy}$.
	\end{enumerate}
	%and
	%Finally, both
\end{lemma}
\begin{proof}
	Since  $\php\in\BP$ and the underlying \LLP is not a compound Poisson process, from \eqref{eq:mu_-2} of Proposition \ref{prop:VigonDens}  we that
	\begin{equation}\label{eq:mu_-1}
	\begin{split}
	\munspace{dy}&=\uunspace{y}dy=\lbrb{\IntOI u_{\pls}(v)\Pnn(y+dv)}dy.
	\end{split}
	\end{equation}
	Note from \eqref{eq:Vigon} that 
	\begin{equation}\label{eq:muinfPinf1}
	\begin{split}
	\mubarnspace{0}=\IntOI\uunspace{y}dy=\infty&\iff \int_{0}^{1}\PPn(y)u_\pls(y)dy=\infty\\
	&\iff \int_{0}^{1}\PPn(y)dy=\infty
	\end{split}
	\end{equation}
	because $u_{\pls}>0$ on $\lbbrbb{0,1}$.
	Next, for any $c>0$, we decompose  \eqref{eq:mu_-1} when $y\in\lbrb{0,c}$ as follows
	\begin{equation}\label{eq:mu_-}
	\begin{split}
	\munspace{dy}&=\uunspace{y}dy=\lbrb{\IntOI u_{\pls}(v)\Pnn(y+dv)}dy
	\\
	&=\lbrb{\int_{0}^{c} u_{\pls}(v)\Pnn(y+dv)}dy+\lbrb{\int_{c}^{\infty}u_{\pls}(v)\Pnn(y+dv)}dy.
	\end{split}
	\end{equation}
	%$U_{\pls}$ and its density $u_{\pls}$ have been discussed prior to the statement of Proposition \ref{prop:condClass2}. Most notably
	Next recalling from \eqref{eq:u}  that
	\begin{equation}\label{eq:u_+}	u_{\pls}(y)=\sum_{j=0}^{\infty}\frac{\lbrb{-1}^j}{\dr^{j+1}_{\pls}}\lbrb{\mathbf{1}*\lbrb{\php(0)+\mubrp}^{*j}}\!(y)=\frac{1}{\dep}+\tilde{u}_{\pls}(y),
	\end{equation}
	one gets, by plugging the right-hand side of \eqref{eq:u_+} in the first term of the last identity of \eqref{eq:mu_-}, the first identity in \eqref{eq:mu_-_1}. The expressions for $\tau^{(c)}_1,\,\tau^{(c)}_2$, see \eqref{eq:tau1} and \eqref{eq:tau2}, are up to a mere choice. Trivially, for the second term to the right-hand side of \eqref{eq:tau2}, we get that
	\[\frac{1}{\dep}\lbrb{\PPn(c)-\PPn(y+c)}\ind{y< c}\leq \frac{1}{\dep}\PPn(c)\ind{y\leq c}\]
	and it is finite and integrable on $\lbrb{0,c}$. Also since $||u_{\pls}||_\infty<\infty$, which is thanks to $\php\in\BP$, we deduce that
	\[\lbrb{\int_{c}^{\infty}u_{\pls}(v)\Pnn(y+dv)}\ind{y<c}\leq||u_{\pls}||_\infty\PPn(y+c)\ind{y<c}\leq||u_{\pls}||_\infty\PPn(c)\ind{y<c}.\]
	Clearly, the upper bound is finite and integrable on $\lbrb{0,c}$. Thus, the last term  to the right-hand side of \eqref{eq:tau2} has been dealt with as well. Finally, using in an evident manner  \eqref{eq:u_+}, we study the first term to the right-hand side of \eqref{eq:tau2}, to get, writing $C_{\dep,\infty}=\frac{1}{\dep}+||u_{\pls}||_\infty$ with $||\tilde{u}_{\pls}||_\infty\leq C_{\dep,\infty}$, that
	\begin{equation}\label{eq:split}
	\begin{split}
	&\abs{\int_{0}^{c}\tilde{u}_{\pls}(v)\left(\ind{v<\frac{c}{2}}\ind{\frac{c}{2}<y<c}+\ind{\frac{c}{2}<v<c}\ind{y<c}\right)\Pnn(y+dv)}\\%+\abs{\int_{\frac{c}{2}}^{c}\tilde{u}_{\pls}(v)\Pnn(y+dv)}\ind{y<c}\\
	&\leq  C_{\dep,\infty} \lbrb{\PPn\lbrb{y}\ind{\frac{c}{2}<y<c}+\PPn\lbrb{\frac{c}{2}}\ind{y<c}}  \\
	& \leq  2C_{\dep,\infty} \: \PPn\lbrb{\frac{c}{2}}\ind{y<c}.
	\end{split}
	\end{equation}
	However, the upper bound in \eqref{eq:split} is clearly both bounded and integrable on $\lbrb{0,c}$. Thus, we have proved that  $\sup_{y\in\lbrb{0,c}
	}\abs{\tau^{(c)}_2(y)}<\infty$ and $\tau^{(c)}_2$ is absolutely integrable on $\lbrb{0,c}$. It remains to investigate $\tau^{(c)}_1$. %\[\tau^{\triangleleft c}_1(y):=\ind{y\leq\frac{c}{2}}\int_{0}^{\frac{c}{2}}\tilde{u}_+(v)\Pi_-(y+dv).\]
	Note that the term defining \eqref{eq:u_+} has, for any $x\geq 0$, the form
	\[\mathbf{1}*\lbrb{\php(0)+\mubrp}\!(x)=\int_{0}^{x}\lbrb{\php(0)+\mubarpspace{y}}dy=\php(0)x+\int_{0}^{x}\mubarpspace{y}dy.\]
	Since $\mubarpspace{0}<\infty$ we conclude that \[\mathbf{1}*\lbrb{\php(0)+\mubrp}\!(x)\simo \lbrb{\php(0)+\mubarp{0}}\!x.\]
	Then, since $1*\lbrb{\php(0)+\mubrp}$ is non-decreasing on $\Rp$, \cite[(4.2)]{Doering-Savov-11} gives, for any $j\in \N$, that
	\[\lbrb{\mathbf{1}*\lbrb{\php(0)+\mubrp}^{*j}}\!(x)\leq\lbrb{\mathbf{1}*\lbrb{\php(0)+\mubarp{x}}}^j\]
	and we conclude that for some $h>0$ and all $x\in\lbrb{0,h}$,
	\[\lbrb{\mathbf{1}*\lbrb{\php(0)+\mubrp}^{*j}}\!(x)\leq 2^j\lbrb{\php(0)+\mubarpspace{0}}^jx^{j}.\]
	Therefore from \eqref{eq:u_+},
	for $x<\frac{1}{4\lbrb{\php(0)+\mubarpspace{0}}}\wedge h$,
	\begin{equation}\label{eq:tildeu_+}
	\abs{\tilde{u}_{\pls}(x)}\leq C_1x,
	\end{equation}
	where $C_1>0$ is some positive constant.
	Hence, from now on, we choose an arbitrary $c<c_0=\frac{1}{4\lbrb{\php(0)+\mubarpspace{0}}}\wedge h$.  Using \eqref{eq:tildeu_+} in \eqref{eq:tau1} we get that
	\begin{equation*}
	\abs{\tau^{(c)}_1\!(y)}\leq C_1\lbrb{\int_{0}^{\frac{c}{2}}v\Pnn(y+dv)}\ind{y\leq\frac{c}{2}}
	\end{equation*}
	and \eqref{eq:boundTau} follows by integration by parts. However, from \eqref{eq:boundTau} we get for $y\in\lbrb{0,\frac{c}{2}}$ that
	\begin{equation}\label{eq:barbarP}
	\begin{split}
	\frac{\tau^{(c)}_1\!(y)}{C_1}&\leq\int_{0}^{\frac{c}{2}}\lbrb{\PPn^{(c)}\!\lbrb{\rho+y}-\PPn^{(c)}\!\lbrb{\frac{c}{2}+y}}d\rho
	\\
	&\leq  \PPPn^{(c)}\!(y):=\int_{y}^{c}\PPn^{(c)}\!(v)dv
	\end{split}
	\end{equation}
	and since
	\[\int_{0}^c \PPPn^{(c)}\!(y)dy=\int_{0}^c\int_{y}^{c}\PPn^{(c)}\!(v)dvdy\leq \int_{0}^c\int_{y}^{c}\PPn(v)dvdy=\int_{0}^{c}v\PPn(v)dv<\infty,\]
	we conclude that $\abs{\tau^{(c)}_1}$ is integrable on $\lbrb{0,\frac{c}{2}}$. This completes item \ref{it:tauint}.
	%\end{proof}
	% We keep the notation of Lemma \ref{lem:aux} and introduce the function \begin{equation}\label{def:tau}
	%	\tau^{\triangleleft c}(y)= \tau^{\triangleleft c}_1(y)+ \tau^{\triangleleft c}_2(y)+\uun{y}\ind{y\geq c}.
	%\end{equation}
	%
	%The next result is technical.
	%\begin{lemma}\label{prop:tau}
	% 	 Let $\Psi\in\overNc$ such that $\php\in\BP$ and $\mubarp{0}<\infty$. Then for any $d\in\lbbrb{0,\mubarn{0}}$ there exists $c_1=c_1(d,\Psi)\in\lbrb{0,c_0}$ \mladen{(with $c_0$ as in Lemma \ref{lem:aux})} such that $\IntOI \tau^{\triangleleft c}(y)dy\in\lbrb{d,\mubarn{0}}$ for all $c\in\lbrb{0,c_1}$.
	%	   \end{lemma}
	% \begin{proof}
	To show item \eqref{it:tau}, we note, from \eqref{def:tau} and Lemma \ref{lem:aux}, since $ \tau^{(c)}_1, \tau^{(c)}_2$ are supported on $\lbrb{0,c}$, that for all $c\in\lbrb{0,c_0}$, \[\int_{c}^{\infty}\tau^{(c)}\!(y)dy=\int_{c}^{\infty}\uunspace{y}dy=\mubarnspace{c}.\]
	By a simple inspection of \eqref{eq:tau1} and \eqref{eq:tau2} we note that the only potential negative contribution to $\tau^{(c)}$ comes from the terms whose integrands are $\tilde{u}_{\pls}$.  Since \eqref{eq:tildeu_+}, that is 	$\abs{\tilde{u}_{\pls}(x)}\leq C_1x$, holds for all $x\in\lbrb{0,c_0}$ clearly an upper bound of the absolute value of any of those terms is the following expression
	\begin{equation*}
	\overline{\tau}(y)=C_1\int_{0}^{c}v\Pnn(y+dv)\ind{y<c}.
	\end{equation*}
	Therefore, integrating by parts, we get that
	\begin{equation*}
	\begin{split}
	\int_{0}^{c}\overline{\tau}(y)dy&=C_1\int_{0}^{c}\int_{0}^{c}\lbrb{\PPn(y+w)-\PPn(y+c)}dwdy\\
	& \leq C_1\int_{0}^{c}\int_{0}^{c}\PPn(y+w)dwdy\leq C_1\int_{0}^{c}y\PPn(y)dy+C_1c\PPn(c).
	\end{split}
	\end{equation*}
	As the upper bound tends to $0$, as $c\to 0$, we conclude the claim as the negative contribution of $\tau^{(c)}_1,\tau^{(c)}_2$ cannot exceed in absolute value this quantity. Indeed, since $\uun{y}=\tau^{(c)}(y)+\frac{1}{\dep}\PPn^{(c)}(y)\ind{y\leq c}$ and 
	\[\limo{c}\int_{c}^{\infty}\tau^{(c)}\!(y)dy=\mubarnspace{0}=\IntOI \uunspace{y}dy\] the claim follows.
\end{proof}
Lemma \ref{lem:aux} allows us to prove the following result which transforms the decomposition of $\mu_{\mis}$ on $\lbrb{0,c}$ to a decomposition of $\phn$. We stress that although one of the terms in the aforementioned decomposition is a Bernstein function, the second term need not belong to $\Bc$.
\begin{lemma}\label{lem:aux1}
	Let $\Psi\in\overNc$ such that $\php\in\BP$ and $\mubarpspace{0}<\infty$. Let $c\in\lbrb{0,c_0}$ so that Lemma \ref{lem:aux} is valid. Then with $\ptt=\min\curly{1,\mubar{0}/4}\ind{\phn(0)=0}$, the function
	\begin{equation}\label{eq:phic}
	\phntc\!\!\lbrb{z}=\phn(0)+\ptt+\dr_{\mis}z+\int_{0}^{c}\lbrb{1-e^{-zy}}\frac{\PPn^{(c)}\lbrb{y}}{\dep}dy\in\Bc
	\end{equation}
	and with the definition of $\tau^{(c)}$, see \eqref{def:tau},
	\begin{equation}\label{eq:phi2}
	\phn(z)=\phntc\!(z)+\IntOI\lbrb{1-e^{-zy}}\tau^{(c)}(y)dy-\ptt=\phntc\!(z)+\phnstc\!(z).
	\end{equation}
	For any such choice and $a>0$ fixed
	\begin{equation}\label{eq:residual}
	\limi{|b|}\Im\!\lbrb{\phnstc\!(a+ib)}=0\quad \textrm{ and } \quad\limi{|b|}\Re\!\lbrb{\phnstc\!(a+ib)}=\phnstc\!\lbrb{\infty}=\IntOI\tau^{(c)}(y)dy-\ptt,
	\end{equation}
	whereas
	\begin{equation}\label{eq:red}
	\limi{|b|}\Re\!\lbrb{\phntc\!(a+ib)}=\phntc\!\!\lbrb{\infty}\textrm{ and if  $b>0$ then } \Im\!\lbrb{\phntc\!(a+ib)}\geq 0.
	\end{equation}
	Finally, there exists  $c_2=c_2\lbrb{d,\Psi}\in\lbrb{0,c_1}$ such that for any $c\in\lbrb{0,c_2}$ we have that 
	\begin{equation}\label{eq:red1}
	\phnstc\!(\infty)=\IntOI \tau^{(c)}(y)dy-\ptt>0.
	\end{equation}
\end{lemma}
\begin{proof} Note that \eqref{eq:phic} can be defined for any $c>0$ but we fix $c\in\lbrb{0,c_0}$ ensuring the validity of Lemma \ref{lem:aux}.
	The validity of \eqref{eq:phi2} is a simple rearrangement. Next, \eqref{eq:residual} follows from the application of the Riemann-Lebesgue lemma to the function $\tau^{(c)}\in\Lspaces{1}{\Rp}$. The latter is a consequence from the absolute integrability of its constituting components $\tau^{(c)}_1,\tau^{(c)}_2$, see Lemma \ref{lem:aux} and \eqref{def:tau}, and the fact that \[\mubarnspace{c}=\int_{c}^{\infty}\uunspace{y}dy<\infty,\] see \eqref{eq:phi}. Next, recall that \eqref{eq:muinfPinf1} states that
	\begin{equation}\label{eq:muinfPinf}
	\mubarnspace{0}=\IntOI\uunspace{y}dy=\infty\iff \int_{0}^{1}\PPn(y)dy=\infty.
	\end{equation}
	Thus, if $\mubarnspace{0}=\infty$, \eqref{eq:muinfPinf} shows that the \LL measure of $\phntc$, that is the quantity \[\PPn^{(c)}(y)dy=\frac{1}{\dep}\lbrb{\PPn(y)-\PPn(c)}\ind{y<c}dy,\] 
	assigns infinite mass on $\lbrb{0,c}$ and is absolutely continuous therein. However, the latter facts trigger the validity of \cite[Theorem 27.7]{Sato-99} and thus the distribution of the non-decreasing \LLP underlying $\phntc$ is absolutely continuous. This, in turn, thanks to the Riemann-Lebesgue lemma yields to
	\begin{equation}\label{eq:rec}
	\limi{|b|}e^{-\phntc\!\lbrb{a+ib}}=\limi{|b|}\Ebb{e^{-\lbrb{a+ib}\xi^{(c)}_1}}=\limi{|b|}\IntOI e^{-ax-ibx}h^{(c)}(x)dx=0,
	\end{equation}
	where $\xi^{(c)}_1$ is the non-decreasing \LLP associated to $\phntc$ taken at time $1$ with probability density on $\intervalOI$
	\[h^{(c)}(x)dx=\Pbb{\xi^{(c)}_1\in dx}.\]
	Thus, from \eqref{eq:rec} we have that \[\limi{|b|}\Re\!\lbrb{\phntc\!(a+ib)}=\infty\] and the first assertion of \eqref{eq:red} is valid with $\phntc(\infty)=\infty$. Relation \eqref{eq:rec}  is clearly valid with $\phntc(\infty)=\infty$ if $\dem>0$ as well.  It remains to settle the first statement of \eqref{eq:red} when $\mubarnspace{0}<\infty$ and $\dem=0$. It follows from consideration of \eqref{eq:phic}, wherein by assumption $\dem=0$ and the Riemann-Lebesgue lemma, which since \eqref{eq:muinfPinf} implies that $\int_{0}^{c}\PPn^{(c)}(y)dy<\infty$, gives that
	\[\limi{|b|}\int_{0}^{c}e^{-ay-i|b|y}\frac{\PPn^{(c)}\lbrb{y}}{\dep}dy=0.\]
	Regardless of $\mubarnspace{0}$ being finite or not the second claim of \eqref{eq:red} follows by integration by parts of $\PPn^{(c)}(y)=\int_{y}^{c}\Pnn(dr),\,y\in\lbrb{0,c},$ in \eqref{eq:phic} or the proof of \cite[Lemma 4.6]{Patie-Savov-16}  since $\PPn^{(c)}$ is non-increasing on $\Rp$.  The final claim of the lemma, that is \eqref{eq:red1}, follows easily from the assertion of Lemma \ref{prop:tau}\eqref{it:tau} and the definition of $\ptt$.
\end{proof}
The next result is the first step to the understanding of the quantity $\Aphn$ via studying its integrand $\arg\phn$, see \eqref{eq:Aphi}. We always fix $c$ such that $\phnstc(\infty)>0$ in Lemma \ref{lem:aux1} and all claims of Lemma \ref{lem:aux} hold, which from the final assertion of Lemma \ref{lem:aux1} is always possible as long as $\mubarnspace{0}>0$.  We then decompose $\arg \phn$ as a sum of $\arg\phntc$ and an error term and we simplify the latter. For this purpose we introduce some further notation. Let assume in the sequel  that \eqref{eq:phi2} holds. Then we denote by $\Untc(dy),\,y>0$, the potential measure of the \LLP associated to $\phntc\in\Bc$ and by $\Untc_a(dy)=e^{-ay}\Untc(dy)$. Recall that $\tau^{(c)}_a(y)=e^{-ay}\tau^{(c)}(y),\,y>0$, where $\tau^{(c)}$  is defined in \eqref{def:tau}. Then, the following claim holds.
\begin{lemma}\label{lem:argphi}
	Let $\Psi\in\overNc$ such that $\php\in\BP$ and $\mubarpspace{0}<\infty$. Assume furthermore that $\mubarnspace{0}=\infty$ or equivalently $\int_{0}^{1}\PPn\!(y)dy=\infty$, see \eqref{eq:muinfPinf}. Fix $a>0$ and $c\in\lbrb{0,c_2}$ so that both Lemma \ref{lem:aux} and Lemma \ref{lem:aux1} are valid. Then, modulo to $\lbrbb{-\pi,\pi}$ for all $b>0$ and directly for all $b$ large enough
	\begin{equation}\label{eq:argphic}
	\begin{split}
	\arg\phn(a+ib)&=\arg \phntc\!(a+ib)+\arg\lbrb{1+\frac{\phnstc\!(a+ib)}{\phntc\!(a+ib)}}\\
	&= \arg \phntc\!(a+ib)+\arg\lbrb{1+\phnstc\!(\infty)\Fo{\Untc_a}\!(-ib)-\Fo{\Untc_a*\tau_a^{(c)}}\!(-ib)},
	\end{split} %\arctan\lbrb{\frac{\Im\lbrb{\phi^c_-(a+ib)}}{\Re\lbrb{\phi^c_-(a+ib)}}}+\arctan\lbrb{\frac{\Im\lbrb{\frac{\phi^*\lbrb{a+ib}}{\phi^c_-\lbrb{a+ib}}}}{1+\Re\lbrb{\frac{\phi^*\lbrb{a+ib}}{\phi^c_-\lbrb{a+ib}}}}},
	\end{equation}
	where $\phntc,\phnstc$ are as in the decomposition \eqref{eq:phi2}.
	%Moreover, as $b\to\infty$,
	%\begin{equation}\label{eq:UcaIm}
	%	\abs{\Im\lbrb{\widehat{\Untc_a}(-ib)}}=\arg\lbrb{\phnc(a+ib)}\so{1}.
	%\end{equation}
	For any $c\in\lbrb{0,c_2}$ there exists $a_c>0$ such that for any $a\geq a_c$ and as $b\to\infty$
	\begin{align}\label{eq:argphicSim}
	\arg\phn(a+ib)=(\arg\phntc\!(a+ib))\lbrb{1+\sospace{1}}+\arg\lbrb{1-\Fo{\Xi_a^{(c)}}\!(-ib)},
	\end{align}
	where $\Xi_a^{(c)}$ is an absolutely continuous finite measure on $\Rp$. Moreover, its density $\chi_a^{(c)}$ is such that $\chi_a^{(c)}\in\Lspaces{1}{\Rp}\cap\Lspaces{\infty}{\Rp}$ and $\limi{a}||\Xi_a^{(c)}||_{TV}=0$.
\end{lemma}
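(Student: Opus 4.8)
\textbf{Proof of Lemma \ref{lem:argphi}.}

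The plan is to derive \eqref{eq:argphic} from the factorization \eqref{eq:phi2} of $\phn$ and then absorb the various pieces into a single measure. First I would fix $a>0$ and $c\in\lbrb{0,c_2}$ so that Lemma \ref{lem:aux} and Lemma \ref{lem:aux1} both apply; in particular $\phnstc(\infty)>0$. Since $\phntc\in\Bc$, the estimate \eqref{eq:phiC+} gives $\phntc(a+ib)\in\CbOI$, so it never vanishes and $\arg\phntc$ is well-defined and continuous in $b$. Writing $\phn=\phntc+\phnstc$ and factoring out $\phntc$,
\[
\arg\phn(a+ib)=\arg\phntc(a+ib)+\arg\lbrb{1+\frac{\phnstc(a+ib)}{\phntc(a+ib)}},
\]
which holds modulo $\lbrbb{-\pi,\pi}$ for all $b>0$; for the ``directly for all large $b$'' part I would invoke \eqref{eq:residual} together with $\abs{\phntc(a+ib)}\to\infty$ (from \eqref{eq:red}, since $\phntc(\infty)=\infty$ when $\mubarn{0}=\infty$), which forces $\phnstc(a+ib)/\phntc(a+ib)\to 0$ and so the right-hand side lies in the principal branch eventually. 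Next, recalling $\IntOI e^{-zy}\Untc(dy)=1/\phntc(z)$ on $\CbOI$, see \eqref{eq:LTU}, and decomposing $\phnstc(z)=\phnstc(\infty)-\IntOI e^{-zy}\tau^{\triangleleft c}(y)dy$ using \eqref{eq:phi2} (note $\phnstc(z)=\phnstc(\infty)-\phnstc(\infty)+\phnstc(z)$, so one just uses $\IntOI(1-e^{-zy})\tau^{\triangleleft c}(y)dy - \ptt$ and that $\IntOI\tau^{\triangleleft c}(y)dy-\ptt=\phnstc(\infty)$), I would compute
\[
\frac{\phnstc(a+ib)}{\phntc(a+ib)}=\phnstc(\infty)\Fo{\Untc_a}(-ib)-\Fo{\Untc_a*\tau_a^{\triangleleft c}}(-ib),
\]
using that the Fourier transform of $e^{-ay}$ times a convolution is the product of the transforms, and that $\Fo{\Untc_a}(-ib)=\IntOI e^{-ay-iby}\Untc(dy)=1/\phntc(a+ib)$. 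This gives \eqref{eq:argphic}.

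For \eqref{eq:argphicSim}, the idea is to choose $a$ large so that the two ``correction'' terms merge into a single contraction. Set $\Xi_a^{\triangleleft c}(dy)=\Untc_a*\tau_a^{\triangleleft c}(dy)-\phnstc(\infty)\Untc_a(dy)$, with density $\chi_a^{\triangleleft c}$. I would check $\chi_a^{\triangleleft c}\in\Lspace{1}{\Rp}\cap\Lspace{\infty}{\Rp}$: the measure $\Untc_a$ is finite for $a>0$ (its total mass is $\Fo{\Untc_a}(0)=1/\phntc(a)$, and it has a bounded density when $\dem>0$, while in the pure-jump case one works with the subadditivity/renewal bound $\Untc(y)\leq C(1+y)$ so that $\Untc_a$ is a finite measure with $L^1\cap L^\infty$ structure after convolving with $\tau_a^{\triangleleft c}\in\Lspace{1}{\Rp}$, using Proposition \ref{prop:convoEst}-type estimates), and $\tau_a^{\triangleleft c}\in\Lspace{1}{\Rp}$ from Lemma \ref{lem:aux}. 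The total variation satisfies
\[
\norm{\Xi_a^{\triangleleft c}}_{TV}\leq \frac{1}{\phntc(a)}\norm{\tau_a^{\triangleleft c}}_{1}+\frac{\phnstc(\infty)}{\phntc(a)}
= \frac{\norm{\tau_a^{\triangleleft c}}_{1}+\phnstc(\infty)}{\phntc(a)}\xrightarrow[a\to\infty]{}0,
\]
since $\phntc(a)\to\phntc(\infty)$, which is $+\infty$ under the standing hypothesis $\mubarn{0}=\infty$; this produces the required $a_c$ with $\norm{\Xi_a^{\triangleleft c}}_{TV}<1$ for $a\geq a_c$. With this merging, \eqref{eq:argphic} reads $\arg\phn(a+ib)=\arg\phntc(a+ib)+\arg\lbrb{1-\Fo{\Xi_a^{\triangleleft c}}(-ib)}$; to get the $(1+\so{1})$ factor in front of $\arg\phntc$ I would note that both $\arg\phntc(a+ib)$ and $\arg\lbrb{1-\Fo{\Xi_a^{\triangleleft c}}(-ib)}$ are $\bo{1}$, and more precisely, as $b\to\infty$, $\Fo{\Xi_a^{\triangleleft c}}(-ib)\to 0$ by the Riemann–Lebesgue lemma applied to $\chi_a^{\triangleleft c}\in\Lspace{1}{\Rp}$, so the second term is $\so{1}$ while $\arg\phntc(a+ib)$ is bounded below away from $0$ in the relevant regime (or, if it is itself $\so{1}$, the statement is trivially of the claimed form); writing the identity as $\arg\phn = \arg\phntc\lbrb{1+\so{1}}+\arg\lbrb{1-\Fo{\Xi_a^{\triangleleft c}}(-ib)}$ is then just bookkeeping, absorbing a $\so{1}$ multiple of the bounded quantity $\arg\phntc$ into the additive $\so{1}$ coming from the second term.

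The main obstacle I anticipate is the control of $\Untc_a$ and $\Untc_a*\tau_a^{\triangleleft c}$ as genuine finite measures with the claimed $\Lspace{1}\cap\Lspace{\infty}$ density in the case $\dem=0$, where $\Untc$ need not have a bounded density. The resolution is to exploit that $\tau^{\triangleleft c}$ is itself in $\Lspace{1}{\Rp}$ and supported near $0$ plus a tail piece $\uun{y}\ind{y\geq c}$, so $\Untc_a*\tau_a^{\triangleleft c}$ has a density obtained by integrating the bounded-variation renewal function $\Untc$ against an $L^1$ function with exponential damping, and subadditivity of $\Untc$ (standard for potential functions of subordinators, cf.\ the discussion around \eqref{eq:potentialMeasure}) yields $\Untc_a\in\Lspace{1}{\Rp}$ and the convolution in $\Lspace{1}\cap\Lspace{\infty}$. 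A secondary, purely cosmetic point is keeping track of the branch of $\arg$; this is handled exactly as above by the decay $\phnstc/\phntc\to 0$, valid because $\phntc(\infty)=\infty$ under $\mubarn{0}=\infty$.
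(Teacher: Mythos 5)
Your derivation of \eqref{eq:argphic} matches the paper's and is fine: you invoke $\phn=\phntc+\phnstc$, the identity $\Fo{\Untc_a}(-ib)=1/\phntc(a+ib)$ from \eqref{eq:LTU}, and the decay of the ratio $\phnstc/\phntc$ forced by \eqref{eq:residual} and $\phntc(\infty)=\infty$ to keep the argument in the principal branch for large $b$.

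The gap is in \eqref{eq:argphicSim}, and it is not the ``cosmetic'' issue you flag at the end. You define $\Xi_a^{\triangleleft c}(dy)=\Untc_a*\tau_a^{\triangleleft c}(dy)-\phnstc(\infty)\Untc_a(dy)$, which is \emph{not} the paper's $\Xi$; with your choice the equation \eqref{eq:argphicSim} becomes an exact identity with $\so{1}=0$, which is cheap, but the regularity claim on the density then fails. Your $\Xi$ has density $\chi_a^{\triangleleft c}(y)=g_a^{\triangleleft c}(y)-\phnstc(\infty)u^{\triangleleft c}_a(y)$ where $u^{\triangleleft c}_a$ is the density of $\Untc_a$. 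Under the standing hypothesis, $\phntc\in\BP^c$ (no drift) and $\phntc(\infty)=\infty$, so the potential density $u^{\triangleleft c}$ of $\phntc$ blows up near the origin; the exponential damping $e^{-ay}$ does nothing there, hence $\chi_a^{\triangleleft c}\notin\Lspace{\infty}{\Rp}$. The subadditivity argument you offer controls the renewal \emph{function} $\Untc(y)\leq C(1+y)$, not the density, so it handles the convolution term $g_a^{\triangleleft c}=\Untc_a*\tau_a^{\triangleleft c}$ (which is indeed bounded, cf.~Lemma \ref{lem:Urau}) but does nothing for the naked $\phnstc(\infty)\Untc_a$ summand, which is exactly where the problem sits. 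Note that the $\Lspace{\infty}$ bound is not decorative: it is used crucially downstream in the proof of Proposition \ref{prop:condClass2} (see \eqref{eq:absInteg}, \eqref{eq:secTerm}, \eqref{eq:finalEst}, all of which need $||\chi_a^{\triangleleft c}||_\infty<\infty$ via Proposition \ref{prop:convoEst}).

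What the paper does instead, and what is missing from your argument, is two-fold. First, it isolates the inner argument as the sum
\[
\arg\lbrb{1+\phnstc(\infty)\Fo{\Untc_a}(-ib)}+\arg\lbrb{1-\frac{\Fo{\Untc_a*\tau_a^{\triangleleft c}}(-ib)}{1+\phnstc(\infty)\Fo{\Untc_a}(-ib)}},
\]
valid because $\Re\lbrb{\phnstc(\infty)\Fo{\Untc_a}(-ib)}>0$. It then proves, via a delicate comparison of $\abs{\Im\lbrb{\Fo{\Untc_a}(-ib)}}$ against $\arg\phntc(a+ib)$ (this is \eqref{eq:toProve}), that the first summand is $\so{\arg\phntc(a+ib)}$ --- that is the genuine source of the $(1+\so{1})$ factor, and it is what allows the potential-measure term to be discarded from $\Xi$ altogether. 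Second, the paper's $\Xi_a^{\triangleleft c}$ is defined by the \emph{quotient} $\Fo{G_a^{\triangleleft c}}/(1+\phnstc(\infty)\Fo{\Untc_a})$, expanded as a geometric series: the density \eqref{eq:chia} then only ever integrates the bounded function $g_a^{\triangleleft c}$ against powers $(\Untc_a)^{*n}$, so $\Lspace{1}\cap\Lspace{\infty}$ is immediate once $a$ is large enough that $\phnstc(\infty)\Untc_a(\Rp)<1/4$ (this is exactly the role of $a_c$). You would need both of these ideas to repair your proposal.
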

\begin{proof}
	Since the assumptions of Lemma \ref{lem:aux} and Lemma \ref{lem:aux1} are satisfied we conclude that $\phn(z)=\phntc\!(z)+\phnstc\!(z)$, see \eqref{eq:phi2}. Then, modulo to $\lbrbb{-\pi,\pi}$, the first identity of \eqref{eq:argphic} is immediate whereas the second one follows from the fact that
	\[\frac{\phnstc\!(a+ib)}{\phntc\!(a+ib)}=\frac{1}{\phntc\!(a+ib)}\lbrb{\IntOI \tau^{(c)}\!(y)dy-\ptt-\Fo{\tau^{(c)}_a}(-ib)}=\frac{\phnstc\!\lbrb{\infty}}{\phntc\!(a+ib)}-\frac{\Fo{\tau^{(c)}_a}\!(-ib)}{\phntc\!(a+ib)},\]
	see \eqref{eq:phi2}, and \eqref{eq:LTU}, which we have restated as
	\begin{equation}\label{eq:LTU1}
	\frac{1}{\phntc\!(a+ib)}=\IntOI e^{-iby}e^{-ay}\Untc\!(dy)=\Fo{\Untc_a}\!(-ib).
	\end{equation}
	Note that \eqref{eq:red} implies that $\arg \phntc\!\lbrb{a+ib} \in\lbbrbb{0,\frac{\pi}{2}}$ at least for $b$ large enough. Moreover, since $\mubarnspace{0}=\infty$ we note that
	\begin{equation}\label{eq:Rec=inf}
	\limi{|b|}\Re\!\lbrb{\phntc\!(a+ib)}=\phntc\!\!\lbrb{\infty}=\infty,
	\end{equation}
	see  \eqref{eq:red}. Also, \eqref{eq:Rec=inf} together with \eqref{eq:residual} yields that \[\arg\lbrb{1+\frac{\phnstc\!(a+ib)}{\phntc\!(a+ib)}}\in\lbrb{-\frac{\pi}{2},\frac{\pi}{2}}\] at least for $b$ large enough. Henceforth, \eqref{eq:argphic} holds directly for such $b$. From \eqref{eq:LTU1}, \eqref{eq:Rec=inf}, $\phnstc\!\!\lbrb{\infty}>0$ and $\Re\!\lbrb{\phntc\!\!\lbrb{a-ib}}>0$, see \eqref{eq:rephi} of Proposition \ref{propAsymp1}, we get that, for all $b\in\R$,
	\begin{equation}\label{eq:ReU}
	\begin{split}
	\Re\!\lbrb{\phnstc\!\!\lbrb{\infty}\Fo{\Untc_a}\!(-ib)}&=\phnstc\!\!\lbrb{\infty}\Re\!\lbrb{\frac{1}{\phntc\!\!\lbrb{\ab}}}\\
	&=\phnstc\!\!\lbrb{\infty}\frac{\Re\!\lbrb{\phntc\!\!\lbrb{a-ib}}}{\abs{\phntc\!\!\lbrb{\ab}}^2}>0.
	\end{split}
	\end{equation}
	Therefore,  we conclude that, for all $b$ large enough,
	\begin{equation}\label{eq:arg}
	\begin{split}
	&\arg\lbrb{1+\phnstc\!(\infty)\Fo{\Untc_a}\!(-ib)-\Fo{\Untc_a*\tau_a^{(c)}}\!(-ib)}\\
	&=\arg\lbrb{1+\phnstc\!(\infty)\Fo{\Untc_a}\!(-ib)}+ \arg\lbrb{1-\frac{\Fo{\Untc_a*\tau_a^{(c)}}(-ib)}{1+\phnstc\!(\infty)\Fo{\Untc_a}\!(-ib)}}
	\end{split}
	\end{equation}
	because from \eqref{eq:Rec=inf} and \eqref{eq:ReU}
	\[\abs{1+\phnstc\!(\infty)\Fo{\Untc_a}(-ib)}\geq 1+\Re\!\lbrb{\phnstc\!\!\lbrb{\infty}\Fo{\Untc_a}\!(-ib)}>1\] 
	and
	\[\limi{b}\Fo{\Untc_a*\tau_a^{(c)}}\!(-ib)=\limi{b}\Fo{\Untc_a}\!(-ib)\Fo{\tau_a^{(c)}}\!(-ib)=0.\]
	From \eqref{eq:LTU1}, as $b\to\infty$, we get with the help of the second fact in \eqref{eq:red}, that is $\Im\!\lbrb{\phntc\!(a+ib)}\geq 0$, and \eqref{eq:Rec=inf} that
	\begin{equation}\label{eq:H0}
	\begin{split}
	H(b)&=\abs{\Im\!\lbrb{\Fo{\Untc_a}\!(-ib)}}=\frac{\abs{\Im\!\lbrb{\overline{\phntc(a+ib)}}}}{\abs{\phntc\!\!\lbrb{a+ib}}^2}\\
	&=\frac{\Im\!\lbrb{\phntc\!(a+ib)}}{\lbrb{\Re\!\lbrb{\phntc\!\!(a+ib)}}^2+\lbrb{\Im\!\lbrb{\phntc\!(a+ib)}}^2}=\sospace{1}.
	\end{split}
	\end{equation}
	From \eqref{eq:red}  and \eqref{eq:Rec=inf} we have again that \[\arg \phntc\lbrb{\ab}=\arctan\!\lbrb{\frac{\Im\lbrb{\phntc(a+ib)}}{\Re\lbrb{\phntc(a+ib)}}}\] and we aim to show, as $b\to\infty$, that
	\begin{equation}\label{eq:toProve}
	H(b)=\abs{\Im\!\lbrb{\Fo{\Untc_a}\!(-ib)}}=\sospace{\arg\lbrb{\phntc\!\!\lbrb{\ab}}}.
	\end{equation}
	To this end, fix $n\in\N$ and note from \eqref{eq:H0} that $\limi{b}nH(b)=0$. Therefore, from the second fact in \eqref{eq:red}, \eqref{eq:Rec=inf} and \eqref{eq:H0}, for all $b$ large enough,
	\[\tan(nH(b))\leq 2nH(b)\leq \frac{2n}{\Re\!\lbrb{\phntc\!\!\lbrb{a+ib}}}\frac{\Im\!\lbrb{\phntc\!(a+ib)}}{\Re\!\lbrb{\phntc\!\!\lbrb{a+ib}}}=\sospace{1}\frac{\Im\!\lbrb{\phntc\!(a+ib)}}{\Re\!\lbrb{\phntc\!\!\lbrb{a+ib}}}.\]
	Therefore, since from \eqref{eq:red}, $\Im\!\lbrb{\phntc\!(a+ib)}\geq0$, as $b\to\infty$, taking $arctan$ in the last inequality we deduct that
	\begin{equation*}
	\begin{split}
	n\limsupi{b}\frac{H(b)}{\arg\lbrb{\phntc\!\!\lbrb{\ab}}} &\leq \limsupi{b}\frac{\arctan\!\lbrb{\sospace{1}\frac{\Im\lbrb{\phntc\!(a+ib)}}{\Re\lbrb{\phntc\!\!\lbrb{a+ib}}}}}{\arg\lbrb{\phntc\!\!\lbrb{\ab}}}\\&
	\leq \limsupi{b}\frac{\arctan\!\lbrb{\frac{\Im\lbrb{\phntc\!(a+ib)}}{\Re\lbrb{\phntc\!\!\lbrb{a+ib}}}}}{\arg\lbrb{\phntc\!\!\lbrb{\ab}}}=1.
	\end{split}
	\end{equation*}
	Hence, since $n\in\N$ is arbitrary we conclude \eqref{eq:toProve}. However, from \eqref{eq:ReU}, that is the inequality $\Re\!\lbrb{\phnstc\!\!\lbrb{\infty}\Fo{\Untc_a}\!(-ib)}>0$, elementary geometry in the complex plane and \eqref{eq:H0}, as $b\to\infty$, we arrive at
	\begin{equation*}
	\begin{split}
	\abs{\arg\!\lbrb{1+\phnstc\!(\infty)\Fo{\Untc_a}\!(-ib)}}\!&=\!\abs{\arg\!\lbrb{1+\phnstc\!(\infty)\Re\!\lbrb{\Fo{\Untc_a}\!(-ib)}+i\phnstc\!(\infty)\Im\lbrb{\Fo{\Untc_a}\!(-ib)}}}\\
	&\leq \abs{\arg\!\lbrb{1+i\phnstc\!(\infty)\Im\!\lbrb{\Fo{\Untc_a}\!(-ib)}}}\\
	&=\abs{\arctan\!\lbrb{\phnstc\!(\infty)\Im\!\lbrb{\Fo{\Untc_a}\!(-ib)}}}\\
	&\lesssim \phnstc\!(\infty)\abs{\Im\!\lbrb{\Fo{\Untc_a}\!(-ib)}}\\ &=\sospace{\arg\lbrb{\phntc\!\!\lbrb{\ab}}},
	\end{split}	
	\end{equation*}
	where we used \eqref{eq:H0} for the last inequality and \eqref{eq:toProve} for the last equality.
	Therefore we deduce easily for the right-hand side of \eqref{eq:arg} that, as $b\to\infty$,
	\begin{equation}\label{eq:arg1}
	\begin{split}
	&\arg\lbrb{1+\phnstc\!(\infty)\Fo{\Untc_a}\!(-ib)-\Fo{\Untc_a*\tau_a^{(c)}}\!(-ib)}
	\\
	&\qquad=\arg\lbrb{1-\frac{\Fo{\Untc_a*\tau_a^{(c)}}(-ib)}{1+\phnstc\!(\infty)\Fo{\Untc_a}\!(-ib)}}+\sospace{\arg\lbrb{\phntc\!\!\lbrb{\ab}}}.
	\end{split}	
	\end{equation}
	To confirm \eqref{eq:argphicSim} we need to study the first term in the second line of \eqref{eq:arg1}. From Lemma \ref{lem:Urau} below we know that for any $a>0$, \[G^{(c)}_a(dy)=\Untc_a*\tau_a^{(c)}(dy)=g^{(c)}_a(y)dy\]
	with $g^{(c)}_a\in\Lspaces{1}{\Rp}\cap \Lspaces{\infty}{\Rp}$. Also, for fixed $c\in\lbrb{0,c_2}$  from \eqref{eq:LTU1} we get that
	\[\Untc_a\!\lbrb{\Rp}=\IntOI e^{-ay}\Untc\!(dy)=\frac{1}{\phntc\!\!\lbrb{a}}.\]
	However, since $\int_{0}^{1}\PPn(y)dy=\infty$ then we obtain from $\PPn^{(c)}(y)=\lbrb{\PPn(y)-\PPn(c)}\ind{y<c}$ that $\frac{1}{\dep}\int_{0}^{1}\PPn^{(c)}(y)dy=\infty$ and thus from \eqref{eq:phic} we conclude that $\limi{a}\phntc\!(a)=\infty$ and hence $\limi{a}\Untc_a\!\!\lbrb{\Rp}=0$. Thus, we choose $a_c>0$ such that \[\Untc_a\!\!\lbrb{\Rp}<\frac{1}{4\phnstc\!(\infty)}\] for all $a\geq a_c$ and work with arbitrary such $a$. This leads to \[\sup_{b\in\R}\abs{\phnstc(\infty)\Fo{\Untc_a}(-ib)}<\frac{1}{4}\] and then we can deduct that
	\[\frac{\Fo{G^{(c)}_a}(-ib)}{1+\phnstc\!(\infty)\Fo{\Untc_a}\!(-ib)}=\Fo{G^{(c)}_a}\!(-ib)\sum_{n=0}^{\infty}(-1)^n\lbrb{\phnstc\!(\infty)}^n\lbrb{\Fo{\Untc_a}\!(-ib)}^n.\]
	Since $G^{(c)}_a(dy)=g^{(c)}_a(y)dy$, formally, the right-hand side is the Fourier transform of a measure $\Xi_a^{(c)}$ supported on $\Rp$ with density
	\begin{equation}\label{eq:chia}
	\chi_a^{(c)}(y)=g^{(c)}_a\!(y)+\sum_{n=1}^{\infty}(-1)^n\lbrb{\phnstc\!(\infty)}^n\int_{0}^{y}g^{(c)}_a\!(y-v)\lbrb{\Untc_a}^{*n}(dv).
	\end{equation}
	However, it is immediate with the assumptions and observations above that
	\[||\chi_a^{(c)}||_{\infty}\leq||g^{(c)}_a||_{\infty}+ ||g^{(c)}_a||_{\infty}\sum_{n=1}^{\infty}\lbrb{\phnstc(\infty)\Untc_a\!\!\lbrb{\Rp}}^n<||g^{(c)}_a||_{\infty}\sum_{n=0}^{\infty}\frac{1}{4^n}<\infty,\]
	and,
	\begin{equation}\label{eq:L1chi}
	\begin{split}
	||\Xi_a^{(c)}||_{TV}&=||\chi^c_a||_1=\IntOI \abs{\chi^c_a(y)}dy\leq \IntOI |g^{(c)}_a(y)|dy\lbrb{1+ \sum_{n=1}^{\infty}\lbrb{\phnstc(\infty)\Untc_a\!\!\lbrb{\Rp}}^n}\\
	&\leq 2||g^{(c)}_a||_1= 2||G^{(c)}_a||_{TV}<\infty.
	\end{split}
	\end{equation}
	Therefore,  $\Xi_a^{(c)}$  is a well-defined finite measure with density $\chi_a^{(c)}\in\Lspaces{1}{\Rp}\cap\:\Lspaces{\infty}{\Rp}$ and from \eqref{eq:arg1} we get for all $a\geq a_c$ and any $b\in\R$ that
	\[\arg\lbrb{1-\frac{\Fo{\Untc_a*\tau_a^{(c)}}\!(-ib)}{1+\phnstc\!(\infty)\Fo{\Untc_a}(-ib)}}=\arg\lbrb{1-\Fo{\Xi_a^{(c)}}\!(-ib)}.\]
	Combining this, \eqref{eq:arg1} and \eqref{eq:argphic} we conclude \eqref{eq:argphicSim} for any  $a\geq a_c$ and as $b\to\infty$. The final claims are also immediate from the discussion above. We just note from \eqref{eq:L1chi} that $||\Xi_a^{(c)}||_{TV}\leq 2||G^{(c)}_a||_{TV}$ and Lemma \ref{lem:Urau} shows that $\limi{a}||\Xi_a^{(c)}||_{TV}=0.$
\end{proof}
In the next result we discuss the properties of the measure $\Untc_a*\tau_a^{(c)}$ used in the proof above.
\begin{lemma}\label{lem:Urau}
	Fix $a>0$. The measure $G^{(c)}_a(dy)=\Untc_a*\tau_a^{(c)}(dy)$ has the following bounded density  on $\intervalOI$ 
	\[g^{(c)}_{a}\!(y)=e^{-ay}\int_{0}^{y}\tau^{(c)}\!(y-v)\Untc\!(dv)=e^{-ay}g^{(c)}\!(y)\]
	and $g^{(c)}_{a}\in\Lspaces{1}{\Rp}$ with $\limi{a}||G^{(c)}_a||_{TV}=\limi{a}||g^{(c)}_{a}||_1=0$.
\end{lemma}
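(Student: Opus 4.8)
The plan rests on three ingredients: the decomposition \eqref{def:tau} of $\tau^{\triangleleft c}$ together with the bounds recorded in Lemma~\ref{lem:aux}; the observation that $\Untc$ is a \emph{finite} measure of total mass $1/\phntc(0)$, which holds since $\phntc(0)=\phn(0)+\ptt>0$ in the present setting, see \eqref{eq:phic} and the definition of $\ptt$; and the classical renewal inequality for (possibly killed) subordinators: if $\phi\in\Bc$ has L\'evy measure $\lambda$ with tail $\overline{\lambda}$ and potential measure $U_\phi$, then $\int_{[0,y]}\overline{\lambda}(y-w)U_\phi(dw)\leq1$ for every $y>0$, see e.g.\ \cite[Chapter~III]{Bertoin-96} (equivalently, this follows by Laplace inversion from $\int_0^\infty e^{-zy}\lbrb{\overline{\lambda}*U_\phi}(y)dy=\lbrb{\phi(z)-\phi(0)-\dr z}/\lbrb{z\phi(z)}$ and $\int_0^\infty e^{-zy}U_\phi(dy)=1/\phi(z)$, the complementary term being a Laplace transform of a non-negative quantity). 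By \eqref{eq:phic} the L\'evy measure of $\phntc$ has density $\PPn^{\triangleleft c}/\dep$, hence tail $\PPPn^{\triangleleft c}/\dep$ with $\PPPn^{\triangleleft c}$ as in \eqref{eq:barbarP}; applied to $\phi=\phntc$ the inequality thus reads $\lbrb{\PPPn^{\triangleleft c}*\Untc}(y)\leq\dep$ for all $y>0$.

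I would first record the density formula. One checks $\tau^{\triangleleft c}\in\Lspace{1}{\Rp}$: indeed $\tau^{\triangleleft c}_1,\tau^{\triangleleft c}_2$ are absolutely integrable on $\lbrb{0,c}$ by Lemma~\ref{lem:aux}, while $\int_c^\infty\uun{y}dy=\mubarn{c}<\infty$ because $\mubrn$, being the L\'evy measure of $\phn\in\Bc$, is finite away from the origin. As $\Untc_a$ is finite, $G^{\triangleleft c}_a=\Untc_a*\tau_a^{\triangleleft c}$ is an absolutely continuous finite measure whose density at $y>0$ equals $\int_{[0,y]}\tau_a^{\triangleleft c}(y-v)\Untc_a(dv)$; inserting $\tau_a^{\triangleleft c}(w)=e^{-aw}\tau^{\triangleleft c}(w)$ and $\Untc_a(dv)=e^{-av}\Untc(dv)$ yields precisely $g^{\triangleleft c}_a(y)=e^{-ay}\int_{[0,y]}\tau^{\triangleleft c}(y-v)\Untc(dv)=e^{-ay}g^{\triangleleft c}(y)$.

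For the boundedness I would split $\tau^{\triangleleft c}=\tau^{\triangleleft c}_1+h$ with $h(y)=\tau^{\triangleleft c}_2(y)+\uun{y}\ind{y\geq c}$, so that $g^{\triangleleft c}=\tau^{\triangleleft c}_1*\Untc+h*\Untc$. The function $h$ is bounded on $\Rp$: $\tau^{\triangleleft c}_2$ is bounded on $\lbrb{0,c}$ by Lemma~\ref{lem:aux}, and for $y\geq c$ the \textit{\'{e}quation amicale invers\'{e}e} \eqref{eq:mu_-1} together with $||u_{\pls}||_\infty<\infty$ (valid since $\php\in\BP$, see the discussion preceding Proposition~\ref{prop:condClass2}) gives $\uun{y}\leq||u_{\pls}||_\infty\PPn(y)\leq||u_{\pls}||_\infty\PPn(c)$; hence $\abs{\lbrb{h*\Untc}(y)}\leq||h||_\infty\Untc([0,y])\leq||h||_\infty/\phntc(0)$. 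For the other term I would use $\abs{\tau^{\triangleleft c}_1(y)}\leq C_1\PPPn^{\triangleleft c}(y)$, see \eqref{eq:boundTau}--\eqref{eq:barbarP}, which gives $\abs{\lbrb{\tau^{\triangleleft c}_1*\Untc}(y)}\leq C_1\lbrb{\PPPn^{\triangleleft c}*\Untc}(y)\leq C_1\dep$ by the renewal inequality above. Adding the two estimates shows that $g^{\triangleleft c}$, and a fortiori $g^{\triangleleft c}_a(y)=e^{-ay}g^{\triangleleft c}(y)$, is bounded on $\Rp$.

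Finally, for $g^{\triangleleft c}_a\in\Lspace{1}{\Rp}$ and $\limi{a}||G^{\triangleleft c}_a||_{TV}=0$ I would invoke Young's inequality for the convolution of an $\Lspace{1}$ function with a finite measure, together with \eqref{eq:LTU}: $||g^{\triangleleft c}_a||_1=||\tau_a^{\triangleleft c}*\Untc_a||_1\leq||\tau_a^{\triangleleft c}||_1\,||\Untc_a||_{TV}=\lbrb{\int_0^\infty e^{-ay}\abs{\tau^{\triangleleft c}(y)}dy}/\phntc(a)$. Since $\phntc$ is non-decreasing on $\Rp$ one has $\phntc(a)\geq\phntc(0)>0$, whereas $\int_0^\infty e^{-ay}\abs{\tau^{\triangleleft c}(y)}dy\to0$ as $a\to\infty$ by dominated convergence, $\tau^{\triangleleft c}$ being in $\Lspace{1}{\Rp}$; hence $||G^{\triangleleft c}_a||_{TV}=||g^{\triangleleft c}_a||_1\to0$, finishing the proof. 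The one step beyond bookkeeping with the estimates of Lemma~\ref{lem:aux} is the renewal inequality $\PPPn^{\triangleleft c}*\Untc\leq\dep$, which I expect to be the main (indeed essentially the only) genuine point.
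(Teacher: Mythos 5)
Your proposal is correct and reaches the same bounds, but it follows a genuinely cleaner route at each of the three steps. For the ``nice'' part (your $h*\Untc$), the paper bounds $e^{-ax}\Untc([0,x])$ via subadditivity of the renewal function $\Untc$ and the elementary estimate $\sup_{x>0}xe^{-ax}<\infty$; you instead observe that $\Untc$ is a \emph{finite} measure of total mass $1/\phntc(0)$ (because $\phntc(0)=\phn(0)+\ptt>0$), which dispenses with the exponential weight entirely and makes the bound manifestly uniform in $a$. For the key inequality $\PPPn^{\triangleleft c}*\Untc\leq\dep$, the paper quotes the probabilistic first-passage identity from \cite[Chapter~III, Proposition~2]{Bertoin-96} whereas you give a self-contained Laplace-transform argument; the conclusion is the same (as your argument delivers the inequality only Lebesgue-a.e., one should note that this suffices since $g^{\triangleleft c}_a$ is a density, a point your write-up could make explicit). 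Finally, for $g^{\triangleleft c}_a\in\Lspace{1}{\Rp}$ and the limit $\limi{a}\|G^{\triangleleft c}_a\|_{TV}=0$ the paper integrates the pointwise bounds in $x$ and appeals to $a\mapsto 1/(a\phntc(a))\to 0$; your one-line Young inequality $\|g^{\triangleleft c}_a\|_1\leq\|\tau^{\triangleleft c}_a\|_1/\phntc(a)$ followed by dominated convergence is tighter and shorter. The structure (decompose $\tau^{\triangleleft c}$, treat $\tau^{\triangleleft c}_1$ via the renewal inequality, treat the bounded remainder via the potential measure) is the same, so this is a streamlining rather than a conceptually new proof, but each local argument you chose is genuinely different from and simpler than the paper's.
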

\begin{proof}
	The existence and the form of $g^{(c)}_{a}$ is immediate from the definition of convolution and the fact that $\tau_a^{(c)}\!(y)dy=e^{-ay}\tau^{(c)}\!(y)dy,\,y>0$. Recall from \eqref{def:tau} that
	\[\tau^{(c)}(y)dy=\lbrb{\tau^{(c)}_1(y)+\tau^{(c)}_2(y)+\ind{y>c}\uunspace{y}}dy.\]
	Then, $A_1=||\tau^{(c)}_2||_\infty+||\upsilon_{\mis}\ind{y>c}||_\infty<\infty$ follows from Lemma \ref{lem:aux} and \eqref{eq:mu_-1} since then 
	\[\sup_{y>c}\upsilon_{\mis}(y)=\sup_{y>c}\lbrb{\IntOI u_{\pls}(v)\Pnn(y+dv)}\leq ||u_{\pls}||_{\infty}\PPn(c)<\infty.\] Therefore,
	we have with some constant $A_3>0$
	\begin{equation}\label{eq:convoEst1}
	\begin{split}
	\sup_{x>0}e^{-ax}\int_{0}^{x}\abs{\tau^{(c)}_2\!(x-y)+\uunspace{x-y}\ind{x-y>c}}\Untc\!(dy)&\leq \sup_{x>0}A_1e^{-ax}\Untc\!(x)\\
	&\leq A_2\sup_{x>0}xe^{-ax}\leq A_3,
	\end{split}	
	\end{equation}
	where we have used the fact that $\Untc:\Rb^+\mapsto\Rb^+$ is subadditive, see \cite[p.74]{Bertoin-96}. Also note that since $\phntc\!(0)=\phn(0)+\ptt> 0$, see \eqref{eq:phic}, then
	\[\Untc\lbrb{\Rp}=\frac{1}{\phntc\!\!\lbrb{0}}<\infty.\]
	 It remains to study the portion coming from $\tau^{(c)}_1$, which according to \eqref{eq:tau1} in Lemma \ref{lem:aux} is supported on $\lbrb{0,\frac{c}{2}}$ and is bounded by the expression in \eqref{eq:boundTau}. Thus, recalling that $\PPn^{(c)}(y)=\lbrb{\PPn(y)-\PPn(c)}\ind{y<c}$, we get that
	\begin{equation}\label{eq:convoEst2}
	\begin{split} 	
	&\sup_{x>0}e^{-ax}\int_{0}^{x}\abs{\tau^{(c)}_1\!(x-y)}\Untc\!(dy)=\sup_{x>0}e^{-ax}\int_{\max\curly{0,\,x-\frac{c}{2}}}^{x}\abs{\tau^{(c)}_1\!(x-y)}\Untc\!(dy)\\
	&\,\,\leq C_1\sup_{x>0}e^{-ax}\int_{\max\curly{0,x-\frac{c}{2}}}^{x}\lbrb{\int_{0}^{\frac{c}{2}}\PPn^{(c)}\!\lbrb{\rho+x-y}-\PPn^{(c)}\!\lbrb{\frac{c}{2}+x-y}d\rho}\Untc\!(dy)\\
	&\,\,\leq C_1\sup_{x>0}e^{-ax}\int_{\max\curly{0,x-\frac{c}{2}}}^{x}\lbrb{\int_{0}^{\frac{c}{2}}\PPn^{(c)}\!\lbrb{\rho+x-y}d\rho}\Untc\!(dy)\\
	&\,\,\leq C_1\sup_{x>0}e^{-ax}\int_{0}^{x} \PPPn^{(c)}\!\lbrb{x-y}\Untc\!(dy)\\
	&\,\,\leq C_1\dep\sup_{x>0}e^{-ax}=C_1\dep,
	\end{split}
	\end{equation}
	where for the first inequality we have used the bound \eqref{eq:boundTau} and for the very last one we have employed that from \cite[Chapter III, Proposition 2]{Bertoin-96}
	\[\frac{1}{\dep}\int_{0}^{x} \PPPn^{(c)}\!\lbrb{x-y}\Untc\!(dy)=\Pbb{\textbf{e}_{\phntc\!(0)}>T^\sharp_{\lbrb{x,\infty}}}\leq 1,\]
	where $\textbf{e}_{\phntc\!(0)}$ is an exponential random variable with parameter $\phntc\!(0)=\phn(0)+\ptt> 0$, see \eqref{eq:phic},  $T^\sharp_{\lbrb{x,\infty}}$ is the first passage time above $x>0$ of the unkilled subordinator related to \[\lbrb{\phntc}^\sharp\!(z)=\phntc\!(z)-\phntc\!(0)\in\Bc\] and, from \eqref{eq:phic}, \[\frac{1}{\dep} \PPPn^{(c)}\lbrb{y}=\frac{1}{\dep}\int_{y}^{\infty}\PPn^{(c)}(v)dv\]
	is the tail of the \LL measure associated to $\lbrb{\phntc}^\sharp$.  Summing \eqref{eq:convoEst1} and \eqref{eq:convoEst2} yields that $||g^{(c)}_{a}||_\infty\leq A_3+C_1\dep<\infty$. Finally, $g^{(c)}_{a}\in \Lspaces{1}{\Rp}$ follows immediately from the estimates before the last estimates in \eqref{eq:convoEst1} and \eqref{eq:convoEst2}. Clearly, from them we also get  $\limi{a}||G^{(c)}_{a}||_{TV}=0$ which ends the proof.
\end{proof}

\begin{proof}[Proof of Proposition \ref{prop:condClass2}]
	When $\phn\in\BP$ it follows immediately from \eqref{eq:AphiAsymp2} of Proposition \ref{thm:imagineryStirling} that $\phn\in\Bthd$ and item \eqref{it:class2_A} is proved. Let us proceed  with item \eqref{it:class2_B}.
	First, from \eqref{eq:mu_-1}, note that, for any $y<1$,
	\begin{equation*}
	\lbrb{\inf_{0\leq v\leq 1}u_{\pls}(v)}\lbrb{\PPn(y)-\PPn(1)}\leq \uunspace{y}\leq ||u_{\pls}||_\infty\PPn(y).
	\end{equation*}
	Then $\inf_{0\leq v\leq 1}u_{\pls}(v)>0$ since $u_{\pls}(0)=\frac{1}{\dep}>0$, $u_{\pls}\in\Ctt\!\lbrb{{\lbbrb{0,\infty}}}$ and $u_{\pls}$ never touches $0$ whenever $\dep>0$, see Proposition \ref{propAsymp1} \eqref{it:bernstein_cmi}, and \eqref{eq:muinfPinf1} is established.
	Next $\php\in\BP$ and $\mubrp\!\lbrb{0}<\infty$ trigger the simultaneous validity of Lemma \ref{lem:aux}, Lemma \ref{lem:aux1} and Lemma \ref{lem:argphi} provided $\mubarnspace{0}=\infty$ or equivalently $\int_{0}^{1}\PPn(y)dy=\infty$, see \eqref{eq:muinfPinf1} above. Assume the latter and note that $\mubarnspace{0}=\infty$ is only needed for Lemma \ref{lem:argphi} so that $\phntc\!(\infty)=\infty$ is valid when $\dem=0$, as it is the case in item \eqref{it:class2_B} here. Then, we can always choose $c\in\Rp$ such that \eqref{eq:argphicSim} holds for any $a\geq a_c>0$, namely, for all $b$ large enough
	\[\arg \phn(a+ib)=\arg\lbrb{\phntc\!\!\lbrb{a+ib}}\lbrb{1+\sospace{1}}+\arg\lbrb{1-\Fo{\Xi_a^{(c)}}(-ib)}\]
	with $\Xi_a^{(c)}$ as in Lemma \ref{lem:argphi}.
	Thus, from the definition of $\Aph$, see \eqref{eq:Aphi}, we get that, as $b\to\infty$,
	\[\Aphn\!(a+ib)=A_{\phntc}\!(a+ib)\lbrb{1+\sospace{1}}+\int_{0}^{b}\arg\lbrb{1-\Fo{\Xi_a^{(c)}}(-iu)}du.\]
	However, since in \eqref{eq:Stirling} of Theorem \ref{thm:Stirling}, 
	\begin{itemize}
		\item $\Gph$ does not depend on $b$,
		\item the term containing $\Eph$ and $\Rph$ is uniformly bounded  for the whole class $\Bc$ on $\Cb_a$, see \eqref{eq:err},
		\item $\phntc\in\Bc$ and $\Aph\geq 0$ for any $\phi\in\Bc$, see \eqref{eq:A=Theta},
	\end{itemize} we conclude that, for every $a>a_c$ fixed, as $b\to\infty$,
	\begin{equation*}
	\begin{split}
	\abs{\Wpn\!(a+ib)}&\asymp \frac{1}{\sqrt{\abs{\phn\!\lbrb{a+ib}}}}e^{-\Aphn\!(a+ib)}\\
	&=\frac{1}{\sqrt{\abs{\phn\!\lbrb{a+ib}}}}e^{-A_{\phntc}\!(a+ib)\lbrb{1+\so{1}}-\int_{0}^{b}\arg\lbrb{1-\Fo{\Xi_a^{(c)}}(-iu)}du}.
	\end{split}	
	\end{equation*}
	Therefore, using \eqref{eq:Stirling} for $W_{\phntc}$ with the same remarks as above, we get that for every $a>a_c$ fixed and any $\eta\in\lbrb{0,\frac{1}{2}}$, as $b\to\infty$,
	\begin{equation}\label{eq:WsimWc}
	\begin{split}
	&\abs{\Wpn\!(a+ib)}\lesssim \frac{\abs{\phntc\!\!\lbrb{a+ib}}^{\frac{1}{2}-\eta}}{\sqrt{\abs{\phn\!\lbrb{a+ib}}}}e^{-\int_{0}^{b}\arg\lbrb{1-\Fo{\Xi_a^{(c)}}\!(-iu)}du}\abs{{W_{\phntc}}\!(a+ib)}^{1-2\eta}\\
	&\abs{\Wpn\!(a+ib)}\gtrsim\frac{\abs{\phntc\!\!\lbrb{a+ib}}^{\frac{1}{2}+\eta}}{\sqrt{\abs{\phn\!\!\lbrb{a+ib}}}}e^{-\int_{0}^{b}\arg\lbrb{1-\Fo{\Xi_a^{(c)}}(-iu)}du}\abs{{W_{\phntc}}(a+ib)}^{1+2\eta}.
	\end{split}
	\end{equation}
	\begin{comment}
	Furthermore, using that for any $\phi\in\Bc$, as $b\to\infty$ and $a>0$ fixed, $0<\phi(a)\geq \abs{\phi\lbrb{\ab}}\simi \delta b+\so{b}$, see Proposition \ref{propAsymp1}\eqref{it:asyphid} and \eqref{eq:rephi}, we deduct that
	\begin{equation}\label{eq:WsimWc3}
	\frac{\abs{\Wpn(a+ib)}}\asymp \frac{\sqrt{\abs{\phi_-^c\lbrb{a+ib}}}}{\sqrt{\abs{\phi_-\lbrb{a+ib}}}}\abs{{W_{\phi_-^c}}(a+ib)}e^{-\int_{0}^{b}arg\lbrb{1-\widehat{U_a^c*\tau_a^c}(-iu)}du}.
	\end{equation}
	\end{comment} 	
	However, the \LL measure of $\phntc$ is $\mu^{(c)}_{\mis}(dy)=\ind{y<c}\dr^{-1}_+\PPn^{(c)}(y)dy$, see \eqref{eq:phic}, and then since $\PPn^{(c)}(y)$ is non-increasing on $\intervalOI$ we deduce  via integration by parts of \eqref{eq:phic} that
	\[\Psi^{(c)}(z)=z\phi_{\mis}^{(c)}\!(z)=\lbrb{\phn(0)+\ptt}z+\frac{1}{\dep}\int_{-c}^{0}\left(e^{zr} -1
	-zr \right)\Pi^{(c)}\!(-dr)\in\overNc.\]
	Since from Lemma \ref{lem:aux1} we have that $\phn(0)+\ptt>0$ we conclude that $\Psi^{(c)}\in\Nc$. Then, however, \cite[Theorem 5.1(5.3)]{Patie-Savov-16} shows that \[\phi_{\mis}^{(c)}\in\PcBinf \iff \PPn^{(c)}\!\lbrb{0}=\infty\] (the latter being equivalent to $\Ntt_{\rk}=\infty$ in the notation of \cite{Patie-Savov-16} and $W_{\phntc}=\Mcc_{V_\psi}$ therein). Moreover,  since $\PPn^{(c)}(y)=\lbrb{\PPn(y)-\PPn(c)}\ind{y\leq c}$, see Lemma \ref{lem:aux}, we obtain that \[\phntc\in\PcBinf \iff \PPn\!\lbrb{0}=\infty.\]  It remains henceforth to understand the terms to the right-hand  side of \eqref{eq:WsimWc} and show that they cannot disrupt the faster than any polynomial decay brought in by $\abs{{W_{\phi_{\mis}^{(c)}}}(a+ib)}^{1\pm2\eta}$. With the notation and the claim of Lemma \ref{lem:argphi} we have that \[\limi{a}||\Xi_a^{(c)}||_{TV}=0\] and thus there exists $a_0>a_c$ such that for all $a>a_0$, $||\Xi_a^{(c)}||_{TV}<1$. Therefore, from \eqref{eq:logFT1} of Proposition \ref{prop:harmConvo} we get that for all $u\in\Rb,$ \[\log_0\lbrb{1-\Fo{\Xi^{(c)}_a}\lbrb{-iu}}=-\Fo{\Logp{\Xi^{(c)}_a}}(-iu).\] Moreover, $||\Xi^{(c)}_a||_{TV}<1$ implies that the first expression in \eqref{eq:convo} holds. Henceforth,
	\begin{equation}\label{eq:argExpansion}
	\begin{split}
	\arg\lbrb{1-\Fo{\Xi^{(c)}_a}\!\!\lbrb{-iu}}&=\Im\!\lbrb{\log_0\!\lbrb{1-\Fo{\Xi^{(c)}_a}\!\!\lbrb{-iu}}}\\
	&=-\Im\!\lbrb{\Fo{\Xi^{(c)}_a}\!\!\lbrb{-iu}}-\sum_{n=2}^{\infty}\frac{\Im\!\lbrb{\Fo{\Xi^{(c)}_a}^n\!\!\lbrb{-iu}}}{n}\\
	&=-\Im\!\lbrb{\Fo{\Xi^{(c)}_a}\!\!\lbrb{-iu}}-\sum_{n=2}^{\infty}\frac{\Im\!\lbrb{\Fo{\lbrb{\Xi^{(c)}_a}^{\!*n}}\!\!\lbrb{-iu}}}{n}.
	\end{split}
	\end{equation}
	We work with the term in the exponent of \eqref{eq:WsimWc}, that is $\int_{0}^b \arg\lbrb{1-\Fo{\Xi^{(c)}_a}(-iu)}du$ and $b>0$. We start with the infinite sum in \eqref{eq:argExpansion} clarifying that, for each $n\geq 1$,
	\begin{equation}\label{eq:repr}
	\Im\!\lbrb{\Fo{\lbrb{\Xi^{(c)}_a}^{\!*n}}\!\lbrb{-iu}}=-\IntOI \sin\lbrb{uy}\lbrb{\chi^{(c)}_{a}}^{\!*n}\!\!(y)dy,
	\end{equation}
	where from Lemma \ref{lem:argphi}  $\chi^{(c)}_a(y)dy=e^{-ay}\chi^{(c)}(y)dy$ is the density of $\Xi^{(c)}_a(dy)$ and for any $a\geq a_c$, $||\chi^{(c)}_{a}||_{\infty}<\infty$. By definition $a>a_0>a_c$. Next, note that, for each $b>0$, from \eqref{eq:convoEst} in Proposition \ref{prop:convoEst}, we have that
	\begin{equation}\label{eq:absInteg}
	\begin{split}
	\int_{0}^{b}\IntOI\abs{\sin(uy)\sum_{n=2}^{\infty}\frac{\lbrb{\chi_a^{(c)}}^{*n}\!\!(y)}n}dydu&\leq b\IntOI \sum_{n=2}^{\infty}||\chi^{(c)}_{a_0}||_{\infty}^n\frac{y^{n-1}e^{-\lbrb{a-a_0}y}}{n!}dy\\
	&=b\sum_{n=2}^{\infty}\frac{||\chi^{(c)}_{a_0}||_{\infty}^n}{n\lbrb{a-a_0}^n}<\infty\iff a-a_0>||\chi^{(c)}_{a_0}||_{\infty}.
	\end{split}	
	\end{equation}
	So since \eqref{eq:WsimWc} is valid for any $a>a_0>a_c$ we, from now on, fix $a>2||\chi^{(c)}_{a_0}||_{\infty}+a_0$. Then \eqref{eq:absInteg} allows via Fubini's theorem and integration by parts in \eqref{eq:secTerm} below to conclude, using \eqref{eq:argExpansion}, \eqref{eq:repr} and again \eqref{eq:convoEst}  that, for any $b>0$,
	\begin{equation}\label{eq:secTerm}
	\begin{split}
	&\abs{\int_{0}^{b}\lbrb{\arg\lbrb{1-\Fo{\Xi_a^{(c)}}\!\!\lbrb{-iu}}+\Im\!\lbrb{\Fo{\Xi_a^{(c)}}\!\!\lbrb{-iu}}}du}=\abs{\int_{0}^{b}\sum_{n=2}^{\infty}\frac{\Im\!\lbrb{\Fo{\lbrb{\Xi_a^{(c)}}^{\!*n}}\!\lbrb{-iu}}}{n}du}\\
	=&\abs{\int_{0}^{b}\sum_{n=2}^{\infty}\frac{\IntOI \sin\lbrb{uy}\lbrb{\chi_a^{(c)}}^{*n}\!\!\!(y)dy}{n}du}=\abs{\IntOI \frac{1-\cos(by)}{y}\sum_{n=2}^{\infty}\frac{\lbrb{\chi_a^{(c)}}^{\!*n}\!\!\!(y)}ndy}\\
	\leq&\,2\IntOI \sum_{n=2}^{\infty}||\chi^{(c)}_{a_0}||_{\infty}^n\frac{y^{n-2}e^{-\lbrb{a-a_0}y}}{n!}dy=2\sum_{n=2}^{\infty}\frac{||\chi^{(c)}_{a_0}||_{\infty}^n}{n(n-1)\lbrb{a-a_0}^{n-1}}<\infty.
	\end{split}
	\end{equation}
	Since the right-hand  side of \eqref{eq:secTerm} is independent of $b>0$ we deduct that for $a>2||\chi^{(c)}_{a_0}||_{\infty}+a_0$, \eqref{eq:WsimWc} is simplified to
	\begin{equation}\label{eq:WsimWc1}
	\begin{split}
	&\abs{\Wpn\!(a+ib)}\lesssim  \frac{\abs{\phntc\!\!\lbrb{a+ib}}^{\frac{1}{2}-\eta}}{\sqrt{\abs{\phn\!\!\lbrb{a+ib}}}}e^{\int_{0}^{b}\Im\lbrb{\Fo{\Xi^{(c)}_a}\!(-iu)}du}\abs{{W_{\phntc}}\!(a+ib)}^{1-2\eta}\\
	&\abs{\Wpn\!(a+ib)}\gtrsim\frac{\abs{\phntc\!\!\lbrb{a+ib}}^{\frac{1}{2}+\eta}}{\sqrt{\abs{\phn\!\!\lbrb{a+ib}}}}e^{\int_{0}^{b}\Im\lbrb{\Fo{\Xi^{(c)}_a}\!(-iu)}du}\abs{{W_{\phntc}}\!(a+ib)}^{1+2\eta}.
	\end{split}
	\end{equation}
	Next, from \eqref{eq:repr}
	\begin{align}\label{eq:est}
	\abs{\int_{0}^{b}\Im\!\lbrb{\Fo{\Xi^{(c)}_a}\!(-iu)}du}&=\abs{\IntOI \frac{1-\cos(by)}{y}\chi^{(c)}_a(y)dy}\\
	\nonumber	&\leq  \abs{\int_{0}^{1} \frac{1-\cos(by)}{y}\chi^{(c)}_a(y)dy}
	+\abs{\int_{1}^{\infty} \frac{1-\cos(by)}{y}\chi^{(c)}_a(y)dy}.
	\end{align}
	From the Riemann-Lebesgue lemma applied to the absolutely integrable function $\chi^{(c)}_a(y)y^{-1}\ind{y>1}$ we get that
	\[\limi{b}\abs{\int_{1}^{\infty} \frac{1-\cos(by)}{y}\chi^{(c)}_a(y)dy}=\abs{\int_{1}^{\infty} \frac{\chi^{(c)}_a(y)}{y}dy}=:\,D_a.\]
	Therefore, using the fact that $||\chi^{(c)}_a||_\infty<\infty,$ see Lemma \ref{lem:argphi}, we conclude, for all $b>1$ big enough, that
	\begin{align}\label{eq:finalEst}
	\abs{\int_{0}^{b}\Im\!\lbrb{\Fo{\Xi^{(c)}_a}\!(-iu)}du}&\leq \int_{0}^{b} \frac{1-\cos y }{y}\abs{\chi^{(c)}_a\!\!\lbrb{\frac{y}{b}}}dy+2D_a\\
	\nonumber	&\leq ||\chi^{(c)}_a||_\infty\lbrb{\int_{0}^{1} \frac{1-\cos y }{y}dy+\int_{1}^{b} \frac{1-\cos y }{y}dy}+2D_a\\
	\nonumber&\leq ||\chi^{(c)}_a||_\infty\int_{0}^{1} \frac{1-\cos y }{y}dy+||\chi^{(c)}_a||_\infty\ln b +\tilde{D}_a,
	\end{align}
	where \[\tilde{D}_a=2D_a+||\chi^{(c)}_a||_\infty\sup_{b>1}\abs{\int_{1}^b\frac{\cos y }{y}dy}<\infty.\]
	This allows us to conclude in \eqref{eq:WsimWc1}, as $b\to\infty$,
	\begin{equation}\label{eq:WsimWc2}
	\begin{split}
	&\abs{\Wpn\!(a+ib)}\lesssim b^{||\chi^{(c)}_a||_\infty+\frac{1}{2}-\eta}\abs{{W_{\phntc}}\!(a+ib)}^{1-2\eta}\\
	&\abs{\Wpn\!(a+ib)}\gtrsim b^{-||\chi^{(c)}_a||_\infty-\frac{1}{2}}\,\,\abs{{W_{\phntc}}\!(a+ib)}^{1+2\eta},
	\end{split}
	\end{equation}
	where we have also used the standard relation $\abs{\phi(a+ib)}\simi\dr b+\sospace{b}$, see Proposition \ref{propAsymp1}\eqref{it:asyphid}, and for any $a>0$ and any $\phi\in\Bc$, $\abs{\phi(\ab)}\geq\Re\!\lbrb{\phi(\ab)}\geq\phi(a)>0$, see \eqref{eq:rephi}, to estimate
	\begin{equation*}
	\begin{split}
	\frac{\abs{\phntc\!\!\lbrb{a+ib}}^{\frac{1}{2}-\eta}}{\sqrt{\abs{\phn\!\!\lbrb{a+ib}}}}&\lesssim b^{\frac{1}{2}-\eta}\quad \text{ and }\quad
	\frac{\abs{\phntc\!\!\lbrb{a+ib}}^{\frac{1}{2}+\eta}}{\sqrt{\abs{\phn\!\!\lbrb{a+ib}}}}\gtrsim b^{-\frac{1}{2}}.
	\end{split}
	\end{equation*}
	
	Hence, as mentioned below \eqref{eq:WsimWc} from \cite[Theorem 5.1 (5.3)]{Patie-Savov-16} we have that \[\phntc\in\PcBinf \iff \PPn^{(c)}\!\!\lbrb{0}=\infty\] and since $\PPn^{(c)}\!(y)=\lbrb{\PPn(y)-\PPn(c)}\ind{y\leq c}$, see Lemma \ref{lem:aux}, we conclude that
	\[\phntc\in\PcBinf\iff \PPn(0)=\infty.\] This together with \eqref{eq:WsimWc2} and Lemma \ref{lem:Lindelof} shows that \[\int_{0}^{1}\PPn(y)dy=\infty\implies\PPn(0)=\infty\implies\phn\in\PcBinf.\] Let next $\int_{0}^{1}\PPn(y)dy<\infty$, but $\PPn(0)=\infty$. Unfortunately, we cannot easily use similar comparison as above despite that $\phntc\in\PcBinf \iff \PPn^{(c)}\!\!\lbrb{0}=\infty$. In fact Lemma \ref{lem:argphi} fails to give a good and quick approximation of $\arg \phn$ with $\arg\phntc$. We choose a different route. From the last claim of Proposition \ref{prop:VigonDens} since $\PPn(0)=\infty$ we get that
	\begin{equation}\label{eq:liminfU}
	\uunspace{0^+}=\limo{y}\uunspace{y}=\infty.
	\end{equation}
	Since  we aim to show that $\phn\in\Bi$ from Lemma \ref{lem:Lindelof}  we can work again with a single $a$ which we will choose later. From $\abs{\Wpn\!\!\lbrb{\ab}}=\abs{\Wpn\!\!\lbrb{a-ib}}$ we can focus on $b>0$ only as well.
	From the alternative expression for $\Aphn$, see \eqref{eq:Tphi} in the claim of Theorem \ref{thm:genFuncs}\eqref{it:Aphi}, we get that
	\begin{equation}\label{eq:Theta}
	\begin{split}
	\Aphn\!\!\lbrb{\ab}&=\int_{a}^{\infty}\ln\lbrb{\abs{\frac{\phn(u+ib)}{\phn(u)}}}du\\
	&\geq\int_{a}^{\infty}\ln\lbrb{\abs{1+\frac{\Re\!\lbrb{\phn(u+ib)-\phn(u)}}{\phn(u)}}}du.
	\end{split}
	\end{equation}
	Next, we note that since $\dem=0$,
	\[\frac{\Re\!\lbrb{\phn(u+ib)-\phn(u)}}{\phn(u)}=\frac{\IntOI\lbrb{1-\cos(by)}e^{-uy}\uunspace{y}dy}{\phn(u)}\geq 0.\]
	Moreover, since $\mubarnspace{0}=\IntOI \uunspace{y}dy<\infty$,
	\[\limi{u}\sup_{b\in \R}\IntOI\lbrb{1-\cos(by)}e^{-uy}\uunspace{y}dy\leq \limi{u}2\IntOI e^{-uy}\uunspace{y}dy=0\]
	and $\limi{u}\phn(u)=\phn(\infty)<\infty$, see \eqref{eq:phi}. We choose $\ak>0$ large enough so that, for all $u\geq \ak$, \[\phn(u)>\frac{\phn(\infty)}2\quad \text{ and }\quad \sup_{b\in \R}\IntOI\lbrb{1-\cos(by)}e^{-uy}\uunspace{y}dy\leq \frac{\phn(\infty)}{4}\]
	so that
	\[\frac{\Re\!\lbrb{\phn(u+ib)-\phn(u)}}{\phn(u)}\leq\frac{1}{2}.\]
	Therefore from \eqref{eq:Theta} and $\ln\lbrb{1+x}\geq Cx$, for all $x<\frac{1}{2}$, with some $C>0$, we deduce that for any $\varepsilon>0$ and $b>\frac{1}{\varepsilon}$
	\begin{equation*}
	\begin{split}
	\Aphn\!\!\lbrb{\ak+ib}&\geq C\int_{\ak}^{\infty}\frac{\IntOI\lbrb{1-\cos(by)}e^{-uy}\uunspace{y}dy}{\phn(u)}du\\
	&\geq \frac{C}{\phn\lbrb{\infty}}\int_{\ak}^{\infty}\IntOI \lbrb{1-\cos(by)}e^{-uy}\uunspace{y}dy du\\
	&=\frac{C}{\phn\lbrb{\infty}}\IntOI \lbrb{1-\cos(by)}e^{-\ak y}\uunspace{y}\frac{dy}y\\
	&\geq \frac{C}{\phn\lbrb{\infty}}\int_{\frac{1}{b}}^{\varepsilon}\lbrb{1-\cos (by) }e^{-\ak y}\uunspace{y}\frac{dy}{y}\\
	&\geq \frac{C}{\phn\lbrb{\infty}}e^{-\ak \varepsilon}\int_{1}^{b\varepsilon}\lbrb{1-\cos y }\uunspace{\frac{y}{b}}\frac{dy}{y}\\
	&\geq \frac{C}{\phn\lbrb{\infty}}e^{-\ak \varepsilon}\lbrb{\inf_{v\in\lbrb{0,\varepsilon}}\uunspace{v}}\int_{1}^{b\varepsilon}\lbrb{1-\cos y }\frac{dy}{y}.
	\end{split}
	\end{equation*}
	However, since $\int_{1}^{\infty}\frac{\cos y }{y}dy<\infty$ we conclude that for any $\varepsilon>0$, as $b\to\infty$,
	\begin{equation}\label{eq:lower}
	\limi{b}\frac{\Aphn\!\!\lbrb{\ak+ib}}{\ln b }\geq \frac{C}{4\phn\!\!\lbrb{\infty}}e^{-\ak \varepsilon}\inf_{v\in\lbrb{0,\varepsilon}}\uunspace{v}.
	\end{equation}
	Now \eqref{eq:liminfU}, that is $\limo{\varepsilon}\inf_{v\in\lbrb{0,\varepsilon}}\uunspace{v}=\infty$, and \eqref{eq:lower} prove the claim $\phn\in\Bi$, since for the fixed $\ak$ and as $b\to\infty$
	\[\abs{\Wpn\!(\ak+ib)}\asymp \frac{1}{\sqrt{\abs{\phn(\ak+ib)}}}e^{-\Aphn\!\lbrb{\ak+ib}}\lesssim\frac{1}{\sqrt{\abs{\phn(\ak)}}} e^{-\frac{C}{4\phn\lbrb{\infty}}e^{-\ak \varepsilon}\lbrb{\inf_{v\in\lbrb{0,\varepsilon}}\uun{v}}\ln b},\]
	see \eqref{eq:Stirling} and $\abs{\phn(\ak+ib)}\geq \phn(\ak)>0$, see \eqref{eq:rephi}.
	We conclude item \eqref{it:class2_B}. We proceed with the proof of item \eqref{it:class2_C}. Assume then that $\PPn(0)<\infty$. In this case we study directly $\Aphn$. Since \eqref{eq:mu_-1} holds in any situation when $\php\in\BP$ then $\mu_{\mis}(dy)=\uunspace{y}dy,\, y\in\intervalOI$, and
	\[||\upsilon_{\mis}||_\infty=\sup_{y\geq 0}\uunspace{y}\leq ||u_{\pls}||_\infty\sup_{y>0}\PPn(y)= ||u_{\pls}||_\infty\PPn(0)=A<\infty.\]
	Note that $\phn(\infty)=\phn(0)+\mubarnspace{0}$ and put $\upsilon^\star_{a}(y)=\frac{e^{-ay}}{\phn(\infty)}\uunspace{y},\,y>0$. Then, clearly from the first expression in \eqref{eq:phi}, for $z=a+ib\in\CbOI,\,a>0$,
	\begin{equation}\label{eq:phicfin}
	\phn(z)=\phn(0)+\mubrn(0)-\IntOI e^{-iby}e^{-ay}\upsilon_{\mis}(y)dy=\phn(\infty)\lbrb{1-\Fo{\upsilon^\star_{a}}(-ib)}.
	\end{equation}
	From $||\upsilon_{\mis}||_\infty\leq A$ then for all $a$ big enough we have that $||\upsilon^\star_{a}||_{1}<1$. Fix such $a$. Then for all $b\in\Rb$ we deduce from \eqref{eq:logFT1} of Proposition \ref{prop:harmConvo} and \eqref{eq:formalF} that
	\begin{align}\label{eq:arg3}
	\nonumber\arg\phn\!\lbrb{a+ib}&=\,\,\,\,\Im\!\lbrb{\log_0\!\lbrb{1-\Fo{\upsilon^\star_{a}}(-ib)}}\\
	&=-\Im\!\lbrb{\Fo{\upsilon^\star_{a}}(-ib)}-\sum_{n=2}^{\infty}\frac{\Im\!\lbrb{\Fo{\upsilon^\star_{a}}(-ib)}^n}n=-\Im\!\lbrb{\Fo{\upsilon^\star_{a}}(-ib)}+g_a(b).
	\end{align}
	Since $||\upsilon^\star||_\infty\leq \frac{A}{\phn(\infty)}<\infty$ with $\upsilon^\star=\upsilon^\star_{0}$ we can show, repeating without modification \eqref{eq:absInteg} and \eqref{eq:secTerm} above and in the process estimating the convolutions $\lbrb{\upsilon^\star_{a}}^{*n}\!,\,n\geq 2$, using \eqref{eq:convoEst} in  Proposition \ref{prop:convoEst} with $a'=0$, that \[\sup_{b>0}\abs{\int_{0}^{b}g_a(v)dv}<\infty.\]  Thus the latter does not contribute more than a constant to $\Aphn$ at least for this fixed $a$ big enough. Without loss of generality work with $b>0$. Then, from the definition of $\Aphn$, see \eqref{eq:Aphi}, \eqref{eq:arg3} and the preceding discussion, we get that
	\begin{align*}  	\Aphn\!\lbrb{a+ib}&=\int_{0}^{b}\arg\phn\!\lbrb{a+iu}du=\int_{0}^{b}g_a(u)du-\int_{0}^{b}\Im\!\lbrb{\Fo{\upsilon^\star_{a}}(-iu)}du\\
	&=\int_{0}^{b}g_a(u)du+\IntOI \frac{1-\cos(by)}{y}\upsilon^\star_{a}(y)dy,
	\end{align*}
	where $\sup_{b>0}\abs{\int_{0}^{b}g_a(u)du}<\infty$.
	Estimating precisely as in \eqref{eq:est} and \eqref{eq:finalEst}, since $y\mapsto \upsilon^\star_{a}(y)y^{-1}\ind{y>1}\in\Lspaces{1}{\Rp}$ and $||\upsilon^{\star}||_\infty\leq \frac{A}{\phn(\infty)}<\infty$, one gets that for some positive constant $C'_a$
	\begin{equation}\label{eq:Aphn1}
	\abs{\Aphn\!\!\lbrb{a+ib}-\int_{1}^{b}\frac{1-\cos y }{y}\upsilon^{\star}_{a}\!\lbrb{\frac{y}{b}}dy}\leq C'_a.
	\end{equation}
	We investigate the contribution of the integral as $b\to\infty$.  Next, note that
	\[\upsilon^{\star}_{a}\!\lbrb{0}=\limo{y}\frac{\IntOI u_{\pls}(v)\Pnn(y+dv)}{\phn(\infty)}=\frac{\IntOI u_{\pls}(v)\Pnn(dv)}{\phn(0)+\mubarn{0}}<\infty,\]
	which follows Proposition \ref{prop:VigonDens} because $\PPn(0)<\infty$. Thus, $\upsilon^{\star}_{a}$ is right-continuous at zero. Set $\overline{\upsilon}_{a}(y)=\upsilon^{\star}_{a}(y)-\upsilon^{\star}_{a}\lbrb{0}$, for $y\in\intervalOI$. Then, clearly,
	\begin{equation}\label{eq:newM}
	\begin{split}
	\int_{1}^{b}\frac{1-\cos y }{y}\upsilon^{\star}_{a}\!\lbrb{\frac{y}{b}}dy&=\int_{1}^{b}\frac{1-\cos y }{y}\overline{\upsilon}_{a}\!\lbrb{\frac{y}{b}}dy+\upsilon^{\star}_{a}\!\lbrb{0}\ln(b)-\upsilon^{\star}_{a}\!\lbrb{0}\int_{1}^{b}\frac{\cos(y)}{y}.
	\end{split}
	\end{equation}
	Fix $\rho\in\lbrb{0,1}$. Then,
	\begin{equation}\label{eq:bound1}
	\sup_{b>1}\int_{b\rho}^{b}\frac{1-\cos y }{y}\overline{\upsilon}_{a}\!\lbrb{\frac{y}{b}}dy\leq 4||\upsilon^{\star}_{a}||_\infty\abs{\ln\rho}.
	\end{equation}
	However, since $\upsilon^{\star}_{a}$ is right-continuous at zero we are able to immediately conclude that
	\[\limo{\rho}\sup_{y\leq b\rho}\abs{\overline{\upsilon}_{a}\!\lbrb{\frac{y}{b}}}=\limo{\rho}\overline{\overline{\upsilon}}\lbrb{\rho}=0,\]
	where $\overline{\overline{\upsilon}}\lbrb{\rho}=\sup_{v\leq\rho}\abs{\overline{\upsilon}_{a}\!\lbrb{v}}$.
	Therefore
	\begin{equation}\label{eq:bound3}
	\abs{\int_{1}^{b\rho}\frac{1-\cos y }{y}\overline{\upsilon}_{a}\!\lbrb{\frac{y}{b}}dy}\leq 2\lbrb{\sup_{y\leq\rho}\abs{\overline{\upsilon}_{a}\!\lbrb{y}}} \ln b=2\overline{\overline{\upsilon}}\!\lbrb{\rho}\ln b .
	\end{equation}
	Since $\sup_{b>1}\upsilon^{\star}_{a}\lbrb{0}\int_{1}^{b}\frac{\cos(y)}{y}dy<\infty$ we then combine  \eqref{eq:newM}, \eqref{eq:bound1} and \eqref{eq:bound3} in \eqref{eq:Aphn1} to get for any $\rho\in\lbrb{0,1}$ and  $C_{a,\rho}:=4||\upsilon^{\star}_{a}||_\infty\abs{\ln\rho}>0$ that
	\begin{equation}\label{eq:Aphn2}
	\abs{\Aphn\!\lbrb{a+ib}-\upsilon^{\star}_{a}\!\lbrb{0}\ln b }\leq C_{a,\rho}+2\overline{\overline{\upsilon}}\!\lbrb{\rho}\ln b .
	\end{equation}
	Thus, for all $a$ big enough and all $\rho\in\lbrb{0,1}$ we have from \eqref{eq:Stirling} of Theorem \ref{thm:Stirling} that
	\begin{equation}\label{eq:fin}
	\abs{\Wpn\!\!\lbrb{a+ib}}	\asymp\frac{1}{\sqrt{|\phn(a+ib)|}}e^{-\upsilon^{\star}_{a}\lbrb{0}\ln b -2\overline{\overline{\upsilon}}\lbrb{\rho}\ln b -C_{a,\rho}}.
	\end{equation}
	Since $\limo{\rho}\overline{\overline{\upsilon}}\lbrb{\rho}=0$ and \[\upsilon^{\star}_{a}\!\lbrb{0}=\upsilon^{\star}\!\lbrb{0}=\frac{\uunspace{0^+}}{\phn(\infty)}=\frac{\IntOI u_{\pls}(y)\Pi_{\mis}(dy)}{\phn(0)+\mubrn(0)},\]
	relation \eqref{eq:fin} settles the proof of item \eqref{it:class2_C} at least for all $a$ big enough.
	However, since $\mu_{\mis}$ is absolutely continuous, for any $a>0$ fixed, we have from Proposition \ref{propAsymp1}\eqref{it:finPhi} that $\limi{|b|}\phn(a+ib)=\phn\lbrb{\infty}$. From $\Wpn\!(1+\ab)=\phn\!\lbrb{\ab}\Wpn\!(\ab)$, see \eqref{eq:Wp}, we then get that \eqref{eq:fin} holds for any $a>0$ up to a multiplicative constant. This concludes the proof of item \eqref{it:class2_C} and therefore of Proposition \ref{prop:condClass2}.
	
	\subsection{Proof of Theorem \ref{thm:Aph}\eqref{eq:subexpp}}\label{page:subexpp}
	The proof is based on the observations of Remark \ref{rem:condClass2} which  collects the main ingredients under which the lengthy proof of Proposition \ref{prop:condClass2} is valid. We will sketch the idea here as well. The requirements $\mu(dy)=\upsilon(y)dy=\lbrb{\upsilon_1(y)+\upsilon_2(y)}dy$, $\upsilon_1\geq 0$ and non-increasing on $\Rp$, and $\upsilon_1,\upsilon_2\in \Lspace{1}{\Rp}$, and $\abs{\upsilon_2(y)}\leq \lbrb{\int_{y}^{\infty}\upsilon_1(r)dr}\vee C$, for some $C>0$, ensure that we can write
	\[\phi=\phi^c+\tau\,\]
	with $\phi^c\in\Bc$ such that $\mu^{c}(dy)=\upsilon^c(y)dy$, where $\upsilon^c=\upsilon_1\ind{\lbrb{0,c}}$  is non-increasing on $\Rp$. Then, it is known that $W_{\phi^c}$ is the Mellin transform of a self-decomposable random variable and its decay along complex lines is evaluated in \cite[Theorem 5.1, Section 5]{Patie-Savov-16}. The final part is to show that the speed of decay of $W_{\phi^c}$ and $\Wp$ along complex lines coincide and are equal to the quantities in item \eqref{eq:subexpp}.  All these details are made rigorous in the proof of Proposition \ref{prop:condClass2}.
\end{proof}

  \section{Proofs for Exponential functionals of L\'evy processes}\label{sec:EFLP}

  \subsection{Regularity, analyticity and representations  of the density: Proof of Theorem \ref{cor:smoothness}\eqref{it:MTfact}}\label{subsec:Reg}
  Recall that for any $\Psi\in\overNc$ and  we have that $\MPs$ satisfies \eqref{eq:fe} that is
  \begin{equation}\label{eq:fe1}
  \MPs(z+1)=\frac{-z}{\Psi(-z)}\MPs(z)
  \end{equation}
  at least for $z\in i\R\setminus \lbrb{\Zc_0\!\lbrb{\Psi}\cup\curly{0}}$, see Theorem \ref{thm:FormMellin}. Recall that the quantity $\dphn$ is defined in \eqref{eq:dphi}, and assume that $\dphn<0$. Then $\MPs$ satisfies \eqref{eq:fe} at least on $z\in\Cb_{\lbrb{0,-\dphn}}$. Therefore, for any $\Psi\in\Nc\subset\overNc$ such that $\dphn<0$, one has, see Remark \ref{rem:Maulik}, that $\MPsi$ solves \eqref{eq:fe1} at least for $z\in\Cb_{\lbrb{0,-\dphn}}$, which in the case  $\Psi\in\Nc\setminus\Nc_\dagger$ is thanks to \cite{Maulik-Zwart-06} and for $\Psi\in\Nc$ thanks to \cite{Arista-Rivero-16} (in fact the extension from $\Nc\setminus\Nc_\dagger$ to $\Nc$ is a simple analytical exercise). Since by definition $\Psi\in\Nc\iff \phn(0)>0$, see \eqref{eq:Nc}, we proceed to show that $\MPsi(z)=\phn(0)\MPs(z)$ or that the identities in \eqref{eq:MIPsi} hold. First, thanks to \cite[Proposition 6.8]{Patie-Savov-16} we have that \[\Ebb{I^{z-1}_{\php}}=\frac{\Gamma(z)}{\Wpp\!(z)},\,z\in\Cb_{\intervalOI}.\]
  Furthermore, it is immediate to verify  that $\phn(0)\Wpn\!\lbrb{1-z}\!,\,z\in\Cb_{\lbrb{-\infty,1-\dphn}}$, is the Mellin transform of the random variable $X_{\phn}$
  defined via the identity
  \begin{equation}\label{eq:auxRv}
  \Ebb{g\lbrb{X_{\phn}}}=\phn(0)\Ebb{\frac{1}{Y_{\phn}}g\lbrb{\frac{1}{Y_{\phn}}}}\!,
  \end{equation}
  where from Definition \ref{def:WB}, $Y_{\phn}$ is the random variable associated to  $\Wpn\in\Wc_{\Bc}$  and $g\in\Ctt_b\!\lbrb{\Rp}$. Therefore
  \begin{equation}\label{eq:MPsiCirc}
  \begin{split}		  \phn(0)\MPs(z)&=\Ebb{I_{\php}^{z-1}}\Ebb{X_{\phn}^{z-1}}\\
  &=\phn(0)\Wpn\!\lbrb{1-z}\frac{\Gamma(z)}{\Wpp(z)},\quad z\in\Cb_{\lbrb{0,1-\dphn}},
  \end{split}
  \end{equation}
  is the Mellin transform of $I_{\php}\times X_{\phn}$  and it therefore solves \eqref{eq:fe1} on $\Cb_{\lbrb{0,-\dphn}}$ with the condition $\phn(0)\MPs(1)=1$. Therefore, both $\phn(0)\MPs$ and $\MPsi$ solve \eqref{eq:fe1} on $\Cb_{\lbrb{0,-\dphn}}$ with $\phn(0)\MPs(1)=\MPsi(1)=1$ and are holomorphic on $\Cb_{\lbrb{0,1-\dphn}}$. However, note that $\phn(0)\MPs$ is zero-free on $\lbrb{0,1-\dphn}$ since $\Gamma$ is zero-free and, according to Theorem \ref{thm:Wp}, $\Wpn$ is zero-free on $\lbrb{\dphn,\infty}$. Thus, we conclude that  
  \[f(z)=\frac{\MPsi(z)}{\phn(0)\MPs(z)}\]
  with $f$ some entire holomorphic periodic function of period one, that is $f(z+1)=f(z),\,z\in\Cb_{\lbrbb{0,1}}$, and $f(1)=1$.
  Next, considering $z=a+ib\in\Cb_{\lbrb{0,1-\dphn}},$ $a$ fixed and $|b|\to\infty$, we get that
  \begin{equation*}
  \begin{split}
  \abs{f(z)}&=\abs{\frac{\MPsi(z)}{\phn(0)\MPs(z)}}\leq \frac{\Ebb{\IPsi^{a-1}}}{\abs{\phn(0)\MPs(z)}}\\
  &= \frac{\Ebb{\IPsi^{a-1}}}{\phn(0)}\frac{\abs{\Wpp\!(z)}}{\abs{\Gamma(z)\Wpn\!(1-z)}}
  =\bospace{1}\frac{\abs{\Wpp\!(z)}}{\abs{\Gamma(z)\Wpn\!(1-z)}}.
  \end{split}
  \end{equation*}
  Since $a>0$ is fixed, we apply \eqref{eq:Stirling} to $\abs{\Wpp\!\lbrb{z}}$ and \eqref{eq:Stirling1} and \eqref{eq:Stirling} to $\abs{\Wpn\lbrb{1-z}}$ to obtain the inequality
  \begin{equation}\label{eq:periodic1}
  \abs{f(z)}\leq \bospace{1} \frac{e^{-\Aphp\!(a+ib)+\Aphn\!\lbrb{1-a+n-ib}}}{\sqrt{\abs{\php(z)}}\abs{\Gamma(z)}}\times\prod_{j=0}^{n-1}\abs{\phn\!\lbrb{1-z+j}}\times \sqrt{\abs{\phn\!\lbrb{1-z+n}}},
  \end{equation}
  where in \eqref{eq:Stirling1} we have taken  $n=\left(\lfloor a-1\rfloor+1\right)\mathbb{I}_{\{1-a\leq 0\}}$ and $z=\ab$. Also, we have regarded any term in \eqref{eq:Stirling} and \eqref{eq:Stirling1} depending on $a$ solely as a constant included in $\bospace{1}$. From  Proposition \ref{propAsymp1}\eqref{it:asyphid}  we have, for $a>0$  fixed,  that $\abs{\phi(a+i|b|)}\simi |b|\lbrb{\dr+\sospace{1}}$. We recall, from \eqref{eq:rephi}, that, for any $a>0$,
  \[\abs{\phi\!\lbrb{a+ib}}\geq \Re(\phi(a+ib))\geq \phi(0)+\dr a+\IntOI \lbrb{1-e^{-ay}}\mu(dy)=\phi(a)>0.\]
  Applying these observations to $\php,\phn,$ in \eqref{eq:periodic1} and invoking \eqref{eq:A=Theta}, that is $\Aph\geq 0$ and $\Aph\!\lbrb{\ab}\leq\frac{\pi}{2}|b|$, we get, as $|b|\to\infty$,
  \begin{equation}\label{eq:periodic2}
  \abs{f(z)}\leq |b|^{n+\frac{1}{2}} \frac{e^{|b|\frac{\pi}{2}}}{\abs{\Gamma(z)}}\bospace{1}.
  \end{equation}
  However, from the well-known Stirling asymptotic for the gamma functions, see \eqref{eq:GammaStirling}, \eqref{eq:periodic2} is further simplified, as $|b|\to\infty$, to
  \begin{equation*}
  \abs{f(z)}\leq |b|^{n-a+1}e^{|b|\pi}\bospace{1}=\sospace{e^{2\pi|b|}}.
  \end{equation*}
  However, the fact that $f$ is entire periodic with period $1$ and $\abs{f(z)}=\so{e^{2\pi|b|}}\!,\,|b|\to\infty,$ for $z\in\Cb_{\lbrb{0,1-\dphn}}$ implies by a celebrated criterion for the maximal growth of periodic entire functions, see \cite[p.96, (36)]{Markushevich-66}, that $f(z)=f(1)=1$. Hence, $\MPsi(z)=\phn(0)\MPs(z),$ which concludes the proof of Theorem \ref{lemma:FormMellin} verifying \eqref{eq:MIPsi}, whenever $\dphn<0$. Recall, from \eqref{eq:lk0}, that $\Psi\in\overNc$ takes the form
  \begin{eqnarray} \label{eq:lk1}
  \Psi(z) =  \frac{\sigma^2}{2} z^2 + \gamma z +\IntII \left(e^{zr} -1
  -zr \ind{|r|<1}\right)\Pi(dr)+\Psi(0).
  \end{eqnarray}
  Next, assume that $\dphn=0$, see \eqref{eq:dphi}, and that either $-\Psi(0)=q>0$ or $\Psi'\!\lbrb{0^+}=\Ebb{\xi_1}\in\lbrbb{0,\infty}$ with $q=0$ hold in the definition of $\Psi\in\Nc$, i.e. \eqref{eq:Nc}. For any $\eta>0$ modify the \LL measure $\Pi$ in \eqref{eq:lk1} as follows
  \begin{equation}\label{eq:Pigam}
  \Pi^{\seta}(dr)=\ind{r>0}\Pi(dr)+\lbrb{\ind{r\leq-1}e^{\eta r}+\ind{r\in\lbrb{-1,0}}}\Pi(dr)= e^{\eta r \ind{r\leq-1}}\Pi(dr).
  \end{equation}
  Plainly  $\Pi^{\seta}$ is a \LL measure and we denote by $\Psi^{\seta} \in \overNc$ (resp.~$\xi^{\seta}$) the \LL\!-Khintchine exponent (resp.~the \LL process associated to  $\Psi^{\seta}$) with parameters $\sigma^2, \gamma$ and $\Psi^{\seta}\!(0)=\Psi(0)$ taken from $\Psi$ and \LL measure $\Pi^{\seta}$. Then set \[\Psi^{\seta}\!(z)=-\php^{\seta}\!(-z)\phn^{\seta}\!(z),\]
   see \eqref{eq:WH1}. However, \eqref{eq:Pigam} and \eqref{eq:lk1} imply that $\Psi^{\seta}$ (resp.~$\phn^{\seta}$) extends holomorphically at least to $\Cb_{\lbrb{-\eta,0}}$ (resp.~$\Cb_{\lbrb{-\eta,\infty}}$), which immediately triggers that $\overline{\mathfrak{a}}_{\phn^{\seta}}<0$ if $-\Psi(0)=-\Psi^{\seta}\!(0)>0$ in \eqref{eq:lk1},  see \eqref{eq:dphi}. However, when $\Psi'\!\lbrb{0^+}=\Ebb{\xi_1}\in\lbrbb{0,\infty}$ and $\Psi(0)=0$ are valid, then we get that \[(\Psi^{\seta})'\!(0^+)=\Ebb{\xi^{\seta}_1}\geq\Psi'(0^+)=\Ebb{\xi_1}>0\] since relation \eqref{eq:Pigam} shows that $\xi^{\seta}$ is derived from $\xi$ via a removal of negative jumps only. Henceforth, a.s.~$\limi{t}\xi^{\seta}_t=\infty$ which shows that the downgoing ladder height process associated to $\phn^{\seta}$ is killed, that is $\phn^{\seta}\!(0)>0$. This combined with $\phn^{\seta}\in\Ac_{\lbrb{-\eta,\infty}}$ gives that  $\overline{\mathfrak{a}}_{\phn^{\seta}}<0$, see \eqref{eq:dphi}. Therefore, we have that \eqref{eq:MIPsi} is valid for $I_{\Psi^{\seta}}$ and the probabilistic interpretation of $\Mcc_{I_{\Psi^{\seta}}}$ above gives that
  \begin{equation}\label{eq:WHgamma}
  I_{\Psi^{\seta}}\stackrel{d}{=}X_{\phn^{\seta}}\times I_{\php^{\seta}}.
  \end{equation}
  However, since \eqref{eq:Pigam} corresponds to the thinning of the negative jumps of $\xi$ we conclude that $I_{\Psi^{\seta}}\leq I_\Psi$ a.s. and clearly a.s.~\[\limo{\eta}I_{\Psi^{\seta}}=I_\Psi.\] Moreover, the truncation functions $h_{\eta}(r)=e^{\eta r \ind{r\leq-1}}$ in \eqref{eq:Pigam} satisfies the conditions of \cite[Lemma 4.9]{Pardo2012} and therefore we have that $\limo{\eta}\phi^{\seta}_\pms(u)=\phi_\pms(u),\,u\geq 0$. Therefore, with the help of Lemma \ref{lem:continuityW} we deduce that \[\limo{\eta}W_{\phi^{\seta}_\pms}(z)=W_{\phi_\pms}\!(z),\,z\in\CbOI,\] 
  and establish \eqref{eq:MIPsi} via
  \begin{equation*}	
  \begin{split}   \MPsi(z)&=\limo{\eta}\M_{I_{\Psi^{\seta}}}\!(z)\\
  &=\limo{\eta}\phn^{\seta}\!(0)\frac{\Gamma(z)}{W_{\php^{\seta}}\!(z)}W_{\phn^{\seta}}\!\lbrb{1-z}\\
  &=\phn(0)\frac{\Gamma(z)}{\Wpp\!(z)}\Wpn\!\lbrb{1-z},\,z\in\Cb_{\lbrb{0,1}}.
  \end{split}
  \end{equation*}
  It remains to consider the case $\Psi\in\Nc$, $\Psi(0)=0$, \[\Ebb{\xi_1\ind{\xi_1>0}}=\Ebb{-\xi_1\ind{\xi_1<0}}=\infty\]
  and $\limi{t}\xi_t=\infty$ a.s.~hold. We do so by killing the \LLP $\xi$ at rate $q>0$. Therefore, with the obvious notation, $\Psi^{\dagger q}(z)=-\php^{\dagger q}(-z)\phn^{\dagger q}(z)$ and $\limo{q}\phi^{\dagger q}_\pms(z)=\phi_\pms(z),\,z\in\CbOI$, since $\phi^{\dagger q}_\pms$ are the  exponents of the  bivariate ladder height processes $\lbrb{\zeta^\pms,H^\pms}$ as introduced in Section \ref{subsec:LP} and Proposition \ref{prop:convPhi} holds. Also  a.s.~$\limo{q} I_{\Psi^{\dagger q}}=\IPsi$. However, since \eqref{eq:MIPsi} holds whenever $-\Psi(0)=q>0$ we conclude \eqref{eq:MIPsi} in this case by again virtue of Lemma \ref{lem:continuityW} since the latter transfers $\limo{q}\phi^{\dagger q}_\pms(z)=\phi_\pms(z),\,z\in\CbOI$ to \[\limo{q}W_{\phpm^{\dagger q}}(z)=W_{\phpm}(z),\,z\in\CbOI.\] \qed

  \subsection{Proof  of Theorem \ref{cor:smoothness}\eqref{it:supp}}
  We use the identity \eqref{eq:GammaType2}, which has been implicitly established in Section \ref{subsec:Reg}, that is
  \begin{equation}\label{eq:GammaType2_1}
  \IPsi\stackrel{d}{=} \Ir_{\php}\times X_{\phn},
  \end{equation}
  where $\Ir_{\php}$ is the exponential functional of the possibly killed negative subordinator associated to $\php\in\Bc$.
  It is well known from \cite[Lemma 2.1]{Haas-Rivero-13} that $\supp\, \Ir_{\php}=\lbbrbb{0,\frac{1}{\dep}}$ unless $\php(z)=\dep z,\dep>0,$ in which case $\supp\, \Ir_{\php}=\curly{\frac{1}{\dep}}$. When $\dep=0$ then $\supp \, \IPsi=\supp \, \Ir_{\php}= \lbbrbb{0,\infty}$ in any case. Assume that $\dep>0$ and note from \eqref{eq:realStirling}, \eqref{eq:AsympHphi} and \eqref{eq:Gph} that
  \[\ln\Ebb{Y^n_{\phn}}=\ln\Wpn(n+1) \simi n\ln\phn(n+1),\]
  which clearly shows that $\supp\,Y_{\phn}\subseteq\lbbrbb{0,\phn\!\lbrb{\infty}}$ and $\supp\,Y_{\phn}\not\subseteq\lbbrbb{0,y}$, for any $0<y<\phn\!\lbrb{\infty}$, where $Y_{\phn}$ is the random variable associated to $\Wpn$, see Definition \ref{def:WB}.  From  Theorem \ref{thm:HY}\eqref{it:hy} we have that $\ln Y_{\phn}$ is infinitely divisible with \LL measure $\kappa(dx)=\int_{0}^{x}U_{\mis}(dx-y)y\mu_{\mis}(dy)$. However, when $\dep>0$ from \eqref{eq:mu_-2} we have
  \[\mu_{\mis}(dy)=\upsilon_{\mis}(y)dy=\lbrb{\IntOI u_{\pls}(v)\Pnn(y+dv)}dy\]
  and as $u_{\pls}>0$ on $\Rp$, as mentioned in the proof of Lemma \ref{lem:aux1}, then $\upsilon_{\mis}>\delta>0$ at least in a neighbourhood of zero, say $\lbbrb{0,\epsilon}$. Also, in this case $\upsilon_{\mis}(y)=\IntOI u_{\pls}(v)\Pnn(y+dv)$ is at least a c\`adl\`ag function, see \cite[Chapter 5, Theorem 16]{Doney-07-book} or in more generality the differentiated version of Proposition \ref{prop:VigonDens} below. Thus, for any $c>0$,
  \begin{equation*}
  \begin{split}
  \int_{0}^{c}\kappa(dx) &=\int_{0}^{c}\int_{y}^{c}(x-y)\upsilon_{\mis}(x-y)dxU_{\mis}(dy)\\
  &\geq \delta\int_{0}^{c}\int_{y}^{c\wedge \lbrb{y+\epsilon}}(x-y)dxU_{\mis}(dy)\\
  &\geq\delta\int_{0}^{\frac{c}{2}}\frac{(c-y)^2\wedge \epsilon^2}{2}U_{\mis}(dy)\\
  & \geq \frac{\delta \lbrb{\frac{c^2}{4}\wedge \epsilon^2}}{2} U_{\mis}\!\lbrb{\frac{c}{2}}>0 
  \end{split}
  \end{equation*}
  and a criterion in \cite{Tucker-75} shows that $\supp\,\ln Y_{\phn}=\lbbrbb{-\infty,\ln \phn(\infty)}$. Finally, from \eqref{eq:auxRv} we deduct that $\supp\,X_{\phi_-}=\lbbrbb{\frac{1}{\phn\lbrb{\infty}},\infty}$. From \eqref{eq:GammaType2_1} this concludes the proof taking into account that $\supp\, \Ir_{\php}=\lbbrbb{0,\frac{1}{\dep}}$ or $\supp\, \Ir_{\php}=\curly{\frac{1}{\dep}}$, and $\supp\,X_{\phi_-}=\curly{\frac{1}{\phn\lbrb{\infty}}}$, when $\phn(z)=\phn(\infty)\in\lbrb{0,\infty}$.  \qed
  \subsection{Proof of items \eqref{it:smooth} and \eqref{it:holomorphic} of Theorem \ref{cor:smoothness}}
  The smoothness and the analyticity in each of the cases follow by a utilization of the dominated convergence theorem in the simple Mellin inversion, which yields
  \begin{equation}\label{eq:MIT1}
  \pPsi^{(n)}\!(x)=\frac{(-1)^{n}x^{-n}}{2\pi }\IntII x^{-a-ib}\frac{\Gamma\!\lbrb{n+\ab}}{\Gamma\!\lbrb{\ab}}\MPsi\!\lbrb{a+ib}db,
  \end{equation}
  and whose application is justified by the implied rate of polynomial decay of $\abs{\MPsi}$, that is $\NPs\in\lbrbb{0,\infty}$, and respectively the rate of exponential decay of $\abs{\MPsi}$, that is  $\Theta\in\lbbrbb{0,\frac{\pi}{2}}$. Thus, \eqref{eq:MIT1} above, which is the expression \eqref{eq:MITd} in Theorem \ref{cor:smoothness}, follows in the ordinary sense. To prove its validity in the $\rm{L}^2$-sense it is sufficient, according to \cite{Titchmarsh-58}, to have that the mapping $b \mapsto\MPsi\!\lbrb{a+ib}\in \Lspace{2}{\R}$, which is the case whenever $\NPs>1/2$. \qed
  \subsection{Proof of Theorem \ref{cor:smoothness}\eqref{it:smallExp}} 	      	
  We start with an auxiliary result. It shows that the decay of $\abs{\MPs}$ can be extended to the left provided $\aphp<0$.
  \begin{proposition}\label{prop:extendDecay}
  	Let $\Psi\in\overNc$. Then, $\abs{\MPs(z)}$ has a power-law decay with exponent $\NPs\geq 0$, see \eqref{eq:NPs}, which is preserved along $\Cb_a$ for any $a\in\lbrbb{\aphp,0}$.
  \end{proposition}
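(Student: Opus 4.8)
The plan is to use the recurrence relation \eqref{eq:fe} satisfied by $\MPs$ to propagate the decay estimate of Theorem \ref{thm:asympMPsi}\eqref{it:decayA} from the strip $\Cb_{\lbrb{0,1-\dphn}}$, where it is known, leftward into $\Cb_{\lbrbb{\aphp,0}}$. Recall from Theorem \ref{thm:FormMellin} that $\MPs\in\Mtt_{\lbrb{\aphp,1-\aphn}}$ and, more precisely, when $\aphp<0$ the poles of $\Gamma$ at the negative integers $-n>\aphp$ are either cancelled by zeros of $1/\Wpp$ (when $-\tphp\in\N$ and $\tphp>\aphp$, in which case they are cancelled for $n\leq|\tphp|$) or they persist. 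In either case, on any vertical line $\Cb_a$ with $a\in\lbrbb{\aphp,0}$ and $a\notin -\N$, the function $\MPs$ is holomorphic and we may write, iterating \eqref{eq:fe} a finite number $m=\lceil |a|\rceil$ of times,
\begin{equation*}
\MPs(z)=\MPs(z+m)\prod_{k=0}^{m-1}\frac{-\Psi(-(z+k))}{z+k},
\end{equation*}
valid for $z=a+ib$ since then $\Re(z+m)\in\lbrb{0,1}$ and all the points $z+k$ avoid $\Zc_0(\Psi)\cup\{0\}$ for $b\neq0$.

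First I would record the elementary growth of the multiplicative factors along $\Cb_a$ as $|b|\to\infty$. By the L\'evy--Khintchine form \eqref{eq:lk0}, $\abs{\Psi(-(a+k)-ib)}=\bo{1+b^2}$ (at worst quadratic, from the Gaussian term; at most linear if $\sigma^2=0$ and the drift of the relevant Bernstein factor is positive; and $\so{|b|}$ if there is no drift), while $\abs{z+k}\gtrsim|b|$. Hence each factor $\frac{-\Psi(-(z+k))}{z+k}$ is $\bo{|b|}$, and so the finite product contributes only a fixed polynomial factor $\bo{|b|^m}$. Combining this with $\abs{\MPs(a+m+ib)}$, whose decay is governed by $\NPs$ through \eqref{eq:NPs} (or faster-than-polynomial when $\NPs=\infty$, or exponential when $\Psi\in\Nth$), I obtain that along $\Cb_a$ the modulus $\abs{\MPs(a+ib)}$ still decays faster than $|b|^{-\beta}$ for every $\beta<\NPs$, i.e.~the power-law exponent is preserved. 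The case $\NPs=\infty$ is immediate since a polynomial loss of $m$ powers does not affect superpolynomial decay, and the exponential case similarly survives a polynomial perturbation.

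The one point requiring care — and the main obstacle — is the behaviour near the negative integers, where $\MPs$ may genuinely have poles, so that "decay preserved along $\Cb_a$" must be understood for $a\notin-\N$ and the constants in the $\bo{\cdot}$ depend on $\mathrm{dist}(a,-\N)$. I would handle this by fixing $a\in\lbrbb{\aphp,0}\setminus(-\N)$ at the outset; then the factor $\prod_{k=0}^{m-1}(z+k)^{-1}$ is uniformly bounded below in modulus on $\Cb_a$ by a constant depending only on $a$, and the argument goes through cleanly. A secondary subtlety is that when $\aphp=0$ the statement is vacuous (the interval $\lbrbb{\aphp,0}$ degenerates), and when $\aphp=-\infty$ the conclusion holds for every negative non-integer $a$ by the same finite iteration; both are consistent with the phrasing. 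Finally, for the sharper first line of \eqref{eq:NPs} one notes that the finite product is actually $\asymp|b|^m\cdot(1+\so{1})$ in the relevant regimes, so not merely the exponent but the precise polynomial rate transfers, though for the present statement only the exponent $\NPs$ is claimed. This completes the proof.
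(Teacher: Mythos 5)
Your overall strategy — iterate the recurrence $\MPs(z)=\frac{\Psi(-z)}{-z}\MPs(z+1)$ a bounded number of times to move the line of integration leftward — is the same one the paper uses. But there is a genuine gap in your quantitative handling of the case $\NPs<\infty$. You bound each factor $\frac{\Psi(-(z+k))}{z+k}$ by $\bo{|b|}$ and then assert that the power-law exponent $\NPs$ "is preserved", but these two statements are incompatible: multiplying $|b|^{-\NPs}$ by $\bo{|b|^m}$ degrades the exponent by $m$ and does not preserve it. The argument cannot close for $\NPs<\infty$ unless each factor is shown to be \emph{bounded}, not merely $\bo{|b|}$, and that is precisely what the paper proves. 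The mechanism is the structural dichotomy of Theorem~\ref{thm:asympMPsi}\eqref{it:decayA}: $\NPs<\infty$ forces $\php\in\BP$, $\phn\in\BP^c$ and $\PP(0)<\infty$; then Proposition~\ref{propAsymp1}\eqref{it:asyphid} gives $\abs{\php(a+ib)/(a+ib)}\to\dep>0$, while Proposition~\ref{propAsymp1}\eqref{it:finPhi} gives $\phn(-a-ib)\to\phn(\infty)<\infty$, so $\frac{\Psi(-z)}{-z}=\frac{\php(z)\phn(-z)}{z}\to\dep\,\phn(\infty)$, a finite nonzero constant. Your closing remark that "the finite product is actually $\asymp|b|^m\cdot(1+\so{1})$" is false in this regime — were it true, the proposition itself would fail — which shows the step was never really established. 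Your treatment of $\NPs=\infty$ and of exponential decay is fine as written, since there a polynomial loss is harmless, but these are the easy cases.

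A secondary omission: the claimed range is $a\in\lbrbb{\aphp,0}=(\aphp,0]$, so $a=0$ must be covered, whereas you restrict to $a\notin-\N$ (and $0\in\N$ in this paper's convention). The paper handles $a=0$ separately by shifting once via $\MPs(ib)=\frac{\php(ib)}{ib}\frac{\Gamma(1+ib)}{\Wpp(1+ib)}\Wpn(1-ib)$ for $b\neq0$ and applying the same asymptotics for $\php(ib)/(ib)$. You should add that, and more importantly replace the $\bo{|b|}$ per-factor bound in the $\NPs<\infty$ case with the boundedness statement above; with these two fixes the proof matches the paper's.
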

  \begin{proof}
  	Let $\aphp<0$ and take any $a\in\lbrb{\max\curly{\aphp,-1},0}$. Then, since $\ab\neq 0$, we can  extend  meromorphically the function $\MPs$ via \eqref{eq:fe} and \eqref{eq:WH1} to derive that
  	\begin{equation}\label{eq:extendLeft}
  	\MPs\!\lbrb{\ab}=\frac{\Psi\!\lbrb{-a-ib}}{-a-ib}\MPs\!\lbrb{a+1+ib}=\frac{\php\!\lbrb{\ab}\phn\!(-a-ib)}{\ab}\MPs\!\lbrb{a+1+ib}.
  	\end{equation}
  	Then, Proposition \ref{propAsymp1}\eqref{it:asyphid}  gives that \[\abs{\Psi\!\lbrb{-a-ib}}=\bospace{b^2}\] and the result when $\Psi\in\Npi$ follows.  If $\Psi\in\NpNP,\,\NPs<\infty,$ then according to Theorem \ref{thm:asympMPsi} we have that $\php\in\BP,\,\phn\in\BP^c ,\,  \PP(0)<\infty$ and $\uunspace{y}= \mu_{\mis}\!(dy)/dy$. Therefore, again from Proposition \ref{propAsymp1}\eqref{it:asyphid}, it follows that $\limi{|b|}\abs{\frac{\php\lbrb{\ab}}{\ab}}=\dep>0$ and from Proposition \ref{propAsymp1}\eqref{it:finPhi}, $\limi{|b|}\phn\!\lbrb{-a-ib}=\phn\!\lbrb{\infty}$. With these observations,  the rate of decay $\NPs$ is preserved through \eqref{eq:extendLeft}. We recur this argument for any $a\in\lbrb{\aphp,-1}$, if $\aphp<-1$. For the case $a=0$ taking $b\neq 0$ and then using the recurrence equation \eqref{eq:Wp} applied to \eqref{eqM:MIPsi} we observe that
  	\[\MPs\!\lbrb{ib}=\frac{\php\!\lbrb{ib}}{ib}\frac{\Gamma(1+ib)}{\Wpp\!\lbrb{1+ib}}\Wpn\!\lbrb{1-ib}.\]
  	However, if $\Psi\in\Npi$ then  $\phn\in\PcBinf$ and/or  the decay of $\abs{\frac{\Gamma(1+ib)}{\Wpp\lbrb{1+ib}}}$ is faster than any polynomial. Then Proposition \ref{propAsymp1}\eqref{it:asyphid} shows the same rapid decay for $\abs{\MPs\lbrb{ib}}$. If $\Psi\in\NpNP,\,\NPs<\infty,$ then as above $\php\in\BP$ and thus $\limi{|b|}\abs{\frac{\php\lbrb{ib}}{ib}}=\dep>0$ and we conclude the proof of the statement.
  \end{proof}	   	
  We are ready to start the proof of Theorem \ref{cor:smoothness}\eqref{it:smallExp}.  A standard relation  of Mellin transforms gives that the restriction  on $\Cb_{\lbrb{-1,0}}$  of
  \begin{equation}\label{eq:MTint1}
  \MPIsi\!(z):=-\frac{1}{z}\MPsi(z+1)=-\frac{\phn(0)}{z}\MPs(z+1)\in\Ac_{\lbrb{-1,0}}\cap\Mtt_{\lbrb{\aphp-1,-\aphn}},
  \end{equation}
  is the Mellin transform in the distributional sense of $\PIoPs{x}=\Pbbs{\IPsi\leq x}$, where  we use the expression \eqref{eq:MIPsi} for  the form and the analytical properties of $\MPsi$. Next note that in this assertion we only consider $\Psi\in\Nc_\dagger,$ that is $-\Psi(0)>0$ and this implies that $\NPs>0$. From Theorem \ref{thm:FormMellin} and \eqref{eq:MTint1} we get that if $\tphp=-\infty$ or $-\tphp\notin\N$ then $\MPIsi$ has simple poles at all points in the set $\curly{-1,\cdots,-\lceil-1-\aphp\rceil+1}$ and otherwise if $-\tphp\in\N$ simple poles at all points in the set $\curly{-1,\cdots,\tphp}$.  The decay of $\abs{\MPIsi(z)}$ along $\Cb_a,\,a\in\lbrb{-1,0}$ is $\NPs+1$ since the decay of $\abs{\MPsi(z)}$ along $\Cb_a,\,a\in\lbrb{0,1}$, is of polynomial order with exponent $\NPs>0$, see Theorem \ref{thm:asympMPsi}\eqref{it:decayA}. However, thanks to Proposition \ref{prop:extendDecay} the decay of $\abs{\MPIsi\!(z)}$ is of polynomial order with exponent $\NPs+1>1$ along $\Cb_a,\,a\in\lbrb{\aphp-1,0}$. Therefore, \eqref{eq:MTint1} is the Mellin transform of $F_{\Psi}$ in the ordinary sense.
  Moreover, with \[\Ntt_{+}=\abs{\tphp}\mathbb{I}_{\{\abs{\tphp} \in \N\}}+\left(\lceil |\aphp|+1\rceil\right)\mathbb{I}_{\{\abs{\tphp}\notin \N\}},\] $\N\ni M<\Ntt_+$ and $a\in\lbrb{\lbrb{-M-1}\vee \lbrb{\aphp-1},-M}$  so that $a+M\in\lbrb{-1,0}$ we apply the Cauchy theorem to the Mellin inversion of $\PIoPs{x}$ to get that
  \begin{equation}\label{eq:F}
  \begin{split}
  \PIoPs{x}&=-\frac{\phn(0)}{2\pi i}\int_{a+M-i\infty}^{a+M+i\infty}x^{-z}\frac{\MPs(z+1)}zdz\\
  &=-\Psi(0)\sum_{k=1}^{M}\frac{\prod_{j=1}^{k-1}\Psi\lbrb{j}}{k!}x^{k}-\frac{\phn(0)}{2\pi i}\int_{a-i\infty}^{a+i\infty}x^{-z}\frac{\MPs(z+1)}zdz\\
  &=-\Psi(0)\sum_{k=1}^{M}\frac{\prod_{j=1}^{k-1}\Psi\lbrb{j}}{k!}x^{k}-\frac{\phn(0)}{2\pi i}\int_{a-i\infty}^{a+i\infty}x^{-z}\frac{\Gamma\lbrb{z}}{\Wpp\!\!\lbrb{z+1}}\Wpn\!\!\lbrb{-z}dz,
  \end{split}
  \end{equation}
  since the residues at each of those poles at $-k$ are of values \[\frac{\phn(0)}{k}\php(0)\frac{\prod_{j=1}^{k-1} \Psi(j)}{(k-1)!}=-\Psi(0)\frac{\prod_{j=1}^{k-1} \Psi(j)}{k!}.\]
  They  have been computed in turn as the residues of $\MPs$, see Theorem \ref{thm:FormMellin}, and the contribution of the other terms of \eqref{eq:MTint1}, that is $\lbrb{-\phn(0)/z}|_{z=-k}$. We recall that by convention $\sum_{j=1}^{0}=0$ and $\prod_{j=1}^{0}=1$. Thus, we prove \eqref{eq:smallExpansion} for $n=0$. The derivative of order $n$ is easily established via differentiating \eqref{eq:F} as long as $0\leq n<\NPs$.
  
  \begin{proof}[Proof of Corollary \ref{cor:smallExp}]
  	We sketch the proof of Corollary \ref{cor:smallExp}. If $\abs{\aphp}=\infty$ and $-\tphp\notin\N$ then it is immediate that  the infinite  sum  \eqref{eq:smallExpansion}, that is \[\PIp(x)\stackrel{0}{\approx}-\Psi(0)\sum_{k=1}^{\infty}\frac{\prod_{j=1}^{k-1}\Psi\lbrb{j}}{k!}x^{k}=:\PIp^*(x),\]
  	is an asymptotic expansion. From the identity $\Psi(j)=-\php(-j)\phn(j)$ and \eqref{eq:phi} we see that
  	\[\php(-j)=\php(0)-\dep j-\IntOI \lbrb{e^{jy}-1}\mu_{\pls}(dy)\]
  	and clearly $\abs{\php(-j)}$ has an exponential growth in $j$ if $\mu_{\pls}$ is not identically zero. In the latter case the asymptotic series cannot be a convergent series for any $x>0$. If $\php(z)=\php(0)+\dep z$ then $\Psi(j)\simi\dep j\phn(j)$ and hence
  	\[\frac{\prod_{j=1}^{k-1}\abs{\Psi\lbrb{j}}}{k!}\simi \frac{\dep^{k-1}}k \prod_{j=1}^{k-1}\phn(j).\]
  	Therefore, $\PIp^*$ is absolutely convergent if $x<\frac{1}{\dep \phn(\infty)}$ and divergent if  $x>\frac{1}{\dep \phn(\infty)}$. Finally, if $\php\equiv \php(\infty)\in\lbrb{0,\infty}$ then \[\frac{\prod_{j=1}^{k-1}\abs{\Psi\!\lbrb{j}}}{k!}\stackrel{\infty}{=} \php(\infty)^{k-1}\frac{\prod_{j=1}^{k-1}\phn\!\lbrb{j}}{k!}\] and since from Proposition \ref{propAsymp1}\eqref{it:asyphid}, $\phn(j)\simi j\lbrb{\dem+\sospace{1}}$, we deduct that $\PIp^*$ is absolutely convergent for $x<\frac{1}{\php(\infty)\dem}$ and divergent for $x>\frac{1}{\php(\infty)\dem}$. \qed
  \end{proof}
  
  \subsection{Proof of Theorem \ref{thm:largeAsymp}}\label{subsec:Cramer}
  Recall the definitions of the \textit{lattice class} and the \textit{weak non-lattice class}, see around \eqref{eq:weaknonlattice}. We start with a result which discusses when the decay of $\abs{\MPs(z)}$ can be extended to the line $\Cb_{1-\tphn}$.
  \begin{proposition}\label{prop:extendDecay1}
  	Let $\Psi\in\Nc$. If $\Psi\in\PcNinf\cap \Nc_\Wc$ then the rapid decay of $\abs{\MPsi(z)}$ along $\Cb_{1-\tphn}$ is preserved. Otherwise, if $\Psi\in\Nc_{\Pc}\lbrb{\NPs}$, $\NPs<\infty$, then the same power-law decay is valid for $\abs{\MPsi}$ along $\Cb_{1-\tphn}$.
  \end{proposition}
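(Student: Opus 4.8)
The plan is to deduce the decay along the boundary line $\Cb_{1-\tphn}$ from the decay along an interior line by a single application of the functional equation \eqref{eq:fe}. In both regimes $\tphn\in\lbrb{-\infty,0}$, and a routine check using $\aphn\leq\tphn$ shows that $-\tphn$ lies in the strip $\lbrb{0,1-\dphn}$, so the bounds of Theorem \ref{thm:asympMPsi} are available along $\Cb_{-\tphn}$: rapid (faster than any power) decay when $\Psi\in\PcNinf$, and power-law decay with exponent $\NPs$ when $\Psi\in\Nc_{\Pc}\lbrb{\NPs}$. For $z=1-\tphn+ib$ with $b\neq 0$ I would write, by meromorphic continuation of \eqref{eq:fe},
\begin{equation*}
\MPsi(z)=\frac{1-z}{\Psi(1-z)}\,\MPsi(z-1)=\frac{\tphn-ib}{\Psi(\tphn-ib)}\,\MPsi(-\tphn+ib),
\end{equation*}
which is legitimate since $\Psi(\tphn-ib)=-\php(-\tphn+ib)\phn(\tphn-ib)\neq 0$ for $b\neq 0$: the factor $\php(-\tphn+ib)$ is nonzero because $-\tphn>0\geq\dphp$, see \eqref{eq:rephi}, and $\phn(\tphn-ib)\neq 0$ in both regimes — for $\Psi\in\Nc_\Wc$ this is forced by the definition \eqref{eq:weaknonlattice}, and for $\Psi\in\Nc_{\Pc}\lbrb{\NPs}$ with $\NPs<\infty$ it follows from the absolute continuity of $\mu_{\mis}$, see the proof of Proposition \ref{prop:condClass2}\eqref{it:class2_C}, which excludes off-axis zeros of $\phn$. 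The simple pole of $\MPsi$ at $z=1-\tphn$ itself, inherited from that of $\Wpn$ at $\tphn$ (Theorem \ref{thm:Wp}\eqref{it:E}), is irrelevant since the claim only concerns $\abs{b}\to\infty$.

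It then remains to estimate the prefactor $g(b)=\abs{\tphn-ib}/\abs{\Psi(\tphn-ib)}$ as $\abs{b}\to\infty$. Using $\abs{\tphn-ib}\asymp\abs{b}$, the relation $\abs{\phn(\tphn-ib)}=\abs{\phn(\tphn+ib)}$ (from $\overline{\phn(w)}=\phn(\overline{w})$) and the bound $\abs{\php(-\tphn+ib)}\geq\php(-\tphn)>0$ of \eqref{eq:rephi}, I get $g(b)\lesssim\abs{b}/\abs{\phn(\tphn+ib)}$. If $\Psi\in\PcNinf\cap\Nc_\Wc$, then by \eqref{eq:weaknonlattice} there is $k\in\N$ with $\liminfi{\abs{b}}\abs{b}^k\abs{\phn(\tphn+ib)}>0$, so $g(b)\lesssim\abs{b}^{k+1}$; since $\abs{\MPsi(-\tphn+ib)}$ decays faster than every power of $\abs{b}$, so does $\abs{\MPsi(1-\tphn+ib)}\lesssim\abs{b}^{k+1}\abs{\MPsi(-\tphn+ib)}$, and $\Cb_{1-\tphn}$ inherits the rapid decay. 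If instead $\Psi\in\Nc_{\Pc}\lbrb{\NPs}$ with $\NPs<\infty$, then Theorem \ref{thm:asympMPsi}\eqref{it:decayA} forces $\dep>0$, $\dem=0$ and $\PP(0)<\infty$, whence $\mu_{\mis}$ is absolutely continuous with bounded density and $\phn(\infty)<\infty$; then $\abs{\php(-\tphn+ib)}\simi\dep\abs{b}$ by Proposition \ref{propAsymp1}\eqref{it:asyphid} and $\phn(\tphn+ib)\to\phn(\infty)\in\lbrb{0,\infty}$ by Proposition \ref{propAsymp1}\eqref{it:finPhi} (recall $\tphn>\aphn$ under the standing Cram\'er regularity), so $g(b)\to1/\lbrb{\dep\phn(\infty)}$. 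Thus $\abs{\MPsi(1-\tphn+ib)}\asymp\abs{\MPsi(-\tphn+ib)}$ and the power-law decay with exponent $\NPs$ is preserved along $\Cb_{1-\tphn}$.

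The main obstacle is precisely the lower bound on $\abs{\phn(\tphn+ib)}$ at the edge of the analyticity strip: in the rapid-decay case this is exactly what the weak non-lattice hypothesis $\Nc_\Wc$ is designed to deliver, while in the power-law case it relies on the degenerate compound-Poisson-plus-drift structure that $\NPs<\infty$ imposes (absolute continuity of $\mu_{\mis}$ together with $\phn(\infty)<\infty$), which is what brings Proposition \ref{propAsymp1}\eqref{it:finPhi} into force. A minor technical point is the borderline case $\tphn=\aphn$, where the regularity of $\phn$ up to $\Cb_\tphn$ used above is supplied by the extra finiteness assumptions on $\Psi'\lbrb{\tphn^+}$ and $\Psi''\lbrb{\tphn^+}$ under which this proposition is invoked, see Remark \ref{rem:Cramer}.
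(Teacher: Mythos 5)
Your proof is correct and follows essentially the same route as the paper: a single application of the meromorphically extended functional equation \eqref{eq:fe} transports the decay from $\Cb_{-\tphn}$ to $\Cb_{1-\tphn}$, and the prefactor $\abs{\tphn-ib}/\abs{\Psi(\tphn-ib)}$ is controlled by the weak non-lattice bound in the rapid-decay case and by the structural consequences of $\NPs<\infty$ (namely $\php\in\BP$, $\phn\in\BP^c$ with absolutely continuous $\mu_{\mis}$ and $\phn(\infty)<\infty$, so that Proposition \ref{propAsymp1}\eqref{it:asyphid} and \eqref{it:finPhi} apply) in the power-law case. The only cosmetic differences are that the paper works directly with $\abs{\Psi(\tphn+ib)}$ while you factor through $\php,\phn$ via \eqref{eq:WH1}, and that you spell out the non-vanishing of $\Psi$ on $\Cb_{\tphn}\setminus\{\tphn\}$ and the boundary case $\tphn=\aphn$, both of which the paper passes over with a brief "easy extension" remark.
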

  \begin{proof}
  	Let $-\tphn=-\dphn\in\lbrb{0,\infty}$, that is $\phn(\tphn)=0$. Using \eqref{eq:WH1} we write for $b\neq 0$
  	\begin{equation}\label{eq:extendRight}
  	\MPs\!\lbrb{1-\tphn+ib}=\frac{\tphn-ib}{\Psi\!\lbrb{\tphn-ib}}\MPs\!\lbrb{-\tphn+ib}.
  	\end{equation}
  	Assume first that $\Psi\in\PcNinf$. Then we choose $k_0\in\N$, whose existence is guaranteed since $\Psi\in\Nc_\Wc$, such that \[\liminfi{|b|}|b|^{k_0}\abs{\Psi\!\lbrb{\tphn+ib}}>0.\] Premultiplying \eqref{eq:extendRight} with $|b|^{-k_0}$ and taking absolute values we conclude that the functions $\abs{\MPs\!\lbrb{1-\tphn+ib}}$ inherits the rapid decay of $\abs{\MPs\!\lbrb{-\tphn+ib}}$. Recall from Theorem \ref{thm:asympMPsi}\eqref{it:decayA} that $\Psi\in\Nc_{\Pc}\lbrb{\NPs}$ with \[\NPs<\infty\iff \php\in\BP,\,\phn\in\BP^c,\,  \PP(0)<\infty\] and from \eqref{eq:mu_-} that the \LL measure behind $\phn$ is absolutely continuous. Then, we conclude from Proposition \ref{propAsymp1}\eqref{it:asyphid} that $\limi{|b|}\frac{\abs{\php\lbrb{-\tphp+ib}}}{\abs{\tphp+ib}}=\dep>0$ and from an easy extension of Proposition \ref{propAsymp1}\eqref{it:finPhi} that $\limi{|b|}\phn\!\lbrb{\tphn+ib}=\phn\!\lbrb{\infty}>0$. Therefore, we conclude that in \eqref{eq:extendRight}, \[\limi{|b|}\frac{\abs{\tphn-ib}}{\abs{\Psi\!\lbrb{\tphn-ib}}}=\frac{1}{\dep\phn\!\lbrb{\infty}}>0.\] This shows that the speed of decay of $\abs{\MPs\lbrb{-\tphn+ib}}$ and therefore, via \eqref{eq:MIPsi} the speed of decay of $\abs{\MPsi(z)}$ are preserved.
  \end{proof}
  We start with the proof of \eqref{eq:tailCramer} of item \eqref{it:Cramer1}. Note that an easy computation related to Mellin transforms shows that  the restriction of
  \begin{equation}\label{eq:MTint}
  \MPoIsi(z)=\frac{1}{z}\MPsi(z+1)\in\Ac_{\lbrb{-1,0}}\cap\Mtt_{\lbrb{\aphp-1,-\aphn}},
  \end{equation}	
  to $\Cb_{\lbrb{0,-\dphn}}$, if $-\dphn>0$, is the Mellin transform of $\PIPs{x}=\Pbb{\IPsi\geq x}$ in the distributional sense.  Next, note from Theorem \ref{thm:asympMPsi}\eqref{it:decayA}
  that since $\NPs>0$ along $\Cb_a$, $a\in\lbrb{0,-\dphn}$, as long as $\Psi(z)\not\equiv \dep z$, then $|\MPoIsi(z)|$ decays either faster than any power-law, if $\Psi\in\PcNinf$, or by polynomial order with exponent $\NPs+1>1$. Therefore, $\MPoIsi$ is the Mellin transform of $\overline{F}_{\Psi}$ in  the ordinary sense. When $\Psi\in\Nc_\Zc$,  \eqref{eq:tailCramer} is known modulo to an unknown constant, see \cite[Lemma 4]{Rivero-05}. The value of this constant, that is $\frac{\phi_{\mis}(0)\Gamma\lbrb{-\tphn}\Wpn\!\lbrb{1+\tphn}}{\phi'_{\mis}\lbrb{\tphn^+}\Wpp\!\lbrb{1-\tphn}}$, can be immediately computed as in \eqref{eq:pstarstarstar} below.
  We proceed to establish \eqref{eq:densityCramer}. For this purpose we assume that either $\Psi\in\PcNinf\cap\Nc_\Wc$ or $\Psi\in\Nn, \: \NPs<\infty$. In any case, whenever $\NPs>1$,  the Mellin inversion theorem applies and yields that, for any $z\in\Cb_{a},\,a\in\lbrb{0,1-\dphn}$,
  \begin{equation}\label{eq:MIT}
  \pPsi(x)=\frac{1}{2\pi }\IntII x^{-a-ib}\MPsi\!\lbrb{a+ib}db.
  \end{equation}
  However, the assumptions $\dphn=\tphn\in\lbrb{-\infty,0}$, $\Psi\lbrb{\tphn}=-\php\!\lbrb{-\tphn}\phn\!\lbrb{\tphn}=0$ and $\abs{\Psi'\!\lbrb{\tphn^+}}<\infty$ of item \eqref{it:Cramer}, together with \eqref{eq:WH1}, lead to
  \[\Psi'\!\lbrb{\tphn^+}=\phn'\!\lbrb{\tphn^+}\php\!\lbrb{-\tphn}.\]
  Hence $\abs{\Psi'\!\lbrb{\tphn^+}}<\infty$ implies that $\abs{\phn'\!\lbrb{\tphn^+}}<\infty$. Differentiating once more we arrive at 
  \[\Psi''\!\lbrb{\tphn^+}=\phn''\!\lbrb{\tphn^+}\php\!\lbrb{-\tphn}-\phn'\!\lbrb{\tphn^+}\php'\!\lbrb{-\tphn}.\]
  Since \[\abs{\phn'\!\lbrb{\tphn^+}\php'\!\lbrb{-\tphn}}<\infty\]
  then $\abs{\Psi''\!\lbrb{\tphn^+}}<\infty$ implies that  $\abs{\phn''\!\lbrb{\tphn^+}}<\infty$.
  This observation, the form of $\MPs$, see \eqref{eq:MPsi1}, and the fact that $\Psi\in\Nc_\Wc\subset\Nc_\Zc$ permit us to write
  \begin{equation}\label{eq:representation}
  \begin{split}
  \MPsi(z)&=\phn(0)\frac{\Gamma(z)}{\Wpp\!(z)}\Wpn(1-z)\\
  &=\phn(0)\frac{\Gamma(z)}{\Wpp\!(z)}P(1-z)+\frac{\phn(0)\Wpn\!\lbrb{1+\tphn}}{\phn'\!\lbrb{\tphn^+}}\frac{1}{1-\tphn-z}\frac{\Gamma(z)}{\Wpp\!(z)},
  \end{split}
  \end{equation}
  with $P$  from Theorem \ref{thm:Wp}\eqref{it:E} having the form and the property that
  \[P(z):=\Wpn(z)-\frac{\Wpn\!\lbrb{1+\tphn}}{\phn'\!\lbrb{\tphn^+}\lbrb{z-\tphn}}\in\Ac_{\lbbrb{\tphn,\infty}}.\]
  Therefore, \eqref{eq:representation} shows that $\MPsi\in\Ac_{\lbrb{0,1-\tphn}}$ extends continuously to $\Cb_{1-\tphn}\setminus\lbcurlyrbcurly{1-\tphn}$. Next, we show that the contour in \eqref{eq:MIT} can be partly moved to the line $\Cb_{1-\tphn}$ at least for $|b|>c>0$, for any $c>0$. For this purpose we observe from Proposition \ref{prop:extendDecay1} that whenever $\Psi\in\Nc_\Wc\cap\PcNinf$ (resp.~$\Psi\in\Nn\cap\Nc,\,\NPs<\infty$ ) the decay of $\abs{\MPsi\!(z)}$ extends with the same speed to the complex line $\Cb_{1-\tphn}$.  Then, for any $ c>0$, $a\in\lbrb{0,1-\tphn}$ and  $x>0$, thanks to the Cauchy integral theorem valid because  $\NPs>1$,
  \begin{equation}\label{eq:boundaryContour}
  \begin{split}
  \pPsi(x)&=\frac{1}{2\pi i}\int_{z=a+ib} x^{-z}\MPsi\!\lbrb{z}dz\\ &=x^{\tphn-1}\frac{1}{2\pi}\int_{|b|>c}x^{-ib}\MPsi\!\!\lbrb{1-\tphn+ib}db  	+\frac{1}{2\pi i}\int_{B^{-}(1-\tphn,c)}x^{-z}\MPsi\!(z)dz\\
  &=\gPsi(x,c)+\hPsi(x,c),
  \end{split}
  \end{equation}
  where \[B^{-}(1-\tphn,c)=\lbcurlyrbcurly{z\in\Cb:\,|z-1+\tphn|=c \textrm{ and } \Re(z-1+\tphn)\leq 0}\] that is a semi-circle centered at $1-\tphn$. We note that the Riemann-Lebesgue lemma applied to the absolutely integrable function $\MPsi\!\lbrb{1-\tphn+ib}\ind{|b|>c}$ yields that
  \begin{equation}\label{eq:past}
  \limi{x}x^{1-\tphn}\gPsi(x,c)=\limi{x}\frac{1}{2\pi}\IntII e^{-ib\ln x}\MPsi\!\lbrb{1-\tphn+ib}\ind{|b|>c}db=0.
  \end{equation}
  Using \eqref{eq:representation} we write that
  \begin{equation}\label{eq:pstar}
  \begin{split}
  \hPsi(x,c)&=\frac{\phn(0)}{2\pi i}\int_{B^{-}(1-\tphn,c)}x^{-z}\frac{\Gamma(z)}{\Wpp(z)}P(1-z)dz\\
  &+\frac{1}{2\pi i}\frac{\phn(0)\Wpn\!\lbrb{1+\tphn}}{\phn'\!\lbrb{\tphn^+}}\int_{B^{-}(1-\tphn,c)}x^{-z}\frac{1}{1-\tphn-z}\frac{\Gamma(z)}{\Wpp\!(z)}dz\\
  &=\hPsih(x,c)+\hPsihh(x,c).
  \end{split}
  \end{equation}
  Since \[\frac{\Gamma(z)}{\Wpp\!(z)}P(1-z)\in\Ac_{\lbrbb{0,1-\tphn}}\] we can use the Cauchy integral theorem to shift the contour of integration to the set $[1-\tphn-ic,1-\tphn+ic]$ to show that, for every $c>0$ fixed,
  \begin{equation}\label{eq:pstarstar}
  \begin{split}
  \abs{\hPsih(x,c)}&=\abs{\frac{\phn(0)}{2\pi }\int_{-c}^{c}x^{-1+\tphn-ib}\frac{\Gamma(1-\tphn+ib)}{\Wpp\!(1-\tphn+ib)}P(\tphn-ib)db}\\
  &\leq x^{\tphn-1}\frac{c\phn(0)}{\pi}\sup_{z=1-\tphn+ib;\,b\in\lbrb{-c,c}}\abs{\frac{\Gamma(z)}{\Wpp\!(z)}P(1-z)}.
  \end{split}
  \end{equation}
  Therefore
  \begin{equation}\label{eq:pstarstar1}
  \limo{c}\limi{x}x^{1-\tphn}\abs{\hPsih(x,c)}\leq \limo{c}\frac{c\phn(0)}{\pi}\sup_{z=1-\tphn+ib;\,b\in\lbrb{-c,c}}\abs{\frac{\Gamma(z)}{\Wpp\!(z)}P(1-z)}=0.
  \end{equation}
  Next, we consider $\hPsihh(x,c)$. Since \[\frac{\Gamma(z)}{\Wpp\!(z)}\in\Ac_{\lbrb{0,\infty}},\] choosing $c$ small enough we have by the Cauchy's residual theorem that
  \begin{equation*}
  \begin{split}
  \hPsihh(x,c)&=\frac{\phn(0)\Wpn\!\lbrb{1+\tphn}\Gamma\!\lbrb{1-\tphn}}{\phn'\!\lbrb{\tphn^+}\Wpp\!\lbrb{1-\tphn}}x^{\tphn-1}\\
  &+\frac{1}{2\pi i}\frac{\phn(0)\Wpn\!\lbrb{1+\tphn}}{\phn'\!\lbrb{\tphn^+}}\int_{B^{+}(1-\tphn,c)}x^{-z}\frac{1}{1-\tphn-z}\frac{\Gamma(z)}{\Wpp\!(z)}dz,
  \end{split}
  \end{equation*}
  where \[B^{+}(1-\tphn,c)=\lbcurlyrbcurly{z\in\Cb:\,|z-1+\tphn|=c \textrm{ and } \Re(z-1+\tphn)\geq 0}.\] However, on $B^{+}(1-\tphn,c)$, we have that $z=1-\tphn+ce^{i\theta},\,\theta\in\lbrb{-\frac{\pi}{2},\frac{\pi}{2}}$, and thus for any such value of $\theta$
  \[\limi{x}\abs{x^{1-\tphn}x^{-z}}=\limi{x}x^{-c\cos\theta}=0.\]
  Therefore, since
  \[\sup_{z\in B^{+}(1-\tphn,c)}\abs{\frac{1}{1-\tphn-z}\frac{\Gamma(z)}{\Wpp\!(z)}}\leq \frac{1}{c}\sup_{z\in B^{+}(1-\tphn,c)}\abs{\frac{\Gamma(z)}{\Wpp\!(z)}}<\infty,\]
  we can apply the dominated convergence theorem to the integral term in the representation of $\hPsihh$ above
  to conclude that for all $c>0$ small enough
  \begin{equation}\label{eq:pstarstarstar}
  \limi{x}x^{1-\tphn}\hPsihh(x,c)=\frac{\phn(0)\Wpn\!\lbrb{1+\tphn}\Gamma\!\lbrb{1-\tphn}}{\phn'\!\lbrb{\tphn^+}\Wpp\!\lbrb{1-\tphn}}.
  \end{equation}
  Combining  \eqref{eq:pstarstarstar} and \eqref{eq:pstarstar1}, we get from \eqref{eq:pstar} that
  \begin{equation*}
  \begin{split}
  \limo{c}\limi{x}x^{1-\tphn}\hPsi(x,c)&=\limo{c}\limi{x}x^{1-\tphn}\hPsihh(x,c)=\frac{\phn(0)\Wpn\!\lbrb{1+\tphn}\Gamma\!\lbrb{1-\tphn}}{\phn'\!\lbrb{\tphn^+}\Wpp\!\lbrb{1-\tphn}}.
  \end{split}
  \end{equation*}
  However, since \eqref{eq:past} holds too, we get from \eqref{eq:boundaryContour} that
  \[\limi{x}x^{1-\tphn}\pPsi(x)=\limo{c}\limi{x}\lbrb{\gPsi(x,c)+\hPsi(x,c)}=\frac{\phn(0)\Wpn\!\lbrb{1+\tphn}\Gamma\!\lbrb{1-\tphn}}{\phn'\!\lbrb{\tphn^+}\Wpp\!\lbrb{1-\tphn}}.\]
  This completes the proof of \eqref{eq:densityCramer} of item \eqref{it:Cramer1} for $n=0$. Its claim for any $n\in\N,\,n\leq \lceil\NPs\rceil-2$, follows by the same techniques as above after differentiating \eqref{eq:boundaryContour} and observing that we have thanks to Proposition \ref{prop:extendDecay} that for every $n\in\N,\,n\leq \lceil\NPs\rceil-2,$
  \[\limi{b}|b|^n\abs{\MPsi\!\lbrb{1-\tphn+ib}}=0\]
  and $|b|^n\abs{\MPsi\!\lbrb{1-\tphn+ib}}$ is integrable.
  
  Let us proceed with the proof of item \eqref{it:dPsi}. Clearly, it is equivalent to showing that if $|\dphn|<\infty$ (resp.~$|\dphn|=\infty$, that is $-\Psi(-z)=\php(z)\in\Bc$), then for any $\underline{d}<|\dphn|<\overline{d}$ (resp.~$\underline{d}<\infty$), we have  that
  \begin{align}
  &\limsupi{x}\: x^{\underline{d}}\:\overline{F}_{\Psi}(x)=0,\label{eq:largedPs1}\\
  &\liminfi{x} \: x^{\overline{d}} \: \overline{F}_{\Psi}(x) \,=\infty.\label{eq:largedPs}
  \end{align}	
  We immediately rule out the case $\Psi(z)\equiv \dep z$ since in this case $\IPsi=\frac{1}{\dep}$ a.s., see Theorem \ref{cor:smoothness}\eqref{it:supp}, and hence  \eqref{eq:largedPs1} and \eqref{eq:logLargedPs} hold true. We consider the remaining cases. Clearly, from \eqref{eq:MTint} and \eqref{eq:domainAnalMPs} of Theorem \ref{thm:FormMellin}, one deducts that
  \[	\MPoIsi(z)=\frac{1}{z}\MPsi(z+1)\in\Ac_{\lbrb{0,\dphn}}\]
  provided $-\dphn>0$.
  We also note that $\MPoIsi$ has a power-law decay with exponent $\NPs+1>1$ along $\Cb_a,\,a\in\lbrb{0,-\dphn},$ if $-\dphn>0$.  Consider first \eqref{eq:largedPs1}, that is $\limsupi{x}\: x^{\underline{d}}\:\overline{F}_{\Psi}(x)=0$ for $\underline{d}<\abs{\dphn}$. Let $-\dphn>0$, as otherwise there is nothing to prove in \eqref{eq:largedPs1}, and choose $\underline{d}\in\lbrb{0,-\dphn}$.
  By Mellin inversion as in \eqref{eq:boundaryContour} and with $a=\underline{d}+\varepsilon<-\dphn,\,\varepsilon>0,$  we get that on $\Rp$			
  \[\PIPs{x}=x^{-\underline{d}-\varepsilon}\frac{1}{2\pi}\abs{\IntII x^{-ib} \MPoIsi\!\!\lbrb{\underline{d}+\varepsilon+ib}db}\leq C_{\underline{d}}x^{-\underline{d}-\varepsilon}\]
  and \eqref{eq:largedPs1} follows. The latter suffices for the claim \eqref{eq:logLargedPs} when $-\Psi(-z)=\php(z)\in\Bc$, since then \[\MPoIsi(z)=\frac{1}{z}\frac{\Gamma(z)}{\Wpp(z)}\in\Ac_{\lbrb{0,\infty}},\] $\abs{\dphn}=\infty$ and we can choose $\underline{d}$ as big as we wish.
  % The second relation of \eqref{eq:largedPs} follows from \eqref{eq:boundaryContour} which when $\Psi\in\Zc^\Nc\cap\Pc_\beta,\,\beta>1,$ immediately implies that even
  %	\[\limsupi{x}x^{d_-+1}\pPsi(x)=0.\]
  %Indeed, all one needs to do is shift the contour of integration to $\Cb_a$ with $1+\dPs>a>1+d_-$.
  It remains therefore to prove \eqref{eq:largedPs} assuming that $-\Psi(-z)\not\equiv\php(z)$. If $\Pi(dy)\equiv 0\,dy$ on $\lbrb{-\infty,0}$ then necessarily $\phn(z)=\phn(0)+\dem z,\,\dem>0,$. In this case   the even stronger item \eqref{it:Cramer} is applicable since it can be immediately shown that its conditions are satisfied that is
  \begin{itemize}
  	\item since $\tphn=-\frac{\phn(0)}{\dem}$ and $\limi{|b|}\abs{\phn\!\lbrb{\tphn+ib}}=\infty$ then  \eqref{eq:weaknonlattice} holds and hence $\Psi\in\Nc_\Wc$;
  	\item  and $\abs{\phn'(\tphn^+)}=\dem<\infty, \abs{\phn''(\tphn^+)}=0$ ensure $\abs{\Psi''\!\lbrb{\tphn^+}}<\infty$.	 	
  \end{itemize}  
  We can assume from now that $\Pi(dy)\not\equiv 0\,dy$ on $\lbrb{-\infty,0}$ and $\Psi\in\Nc_\Zc$.  If the conditions of item \eqref{it:Cramer} are present then they immediately imply \eqref{eq:largedPs} as then a stronger version of the asymptotic holds true. If the conditions of item \eqref{it:Cramer} are violated we proceed by approximation to furnish them. To this end, we define, recalling the expression of $\Psi$ in \eqref{eq:lk1}, for each $\eta>0$,
  \begin{equation}\label{eq:PsiDelta}
  \Psime\!(z)=\frac{\sigma^2}{2} z^2 + \gamma z +\int_{\R} \left(e^{zr} -1
  -zr \ind{|r|<1}\right)\Pi^{(\eta)}(dr) +\Psi(0)
  \end{equation}
  where we set $\Pi^{\leta}(dr)=e^{-\eta r^2\ind{r<-1}}\Pi(dr)$.
  Since $\Pi^{\leta}$ is equivalent to $\Pi$ it is clear that for each $\eta>0$, \[\Psi\in\Nc_\Zc\implies\Psime\in\Nc_\Zc\] and set from \eqref{eq:WH1}, $\Psime\!(z)=-\php^{\leta}(-z)\phn^{\leta}(z)$. Let $\xi^{\leta}$ be the \LLP underlying $\Psime$. Note that the transformation $\Psi\mapsto\Psime$ leaves the killing rate $q:=-\Psi(0)$ invariant and has the sole effect of truncating at the level of paths some of the negative jumps smaller than $-1$ of the underlying \LLP $\xi$. Therefore, pathwise
  \begin{equation}\label{eq:I>Ieta}
  \IPsi=\int_{0}^{\textbf{e}_q} e^{-\xi_s}ds\geq\int_{0}^{\textbf{e}_q} e^{-\xi^{\leta}_s}ds=I_{\Psime}.
  \end{equation}
  Next, it is clear from \eqref{eq:PsiDelta} that $\Psime\in\Ac_{\lbrb{-\infty,0}}$ and then it is immediate that $\lbrb{\Psime}''>0$ on $\Rn$,  and thus $\Psime$ is convex on $\Rn$. Moreover, clearly
  \[\lim_{u\to-\infty}\frac{1}{|u|}\IntIO\lbrb{e^{ur}-1-ru\ind{|r|<1}}\Pi^{\leta}\!(dr)=\infty\]
  since $\Pi$ is not identically the zero measure on $\Rn$. Therefore, \[\lim_{u\to-\infty}\Psime\!(u)=\limi{u}\phn^{\leta}\!(-u)=\infty.\] Thus,  we conclude immediately, writing here $\thPsml:=\mathfrak{u}_{\phn^{\leta}}$, that $-\thPsml\in\lbrb{0,\infty}$ if $\Psi(0)=-q<0$. Also, if $q=0$, then, regardless of whether $\Ebb{\xi_1}\in\lbrbb{0,\infty}$ or $\Ebb{\xi_1\ind{\xi_1>0}}=\Ebb{-\xi_1\ind{\xi_1<0}}=\infty$ with a.s.~$\limi{t}\xi_t=\infty$ hold for $\xi$, we necessarily have $\Ebb{\xi^{\leta}_1}\in\lbrbb{0,\infty}$, see \eqref{eq:PsiDelta}, and thus $\phn^{\leta}\!(0)>0$ leads to $-\thPsml\in\lbrb{0,\infty}$.
  However, from \eqref{eq:I>Ieta} we get that
  \[\Pbb{\IPsi>x}\geq \Pbb{I_{\Psime}>x}.\]
  Hence, from \eqref{eq:tailCramer} of item \eqref{it:Cramer} we obtain, for any $\epsilon>0$ and fixed $\eta>0$, that, with some $C>0$,  		\begin{equation}\label{eq:725}
  \limsupi{x}x^{-\thPsml+\epsilon}\Pbb{\IPsi>x}\geq\limsupi{x}x^{-\thPsml+\epsilon}\Pbb{I_{\Psime}>x}=C\limsupi{x}x^{\epsilon}=\infty.
  \end{equation}
  Therefore \eqref{eq:largedPs} holds with $\overline{d}=-\thPsml+\epsilon$. Thus, it remains to show that \[\limo{\eta}\thPsml=\dphn\in\lbrbb{-\infty,0},\] where $\dphn>-\infty$ is evident since $z\mapsto -\Psi(-z)\notin\Bc$. It is clear from the identity
  \begin{equation*}
  \begin{split}
  \Psime\!\lbrb{u}&=\frac{\sigma^2}{2}u^2+c u+ \int_\R\lbrb{e^{ur}-1-ur\ind{|r|<1}}\lbrb{\Pi(dr)\ind{r>0}+e^{-\eta r^2\ind{r<-1}}\Pi(dr)\ind{r<0}},	
  %\IntOI\lbrb{e^{ur}-1-ur\ind{r<1}}\Pi(dr)+\IntIO\lbrb{e^{ur}-1-ur\ind{r>-1}}e^{-\eta r^2\ind{r<-1}}\Pi(dr),	
  \end{split}
  \end{equation*}
  that for any $u\leq 0$, $\Psime\!(u)$ is increasing in $\eta$ with  \[\limo{\eta}\Psime\!(u)=\Psi(u).\]  Fix $\eta_0>0$ small enough. Recall that, for any $\eta>0$, $\thPsml:=\mathfrak{u}_{\phn^{\leta}}$. Then, clearly, \[\Psi\!\lbrb{\tphn^{ (\eta_0)}}=\limo{\eta}\Psime\!\!\lbrb{\tphn^{(\eta_0)}}\geq\Psi^{(\eta_0)}\!\!\lbrb{\tphn^{(\eta_0)}}=0.\] 
  If $\Psi\!\lbrb{\tphn^{(\eta_0)}}=\infty$ then from \eqref{eq:WH1} we get that $\phn\!\lbrb{\tphn^{(\eta_0)}}=-\infty$ and thus $\tphn^{(\eta_0)}\leq \dphn$, see the definition \eqref{eq:dphi}. Similarly, if \[\Psi\!\lbrb{\tphn^{(\eta_0)}}=-\phn\!\lbrb{\tphn^{(\eta_0)}}\php\!\lbrb{-\tphn^{(\eta_0)}}\in\lbbrb{0,\infty}\] then $\phn\!\lbrb{\tphn^{(\eta_0)}}\leq 0$ and again $\tphn^{(\eta_0)}\leq \dphn$. Therefore,
  \[\underline{\mathfrak{u}}=\limsupo{\eta}\thPsml\leq \dphn.\]
  Assume that $\underline{\mathfrak{u}}<\dphn$ and choose $u\in\lbrb{\underline{\mathfrak{u}},\dphn}$. Then for any $\eta>0$ small enough $\thPsml<u$ and hence $\phn^{\leta}(u)>0$. Therefore, $\Psime\!(u)=-\php^{\leta}(-u)\phn^{\leta}(u)<0$ and thus \[\Psi(u)=\limo{\eta}\Psime(u)\leq 0.\] However, for $u<\dphn$, \[\Psi(u)=-\php(-u)\phn(u)\in\lbrbb{0,\infty},\] which is a contradiction and triggers the validity of $\underline{\mathfrak{u}}=\dphn$. Moreover, the monotonicity of $\Psime$ when $\eta\downarrow 0$ shows that in fact \[\limo{\eta}\thPsml=\underline{\mathfrak{u}}=\dphn\] and from \eqref{eq:725} valid for any $\eta,\epsilon>0$ we conclude the statement \eqref{eq:largedPs} when $\Psi\in\Nc_\Zc$. Hence, \eqref{eq:logLargedPs} holds true for $\Psi\in\Nc_\Zc$. Next, if $\Psi\notin\Nc_\Zc$ then as in Theorem \ref{thm:Wp}\eqref{it:D} one can check that
  \[\Pi(dr)=\sum_{n=-\infty}^\infty c_n \delta_{hn}(dr),\,\sum_{n=-\infty}^{\infty}c_n<\infty,\,c_n\geq 0,\,h>0, \text{ and $\sigma^2=\gamma=0$ in \eqref{eq:lk1}.}\]
  The underlying \LL process, $\xi$, is a possibly killed compound  Poisson process living on the lattice $\curly{hn}_{n=-\infty}^\infty$. We proceed by approximation. Set \[\Psi^{(\dr)}(z)=\Psi(z)+\dr z,\,\dr>0,\] with underlying \LL process $\xi^{(\dr)}$. Clearly, $\xi_t^{(\dr)}= \xi_t+\dr t,\,t\geq0,$ and hence $\IPsi\geq I_{\Psi^{(\dr)}}$ and, for any $x>0$, \[\Pbb{\IPsi> x}\geq\Pbb{I_{\Psi^{(\dr)}}>x}.\] Set \[\Psi^{(\dr)}\!(z)=-\php^{(\dr)}\!(-z)\phn^{(\dr)}\!(z),\,\,\, \dphn^{(\dr)}=\mathfrak{\overline{a}}_{\phn^{(\dr)}}, \text{ see \eqref{eq:dphi1}}.\] However, $\Psi^{(\dr)}\in\Nc_\Zc$ and therefore from \eqref{eq:largedPs}, for any $\overline{d}>\abs{\dphn^{(\dr)}}$, we deduct that
  \[\liminfi{x}x^{\overline{d}}\Pbb{\IPsi> x}\geq\liminfi{x}x^{\overline{d}}\Pbb{I_{\Psi^{(\dr)}}> x}=\infty.\]
  To establish \eqref{eq:largedPs} for $\Psi$ it remains to confirm that $\limo{
  	\dr}\dphn^{(\dr)}=\dphn$. Note that adding $\dr z$ to $\Psi(z)$ does not alter its range of analyticity and hence with the obvious notation $\aphn=\mathfrak{a}_{\phn^{(\dr)}}$. Set \[T=\inf\curly{t>0:\,\xi_t<0}\in\lbrbb{0,\infty} \text{ and } T^{(\dr)}=\inf\curly{t>0:\,\xi^{(\dr)}_t<0}\in\lbrbb{0,\infty}.\]
  Clearly, from $\xi_t^{(\dr)}= \xi_t+\dr t$ we get that  a.s.
  \[\limo{\dr}\lbrb{T^{(\dr)},\xi_{T^{(\dr)}}}= \lbrb{T,\xi_T}.\]
   However, $\lbrb{T,\xi_T}$ and $\lbrb{T^{(\dr)},\xi_{T^{(\dr)}}}$ define the distribution of the bivariate descending ladder time and height processes of $\Psi$ and $\Psi^{(\dr)}$, see Section \ref{subsec:LP}. Therefore, since from Lemma \ref{lem:WH} we can choose $\phn,\phn^{(\dr)}$ to represent the  descending ladder height process, that is $\phn=\kappa_{\mis},\,\phn^{(\dr)}=\kappa^{(\dr)}_{\mis}$ in the notation therein, we conclude that $\limo{\dr}\phn^{(\dr)}(z)=\phn(z)$ for any $z\in\Cb_{\lbrb{\aphn,\infty}}$ and hence $\limo{\dr}\mathfrak{u}_{\phn^{(\dr)}}=\tphn$. Thus $\limo{
  	\dr}\dphn^{(\dr)}=\dphn$. This, concludes Theorem \ref{thm:largeAsymp}. \qed
  
  \subsection{Proof of Theorem \ref{thm:smallTime}}\label{subsec:smallTime}
  Recall from \eqref{eq:MTint1} that $\MPIsi(z)$ is the Mellin transform of $\PIoPs{x}$ at least on $\Cb_{\lbrb{-1,0}}$.  We record and re-express it with the help of \eqref{eqM:MIPsi} and \eqref{eq:Wp} as
  \begin{equation}\label{eq:M*}
  \begin{split}
  \MPIsi(z)&=-\frac{1}{z}\MPsi(z+1)=-\frac{\phn(0)}{z}\MPs(z+1)\\
  &=-\frac{\phn(0)}{z}\frac{\Gamma(z+1)}{\Wpp\!(z+1)}\Wpn\!(-z)
  ,\,z\in\Cb_{\lbrb{-1,0}}.
  \end{split}
  \end{equation}
  From Theorem \ref{lemma:FormMellin}  we deduct that $\MPsi\in\Ac_{\lbrb{0,\,1-\aphn}}$ and that it extends continuously to $\Cb_{0}\setminus\curly{0}$. Moreover, Proposition \ref{prop:extendDecay} applied to  $\MPsi(z)$ for $z\in i\R$ shows that the decay of $\abs{\MPsi(z)}$  is preserved along $\Cb_{0}$. Therefore, either the rate of decay of $\abs{\MPIsi(z)}$ is faster than any power along $\Cb_a,\,a\in\lbbrb{-1,0},$ or it decays with power-like exponent of value $\NPs+1>1$.
  \begin{comment}
  Clearly from Theorem \ref{lemma:FormMellin}, $z\M^*(z)\in\Ac_{\lbrb{-1,\dPs}}$, if $\dPs>0$, and $z\M^*(z)\in\Ac_{\lbrbb{-1,0}}$., if $\dPs=0$. Also thanks to Theorem \ref{thm:asympMPsi} and the definitions of the classes $\Sc^\Nc$ and $\Pc_\beta$ we get that for $a\in\lbrb{-1,\dPs}$ and all $\beta\in\lbrb{1,\mathtt{N}_\Psi+1}$ we have that
  \begin{equation}\label{eq:decayM*}
  \limsupi{|b|}|b|^\beta\abs{\M^*(a+ib)}=0.
  \end{equation}
  % However, since $\Psi(0)\stackrel{\ref{eq:WH}}{=}-\phi_+(0)\phi_-(0)<0$ then $\phi_+(0)>0$ then item \eqref{it:A} of Theorem \ref{thm:Wp} implies that $\Wpp$ is zero-free on $\Cb_{\lbbrb{0,\infty}}$. Therefore, $(z+1)\M^*(z)\in\Ac_{\lbbrb{-1,\dPs}}$.
  From \eqref{eq:M*} and the recurrent equation \eqref{eq:Wp} we get that
  \begin{eqnarray}\label{eq:M*1}
  \M^*(z)=\frac{\phi_-(0)}z\frac{\phi_+(z+1)}{z+1}\frac{\Gamma(z+2)}{W_{\phi_+}(z+2)}W_{\phi_-}(-z)
  =\frac{\phi_+(z+1)}{z+1}\M^{**}(z).
  \end{eqnarray}
  However, Theorem \ref{thm:Wp} implies that $\Wpp$ is zero-free on $\Cb_{\lbrb{0,\infty}}$ and thus $(z+1)\M^*(z)\in\Ac_{\lbbrb{-1,\dPs}}$. Furthermore,
  we see that $\M^{**}\in\Ac_{\lbrb{-2,0}}$ and applying $\Wpp(1)=\Wpn(1)=\Gamma(1)=1$ we arrive at $\M^{**}(-1)=-\phi_-(0)$. Moreover, we see from \eqref{eq:M*1} that $\M^*$ extends continuously to $\Cb_{\lbbrb{-1,\dPs}}\setminus\curly{-1}$ and trivially \eqref{eq:decayM*} holds for $a=-1$ too.
  \end{comment}
  Via a Mellin inversion, choosing a contour, based on the line $\Cb_{-1}$ and a semi-circle, as in the proof of Theorem \ref{thm:largeAsymp}, see \eqref{eq:boundaryContour} and \eqref{eq:past}, we get that, for any $c\in\lbrb{0,\frac12}$, as $x\to0$,
  \begin{equation}\label{eq:F_Psi}
  \PIoPs{x}=\frac{1}{2\pi i}\int_{B^{+}(-1,c)}x^{-z}\MPIsi\!(z)dz+\sospace{x},
  \end{equation}
  where only the contour is changed to $B^{+}(-1,c)=\lbcurlyrbcurly{z\in\Cb:\,|z+1|=c \textrm{ and } \,\Re(z+1)\geq0}$. Apply \eqref{eq:Wp} to write from \eqref{eq:M*}
  \begin{equation}\label{eq:M*2}
  \MPIsi(z)=-\frac{\phn(0)}{z}\frac{\php\!\lbrb{z+1}}{z+1}\frac{\Gamma(z+2)}{\Wpp\!(z+2)}\Wpn\!(-z)=\frac{\php\lbrb{z+1}}{z+1}\MPIssi(z).
  \end{equation}
  Clearly, $\MPIssi\in\Ac_{\lbrb{-2,0}}$.  Recall that $\php^\sharp(z)=\php(z)-\php(0)\in\Bc$.
  Then, we have, noting $\MPIssi(-1)=\phn(0)$, that
  \begin{equation}\label{eq:M^*decomp}
  \begin{split}
  \MPIsi(z)&=\frac{\php(0)\phn(0)}{z+1}+\frac{\php^\sharp(z+1)}{z+1}\MPIssi(-1)+\php(z+1)\frac{\MPIssi(z)-\MPIssi(-1)}{z+1}\\
  \nonumber&=H_{\Psi,1}(z)+H_{\Psi,2}(z)+H_{\Psi,3}(z).
  \end{split}
  \end{equation}
  Plugging this in \eqref{eq:F_Psi} we get and set
  \begin{equation}\label{eq:FDecom}
  \PIoPs{x}=\frac{1}{2\pi i}\int_{B^{+}(-1,c)}x^{-z}\sum_{j=1}^3H_{\Psi,j}(z)dz+\sospace{x}=\sum_{j=1}^3F^{(c)}_j\!(x)+\sospace{x}.
  \end{equation}
  Since \[z\mapsto \frac{H_{\Psi,3}(z)}{\php\lbrb{z+1}}=\frac{\MPIssi(z)-\MPIssi(-1)}{z+1}\in\Ac_{\lbrb{-2,0}}\] and $\php(0)\in\lbbrb{0,\infty}$ then precisely as in \eqref{eq:pstarstar} and \eqref{eq:pstarstar1}, that is by shifting the contour from $B^{+}(-1,c)$ to $[-1-ic,-1+ic],$ we get that
  \begin{equation}\label{eq:F33}
  \limo{c}\limo{x}x^{-1}F^{(c)}_3\!(x)=0.
  \end{equation}
  Next, from \eqref{eq:phi} and the identity $\MPIssi(-1)=\phn(0)$ we get that \[H_{\Psi,2}(z)=\frac{\php^\sharp(z+1)}{z+1}\MPIssi(-1)=\phn(0)\int_{0}^{\infty}e^{-(z+1)y}\mubarp{y}dy+\phn(0)\dep.\]
  Clearly, if \[\lbrb{\php^\sharp}'\!\!(0)=\php'(0)=\IntOI\mubarp{y}dy<\infty,\] then the same arguments and shift of contour used above to prove \eqref{eq:F33} yield that
  \begin{equation}\label{eq:F32}
  \limo{c}\limo{x}x^{-1}F^{(c)}_2(x)=	\limo{c}\limo{x}\frac{x^{-1}}{2\pi i}\int_{B^{+}(-1,c)}x^{-z}H_{\Psi,2}(z)dz=0.
  \end{equation}
  Also, similarly, we deduce that in any case the term $\phn(0)\dep$ does not contribute to \eqref{eq:F32} and therefore assume that $\dep=0$ in the sequel.
  However, if \[\lbrb{\php^\sharp}'\!\!\lbrb{0^+}=\php'\!\lbrb{0^+}=\infty\] we could not apply this argument. We then split \[\mubarp{y}=\mubarp{y}\ind{y>1}+\mubarp{y}\ind{y\leq 1}\] and write accordingly
  \begin{equation*}
  \begin{split}
  H_{\Psi,2}(z)&=\phn(0)\int_{1}^{\infty}e^{-(z+1)y}\mubarp{y}dy+\phn(0)\int_{0}^{1}e^{-(z+1)y}\mubarp{y}dy\\
  &=H_{\Psi,2,1}(z)+H_{\Psi,2,2}(z).
  \end{split}
  \end{equation*}
  However, $\abs{H_{\Psi,2,2}(-1)}=\phn(0)\int_{0}^{1}\mubarp{y}dy<\infty$ and the same arguments show that the portion of $H_{\Psi,2,2}$ in $F^{(c)}_2$ is negligible in the sense of \eqref{eq:F32}. Then, we need discuss solely the contribution of $H_{\Psi,2,1}$ to $F^{(c)}_2$ that is
  \begin{equation*}
  \begin{split}
  F^{(c)}_{2,1}(x)&=\frac{\phn(0)}{2\pi i}\int_{B^{+}(-1,c)}x^{-z}\int_{1}^{\infty}e^{-(z+1)y}\mubarp{y}dydz\\
  &=\frac{\phn(0)}{2\pi i}\int_{1}^{\infty}\int_{B^{+}(-1,c)}x^{-z}e^{-(z+1)y}dz\mubarp{y}dy.
  \end{split}
  \end{equation*}
  The interchange is possible since evidently on $B^{+}(-1,c)$ we have that \[\sup_{z\in B^{+}(-1,c)}\abs{H_{\Psi,2,1}(z)}<\infty.\]
  The latter in turn follows from
  \[\sup_{z\in B^{+}(-1,c)}\abs{H_{\Psi,2,1}(z)}=\sup_{z\in B^{+}(-1,c)}\phn(0)\frac{\abs{\php^\sharp(z+1)}}{\abs{z+1}}=\sup_{z\in B^{+}(-1,c)}\phn(0)\frac{\abs{\php^\sharp(z+1)}}{c}<\infty.\]
  However, for any $x,y>0$, $z\mapsto x^{-z}e^{-(z+1)y}$ is an entire function and an application of the Cauchy theorem to the closed contour $B^{+}(-1,c)\cup \curly{-1+i\beta,\,\beta\in\lbbrbb{-c,c}}$ implies that
  \[\int_{B^{+}(-1,c)}x^{-z}e^{-(z+1)y}dz=ix\int_{-c}^{c}e^{-i\beta\lbrb{\ln x  +y}}d\beta=2x\frac{\sin\lbrb{c\lbrb{\ln x  +y}}}{\ln x  +y}i.\]
  Therefore, integrating by parts after representing $\mubarpspace{y}=\int_{y}^{\infty}\mu_{\pls}(dv)$, we get that
  \begin{equation*}
  \begin{split}
  F^{(c)}_{2,1}(x)&=x\frac{\phn(0)}{\pi }\limi{A}\int_{1}^{A}\frac{\sin\lbrb{c\lbrb{\ln x  +y}}}{\ln x  +y}\mubarp{y}dy\\
  &=x\frac{\phn(0)}{\pi }\limi{A}\int_{1}^{A}\int_{c\lbrb{1+\ln x  }}^{c\lbrb{v+\ln x  }}\frac{\sin\lbrb{y}}{y}dy\mu_{\pls}(dv)\\
  &+\,x\frac{\phn(0)}{\pi }\limi{A}\mubarpspace{A}\int_{c\lbrb{\ln x  +1}}^{c\lbrb{A+\ln x  }}\frac{\sin\lbrb{y}}{y}dy\\
  &=x\frac{\phn(0)}{\pi }\int_{1}^{\infty}\int_{c\lbrb{1+\ln x  }}^{c\lbrb{v+\ln x  }}\frac{\sin\lbrb{y}}{y}dy\mu_{\pls}(dv),
  \end{split}	
  \end{equation*}
  where we have used that the second limit in the second relation to the right-hand side vanishes because
  \[\sup_{a<b;a,b\in\Rb}\abs{\int_{a}^{b}\frac{\sin\lbrb{y}}{y}dy}<\infty,\]
  the mass of $\mu_+(dv)$ on $\lbrb{1,\infty}$ is simply $\mubarpspace{1}<\infty$ and $\limi{A}\mubarpspace{A}=0$. Also, this allows via the dominated convergence theorem to deduce that
  \begin{equation*}
  \limo{x}x^{-1}F^{(c)}_{2,1}(x)=\frac{\phi_-(0)}{\pi }\limo{x}\int_{1}^{\infty}\int_{c\lbrb{1+\ln x}}^{c\lbrb{v+\ln x}}\frac{\sin\lbrb{y}}{y}dy\mu_{\pls}(dv)=0.
  \end{equation*}
  Thus with the reasoning above we verify that \eqref{eq:F32} holds, that is $\limo{c}\limo{x}x^{-1}F^{(c)}_2(x)=0$. The latter together with \eqref{eq:F33}  applied in \eqref{eq:FDecom} allows us to understand the asymptotic of $x^{-1}F_{\Psi}(x)$, as $x\to 0$, in terms of $F^{(c)}_1$. However, from its very definition, \eqref{eq:M^*decomp} and $\Psi(0)=-\phn(0)\php(0)$, see \eqref{eq:WH1},
  \begin{equation*}
  F^{(c)}_1(x)=-\frac{\Psi(0)}{2\pi i}\int_{B^{+}(-1,c)}\frac{x^{-z}}{z+1}dz=-\Psi(0)x+\frac{\Psi(0)}{2\pi i}\int_{B^{-}\!(-1,c)}\frac{x^{-z}}{z+1}dz,
  \end{equation*}
  where we recall that
  \[B^{-}\!(-1,c)=\lbcurlyrbcurly{z\in\Cb:\,|z+1|=c \textrm{ and } \Re(z+1)\leq0}.\]
  Clearly, if $z\in B^{-}(-1,c)\setminus\curly{-1\pm ic}$ then $\limo{x}\abs{x^{-1}x^{-z}}=0$. Therefore, for any $c\in\lbrb{0,\frac12}$,
  \begin{equation}\label{eq:Fasym}
  \limo{x}x^{-1}	\PIoPs{x}=\limo{x}x^{-1}F^{(c)}_1(x)=-\Psi(0)\in\lbbrb{0,\infty}.
  \end{equation}
  Thus \eqref{eq:smallTime} holds true. When $\Psi\in\Nn$, for some $\NPs>1$, all arguments above applied to $\MPIsi$ can be  carried over directly  to $\MPsi$ but at $z=0$. When $f_\Psi$ is continuous at zero then the result is immediate from \eqref{eq:smallTime}. Thus, we obtain \eqref{eq:smallTime1}. \qed

  \subsection{Proof of  Theorem \ref{thm:momentsIt}\ref{it:negative}}\label{subsec:momentsIt}
  Let $\Psi\in\overNc\setminus\Nc_\dagger$ that is $\Psi(0)=0$, see \eqref{eq:Ncdag}. Recall that $\IPsi(t)=\int_{0}^{t}e^{-\xi_s}ds$. Then, if $\aphp<0$, we get that, for any $a\in\lbrb{0,-\aphp}$,
  \begin{equation}\label{eq:finiteMoment}
  \Ebb{\IPsi^{-a}\!(t)}\leq t^{-a}\Ebb{e^{a\sup_{v\leq t}\xi_v}}<\infty,
  \end{equation}
  where the finiteness of the exponential moments of $\sup_{v\leq t}\xi_v$ of order less than $-\aphp$ follows from the definition of $\aphp$, see \eqref{eq:aphi}, that is $\php\in \Ac_{\lbrb{-a,\infty}}$ for $a\in\lbrb{0,-\aphp}$, see e.g. \cite[Chapter VI]{Bertoin-96} or \cite[Chapter 4]{Doney-07-book}. This of course settles \eqref{eq:momentsIt}, that is \[a\in\lbrb{0,1-\aphp}\implies \Ebb{\IPsi^{-a}(t)}<\infty,\] when $\aphp=-\infty$.
  %We know that for any $r>0$ and $\Psi_r(z)=\Psi(z)-r\in\Nc_\dagger$ that $\MPsiq\in\Ac_{\lbrb{0,1}}$, see Theorem \ref{cor:smoothness}\eqref{it:MTfact}, and therefore from
  %\[\MPsiq(1-a)=\Ebb{\IPsiq{r}^{-a}}=r\IntOI e^{-rt}\Ebb{\IPsi^{-a}(t)}dt<\infty\]
  %we conclude from the fact that $s\mapsto \Ebb{\IPsi^{-a}(s)}$ is non-increasing that $\Ebb{\IPsi^{-a}(t)}<\infty$ for any $a\in\lbrb{0,1}$. Then assume next that $\aphp<0$.
  Next, we rewrite \eqref{eq:lk1} as follows
  \begin{equation}\label{eq:decomp}
  \begin{split}
  \Psi(z)&=\Psi_1(z)+\Psi_2(z)\\
  &=\lbrb{\frac{\sigma^2}{2} z^2 + \gamma z +\int_{-\infty}^{1} \left(e^{zr} -1
  	-zr \ind{|r|<1}\right)\Pi(dr)}+\int_{1}^{\infty}\left(e^{zr} -1
  \right)\Pi(dr),
  \end{split}
  \end{equation}
  where $\Psi_1,\Psi_2\in\overNc\setminus \Nc_\dagger$ and from \eqref{eq:decomp} $\lbrb{\xi_s}_{s\geq0}=\lbrb{\xi^{(1)}_s+\xi^{(2)}_s}_{s\geq0}$ where $\xi^{(1)},\xi^{(2)}$ are independent \LLPs with \LLK exponents $\Psi_1,\Psi_2$ respectively.
  Set as usual $\Psi_1\!(z)=-\php^{(1)}\!(-z)\phn^{(1)}\!(z)$ and note from \eqref{eq:decomp} that $\Psi_1\in\Ac_{\lbrb{0,\infty}}$ and hence $\php^{(1)}\in\Ac_{\lbrb{-\infty,0}}$ or equivalently $\mathfrak{a}_{\php^{(1)}}=-\infty$, see \eqref{eq:aphi}. Also note that $\mathfrak{a}_{\php^{(2)}}=\aphp$. Similarly to \eqref{eq:finiteMoment} we get that, for any $a\in\lbrb{0,1-\aphp}$,
  \begin{equation}\label{eq:finiteMoment1}
  \Ebb{\IPsi^{-a}(t)}\leq \Ebb{e^{a\sup_{v\leq t}\xi^{(1)}_v}}\Ebb{I^{-a}_{\Psi_2}(t)}.
  \end{equation}
  However, since $\mathfrak{a}_{\php^{(1)}}=-\infty$ we conclude from \eqref{eq:finiteMoment} that in fact $\Ebb{e^{a\sup_{v\leq t}\xi^{(1)}_v}}<\infty$, for any $a>0$.
  If $\Psi_2\equiv 0$ there is nothing to prove. So let \[h=\Pi\!\lbrb{\curly{1,\infty}}>0\] and write $\xi^{(2)}_s=\sum_{j=1}^{N_s}X_j$, where $\lbrb{N_s}_{s\geq 0}$ is a Poisson counting process  with parameter $h>0$ and $\lbrb{X_j}_{j\geq 1}$ are independent and identically distributed  random variables with law \[\Pbb{X_1\in dx}=\ind{x>1} \frac{\Pi(dx)}h.\]
  It is a well-known fact that $\lambda<-\aphp \Longrightarrow \Ebb{e^{\lambda X_1}}<\infty$. %\impliedby .\]
  Set, for $n \in \N$, \[S_n=\sum_{j=1}^{n}X_j \text{ with } S_0=0.\] Then
  \[I_{\Psi_2}(t)=t\ind{N_t=0}+\ind{N_t>0}\lbrb{\sum_{j=1}^{N_t}\textbf{e}_{j} e^{-S_{j-1}}+\lbrb{t-\sum_{j=1}^{N_t}\textbf{e}_{j}}e^{-S_{N_t}}}\]
  with $\lbrb{\textbf{e}_j}_{j\geq 1}$ a sequence of independent and identically distributed random variables with exponential law of parameter  $h$. Clearly then
  \begin{equation}\label{eq:repIt}
  \begin{split}
  \Ebb{I_{\Psi_2}(t)^{-a}}&=t^{-a}\Pbb{N_t=0}+\sum_{n=1}^{\infty}\Ebb{\lbrb{\sum_{j=1}^{n}\textbf{e}_{j} e^{-S_{j-1}}+\lbrb{t-\sum_{j=1}^{n}\textbf{e}_{j} }e^{-S_n}}^{-a}\!\!;N_t=n}\\
  &=t^{-a}\Pbb{N_t=0}+\sum_{n=1}^{\infty}A_n(t).\\
  \end{split}
  \end{equation}
  Note that
  \[\curly{N_t=n}=\curly{\sum_{j=1}^{n}\textbf{e}_{j}\leq \frac{t}{2}; \sum_{j=1}^{n+1}\textbf{e}_{j}>t}\bigcup\curly{\sum_{j=1}^{n}\textbf{e}_{j}\in \lbrb{\frac{t}{2},t}; \sum_{j=1}^{n+1}\textbf{e}_{j}>t}.\]
  We observe, {since $N_s$ follows a Poisson distribution with parameter $hs$, that, for any $t>0$ and $n-1 \in \N$,
  	\begin{equation}\label{eq:Nt}
  	\sup_{s\in\lbrbb{0,t}}\Pbb{N_s\geq n-1}\leq e^{ht}\frac{t^{n-1}h^{n-1}}{(n-1)!}.
  	\end{equation}
  	We split the quantity $A_n(t)=A_n^{(1)}(t)+A_n^{(2)}(t)$ by considering the two possible mutually exclusive cases for the event $\curly{N_t=n}$ discussed above. In the first scenario we have the following sequence of relations,  where we use the bound \eqref{eq:Nt} in the third line below,
  	\begin{equation}\label{eq:An1}
  	\begin{split}
  	A_n^{(1)}(t)&\,\,=\Ebb{\lbrb{\sum_{j=1}^{n}\textbf{e}_{j} e^{-S_{j-1}}+\lbrb{t-\sum_{j=1}^{n}\textbf{e}_{j} }e^{-S_n}}^{-a};\sum_{j=1}^{n}\textbf{e}_{j}\leq \frac{t}{2}; \sum_{j=1}^{n+1}\textbf{e}_{j}>t}\\
  	&\,\,\leq \Ebb{\lbrb{\textbf{e}_{1}+\frac{t}{2}e^{-S_n}}^{-a};\,\textbf{e}_{1}\leq \frac{t}{2};\sum_{j=2}^{n}\textbf{e}_{j}<t}\\
  	&\,\,\leq\Ebb{\lbrb{\textbf{e}_{1}+\frac{t}{2}e^{-S_n}}^{-a};\,\textbf{e}_{1}\leq \frac{t}{2}}\Pbb{N_t\geq n-1}\\
  	&\,\,\leq  e^{ht}\frac{h^nt^{n-1}}{(n-1)!} \Ebb{\int_{0}^{\frac{t}{2}}\lbrb{x+\frac{t}{2}e^{-S_n}}^{-a}e^{-hx}dx}\\
  	&\,\,\leq e^{ht}\frac{h^nt^{n-1}}{(n-1)!}\Ebb{\int_{0}^{\frac{t}{2}}\lbrb{x+\frac{t}{2}e^{-S_n}}^{-a}dx}\\
  	&\,\,\leq e^{ht}\frac{h^nt^{n-1}}{(n-1)!}\lbrb{\frac{2^{a-1}}{(a-1)t^{a-1}}\Ebb{e^{(a-1)S_n}} \ind{a>1}+\lbrb{\Ebb{S_n}+\ln(4)}\ind{a=1}}\\
  	&\quad\,\,+e^{ht}\frac{h^nt^{n-1}}{(n-1)!}\frac{t^{1-a}}{1-a}\ind{a\in\lbrb{0,1}}\\
  	&\,\,=e^{ht}\frac{h^nt^{n-1}}{(n-1)!}\lbrb{\frac{(2/t)^{a-1}}{\lbrb{a-1}}\lbrb{\Ebb{e^{(a-1)X_1}}}^n\ind{a>1}+\lbrb{n\Ebb{X_1}+\ln(4)}\ind{a=1}}\\
  	&\quad\,\,+e^{ht}\frac{h^nt^{n-1}}{(n-1)!}\frac{t^{1-a}}{1-a}\ind{a\in\lbrb{0,1}}
  	%&\,\,+e^{ht}\frac{h^nt^{n-1}}{(n-1)!},
  	\end{split}
  	\end{equation}
  	where for the terms containing $\ind{a=1}$ and $\ind{a\in\lbrb{0,1}}$ in the derivation of the last inequality we have used that $S_n\geq n>0$ a.s.~since a.s.~$X_1\geq1$. Summing over $n$ we get that
  	\begin{equation}\label{eq:A1}
  	\begin{split}
  	\sum_{n=1}^{\infty}A_n^{(1)}(t)&\leq  t^{1-a}\frac{2^{a-1}h\Ebb{e^{(a-1)X_1}}}{a-1}e^{ht\lbrb{\Ebb{e^{(a-1)X_1}}+1}}\ind{a>1}+t^{1-a}\frac{h}{1-a}e^{2ht}\ind{a\in\lbrb{0,1}}\\
  	&+\lbrb{\Ebb{X_1}\lbrb{1+2ht}e^{2ht}+he^{2ht}\ln(4)}\ind{a=1}.
  	\end{split}
  	\end{equation}
  	In the second scenario we observe that the following inclusion holds
  	\begin{equation}\label{eq:inclusion}
  	\curly{\sum_{j=1}^{n}\textbf{e}_{j}\in \lbrb{\frac{t}{2},t}; \sum_{j=1}^{n+1}\textbf{e}_{j}>t}\subseteq\bigcup_{j=1}^{n}\lbrb{\curly{\textbf{e}_{j}\geq \frac{t}{2n}}\bigcap\curly{\sum_{1\leq i\leq n;i\neq j}\textbf{e}_{i}<t}}.
  	\end{equation}
  	Clearly then for any $j$, the events  $\curly{\textbf{e}_{j}\geq \frac{t}{2n}}$ and $\curly{\sum_{1\leq i\leq n;i\neq j}\textbf{e}_{i}<t}$ are independent and moreover
  	\[\Pbb{\sum_{1\leq i\leq n;i\neq j}\textbf{e}_{i}<t}=\Pbb{\sum_{1\leq i\leq n-1}\textbf{e}_{i}<t}= \Pbb{N_t\geq n-1}.\]
  	We are therefore able to estimate $A_n^{(2)}(t)$ using the relation between events in \eqref{eq:inclusion} in the following manner
  	\begin{equation*}
  	\begin{split}
  	A_n^{(2)}(t)&=\Ebb{\lbrb{\sum_{j=1}^{n}\textbf{e}_{j} e^{-S_{j-1}}+\lbrb{t-\sum_{j=1}^{n}\textbf{e}_{j} }e^{-S_n}}^{-a};\sum_{j=1}^{n}\textbf{e}_{j}\in\lbrb{\frac{t}{2},t}; \sum_{j=1}^{n+1}\textbf{e}_{j}>t}\\
  	&\leq \Ebb{\textbf{e}_{1}^{-a};t>\textbf{e}_{1}\geq \frac{t}{2n}; \sum_{j=2}^{n}\textbf{e}_{j}<t}\\
  	&+\,\,\ind{n>1}\sum_{j=2}^n\Ebb{\lbrb{\textbf{e}_{1}+\frac{t}{2n}e^{-S_{j-1}}}^{-a};\textbf{e}_{j}\geq \frac{t}{2n}; \sum_{i=2,i\neq j}^{n}\textbf{e}_{i}<t;\textbf{e}_{1}<t}\\
  	&\leq h\lbrb{\abs{\frac{1}{1-a}}\lbrb{2n}^at^{1-a}\ind{a\neq 1}+\ln(2n)\ind{a=1}}\Pbb{N_t\geq n-1}\\
  	&+\,\,\ind{n>1}\lbrb{\sum_{j=2}^{n} h\Ebb{\int_{0}^{t}\frac{1}{\lbrb{x+\frac{t}{2n}e^{-S_{j-1}}}^{a}}dx}}\Pbb{N_t\geq n-2},
  	\end{split}
  	\end{equation*}
  	where the terms in last estimate are derived in the first case by disintegration of $\textbf{e}_{1}$ on $\curly{t>\textbf{e}_{1}\geq \frac{t}{2n}}$ and the independence of $\sum_{j=2}^{n}\textbf{e}_{j}$ from $\textbf{e}_{1}$ and in the second case by disintegration of $\textbf{e}_{1}$ on $\lbrb{0,t}$, removing the set $\curly{\textbf{e}_{j}\geq \frac{t}{2n}}$  and invoking independence again for the remaining term  $\sum_{i=2,i\neq j}^{n}\textbf{e}_{i}$.
  	However, performing the integration and estimating precisely as in \eqref{eq:An1} we get with the help of \eqref{eq:Nt} applied only to the term  $\Pbb{N_t\geq n-2}$ that
  	\begin{equation}\label{eq:An2}
  	\begin{split}
  	A_n^{(2)}(t)
  	&\leq h\lbrb{\abs{\frac{1}{1-a}}\lbrb{2n}^at^{1-a}\ind{a\neq 1}+\ln(2n)\ind{a=1}}\Pbb{N_t\geq n-1}\\
  	&+\ind{n>1}\ind{a>1} e^{ht}\frac{t^{n-2}h^{n-1}}{(n-2)!}\sum_{j=2}^{n}\frac{\lbrb{2n}^{a-1}}{\lbrb{a-1}t^{a-1}}\lbrb{\Ebb{e^{\lbrb{a-1}X_1}}}^{j-1}\\
  	&+\ind{n>1}\ind{a=1} e^{ht}\frac{t^{n-2}h^{n-1}}{(n-2)!}\sum_{j=2}^{n}\lbrb{j\Ebb{X_1}+\ln\lbrb{4n}}\\
  	&+\ind{n>1}\ind{a\in\lbrb{0,1}} e^{ht}\frac{t^{n-2}h^{n-1}}{(n-2)!}\sum_{j=2}^{n}\frac{t^{1-a}}{1-a}\ind{a\in\lbrb{0,1}}\\
  	&\leq h\lbrb{\abs{\frac{1}{1-a}}\lbrb{2n}^at^{1-a}\ind{a\neq 1}+\ln(2n)\ind{a=1}}\Pbb{N_t\geq n-1}\\
  	&+\ind{n>1}\Big (e^{ht}\frac{t^{n-1-a}h^{n-1}}{(n-2)!}\frac{\lbrb{4n}^a }{a-1}\lbrb{\Ebb{e^{(a-1)X_1}}}^{n-1}\ind{a>1}\\
  	&+e^{ht}\frac{t^{n-2}h^{n-1}}{(n-2)!}\lbrb{n^2\Ebb{X_1}+n\ln\lbrb{4n}}\ind{a=1}+e^{ht}\frac{n}{1-a}\frac{t^{n-1-a}h^{n-1}}{(n-2)!}\ind{a\in\lbrb{0,1}}\Big).
  	\end{split}
  	\end{equation}
  	Summing over $n$ we arrive with the help of \eqref{eq:Nt} at
  	\begin{equation}\label{eq:A2}
  	\begin{split}
  	\sum_{n=1}^{\infty}A_n^{(2)}(t)&\leq 8^at^{1-a}\frac{he^{ht}\Ebb{e^{\lbrb{a-1}X_1}}}{a-1}\sum_{n=0}^{\infty}\frac{t^nh^n(n+2)^a\lbrb{\lbrb{\Ebb{e^{\lbrb{a-1}X_1}}+1}}^n}{n!}\ind{a>1}\\
  	&+e^{ht}h\sum_{n=0}^\infty \frac{t^nh^n}{n!}\lbrb{(n+2)^2\Ebb{X_1}+(n+3)\ln(4n)}\ind{a=1}\\
  	&+4t^{1-a}\frac{e^{ht}(h+1)}{1-a}\sum_{n=0}^\infty n\frac{t^nh^n}{n!}\ind{a\in\lbrb{0,1}}.
  	\end{split}
  	\end{equation}
  	Therefore from $A_n=A_n^{(1)}(t)+A_n^{(2)}(t)$, \eqref{eq:A1} and \eqref{eq:A2} applied in \eqref{eq:repIt} we easily get that
  	\[\Ebb{I_{\Psi_2}(t)^{-a}}=t^{-a}\Pbb{N_t=0}+\sum_{n=1}^\infty\lbrb{A_n^{(1)}(t)+A_n^{(2)}(t)}<\infty,\]
  	whenever either  $a\in\lbrb{0,1}$ or $a=1$ and $\Ebb{X_1}<\infty$ or $a>1$ and  $\Ebb{e^{(a-1)X_1}}<\infty$. However, it is a very well-known fact that
  	\[\Ebb{X_1}<\infty \iff \Ebb{\max\curly{\xi_1,0}}<\infty\iff \abs{\Psi'(0^+)}<\infty\]
  	and if $\aphp<0$ then
  	\[\Ebb{e^{(a-1)X_1}}<\infty\impliedby a\in\lbrb{0,1-\aphp}\]
  	and \[\Ebb{e^{(\aphp-1)X_1}}<\infty\iff \abs{\Psi(-\aphp)}<\infty\iff\abs{\php(\aphp)}<\infty.\] Hence via \eqref{eq:finiteMoment1} and the fact that $\Ebb{e^{a\sup_{v\leq t}\xi^{(1)}_v}}<\infty$, for any $a>0$, the relation \eqref{eq:momentsIt} and the backward directions of \eqref{eq:momentsIt1} and \eqref{eq:momentsIt2} follow. Let us provide a lower bound for $\Ebb{\IPsi^{-a}(t)}$ whenever $a\geq 1$. We again utilize the processes $\xi^{(1)},\xi^{(2)}$ in the manner
  	\begin{equation*}
  	\begin{split}
  	\Ebb{\IPsi(t)^{-a}}&\geq \Ebb{\IPsi(t)^{-a};\sup_{v\leq t}|\xi^{(1)}_v|\leq 1;N_t=1}\\
  	&\geq \Ebb{e^{-at\sup_{v\leq t}|\xi^{(1)}_v|}I^{-a}_{\Psi_2}(t);\sup_{v\leq t}|\xi^{(1)}_v|\leq 1;N_t=1}\\
  	&\geq e^{-at}\Pbb{\sup_{v\leq t}|\xi^{(1)}_v|\leq 1} \Ebb{I^{-a}_{\Psi_2}(t); N_t=1}\\
  	&\geq e^{-at}\Pbb{\sup_{v\leq t}|\xi^{(1)}_v|\leq 1}\Pbb{e_2>t}he^{-ht}\Ebb{\int_{0}^{t}\lbrb{x+te^{-X_1}}^{-a}dx}\\
  	&= e^{-at}\Pbb{\sup_{v\leq t}|\xi^{(1)}_v|\leq 1}\Pbb{e_2>t}he^{-ht}\\
  	&\times\lbrb{\lbrb{\frac{t^{1-a}}{a-1}\Ebb{e^{(a-1)X_1}}-1}\ind{a>1}+\Ebb{X_1}\ind{a=1}}.
  	\end{split}
  	\end{equation*}
  	The very last term is infinity if and only if $a-1>\aphp$ or  $a=1$ and $\Ebb{X_1}=\infty$ or  $a-1=\aphp$  and  $\Ebb{e^{\aphp X_1}}=\infty$. This shows the forward directions of  \eqref{eq:momentsIt1} and \eqref{eq:momentsIt2} and relation  \eqref{eq:momentsIt1} and \eqref{eq:momentsIt3}. It remains to show \eqref{eq:limitIt}. From  \eqref{eq:A1} and \eqref{eq:A2} we get that, for any $a>0$ and as $t\to 0$,
  	\[\sum_{n=1}^\infty A_n(t)\leq \sum_{n=1}^{\infty}A_n^{(1)}(t)+\sum_{n=1}^{\infty}A_n^{(2)}(t)=\bospace{t^{1-a}}.\]
  	Therefore from \eqref{eq:repIt} we get that
  	\[\limo{t}t^a\Ebb{\IPsi^{-a}(t)}=\limo{t}\lbrb{\Pbb{N_t=0}+t^a\sum_{n=1}^\infty A_n(t)}=\limo{t}\Pbb{N_t=0}=1.\]
  	This establishes the validity of \eqref{eq:limitIt} and concludes the proof of the whole theorem.
  	\subsection{Proof of  Theorem \ref{thm:momentsIt}\eqref{it:positive}}\label{subsec:Moments}  
  	Let $\Psi\in\overNc\setminus\Nc_\dagger$ that is $\Psi(0)=0$. Let $a>0$ and note that
  	\[\IPsi^a(t)\leq t^ae^{a\sup_{s\leq t}-\xi_s}=t^ae^{-a\inf_{s\leq t}\xi_s}.\]  
  	The finiteness of the negative exponential moments of $\inf_{s\leq t}\xi_s$ of order greater than $\aphn$ follows from the definition of $\aphn$, see \eqref{eq:aphi}, that is $\phn\in \Ac_{\lbrb{-a,\infty}}$ for $a\in\lbrb{0,-\aphn}$, see e.g. \cite[Chapter VI]{Bertoin-96} or \cite[Chapter 4]{Doney-07-book}. Thus \eqref{eq:pmomentsIt} is established as well as \eqref{eq:pmomentsIt1} when $\abs{\Psi\lbrb{\aphn}}<\infty$. To prove the opposite direction of \eqref{eq:pmomentsIt1} and \eqref{eq:pmomentsIt3} we find a suitable lower bound. As in the previous proof we decompose the \LLP $\xi$ underlying $\Psi$ as $\xi=\xi^1+\xi^2$ where $\xi^1$ collects all positive jumps and all jumps of value between $\lbrb{-1,0}$ and $\xi^2$ is a compound Poisson process of jumps less than $-1$. Then clearly
  	\[\IPsi^a(t)\geq e^{-a\inf_{s\leq t}\xi^1_s}\lbrb{\int_{0}^{t}e^{-\xi^2_s}ds}^a\]
  	and because $\xi^1$ has all negative exponential moments then, for any $a>0$,
  	\[J(a):=\Ebb{e^{-a\inf_{s\leq t}\xi^1_s}}<\infty.\]
  	Let $T_1=\inf_{s\geq 0}\curly{\abs{\xi^2_s-\xi^2_{s-}}>0}$ and $T_2$ stand for the time of the second jump of $\xi^2$. Also let $X_1=\xi^2_{T_1}$. Then
  	\[\Ebb{\IPsi^a(t)}\geq J(a)\Ebb{\lbrb{\int_{0}^{t}e^{-\xi^2_s}ds}^a;T_1\leq \frac{t}{2}; T_2>t}\geq J(a)\frac{t^a}{2^a}\Pbb{T_1\leq \frac{t}{2}; T_2>t}\Ebb{e^{-aX_1}}.\]
  	The left hand is infinite provided $a<\aphn$ or $a=\aphn$ and $\abs{\Psi\lbrb{\aphn}}=\infty$ because in this case $\Ebb{e^{-a\xi^2_1}}=\infty$ and hence  $\Ebb{e^{-aX_1}}=\infty$.
  	\subsection{Proof of  Theorem \ref{thm:Nc}}\label{subsec:Li}
  	Recall that
  	\[\Psi\in\overNc\setminus\Nc=\curly{\Psi\in\overNc:\,\phn(0)=0}\]
  	and as  above we write, for any $q\geq 0$, \[\Psi^{\dagger q}\!(z)=\Psi(z)-q=-\php^{\dagger q}(-z)\phn^{\dagger q}(z)\in\Nc.\]
  	We also repeat that $\IPsi(t)=\int_{0}^{t}e^{-\xi_s}ds,\,t\geq0,$ and $\IPsi(t)$ is non-decreasing in $t$. The relation \eqref{eq:It} then follows from the immediate bound
  	\begin{equation*}
  	\begin{split}
  	\Pbb{\IPsq{\dagger q}\leq x}=q\IntOI e^{-qt}\Pbb{\IPsi(t)\leq x}dt\geq \lbrb{1-e^{-1}}\Pbb{\IPsi\lbrb{1/q}\leq x},
  	\end{split}
  	\end{equation*}
  	combined with the representation \eqref{eq:smallExpansion} with $n=0$ for $\Pbb{\IPsiq{q}\leq x}$, the identity
  	\[\kappa_{\mis}(q,0)\php^{\dagger q}(0)=\phn^{\dagger q}(0)\php^{\dagger q}(0)=-\Psi(0)=q,\] which is valid since  \eqref{eq:WH} and Lemma \ref{lem:WH} hold. Thus, Theorem \ref{thm:Nc}\eqref{it:NC1} is settled and we proceed with Theorem \ref{thm:Nc}\eqref{it:NC2}.  We consider two separate cases.
  	
  	\textbf{Case $a\in\lbrb{0,1}$.} Denote by $\MPIsia{a}$, $a\in\lbrb{0,1}$, the Mellin transform of $V^{q}_a$ which denotes the cumulative distribution function of the measure $y^{-a}\Pbb{I_{\Psi^{\dagger q}}\in dy}\ind{y>0}$. Following \eqref{eq:M*} and noting from Lemma \ref{lem:WH} that  $\kappa_{\mis}(q,0)=\phn^{\dagger q}\!(0)$ we conclude that
  	\begin{equation}\label{eq:M*a}
  	\MPIsia{a}(z)=-\frac{1}{z}\MPsiq\!(z+1-a)=-\frac{\kappa_{\mis}(q,0)}{z}\MPsq{q}\!\lbrb{z+1-a}
  	,\,z\in\Cb_{\lbrb{a-1,0}},
  	\end{equation}
  	and at least $\MPIsia{a}\in\Ac_{\lbrb{a-1,0}}$ since $\MPsq{q}\in\Ac_{\lbrb{0,1}}$,  see Theorem \ref{lemma:FormMellin}\eqref{eq:domainAnalMPs}.
  	Similarly, as for any $\Psi\in\overNc$  such that $\Psi(z) \neq \dep z$,  which is the case under the conditions of this item, we have that $\NPs>0$ and we deduct that  $\limi{|b|}|b|^{\beta}\abs{\MPIsia{a}\!\lbrb{c+ib}}=0$ for some $\beta\in\lbrb{1,1+\NPs}$ and any $c\in\lbrb{a-1,0}$. Therefore, by Mellin inversion, for any $x>0$,
  	\begin{equation}\label{eq:MILi}
  	\begin{split}
  	V^{q}_a(x)&=\int_{0}^{x}y^{-a}\Pbb{I_{\Psi^{\dagger q}}\in\,dy}=\frac{x^{-c}}{2\pi}\IntII x^{-ib}\MPIsia{a}(c+ib)db\\
  	&=-\kappa_{\mis}(q,0)\frac{x^{-c}}{2\pi}\IntII x^{-ib}\frac{\MPsqq{^{\dagger q}}\!\!\lbrb{c+1-a+ib}}{c+ib}db.
  	\end{split}	
  	\end{equation}
  	However, since also
  	\begin{equation}\label{eq:Var}
  	V_a^q(x)=q\IntOI e^{-qt}\int_{0}^{x}y^{-a}\Pbb{I_{\Psi}(t)\in dy}dt,
  	\end{equation}
  	we have that
  	\begin{equation*}
  	\begin{split}
  	&\lim\limits_{q \to 0} \frac{q}{\kappa_{\mis}(q,0)}\IntOI e^{-qt}\int_{0}^{x}y^{-a}\Pbb{I_{\Psi}(t)\in dy}dt\\
  	&=\limo{q}\lbrb{ -\frac{x^{-c}}{2\pi}\IntII x^{-ib}\frac{\MPsqq{^{\dagger q}}\!\lbrb{c+1-a+ib}}{c+ib}db}\\
  	&=\limo{q}\lbrb{ -\frac{x^{-c}}{2\pi}\IntII x^{-ib}\frac{\lbrb{\MPsqq{^{\dagger q}}\!\lbrb{c+1-a+ib}-\MPs\!\lbrb{c+1-a+ib}}}{c+ib}db}\\
  	&\quad-\frac{x^{-c}}{2\pi}\IntII x^{-ib}\frac{\MPs\!\lbrb{c+1-a+ib}}{c+ib}db.    
  	\end{split}
  	\end{equation*}
  	From \eqref{eq:MrConv}, \eqref{eq:MrBoundedness} and \eqref{eq:uniformDecay} of Lemma \ref{cor:uniformDecay} with $\beta\in\lbrb{0,\NPs}$ we conclude that the dominated convergence theorem applies and yields that
  	\begin{equation}\label{eq:VarConv}
  	\begin{split}
  	&\lim\limits_{q \to 0} \frac{q}{\kappa_{\mis}(q,0)}\IntOI e^{-qt}\int_{0}^{x}y^{-a}\Pbb{I_{\Psi}(t)\in dy}dt= -\frac{x^{-c}}{2\pi}\IntII x^{-ib}\frac{\MPs\!\lbrb{c+1-a+ib}}{c+ib}db.
  	\end{split}
  	\end{equation}
  	The latter means that the renormalized Laplace transform of $\int_{0}^{x}y^{-a}\Pbb{I_{\Psi}(t)\in dy}$ converges.
  	Let $-\liminfi{t}\xi_t=\limsupi{t}\xi_t=\infty$ a.s.~or alternatively $\php\!(0)=\phn\!(0)=\kappa_{\mis}(0,0)=0$. Assume also that $\limi{t}\Pbb{\xi_t<0}=\rho\in\lbbrb{0,1}$. Then from the discussion succeeding \cite[Chapter 7, (7.2.3)]{Doney-07-book} (a chapter dedicated to equivalent statements related to the Spitzer's condition) we have that  $\kappa_{\mis}\!\lbrb{\cdot,0}\in RV_\rho$.
  	Since  \[\frac{\kappa_{\mis}(q,0)}q=\frac{\phn^{\dagger q}\!(0)}q=\frac{1}{\php^{\dagger q}\!(0)},\] then $\php^{\dagger \cdot}\!(0) \in RV_{1-\rho}$ with $1-\rho>0$, and since
  	\begin{equation}\label{eq:Var1}
  	t \mapsto		\int_{0}^{x}y^{-a}\Pbb{\IPsi(t)\in dy}=\Ebb{\IPsi^{-a}(t)\ind{\IPsi(t)\leq x}}
  	\end{equation}
  	is non-increasing  for any fixed $x>0$ and $a\in\lbrb{0,1}$, from a classical Tauberian theorem and the monotone density theorem, see \cite[Section 0.7]{Bertoin-96}, we conclude from \eqref{eq:MILi} and \eqref{eq:VarConv} that, for any $x>0$ and any $c\in\lbrb{a-1,0}$,
  	\begin{equation}\label{eq:MILit}
  	\limi{t}t\php^{\dagger \frac{1}{t}}(0)\int_{0}^{x}y^{-a}\Pbb{I_{\Psi}(t)\in dy}=-\frac{x^{-c}}{2\pi\Gamma\lbrb{1-\rho}}\IntII x^{-ib}\frac{\MPs\!\lbrb{c+1-a+ib}}{c+ib}db.
  	\end{equation}
  	With $t\php^{\dagger \frac{1}{t}}(0)=\frac{1}{\kappa_{\mis}(\frac{1}{t},0)}$ we deduce that
  	\[\frac{y^{-a}\Pbb{I_{\Psi}(t)\in dy}}{\kappa_{\mis}(\frac{1}{t},0)} \text{ converges vaguely to $\vartheta_a$},\]
  	whose distribution function is simply the integral to the right-hand side of \eqref{eq:MILit}. To show that it converges weakly, and thus prove \eqref{eq:weak} and \eqref{eq:V_a},  we need only show that
  	\[\limi{t}\frac{\Ebb{\IPsi^{-a}(t)}}{\kappa_{\mis}(\frac{1}{t},0)}=\limi{t}\frac{\IntOI y^{-a}\Pbb{I_{\Psi}(t)\in dy} }{\kappa_{\mis}(\frac{1}{t},0)}<\infty.\]
  	However, this is immediate from the fact that $\MPsiq(z),\,q>0,$ is always well defined on $\Cb_{\lbrb{0,1}}$, see \eqref{eq:MIPsi} of Theorem \ref{lemma:FormMellin}, $\limo{q}\MPsqq{\dagger q}(1-a)=\MPs(1-a),\,a\in\lbrb{0,1}$, justified in  \eqref{eq:MrConv} below, and utilizing again a Tauberian theorem and the monotone density theorem in
  	\begin{equation}\label{eq:MIPsir}
  	\MPsiq(1-a)=q\IntOI e^{-qt}\Ebb{\IPsi^{-a}(t)}dt=\kappa_{\mis}(q,0)\MPsqq{\dagger q}(1-a).
  	\end{equation}
  	Relation \eqref{eq:RVIPsi}, for any $a\in\lbrb{0,1}$,  also follows from \eqref{eq:MIPsir}.
  	
  	\textbf{Case $a\in\lbrb{0,1-\aphp}$.} This case is more convoluted. The main problem is that we study the moments of $\IPsi(t)$ at infinity via the Laplace transform $I_{\Psi^{\dagger q}}$ which depends on $\IPsi(t),t\downarrow 0,$ as well. The latter is different from the large time behaviour. Our approach is based on separation of the Laplace transform, see \eqref{eq:Var2}, and the subsequent understanding of both parts using the analytical information for the Mellin transform $\MPsiq$, see \eqref{eq:MPr}.
  	
  	Assume that $\aphp<0$ and fix any $q>0$. For any $a\in\lbrb{0,1-\aphp}$ set
  	\begin{equation}\label{eq:H}
  	H(q,a)=\int_{1}^{\infty}e^{-qt}\Ebb{\IPsi^{-a}(t)}dt.
  	\end{equation}
  	From Theorem \ref{thm:momentsIt} we have that $\Ebb{\IPsi^{-a}(t)}<\infty$ for all $t\geq 0$ and any $a\in\lbrb{0,1-\aphp}$. Therefore from $\Ebb{\IPsi^{-a}(1)}\geq \Ebb{\IPsi^{-a}(t)}$, for $t\geq 1$, we conclude that
  	\[H(q,a)\leq \Ebb{\IPsi^{-a}(1)}\int_{1}^{\infty}e^{-qt}dt<\infty\]
  	and thus $H(q,-z)$ can be extended analytically so that $H(q,\cdot)\in\Ac_{\lbrbb{\aphp-1,0}}$. From \eqref{eq:Var1} we immediately see that, for any $x>0$,
  	\[\infty>H(q,a)\geq \int_{1}^{\infty} e^{-qt}\int_{0}^{x}y^{-a}\Pbb{I_{\Psi}(t)\in dy}dt:=W_x(q,-a)\]
  	and hence with $a\mapsto z$ we conclude that  $W_x(q,\cdot)\in \Ac_{\lbrbb{\aphp-1,0}}$. We re-express \eqref{eq:Var} as
  	\begin{equation}\label{eq:Var2}
  	\begin{split}
  	\frac{1}{q}V^{q}_a(x)&=\int_{0}^{1}e^{-qt}\int_{0}^{x}y^{-a}\Pbb{I_{\Psi}(t)\in dy}dt+\int_{1}^{\infty}e^{-qt}\int_{0}^{x}y^{-a}\Pbb{I_{\Psi}(t)\in dy}dt\\
  	&=\int_{0}^{1}e^{-qt}\int_{0}^{x}y^{-a}\Pbb{I_{\Psi}(t)\in dy}dt+ W_x(q,-a)
  	\end{split}
  	\end{equation}
  	and the finiteness of $	\frac{1}{q}V^{q}_a(x)$ can solely fail thanks to the first term in the last identity. We proceed to understand it.  Next, since the analyticity of $\Psi^{\dagger q}$ obviously coincides with that of $\Psi$ we conclude that $\aphp=a_{\php^{\dagger q}}$ and $\aphn=a_{\phn^{\dagger q}}$ for any $q\geq 0$. Moreover, from $\php(0)=0$ then $\php<0$ on $\lbrb{\aphp,0}$ and $\tphp=0$, and since $\limo{q}\php^{\dagger q}(a)=\php(a)$ for any $a>\aphp$, see \eqref{eq:convPhi}, we easily deduct that
  	\[\limo{q}\mathfrak{u}_{\php^{\dagger q}}=\tphp=0.\]
  	Then there exists $q_0>0$ such that, for all $q\leq q_0$,  $\mathfrak{u}_{\php^{\dagger q}}\in\lbrb{\max\curly{-1,\aphp},0}$.  Therefore since, for $q\leq q_0$, $-\mathfrak{u}_{\php^{\dagger q}}$ is not an integer, from Theorem \ref{thm:FormMellin} we conclude that for all $q\leq q_0$, $\MPsiq\in \Mtt_{\lbrb{\aphp,1-\aphn}}$ and $\MPsiq$ has simple poles with residues $\php^{\dagger q}\!(0)\frac{\prod_{k=1}^{n} \Psi^{\dagger q}\!(k)}{n!}$, with $\prod_{k=1}^{0}=1$, at all non-positive integers $-n$ such that $-n>\aphp$. Henceforth, for any $-n_0>\aphp,\,n_0\in\Nb$, we have that
  	\begin{equation}\label{eq:MPr}
  	\MPsqq{\dagger q}(z)=\php^{\dagger q}\!(0)\sum_{n=0}^{n_0}\frac{\prod_{k=1}^{n} \Psi^{\dagger q}(k)}{n!}\frac{1}{z+n}+\MPsqq{\dagger q}^{\lbrb{n_0}}\!(z)
  	\end{equation}
  	with $\MPsqq{\dagger q}^{\lbrb{n_0}}\in\Ac_{\lbrb{\max\curly{-n_0-1,\aphp},1-\aphn}}$. Also since for any $\Psi\in\overNc$ that does not correspond to a pure drift, conservative \LLP, we have that $\NPs>0$, see Theorem \ref{thm:asympMPsi}, we conclude from \eqref{eq:MPr} that at least for any $\beta\in\lbrb{0,\min\curly{N_{\Psi^{\dagger q}},1}}$ and any $c\in\lbrb{\max\curly{-n_0-1,\aphp},1-\aphn}$
  	\begin{equation}\label{eq:MPr_decay}
  	\limi{|b|}|b|^\beta\abs{\MPsqq{\dagger q}^{\lbrb{n_0}}\!(c+ib)}=\limi{|b|}|b|^\beta\abs{\MPsqq{\dagger q}\!(c+ib)-\php^{\dagger q}\!(0)\sum_{n=0}^{n_0}\frac{\prod_{k=1}^{n} \Psi^{\dagger q}\!(k)}{n!}\frac{1}{c+ib+n}}=0.
  	\end{equation}
  	For $a\in\lbrb{0,1}$, $c\in\lbrb{a-1,0}$ and $-n_0>\aphp, n_0\in\Nb,$ \eqref{eq:MILi} together with \eqref{eq:Var2}, \eqref{eq:MPr}, \eqref{eq:MPr_decay} and $\php^{\dagger q}(0)\kappa_{\mis}(q,0)=\php^{\dagger q}(0)\phn^{\dagger q}(0)=q$, allow us to re-express \eqref{eq:MILi} as follows
  	\begin{equation}\label{eq:MILi1}
  	\begin{split} \frac{1}{q}V^{q}_a(x)&=\int_{0}^{1}e^{-qt}\int_{0}^{x}y^{-a}\Pbb{I_{\Psi}(t)\in dy}dt+ W_x(q,-a)\\
  	&=-\frac{\kappa_{\mis}(q,0)}q\frac{1}{2\pi i}\int_{z\in\Cb_c} x^{-z}\frac{\MPsqq{\dagger q}\!\lbrb{z+1-a}}{z}dz\\
  	&=\sum_{n=0}^{n_0}\frac{\prod_{k=1}^{n} \Psi^{\dagger q}(k)}{n!}\frac{1}{1-a+n}\lbrb{x^{1-a+n}\ind{ x\leq 1}+\ind{x>1}}\\
  	&\,\,-\frac{\kappa_{\mis}(q,0)}{q}\frac{1}{2\pi i}\int_{z\in\Cb_c} x^{-z}\frac{\MPsqq{\dagger q}^{\lbrb{n_0}}\!\lbrb{z+1-a}}{z}dz,
  	\end{split}	
  	\end{equation}
  	where the first term in the very last identity stems from the fact that for $a\in\lbrb{0,1}$ the function
  	$-\frac{1}{z\lbrb{z+1-a+n}}$ for $\Re(z)\in\lbrb{a-n-1,0}$ is the Mellin transform of \[\frac{1}{1-a+n}\lbrb{x^{1-a+n}\ind{ x\leq 1}+\ind{x>1}}\] and the integral in \eqref{eq:MILi1} is its Mellin inversion. Also as $\MPsqq{\dagger q}^{\lbrb{n_0}}\in\Ac_{\lbrb{\max\curly{-n_0-1,\aphp},1-\aphn}}$, the fact that we can choose $c<0$ as close to zero as we wish and \eqref{eq:MPr_decay} together with Fubini's and Morera's theorem allow us to conclude that
  	\begin{equation}\label{eq:hol}
  	\int_{z\in\Cb_c} x^{-z}\frac{\MPsqq{\dagger q}^{\lbrb{n_0}}\!\!\lbrb{z+1+\cdot}}{z}dz\in \Ac_{\lbrbb{\max\curly{-n_0-2,\aphp-1},0}}.
  	\end{equation}
  	However,  since  $W_x(q,\cdot)\in \Ac_{\lbrbb{\aphp-1,0}}$ as noted beneath \eqref{eq:Var2},   and \eqref{eq:hol} hold true,  we deduct upon substitution $-a\mapsto \zeta$ in \eqref{eq:MILi1} and equating the second and forth terms in \eqref{eq:MILi1} that as a function of $\zeta$
  	\begin{equation}\label{eq:MILi2}
  	\begin{split}
  	W_x&(q,\zeta)+\frac{\kappa_{\mis}(q,0)}{q}\frac{1}{2\pi i}\int_{z\in\Cb_c} x^{-z}\frac{\MPsqq{\dagger q}^{\lbrb{n_0}}\!\!\lbrb{z+1+\zeta}}{z}dz\\
  	&=\sum_{n=0}^{n_0}\frac{\prod_{k=1}^{n} \Psi^{\dagger q}(k)}{n!}\frac{\lbrb{x^{1+\zeta+n}\ind{ x\leq 1}+\ind{x>1}}}{1+\zeta+n}-\int_{0}^{1}e^{-qt}\int_{0}^{x}y^{\zeta}\Pbb{I_{\Psi}(t)\in dy}dt\\
  	:&=\sum_{n=0}^{n_0}\frac{\prod_{k=1}^{n} \Psi^{\dagger q}(k)}{n!}\frac{\lbrb{x^{1+\zeta+n}\ind{ x\leq 1}+\ind{x>1}}}{1+\zeta+n}-G_x(q,\zeta)\\
  	:&=\tilde{G}_x(q, n_0,\zeta)\in\Ac_{\lbrbb{\max\curly{-n_0-2,\aphp-1},0}},
  	\end{split}
  	\end{equation}
  	where \[G_x(q,\zeta):=\int_{0}^{1}e^{-qt}\int_{0}^{x}y^{\zeta}\Pbb{I_{\Psi}(t)\in dy}dt.\] Therefore, $\tilde{G}_x$ has no poles on the region $\lbrbb{\max\curly{-n_0-2,\aphp-1}\!,0}$ and the poles of  $G_x$ are cancelled by those of the sum above. We stress that we have started the considerations with $a\in\lbrb{0,1}$, see \eqref{eq:MILi1}, but the resulting identity \eqref{eq:MILi2} extends to the analytic function on $\Cb_{\lbrbb{\max\curly{-n_0-2,\aphp-1},0}}$.
  	Note that, for any $k\in\Nb$, by means of  the Taylor formula for $e^{-x}$, one gets
  	\begin{equation}\label{eq:Brep}
  	\begin{split}
  	G_x(q,\zeta)&=\sum_{j=0}^{k}(-1)^j\frac{q^{j}}{j!}\int_{0}^{1}t^j\int_{0}^{x}y^{\zeta}\Pbb{I_{\Psi}(t)\in dy}dt+\int_{0}^{1}f_{k+1}(q,t)\int_{0}^{x}y^{\zeta}\Pbb{I_{\Psi}(t)\in dy}dt\\
  	:&=\sum_{j=0}^{k}(-1)^j\frac{q^{j}}{j!}H_j(x,\zeta)+\tilde{H}_{k+1}(x,q,\zeta).
  	\end{split}
  	\end{equation}
  	Since $\abs{\int_{0}^{x}y^{\zeta}\Pbb{I_{\Psi}(t)\in dy}}\leq \Ebb{\IPsi(t)^{\Re(\zeta)}}$,  one gets from Theorem \ref{thm:momentsIt}\eqref{eq:limitIt} that \[\limo{t}t^{-\Re(\zeta)}\Ebb{\IPsi(t)^{\Re(\zeta)}}=1,\] for $\Re(\zeta)\in\lbrb{\aphp-1,0}$. Hence, for  $\Re(\zeta)\in \lbrb{\max\curly{-1-j,-1+\aphp},0}$
  	\[\abs{\int_{0}^{1}t^{j+\Re(\zeta)}t^{-\Re(\zeta)}\int_{0}^{x}y^{\zeta}\Pbb{I_{\Psi}(t)\in dy}dt}<\infty.\]	
  	Also $\limsupo{t}t^{-k-1}f_{k+1}(q,t)<\infty$  implies that for $\Re(\zeta)\in \lbrb{\max\curly{-2-k,-1+\aphp},0}$
  	\[\abs{\int_{0}^{1}t^{\Re(\zeta)}f_{k+1}(q,t)t^{-\Re(\zeta)}\int_{0}^{x}y^{\zeta}\Pbb{I_{\Psi}(t)\in dy}dt}<\infty.\]
  	Therefore, with these inequalities and Morera's theorem we can deduce that
  	\begin{equation}\label{eq:analit}
  	H_{j}\lbrb{x,\cdot}\in \Ac_{\lbrb{\max\curly{-1-j,-1+\aphp},0}} \text{ and } \tilde{H}_{k+1}\lbrb{x,q,\cdot}\in\Ac_{\lbrb{\max\curly{-2-k,-1+\aphp},0}}.
  	\end{equation}
  	Set $n'$ the largest integer smaller than $1-\aphp$. Then, from \eqref{eq:MILi2} applied with $n_0=n'$ we conclude that $G_x(q,\cdot)\in \Mtt_{\lbrb{\aphp-1,0}}$ with simple poles only at the locations $\curly{-n'-1,\cdots,-1}$. From \eqref{eq:Brep} applied with $k=n'$ we can deduct that $H_j\lbrb{x,\cdot},\,0\leq j\leq n',$ is meromorphic on $\lbrb{\aphp-1,0}$, that is $H_j\lbrb{x,\cdot}\in\Mtt_{\lbrb{\aphp-1,0}}$ with simple poles at the points $\curly{-n'-1,\cdots,-j-1}$. Let us sketch how this is inferred. From \eqref{eq:Brep} and \eqref{eq:analit} we check that $H_0(x,\cdot)\in\Mtt_{\lbrb{-2,0}}$ as all other quantities involved there are meromorphic on the strip $\Re(\zeta)\in\lbrb{-2,0}$. Also using the same arguments we deduct that
  	\[H_0(x,\cdot)-qH_1(x,\cdot)\in\Mtt_{\lbrb{-3,0}}.\]
  	Since the latter is valid for any $q\leq q_0$  by choosing different $q',q''\in \Rp$ with $\max\curly{q',q''}\leq q_0$ by subtraction we end up with
  	\[\lbrb{q'-q''}H_1(x,\cdot)\in\Mtt_{\lbrb{-3,0}}.\]
  	Feeding back this information we arrive at $H_0(x,\cdot)\in\Mtt_{\lbrb{-3,0}}$. This procedure can be continued.
  	Set, for $k\leq n'$,
  	\[H_k(x,\zeta)= \sum_{j=k+1}^{n'+1}\frac{b(j,k,x)}{\zeta+j}+\hat{H}_k(x,\zeta)\]
  	with $\hat{H}_{k}(x,\cdot)\in\Ac_{\lbrb{\aphp-1,0}}$. Then
  	\[G_x(q,\zeta)=\sum_{j=1}^{n'+1}\frac{P_{j}(x,q)}{\zeta+j}+\sum_{j=0}^{n'}(-1)^j\frac{q^j}{j!}\hat{H}_j\lbrb{x,\zeta}+\tilde{H}_{n'+1}(x,q,\zeta)\]
  	with $P_{j}(x,q),j=1,\cdots,n'+1,$ polynomials in $q$ and $ \tilde{H}_{n'+1}(x,q,\cdot)\in\Ac_{\lbrb{\aphp-1,0}}$. However, since $\Psi^{\dagger q}(z)=\Psi(z)-q$ we deduce that $\prod_{k=1}^{n} \Psi^{\dagger q}(k)$ are polynomials in $q$ and since from the definition of $n'$, \eqref{eq:MILi2} defines analytic function, that is $\tilde{G}_x(q, n',\cdot)\in\Ac_{\lbrbb{\aphp-1,0}}$, we conclude that
  	\begin{equation*}
  	\begin{split}
  	\tilde{G}_x(q, n',\zeta)&=\sum_{j=0}^{n'}\frac{\prod_{k=1}^{n} \Psi^{\dagger q}(k)}{j!}\frac{1}{\zeta+j+1}\lbrb{x^{\zeta+j+1}\ind{ x\leq 1}+\ind{x>1}}-G_x(q,\zeta)\\
  	&=\sum_{j=0}^{n'}\frac{\prod_{k=1}^{j} \Psi^{\dagger q}(k)}{j!}\frac{1}{\zeta+j+1}\lbrb{x^{1+\zeta+n}\ind{ x\leq 1}+\ind{x>1}}-\sum_{j=0}^{n'}\frac{P_{j+1}(x,q)}{\zeta+j+1}\\
  	&-\sum_{j=0}^{n'}(-1)^j\frac{q^j}{j!}\hat{H}_j\lbrb{x,\zeta}-\tilde{H}_{n'+1}(x,q,\zeta)\\
  	&=-\sum_{j=0}^{n'}(-1)^j\frac{q^j}{j!}\hat{H}_j\lbrb{x,\zeta}-\tilde{H}_{n'+1}(x,q,\zeta)
  	\end{split}
  	\end{equation*}
  	as the poles evidently have to cancel to ensure that $\tilde{G}_x(q, n',\cdot)\in\Ac_{\lbrbb{\aphp-1,0}}$. Clearly, then we are able to deduct that the following limit holds
  	\begin{equation*}
  	\begin{split}
  	\limo{q}\tilde{G}_x(q, n',\zeta)&=\limo{q}\lbrb{-\sum_{j=0}^{n'}(-1)^j\frac{q^j}{j!}\hat{H}_j\lbrb{x,\zeta}-\tilde{H}_{n'+1}(x,q,\zeta)}\\
  	&=-\hat{H}_0(x,\zeta)
  	\end{split}
  	\end{equation*}
  	since from \eqref{eq:Brep} we have the limit $\limo{q}f_{n'+1}(q,t)=0$ and the asymptotic relation $f_{n'+1}(q,t)\asymp q^{n'+1}t^{n'+1}$ which yield the validity of
  	\[\limo{q}\int_{0}^{1}f_{n'+1}(q,t)\int_{0}^{x}y^{\zeta}\Pbb{I_{\Psi}(t)\in dy}dt=\limo{q}\tilde{H}_{n'+1}(x,q,\zeta)=0.\]
  	Henceforth,	 from \eqref{eq:MILi2} we conclude that, for any $-a\in\lbrb{\aphp-1,0}$, we have the following limit in $q$
  	\begin{equation}\label{eq:Wx}
  	\limo{q}\lbrb{W_x(q,-a)+\frac{\kappa_{\mis}(q,0)}{q}\frac{1}{2\pi i}\int_{z\in\Cb_c} x^{-z}\frac{\MPsqq{\dagger q}^{\lbrb{n'}}\!\lbrb{z+1-a}}{z}dz}=-\hat{H}_0(x,-a).
  	\end{equation}
  	Since it is true that $\php^{\dagger q}(0)\kappa_{\mis}(q,0)=\php^{\dagger q}(0)\phn^{\dagger q}(0)=q$ then from \eqref{eq:MPr} we have the ensuing set of relations valid when $q\to 0$,
  	\begin{equation*}
  	\begin{split}
  	\frac{\kappa_{\mis}(q,0)}{q}\abs{\MPsqq{\dagger q}(z)-\MPsqq{\dagger q}^{\lbrb{n'}}(z)}&\leq\php^{\dagger q}(0)\frac{\kappa_{\mis}(q,0)}{q}\sum_{n=0}^{n'}\abs{\frac{\prod_{k=1}^{n} \Psi^{\dagger q}(k)}{n!}\frac{1}{z+n}}\\
  	&=\sum_{n=0}^{n'}\abs{\frac{\prod_{k=1}^{n} \Psi(k)-q}{n!}\frac{1}{z+n}}\\
  	&=\sum_{n=0}^{n'}\abs{\lbrb{\frac{\prod_{k=1}^{n} \Psi(k)}{n!}+\sospace{1}}\frac{1}{z+n}}.
  	\end{split}
  	\end{equation*}
  	Therefore, by our freedom, for fixed $a\in\lbrb{0,1-\aphp}$, to choose $|c+1-a|$ to be non-integer and $c\in\lbrb{a-1+\aphp,0}$ we get that
  	\begin{equation}\label{eq:somebound}
  	\begin{split}
  	&\frac{\kappa_{\mis}(q,0)}{q}\abs{\int_{z\in\Cb_c} x^{-z}\frac{\MPsqq{\dagger q}\!\lbrb{z+1-a}}{z}dz-\int_{z\in\Cb_c} x^{-z}\frac{\MPsqq{\dagger q}^{\lbrb{n'}}\!\lbrb{z+1-a}}{z}dz}\\
  	&\leq x^{-c}\IntII \sum_{n=0}^{n'}\abs{\lbrb{\frac{\prod_{k=1}^{n} \Psi(k)}{n!}+\sospace{1}}\frac{1}{1-a+c+ib+n}}\frac{db}{\abs{c+ib}}=\bospace{1},
  	\end{split}
  	\end{equation}
  	where in the last step we have invoked the dominated convergence theorem.
  	From these observations  we deduct from \eqref{eq:Wx} and \eqref{eq:somebound} that as $q$ approaches zero
  	\begin{equation*}
  	\begin{split}
  	W_x(q,-a)&=\int_{1}^{\infty}e^{-qt}\int_{0}^{x}y^{-a}\Pbb{\IPsi(t)\in dx}dt\\
  	&=-\frac{\kappa_{\mis}(q,0)}{q}\frac{1}{2\pi i}\int_{z\in\Cb_c} x^{-z}\frac{\MPsqq{\dagger q}^{\lbrb{n'}}\!\lbrb{z+1-a}}{z}dz-\hat{H}_0(x,-a)+\sospace{1}\\
  	&=\frac{\kappa_{\mis}(q,0)}{q}\frac{1}{2\pi i}\lbrb{\int_{z\in\Cb_c}x^{-z}\lbrb{ \frac{\MPsqq{\dagger q}\!\lbrb{z+1-a}}{z}-\frac{\MPsqq{\dagger q}^{\lbrb{n'}}\!\!\lbrb{z+1-a}}{z}}dz}
  	\\
  	&-\hat{H}_0(x,-a)+\sospace{1}-\frac{\kappa_{\mis}(q,0)}{q}\frac{1}{2\pi i}\int_{z\in\Cb_c} x^{-z}\frac{\MPsqq{\dagger q}\!\lbrb{z+1-a}}{z}dz\\
  	&\simo -\frac{\kappa_{\mis}(q,0)}{q}\frac{1}{2\pi i}\int_{z\in\Cb_c} x^{-z}\frac{\MPsqq{\dagger q}\!\lbrb{z+1-a}}{z}dz.
  	\end{split}
  	\end{equation*}
  	Finally, using this asymptotic relation and employing  \eqref{eq:MrConv},\eqref{eq:MrBoundedness} and \eqref{eq:uniformDecay} of Lemma \ref{cor:uniformDecay} we further arrive at the following asymptotic behaviour
  	\begin{equation*}
  	\begin{split}
  	W_x(q,-a)&\simo-\frac{\kappa_{\mis}(q,0)}{q}\frac{1}{2\pi i}\int_{z\in\Cb_c} x^{-z}\lbrb{\frac{\MPsqq{\dagger q}\!\lbrb{z+1-a}}{z}-\frac{\MPs\!\lbrb{z+1-a}}{z}}dz\\
  	&-\frac{\kappa_{\mis}(q,0)}{q}\frac{1}{2\pi i}\int_{z\in\Cb_c} x^{-z}\frac{\MPs\!\lbrb{z+1-a}}{z}dz\\
  	&\simo -\frac{\kappa_{\mis}(q,0)}{q}\frac{1}{2\pi i}\int_{z\in\Cb_c} x^{-z}\frac{\MPs\!\lbrb{z+1-a}}{z}dz.
  	\end{split}
  	\end{equation*}
  	However, from \eqref{eq:MILi1} we then get that
  	\begin{equation*}
  	\begin{split}
  	W_x(q,-a)&=\int_{1}^{\infty}e^{-qt}\int_{0}^{x}y^{-a}\Pbb{I_{\Psi}(t)\in dy}dt\\
  	&\simo -\frac{\kappa_{\mis}(q,0)}{q}\frac{1}{2\pi i}\int_{z\in\Cb_c} x^{-z}\frac{\MPs\!\lbrb{z+1-a}}{z}dz
  	\end{split}
  	\end{equation*}
  	or the behaviour of the Laplace transform of the function $\ind{t\geq 1}\int_{0}^{x}y^{-a}\Pbb{I_{\Psi}(t)\in dy}$.
  	We then conclude \eqref{eq:weak} and\eqref{eq:V_a} as in the case $a\in\lbrb{0,1}$, see the arguments around \eqref{eq:Var1} and \eqref{eq:MILit}. Finally, we show, for $a\in\lbrb{0,1-\aphp}$, that
  	\[\limi{t}\frac{\Ebb{\IPsi^{-a}(t)}}{\kappa_{\mis}\lbrb{\frac{1}{t},0}}=\vartheta_a\lbrb{\Rp}<\infty\]
  	by decomposing \eqref{eq:MIPsir} precisely as $V^{q}_a(x)$ in \eqref{eq:MILi1} and proceeding as there. This also verifies the expression for $\vartheta_a\lbrb{\Rp}$ in \eqref{eq:RVIPsi}.
  	%We formally rewrite \eqref{eq:MIPsir} as follows
  	% \begin{equation}\label{eq:MIPsir1}
  	%\begin{split}
  	%\frac{1}{r}\MPsiq(1-z)-\int_{0}^{1}e^{-rt}\Ebb{\IPsi^{-z}(t)}dt&=\frac{\kappa_{\mis}(q)}r\MPsqq{\dagger q}(1-z)-\int_{0}^{1}e^{-rt}\Ebb{\IPsi^{-z}(t)}dt\\
  	% &=\int_{1}^\infty e^{-rt}\Ebb{\IPsi^{-z}(t)}dt=H(q,z).
  	%  \end{split}
  	% \end{equation}
  	%  Since $1$
  	%From Theorem \ref{cor:smoothness}\eqref{eq:MIPsi} we get that at least $\MPsiq(1-z)\in\Mtt_{\lbrb{0,1-\aphp}}$ and
  	
  	The proof above relies on the ensuing claims.
  	\begin{lemma}\label{cor:uniformDecay}
  		Let $\Psi\in\overNc$ and recall that for any $q\geq 0$, $\Psi^{\dagger q}(z)=\Psi(z)-q$. Fix $a\in\lbrb{\aphp,1}$ such that $-a\notin\N$. Then for any $z\in\Cb_a$ we have that
  		\begin{equation}\label{eq:MrConv}
  		\limo{q}\MPsqq{\dagger q}(z)=\MPs(z).
  		\end{equation}
  		Moreover, for any $\bk>0$ and $\rk<\infty$,
  		\begin{equation}\label{eq:MrBoundedness}
  		\sup_{0\leq q\leq\rk}\sup_{|b|\leq \bk}\abs{\MPsqq{\dagger q}(a+ib)}<\infty.
  		\end{equation}
  		Finally, for any $0<\beta<\NPs$, we have that
  		\begin{equation}\label{eq:uniformDecay}
  		\limsupi{|b|}|b|^\beta	\sup_{0\leq q\leq\rk}\abs{\MPsqq{\dagger q}(\ab)-\MPs\lbrb{\ab}}=0.
  		\end{equation}
  	\end{lemma}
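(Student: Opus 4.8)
The plan is to exploit the explicit product representation $\MPsqq{\dagger q}(z)=\frac{\Gamma(z)}{W_{\php^{\dagger q}}(z)}W_{\phn^{\dagger q}}(1-z)$ from Theorem \ref{thm:FormMellin}, together with the continuity of $\phi\mapsto W_\phi$ proved in Lemma \ref{lem:continuityW} and the asymptotic estimates of Theorem \ref{thm:Stirling} and Theorem \ref{thm:asympMPsi}. First I would record that, by Lemma \ref{lem:WH} and \eqref{eq:Fristed}, $\phn^{\dagger q}(z)=\kappa_{\mis}(q,z)$ and $\php^{\dagger q}(z)=\kappa_{\pls}(q,z)$ are the Laplace exponents of the bivariate ladder height processes, hence (by Proposition \ref{prop:convPhi} / the \textit{équation amicale inversée} machinery in the Appendix) for every fixed $u>\aphp$ one has $\limo{q}\phi^{\dagger q}_{\pms}(u)=\phi_{\pms}(u)$ and, since $-a\notin\N$, also $\limo{q}\mathfrak{u}_{\php^{\dagger q}}=\tphp$ and $\limo{q}\aphp^{\dagger q}=\aphp$, so the strip $\Cb_a$ is eventually inside the domain of analyticity of every $\MPsqq{\dagger q}$. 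Then \eqref{eq:MrConv} follows by applying Lemma \ref{lem:continuityW} to $\php^{\dagger q}\to\php$ and $\phn^{\dagger q}\to\phn$, which gives $\limo{q}W_{\php^{\dagger q}}(z)=\Wpp(z)$ and $\limo{q}W_{\phn^{\dagger q}}(1-z)=\Wpn(1-z)$ pointwise on $\CbOI$, using that $\Wpp$ is zero-free on $\Cb_{(\aphp,\infty)}$ (Theorem \ref{thm:Wp}\eqref{it:A}) so the denominator does not vanish in the limit; for $a\le 0$ one extends meromorphically via the recurrence \eqref{eq:fe} and the relation $\Psi^{\dagger q}(k)=\Psi(k)-q\to\Psi(k)$.

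For the local uniform bound \eqref{eq:MrBoundedness}, the plan is to use \eqref{eq:Stirling} of Theorem \ref{thm:Stirling}, which represents $\abs{W_{\phi}(z)}$ through $\phi$, $\Gph$, $\Aph$ and the uniformly bounded error term from \eqref{eq:err}. Writing $a'=a+n$ with $n$ chosen so that $a'>0$ and using the recurrence to shift $\Cb_a$ into $\CbOI$, the quantities $\abs{\phi^{\dagger q}_{\pms}(\cdot)}$ on the compact set $\{|b|\le\bk\}$ and the constituent integrals $G_{\phi^{\dagger q}_{\pms}}$ are continuous in $(q,b)$ on $[0,\rk]\times[-\bk,\bk]$ hence bounded; the factors $\phi^{\dagger q}_{\pms}(k)-q$ appearing in the finitely many shift steps are likewise bounded. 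This yields a finite supremum. The genuinely delicate point is \eqref{eq:uniformDecay}: a bound on $\MPsqq{\dagger q}-\MPs$ that is simultaneously uniform in $q\in[0,\rk]$ and decays like $|b|^{-\beta}$ for $\beta<\NPs$.

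The hard part will be \eqref{eq:uniformDecay}, and I expect to handle it as follows. From Theorem \ref{thm:asympMPsi}\eqref{it:decayA} applied to $\Psi$ one has $\abs{\MPs(a+ib)}=\so{|b|^{-\beta}}$ for all $\beta<\NPs$; one needs the same decay for $\MPsqq{\dagger q}$, uniformly in $q$. The killing $\Psi\mapsto\Psi^{\dagger q}$ affects the Wiener--Hopf factors only through $\phi_{\pms}\mapsto\phi_{\pms}^{\dagger q}$, and the key structural fact (Lemma \ref{lem:Aphi} and Theorem \ref{thm:genFuncs}\eqref{it:Aphi}) is that $q\mapsto A_{\phi^{\dag q}}(z)$ is non-increasing, so $A_{\phi^{\dagger q}_{\pms}}$ is controlled by $A_{\phi_{\pms}}$; likewise the drift and $\bar\mu(0)$ entering $\NPs$ in \eqref{eq:NPs} are unchanged under killing (killing adds a constant to $\phi$, not to the drift or the tail mass), so $N_{\Psi^{\dagger q}}\ge\NPs$ with the same leading decay rate. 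Feeding the Stirling representation \eqref{eq:Stirling} for $W_{\php^{\dagger q}}$ and $W_{\phn^{\dagger q}}$ into \eqref{eqM:MIPsi} and using the uniform error bound \eqref{eq:err}, one obtains $\sup_{q\le\rk}\abs{\MPsqq{\dagger q}(a+ib)}\lesssim |b|^{-\beta}$ for every $\beta<\NPs$, with constant independent of $q$; combined with the pointwise convergence \eqref{eq:MrConv} and dominated convergence this upgrades to \eqref{eq:uniformDecay}. The technical obstacle is keeping the $q$-dependence of the auxiliary functions $\Gph$, $\Eph$, $\Rph$ for $\phi^{\dagger q}_{\pms}$ under uniform control as $|b|\to\infty$; this is where the uniformity statements \eqref{eq:uniBoundRE1} and \eqref{eq:err}, which hold for the \emph{whole} class $\Bc$ simultaneously, do the essential work, reducing everything to the monotonicity of $A_{\phi^{\dag q}}$ in $q$.
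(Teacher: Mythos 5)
Your plan for \eqref{eq:MrConv} and \eqref{eq:MrBoundedness} matches the paper's route and is sound (though the paper handles \eqref{eq:MrBoundedness} through explicit inequalities via \eqref{eq:WpBound}, \eqref{eq:WpBound1} and the $J_1,J_2,J_3$ decomposition rather than appealing to joint continuity in $(q,b)$; that is cleaner than "continuous hence bounded", but your idea would also work). The overall strategy for \eqref{eq:uniformDecay} — Stirling plus the monotonicity of $q\mapsto A_{\phi^{\dagger q}}$ from Lemma \ref{lem:Aphi} plus the uniform error control \eqref{eq:err} — is also the paper's.

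However, there is a genuine error in the step "likewise the drift and $\bar\mu(0)$ entering $\NPs$ in \eqref{eq:NPs} are unchanged under killing (killing adds a constant to $\phi$...) so $N_{\Psi^{\dagger q}}\ge\NPs$." You appear to conflate the operation $\phi\mapsto\phi^{\dag q}=\phi+q$ of Theorem \ref{thm:genFuncs}\eqref{it:Aphi} with the operation $\Psi\mapsto\Psi^{\dagger q}=\Psi-q$ that produces $\phi_{\pls}^{\dagger q},\phi_{\mis}^{\dagger q}$ through the Wiener--Hopf factorization. The latter does \emph{not} add a constant to $\phi_{\pls}$ or $\phi_{\mis}$: from Lemma \ref{lem:WH}, $\mu_{\pms}^{\dagger q}(dy)$ involves $e^{-qy_1}\mu^\sharp_{\pms}(dy_1,dy)$, so both the tail masses $\bar\mu^{\dagger q}_{\pms}(0)$ and $\upsilon^{\dagger q}_{\mis}(0^+)$, and hence both summands in \eqref{eq:NPs}, genuinely depend on $q$. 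Proposition \ref{prop:convPhi} only asserts that the drift $\dr_{\pms}^{\dagger q}=\dr_{\pms}$ and that the $\emph{finiteness}$ of $\bar\mu^{\dagger q}_{\pms}(0)$ is preserved, not the value. In fact, inspecting \eqref{eq:NPs}, the first summand $\upsilon^{\dagger q}_{\mis}(0^+)/\phi^{\dagger q}_{\mis}(\infty)$ is non-increasing in $q$, so if anything the decay exponent $\Ntt_{\Psi^{\dagger q}}$ drifts \emph{below} $\NPs$, which is the opposite of what you need. The paper fills this hole by establishing $\limo{q}\Ntt_{\Psi^{\dagger q}}=\NPs$ (their display \eqref{eq:convergenceNPs}, which needs $\limo{q}\upsilon^{\dagger q}_{\mis}(0^+)=\upsilon_{\mis}(0^+)$ via the Vigon identity and the convergence of the potential densities $u^{\dagger q}_{\pls}\to u_{\pls}$), and then takes $\rk$ small enough that $\Ntt_{\Psi^{\dagger\rk}}>\beta$; the $A_{\phi^{\dagger q}}$-monotonicity reduces the uniform bound over $q\in[0,\rk]$ to the two \emph{extreme} products $\Gamma/W_{\php^{\dagger 0}}$ and $W_{\phn^{\dagger\rk}}(1-\cdot)$, not to the unkilled pair. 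The case $\NPs=\infty$ is also handled separately in the paper via the invariance of the \LL triplet; your argument would cover it once the tail-mass claim is corrected. Finally, "dominated convergence" at the end is not needed: once you have $\sup_q|\MPsqq{\dagger q}(\ab)|\lesssim|b|^{-\beta'}$ for some $\beta'\in(\beta,\NPs)$, the triangle inequality with $|\MPs(\ab)|\lesssim|b|^{-\beta'}$ gives \eqref{eq:uniformDecay} directly.
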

  	\begin{proof}%[Proof of Corollary \ref{cor:uniformDecay}]
  		Let $q\geq 0$. Set \[\Psi^{\dagger q}(z)=\Psi(z)-q=-\php^{\dagger q}(-z)\phn^{\dagger q}(z).\] Since the analyticity of $\Psi^{\dagger q}$ obviously coincides with that of $\Psi$ we conclude that $\aphp=a_{\php^{\dagger q}}$ and $\aphn=a_{\phn^{\dagger q}}$ for any $q\geq 0$.
  		We start with some preparatory work by noting that for $a\in\lbrb{\aphp,1}$ and any $q\geq 0$,
  		\begin{equation}\label{eq:WpBound}
  		\sup_{b\in\R}\abs{W_{\phn^{\dagger q}}\!\lbrb{1-a-ib}}\leq W_{\phn^{\dagger q}}\!\lbrb{1-a},
  		\end{equation}
  		since, from Definition \ref{def:WB}, $\lbrb{W_{\phn^{\dagger q}}(n+1)}_{n\geq 0}$ is the moment sequence of the random variable $Y_{\phn^{\dagger q}}$. Also, for any non-integer $a\in\lbrb{\aphp,1}$, $z=\ab\in\Cb_a$ and any $q\geq 0$ we get from \eqref{eq:Stirling1} that
  		\begin{equation}\label{eq:GW}
  		\frac{\Gamma(\ab)}{W_{\php^{\dagger q}}(\ab)}=\lbrb{\prod_{j=0}^{l-1}\frac{\php^{\dagger q}\lbrb{a+j+ib}}{a+j+ib}}\frac{\Gamma(a+l+ib)}{W_{\php^{\dagger q}}\!\lbrb{a+l+ib}},
  		\end{equation}
  		where $l=0$ if $a>0$ and $l+a>0,l\in\N$ otherwise. The convention $\prod_{0}^{-1}=1$ is also in force. This leads to
  		\begin{equation}\label{eq:WpBound1}
  		\sup_{b\in\R}\abs{\frac{\Gamma\!\lbrb{\ab}}{W_{\php^{\dagger q}}\!\lbrb{a+ib}}}\leq \sup_{b\in\R}\lbrb{\prod_{j=0}^{l-1}\frac{\abs{\php^{\dagger q}\!\lbrb{a+j+ib}}}{\abs{a+j+ib}}}\frac{\Gamma(a+l)}{W_{\php^{\dagger q}}\!\lbrb{a+l}}
  		\end{equation}
  		since for $a>0$, $\frac{\Gamma\lbrb{a+ib}}{\Wp\lbrb{\ab}}$ is the Mellin transform of $I_\phi$, see the proof of Theorem \ref{thm:factorization} in section \ref{subsec:Reg}. Next, observe from \eqref{eq:Stirling} that for any $0\leq q \leq \rk$, $z=\ab$ and fixed $a>0$,
  		\begin{equation}\label{eq:supW}
  		\begin{split}
  		\sup_{0\leq q\leq \rk}\abs{W_{\phi_{\pms}^{\dagger q}}(z)}&=\sup_{0\leq q \leq \rk}\lbrb{\frac{\sqrt{\phi_\pms^{\dagger q}\!(1)}}{\sqrt{\phi_\pms^{\dagger q}\!(a)\phi_\pms^{\dagger q}\!(1+a)|\phi_\pms^{\dagger q}\!(z)|}}e^{G_{\phi_\pms^{\dagger q}}\!(a)-A_{\phi_\pms^{\dagger q}}\!(z)}e^{-E_{\phi_\pms^{\dagger q}}\!(z)-R_{\phi_\pms^{\dagger q}}\!(a)}}.
  		\end{split}
  		\end{equation}
  		First, the error term, namely the last  product term above, is uniformly bounded over the whole class of Bernstein functions, see \eqref{eq:uniBoundRE1}. Second, from Proposition \ref{prop:convPhi}, we have that
  		\[\limo{q}\phi^{\dagger q}_\pms\!(1)=\phi_\pms\!(1),\,\,\limo{q}\phi^{\dagger q}_\pms(a)=\phi_\pms(a)\text{ and } \limo{q}\phi^{\dagger q}_\pms(1+a)=\phi_\pms(1+a).\]
  		Similarly, from  \eqref{eq:Gphi},  $\limo{q}G_{\phi_\pms^{\dagger q}}(a)=G_{\phi_\pms}(a)$.
  		Therefore, \eqref{eq:supW} is simplified to
  		\begin{equation}\label{eq:supW1}
  		\begin{split}
  		\sup_{0\leq q\leq \rk}\abs{W_{\phi_\pms^{\dagger q}}(z)}&\asymp\sup_{0\leq q\leq \rk}\frac{1}{\sqrt{|\phi_\pms^{\dagger q}\!(z)|}}e^{-A_{\phi_\pms^{\dagger q}}\!(z)}.
  		\end{split}
  		\end{equation}
  		However, according to Lemma \ref{lem:Aphi}, $q \mapsto A_{\phi_\pms^{\dagger q}}(z)$ are non-increasing on $\R^+$. Henceforth, \eqref{eq:supW1} yields that, for any $z=\ab,\,a>0$,
  		\begin{equation}\label{eq:uniformBounds1}
  		C'_a \inf_{0\leq q \leq\rk}\lbrb{\frac{\sqrt{\abs{\phi_\pms\lbrb{z}}}}{\sqrt{\abs{\phi^{\dagger q}_\pms(z)}}}}\abs{W_{\phi_\pms}\!(z)} \leq \sup_{0\leq q\leq\rk}\abs{W_{\phi_\pms^{\dagger q}}(z)}\leq C_a \sup_{0\leq q\leq\rk}\lbrb{\frac{\sqrt{\abs{\phi^{\dagger \rk}_\pms\lbrb{z}}}}{\sqrt{\abs{\phi_\pms^{\dagger q}(z)}}}}\abs{W_{\phi^{\dagger \rk}_\pms}(z)},
  		\end{equation}
  		where $C_a,\,C_a'$ are two absolute constants.	Next, from Proposition \ref{prop:convPhi}\eqref{eq:convPhi} we have, for any $z\in\CbOI$, that $\limo{q}\phi^{\dagger q}_\pms(z)=\phi_\pms(z)$  and hence from Lemma \ref{lem:continuityW} one obtains, for any $z\in\CbOI$, that the next limit holds
  		\[\limo{q}W_{\phi^{\dagger q}_\pms}(z)=W_{\phi_\pms}(z).\]
  		Also from Proposition \ref{prop:convPhi}\eqref{eq:convPhi} and \eqref{eq:GW} we get that, for any non-integer $a\in\lbrb{\aphp,1}$ and fixed $z\in\Cb_a$,
  		\begin{equation*}
  		\begin{split}
  		\limo{q}\frac{\Gamma\lbrb{z}}{W_{\php^{\dagger q}}(z)}&=\limo{q}\lbrb{\prod_{j=0}^{l-1}\frac{\php^{\dagger q}\lbrb{z+j}}{\abs{z+j}}\frac{\Gamma(z+l)}{W_{\php^{\dagger q}}\!\lbrb{z+l}}} \\
  		&=\prod_{j=0}^{l-1}\frac{\php\!\lbrb{z+j}}{\abs{z+j}}\frac{\Gamma(z+l)}{W_{\php}\!\lbrb{z+l}}\\
  		&=\frac{\Gamma\lbrb{z}}{W_{\php}\!(z)}.
  		\end{split}
  		\end{equation*}
  		Recalling, from \eqref{eqM:MIPsi}, that $
  		\MPs(z)=\frac{\Gamma(z)}{W_{\php}\!(z)}W_{\phn}\!\lbrb{1-z}$, one gets from the last two limits that, for any $ z\in \Cb_a$,
  		\begin{equation*}
  		\begin{split}
  		\limo{q}\MPsqq{\dagger q}(z)&=\limo{q}\frac{\Gamma(z)}{W_{\php^{\dagger q}}(z)}W_{\phn^{\dagger q}}\!\lbrb{1-z}\\
  		&=\frac{\Gamma(z)}{W_{\php}\!(z)}W_{\phn}\!\lbrb{1-z}
  		=\MPs(z),
  		\end{split}
  		\end{equation*}
  		and \eqref{eq:MrConv} follows. Next, \eqref{eq:GW} and \eqref{eq:uniformBounds1} give with the help of \eqref{eqM:MIPsi} that, for any $\rk\in\Rp$ and for any non-integer $a\in\lbrb{\aphp,1}$,
  		\begin{equation}\label{eq:estimateSup}
  		\begin{split}
  		\sup_{0\leq q \leq\rk}\abs{\MPsqq{\dagger q}\!\lbrb{\ab}}&=\sup_{0\leq q \leq\rk}\prod_{j=0}^{l-1}\frac{\abs{\php^{\dagger q}\!\lbrb{a+j+ib}}}{\abs{a+j+ib}}\frac{\abs{\Gamma\!\lbrb{a+l+ib}}}{\abs{W_{\php^{\dagger q}}\!\lbrb{a+l+ib}}}\abs{W_{\phn^{\dagger q}}(1-a-ib)}\\
  		&\leq  C_a\Bigg[\sup_{0\leq q \leq\rk}\frac{\sqrt{\abs{\php^{\dagger q}\!\lbrb{a+l+ib}}}}{\sqrt{\abs{\php\!\lbrb{a+l+ib}}}}\sup_{0\leq q\leq \rk}\frac{\sqrt{\abs{\phn^{\dagger \rk}\!\lbrb{1-a-ib}}}}{\sqrt{\abs{\phn^{\dagger q}\!(1-a-ib)}}}\\
  		&\,\times  \sup_{0\leq q \leq\rk}\prod_{j=0}^{l-1}\frac{\abs{\php^{\dagger q}\!\lbrb{a+j+ib}}}{\abs{a+j+ib}}\\
  		&\,\times \frac{\abs{\Gamma\!\lbrb{a+l+ib}}}{\abs{W_{\php}\!\!\lbrb{a+l+ib}}}\abs{W_{\phn^{\dagger \rk}}(1-a-ib)}\Bigg]\\
  		&=C_a\lbrb{J_1(b)\times J_2(b)\times J_3(b)}.
  		\end{split}
  		\end{equation}
  		However, \eqref{eq:convPhi} in Proposition \ref{prop:convPhi} triggers that
  		\begin{equation*}
  		\begin{split}
  		\sup_{|b|\leq\bk}J_1(b)&=\sup_{|b|\leq\bk}\lbrb{\sup_{0\leq q \leq\rk}\frac{\sqrt{\abs{\php^{\dagger q}\!\lbrb{a+l+ib}}}}{\sqrt{\abs{\php\!\lbrb{a+l+ib}}}}\sup_{0\leq q \leq\rk}\frac{\sqrt{\abs{\phn^{\dagger \rk}\!\lbrb{1-a-ib}}}}{\sqrt{\abs{\phn^{\dagger q}\!(1-a-ib)}}}}\\
  		&\leq\sup_{|b|\leq\bk}\lbrb{\sup_{0\leq q \leq\rk}\frac{\sqrt{\abs{\php^{\dagger q}\!\lbrb{a+l+ib}}}}{\sqrt{\php\!\lbrb{a+l}}}\sup_{0\leq q \leq\rk}\frac{\sqrt{\abs{\phn^{\dagger \rk}\!\lbrb{1-a-ib}}}}{\sqrt{\phn^{\dagger q}(1-a)}}}<\infty
  		\end{split}
  		\end{equation*}
  		since $1-a>0,\,a+l>0$ and \eqref{eq:rephi} holds, that is $\Re\lbrb{\phi\lbrb{\ab}}\geq \phi(a)>0$. The same is valid for $\sup_{|b|\leq\bk}J_3(b)$ (resp.~$\sup_{|b|\leq\bk}J_2(b)$) thanks to \eqref{eq:WpBound}, \eqref{eq:WpBound1} and $\limo{q}W_{\phi^{\dagger q}_\pms}(z)=W_{\phi_\pms}(z)$ (resp.~\eqref{eq:convPhi} and $a$ not an integer). Henceforth, \eqref{eq:MrBoundedness} follows. It remains to show \eqref{eq:uniformDecay}.
  		Let $\NPs=\infty$ first. We note that for any $q>0$, \[\Psi\in\Ni\iff \Psi^{\dagger q}\in\Ni,\text{ or, equivalently, } \quad\NPs=\infty\iff\Ntt_{\Psi^{\dagger q}}=\infty.\]
  		This is due to the fact that the decay of $\abs{\MPs}$ along complex lines is only determined by the \LL triplet $\lbrb{\gamma,\sigma^2,\Pi}$, see \eqref{eq:lk1}, which is unaffected in this case. Henceforth, we are ready to consider the terms in \eqref{eq:estimateSup} and observe that, for any $\beta>0$,
  		\begin{equation}\label{eq:J3}
  		\limsupi{|b|}|b|^\beta J_3(b)=\limsupi{|b|}|b|^\beta \frac{\abs{\Gamma\!\lbrb{a+l+ib}}}{\abs{W_{\php}\!\lbrb{a+l+ib}}}\abs{W_{\phn^{\dagger \rk}}(1-a-ib)}=0,
  		\end{equation}
  		because if \[\limsupi{|b|}|b|^{\beta}\frac{\abs{\Gamma\!\lbrb{a+l+ib}}}{\abs{W_{\php}\!\lbrb{a+l+ib}}}>0\]
  		then from Theorem \ref{thm:asympMPsi}\eqref{eq:subexp} we have that  $\php\in\BP,\,\mubrp\!\lbrb{0}<\infty$. However, then Proposition \ref{prop:convPhi} implies that $\php^\rk\in\BP,\,\mubrp^\rk\!\lbrb{0}<\infty$. Henceforth, necessarily, for any $\beta>0$,
  		\[\limi{|b|}|b|^{\beta}\abs{W_{\phn^{\dagger \rk}}(1-a-ib)}=0,\]
  		as otherwise we would have $\Ntt_{\Psi^{\dagger \rk}}<\infty$ and $\NPs=\infty$ which we have seen to be impossible.
  		Next from \eqref{eq:convPhi1} and Proposition \ref{propAsymp1}\eqref{it:asyphid} we deduct that
  		\begin{equation}\label{eq:J2}
  		\begin{split}
  		\limsupi{|b|}J_2(b)&=\limsupi{|b|}\sup_{0\leq q \leq\rk}\lbrb{\prod_{j=0}^{l-1}\frac{\abs{\php^{\dagger q}\!\lbrb{a+j+ib}}}{\abs{a+j+ib}}}\\
  		&\leq\limsupi{|b|}\lbrb{\prod_{j=0}^{l-1}\frac{\sup\limits_{0\leq q \leq\rk}\lbrb{\abs{\php^{\dagger q}\!\lbrb{a+j+ib}-\php\!\lbrb{a+j+ib}}}+\abs{\php\!\lbrb{a+j+ib}}}{\abs{b}}}\\
  		& <\infty.
  		\end{split}
  		\end{equation}
  		Finally, from \eqref{eq:rephi}, \eqref{eq:convPhi}, \eqref{eq:convPhi1} and Proposition \ref{propAsymp1}\eqref{it:asyphid} we conclude that
  		\begin{equation}\label{eq:J1}
  		\begin{split}
  		\limsupi{|b|}\frac{J_1(b)}{|b|^2}&=\limsupi{|b|}\frac{1}{|b^2|}\sup_{0\leq q \leq\rk}\frac{\sqrt{\abs{\php^{\dagger q}\!\lbrb{a+l+ib}}}}{\sqrt{\abs{\php\!\lbrb{a+l+ib}}}}\sup_{0\leq r<\rk}\frac{\sqrt{\abs{\phn^{\dagger \rk}\!\lbrb{1-a-ib}}}}{\sqrt{\abs{\phn^{\dagger q}(1-a-ib)}}}\\
  		&\leq C_a\sup_{0\leq q \leq\rk}\frac{1}{\sqrt{\abs{\php\!\lbrb{a+l}}}}\frac{1}{\sqrt{\phn^{\dagger q}\!\lbrb{1-a}}}<\infty,
  		\end{split}
  		\end{equation}
  		where we also recall that $1-a\in\lbrb{0,1-\aphp}$.
  		Collecting the estimates \eqref{eq:J3}, \eqref{eq:J2} and \eqref{eq:J1} we prove \eqref{eq:uniformDecay} when $\NPs=\infty$ since from \eqref{eq:estimateSup}
  		\begin{equation*}
  		\begin{split}
  		\limi{|b|}|b|^\beta\sup_{0\leq q \leq\rk}\abs{\Mcc_{\Psi^{\dagger q}}\!\lbrb{\ab}}\leq C_a\limi{|b|}\frac{J_1(b)}{|b|^2}J_2(b)|b|^{\beta+2}J_3(b)=0.
  		\end{split}
  		\end{equation*}
  		Assume next that $\NPs<\infty$ which triggers from Theorem \ref{thm:asympMPsi}\eqref{eq:subexp} and Proposition \ref{prop:convPhi} that $\php,\,\php^{\dagger q}\in\BP$ with $\dep=\dep^q,\forall\,q>0,\,$ $\phn,\phn^{\dagger q}\in\BP^c$ and $\PP(0)<\infty$. The latter implies that $\bar{\mu}^{\dagger q}_{\pms}(0)<\infty,\,q\geq 0$.  	  	
  		Thus, from \eqref{eq:murtomu} and \eqref{eq:convPhi} of Proposition \ref{prop:convPhi}, we  conclude from \eqref{eq:NPs} that
  		\begin{equation}\label{eq:convergenceNPs}
  		\begin{split}
  		\limo{q} \mathtt{N}_{\Psi^{\dagger q}}&=\limo{q}\lbrb{\frac{\upsilon^{\dagger q}_{\mis}(0^+)}{\phn^{\dagger q}(0)+\bar{\mu}^{\dagger q}_{\mis}(0)}}+\limo{q}\lbrb{\frac{\php^{\dagger q}(0)+\bar{\mu}^{\dagger q}_{\pls}(0)}{\dep}}\\
  		&=  \lbrb{\frac{\upsilon_{\mis}(0^+)}{\phn(0)+\bar{\mu}_{\mis}(0)}}+\frac{\php(0)+\bar{\mu}_{\pls}(0)}{\dep}\\
  		&=\NPs,
  		\end{split}
  		\end{equation}
  		wherein it has not only been checked yet that \[\limo{q}\upsilon^{\dagger q}_{\mis}(0^+)=\upsilon_{\mis}(0^+).\] 
  		However,  whenever $\Ntt_{\Psi^{\dagger q}}<\infty$, from \eqref{eq:mu_-2} we have that \[\upsilon_{\mis}^{\dagger q}(0^+)=\IntOI u^{\dagger q}_{\pls}(y)\Pnn(dy),\] since $\Pi^{\dagger q}=\Pi$. Also, in this case since $\dep=\dep^{\dagger q}$ from \eqref{eq:u} we have that, for any $y\geq 0$,
  		\begin{equation*}
  		u^{\dagger q}_{\pls}(y)=\frac{1}{\dep}+\sum_{j=1}^{\infty}\frac{\lbrb{-1}^j}{\dep^{j+1}}\lbrb{\mathbf{1}*(\php^{\dagger q}(0)+\bar{\mu}_{\pls}^{\dagger q})^{*j}}(y)=\frac{1}{\dep}+\tilde{u}_{\pls}^{\dagger q}(y).
  		\end{equation*}
  		The infinite sum above is locally uniformly convergent, see the proof of \cite[Proposition 1]{Doering-Savov-11}, and therefore we can show using that $\limo{q}\php^{\dagger q}(0)=\php(0)$, $\limo{q}\bar{\mu}_{\pls}^{\dagger q}=\bar{\mu}_{\pls}$ and \eqref{eq:murtomu} and \eqref{eq:convPhi} of Proposition \ref{prop:convPhi}, that for any $y>0$,
  		\[\limo{q}u^{\dagger q}_{\pls}(y)=u_{\pls}(y)\] and hence $\limo{q}\upsilon_{\mis}^{\dagger q}(0^+)=\upsilon_{\mis}(0^+)$ follows. Thus, \eqref{eq:convergenceNPs} holds true. Note that since $\dep>0$ the \LLP underlying $\Psi^{\dagger q}$ is not a compound  Poisson process and hence from Lemma \ref{lem:WH} we have that that \[\mu^{\dagger q}_\pms(dy)=\IntOI e^{-rt+\Psi(0)t}\mu^\sharp_\pms(dt,dy),\] where $\mu^\sharp_{\pm}$ stands for the \LL measure of the ladder height processes of the conservative \LLP underlying $\Psi^\sharp(z)=\Psi(z)-\Psi(0)=(\Psi^{\dagger q})^\sharp(z)$. Therefore, in the sense of measures on $\intervalOI$, $\mu^{\dagger q}_\pms(dy)\leq \mu^\sharp_\pms(dy)$, for all $q\geq0$. Since $\bar{\mu}^{\dagger q}_{\mis}(0)<\infty,\,q\geq 0,$ and $\phn,\phn^{\dagger q}\in\BP^c$ we conclude from Proposition \ref{prop:convPhi}\eqref{eq:convPhi} that for any $a>\aphp$ and $\rk>0$
  		\begin{equation}\label{eq:supphn}
  		\sup_{b\in\R}\sup_{0\leq q \leq \rk} \abs{\phn^{\dagger q}(a+ib)}\leq \sup_{0\leq q \leq\rk}\lbrb{\phn^{\dagger q}(0)}+\IntOI \lbrb{e^{ay}+1}\mu^\sharp_{\mis}(dy)<\infty.
  		\end{equation}	
  		Also from $\dep^{\dagger q}=\dep>0$ and $\bar{\mu}^{\dagger q}_{\pls}(0)<\infty,\,q\geq 0,$ we obtain, for fixed $a>\aphp,\,-a\notin\N$, that
  		\begin{equation}\label{eq:supphp}
  		\sup_{b\in\R}\sup_{0\leq q \leq \rk} \frac{\abs{\php^{\dagger q}(a+ib)}}{\abs{a+ib}}\leq \sup_{0\leq q \leq\rk}\frac{\php^{\dagger q}(0)}{|a|}+\dep+\frac{1}{|a|}\IntOI \lbrb{e^{ay}+1}\mu^\sharp_{\pls}(dy)<\infty.
  		\end{equation}
  		Therefore from \eqref{eq:supphp}
  		\begin{equation}\label{eq:J2_1}
  		\limsupi{|b|}J_2(b)=\limsupi{|b|}\sup_{0\leq q \leq\rk}\lbrb{\prod_{j=0}^{l-1}\frac{\abs{\php^{\dagger q}\lbrb{a+j+ib}}}{\abs{a+j+ib}}}<\infty
  		\end{equation}
  		and from \eqref{eq:supphn}, \eqref{eq:supphp}, \eqref{eq:rephi} and the fact that
  		\[\inf_{b\in\R}\frac{\sqrt{\abs{\php\lbrb{a+l+ib}}}}{\sqrt{\abs{a+ib}}}>0\]
  		we arrive at
  		\begin{equation}\label{eq:J1_1}
  		\begin{split}
  		\limsupi{|b|}J_1(b)&=\limsupi{|b|}\sup_{0\leq q \leq\rk}\frac{\sqrt{\abs{\php^{\dagger q}\!\lbrb{a+l+ib}}}}{\sqrt{\abs{\php\!\lbrb{a+l+ib}}}}\sup_{0\leq q\leq\rk}\frac{\sqrt{\abs{\phn^{\dagger \rk}\!\lbrb{1-a-ib}}}}{\sqrt{\abs{\phn^{\dagger q}\!(1-a-ib)}}}\\
  		&\leq C_a\sup_{0\leq q \leq\rk}\frac{\frac{\sqrt{\abs{\php^{\dagger q}\lbrb{a+l+ib}}}}{\sqrt{\abs{a+l+ib}}}}{\frac{\sqrt{\abs{\php\lbrb{a+l+ib}}}}{\sqrt{\abs{a+l+ib}}}}\frac{1}{\sqrt{\phn^{\dagger q}\!\lbrb{1-a}}}<\infty.
  		\end{split}
  		\end{equation}
  		Relations \eqref{eq:J2_1} and \eqref{eq:J1_1} allow the usage of \eqref{eq:estimateSup} to the effect that
  		\begin{equation}\label{eq:estimateSup1}
  		\begin{split}
  		&\sup_{0\leq q \leq\rk}\sup_{|b|\leq\bk}\abs{\Mcc_{\Psi^{\dagger q}}\lbrb{\ab}}
  		\leq C_a\frac{\abs{\Gamma\lbrb{a+l+ib}}}{\abs{W_{\php}\!\lbrb{a+l+ib}}}\abs{W_{\phn^{\dagger \rk}}(1-a-ib)}.
  		\end{split}
  		\end{equation}
  		Then \eqref{eq:uniformDecay} follows from \eqref{eq:estimateSup1} and \eqref{eq:convergenceNPs} as $\rk$ can be chosen as small as we wish so that each of the two summands computing $\mathtt{N}_{\Psi^{\dagger \rk}}$  is as close as we need to each of the summands evaluating $\NPs$. This means that the exponent of the power decay of $\abs{W_{\phn^{\dagger \rk}}}$ can be as close to the exponent of $\abs{W_{\phn}}$ and \eqref{eq:uniformDecay} is indeed verified. This concludes the proof of this lemma.
  	\end{proof}
  
  \section{Factorization of laws and intertwining between self-similar semigroups}\label{sec:addons}
  \subsection{Proof of Theorem \ref{thm:factorization}}
  As in the case $\dphn<0$, see section \ref{subsec:Reg}, we recognize   $\frac{\Gamma(z)}{\Wpp\!(z)}$ as the Mellin transform of the random variable $I_{\php}$ and $ \phn(0)\Wpn\!\lbrb{1-z}$ as the Mellin transform of $X_{\phn}$ as defined in \eqref{eq:auxRv}. This leads to the first factorization \eqref{eq:GammaType2} of Theorem \ref{thm:factorization}.
  Next, we proceed with the proof of the second identity in law  of Theorem \ref{thm:factorization}.
  The product representations of the functions $\Wpp,\Wpn,\Gamma$, see \eqref{eq:BernWeier} and the recurrent equation \eqref{eq:Wp}, allow us to obtain that for any Bernstein function $\Wp$ and $z\in\CbOI$
  \begin{equation*}
  \Wp(z+1)=\phi(z)\Wp(z)=e^{-\gamma_{\phi} z}\prod_{k=1}^{\infty}\frac{\phi(k)}{\phi(k+z)}e^{\frac{\phi'(k)}{\phi(k)}z},
  \end{equation*}
  which by obvious analytic extension is valid even on $\Cb_{\lbrb{-1,\infty}}$.
  Next, we simply substitute the latter and invoke the  product representation \eqref{eq:BernWeier} in \eqref{eq:MIPsi}, for $z\in\Cb_{\lbrb{-1,0}}$, to get that
  \begin{equation}\label{eq:unifiedProduct}
  \begin{split}	\MPsi(z+1)&=\Ebb{I^{z}_{\Psi}}=\phn(0)\frac{\Gamma(z+1)}{\Wpp(z+1)}\Wpn(-z)\\
  &= e^{z\lbrb{\gamma_{\php}+\gamma_{\phn}-\gamma}}\frac{\phn(0)}{\phn(-z)}\prod_{k=1}^{\infty} \frac{\phn(k)}{\phn(k-z)}\frac{k\php\!\lbrb{k+z}}{\php(k)(k+z)}C^z_\Psi(k)\\
  &=\prod_{k=0}^{\infty}\frac{\phn(k)}{\phn(k-z)}\frac{(k+1)\php\!\lbrb{k+1+z}}{\php(k+1)(k+1+z)}C^z_\Psi(k),
  \end{split}
  \end{equation}
  where, for  $k\geq1$,
  \[C_\Psi(k)=e^{\lbrb{\frac{1}{k}-\frac{\php'(k)}{\php(k)}-\frac{\phn'(k)}{\phn(k)}}}\text{ and }C_\Psi(0)=e^{\lbrb{\gamma_{\php}+\gamma_{\phn}-\gamma}}.\]
  Performing a change of variables in  Proposition \ref{propAsymp1}\eqref{it:bernstein_cmi} (resp.~in the expression \eqref{eq:phi}), we get, recalling that $\Upsilon_{\mis}(dy)=U_{\mis}\lbrb{\ln(dy)},\,y>1,$ is the image of $U_{\mis}$ via the mapping $y\mapsto \ln y$, that the following identities hold true
  \begin{equation*}
  \begin{split}
  &\int_1^{\infty} y^{z}\Upsilon_{\mis}(dy)=\IntOI e^{zy}U_{\mis}(dy)=\frac{1}{\phn(-z)}\\
  &\int_0^{1} y^{z}\left(\bar{\mu}_{\pls}(-\ln y)dy+\php(0)dy+\dep \delta_{1}(dy)\right)=\frac{\php\!\lbrb{1+z}}{(1+z)}.
  \end{split}
  \end{equation*}
  Note that the last identities prove \eqref{eq:rvs}, that is
  \begin{equation*}
  \begin{split}
  \Pbb{X_{\Psi} \in dx}&=\frac{1}{\php(1)}\left(\bar{\mu}_{\pls}(-\ln x)dx+\php(0)dx+\dep \delta_{1}(dx)\right),\,x\in\lbrb{0,1},\\
  \Pbb{Y_{\Psi} \in dx}&=\phn(0)\Upsilon_{\mis}(dx),\,x>1,
  \end{split}
  \end{equation*}
  are probability measures on $\Rp$.
  Next, it is trivial that, for any $z \in i\R$,
  \[\int_0^{\infty}x^z\P(X_{\Psi}\times Y_{\Psi} \in dx)= \frac{\phn(0)}{\php(1)}\frac{\php\!\lbrb{1+z}}{\phi_{\mis}(-z) (1+z)}.\]
  Then it is clear that for $k=0,\ldots$, we have that for any bounded measurable function $f$
  \begin{equation*}
  \begin{split}
  \Ebb{f(\mathfrak{B}_{k} X_{\Psi})}\Ebb{f(\mathfrak{B}_{-k} Y_{\Psi})}&=\frac{(k+1)\php(1)}{\php(k+1)}\frac{\phn(k)}{\phn(0)}\Ebb{X_{\Psi}^{k}f(X_{\Psi})}\Ebb{Y_{\Psi}^{-k}f(Y_{\Psi})},
  \end{split}
  \end{equation*}
  where we recall that
  $ \Ebb{f(\mathfrak{B}_k X)} = \frac{\Ebb{X^kf(X)}}{\Eb[X^k]}$
  and evidently
  \[\Ebb{X^{k}_\Psi}=\frac{\php(k+1)}{\lbrb{k+1}\php(1)}\quad \text{ and }\quad \Ebb{Y^{-k}_{\Psi}}=\frac{\phn(0)}{\phn(k)}.\]
  Setting $f(x)=x^z$ for some $z\in i\R$ we deduce that for $k=0,1,\cdots$
  \begin{equation*}
  \begin{split}
  \Ebb{(\mathfrak{B}_k X_{\Psi})^z}\Ebb{(\mathfrak{B}_{-k} Y_{\Psi})^z}&=\frac{\phn(k)}{\phn(k-z)}\frac{(k+1)}{\php(k+1)}\frac{\php\!\lbrb{k+1+z}}{\lbrb{k+1+z}}.
  \end{split}
  \end{equation*}
  Therefore the second identity in law of \eqref{eq:InfW} follows immediately by inspecting the terms in the last expression of \eqref{eq:unifiedProduct}. \qed
  \subsection{Proof of Theorem \ref{thm:PSSMP}} \label{sec:prof_int}
  Let $\Psi\in\Nc_{\unrhd}$. If $\Psi'\lbrb{0}\in\lbrb{0,\infty}$, that is, the underlying L\'evy process drifts to infinity, \eqref{eqn:GammaType} and hence \eqref{eq:feVPsi} can be verified directly from   \eqref{eq:MIPsi} and \cite{Bertoin-Yor-02-b} wherein it is shown that \[\Ebb{f\!\lbrb{\VPsi}}=\frac{1}{\Ebb{\IPsi^{-1}}}\Ebb{\frac{1}{\IPsi}f\!\lbrb{\frac{1}{\IPsi}}}\] for any $f\in\cco\!\lbrb{\lbbrb{0,\infty}}$. Indeed from the latter we easily get that 
  \[\Ebb{\IPsi^{-1}}=\phn(0)\php'\!\lbrb{0^+},\] where we recall that $\Psi'\lbrb{0}\in\lbrb{0,\infty}$ triggers $\phn(0)>0$ since the \LLP goes to infinity. Then a substitution yields the result. If $\Psi'(0)=0$ that is the underlying process oscillates we proceed by approximation. Set $\Psi_{\rk}(z)=\Psi(z)+\rk z$ and note that $\Psi'_\rk(0)=\rk>0$. Then \eqref{eqn:GammaType} and hence \eqref{eq:feVPsi} are valid for \[\Psi_\rk(z)=-\php^{(\rk)}(-z)\phn^{(\rk)}(z).\]  From the Fristedt's formula, see \eqref{eq:Fristed} below, Lemma \ref{lem:WH} and fact that the underlying process is conservative, that is $\Psi(0)=0$, we get that
  \[\phi^{(\rk)}_{\pls}\!(z)=h^{(\rk)}(0)e^{\IntOI\int_{\lbbrb{0,\infty}} \lbrb{e^{-t}-e^{-zx}}\frac{\Pbb{\xi_t+\rk t\in dx}}{t}dt},\,\,z\in\Cb_{\lbbrb{0,\infty}},\]
  where $h^{(\rk)}(0)=1$ since the \LLP $\xi^{(\rk)}$ corresponding to $\Psi_\rk$ is not a compound  Poisson process, see \eqref{eq:h}.  Then as \[\limo{\rk}\Pbb{\xi_t+\rk t\in \pm dx}=\Pbb{\xi_t\in \pm dx}\]  weakly on $\lbbrb{0,\infty}$ we conclude that $\limo{\rk}\php^{(\rk)}(z)=\php(z),\,z\in\Cb_{\lbbrb{0,\infty}}$. This together with the obvious $\limo{\rk}\Psi_\rk(z)=\Psi(z)$ gives that  $\limo{\rk}\phn^{(\rk)}(z)=\phn(z),\,z\in\Cb_{\lbbrb{0,\infty}}$. Thus, from Lemma \ref{lem:continuityW} we get that \[\limo{\rk}W_{\phi_{\pms}^{(\rk)}}(z)=W_{\phi_{\pms}}(z)\] on $\CbOI$. Therefore, \eqref{eqn:GammaType}, that is
  \begin{equation*}
  \begin{split}
  \limo{\rk}\mathcal{M}_{V_{\Psi_\rk}}(z)&=  \limo{\rk}\frac{1}{\lbrb{\php^{\rk}(0^+)}'}\frac{\Gamma(1-z)}{W_{\phi^{(\rk)}_{\pls}}(1-z)}W_{\phi^{(\rk)}_{\mis}}(z)\\
  &=\frac{1}{\php'(0^+)}\frac{\Gamma(1-z)}{W_{\php}(1-z)}W_{\phn}(z),\quad  z\in \C_{\lbrb{\dphn,1}},	    
  \end{split}
  \end{equation*}
  holds provided that $\limo{\rk}\lbrb{\php^{(\rk)}(0^+)}'=\php'(0^+)$.   However, from $\limo{\rk}\php^{(\rk)}(z)=\php(z)$ we deduct from the second expression in \eqref{eq:phi} with $\php(0)=\php^{(\rk)}\!(0)=0$ that on $\CbOI$
  \[\limo{\rk}\lbrb{\dep^{(\rk)}+\IntOI e^{-zy}\mubrp^{(\rk)}\lbrb{y}dy}=\dep+\IntOI e^{-zy}\mubrp\lbrb{y}dy.\]
  Since by assumption $\php'(0^+)<\infty$ and hence $\lbrb{\php^{(\rk)}}'\!\!\!(0^+)<\infty$. Then, in an obvious manner from \eqref{eq:phi'} we can get that
  \[\limo{\rk}\lbrb{\php^{(\rk)}}'\!(0^+)=\limo{\rk}\lbrb{\dep^{(\rk)}+\IntOI \mubrp^{(\rk)}\!\lbrb{y}dy}=\dep+\IntOI\mubrp\!\lbrb{y}dy=\php'(0^+).\]
  Thus, item \eqref{it:entrance} is settled. All the claims of item \eqref{it:intertwining} follow from the following sequence of arguments. First under the condition that $\Pi(dy)=\pi_{\pls}(y)dy,\,y>0$, $\pi_{\pls}$ non-increasing on $\Rp$ from \cite{Pardo2012} we get that \eqref{eq:InfW} is refined to $\IPsi\stackrel{d}{=} \Ir_{\php}\times \Ir_{\psi}$, where $\psi(z)=z\phn(z)\in\Nc_{\unrhd}$. Secondly, this factorization is transferred to $\VPsi\stackrel{d}{=} V_{\php}\times V_{\psi}$ via \eqref{eqn:GammaType}. Finally  the arguments in the proof of \cite[Theorem 7.1]{Patie-Savov-16} depend on the latter factorization of the entrance laws and the zero-free property of $\M_{\VPsi}(z)$ for $z\in\Cb_{\lbrb{0,1}}$ which via \eqref{eqn:GammaType} is a consequence of Theorem \ref{thm:Wp} which yields that $\Wp(z)$ is zero free on $\CbOI$ for any $\phi\in\Bc$.

  \newpage
  
  \begin{appendix}\label{sec:appendix}
  	\section{Some fluctuation details on \LL processes and their exponential functional}\label{subsec:LP}
  	Recall that a \LLP $\xi=(\xi_t)_{t\geq0}$ is a real-valued stochastic process which possesses stationary and independent increments with a.s.~right-continuous paths. We allow killing of the \LLP by means of the following procedure. Consider the conservative that is unkilled version of $\xi$. Then, writing $-\Psi(0)=q\geq 0$ pick an  exponential variable $\textbf{e}_q$, of parameter $q$, independent of $\xi$, and set $\xi_t=\infty$ for any $t\geq {\textbf{e}_q}$. Note that $\textbf{e}_0=\infty$ a.s. and in this case the \LLP coincides with its conservative version.  The law of a possibly killed \LLP $\xi$ is  characterized via its characteristic exponent, i.e.~$\log\Eb\left[e^{z\xi_t}\right]=\Psi(z)t$, where $\Psi : i\R \rightarrow \mathbb{C}$  admits the following L\'evy-Khintchine representation
  	\begin{eqnarray} \label{eq:lk}
  	\Psi(z) =  \frac{\sigma^2}{2} z^2 + \gamma z +\IntII \left(e^{zr} -1
  	-zr \ind{|r|<1}\right)\Pi(dr)+\Psi(0),
  	\end{eqnarray}
  	where $\Psi(0)\leq0$ is the killing rate, $\sigma^2 \geq 0$, $\gamma\in \R$, and, the L\'evy measure
  	$\Pi$ satisfies the integrability condition $\IntII(1
  	\wedge r^2 )\:\Pi(dr) <+  \infty$.  With each \LL process, say $\xi$, there are the bivariate ascending and descending ladder time and height processes  $\lbrb{\zeta^\pms,H^\pms}=\lbrb{\zeta^\pms_t,H^\pms_t}_{t\geq0}$ associated to $\xi$ via $\lbrb{H^\pms_t}_{t\geq 0}=\lbrb{\xi_{\zeta^\pms_t}}_{t\geq 0}$ and we refer to \cite[Chapter VI]{Bertoin-96} for more information on these processes. Let us consider the bivariate ladder height processes related to the conservative version of $\xi$, that is $\xi^\sharp$. Then since these processes are bivariate subordinators we denote by $\kappa_{\pms}$ their Laplace exponents. The Fristedt's formula, see \cite[Chapter VI, Corollary 10]{Bertoin-96}, then evaluates those on $z\in\Cb_{\lbbrb{0,\infty}},q\geq 0$, as
  	\begin{equation}\label{eq:Fristed}
  	\kappa_{\pms}\lbrb{q,z}=e^{\IntOI\int_{\lbbrb{0,\infty}}\lbrb{e^{-t}-e^{-zx-qt}}\Pbb{\pm\xi^\sharp_t\in dx}\frac{dt}{t}},
  	\end{equation}
  	where $\xi^\sharp$ is the conservative \LLP constructed from $\xi$ by letting it  evolve on an infinite time horizon. Set
  	\begin{equation}\label{eq:h}
  	h(q)=e^{-\IntOI\lbrb{e^{-t}-e^{-qt}}\Pbb{\xi^\sharp_t=0}\frac{dt}{t}}
  	\end{equation}
  	and note that $h:\lbbrb{0,\infty}\mapsto\Rp$ is a decreasing, positive function.
  	Then, the analytical form of the Wiener-Hopf factorization of $\Psi\in\overNc$ is given by the expressions
  	\begin{eqnarray}\label{eq:WH}
  	\Psi(z)=-\php(-z)\phn(z)=-h(q)\kappa_{\pls}\!\lbrb{q,-z}\kappa_{\mis}\!\lbrb{q,z},\: z\in i\R,
  	\end{eqnarray}
  	where  $\phi_\pms  \in \Be$ with $\phi_\pms(0)\geq 0$ and the characteristics of $\phi_\pms$, that is $\lbrb{\phi_\pms(0),\dr_\pms,\mu_\pms}$, depend on $q=-\Psi(0)\geq0$. Then we have the result.
  	\begin{lemma}\label{lem:WH}
  		For any $\Psi\in\overNc$ it is possible to choose $\php(z)=h(q)\kappa_{\pls}(q,z)$ and $\phn(z)=\kappa_{\mis}(q,z)$. The function $h:\lbbrb{0,\infty}\mapsto\Rp$ is not identical to $1$ if and only if $\PP(0)<\infty,\,\sigma^2=\gamma=0$, see \eqref{eq:lk}, that is $\xi$ is a compound  Poisson process. Then, on $\Rp$, \[\mu_{\mis}(dy)=\int_{0}^{\infty}e^{-qy_1}\mu^\sharp_{\mis}(dy_1,dy) \text{ and } \mu_{\pls}(dy)=h(q)\int_{0}^{\infty}e^{-qy_1}\mu^\sharp_{\pls}(dy_1,dy),\] where $\mu^\sharp_{\pms}(dy_1,dy)$ are
  		the \LL measures of the bivariate ascending and descending ladder height and time processes $\lbrb{\zeta^\pms,H^\pms}=\lbrb{\zeta^\pms_t,H^\pms_t}_{t\geq0}$ associated to the conservative \LLP $\xi^\sharp$.
  	\end{lemma}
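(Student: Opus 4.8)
The plan is to derive the whole statement from Fristedt's formula \eqref{eq:Fristed} together with the definition \eqref{eq:h} of $h$, keeping everything within classical fluctuation theory; throughout $q=-\Psi(0)\ge0$ and $\Psi^\sharp=\Psi-\Psi(0)=\Psi+q$ is the exponent of the conservative process $\xi^\sharp$. First I would verify that the proposed factors lie in $\Bc$ and realise \eqref{eq:WH}. For fixed $q$, $z\mapsto\kappa_{\pms}(q,z)$ is the Laplace exponent, in the spatial variable, of the (possibly killed) bivariate ladder subordinator $(\zeta^{\pms},H^{\pms})$ of $\xi^\sharp$ with its time component frozen at $q$; the L\'evy--Khintchine decomposition performed in the last paragraph shows it is a Bernstein function of $z$, and multiplying by the strictly positive constant $h(q)$ preserves this, so $\php:=h(q)\kappa_{\pls}(q,\cdot)$ and $\phn:=\kappa_{\mis}(q,\cdot)$ belong to $\Bc$. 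To get $-\Psi(z)=\php(-z)\phn(z)$ on $i\R$ I would multiply the two instances $\kappa_{\pls}(q,-z)$ and $\kappa_{\mis}(q,z)$ of \eqref{eq:Fristed}: the ascending integral runs over $\{\xi^\sharp_t\ge0\}$ and the descending one over $\{\xi^\sharp_t\le0\}$, so their sum equals $\IntOI\int_{\R}(e^{-t}-e^{zx-qt})\Pbb{\xi^\sharp_t\in dx}\frac{dt}{t}$ plus the contribution of the overlap $\{\xi^\sharp_t=0\}$, which is exactly $\IntOI(e^{-t}-e^{-qt})\Pbb{\xi^\sharp_t=0}\frac{dt}{t}=-\ln h(q)$. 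Using $\Ebb{e^{z\xi^\sharp_t}}=e^{t\Psi^\sharp(z)}$ and a Frullani computation (valid whenever $q-\Psi^\sharp(z)\ne0$, and extended by continuity otherwise), the $\R$-integral equals $\ln(q-\Psi^\sharp(z))=\ln(-\Psi(z))$, whence $h(q)\kappa_{\pls}(q,-z)\kappa_{\mis}(q,z)=-\Psi(z)$. This is \eqref{eq:WH} with the announced choice of factors and it also identifies $\php(0)=h(q)\kappa_{\pls}(q,0)$ and $\phn(0)=\kappa_{\mis}(q,0)$.

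Next I would handle the dichotomy for $h$. Since $h(1)=1$ and $q\mapsto h(q)$ is monotone, $h\not\equiv1$ if and only if $t\mapsto\Pbb{\xi^\sharp_t=0}$ is not Lebesgue-a.e.\ zero on $\intervalOI$, and because $\Pbb{\xi_t=0}=e^{-qt}\Pbb{\xi^\sharp_t=0}$ this is the same condition for $\xi$. The classical characterisation of atoms of infinitely divisible laws (see e.g.\ \cite{Sato-99}) states that a \LLP has $\Pbb{\xi_t=0}>0$ for some $t>0$ exactly when it has this for every $t>0$, and exactly when it is a compound Poisson process, i.e.\ $\sigma^2=0$, $\PP(0)=\IntII\Pi(dr)<\infty$ and the drift $\gamma$ in \eqref{eq:lk} vanishes. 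This gives the stated equivalence.

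For the L\'evy measures I would expand the exponent of $(\zeta^{\pms},H^{\pms})$, which has the bivariate L\'evy--Khintchine form $\kappa_{\pms}(\alpha,\beta)=\kappa_{\pms}(0,0)+d_1^{\pms}\alpha+d_2^{\pms}\beta+\int_{[0,\infty)^2\setminus\{(0,0)\}}(1-e^{-\alpha s-\beta x})\mu^\sharp_{\pms}(ds,dx)$. Setting $\alpha=q$ and using $1-e^{-qs-zx}=(1-e^{-qs})+e^{-qs}(1-e^{-zx})$, together with the fact that the slice $\{x=0\}$ does not contribute to the second summand, gives $\kappa_{\pms}(q,z)=\kappa_{\pms}(q,0)+d_2^{\pms}z+\IntOI(1-e^{-zx})\Bigl(\IntOI e^{-qy_1}\mu^\sharp_{\pms}(dy_1,dx)\Bigr)$. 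Comparing with the representation \eqref{eq:phi} of a Bernstein function, the L\'evy measure of $\phn=\kappa_{\mis}(q,\cdot)$ is $\IntOI e^{-qy_1}\mu^\sharp_{\mis}(dy_1,dy)$, that of $\php=h(q)\kappa_{\pls}(q,\cdot)$ is $h(q)\IntOI e^{-qy_1}\mu^\sharp_{\pls}(dy_1,dy)$, and the drifts are $\dem=d_2^{\mis}$, $\dep=h(q)d_2^{\pls}$.

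The only part that is more than bookkeeping is the first paragraph: pinning down the normalising constant $h(q)$ by correctly accounting for the atom of $\xi^\sharp_t$ at $0$ when multiplying the two Fristedt formulas, and checking the Frullani step for purely imaginary arguments (convergence at $t\to0$ and $t\to\infty$, and the degenerate value $q-\Psi^\sharp(z)=0$). Once this is settled, the characterisation of $h$ is a quotation of a standard fact and the L\'evy-measure identities follow at once from the L\'evy--Khintchine form of bivariate ladder subordinators.
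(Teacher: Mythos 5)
Your proof is correct and, in substance, reconstructs exactly the argument underlying the reference the paper relies on: the paper's proof is the single sentence ``The proof is straightforward from [Doney, p.~27] and the fact that for fixed $q\geq0$, $\kappa_{\pms}\in\Bc$,'' and what you write out—multiplying the two Fristedt exponents, isolating the overlap at $\{\xi^\sharp_t=0\}$ which produces $-\ln h(q)$, evaluating the remaining double integral by a Frullani argument to obtain $\ln(-\Psi(z))$, and then reading off the L\'evy measures and drift of $\kappa_{\pms}(q,\cdot)$ from the bivariate L\'evy--Khintchine form—is precisely the content that citation delegates to. The only place to be slightly careful is the Frullani step on $i\R$ near zeros of $\Psi$ (conditional rather than absolute convergence) and the precise formulation of the compound-Poisson dichotomy via atoms of $\xi^\sharp_t$ at $0$; you flag both and they are handled by the standard sources you invoke.
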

  	\begin{proof}
  		The proof is straightforward from \cite[p.27]{Doney-07-book} and the fact that for fixed $q\geq0$, $\kappa_{\pms}\in\Bc$.
  	\end{proof}
  	We refer to the excellent monographs \cite{Bertoin-96} and \cite{Sato-99} for background on the probabilistic and path-wise properties of general \LL processes and their associated \LLK exponent $\Psi\in\overNc$. Also they contain the bulk of the fluctuation theory of \LL processes.
  	
  	We proceed with providing an alternative expression for $\Aph\lbrb{z}=\int_{0}^{b}\arg \phi\lbrb{a+iu}du$, $z=a+ib$, see \eqref{eq:Aphi} when $\phi$ is a Wiener-Hopf factor of some $\Psi\in\overNc$.
  	\begin{lemma}\label{lem:Aphi}
  		Let $\Psi\in\overNc$ and take $\phpm$ from \eqref{eq:WH}. Then, for any $z=\ab\in\CbOI$, we have that the following identity holds
  		\begin{equation}\label{eq:AphiAlt}
  		A_{\phpm}(z)=\IntOI \int_{\lbbrb{0,\infty}}\frac{1-\cos(bx)}{x}e^{-ax}e^{\Psi(0)t}\Pbbs{\pm\xi^{\sharp}_t\in dx}\frac{dt}{t}.
  		\end{equation}
  		Also if, for any  $q\geq 0$, $\Psi^{\dagger q}(\cdot)=\Psi(\cdot)-q=-\php^{\dagger q}(-\cdot)\phn^{\dagger q}(\cdot)$, then for any  fixed  $z=\ab\in\CbOI$ the functions $q\mapsto A_{\phpm^{\dagger q}}(z)$ are non-increasing.
  	\end{lemma}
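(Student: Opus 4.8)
The plan is to feed the Fristedt representation of the Wiener--Hopf factors recorded in the Appendix into the identity \eqref{eq:Tphi} for $A_{\phpm}$ and then exchange the order of integration, all the integrands involved being nonnegative. By Lemma \ref{lem:WH} we may and do choose $\php(z)=h(q)\kappa_{\pls}(q,z)$ and $\phn(z)=\kappa_{\mis}(q,z)$ with $q=-\Psi(0)\ge 0$, where $\kappa_{\pms}(q,\cdot)$ is given by \eqref{eq:Fristed} and $h(q)>0$ by \eqref{eq:h}. Since each $\phpm\in\Bc$ maps $\CbOI$ into $\CbOI$, see \eqref{eq:rephi}, the principal logarithm $\log\phpm$ is holomorphic on $\CbOI$; and since the integral in \eqref{eq:Fristed} is continuous on $\CbOI$, holomorphic in its interior, real on $\Rp$ and valued in the right half plane (because $\kappa_{\pms}(q,\cdot)\in\Bc$), it coincides there with $\log\kappa_{\pms}(q,\cdot)$. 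As $h(q)$ is a positive constant it cancels in differences, and the counterterm $e^{-t}$ also cancels upon subtraction, so that for $z=u+ib$ with $u>0$ one gets
\[
\log\frac{\phpm(u+ib)}{\phpm(u)}=\int_{0}^{\infty}\!\!\int_{\lbbrb{0,\infty}} e^{-ux+\Psi(0)t}\lbrb{1-e^{-ibx}}\,\Pbb{\pm\xi^{\sharp}_t\in dx}\,\frac{dt}{t}.
\]

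Taking real parts and using $\Re\lbrb{1-e^{-ibx}}=1-\cos(bx)$ yields
\[
\ln\frac{\abs{\phpm(u+ib)}}{\phpm(u)}=\int_{0}^{\infty}\!\!\int_{\lbbrb{0,\infty}} e^{-ux+\Psi(0)t}\lbrb{1-\cos(bx)}\,\Pbb{\pm\xi^{\sharp}_t\in dx}\,\frac{dt}{t}\ \ge 0 .
\]
Next I would integrate this in $u$ over $\lbrb{a,\infty}$ and invoke \eqref{eq:Tphi} of Theorem \ref{thm:genFuncs}\eqref{it:Aphi}, which says $A_{\phpm}(\ab)=\int_{a}^{\infty}\ln\lbrb{\abs{\phpm(u+ib)}/\phpm(u)}du$. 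Because every factor in the previous display is nonnegative, Tonelli's theorem applies, and using $\int_{a}^{\infty}e^{-ux}du=e^{-ax}/x$ for $x>0$ (and noting that a possible atom of $\Pbb{\pm\xi^{\sharp}_t\in\cdot}$ at $0$ contributes nothing since $1-\cos(bx)$ vanishes at $x=0$) one obtains precisely \eqref{eq:AphiAlt}. No absolute‑convergence estimate is needed beyond the mere convergence of the Fristedt integral, which is part of the standard fluctuation theory quoted in the Appendix.

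For the monotonicity statement, observe that $\lbrb{\Psi^{\dagger q}}^{\sharp}=\Psi^{\dagger q}-\Psi^{\dagger q}(0)=\Psi-\Psi(0)=\Psi^{\sharp}$, so the conservative process $\xi^{\sharp}$ attached to $\Psi^{\dagger q}$ is the very same as that of $\Psi$, while $\Psi^{\dagger q}(0)=\Psi(0)-q$. Applying \eqref{eq:AphiAlt} to $\Psi^{\dagger q}$ therefore gives
\[
A_{\phpm^{\dagger q}}(\ab)=\int_{0}^{\infty}\!\!\int_{\lbbrb{0,\infty}}\frac{1-\cos(bx)}{x}\,e^{-ax}\,e^{\Psi(0)t}\,e^{-qt}\,\Pbb{\pm\xi^{\sharp}_t\in dx}\,\frac{dt}{t},
\]
and since all the remaining factors are nonnegative and $q\mapsto e^{-qt}$ is non‑increasing for each fixed $t\ge 0$, the map $q\mapsto A_{\phpm^{\dagger q}}(\ab)$ is non‑increasing; this sharpens the corresponding monotonicity assertion in Theorem \ref{thm:genFuncs}\eqref{it:Aphi}. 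The one place calling for a little care is the identification of the Fristedt integral with the principal branch of $\log\kappa_{\pms}(q,\cdot)$ and the verification that the $e^{-t}$ and $\log h(q)$ contributions drop out of $\log\phpm(u+ib)-\log\phpm(u)$; once that is in place, the rest is just Tonelli's theorem on nonnegative integrands.
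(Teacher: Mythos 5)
Your proof is correct, and it reaches the identity \eqref{eq:AphiAlt} by a genuinely different route from the paper's: you differentiate between the two equivalent representations of $\Aph$ established in Theorem \ref{thm:genFuncs}\eqref{it:Aphi}. The paper works directly with the defining formula $\Aph(a+ib)=\int_0^b\arg\phpm(a+iu)\,du$, so it takes the \emph{imaginary} part of the Fristedt exponent, getting $\arg\phpm(a+iu)=\int_0^\infty\int_{[0,\infty)}\sin(ux)e^{-ax}e^{\Psi(0)t}\Pbb{\pm\xi^\sharp_t\in dx}\frac{dt}{t}$, and then integrates in $u$ over $(0,b)$ to produce $\frac{1-\cos(bx)}{x}$. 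You instead take the \emph{real} part to get $\ln\frac{|\phpm(u+ib)|}{\phpm(u)}$, and integrate in $u$ over $(a,\infty)$ via the alternative expression $\Aph(\ab)=\int_a^\infty\ln\frac{|\phpm(u+ib)|}{\phpm(u)}\,du$ from \eqref{eq:A=Theta}, producing the factor $\frac{e^{-ax}}{x}$ and picking up $1-\cos(bx)$ from the real part. Both routes rest on the same Fristedt/Wiener--Hopf input and both are licensed by Theorem \ref{thm:genFuncs}\eqref{it:Aphi}; the paper's version is marginally more direct in that it uses the primary definition of $\Aph$ and needs no Tonelli step (the inner integrand $\sin(ux)$ changes sign, but $u$ ranges over a bounded interval so the exchange is immediate by Fubini against the convergent Fristedt integral), whereas your version has everything nonnegative at the expense of invoking the derived representation. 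Your monotonicity argument is the same as the paper's, just stated slightly more carefully (you keep track of $\Psi^{\dagger q}(0)=\Psi(0)-q$ rather than tacitly taking $\Psi(0)=0$), and your remark about identifying the Fristedt integral with the principal branch of $\log\kappa_{\pms}(q,\cdot)$ is a point the paper glosses over.
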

  	\begin{proof}
  		It suffices to consider $z=\ab$ with $b>0$ only. Clearly, from \eqref{eq:Fristed} and Lemma \ref{lem:WH}, we have that, for any $u\geq0$,
  		\begin{equation*}
  		\begin{split}
  		\arg \phpm(a+iu)&=\Im\!\lbrb{\IntOI\int_{\lbbrb{0,\infty}}\lbrb{e^{-t}-e^{-(a+iu)x+\Psi(0)t}}\Pbbs{\pm\xi^\sharp_t\in dx}\frac{dt}{t}}\\
  		&=\IntOI\int_{\lbbrb{0,\infty}}\sin\lbrb{ux}e^{-ax}e^{\Psi(0)t}\Pbbs{\pm\xi^\sharp_t\in dx}\frac{dt}{t}.
  		\end{split}
  		\end{equation*}
  		A simple integration then leads to
  		\begin{equation*}
  		\begin{split}
  		\Aphm\!\!\lbrb{z}&=\int_{0}^{b}\arg \phpm\!\lbrb{a+iu}du\\
  		&=\IntOI\int_{\lbbrb{0,\infty}}\frac{1-\cos\lbrb{bx}}{x}e^{-ax}e^{\Psi(0)t}\Pbbs{\pm\xi^\sharp_t\in dx}\frac{dt}{t}
  		\end{split}
  		\end{equation*}
  		or \eqref{eq:AphiAlt} is recovered. If $\Psi^{\dagger q}(0)=\Psi(0)-q$ then the monotonicity in $q$ is clear from \eqref{eq:AphiAlt} since all terms are non-negative, do not depend on $q$ and $e^{\Psi^{\dagger q}(0)t}=e^{\Psi(0)t-qt}$ decreases in the variable $q$.
  	\end{proof}
  	
  	%\subsection{Exponential functional of \LL processes: introduction and existing literature}
  	\section{A simple extension of the \'{e}quation amicale invers\'{e}e}\label{subsec:Vigon}
  	Let $\Psi\in\overNc$ and recall its Wiener-Hopf factorization  $\Psi(z)=-\php(-z)\phn(z),\,z\in i\R$. When in addition $\Psi(0)=0$, then the Vigon's  \textit{\'{e}quation amicale invers\'{e}e}, see \cite[5.3.4]{Doney-07-book}, states that
  	\begin{equation}\label{eq:Vigon}
  	\mubarnspace{y}=h(q)\IntOI \PPn(y+v) U_{\pls}(dv),\,y>0,
  	\end{equation}
  	where $\mu_{\mis}$ is the \LL measure of $\phi_-$ or of the descending ladder height process and $U_{\pls}$ is the potential measure associated to $\phi_+$ or of  the ascending ladder height process, see Section \ref{subsec:LP} and relation \eqref{eq:LTU}. We now extend \eqref{eq:Vigon} to all $\Psi\in\overNc$.\newpage
  	\begin{proposition}\label{prop:Vigon}
  		Let $\Psi\in\overNc$. Then \eqref{eq:Vigon} holds.
  	\end{proposition}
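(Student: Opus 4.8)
The plan is to extend Vigon's \textit{\'{e}quation amicale invers\'{e}e} \eqref{eq:Vigon} from the conservative case $\Psi(0)=0$ to an arbitrary $\Psi\in\overNc$ by reducing the killed case to the conservative one via a standard killing/shift argument. First I would recall that for $\Psi\in\overNc$ with $-\Psi(0)=q\geq 0$ the underlying possibly killed \LLP $\xi$ is obtained from its conservative version $\xi^\sharp$ (whose \LLK exponent is $\Psi^\sharp=\Psi-\Psi(0)\in\overNc$ with $\Psi^\sharp(0)=0$) by independent exponential killing at rate $q$. By Lemma \ref{lem:WH} one may take $\php(z)=h(q)\kappa_{\pls}(q,z)$ and $\phn(z)=\kappa_{\mis}(q,z)$, where $\kappa_{\pms}(q,\cdot)$ are obtained from the bivariate ladder height and time processes $\lbrb{\zeta^\pms,H^\pms}$ of the conservative process $\xi^\sharp$ through $q$-exponential tilting in the time variable, as recorded in the Fristedt formula \eqref{eq:Fristed}. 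Concretely, $\kappa_{\pms}(q,z)$ is the Laplace exponent (in the spatial variable) of the subordinator with \LL measure $\mu^{\dagger q}_{\pms}(dy)=\int_{0}^{\infty}e^{-qy_1}\mu^\sharp_{\pms}(dy_1,dy)$ and potential measure $U^{\dagger q}_{\pms}$, which is exactly the potential measure of the once-killed-in-time ascending/descending ladder height process.

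The key step is then to apply the \emph{conservative} version of \eqref{eq:Vigon}, which is available from \cite[5.3.4]{Doney-07-book}, but to the \LLP obtained by stopping $\xi^\sharp$ at an independent exponential time ${\bf e}_q$ — equivalently, to interpret $\php,\phn$ as the Wiener--Hopf factors of the killed process. The point is that Vigon's identity is really a statement about the ascending/descending ladder structure of a (possibly killed) \LLP, and the derivation in Doney's book — which proceeds through the potential-theoretic identity relating $U_{\pls}$, $\PPn$ and $\mu_{\mis}$ — carries over verbatim once one replaces the conservative ladder objects $U^\sharp_{\pls}, \overline{\Pi}^\sharp_{\mis}, \mu^\sharp_{\mis}$ by their $q$-tilted counterparts $U^{\dagger q}_{\pls}=U_{\pls}$, $\PPn$, $\mu_{\mis}$. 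Thus I would: (i) verify that the killed \LLP $\xi$ admits ascending and descending ladder processes whose height components are precisely the subordinators with exponents $\php/h(q)=\kappa_{\pls}(q,\cdot)$ and $\phn=\kappa_{\mis}(q,\cdot)$, noting that the constant $h(q)$ reflects the possibility that $\xi$ is a compound Poisson process (the factor $h$ in \eqref{eq:WH}); (ii) quote Vigon's equation for this killed process in the form $\mubarn{y}=h(q)\IntOI \PPn(y+v)\,U_{\pls}(dv)$, the $h(q)$ appearing exactly as in \eqref{eq:WH}; (iii) observe that all quantities in \eqref{eq:Vigon} are by construction the objects attached to $\php,\phn$ as in Section \ref{subsec:LP}, so the displayed identity is precisely \eqref{eq:Vigon}.

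An alternative, slightly more self-contained route avoids invoking a "killed Vigon" black box: start from the analytic Wiener--Hopf factorization $\Psi(z)=-\php(-z)\phn(z)$, write $\frac{1}{\phn(z)}=\IntOI e^{-zy}U_{\mis}(dy)$ and $\frac{1}{\php(z)}=\IntOI e^{-zy}U_{\pls}(dy)$ from Proposition \ref{propAsymp1}\eqref{it:bernstein_cmi}, and manipulate the identity $\Psi(z)\cdot\frac{1}{\php(-z)\phn(z)}=-1$ together with the L\'evy--Khintchine form \eqref{eq:phi0} of $\phn$ to isolate $\mu_{\mis}$. Expanding $-\phn(z)=\phn(z)$'s jump part and convolving with $U_{\pls}$ reproduces \eqref{eq:Vigon} after an integration by parts, the killing rate $\phn(0)$ being absorbed harmlessly since it contributes a finite atom. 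Either way, the step I expect to be the main obstacle is the bookkeeping of the constant $h(q)$ and the degenerate compound-Poisson situation: one must be careful that $U_{\pls}$ denotes the potential measure of $\php=h(q)\kappa_{\pls}(q,\cdot)$ and not of $\kappa_{\pls}(q,\cdot)$ itself, and that the identity \eqref{eq:Vigon} is invariant (as it must be, by Theorem \ref{thm:FormMellin}) under the scaling $\php\mapsto c\php,\phn\mapsto c^{-1}\phn$. Checking this invariance directly — $U_{\pls}\mapsto c^{-1}U_{\pls}$, $h(q)\mapsto c\,h(q)$ in the appropriate normalization, $\PPn$ unchanged — is the cleanest way to confirm that the extension is consistent and that no spurious constant creeps in.
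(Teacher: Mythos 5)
Your overall strategy — reduce to the conservative process $\xi^\sharp$, express the killed ladder objects as $q$-exponential tilts of their conservative counterparts via Lemma \ref{lem:WH}, and then appeal to Doney's treatment of Vigon's identity — is exactly what the paper does. The $h(q)$ bookkeeping and the consistency check under the scaling $\php\mapsto c\php,\phn\mapsto c^{-1}\phn$ are secondary but you get them right; in the paper the constant $h(q)$ enters only through the normalization $\php=h(q)\kappa_{\pls}(q,\cdot)$, whence $U_{\pls}(dv)=h(q)^{-1}\IntOI e^{-qt}U^\sharp_{\pls}(dt,dv)$ for a compound Poisson process and simply $U_{\pls}(dv)=\IntOI e^{-qt}U^\sharp_{\pls}(dt,dv)$ otherwise.

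The genuine gap is in step (ii): you write ``quote Vigon's equation for this killed process,'' but the version of the \'equation amicale invers\'ee in \cite[5.3.4]{Doney-07-book} is stated for conservative processes, and ``the derivation carries over verbatim'' is an assertion that you never actually cash out. The paper closes this gap with a short, explicit computation: it uses the \emph{bivariate} form of Vigon's identity from \cite[Corollary 6, Chapter 5]{Doney-07-book},
\[
\mu^\sharp_{\mis}(dt,dy)=\IntOI U^\sharp_{\pls}(dt,dv)\,\Pnn(v+dy),
\]
and then integrates both sides against $e^{-qt}\,dt$; by Lemma \ref{lem:WH} the left side becomes $\mu_{\mis}(dy)$, and the right side becomes $h(q)\IntOI \Pnn(v+dy)\,U_{\pls}(dv)$ after identifying the $q$-tilted bivariate potential with $h(q)U_{\pls}(dv)$ via $\frac{1}{\php(\eta)}=\IntOI e^{-\eta v}U_{\pls}(dv)$. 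This bivariate-then-tilt mechanism is the device that makes the reduction a one-line argument, and it is the piece your proposal is missing. Your ``alternative analytic route'' through the identity $\Psi(z)/(\php(-z)\phn(z))=-1$ is too sketchy to evaluate: Vigon's identity encodes genuinely probabilistic information about the ladder structure (it is \emph{not} just a rearrangement of the Wiener--Hopf factorization), and I would not expect that manipulation to produce \eqref{eq:Vigon} without smuggling in the same ladder/potential input through the back door.
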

  	\begin{proof}
  		Recall that $\Psi^\sharp(z)=\Psi(z)-\Psi(0)\in\overNc$ and $\Psi^\sharp$ corresponds to a conservative \LL process.  Set $-\Psi(0)=q$. From Lemma \ref{lem:WH} we have that $\mu_{\mis}(dy)=\IntOI e^{-qt}\mu^\sharp_{\mis}\lbrb{dt,dy},\,y>0$. However, from \cite[Corollary 6, Chapter 5]{Doney-07-book} we have that
  		\[\mu_{\mis}^\sharp(dt,dy)=\IntOI U^\sharp_{\pls}(dt,dv)\Pnn(v+dy),\]
  		where $U_{\mis}^\sharp$ is the bivariate potential measure associated to $\lbrb{\lbrb{\zeta^{\mis}}^\sharp,\lbrb{H^{\mis}}^\sharp}$, see \cite[Chapter 5]{Doney-07-book} for more details. Therefore,
  		\[\mubarn{y}=\IntOI\IntOI e^{-qt} U^\sharp_{\pls}(dt,dv)\PPn(v+y).\]
  		Assume first that the underlying \LLP is not a compound  Poisson process. Then from \cite[p.~50]{Doney-07-book} and Lemma \ref{lem:WH}, we have,  for any $\eta>0$,
  		\[\frac{1}{\php(\eta)}=\frac{1}{\kappa_{\pls}(q,\eta)}=\IntOI e^{-\eta v}\IntOI e^{-qt}U^{\sharp}_{\pls}(dt,dv)\]
  		and from Proposition \ref{propAsymp1}\eqref{it:bernstein_cmi} we conclude that $U_{\pls}(dv)=\IntOI e^{-qt}U^{\sharp}_{\pls}(dt,dv)$ since
  		\[\frac{1}{\php(\eta)}=\IntOI e^{-\eta v}U_{\pls}(dv).\]
  		Thus by plugging the espression for $U_{\pls}$ in the relation for $\mubarnspace{y}$ above, \eqref{eq:Vigon} is established for any $\Psi\in\overNc$ such that the underlying \LLP is not a compound  Poisson process. In the case of compound  Poisson process the claim of \eqref{eq:Vigon} follows easily by noting that $\php(\eta)=h(q)\kappa_{\pls}(q,\eta)$ and hence the relation
  		\[U_{\pls}(dv)=\frac{1}{h(q)}\IntOI e^{-qt}U^{\sharp}_{\pls}(dt,dv).\]
  		This concludes the proof of the statement.
  		
  		%by a modification of the proof in \cite[p. 50]{Doney-07-book} accounting for the function $h(q)$ appearing in \eqref{eq:WH} which is missed therein since in this case $h(q)\kappa_{\pls}(q,0)\kappa_{\mis}(q,0)=q$.
  	\end{proof}
  	Next we investigate when from \eqref{eq:Vigon} it can be deduced that $\mu_{\mis}$ has a density. When $\Psi(0)=0$ the ensuing result has been established by Vigon, see \cite[5.3.4]{Doney-07-book}.
  	\begin{proposition}\label{prop:VigonDens}
  		Let $\Psi\in\overNc$ and $\dep>0$. Then $U_{\pls}(dx)=u_{\pls}(x)dx,x>0,$ and
  		\begin{equation}\label{eq:mu_-2}
  		\begin{split}
  		\uun{y}=\IntOI u_{\pls}(v)\Pnn(y+dv)=\int_{y}^{\infty}u_{\pls}(v-y)\Pnn(dv).
  		\end{split}
  		\end{equation}
  		is a right-continuous version of the density of $\mu_{\mis}$. If $\PPn(0)<\infty$ then $\upsilon_{\mis}(0^+)=\int_{0}^{\infty}u_{\pls}(v)\Pnn(dv)\in\lbbrb{0,\infty}$ and otherwise $\upsilon_{\mis}(0^+)=\infty$.
  	\end{proposition}
  	\begin{proof}
  		When $\dep>0$ we know from Proposition \ref{propAsymp1} \ref{it:bernstein_cmi} that the potential density exists and  $u_{\pls}\in\Ctt\!\lbrb{\lbbrb{0,\infty}}$. Next, for any $x>0$ we integrate the second relation in \eqref{eq:mu_-2} on $\lbrb{x,\infty}$ to get
  		\begin{equation*}
  		\begin{split}
  		&\int_{x}^{\infty}\int_{y}^{\infty}u_{\pls}(v-y)\Pnn(dv)dy=\int_{x}^{\infty}\int_{x}^{v}u_{\pls}(v-y)	dy\Pnn(dv)\\
  		&=\int_{x}^{\infty}U_{\pls}(v-x)\Pnn(dv)=\int_{0}^{\infty}\PPn\lbrb{v+x}u_{\pls}(v)dv\stackrel{\ref{eq:mu_-1}}{=}\mubarnspace{x}.
  		\end{split}
  		\end{equation*}
  		Thus relation \eqref{eq:mu_-2} is established since $\dep>0$ leads to  $h(q)=1$ in \eqref{eq:mu_-1}. When $\PPn(0)<\infty$ then since $u_{\pls}\in\Ctt\!\lbrb{\lbbrb{0,\infty}}$ we can take right limit in \eqref{eq:mu_-2} and thus $\upsilon_{\mis}(0^+)=\int_{0}^{\infty}u_{\pls}(v)\Pnn(dv)\in\lbbrb{0,\infty}$. Let $\PPn(0)<\infty$ and choose $\epsilon>0$ small enough such that $u_{\pls}\geq \frac{1}{2\dep}$ on $\lbrb{0,\epsilon}$. Then from \eqref{eq:mu_-2}
  		\begin{equation*}
  		\begin{split}
  		 \upsilon_{\mis}(0^+)&=\limo{y}\int_{y}^{\infty}u_{\pls}(v-y)\Pnn(dv)\geq \frac{1}{2\dep}\limo{y}\int_{y}^{\epsilon}\Pnn(dv)\\
  		&= \frac{1}{2\dep}\limo{y}\lbrb{\PPn\lbrb{y}-\PPn\lbrb{\epsilon}}=\infty.
  		\end{split}
  		\end{equation*} 
  		This concludes the proof of the proposition.
  	\end{proof}

  	\section{Some remarks on killed \LLPs}\label{subsec:killedLP}\label{sec:LP}
  	The next claim is also a general fact that seems not to have been recorded in the literature at least in such a condensed form.
  	\begin{proposition}\label{prop:convPhi}
  		Let $\Psi\in\overNc$ and for any $q>0$, \[\Psi^{\dagger q}(z)=\Psi(z)-q=\php^{\dagger q}(-z)\phn^{\dagger q}(z),\,z\in i\R,\] with the notation $\lbrb{\phi^{\dagger q}_{\pms}(0),\dr_\pms^{\dagger q},\mu_{\pms}^{\dagger q}}$ for the triplets defining the Bernstein functions $\phi_\pms^{\dagger q}$. Then, for any $q>0$, $\dr^{\dagger q}_\pms=\dr_\pms$ and \[\bar{\mu}^{\dagger q}_{\pls}(0)=\infty\iff \mubarpspace{0}=\infty \text{ and } \bar{\mu}^{\dagger q}_{\mis}(0)=\infty\iff \mubarnspace{0}=\infty.\]
  		Moreover, we have that vaguely  on $\intervalOI$ in general and weakly  on $\intervalOI$ when some of the measures $\mu_{\pms}$ is finite, the convergence 
  		\begin{equation}\label{eq:murtomu}
  		\limo{q}\mu^{\dagger q}_{\pms}(dy)=\mu_{\pms}(dy)
  		\end{equation}
  		holds. Therefore for any $a>\aphp$ and $[b_1,b_2]\subset \R$ with $-\infty<b_1<0<b_2<\infty$
  		\begin{equation}\label{eq:convPhi}
  		\limsupo{\rk}\sup_{b\in [b_1,b_2]}\sup_{0\le q  \leq\rk}\abs{\php^{\dagger q}(a+ib)-\php(\ab)}=0
  		\end{equation}
  		and
  		\begin{equation}\label{eq:convPhi1}
  		\limsupo{\rk}\sup_{b\in \R\setminus[b_1,b_2]}\sup_{0\le q  \leq\rk}\frac{\abs{\php^{\dagger q}(a+ib)-\php(\ab)}}{|b|}=0.
  		\end{equation}
  		Relations \eqref{eq:convPhi} and \eqref{eq:convPhi1} also hold with $\phn,\phn^{\dagger q}$  for any fixed $a>\aphn$.
  	\end{proposition}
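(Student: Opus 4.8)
The plan is to pull everything back to the Fristedt representation \eqref{eq:Fristed} of the space--time ladder exponents $\kappa_{\pms}$ together with the identification of $\phpm^{\dagger q}$ given in Lemma \ref{lem:WH}, and then to exploit two monotonicity facts: the function $h$ of \eqref{eq:h} is non-increasing, and the weight $e^{-qy_1}$ entering Lemma \ref{lem:WH} (weighting the ladder-time variable $y_1$) is non-increasing in $q$. Write $q_0=-\Psi(0)\ge0$; then $\Psi^{\dagger q}$ has killing rate $q+q_0$ and the same conservative part $\xi^\sharp$ as $\Psi$, so by Lemma \ref{lem:WH} we may take $\php^{\dagger q}(z)=h(q+q_0)\kappa_{\pls}(q+q_0,z)$, $\phn^{\dagger q}(z)=\kappa_{\mis}(q+q_0,z)$, with L\'evy measures $\mu^{\dagger q}_{\pls}(dy)=h(q+q_0)\IntOI e^{-(q+q_0)y_1}\mu^\sharp_{\pls}(dy_1,dy)$ and $\mu^{\dagger q}_{\mis}(dy)=\IntOI e^{-(q+q_0)y_1}\mu^\sharp_{\mis}(dy_1,dy)$. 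Two assertions are then essentially immediate: the analyticity strip of $\Psi^{\dagger q}=\Psi-q$ is that of $\Psi$, and the elementary L\'evy-measure bookkeeping --- split $\{y_1\leq1\}\cup\{y_1>1\}$, use $e^{-(q+q_0)y_1}\asymp1$ on $\{y_1\leq1\}$ uniformly for $q$ in a compact set, and that the bivariate L\'evy measures $\mu^\sharp_{\pms}$ charge finitely any set at positive distance from the origin --- shows that the finiteness (in $r$) of $\int_1^\infty e^{ry}\mu^{\dagger q}_{\pms}(dy)$ and of $\mu^{\dagger q}_{\pms}((0,\infty))$ does not depend on $q\geq0$, i.e.\ $\mathfrak{a}_{\php^{\dagger q}}=\aphp$, $\mathfrak{a}_{\phn^{\dagger q}}=\aphn$ and $\bar{\mu}^{\dagger q}_{\pls}(0)=\infty\iff\mubarp{0}=\infty$ (and likewise for $\phn$).

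The delicate step is the invariance of the drifts, and I expect this to be the main obstacle. Inserting $\php^{\dagger q}=h(q+q_0)\kappa_{\pls}(q+q_0,\cdot)$ into \eqref{eq:Fristed} and using $\ln h(q)=-\IntOI(e^{-t}-e^{-qt})\Pbb{\xi^\sharp_t=0}\frac{dt}{t}$, the atom at $y_1=0$ in the space integral cancels exactly against the $\ln h$ contribution, leaving for each $u>0$
\[
\ln\php^{\dagger q}(u)-\ln\php(u)=\IntOI\int_{\lbrb{0,\infty}}e^{-uy_1}\lbrb{e^{-q_0t}-e^{-(q+q_0)t}}\Pbb{\xi^\sharp_t\in dy_1}\frac{dt}{t}\;\geq\;0,
\]
a finite quantity that decreases to $0$ as $u\to\infty$ by monotone convergence. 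Hence $\php^{\dagger q}(u)/\php(u)\to1$, so $\dep^{\dagger q}=\limi{u}\php^{\dagger q}(u)/u=\dep$. The same computation for $\phn$ (with $\mu^\sharp_{\mis}$, and no $h$-cancellation) gives $\phn^{\dagger q}(u)/\phn(u)\to e^{c(q)}$ with $c(q)=\IntOI(e^{-q_0t}-e^{-(q+q_0)t})\Pbb{\xi^\sharp_t=0}\frac{dt}{t}$ finite; since by Lemma \ref{lem:WH} $\Pbb{\xi^\sharp_t=0}$ vanishes identically in $t$ unless $\xi$ is compound Poisson --- in which case $\dep=\dem=0$ --- we get $c(q)=0$ whenever $\dem>0$, whence $\dem^{\dagger q}=\dem$ in every case. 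The same argument at $u=0$ (replace $e^{-uy_1}$ by $1$, use monotone convergence in the Fristedt exponent and the continuity of $h$) shows $q\mapsto\phi^{\dagger q}_{\pms}(0)$ is continuous on $\lbbrb{0,\infty}$ with $\limo{q}\phi^{\dagger q}_{\pms}(0)=\phi_{\pms}(0)$.

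With the drifts pinned down, the remaining convergences come from a domination argument. Since $q\mapsto h(q+q_0)$ and $q\mapsto e^{-(q+q_0)y_1}$ are non-increasing, so is $q\mapsto\bar{\mu}^{\dagger q}_{\pms}(y)$ for each fixed $y>0$; in particular $\bar{\mu}^{\dagger q}_{\pms}(y)\leq\bar{\mu}_{\pms}(y)$ for $q\geq0$, and monotone convergence gives $\bar{\mu}^{\dagger q}_{\pms}(y)\uparrow\bar{\mu}_{\pms}(y)$ as $q\downarrow0$ for every $y>0$. This monotone convergence of the tails is exactly what \eqref{eq:murtomu} records: for $g$ continuous with compact support in $\intervalOI$, dominating $\labsrabs{g(y)}e^{-(q+q_0)y_1}$ by $\|g\|_\infty e^{-q_0y_1}$ on $\supp g$ --- whose $\mu^\sharp_{\pms}$-integral is finite because $\supp g$ lies at positive distance from $0$ --- dominated convergence yields $\int g\,d\mu^{\dagger q}_{\pms}\to\int g\,d\mu_{\pms}$.

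Finally, for \eqref{eq:convPhi} and \eqref{eq:convPhi1}, fix $a>\aphp$. Since $\mathfrak{a}_{\php^{\dagger q}}=\aphp$, $\bar{\mu}^{\dagger q}_{\pls}\leq\bar{\mu}_{\pls}$ and $\IntOI e^{-ay}\mubarp{y}\,dy<\infty$, the representation \eqref{eq:phi}, continued analytically to $\{\Re z>\aphp\}$, reads $\php^{\dagger q}(a+ib)=\php^{\dagger q}(0)+\dep(a+ib)+(a+ib)\IntOI e^{-(a+ib)y}\bar{\mu}^{\dagger q}_{\pls}(y)\,dy$ with the integral absolutely convergent uniformly in $q$; subtracting the same identity for $\php$ and using the tail monotonicity gives, for all $q\in\lbbrbb{0,\rk}$,
\[
\labsrabs{\php^{\dagger q}(a+ib)-\php(a+ib)}\leq\labsrabs{\php^{\dagger q}(0)-\php(0)}+\labsrabs{a+ib}\,\varepsilon_{\rk},\qquad\varepsilon_{\rk}:=\IntOI e^{-ay}\lbrb{\mubarp{y}-\bar{\mu}^{\dagger\rk}_{\pls}(y)}dy,
\]
and $\varepsilon_{\rk}\downarrow0$ as $\rk\downarrow0$ by dominated convergence (dominant $e^{-ay}\mubarp{y}$). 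Combined with $\sup_{q\in\lbbrbb{0,\rk}}\labsrabs{\php^{\dagger q}(0)-\php(0)}\to0$ (continuity of $q\mapsto\php^{\dagger q}(0)$ from the second paragraph), this yields \eqref{eq:convPhi} on any compact $b$-set, and, after dividing by $\labsrabs{b}$ and bounding $\labsrabs{a+ib}/\labsrabs{b}\leq\sqrt{1+a^2/\min(\labsrabs{b_1},b_2)^2}$ for $b\notin[b_1,b_2]$, also \eqref{eq:convPhi1}; the case of $\phn$ (with $\mubarn{\cdot}$, $\dem$, $\phn^{\dagger q}(0)$ and any fixed $a>\aphn$) is verbatim the same. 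In short, only the drift-invariance step is genuinely non-routine; everything else reduces to monotonicity and monotone/dominated convergence.
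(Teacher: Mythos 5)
Your proof is correct and follows the same overall route as the paper's — identify $\phpm^{\dagger q}$ via Lemma~\ref{lem:WH} and the Fristedt formula \eqref{eq:Fristed}, prove $\limo{q}\phi^{\dagger q}_{\pms}(0)=\phi_{\pms}(0)$ from the Fristedt exponent, and then control $\labsrabs{\php^{\dagger q}(a+ib)-\php(a+ib)}$ by the difference at $0$ plus $\labsrabs{a+ib}$ times an $\mathrm{L}^1$-type error in the tail measures. Two places where you go further than, or simplify, the paper's argument are worth noting. For the drift invariance $\dr^{\dagger q}_{\pms}=\dr_{\pms}$ the paper only asserts that this is "a local property unaffected by the additional killing rate"; you actually prove it, by writing $\ln\php^{\dagger q}(u)-\ln\php(u)$ through the Fristedt exponent, observing that the atom at $x=0$ cancels the $\ln h$ term, and then sending $u\to\infty$ by monotone convergence to force $\php^{\dagger q}(u)/\php(u)\to1$; the clean dichotomy (either $\Pbb{\xi^\sharp_t=0}\equiv0$, or $\xi$ is compound Poisson, in which case $\dep=\dem=0$) closes the $\phn$ case. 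For the uniform tail estimate the paper bounds $\sup_{0\le q\le\rk}\labsrabs{\mubarp{y}-\bar{\mu}^{\dagger q}_{\pls}(y)}$ by splitting the time integral and using $1-e^{-qt}\leq qt$ on $\{t\leq1\}$ and $\leq 1-e^{-\rk t}$ on $\{t>1\}$ (plus a separate term for $h$); you instead exploit that $q\mapsto h(q+q_0)e^{-(q+q_0)y_1}$ is decreasing, so that the supremum over $q\in[0,\rk]$ is attained at $q=\rk$ and the convergence follows by monotone convergence — a genuinely cleaner observation. Both buy you the same \eqref{eq:convPhi}--\eqref{eq:convPhi1} with essentially the same amount of work thereafter.
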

  	\begin{proof}
  		The \LLP $\xi^{\dagger q}$ underlying $\Psi^{\dagger q}$ is killed at rate $-\Psi(0)+q$ but otherwise possesses the same \LL triplet $\lbrb{\gamma,\sigma^2,\Pi}$ as $\xi$. Therefore, for any $q>0$, $\dr_\pms^{\dagger q}=\dr_\pms$,
  		\[\bar{\mu}^{\dagger q}_{\pls}(0)=\infty\iff \mubarp{0}=\infty \text{ and } \bar{\mu}^{\dagger q}_{\mis}(0)=\infty\iff \mubarn{0}=\infty\]
  		since those are local properties unaffected by the additional killing rate. Moreover, even $\mathfrak{a}_{\phi_\pms^{\dagger q}}=\mathfrak{a}_{\phi_\pms}$, see \eqref{eq:aphi1}, since the analyticity of $\Psi$ and hence of $\phi_\pms$ is unaltered. Next, the weak convergence $\limo{q}\mu^{\dagger q}_{\pms}(dy)=\mu_{\pms}(dy)$ in \eqref{eq:murtomu} follows immediately from Lemma \ref{lem:WH} as it represents $\mu^{\dagger q}_{\pms}$ in terms of the \LL measure of the ladder height processes of the conservative process underlying $\Psi^\sharp$, that is the relations
  		\begin{equation*}
  		\begin{split}
  		\mu^{\dagger q}_{\mis}(dy)&=\int_{0}^{\infty}e^{-\lbrb{q-\Psi(0)}y_1}\mu^\sharp_{\mis}(dy_1,dy)\\
  		\mu^{\dagger q}_{\pls}(dy)&=h(q-\Psi(0))\int_{0}^{\infty}e^{-\lbrb{q-\Psi(0)}y_1}\mu^\sharp_{\pls}(dy_1,dy),
  		\end{split}
  		\end{equation*}
  		and by a simple application of the monotone convergence theorem $\limo{q}h\lbrb{q-\Psi(0)}=h\lbrb{-\Psi(0)}$, see \eqref{eq:h}. It remains to prove \eqref{eq:convPhi} and \eqref{eq:convPhi1}. Fix $a>\aphp$ and $[b_1,b_2]$ as in the statement.
  		Then from the second expression of \eqref{eq:phi} and the fact that $\dr_\pms^{\dagger q}=\dr_\pms$ we arrive at
  		\begin{equation}\label{eq:phiDCT}
  		\begin{split}
  		&\sup_{b\in[b_1,b_2]}\sup_{0\le q  \leq\rk}\abs{\php^{\dagger q}(a+ib)-\php(\ab)}\leq
  		\sup_{0\leq q \leq\rk}\abs{\php^{\dagger q}(0)-\php(0)}\\ & +2\lbrb{\max\curly{|b_1|,b_2}+|a|}\IntOI e^{-ay}\sup_{0\leq q \leq\rk}\abs{\mubarp{y}-\bar{\mu}^{\dagger q}_{\pls}(y)}dy.
  		\end{split}
  		\end{equation}
  		Clearly, from the  Fristedt's formula, see \eqref{eq:Fristed}, Lemma \ref{lem:WH}, and the monotone convergence theorem when $\Psi(0)=0$ or the dominated convergence theorem when $\Psi(0)<0$ we get that
  		\begin{equation*}
  		\begin{split}
  		\limo{q}\php^{\dagger q}(0)&=\limo{q}h\lbrb{q-\Psi(0)}\kappa_{\pls}(q-\Psi(0),0)\\&=\limo{q}h\lbrb{q-\Psi(0)}e^{\IntOI\int_{\lbbrb{0,\infty}} \lbrb{e^{-t}-e^{-\lbrb{q-\Psi(0)}t}}\frac{\Pbb{\xi^\sharp_t\in dx}}{t}dt}\\
  		&=h\!\lbrb{-\Psi(0)}e^{\IntOI\int_{\lbbrb{0,\infty}} \lbrb{e^{-t}-e^{\Psi(0)t}}\frac{\Pbb{\xi^\sharp_t\in dx}}{t}dt}\\
  		&=h\!\lbrb{-\Psi(0)}\kappa_{\pls}\lbrb{-\Psi(0),0}=\php(0),
  		\end{split}
  		\end{equation*}
  		where $\xi^\sharp$ is the conservative \LLP underlying $\Psi^\sharp(z)=\Psi(z)-\Psi(0)$.
  		Next, from Lemma \ref{lem:WH} it follows that for any $y>0$ and any $\rk>0$
  		\begin{equation}\label{eq:muDCT}
  		\sup_{0\leq q \leq \rk}e^{-ay}\bar{\mu}^{\dagger q}_{\pls}(y)\leq h\!\lbrb{-\Psi(0)}e^{-ay}\bar{\mu}^\sharp_{\pls}(y)
  		\end{equation}
  		with the latter being integrable on $\intervalOI$ since $a>\aphp$. Moreover, again from Lemma \ref{lem:WH} we get that, for any $y>0$,
  		\begin{equation*}
  		\begin{split}
  		\sup_{0\leq q \leq \rk}\abs{\mubarpspace{y}-\bar{\mu}^{\dagger q}_{\pls}(y)}&=\sup_{0\leq q \leq \rk}\abs{\IntOI\lbrb{1-e^{-q t}}e^{\Psi(0)t}\mu_{\pls}^\sharp\!\lbrb{dt,\lbrb{y,\infty}}}\\
  		&\leq \rk\int_{0}^{1} t\mu^\sharp_{\pls}\!\lbrb{dt,\lbrb{y,\infty}}+\int_{1}^{\infty}\lbrb{1-e^{-\rk t}}\mu_{\pls}^\sharp\!\lbrb{dt,\lbrb{y,\infty}}
  		\end{split}
  		\end{equation*}
  		provided $\xi$ underlying $\Psi$ is not a compound  Poisson process and
  		\begin{equation*}
  		\begin{split}
  		&\sup_{0\leq q \leq \rk}\abs{\mubarpspace{y}-\bar{\mu}^{\dagger q}_{\pls}(y)}\\
  		&\leq h\!\lbrb{-\Psi(0)}\lbrb{\rk\int_{0}^{1} te^{\Psi(0)t}\mu^\sharp_{\pls}\!\lbrb{dt,\lbrb{y,\infty}}+\int_{1}^{\infty}\lbrb{1-e^{-\rk t}}e^{\Psi(0)t}\mu_{\pls}^\sharp\!\lbrb{dt,\lbrb{y,\infty}}}\\
  		&+\lbrb{h\!\lbrb{\rk-\Psi(0)}-h(-\Psi(0))}\IntOI e^{\Psi(0)t}\mu^{\sharp}_{\pls}\!\!\lbrb{dt,\Rp}
  		\end{split}
  		\end{equation*}
  		otherwise.
  		Evidently, in both cases, the right-hand  side goes to zero, for any $y>0$,  as $\rk\to 0$,  and this together with \eqref{eq:muDCT} and the dominated convergence theorem show from \eqref{eq:phiDCT} that \eqref{eq:convPhi} holds true. In fact \eqref{eq:convPhi1} follows in the same manner from  \eqref{eq:phiDCT} by first dividing by $2\max\curly{|b|+|a|}$ for $b\in\R\setminus[b_1,b_2]$ and then taking supremum in $b$.
  		%Note from \eqref{eq:murtomu} that in the sense of measures
  		%\begin{equation}\label{eq:diffmu}
  		%  \abs{\mu^{\dagger q}_+(dx)-\mu_+(dx)}\leq \sqrt{r}\mu^\dagger_+(dx)+\mu_+^\dagger\lbrb{\lbrb{\frac{1}{\sqrt{r}},\infty},dx}.    	  	
  		%  \end{equation}
  		% Then from \eqref{eq:phi}
  		%\begin{equation*}
  		% \begin{split}
  		%   \abs{\php^{\dagger q}(a+ib)-\php(\ab)}\leq \abs{\php^{\dagger q}(0)-\php(0)}+
  		% \end{split}
  		%\end{equation*}
  	\end{proof}
  	\section{Tables with frequently used symbols}\label{sec:symbols}
  	The first table describes different subclasses of the negative definite functions (NDFs) usually denoted by $\Psi$ with underlying \LLP $\xi$ and Wiener-Hopf factors $\phpm$.
  	
  	\begin{tabular}{|c|c|c|}
  		\hline
  		\rule[-1ex]{0pt}{2.5ex}  Notation & Meaning  &  Page of Appearance \\
  		\hline
  		\rule[-1ex]{0pt}{2.5ex} $\overNc$ & space of NDFs & \pageref{overNc} \\
  		\hline
  		\rule[-1ex]{0pt}{2.5ex}$\Nc$  & space of NDFs such that $\IPsi=\IntOI e^{-\xi_s}ds<\infty$   & \pageref{eq:Nc} \\
  		\hline
  		\rule[-1ex]{0pt}{2.5ex} $\Nc_\dagger$ & space of NDFs with $\Psi(0)<0$ & \pageref{eq:Ncdag} \\
  		\hline
  		\rule[-1ex]{0pt}{2.5ex}$\Npb$ &  NDFs with $\abs{\MPs}$ decaying polynomially of power $\beta$ & \pageref{eq:polyclass} \\
  		\hline
  		\rule[-1ex]{0pt}{2.5ex} $\Nth$ & NDFs with $\abs{\MPs}$ decaying exponentially of speed $\Theta$ & \pageref{eq:polyclass1} \\
  		\hline
  		\rule[-1ex]{0pt}{2.5ex}$\Nc_{\Zc}$  & NDFs with $\xi$ not supported by a lattice  &  \pageref{eq:nonlatticeclass}\\
  		\hline
  		\rule[-1ex]{0pt}{2.5ex} $\Nc_\Wc$ & NDFs in $\Nc_{\Zc}$ with a minute analytical requirement & \pageref{eq:weaknonlattice} \\
  		\hline
  		\rule[-1ex]{0pt}{2.5ex}  $\Nc_{\unrhd}$& NDFs with finite moment of ascending  height  of $\xi$ &\pageref{Nm}  \\
  		\hline
  		\rule[-1ex]{0pt}{2.5ex} $\Pc$ & space of positive definite functions  &\pageref{sym:P}  \\
  		\hline
  	\end{tabular}
  	
  	The next table collects some commonly used symbols that depend on different functions.
  	
  	\begin{tabular}{|c|c|c|}
  		\hline
  		\rule[-1ex]{0pt}{2.5ex} Notation & Meaning  & Page of Appearance \\
  		\hline
  		\rule[-1ex]{0pt}{2.5ex}  $\Zc_a\!\lbrb{f}$& the zeros of the function $f(-\cdot)$ on the  line $a+i\R\,\,$ & \pageref{sym:P} \\
  		\hline
  		\rule[-1ex]{0pt}{2.5ex}  $\IPsi$& the exponential functional of \LLPs & \pageref{eq:Ipsi} \\
  		\hline
  		\rule[-1ex]{0pt}{2.5ex}  $\MPsi$& the Mellin transform of $\IPsi$\quad\quad & \pageref{sym:MT} \\
  		\hline
  		\rule[-1ex]{0pt}{2.5ex}  $\aph,\tph,\dph$& analiticity and roots of the Bernstein functions $\phi$ & \pageref{eq:aphi} \\
  		\hline
  		\rule[-1ex]{0pt}{2.5ex}  $\gamma_\phi$& generalized Euler-Mascheroni constant for $\phi\in\Bc$ & \pageref{sym:gphi} \\
  		\hline
  	\end{tabular}
  	
  	The next table presents subclasses of Bernstein functions (BFs) and contains generic quantities pertaining to $\phi\in\Bc$.
  	
  	\begin{tabular}{|c|c|c|}
  		\hline
  		\rule[-1ex]{0pt}{2.5ex} Notation & Meaning  & Page of Appearance \\
  		\hline
  		\rule[-1ex]{0pt}{2.5ex}  $\Bc$& the space of BFs & \pageref{Bc} \\
  		\hline
  		\rule[-1ex]{0pt}{2.5ex} $\BP$ & subset of BFs with positive linear term, i.e. $\dr>0$  &  \pageref{eq:BP}\\
  		\hline
  		\rule[-1ex]{0pt}{2.5ex} $\BP^c$ &  the complement of $\BP$, that is  $\dr=0$& \pageref{BPc} \\
  		\hline
  		\rule[-1ex]{0pt}{2.5ex} $\Beb$ & BFs with $\abs{\Wp}$ decaying polynomially of power $\beta$ & \pageref{eq:polyclassB} \\
  		\hline
  		\rule[-1ex]{0pt}{2.5ex}  $\Bth$& BFs with $\abs{\Wp}$ decaying exponentially of speed $\theta$ &  \pageref{eq:polyclassB1}\\
  		\hline
  		\rule[-1ex]{0pt}{2.5ex} $\Bc_{\alpha}$ & BFs with $\dr=0$ and  regular variation of $\mu$ &\pageref{eq:classesPhi1}  \\
  		\hline
  	\end{tabular}
  	
  	The next table gathers some functional spaces and domains.
  	
  	\begin{tabular}{|c|c|c|}
  		\hline
  		\rule[-1ex]{0pt}{2.5ex} Notation & Meaning & Page of Appearance  \\
  		\hline
  		\rule[-1ex]{0pt}{2.5ex} $\Cb_I$ & complex numbers $z\in\Cb$ such that $\Re(z)\in I\subseteq\R$ & \pageref{CbI} \\
  		\hline
  		\rule[-1ex]{0pt}{2.5ex} $\Cb_a$ & $z\in\Cb$ with $\Re(z)=a$ &  \pageref{Ca}\\
  		\hline
  		\rule[-1ex]{0pt}{2.5ex} $\Cb_{\lbrb{a,b}}$ &  $z\in\Cb$ with $a<\Re(z)<b$ & \pageref{Ca} \\
  		\hline
  		\rule[-1ex]{0pt}{2.5ex} $\Ac_{\lbrb{a,b}}$ & the space of holomorphic functions on $\Cb_{\lbrb{a,b}}$ &  \pageref{Ca}\\
  		\hline
  		\rule[-1ex]{0pt}{2.5ex} $\Ac_{\lbbrb{a,b}}$ & functions in $\Ac_{\lbrb{a,b}}$ with continuous extension to $\Cb_a$ & \pageref{Ca} \\
  		\hline
  		\rule[-1ex]{0pt}{2.5ex}$\Mtt_{\lbrb{a,b}}$  & the space of meromorphic functions on $\Cb_{\lbrb{a,b}}$  & \pageref{Ca} \\
  		\hline
  		\rule[-1ex]{0pt}{2.5ex} $\Ctt^k(\Kb)$ & $k$-times differentiable functions on $\Kb$ & \pageref{Ck} \\
  		\hline
  		\rule[-1ex]{0pt}{2.5ex} $\cco^k\lbrb{\Rp}$& $\Ctt^k(\Rp)$ functions with derivatives vanishing at $\infty$  &\pageref{Cok} \\
  		\hline
  		\rule[-1ex]{0pt}{2.5ex}  $\Ctt_b^k\lbrb{\Rp}$ &functions in $\Ctt^k(\Rp)$ with bounded derivatives & \pageref{Cob} \\
  		\hline
  	\end{tabular}
  \end{appendix} 
  %\end{proof} 	
                                          %%
%% Use the two commands below for producing your bibliography    %%
%% with bibtex, then comment again the commands and include the  %%
%% content of the .bbl file in this file below the commands.     %%
%%                                                               %%
%%%%%%%%%%%%%%%%%%%%%%%%%%%%%%%%%%%%%%%%%%%%%%%%%%%%%%%%%%%%%%%%%%%

%\bibliographystyle{amsplain}
%\bibliography{yourbibfilename}

\begin{thebibliography}{99}

\bibitem{Af-Gei-Ker-Vat-05}
Afanasyev, V. I., Geiger, J., Kersting, G. and Vatutin, V. A.:
\newblock Criticality for branching processes in random environment.
\newblock { \textit{Ann. Probab.}}, \textbf{33}(2), (2005), 645--673. \MR{2123206}

\bibitem{Alili-Jadidi-14}
Alili, L., Jedidi, W. and Rivero, V.:
\newblock On exponential functionals, harmonic potential measures and
undershoots of subordinators.
\newblock { \textit{ALEA Lat. Am. J. Probab. Math. Stat.}}, \textbf{11}(1), (2014), 711--735. \MR{3323879}

\bibitem{Arendt}
Arendt, W., ter Elst, A. F. M. and Kennedy, J. B.:
\newblock Analytical aspects of isospectral drums.
\newblock { \textit{Oper. Matrices}}, \textbf{8}(1), (2014), 255--277. \MR{3202939}

\bibitem{Arista-Rivero-16}
Arista, J. and Rivero, V.:
\newblock Implicit renewal theory for exponential functionals of {L}\'evy
processes. (2015), arxiv: 1510.01809

\bibitem{Berard-89}
B{\'e}rard, P.:
\newblock Vari\'et\'es riemanniennes isospectrales non isom\'etriques.
\newblock { \textit{Ast\'erisque}}, (177-178):Exp.\ No.\ 705, 127--154, 1989.
\newblock S{\'e}minaire Bourbaki, Vol. 1988/89. \MR{1040571}

\bibitem{Berg-07}
Berg, C.:
\newblock On powers of {S}tieltjes moment sequences. {II}.
\newblock { \textit{J. Comput. Appl. Math.}}, 199(1), (2007), 23--38. \MR{2267528}

\bibitem{Berg}
Berg, C. and  Dur{\'a}n, A. J.:
\newblock A transformation from {H}ausdorff to {S}tieltjes moment sequences.
\newblock { \textit{Ark. Mat.}}, \textbf{42}(2), (2004), 239--257. \MR{2101386}

\bibitem{Bernyk2008}
Bernyk, V.,  Dalang, R.C. and Peskir, G.:
\newblock The law of the supremum of a stable {L}\'evy process with no negative
jumps.
\newblock { \textit{Ann. Probab.}}, \textbf{36}(5), (2008), 1777--1789. \MR{2440923}

\bibitem{Bertoin-96}
Bertoin, J.:
\newblock { L\'evy Processes}.
\newblock \textit{Cambridge University Press}, Cambridge, 1996. \MR{1406564}

\bibitem{Bertoin-Biane-Yor-04}
Bertoin, J., Biane, P. and Yor, M.:
\newblock Poissonian exponential functionals, {$q$}-series, {$q$}-integrals,
and the moment problem for log-normal distributions.
\newblock In { \textit{Seminar on Stochastic Analysis, Random Fields and
	Applications {IV}}}, volume~58 of \textit{Progr. Probab.}, pages 45--56.
Birkh\"auser, Basel, 2004. \MR{2096279}

\bibitem{Bertoin-Caballero-02}
Bertoin, J. and Caballero, M.E.:
\newblock Entrance from $0+$ for increasing semi-stable {M}arkov processes.
\newblock { \textit{Bernoulli}}, \textbf{8}(2), (2002), 195--205. \MR{1895890}

\bibitem{Bertoin-Curien-15}
Bertoin, J., Curien, J. and Kortchemski, I.:
\newblock Random planar maps \& growth-fragmentations.
\newblock { \textit{Ann. Probab.}}, \textbf{46}(1), (2018), 207--260. \MR{3758730}

\bibitem{Bertoin-Igor-16}
Bertoin, J. and Kortchemski, I.:
\newblock Self-similar scaling limits of {Markov} chains on the positive
integers.
\newblock { \textit{Ann. Appl. Probab.}},\textbf{26}(4), (2016), 2556--2595. \MR{3543905}

\bibitem{Bertoin-Lindner-07}
Bertoin, J., Lindner, A. and Maller, R.:
\newblock On continuity properties of the law of integrals of {L}\'evy
processes.
\newblock In \textit{S\'eminaire de probabilit\'es {XLI}}, volume 1934 of {
	\textit{Lecture Notes in Math.}}, pages 137--159. Springer, Berlin, 2008. \MR{2483729}

\bibitem{Bertoin-Savov}
Bertoin, J., and Savov, M.:
\newblock Some applications of duality for {L}\'evy processes in a half-line.
\newblock { \textit{Bull. Lond. Math. Soc.}}, \textbf{43}(1), (2011), 97--110. \MR{2765554}

\bibitem{Bertoin-Yor-02-b}
Bertoin, J., and Yor, M.:
\newblock The entrance laws of self-similar {M}arkov processes and exponential
functionals of {L}\'evy processes.
\newblock { \textit{Potential Anal.}}, \textbf{17}(4), (2002), 389--400. \MR{1918243}

\bibitem{Bertoin-Yor-02}
Bertoin, J., and Yor, M.:
\newblock On the entire moments of self-similar {M}arkov processes and
exponential functionals of { L}\'evy processes.
\newblock { \textit{Ann. Fac. Sci. Toulouse Math.}}, \textbf{11}(1), (2002), 19--32. \MR{1986381}

\bibitem{Bertoin-Yor-05}
Bertoin, J., and Yor, M.:
\newblock {Exponential functionals of L\'evy processes}.
\newblock { \textit{Probab. Surv.}}, \textbf{2}, (2005), 191--212. \MR{2178044}

\bibitem{Borodin-Corwin}
Borodin, A and Corwin, I.:
\newblock Macdonald processes.
\newblock { \textit{Probab. Theory Related Fields}}, \textbf{158}(1-2), (2014), 225--400. \MR{3152785}

\bibitem{Caballero-Chaumont-06-b}
Caballero, M. E. and Chaumont, L.:
\newblock Weak convergence of positive self-similar {M}arkov processes and
overshoots of {L}\'evy processes.
\newblock { \textit{Ann. Probab.}}, \textbf{34}(3), (2006), 1012--1034. \MR{2243877}

\bibitem{Carmona-Petit-Yor-98}
Carmona, Ph. Petit, F. and Yor, M.:
\newblock Beta-{gamma} random variables and intertwining relations between
certain {Markov} processes.
\newblock { \textit{Rev. Mat. Iberoamericana}}, \textbf{14}(2), (1998), 311--368. \MR{1654531}

\bibitem{Chhaibi-2016}
Chhaibi, R.:
\newblock A note on a {P}oissonian functional and a $q$-deformed {D}ufresne
identity.
\newblock \textit{Electron. Commun. Probab.}, \textbf{21}, (2016), 1--13. \MR{3492930}

\bibitem{Diaconis1990}
Diaconis, P. and Fill, J. A.:
\newblock Strong stationary times via a new form of duality.
\newblock { \textit{Ann. Probab.}}, \textbf{18}(4), (1990), 1483--1522. \MR{1071805}

\bibitem{Doney-07-book}
Doney, R.:
\newblock {Fluctuation Theory for L\'{e}vy Processes}.
\newblock \textit{Ecole d'Et\'{e} de Probabilit\'es de Saint-Flour XXXV-2005}. Springer,
2007.
\newblock first edition. \MR{2320889}

\bibitem{Doney-Savov-10}
Doney, R. and Savov, M.:
\newblock The asymptotic behavior of densities related to the supremum of a
stable process.
\newblock { \textit{Ann. Probab.}}, \textbf{38}(1), (2010), 316--326. \MR{2599201}

\bibitem{Doering-Savov-11}
D{\"o}ring, L. and Savov, M.:
\newblock ({N}on)differentiability and asymptotics for potential densities of
subordinators.
\newblock { \textit{Electron. J. Probab.}}, \textbf{16}(17), (2011), 470--503.\MR{2781843}

\bibitem{Erdelyi-55}
Erd{\'e}lyi, A., Magnus, W., Oberhettinger, F. and Tricomi, F.G.:
\newblock {Higher Transcendental Functions}, volume~3.
\newblock \textit{McGraw-Hill}, New York-Toronto-London, 1955. \MR{0066496}

\bibitem{Euler-1728}
Euler. L.:
\newblock \textit{Commentarii Academiae scientiarum imperialis Petropolitanae. De
	progressionibus transcendentibus seu quarum termini generales algebraice dari
	nequeunt}, volume t.1 (1726-1728).
\newblock Petropolis Typis Academiae.

\bibitem{Feller-71}
Feller, W.E.: 
\newblock { An Introduction to Probability Theory and its Applications},
volume~2.
\newblock \textit{Wiley}, New York, $2^{nd}$ edition, 1971. \MR{0270403}

\bibitem{Garret-07}
Garret, P.:
\newblock Phragm\'{e}n-{L}indel\"{o}f theorems.
\newblock page http://www.math.umn.edu/$\sim$garrett/.

\bibitem{Goldie-Grubel-96}
 Goldie, C.M. and Gr{\"u}bel, R.:
\newblock Perpetuities with thin tails.
\newblock { \textit{Adv. in Appl. Probab.}}, \textbf{28}(2), (1996), 463--480. \MR{1387886}

\bibitem{Gradshteyn-Ryzhik-00}
Gradshteyn, I.S.  and Ryshik, I.M.:
\newblock { Table of Integrals, Series and Products}.
\newblock\textit{ Academic Press}, San Diego, $6^{th}$ edition, 2000. \MR{1773820}

\bibitem{Haas-Rivero-13}
Haas, B. and Rivero, V.:
\newblock Quasi-stationary distributions and {Y}aglom limits of self-similar
{M}arkov processes.
\newblock { \textit{Stochastic Processes and their Applications}},
\textbf{122}(12), (2012), 4054--4095. \MR{2971725}

\bibitem{Hackmann-Kuznetsov-14}
Hackmann, D. and Kuznetsov, A.:
\newblock Asian options and meromorphic {L}\'evy processes.
\newblock { \textit{Finance Stoch.}}, \textbf{18}(4), (2014), 825--844. \MR{3255753}

\bibitem{Nikolski-98}
Havin, V.P.  and Nikolski, N. K. editors.
\newblock { Commutative harmonic analysis. {II}}, volume~25 of {
	\textit{Encyclopaedia of Mathematical Sciences}}.
\newblock Springer-Verlag, Berlin, 1998. \MR{1622489}

\bibitem{Hirsch-Yor-13}
Hirsch, F. and Yor, M.:
\newblock On the {M}ellin transforms of the perpetuity and the remainder
variables associated to a subordinator.
\newblock { \textit{Bernoulli}}, \textbf{19}(4), (2013), 1350--1377. \MR{3102555}

\bibitem{Jacob-01}
Jacob, N.:
\newblock { Pseudo Differential Operators and Markov Processes Vol. 1:
	Fourier Analysis and Semigroups}, volume~1.
\newblock \textit{Imperial College Press}, 2001. \MR{1873235}

\bibitem{Kallenberg2001}
Kallenberg, O.:
\newblock { Foundations of Modern Probability}.
\newblock \textit{Springer}, $2^{nd}$ edition, 2002. \MR{1876169}

\bibitem{Kozlov-76}
Kozlov, M. V.:
\newblock The asymptotic behaviour of the probability of non-extinction of
critical branching processes in a random environment.
\newblock { \textit{Teor. Verojatnost. i Primenen.}}, \textbf{21}(4), (1976), 813--825. \MR{0428492}

\bibitem{Kuznetsov2011}
Kuznetsov, A.:
\newblock On extrema of stable processes.
\newblock { \textit{Ann. Probab.}}, \textbf{39}(3), (2011), 1027--1060. \MR{2789582}

\bibitem{Kuznetsov13}
Kuznetsov, A.:
\newblock On the density of the supremum of a stable process.
\newblock { \textit{Stochastic Process. Appl.}}, \textbf{123}(3), (2013), 986--1003. \MR{3005012}

\bibitem{Kuznetsov-Pardo}
Kuznetsov, A. and Pardo, J.C.:
\newblock Fluctuations of stable processes and exponential functionals of
hypergeometric {L}\'evy processes.
\newblock { \textit{Acta Appl. Math.}}, \textbf{123}, (2013), 113--139. \MR{3010227}

\bibitem{Lamperti-62}
Lamperti. J.:
\newblock Semi-stable stochastic processes.
\newblock { \textit{Trans. Amer. Math. Soc.}}, \textbf{104}, (1962), 62--78, 1962. \MR{0138128}

\bibitem{Lebedev-72}
Lebedev, N. N.:
\newblock { Special functions and their applications}.
\newblock \textit{Dover Publications Inc.}, New York, 1972.
\newblock Revised edition, translated from the Russian and edited by Richard A.
Silverman, Unabridged and corrected republication. \MR{0350075}

\bibitem{Li-XU-16}
Li, Z. and Xu, W.:
\newblock Asymptotic results for exponential functionals of {L}\'{e}vy
processes.
\newblock { \textit{Stochastic Process. Appl.}}, \textbf{128}(1), (2018), 108--131. \MR{3729532}

\bibitem{Markushevich-66}
Markushevich, A. I.:
\newblock { Entire functions}.
\newblock Translated from the Russian by Scripta Technica, Inc. Translation
editor: \textit{Leon Ehrenpreis. American Elsevier Publishing Co.}, Inc., New York,
1966. \MR{0199395}

\bibitem{Maulik-Zwart-06}
Maulik, K. and Zwart, B.:
\newblock Tail asymptotics for exponential functionals of {L}\'evy processes.
\newblock { \textit{Stochastic Process. Appl.}}, \textbf{116}, (2006), 156--177. \MR{2197972}

\bibitem{Miclo-16}
Miclo, L.:
\newblock On the {Markovian} similarity.
\newblock { https://hal-univ-tlse3.archives-ouvertes.fr/hal-01281029v1},
2016.

\bibitem{Olver-74}
 Olver,F.W.J.:
\newblock {Introduction to Asymptotics and Special Functions}.
\newblock \textit{Academic Press}, 1974. \MR{515140}

\bibitem{Pal-Shkolnikov}
Pal, S. and Shkolnikov, M.:
\newblock Intertwining diffusions and wave equations.
 (2013), arxiv 1306.0857

\bibitem{Palau-Pardo-Smadi-16}
Palau, S., Pardo, J.C. and Smadi, C.:
\newblock Asymptotic behaviour of exponential functionals of {L}\'evy processes
with applications to random processes in random environment.
\newblock {\textit{ ALEA Lat. Am. J. Probab. Math. Stat.}}, \textbf{13}(2), (2016), 1235--1258. \MR{3592365}

\bibitem{Paley-Wiener-34}
Paley, R. and Wiener, N.:
\newblock{Fourier Transforms in the Complex Domain}
\newblock \textit{American Mathematical Society}, Providence, 1934. \MR{1451142}


\bibitem{Pardo2012}
Pardo, J.C., Patie, P. and Savov, M.:
\newblock A {W}iener-{H}opf type factorization for the exponential functional
of {L}\'evy processes.
\newblock { \textit{J. Lond. Math. Soc.} (2)}, \textbf{86}(3), (2012), 930--956. \MR{3000836}

\bibitem{Pardo-Rivero-Scheik-13}
Pardo, J.C., Rivero, V. and van Schaik, K.:
\newblock On the density of exponential functionals of {L}\'evy processes.
\newblock { \textit{Bernoulli}}, \textbf{19}(5A), (2013), 1938--1964. \MR{3129040}

\bibitem{Patie2009}
Patie, P.:
\newblock Exponential functional of a new family of {L}\'evy processes and
self-similar continuous state branching processes with immigration.
\newblock { \textit{Bull. Sci. Math.}}, \textbf{133}(4), (2009), 355--382. \MR{2532690}

\bibitem{Patie2009d}
Patie, P.:
\newblock Infinite divisibility of solutions to some self-similar
integro-differential equations and exponential functionals of {L}\'evy
processes.
\newblock { \textit{Ann. Inst. Henri Poincar\'e Probab. Stat.}}, \textbf{45}(3), (2009), 667--684. \MR{2548498}

\bibitem{Patie2012}
Patie, P.:
\newblock Law of the absorption time of some positive self-similar {M}arkov
processes.
\newblock { \textit{Ann. Probab.}}, \textbf{40}(2), (2012), 765--787. \MR{2952091}

\bibitem{Patie-As}
Patie, P.:
\newblock Asian options under one-sided {L}\'evy models.
\newblock { \textit{J. Appl. Probab.}}, \textbf{50}(2), (2013), 359--373. \MR{3102485}

\bibitem{Patie-Savov-11}
Patie, P. and Savov, M.:
\newblock Extended factorizations of exponential functionals of {L}\'evy
processes.
\newblock { \textit{Electron. J. Probab.}}, \textbf{17}(38), (2012), 1--22. \MR{2928721}

\bibitem{Patie-Savov-13}
Patie, P. and Savov, M.:
\newblock Exponential functional of {L}\'evy processes: generalized
{W}eierstrass products and {W}iener-{H}opf factorization.
\newblock { \textit{C. R. Math. Acad. Sci. Paris}}, \textbf{351}(9-10), (2013), 393--396. \MR{3072167}

\bibitem{Patie-Savov-16}
Patie, P. and Savov, M.:
\newblock
\href{https://www.researchgate.net/publication/277712416_Spectral_expansions_of_non-self-adjoint_generalized_Laguerre_semigroups}{Spectral
	expansion of non-self-adjoint generalized {Laguerre semigroups}}.
\newblock {\textit{Submitted}},  162 pages. (current version), 2015.arxiv: 1506.01625

\bibitem{Patie-Savov-GL}
Patie, P. and Savov, M.:
\newblock Cauchy problem of the non-self-adjoint {Gauss-Laguerre semigroups and
	uniform bounds of generalized Laguerre polynomials}.
\newblock { \textit{J. Spectr. Theory}}, \textbf{7}, (2017), 797--846. \MR{3713026}

\bibitem{Patie-Savov-Zhao}
Patie, P., Savov, M., and Zhao, Y.:
\newblock Intertwining, excursion theory and {Krein} theory of strings for
non-self-adjoint {M}arkov semigroups.
\newblock 2017, arxiv: 1706.08995

\bibitem{Patie2012b}
Patie, P. and Simon, T.:
\newblock Intertwining certain fractional derivatives.
\newblock { \textit{Potential Anal.}}, \textbf{36}(4), (2012), 569--587. \MR{2904634}

\bibitem{Patie-Zhao}
Patie, P. and Zhao, Y.:
\newblock Spectral decomposition of fractional operators and a reflected stable
semigroup.
\newblock { \textit{J. Differential Equations}}, \textbf{262}(3), (2017), 1690--1719. \MR{3582209}

\bibitem{Phragmen-Lindelof-1908}
Phargm\'{e}n, E and Lindel\"{o}f, E.:
\newblock Sur une extension d'un principe classique de l'analyse.
\newblock { \textit{Acta Math.}}, \textbf{31}, (1908), 381--406. \MR{1555044}

\bibitem{Rivero-05}
Rivero, V.:
\newblock Recurrent extensions of self-similar {M}arkov processes and
{C}ram\'er's condition.
\newblock { \textit{Bernoulli}}, \textbf{11}(3), (2005), 471--509. \MR{2146891}

\bibitem{Sato-99}
Sato, K.:
\newblock { L\'evy Processes and Infinitely Divisible Distributions}.
\newblock \textit{Cambridge University Press}, Cambridge, 1999. \MR{1739520}

\bibitem{Sato-Yam-78}
Sato, K. and Yamazato, M.:
\newblock On distribution functions of class {L}.
\newblock { \textit{Z. Wahrscheinlichkeitstheorie}}, \textbf{43}, (1978), 273--308. \MR{0494405}

\bibitem{Schilling2010}
Schilling, R.L., Song, R. and Vondra{\v{c}}ek, Z.:
\newblock { Bernstein functions}, volume~37 of { \textit{de Gruyter Studies in
	Mathematics}}.
\newblock Walter de Gruyter \& Co., Berlin, (2010).
\newblock Theory and applications. \MR{2598208}

\bibitem{Soulier-09}
Soulier, Ph.:
\newblock { Some applications of regular variation in probability and
	statistics}.
\newblock \textit{XXII Escuela Venezolana de Mathematicas, Instituto Venezolano de
Investigaciones Cientcas}, (2009). 
\bibitem{Shea-75}
Shea, D  and  Wainger, St.
\newblock{ Variants of the Wiener-Lévy Theorem, with Applications to Stability Problems for Some Volterra Integral Equations}
\newblock{ \textit{American Journal of Mathematics}} \textbf{2}, (1975), 312--343. \MR{0372521}


\bibitem{Titchmarsh-58}
Titchmarsh, E. C.:
\newblock { The theory of functions}.
\newblock \textit{Oxford University Press}, Oxford, (1958).
\newblock Reprint of the second (1939) edition. \MR{3155290}

\bibitem{Titchmarsh1939}
Titchmarsh, E. C.:
\newblock { The theory of functions}.
\newblock \textit{Oxford University Press}, Oxford, (1939). \MR{3728294} 

\bibitem{Tucker-75}
Tucker, H. G.:
\newblock The supports of infinitely divisible distribution functions.
\newblock { \textit{Proc. Amer. Math. Soc.}}, \textbf{49}, (1975), 436--440. \MR{0365658}

\bibitem{Urbanik-95}
Urbanik, K.:
\newblock Infinite divisibility of some functionals on stochastic processes.
\newblock { \textit{Probab. Math. Statist.}}, \textbf{15}, (1995), 493--513. \MR{1369817}

\bibitem{Webster-97}
Webster, R.:
\newblock Log-convex solutions to the functional equation $f(x+1)= f(x)g(x)$:
$\Gamma$-type functions.
\newblock { \textit{Journal of Mathematical Analysis and Applications}},
\textbf{209}, (1997), 605--623. \MR{1474628}

\bibitem{Yor-01}
Yor, M.:
\newblock { Exponential functionals of Brownian motion and related
		processes}.
\newblock\textit{ Springer Finance}, Berlin, (2001). \MR{1854494}

\end{thebibliography}

% add below the content of your .bbl file produced by bibtex.

%%%%%%%%%%%%%%%%%%%%%%%%%%%%%%%%%%%%%%%%%%%%%%%%%%%%%%%%%%%%%%%%%%%
%%                                                               %%
%% You may add acknowledgments (optional).                       %%
%%                                                               %%
%%%%%%%%%%%%%%%%%%%%%%%%%%%%%%%%%%%%%%%%%%%%%%%%%%%%%%%%%%%%%%%%%%%

\textbf{Acknowledgement:} The authors are grateful to two anonymous referees for a thorough reading and insightful comments that lead to a substantial improvement of the quality and presentation of the manuscript. They also would like to thank Adam Barker for his careful reading of the manuscript and for pointing out numerous typos and errors.

%%%%%%%%%%%%%%%%%%%%%%%%%%%%%%%%%%%%%%%%%%%%%%%%%%%%%%%%%%%%%%%%%%%
%%                                                               %%
%% You have reached the end of your document.                    %%
%%                                                               %%
%%%%%%%%%%%%%%%%%%%%%%%%%%%%%%%%%%%%%%%%%%%%%%%%%%%%%%%%%%%%%%%%%%%

\end{document}